\newtheorem{theorem}{Theorem}[section]
\newtheorem{corollary}[theorem]{Corollary}%[section]
\newtheorem{lemma}[theorem]{Lemma}%[section]
\newtheorem{definition}[theorem]{Definition}%[section]
\theoremstyle{definition}
\declaretheoremstyle[
  headfont=\normalfont\bfseries,
  sharenumber = theorem,
  bodyfont=\normalfont,
%  spaceabove=1em plus 0.75em minus 0.25em,
%  spacebelow=1em plus 0.75em minus 0.25em,
 % qed={$\boxtimes$},%\qedsymbol,
  %qed = {\hbox{$\boxtimes$}}
    qed = {\hbox{$\triangleleft$}}
]{examplestyle2}
\declaretheorem[
  style=examplestyle2,
  title=Example,
  refname={example,examples},
  Refname={Example,Examples}
]{example}
\declaretheorem[
  style=examplestyle2,
  title=Remark,
  refname={remark, remarks},
  Refname={Remark, Remarks}
]{remark}
\numberwithin{equation}{section}
\def\l@subsection{\@tocline{2}{0pt}{1pc}{4.6em}{}}
\renewcommand{\tocsubsection}[3]{%
  \indentlabel{\@ifnotempty{#2}{\hspace*{2.3em}\makebox[2.3em][l]{%
    \ignorespaces#1 #2.\hfill}}}#3}
\newcommand{\chnabla}{\smash{\overline{\widehat{\nabla}}}}
\newcommand{\cone}{\mathbb{K}}
\newcommand{\lr}{\mathsf{r}}
\newcommand{\lric}{\mathsf{ric}}
\newcommand{\sn}{\mathsf{S}}
\newcommand{\cn}{\mathsf{C}}
\newcommand{\cliff}{\operatorname{Cl}} %Clifford algebra
\newcommand{\jrad}{\operatorname{rad}}%Jacobson radical
\newcommand{\brad}{\operatorname{rad}}%Radical of bilinear form
\newcommand{\ctr}{\mathfrak{Z}}%Center of algebra
\newcommand{\torus}{\mathbb{T}}
\newcommand{\sphere}{\mathbb{S}}
\newcommand{\jsphere}{\mathbb{J}}
\newcommand{\Euc}{E}
\newcommand{\SEuc}{SE}
\newcommand{\euc}{\mathfrak{e}}
\newcommand{\su}{\mathfrak{su}}
\renewcommand{\sp}{\mathfrak{sp}}
\newcommand{\so}{\mathfrak{so}}
\newcommand{\mat}[2]{\mathscr{M}_{#1}(#2)}
\newcommand{\heisen}{\mathfrak{heis}}
\newcommand{\Heisen}{\operatorname{Heis}}
\newcommand{\onabla}{\op{\nabla}}
\newcommand{\imt}{\iota}
\newcommand{\std}{\ste^{\ast}}
\newcommand{\er}{\rho}
\newcommand{\divn}{\div_{\nabla}}
\newcommand{\zero}{\mathsf{Zero}}
\newcommand{\mc}{\omega_{G}}
\newcommand{\hopf}{\mathscr{H}}
\newcommand{\btau}{\bar{\tau}}
\newcommand{\fie}{\mathbb{k}}
\newcommand{\fiet}{\mathbb{k}^{\ast}}
\newcommand{\conftor}{\mathcal{K}}
\newcommand{\bt}{\mathcal{L}}
\newcommand{\sT}{\mathscr{T}}
\renewcommand{\r}{\mathsf{r}}
\newcommand{\chr}{\operatorname{char}}
\newcommand{\alg}{\mathbb{A}}
\newcommand{\h}{\mathfrak{h}}
\newcommand{\prin}{\mathsf{Prin}}
\newcommand{\hol}{\mathsf{hol}}
\newcommand{\dev}{\mathsf{dev}}
\renewcommand{\div}{\operatorname{div}}
\newcommand{\om}{\omega}
\newcommand{\mlt}{\circ}
\newcommand{\Om}{\Omega}
\newcommand{\vol}{\mathsf{vol}}
\newcommand{\tD}{\tilde{D}}
\newcommand{\ka}{\kappa}
\newcommand{\sR}{\mathscr{R}}
\newcommand{\T}{\mathcal{T}}
\renewcommand{\part}{\vdash}
\newcommand{\rad}{\mathbb{E}}
\newcommand{\vdens}{\mathcal{V}}
\newcommand{\qa}[1][]{\mathbb{A}\ifx#1\empty\else{\left(\frac{#1}{\fie}\right)}\fi}
\newcommand{\qaf}[2][]{\ifx#1\mathbb{A}\else{\left(\frac{#1}{#2}\right)}\fi}
\newcommand{\dens}[1][]{ \mathcal{D}\ifx#1\empty\else{^{#1}}\fi}
\newcommand{\Id}{\operatorname{Id}}
\newcommand{\dum}{\,\cdot\,\,}
\newcommand{\Ga}{\Gamma}
\newcommand{\ctn}{T^{\ast}N}
\newcommand{\prm}{\prime}
\newcommand{\ric}{\mathsf{Ric}}
\newcommand{\Q}{\mathcal{Q}}
\renewcommand{\sc}{\mathit{s}}
\renewcommand{\j}{\mathsf{i}}
\newcommand{\taut}{\mathbb{O}}
\newcommand{\la}{\lambda}
\newcommand{\ep}{\epsilon}
\newcommand{\zmodtwo}{\mathbb{Z}/2\mathbb{Z}}
\newcommand{\reat}{\mathbb{R}^{\ast}}
\newcommand{\reap}{\mathbb{R}^{+}}
\newcommand{\ext}{\@ifnextchar^\@extp{\@extp^{\,}}}
\def\@extp^#1{\mathop{\bigwedge\nolimits^{\!#1}}}
\newcommand{\cinf}{C^{\infty}}
\newcommand{\Det}{\operatorname{Det}}
\newcommand{\ben}{[\Bar{\nabla}]}
\newcommand{\hD}{\Hat{D}}
\def\brs#1{\ensuremath{\mkern+0mu{\mkern+1mu ^{s}\mkern-2mu#1}}}
\newcommand{\hDs}{\hD}
\newcommand{\Ds}{\brs{D}}
\newcommand{\eno}{\text{End}}
\newcommand{\lin}{\mathsf{L}}
\newcommand{\linft}{\mathscr{L}}
\newcommand{\si}{\sigma}
\newcommand{\pr}{\partial}
\newcommand{\ctm}{T^{\ast}M}
\newcommand{\hR}{\hat{R}}
\newcommand{\pxi}{\tfrac{\partial}{\partial x^{i}}}
\newcommand{\pxj}{\tfrac{\partial}{\partial x^{j}}}
\newcommand{\pzi}{\tfrac{\partial}{\partial z^{i}}}
\newcommand{\gr}{\operatorname{Gr}}
\newcommand{\sign}{\operatorname{sgn}}
\newcommand{\bnabla}{\bar{\nabla}}
\newcommand{\anabla}{\wideparen{\nabla}}
\newcommand{\integer}{\mathbb{Z}}
\newcommand{\enb}{\{\nabla\}}
\def\op#1{\ensuremath{\,\!^{\mathsf{op}}#1}}
\newcommand{\en}{[\nabla]}
\newcommand{\lie}{\mathfrak{L}}
\newcommand{\nablap}{\nabla^{\prime}}
\newcommand{\re}{\operatorname{Re}}
\newcommand{\im}{\operatorname{Im}}
\newcommand{\tpc}{\tau}
\newcommand{\spc}{\chi}
\newcommand{\wpc}{\nu}
\newcommand{\epc}{\eta}
\newcommand{\aff}{\mathcal{A}}
\newcommand{\F}{\mathcal{F}}
\newcommand{\lb}{\langle}
\newcommand{\ra}{\rangle}
\newcommand{\stz}{\mathbb{V}\setminus\{0\}}
\newcommand{\ste}{\mathbb{V}}
\newcommand{\stez}{\mathbb{V} \setminus \{0\}}
\newcommand{\projp}{\mathbb{P}^{+}}
\newcommand{\al}{\alpha}
\newcommand{\be}{\beta}
\newcommand{\ga}{\gamma}
\newcommand{\frameb}[1][]{
	\mathcal{F}\ifx#1\empty\else{\left(#1\right)}\fi
}
\newcommand{\orientb}[1][]{
	\mathcal{O}\ifx#1\empty\else{\left(#1\right)}\fi
}
\newcommand{\zsec}{\mathsf{0}}
\newcommand{\zs}{\mathsf{0}}
\newcommand{\G}{\mathscr{G}}
\newcommand{\spn}{\text{Span}\,}
\newcommand{\emf}{\mathcal{E}}
\newcommand{\lmf}{\mathcal{L}}
\newcommand{\hnabla}{\widehat{\nabla}}
\newcommand{\tnabla}{\tilde{\nabla}}
\newcommand{\sll}{\mathfrak{sl}}
\newcommand{\eul}{\mathbb{X}}
\newcommand{\veul}{\mathbb{X}^{\vdens}}
\newcommand{\proj}{\mathbb{P}}
\DeclareMathOperator{\diff}{Diff}
\DeclareMathOperator{\vect}{Vec}
\DeclareMathOperator{\Aut}{Aut}
\newcommand{\m}{\mathfrak{m}}
\newcommand{\g}{\mathfrak{g}}
\newcommand{\n}{\mathfrak{n}}
\newcommand{\ad}{\operatorname{ad}}
\newcommand{\Ad}{\operatorname{Ad}}
\newcommand{\tensor}{\otimes}
\newcommand{\stensor}{\widehat{\tensor}}
\newcommand{\rea}{\mathbb R}
\newcommand{\com}{\mathbb C}
\newcommand{\tr}{\mathsf{tr} \,}
\newcommand{\rank}{\text{rank\,}}
\newcommand{\ann}{\text{Ann}\,}
\renewcommand{\P}{\mathcal{P}}
\def\crn#1#2{{\vcenter{\vbox{
        \hbox{\kern#2pt \vrule width.#2pt height#1pt
           }
          \hrule height.#2pt}}}}
\begin{document}
\title{Conelike radiant structures}

\author{Daniel J. F. Fox} 
\address{Departamento de Matemática Aplicada\\ Escuela Técnica Superior de Arquitectura de Madrid\\ Universidad Politécnica de Madrid\\Av. Juan de Herrera 4 \\ 28040 Madrid España}
\email{daniel.fox@upm.es}

%\date{\today}

\begin{abstract}
Analogues of the classical affine-projective correspondence are developed in the context of statistical manifolds compatible with a radiant vector field. These utilize a formulation of Einstein equations for special statistical structures that generalizes the usual Einstein equations for pseudo-Riemannian metrics and is of independent interest.

A conelike radiant structure is a not necessarily flat affine connection equipped with a family of surfaces that behave like the intersections of the planes through the origin with a convex cone in a real vector space. A radiant structure is a torsion-free affine connection and a vector field whose covariant derivative is the identity endomorphism. A radiant structure is conelike if for every point and every two-dimensional subspace containing the radiant vector field there is a totally geodesic surface passing through the point and tangent to the subspace. Such structures exist on the total space of any principal bundle with one-dimensional fiber and on any Lie group with a quadratic structure on its Lie algebra. 

The affine connection of a conelike radiant structure can be normalized in a canonical way to have antisymmetric Ricci tensor. Applied to a conelike radiant structure on the total space of a principal bundle with one-dimensional fiber this yields a generalization of the classical Thomas connection of a projective structure. The compatibility of radiant and conelike structures with metrics is investigated and yields a construction of connections for which the symmetrized Ricci curvature is a constant multiple of a compatible metric that generalizes well-known constructions of Riemannian and Lorentzian Einstein-Weyl structures over Kähler-Einstein manifolds having nonzero scalar curvature. A formulation of Einstein equations for special statistical manifolds is given that generalizes the Einstein-Weyl equations and encompasses these more general examples.

There are constructed left-invariant conelike radiant structures on a Lie group endowed with a left-invariant nondegenerate bilinear form, and the case of three-dimensional unimodular Lie groups is described in detail.
\end{abstract}

\maketitle

%%%%%%% Table of contents depth specification
\setcounter{tocdepth}{1}  % Print the section only to the toc
%{\footnotesize }
\tableofcontents

\section{Introduction}
A \emph{statistical structure} $(\nabla, h)$ comprises a pseudo-Riemannian metric $h$ and a torsion-free affine connection $\nabla$ satisfying $\nabla_{[i}h_{j]k} = 0$. The compatibility condition is equivalent to the complete symmetry of the \emph{cubic form} $\bt_{ijk} = \nabla_{i}h_{jk}$. The torsion-free connection $\bnabla = \nabla + h^{kp}\bt_{ijp}$ constitutes with $h$ the \emph{conjugate} statistical structure having cubic form $\bnabla_{i}h_{jk} = -\bt_{ijk}$. Conjugacy of statistical structures is an involution that generalizes duality of hypersurfaces in flat affine space. The self-conjugate statistical structures are pairs comprising a metric and its Levi-Civita connection. 

%The dependence of a parametric family of probability measure on the parameters can be measured via the Fisher-Rao Riemannian metric on the parameter space. 
The notion of statistical structure was introduced by Amari, see \cite{Amari-Nagaoka}, and Chentsov, see \cite{Chentsov}, in the course of applying differential geometric methods to study the Fisher-Rao Riemannian metric on the parameter space of a parametric family of probability spaces \cite{Ay-Jost-Le-Schwachhofer}. Background on this approach to parametric statistics, now usually called information geometry, can be found in the books of Amari  \cite{Amari, Amari-informationgeometry, Amari-Nagaoka} and in \cite{Ay-Jost-Le-Schwachhofer} which also surveys the relevant literature.

A statistical structure $(\nabla, h)$ is \emph{special} if $\nabla$ preserves $\det h$. That a cooriented nondegenerate hypersurface in flat affine space acquires a pair of conjugate special statistical structures (see Example \ref{affinehypersurfaceexample}) was emphasized by H. Matsuzoe \cite{Matsuzoe, Matsuzoe-conformallyprojectively} and implicitly plays a role in earlier work of T. Kurose \cite{Kurose-dual, Kurose} and H. Amari and collaborators \cite{Amari, Amari-Nagaoka}, among others. In this context, statistical structures have been called by other names such as \emph{Codazzi structures} \cite{Marugame-codazzi, Shima} or the author's terminology \emph{exact AH structures} \cite{Fox-2dahs, Fox-crm}. A statistical structure $(\nabla, h)$ is \emph{flat} if $\nabla$ is flat. There is an extensive literature treating flat statistical structures which are also called \emph{Hessian structures} \cite{Cardoso-Mohaupt, Duistermaat, Shima-compacthessian, Shima-homogeneoushessian, Shima-Yagi, Totaro} or \emph{Kähler affine structures} \cite{Cheng-Yau-realmongeampere, Fox-schwarz, Klartag-Kolesnikov}. Possibly singular flat statistical structures are related to the Frobenius manifolds and WDVV equations studied in mathematical physics \cite{Chen-Kontsevich-Schwarz, Dubrovin, Dubrovin-integrable,Hertling-Manin, Hitchin-frobenius}. 

Even without such motivations, statistical structures are of interest simply as a direct generalization of pseudo-Riemannian metrics and in recent years have received more attention in this sense, for example in \cite{Blaga-Chen, Fujitani, Le, Marugame-bonnet, Opozda-bounded} (no attempt is made here at completeness).

A structure arising in a diversity of initially apparently little related contexts is likely to be rich, while the study of its properties from the perspective appropriate to one context may yield unexpected insights in the other contexts.

Here the differential geometric point of view on statistical structures is pursued. This differential geometric perspective leads to focusing on a class of examples different from those that form the center of attention in the main literature on statistical structures. The specific context considered (described in detail later in the introduction) generalizes the classical relation between geometric structures on a domain in projective space and geometric structures on the cone over this domain. It leads to the following notions:
\begin{itemize}
\item A notion of Einstein equations for special statistical structures that generalizes the usual Einstein equations for pseudo-Riemannian metrics and is preserved by conjugacy. For example, the Cheng-Yau metric of a properly convex flat projective structure constitutes with a distinguished affine connection representing the given projective structure an Einstein special statistical structure.
\item A notion of locally statistical structure, called an \emph{AH (affine hypersurface)} structure that extends the usual notion in much the same way that Weyl structures extend pseudo-Riemannian metrics. Einstein equations are defined also for these more general AH structures.
\end{itemize}
The author studied these notions previously \cite{Fox-2dahs, Fox-crm, Fox-schwarz} in the more general context of AH structures and without reference to statistical structures as such. It is hoped that the exposition here will be more accessible. 

The definition of Einstein equations for special statistical structures is less obvious than might be expected naively. It is not enough to require the vanishing of the trace-free Ricci tensor, nor that this hold for the statistical structure and its conjugate, as this need not imply constancy of some understandable quantity interpretable as a scalar curvature. It appears that such an additional condition must be imposed as part of the definition. Its formulation and motivation, which are both a bit subtle, are treated in Section \ref{einsteinstatisticalsection}. They are such that a special statistical structure solves the Einstein equations for statistical structures if and only if its underlying metric solves the Einstein field equations with a stress-energy tensor built from its cubic form (see Corollary \ref{conservativestatisticalcorollary} for a precise statement and \eqref{stdefined} for the definition of the stress-energy tensor).

Statistical structures interact with other geometric structures, such as almost complex or symplectic structures or generalized geometries. See \cite{Blaga-Nannicini, Haba-Matsuzoe} for examples.
Here a key role is played by the notion of a \emph{radiant statistical structure}, which is a triple $(\nabla, h, \rad)$ comprising a torsion-free affine connection $\nabla$, a pseudo-Riemannian metric $h$, and a vector field $\rad$ such that $(\nabla, h)$ is a statistical structure; $\rad$ is radiant, meaning $\nabla \rad = \Id_{TM}$; and $h$ is self-similar, meaning $\lie_{\rad}h = 2h$. Such structures occur naturally in a variety of contexts: on pointed convex cones as a consequence of deep work of Cheng-Yau \cite{Cheng-Yau-realmongeampere,Cheng-Yau-affinehyperspheresI, Fox-schwarz,Hildebrand-canonical, Loftin-affinekahler}, total spaces of one-dimensional principal bundles over manifolds carrying particular geometric structures (see sections \ref{codazzisection}-\ref{ewsection}), and in the construction of the Fefferman-Graham ambient metric \cite{Fefferman-Graham-ambient, Rodnianski-Shlapentokh-Rothman}. From the point of view of the study of statistical structures the main results of the paper can be framed as extensions of these ideas:
\begin{itemize}
\item Theorem \ref{liftedahtheorem} that shows that if the curvature of a principal connection on a one-dimensional principal bundle over a manifold equipped with an Einstein special statistical structure is compatible in a certain way with the statistical structure then on the total space of the principal bundle there is an Einstein AH structure built from the downstairs special statistical structure and the curvature of the principal connection. As is explained in more detail later in the introduction, this construction generalizes vastly the classical correspondence between projective space and affine space. It generalizes to statistical structures constructions associating Einstein-Weyl structures with positive scalar curvature Kähler-Einstein metrics \cite{Calderbank-Pedersen, Pedersen-Swann-submersions}.
\item Theorem \ref{convexprojectivetheorem} applies Theorem \ref{liftedahtheorem} to show that if a principal circle bundle over a compact orientable surface of genus at least two has Euler number of absolute value less than minus the Euler characteristic of the surface then on the total space of the circle bundle there is a Lorentzian signature Einstein AH structure that, in a certain precise sense, induces on the base surface the Cheng-Yau Einstein special statistical structure of a properly convex projective structure.
\end{itemize}
The idea of a correspondence between geometric structures on a base manifold and other geometric structures on the total space of a fiber bundle over the base goes back in different forms and specific instances to Kaluza-Klein \cite{Wang-einstein}, T.~Y. Thomas \cite{Thomas-book}, and many others. In the context of statistical structures such constructions have been explored in various forms. In \cite{Matsuzoe-Inoguchi}, Matsuzoe and Inoguchi showed that the tangent bundle over a flat statistical manifold admits a natural complex structure compatible with a statistical structure whose underlying metric is the Norden metric. In \cite{Balan-Peyghan-Sharahi} there were studied statistical structures with underlying metric a Sasaki metric and there was explored the case where the metric on the base manifold is Kähler. The results in Section \ref{ewsection} are analogous, with a one-dimensional principal bundle in place of the tangent bundle, and an Einstein statistical structure on the base.

The formulations and proofs of Theorems \ref{liftedahtheorem} and \ref{convexprojectivetheorem} are other related results utilize the notion of conelike radiant structure which gives the title of the article. The remainder of the introduction is devoted to motivating this notion, which has independent interest. The reader mainly interested in statistical structures can begin reading in Section \ref{codazzisection}, although the results of the earlier sections are necessary for the proofs of the results in Section \ref{ewsection}. 

The notion of \emph{conelike radiant structure} formalizes the idea of a not necessarily flat affine connection equipped with a family of surfaces that behave like the intersections with a cone of the planes through the origin in a real vector space. A \emph{radiant structure} on a smooth manifold $M$ is a pair $(\nabla, \rad)$ comprising a torsion-free affine connection $\nabla$ and a vector field $\rad$ such that $\nabla \rad$ is the identity endomorphism of the tangent bundle $TM$. In a manifold with a radiant structure, a smoothly immersed surface is \emph{planelike} if it is totally geodesic and everywhere tangent to $\rad$. A radiant structure is \emph{conelike} if it \emph{admits a complete set of planelike surfaces} in the sense that for every point $p \in M$ and every two-dimensional subspace $L \subset T_{p}M$ that contains $\rad_{p}$, there is a planelike surface $\Sigma \subset M$ containing $p$ and such that $T_{p}\Sigma = L$. The curvature of a radiant structure $(\nabla, \rad)$ means the curvature of $\nabla$. The flat radiant structure on a real vector space is that given by the flat affine structure and the Euler vector field generating dilations. The interior of a cone carries the flat conelike radiant structure generated by this radiant structure and whose planelike surfaces are its intersections with two-dimensional subspaces. The planelike surfaces of a conelike radiant structure generalize the two-dimensional subspaces through the origin in the flat model. Their dilation invariance is replaced by tangency to the radiant vector field, and their flatness is replaced by being totally geodesic.

Conelike radiant structures exist on manifolds with nontrivial topology. For example, by Theorem \ref{extendedthomastheorem}, the total space of any principal $G$-bundle with $\dim G = 1$ admits conelike radiant structures and, by Theorem \ref{killingnonnulltheorem}, on a semisimple Lie group there is a conelike radiant structure associated with each adjoint orbit in its Lie algebra. More generally, by Theorem \ref{gnonnulltheorem}, any Lie group whose Lie algebra is equipped with a structure of a quadratic Lie algebra admits a left-invariant conelike radiant structure. In general these conelike radiant structures are not flat, although their Ricci tensors are antisymmetric.

The projection $\stez \to \proj(\ste)$ of the complement of the origin in an $(n+1)$-dimensional real vector space $\ste$ to the $n$-dimensional projectivization $\proj(\ste)$ is the model for the affine-projective/homogeneous-inhomogeneous paradigm that pervades much of geometry. Together with some additional functional data this setup yields projective and hyperbolic geometry. Work due in its essential points to Cheng-Yau \cite{Cheng-Yau-affinehyperspheresI, Cheng-Yau-mongeampere}, establishing a picture conjectured by Calabi \cite{Calabi-bernsteinproblems, Calabi-completeaffine} and developed also partly in \cite{Loewner-Nirenberg, Sasaki}, extends this basic picture to the context of affine spheres and the cones over them. This context includes the universal covers of properly convex flat real projective structures. It shows that the interior of any convex cone that is pointed (contains no half space) is foliated in a unique way by complete hyperbolic affine spheres asymptotic to the cone and having center at the vertex of the cone (see also \cite{Loftin-affinekahler, Loftin-survey}). The cone itself supports a related Lorentzian Monge-Ampere metric \cite{Fox-schwarz} (see also \cite{Calabi-nonhomogeneouseinstein, Loftin-Yau-Zaslow} for closely related constructions). For the Minkowski cone the foliating affine spheres are hyperboloids with Blaschke metrics homothetic to the hyperbolic metric and the associated Lorentzian Monge-Ampere metric is the Minkowski metric. The notion of conelike radiant structure introduced here makes it possible to generalize this picture in several ways - most obviously by dropping the flatness of the affine connection (equivalently the projective flatness of the downstairs projective structure) but also by abandoning convexity and replacing the Euler field generating the group of dilations on a vector space by a more general notion of Euler vector field. 

Example \ref{projflatexample} indicates in more detail the basic structural features whose generalization to a curved setting, culminating in the results of Section \ref{ewsection}, occupies much of the present article. Example \ref{projflatexample} is written using notations that make it straightforward to see how it is a special case of results given later, such as Lemma \ref{liftedmetriclemma} and Theorem \ref{liftedahtheorem}, that can be understood as extending it to a curved setting. 

\begin{example}\label{projflatexample}
Let $\rho:\F = \ste \setminus\{0\} \to M = \projp(\ste)$ be the principal $\reap$-bundle defining the oriented projectivization of the $(n+1)$-dimensional vector space $\ste$, where $\reap = GL^{+}(1, \rea)$ is the multiplicative group of positive real numbers, regarded as a topological group. 
Let $z^{I}$ be coordinates on $\ste$ so that $\tfrac{\pr}{\pr z^{I}}$ is a parallel frame for the flat affine connection $\hnabla$ on $\ste$. Where $z^{n+1} \neq 0$, $t = z^{n+1}$ is a fiber coordinate corresponding with the flat principal $\reap$-connection $\al = d\log t= d\log z^{n+1}$ and determining local coordinates $x^{i}$ on the base by $z^{i} = tx^{i}$, so that $(x^{i}, t)$ are coordinates adapted to the bundle structure in which $\eul = t\tfrac{\pr}{\pr t} = z^{I}\tfrac{\pr}{\pr z^{I}}$ is the usual Euler field. From $\tfrac{\pr}{\pr z^{i}} = t^{-1}\tfrac{\pr}{\pr x^{i}}$ and $\tfrac{\pr}{\pr z^{n+1}} = \tfrac{\pr}{\pr t} - t^{-1}x^{p}\tfrac{\pr}{\pr x^{p}}$, it follows that the $\al$-horizontal lift of $\tfrac{\pr}{\pr x^{i}}$ is $z^{n+1}\tfrac{\pr}{\pr z^{i}}$. The connection $D$ induced on the base by $\hnabla$ and $\al$ is the flat connection with respect to which $\tfrac{\pr}{\pr x^{i}}$ is a parallel local frame. For a function $u$ on the base $M$ there holds
\begin{align}\label{fhddu}
\hnabla \rho^{\ast}(du) = \rho^{\ast}(Ddu) - du \tensor d\log t - d\log t \tensor du.
\end{align}
Define $F = t^{2}\rho^{\ast}(u)$. The function $u$ is related to $F$ in that $u^{2}|dx^{1}\wedge \dots \wedge dx^{n}|^{-2/(n+1)}$ is the expression in local coordinates of the density on $M$, viewed as a section of a line bundle associated with the principal bundle $\rho:\F \to M$, corresponding with the positive homogeneity $2$ function $F$, viewed as an equivariant function on the total space of $\F$. %removed ..
Let $\ga = -(1/2)d\log u$ and define 
\begin{align}
\be = \al - \rho^{\ast}(\ga) = d\log t + \tfrac{1}{2}\rho^{\ast}(d\log u) = \tfrac{1}{2}F^{-1}d F. 
\end{align}
When $\Sigma = \projp(F^{-1}(0)) = u^{-1}(0)$ is a submanifold, $\be$ is a principal $\reap$-connection over the connected component $M^{+} = \{[z] \in M: F(z) > 0\}$ of $M \setminus \Sigma$.
The connection $\nabla$ induced on $M^{+}$ by $\hnabla$ and $\be$ equals $D + 2\ga_{(i}\delta_{j)}\,^{k}$ and the $\be$-horizontal lift of $\tfrac{\pr}{\pr x^{i}}$ is 
\begin{align}
z^{n+1}\tfrac{\pr}{\pr z^{i}} + \rho^{\ast}\left(\ga(\tfrac{\pr}{\pr x^{i}})\right)\eul = z^{n+1}\tfrac{\pr}{\pr z^{i}} - \tfrac{1}{2} \rho^{\ast}(u^{-1}\tfrac{\pr u}{\pr x^{i}})\eul.
\end{align}
Because $D$ is flat, by \eqref{projvary}, the projective Schouten tensor $P$ of $\nabla$ is
\begin{align}\label{fpu}
P = D\ga - \ga \tensor \ga = - \tfrac{1}{2}u^{-1}Ddu + \tfrac{1}{4}d\log u \tensor d\log u = -|u|^{-1/2}Dd|u|^{1/2}.
\end{align} 
The tensors $H = \tfrac{1}{2}\hnabla dF$ and $G = \hnabla \be + 2\be \tensor \be$ are related by $H =  FG$. Straightforward computation using \eqref{fhddu} and \eqref{fpu} shows
\begin{align}\label{fpu2}
\begin{split}
G & = F^{-1}H  = \al \tensor \al - \rho^{\ast}(\ga)\tensor \al - \al \tensor \rho^{\ast}(\al)  + \tfrac{1}{2}\rho^{\ast}(u^{-1}Ddu)\\
& =\be \tensor \be - \rho^{\ast}(\ga)\tensor \rho^{\ast}(\ga) + \tfrac{1}{2}\rho^{\ast}(u^{-1}Ddu)= \be \tensor \be - \rho^{\ast}(P).
\end{split}
\end{align}
Consider the volume forms $\mu = dx^{1}\wedge \dots \wedge dx^{n}$ on $M^{+}$ and $\Psi = d\log t\wedge \rho^{\ast}(\mu) = \be \wedge \rho^{\ast}(\mu)$ on $\F$.  From \eqref{fpu} and \eqref{fpu2} there follows
\begin{align}
\begin{split}
\tfrac{\det \hnabla dF}{\Psi^{\tensor 2}} &= 2^{n+1}F^{n+1}\tfrac{\det G}{\Psi^{\tensor 2}} = (-1)^{n}2^{n+1}t^{2(n+1)}\rho^{\ast}\left(u^{n+1}\tfrac{\det P}{\mu^{\tensor 2}}\right)
\\&
 = 2^{n+1}t^{2(n+1)}\rho^{\ast}\left((|u|^{1/2})^{n+2}\tfrac{\det Dd|u|^{1/2}}{\mu^{\tensor 2}}\right).
\end{split}
\end{align}
In particular, there is a nonzero constant $\ka$ such that 
\begin{align}\label{affinespheres}
&\det \hnabla dF = \ka \Psi^{\tensor 2},& &\text{if and only if} &&\det Dd|u|^{1/2} = \ka (|u|^{1/2})^{-(n+2)}\mu^{\tensor 2}. 
\end{align}
In this case the level sets of $F$ are affine spheres \cite{Fox-schwarz, Loftin-affinekahler, Loftin-survey}. The solution of these equations on properly convex cones or domains under appropriate boundary/growth conditions is due to Cheng and Yau \cite{Cheng-Yau-mongeampere, Cheng-Yau-affinehyperspheresI, Cheng-Yau-realmongeampere}. Klartag \cite{Klartag-elliptic} has shown how to extend this picture to obtain incomplete affine spheres that are elliptic rather than hyperbolic. (The absolute values can be removed from \eqref{affinespheres} at the cost of distracting fussing about signs.) There seems to be no systematic study of \eqref{affinespheres} in the absence of convexity, though model solutions can be constructed from relative invariants of irreducible prehomogeneous vector spaces \cite{Fox-autoiso}.

For the simplest concrete example, let $\ep \in \{\pm 1\}$ and let $u(x) = 1 +\ep x^{i}x^{j}g_{ij}$, where $g_{ij}$ is a $D$-parallel metric on the, so that $F(z) = (z^{n+1})^{2} + \ep z^{i}z^{j}\rho^{\ast}(g)_{ij} = t^{2}(1 +\ep x^{i}x^{j}g_{ij})$. Since $F$ is a quadratic polynomial, $\hnabla H  = 0$. If $\ep = -1$, the level set $F^{-1}(0)$ is the null cone of the Minkowski metric, and the submanifold $\Sigma = \projp(F^{-1}(0))$ is an $(n-1)$ sphere identified in the coordinates $x$ on the base as the unit Euclidean sphere in $\rea^{n}$, whereas if $\ep = 1$ then $F^{-1}(0)$ is empty. Over the connected component $M^{+}$ the one-form $\be = \tfrac{1}{2}F^{-1}dF = d\log{t} +\ep \tfrac{x^{a}g_{ab}dx^{b}}{1 + \ep |x|^{2}}$ is a principal $\reap$-connection. The $\be$-horizontal lifts of the standard vector fields $\tfrac{\pr}{\pr x^{i}}$ are
\begin{align}
\widehat{\tfrac{\pr}{\pr x^{i}}} = \tfrac{\pr}{\pr x^{i}} -\ep \tfrac{x^{a}g_{ia}}{1 +\ep |x|^{2}}\eul = z^{n+1}\tfrac{\pr}{\pr z^{i}} -\ep \tfrac{z^{n+1}z^{a}g_{ia}}{F}\eul.
\end{align}
Let $\nabla$ be the unique symmetric representative of the standard flat projective structure on $\projp(\ste)$ inducing $\be$ and let $P$ be its projective Schouten tensor. Straightforward computation using the formulas just given shows
\begin{align}
&\nabla_{\pxi}\pxj = -\ep \tfrac{x^{a}g_{ia}}{1 + \ep |x|^{2}}\pxj -\ep \tfrac{x^{b}g_{jb}}{1 +\ep |x|^{2}}\pxi,&
&P(\pxi, \pxj) =  -\tfrac{\ep}{1 +\ep |x|^{2}}\left(g_{ij} -\ep \tfrac{x^{a}x^{b}g_{ia}g_{jb}}{1 +\ep |x|^{2}}\right).
\end{align}
Further computation shows that (whatever is $\ep$),  $H = F\left(\beta \tensor \beta - \rho^{\ast}(P)\right) = F(\hnabla \be + 2\be \tensor \be)$, and that $\nabla P = 0$. Hence $\nabla$ is the Levi-Civita connection of the projectively flat constant sectional curvature $\ep$ metric $-\ep P$. Note that $\nabla$ is projectively equivalent to the flat Euclidean connection $D$ and that $\ep P(\pxi, \pxj) =  u^{-1}Ddu(\pxi, \pxj)$.

%%% 11/11/2022 Fixed following paragraph
Other choices of $u$ or $F$ solving \eqref{affinespheres} yield more interesting geometries. A more interesting example, for which $g_{ij}$ has indefinite signature is given by $u = v^{2}$ where
\begin{align}
v(x_{1}, x_{2}, x_{3}) = \left(x_{2}^{2}x_{3}^{2} + 18x_{1}x_{2}x_{3} - 4x_{1}x_{3}^{3} - 4x_{2}^{3} - 27x_{1}^{2} \right)^{1/4}
\end{align}
which solves $v^{-5}\det Dv = 81/16$. Its radial graph is a level set of the discriminant of a binary cubic form and is a homogeneous indefinite signature proper affine sphere. %However, here it is not the intention to study specific examples in detail. 
See \cite{Fox-autoiso, Fox-schwarz, Loftin-affinekahler, Loftin-survey, Loftin-Yau-Zaslow} for further examples and references. 
\end{example}

The picture of Example \ref{projflatexample} can be generalized by relaxing the conditions on the upstairs affine/homogeneous model comprising $\hnabla$, $\rad$, and the two-dimensional planes through the origin or on the downstairs projective/inhomogeneous model comprising $\en$ and its geodesics. Although related, the two approaches do not necessarily lead to exactly the same constructions or point of view. The author took the latter approach in \cite{Fox-ahs, Fox-2dahs, Fox-crm}. Here the first approach is taken, one point of departure being that flat radiant structures have already been studied extensively \cite{Choi-radiant, Fried-Goldman-Hirsch, Goldman-Hirsch}. Simply allowing the connection to have curvature is the most naive generalization. Another is to view a cone as a principal $\reat$ bundle and to consider instead principal $S^{1}$-bundles. 

To begin, Section \ref{radiantstructuresection} explores more general notions of compatibility between a connection $\nabla$ and a vector field $X$. Namely, $X$ is said to be \emph{dilatative} with respect to $\nabla$ if $\nabla_{i}X^{j} = f\delta_{i}\,^{j}$ for some smooth function $f$ (this generalizes the notion of a vector field concircular with respect to a Levi-Civita connection). It is shown that off the zero set of $f$, $\nabla$ and $X$ can be modified to yield a radiant structure. A more general notion of a projectively dilatative vector field is also considered. The conclusion is that if $X$ is projectively dilatative with respect to $\nabla$, then off the zero set of $X$ it is dilatative with respect to a connection projectively equivalent to $\nabla$. These results motivate an initial focus on radiant structures and this is what is done in the rest of the paper. Further exploration of the more general notions remains as a potentially interesting project.

Section \ref{radiantstructuresection} generalizes to general radiant structures basic results about flat radiant structures due to Fried, Goldman, and Hirsch \cite{Fried-Goldman-Hirsch, Goldman-Hirsch}.  For example, Lemmas \ref{radiantisolatedlemma} and \ref{paralleloneformlemma} show that a compact manifold supporting a radiant structure must have nonnegative Euler characteristic and admits no parallel volume form. A compact flat radiant manifold must have positive first Betti number. Since Theorem \ref{qaconetheorem} shows that the three sphere admits a radiant structure, this fails for non-flat radiant structures, but Lemma \ref{nosymmetriclemma} shows that a compact manifold with vanishing first Betti number admits no radiant structure with symmetric Ricci tensor. Benzecri \cite{Benzecri} showed that a compact surface admits a flat affine structure if and only if it has vanishing Euler characteristic. Both the torus and Klein bottle admit flat radiant affine structures \cite{Arrowsmith-Furness-locallysymmetric, Arrowsmith-Furness, Nagano-Yagi, Benoist-affine-tori}. Theorem \ref{2dtorustheorem} extends these results to non-flat radiant structures, showing that a compact surface admits a radiant structure if and only if it has Euler characteristic zero. This is deduced as follows. Because by Lemma \ref{radiantisolatedlemma} the Euler characteristic must be nonnegative, it suffices to rule out positive Euler characteristic. Lemma \ref{2dsymmetriclemma} shows that a radiant structure on any surface has symmetric Ricci tensor and Lemma \ref{riccisymmetriclemma} shows that on a manifold with vanishing first Betti number a torsion-free affine connection with symmetric Ricci tensor admits a parallel volume density. As Lemma \ref{paralleloneformlemma} precludes a parallel volume form for a radiant structure, this suffices. Note that a \emph{volume density} means a nonvanishing $1$-density, which is a different object from a volume form and makes sense on a nonorientable manifold. 
The rest of Section \ref{radiantstructuresection} records calculations regarding the curvature of radiant structures that are used throughout the remainder of the paper. 

As remarked, the situation in dimension three and higher is different. There are manifolds that admit no flat radiant structure that admit radiant structures with purely antisymmetric Ricci tensor.

The volume form $\Psi$ on $\ste$ in Example \ref{projflatexample} is an additional bit of structure that plays a role in particular with respect to the formulation of the Monge-Ampere equations \eqref{affinespheres} solved by $F$ and $u$. It is compatible with the radiant structure in that it is preserved by the affine connection $\hnabla$ and has homogeneity $\dim \ste$ with respect to the radiant vector field $\rad$. Both of these conditions generalize in an obvious way to the context of not necessarily flat radiant structures to yield a notion of equiaffine radiant structure. The generalization of the compatibility of $\rad$ and $\Psi$ leads to the notion of Euler structure, a vector field $\rad$ and volume form $\Psi$ on an $n$-manifold $M$ such that $\lie_{\rad}\Psi = n \Psi$. Section \ref{eulersection} explores this notion and its relation to the notion of an Euler-like vector field studied in \cite{Bursztyn-Lima-Meinrenken, HajSaeediSadegh-Higson, Meinrenken-eulerlike}. Although the two notions are independent, a radiant vector field is Euler-like. 

Basic examples of Euler manifolds, needed later in Section \ref{thomassection}, are the total spaces of real line bundles whose local sections transform like densities of weight $-1/(n+1)$. A pseudo-hyperplane line bundle is a real line bundle $\emf$ equipped with an isomorphism $\emf^{2n+2} \simeq (\Det TM)^{\tensor 2}$; its dual is called a pseudo-tautological line bundle. Up to isomorphism there is a unique such bundle with a given first Stiefel-Whitney class. The total space of any such line bundle carries in a canonical way a pseudo-Euler structure (\emph{pseudo} means the volume form is replaced by a volume density; see section \ref{eulersection} for the precise definition). Such a pseudo-tautological line bundle is a real analogue of a square-root of the canonical bundle of a complex manifold, and generalizing the construction of the Blaschke metric of a properly convex flat real projective structure from the Cheng-Yau construction of affine spheres requires considering projectively invariant Monge-Ampère equations on sections of such line bundles; this is discussed in the author's previous \cite{Fox-ahs, Fox-2dahs, Fox-crm} and the author intends to discuss it further in future work formulating a projective analogue of the Calabi conjecture. See also \cite[Section $4$]{Cap-Mettler} where these notions are discussed in the more general framework of parabolic geometries associated with a $|1|$-grading.

A radiant Euler structure is a radiant structure equipped with volume form $\Psi$ that with $\rad$ constitutes an Euler structure. It is \emph{equiaffine} if $\Psi$ is parallel. Section \ref{eulersection} gives the basic properties of such structures. 

Section \ref{conelikesection} contains the main technical results about conelike radiant structures. The main conclusion of Theorem \ref{conenormalizationtheorem} is that given a conelike radiant structure $(\nabla, \rad)$ with nonsingular radiant vector field $\rad$ and for which the Ricci curvature $\ric$ satisfies that the one-form $\er = \ric(\rad, \dum)$ vanishes, there is a unique $\rad$-invariant connection that with $\rad$ constitutes a conelike radiant connection having the same planelike surfaces as $(\nabla, \rad)$, inducing on $|\Det \ctm|$ the same connection as that induced by $\nabla$, and having antisymmetric Ricci tensor. (Whether the vanishing of $\er$ is merely a technical limitation of the proof or is really necessary is not resolved.)

From the point of view of differential geometry, the flat projective structure on $\proj(\ste)$ is determined by its projective geodesics, which are the images in $\proj(\ste)$ of planes through the origin in $\ste$. The classical construction of T.~Y. Thomas \cite{Thomas-affine, Thomas-book, Veblen-Thomas-paths} extends this picture to an $n$-manifold $M$ equipped with a projective structure $\en$. It associates with $(M, \en)$ a principal $\reat$-bundle $\rho:\F \to M$ equipped with a Ricci-flat affine connection $\hnabla$ and a $\hnabla$-parallel volume form $\Psi$ satisfying certain certain conditions expressing compatibility between $\hnabla$ and the principal bundle structure and the projective structure on the base. 
The Thomas construction is essentially local in nature and is usually built following a bottom up perspective, associating with a representative $\nabla \in \en$ a lifted connection $\hnabla$ satisfying certain conditions, the most of important of which is Ricci-flatness, that a posteriori is seen to be independent of the choice of $\nabla$. The affine connection $\hnabla$ forms with (a multiple of) the vector field $\rad$ generating the dilations in the fibers of $\F$ an example of a conelike radiant structure. 

Specializing the results of Section \ref{conelikesection} to the setting of the total space of a principal $G$-bundle over $M$ with $\dim G = 1$ yields an extension of the Thomas construction that is a generalization in several respects. First, the principal bundle fibering over $M$ need not be topologically trivial. An additional bit of data is the choice of a principal connection on this bundle, which no longer need be linked a priori to the projective structure $\en$. Rather the basic data is an \emph{extended projective structure} $[\nabla, \be]$ comprising an orbit of pairs $(\nabla, \be)$ where $\nabla$ is a connection and $\be$ is a principal connection under the action $\ga\cdot(\nabla, \be) =( \nabla + 2\ga_{(i}\delta_{j)}\,^{k},\be - \rho^{\ast}(\ga))$. This is explained and motivated in Section \ref{extendedprojectivesection}. It would be interesting to explore further couplings of a projective structure to a principal connection with higher-dimensional or nonabelian structure group, or to some further data on an associated vector bundle. In the context of flat bundles on affine manifolds, something along these lines has been studied in \cite{Biswas-Loftin-Stemmler}.

The Thomas connection is closely related to the tractor connection associated with the projective structure in the parabolic geometry formalism \cite{Cap-Slovak-book, Cap-Gover} and from either of these essentially equivalent structures there can be constructed the regular normal Cartan connection associated with the the underlying projective structure \cite{Kobayashi-Nagano}. 
Projective structures are a particular kind of parabolic geometry. General theorems associate with a parabolic geometry satisfying certain conditions a regular, normal Cartan connection. The adjectives regular and normal refer to certain normalizations on the curvature of the associated Cartan connection that in the context of projective connections amount in essence to the requirement that the classical Thomas connection be Ricci-flat. The general theory interprets these normalizations in terms of the harmonic Hodge theory due to Kostant, and these normalizations are satisfying from the point of view of representation theory. However, their geometric significance is in general somewhat opaque. In section \ref{extendedprojectivesection} it is shown that the geometric significance of the Ricci-flat normalization for projective structures is that it links parametrizations of projective geodesics with parametrizations of their lifts. Precisely, without this condition an unnormalized Thomas-like connection determines a projective structure on each projective geodesic, and the Ricci-flat normalization forces this induced projective structure to be the standard one.

Section \ref{thomassection} shows how to recover the classical Thomas connection from the formalism described here. It extends the classical construction to associate with a projective structure on $M$ a Thomas connection on the total space of any pseudo-tautological line bundle over $M$. This clarifies the global and functorial properties of the classical construction.

To fully develop the picture corresponding to Example \ref{projflatexample}, there remains to consider the relation of metric geometry with a conelike radiant structure. Section \ref{codazzisection} treats metrics compatible with a radiant structure and what it means for such structures to be conelike. Given a radiant structure $(\nabla, \rad)$, a metric $h$ can be compatible with each of $\nabla$ and $\rad$. Following \cite{Rodnianski-Shlapentokh-Rothman}, the metric is \emph{self-similar} if $\lie_{\rad}h = 2h$. A \emph{radiant Hessian structure} is a radiant structure $(\nabla, \rad)$ and a self-similar metric $h$ such that $\nabla h(\rad, \dum) = h$. In this case $h$ is the Hessian of $v = \tfrac{1}{2}h(\rad, \rad)$ and Lemma \ref{radcodazzilemma} shows that around any point where $v$ is not zero there is a neighborhood on which $h$ is isometric to a warped product. A triple $(\nabla, \rad, h)$ such that $(\nabla, \rad)$ is radiant and $\nabla_{[i}h_{j]k} = 0$ is a \emph{radiant statistical structure}. The metric of a radiant statistical structure is necessarily self-similar, so such a structure is a radiant Hessian structure. The usual involutive notion of conjugacy (also called duality) of statistical structures extends to radiant statistical structures. 
%There is an involutive notion of conjugacy of radiant statistical structures such 
The conjugate radiant statistical structure $(\bnabla, \rad, h)$ has the same underlying self-similar metric structure $(h, \rad)$. A radiant statistical structure with nonsingular radiant vector field and its conjugate are both conelike if and only if the connection of each is $\rad$-invariant. 
A radiant statistical structure is \emph{special} if $\nabla |\det h| = 0$. Theorem \ref{conjugatethomastheorem} shows that there is a metric $g$ that constitutes with the Thomas connection of a projective structure a special radiant statistical structure if and only if the $g$-conjugate connection of the Thomas connection is also the Thomas connection of a projective structure. For properly convex flat projective structures this recovers the duality of affine spheres, and so it shows how to extend this duality to the nonconvex and nonflat settings.

It is useful to generalize the notion of radiant statistical structure by replacing the statistical condition $\nabla_{[i}g_{j]k} = 0$ with the condition $\nabla_{[i}g_{j]k} =\chi_{[i}h_{j]k}$ for some one-form $\chi_{i}$. Although this condition behaves well with respect to conformal rescaling of the metric $g$, it does not link $\nabla$ with the density $\det g$ in any way. A pair $(\nabla, [g])$ comprising a conformal structure $[g]$ and a torsion-free affine connection $\nabla$ is an \emph{AH structure} if for each $g \in [g]$ there is a one-form $\chi_{i}$ such that $\nabla_{[i}g_{j]k} =\chi_{[i}h_{j]k}$ and there holds the \emph{alignment} condition that $g^{pq}\nabla_{i}g_{pq} = ng^{pq}\nabla_{p}g_{iq}$ (this does not depend on the choice of $g \in [g]$). The basic properties of AH structures generalize those of statistical structures, with suitable modifications to accommodate $\chi$. Although these are all given in the author's \cite{Fox-ahs, Fox-2dahs, Fox-crm, Fox-schwarz}, they are developed from scratch in Section \ref{einsteinstatisticalsection} in a manner more streamlined and hopefully more readable than in \cite{Fox-ahs, Fox-2dahs, Fox-crm}.  AH structures are \emph{locally statistical structures} in a sense made precise by Lemma \ref{locallycodazzilemma}. 

The conclusion of Lemma \ref{liftedmetriclemma} illustrates the need for the notion of AH structure. It shows that associated with a special statistical structure and a connection $\be$ on a principal bundle with one-dimensional structure group compatible in an appropriate sense with the metric of the statistical structure there is on the total space of the principal bundle a pair of connections $\hD$ and a metric $G_{IJ}$ satisfying $\hD_{[I}G_{J]K} = \chi_{[I}G_{J]k}$, where $\chi$ is a constant multiple of $\be$, and such that $\hD$ has Ricci tensor equal to a multiple of the metric. More detailed background and motivation for Lemma \ref{liftedmetriclemma} and the other results of Section \ref{ewsection} is given at the beginning of that section.

More fundamentally, the usual Einstein-Weyl equations can be extended to AH structures as well and Theorem \ref{liftedahtheorem} shows that in the setting of Lemma \ref{liftedmetriclemma}, with suitable additional geometrica hypotheses, there result Einstein AH structures on $N$. Section \ref{einsteinstatisticalsection} gives a self-contained introduction to the Einstein equations for special statistical structures and AH structures. The precise notion is Definition \ref{einsteinahdefinition} which is a reformulation (with slightly different terminology) of the Einstein equations for AH structures defined and studied previously in \cite{Fox-ahs, Fox-2dahs, Fox-crm, Fox-ricweyl, Fox-cubicpoly, Fox-schwarz}. A special case of the definition yields a notion of Einstein equations for special statistical structures, given in Definition \ref{einsteinspecialstatisticaldefinition}, that is of independent interest. The motivating observation is that the statistical structure induced on a cooriented nondegenerate hypersurface in flat affine space is Einstein if and only if the hypersurface is an affine sphere. For a flat ambient connection the generalization of Example \ref{projflatexample} for a convex cone is given by the Blaschke metric on the associated affine sphere. The special statistical structure induced on an affine hypersurface is Einstein in the sense defined here if and only if the hypersurface is an affine sphere, so the Einstein equations for special statistical structures, or more generally AH structures, can be regarded as a generalization of the equations for affine spheres and correspondingly as the appropriate equations for generalizing Example \ref{projflatexample}.

As explained in Example \ref{ewexample}, a special case of Theorem \ref{liftedahtheorem} recovers a construction associating with a Kähler-Einstein metric of positive scalar curvature an Einstein-Weyl structure due to \cite{Pedersen-Swann-submersions}. It also yields the following, detailed in Example \ref{convexprojectiveexample} and summarized in Theorem \ref{convexprojectivetheorem}. Given a properly convex flat real projective structure $\en$ on an oriented surface $M$ of genus $g \geq 2$, there is a distinguished representative $\nabla \in \en$ with symmetric Ricci tensor $R_{ij}$ such that $g_{ij} = -R_{ij}$ is positive definite and satisfies $\nabla_{[i}g_{j]k} = 0$ and $\nabla_{i}|\det g| = 0$. Namely, $g_{ij}$ and $\nabla$ are induced by the Blaschke metric and induced affine connection on the hyperbolic sphere asymptotic to the cone over the universal cover of $M$. See \cite{Fox-2dahs} for details. For the complex structure $J$ on $M$ determined by $g_{ij}$ and the given orientation, the two-form $\om = \tfrac{2\pi e}{\vol_{g}(M)^{2}}g(J(\dum), \dum)$ represents the first Chern class of a principal circle bundle $\rho:N \to M$ equipped with a principal connection $\be$ having curvature $d\be = \rho^{\ast}(\om)$ provided $|e| \leq |\chi(M)|$. 
Example \ref{convexprojectiveexample} shows that in this case there are on $N$ a Lorentzian metric $G_{IJ}$ and a one-parameter family of connections $\hDs$ satisfying $\hDs_{[I}G_{J]K} = -2s\be_{[I}G_{J]K}$, $\hDs_{I}G_{JK} = - 2s\be_{I}G_{JK} + \rho^{\ast}(\nabla g)_{IJK}$, $G^{PQ}\hDs_{I}G_{PQ} - 3G^{PQ}\hDs_{P}G_{QI} = 0$, and $\ric(\hDs)_{IJ} =  - \tfrac{3}{2}s\rho^{\ast}(\om)_{IJ} - \tfrac{2\pi^{2}e^{2}}{\vol_{g}(M)^{2}}G_{IJ}$. Moreover, $(\hDs, [G])$ is an Einstein AH structure in the sense of Definition \ref{einsteinahdefinition}. In the special case that $\nabla$ is the Levi-Civita connection of the hyperbolic metric $g$, then $(\hDs, [G])$ is a Lorentzian Einstein-Weyl structure. %In general it should be regarded as en Einstein-like structure in some sense that needs to be clarified.

Section \ref{leftinvariantsection} constructs left-invariant conelike radiant structures on Lie groups. By Theorem \ref{killingnonnulltheorem}, with the adjoint orbit of a Killing anisotropic element of the Lie algebra of a Lie group there is a conelike radiant structure determined uniquely up to automorphism. This has interest independent of the main thrust of the article, as it associates a canonical geometric structure with every Killing anisotropic adjoint orbit. It should be contrasted with the theorem of Helmstetter \cite{Helmstetter} showing that a semisimple Lie group carries no flat affine structure (so in particular no flat radiant structure).

In section \ref{3dsection} the three-dimensional unimodular Lie groups are given a uniform description in terms of real Clifford algebras and the conelike radiant structures on them obtained by applying the results of the preceding section are described in detail. The resulting radiant affine connections have purely antisymmetric Ricci tensor, so are in some sense as close to flat as is possible.
As an example, it is explained how to construct from them the well-known Einstein-Weyl structures on the Berger spheres and their Lorentzian analogues on $SL(2, \rea)$. This gives a novel approach to constructing these Einstein-Weyl structures that grounds them in the classical affine/projective dichotomy and situates them in the landscape of statistical manifolds.

\subsection*{Notational conventions}
Smooth means infinitely differentiable and all manifolds, bundles, sections, etc. are smooth unless otherwise indicated. Throughout $M$ is a connected, smooth $n$-manifold. The notation $Tf(u)(X)$ indicates the image of a vector $X \in T_{u}M$ tangent to $M$ at $u \in M$ under the differential of a smooth map $f$ with domain containing $u$. 

Given a free $\reat$ action $R:E \times \reat \to E$ by diffeomorphisms on a smooth manifold $E$, a tensor $S$ on $E$ is \emph{homogeneous} (resp. \emph{positively homogeneous}) of degree $\la \in \rea$ if $R_{r}^{\ast}(S) = r^{\la}S$ for all $r \in \reat$ (resp. for all $r \in \reap$). 

Where convenient, tensors are indicated using the abstract index notation, so that, for instance, $a_{ij}$ indicates a covariant two tensor. The indices are lables indicating tensor valencies, types, and symmetries and do not indicate any choice of frame, although were a frame fixed they could be interpreted as indicating components with respect to it. Enclosure of indices in square brackets or parentheses indicates complete antisymmetrization or symmetrization over the enclosed indices. For example, $a^{ij}= a^{(ij)} + a^{[ij]}$ indicates the decomposition of a contravariant two-tensor into its symmetric and antisymmetric parts. 
The exterior product is defined consistently with the convention that $X \wedge Y = X \tensor Y - Y \tensor X$ for vector fields $X$ and $Y$, so $(X \wedge Y)^{ij} = 2X^{[i}Y^{j]}$.
Inclusion of an index between vertical bars $|\,|$ indicates its omission from an indicated symmetrization. For example $2a_{[i|jk|l]} = a_{ijkl} - a_{ljki}$. The summation convention is always in effect in the following form: indices are in either \emph{up} position or \emph{down} and the trace pairing of the corresponding tensor factors is indicated by repeating a label appearing as both an up index and a down index. For example $a_{pij}\,^{p}$ indicates the pairing of the contravariant part with the first covariant factor. Since polynomials on the vector space $\ste$ are tautologically identified with symmetric tensors on the dual vector space $\ste^{\ast}$, the index $i$ in $\tfrac{\pr}{\pr y_{i}}$ has to be regarded as an \emph{up} index. 
Although in common use by relativists and conformal geometers, these conventions sometimes bother those who dislike coordinate expressions, so it bears repeating that abstract indices \emph{do not} refer to coordinates and are simply a notationally compact way of indicating invariantly defined objects. In the same spirit, those who use abstract indices sometimes insist that every tensor be labeled with indices. Here indices are omitted when they communicate little. So a metric is sometimes written $g_{ij}$, sometimes simply $g$ (this is no different than writing sometimes $g(\dum, \dum)$ and sometimes simply $g$) and $g(X, Y)$ and $g_{ij}X^{i}Y^{j}$ are notational \emph{synonyms}.

A line-bundle valued tensor is said to be \emph{weighted}. A nondegenerate, possibly weighted, covariant two-tensor $h_{ij}$ determines a contravariant two-tensor $h^{ij}$ of complementary weight (valued in the dual line bundle) defined uniquely by $h^{ip}h_{jp} = \delta_{j}\,^{i}$, in which here, as always, $\delta_{i}\,^{j}$ is the tautological $\binom{1}{1}$-tensor determined by the pairing of vectors with covectors. 
The horizontal position of an index is maintained when it is raised or lowered using $h^{ij}$ or $h_{ij}$.

\section{Background on projective structures}
The basic facts about projective structures go back to H. Weyl, E. Cartan, and T.~Y. Thomas and are well assimilated into an extensive literature although it seems no one source collects all the relevant information. What follows summarizes what is needed here. More detail from a similar perspective is available in \cite{Gover-Neusser-Willse}.
Terminology is abused in the traditional way (e.g. \cite[section III.$3$]{Kobayashi-Nomizu}) and \emph{affine connection} is used to mean a linear connection, or the covariant derivative it induces on the tangent bundle or any other tensor bundle. 

A \emph{path geometry} on a smooth manifold $M$ is a one-dimensional foliation $W \subset T\proj(TM)$ of the projectivized tangent bundle $\rho:\proj(TM) \to M$ that is transverse to the vertical subbundle $\ker T \rho$ and tangent to the Cartan subbundle $E \subset T\proj(TM)$ defined by $E_{\ell} = \{X \in T_{\ell}\proj(TM): T\rho(\ell)X \in \ell\}$. Its \emph{paths} are the images in $M$ of the maximal integral submanifolds of $W$. A path geometry is a \emph{projective structure} if there is on $M$ a torsion-free affine connection $\nabla$ such that the images of the geodesics of $\nabla$ are contained in its paths.
Two affine connections are \emph{projectively equivalent} if the image of every geodesic of one is contained in the image of a geodesic of the other. Two torsion-free affine connections $\bnabla$ and $\nabla$ are projectively equivalent if and only if there is a one-form $\si_{i}$ such that $\bnabla - \nabla = 2\si_{(i}\delta_{j)}\,^{k}$. A projective structure is identified with the equivalence class $\en$ of projectively equivalent connections determining its paths. Here, a representative $\nabla \in \en$ is always assumed to be torsion-free. 

Let $\ste$ be a vector space with projectivization $\proj(\ste)$ and let $\gr(2, \ste)$ be the Grassmannian of two-dimensional subspaces. The flat model path geometry is given by the foliation of the flag manifold $\{([\ell, K] \in \proj(\ste)\times \gr(2, \ste): \ell \subset K\}$ by the fibers of the projection onto the second factor. The paths in $\proj(\ste)$ are the images in $\proj(\ste)$ of the two-dimensional subspaces of $\ste$. This path geometry is a projective structure, and a projective structure is \emph{flat} if it is locally equivalent to this model. In this case it can be described in terms of an atlas of charts modeled on open domains in $\proj(\ste)$ with transition functions in $PGL(\ste)$, and a projective structure is flat if and only if it can be covered by coordinate charts in which the images of its geodesics are contained in straight lines. See \cite{Goldman-notes} for background on flat projective structures.

A projective structure determines and is determined by a regular normal Cartan connection that can be constructed via the Thomas connection \cite{Thomas-book, Bailey-Eastwood-Gover}, via the tractor formalism \cite{Cap-Gover}, working with jet bundles \cite{Kobayashi-Nagano}, or via the method of equivalence \cite{Grossman} and the claims about local flatness that follow can all be proved using this correspondence; see \cite{Cap-Slovak-book} for a modern approach in a more general setting.

The curvature $R_{ijk}\,^{l}$ and torsion $\tau_{ij}\,^{k}$ of an affine connection $\nabla$ are defined by $2\nabla_{[i}\nabla_{j]}X^{k} = R_{ijp}\,^{k}X^{p} - \tau_{ij}\,^{p}\nabla_{p}X^{k}$ for a vector field $X \in \Ga(TM)$. The \emph{Ricci curvature} of $\nabla$ is $R_{ij} = R_{pij}\,^{p}$. The curvature of the covariant derivative induced on $\Det \ctm$ by a torsion-free affine connection $\nabla$ is $-R_{ijp}\,^{p} = 2R_{[ij]}$, and the \emph{projective Weyl} and \emph{projective Cotton} tensors $B_{ijk}\,^{l}$ and $C_{ijk}$ of $\nabla$ are defined by 
\begin{align}\label{bijkl}
&B_{ijk}\,^{l} = R_{ijk}\,^{l} + 2\delta_{[i}\,^{l}P_{j]k}  - 2\delta_{k}\,^{l}P_{[ij]},& &C_{ijk} = 2\nabla_{[i}P_{j]k},
\end{align}
in which the \emph{projective Schouten tensor} is defined by
\begin{align}\label{projectiveschouten}
P_{ij}= \tfrac{1}{1-n}R_{(ij)} - \tfrac{1}{n+1}R_{[ij]} = \tfrac{1}{1-n}\left(R_{ij} - \tfrac{2}{n+1}R_{[ij]} \right).
\end{align}
Tracing the differential Bianchi identity yields $\nabla_{p}R_{ijk}\,^{p} = 2\nabla_{[i}R_{j]k}$ and so the algebraic Bianchi identity yields $0 = \nabla_{p}R_{[ijk]}\,^{p} = 2\nabla_{[i}R_{jk]} = 2(n+1)\nabla_{[i}P_{jk]} = (n+1)C_{[ijk]}$, showing that $C_{[ijk]} = 0$. In a similar fashion the trace-free parts of the Bianchi identities for $\nabla$ yield:
\begin{align}\label{projectivebianchi}
&B_{[ijk]}\,^{l} = 0,& & C_{[ijk]} = 0,&
&\nabla_{[i}B_{jk]l}\,^{p} = -\delta_{[i}\,^{p}C_{jk]l},& &\nabla_{p}B_{ijk}\,^{p} = (2-n)C_{ijk}.
\end{align}
The Ricci identity and the algebraic Bianchi identity yield $\nabla_{[i}C_{jk]l} = 2\nabla_{[i}\nabla_{j}P_{k]l} = - R_{[ij|l|}\,^{p}P_{k]p} =  - B_{[ij|l|}\,^{p}P_{k]p}$, or $\nabla_{[i}C_{jk]l} + P_{[i|p|}B_{jk]l}\,^{p} = 0$. 
The corresponding tensors associated with $\tnabla = \nabla + 2\si_{(i}\delta_{j)}\,^{k}$ are indicated $\tilde{R}_{ijk}\,^{l}$, $\tilde{P}_{ij}$, etc., and satisfy
\begin{align}\label{projvary}
\begin{split}
\tilde{R}_{ijk}\,^{l} & = R_{ijk}\,^{l} + (\nabla_{i}\si_{k} - \si_{i}\si_{k})\delta_{j}\,^{l} - (\nabla_{j}\si_{k} - \si_{j}\si_{k})\delta_{i}\,^{l} + d\si_{ij}\delta_{k}\,^{l},\\
\tilde{R}_{ij} & = R_{ij} + (1-n)(\nabla_{i}\si_{j} - \si_{i}\si_{j}) - d\si_{ij},\\
\tilde{P}_{ij} & = P_{ij} + \nabla_{i}\si_{j} - \si_{i}\si_{j},\qquad
\tilde{C}_{ijk}  = C_{ijk} - B_{ijk}\,^{p}\si_{p}.
\end{split}
\end{align}
By \eqref{projvary}, the projective Weyl tensor satisfies $\tilde{B}_{ijk}\,^{l} = B_{ijk}\,^{l}$, so does not depend on the choice of $\nabla \in \en$. When $n = 2$ the projective Weyl tensor is identically zero and, by \eqref{projvary}, the projective Cotton tensor does not depend on the choice of $\nabla \in \en$. 
A projective structure is flat if and only if its projective Cotton and Weyl tensors vanish. By \eqref{projectivebianchi}, when $n > 2$ this is the case if and only if the projective Weyl tensor vanishes. 

The remainder of this section details the well known fact that a projective geodesic of a projective structure acquires in a canonical manner a projective structure \cite{HarEl, Kobayashi-projectivelyinvariantdistances, Wu-projectivehyperbolicity}. Although this material is well known, it is hard to find it written in the form needed here, so precise statements are given (although proofs are only indicated). The key point, that is needed in the proof of Lemma \ref{projectiveparametrizationlemma}, is the definition of the projective parametrization of a projective geodesic. 
This uses some elementary facts relating initial value problems for second order linear ordinary differential equations, Ricatti differential equations, and the Schwarzian derivative. 
%For the projective parametrization of geodesics see \cite{HarEl, Kobayashi-projectivelyinvariantdistances, Wu-projectivehyperbolicity}. 
%One-dimensional projective structures were classified by N. Kuiper \cite{Kuiper}; see also \cite{Goldman-thesis}.

Let $M$ be equipped with a projective structure $\en$. If $\tnabla, \nabla \in \en$ and $f \neq 0$, then $fX \wedge \tnabla_{fX}(fX) = f^{3}(X \wedge \nabla_{X}X)$ for any local vector field $X$, so the condition $X \wedge \nabla_{X}X = 0$ is preserved if $\nabla$ is varied within $\en$ or $X$ is rescaled. A $C^{1}$ immersed one-dimensional submanifold $C \subset M$ is a \emph{projective geodesic} of $\en$ if $X \wedge \nabla_{X}X = 0$ for any local vector field tangent to $C$ along $C$ and any $\nabla \in \en$. Equivalently, $\dot{\ga}\wedge \nabla_{d/dt}\dot{\ga} = 0$ for any local $C^{2}$ parametrization $\ga:(-\ep, \ep) \to C$ and any $\nabla \in \en$. Locally any projective geodesic can be parametrized as a geodesic of a given $\nabla \in \en$, but in general such a parametrization is not possible globally. The image of a geodesic of any $\nabla \in \en$ is called a \emph{path} or \emph{projective geodesic} of $\en$. 

The Schwarzian derivative $S(f)$ of a $C^{3}$ diffeomorphism $f:I \to J$ between subintervals $I$ and $J$ of $\rea$ is defined by $S(f) = \dddot{f}/\dot{f} - \tfrac{3}{2}\left(\ddot{f}/\dot{f}\right)^{2}$, where dots denote derivatives.
If $\phi$ is a $C^{3}$ diffeomorphism, then, where the composition is defined, there holds the cocycle identity
\begin{align}\label{cocycle}
S(f \circ \phi) = \dot{\phi}^{2}S(f)\circ \phi + S(\phi),
\end{align}
which in part reflects that $S(f)(t)$ is the coordinate expression of the quadratic differential $S(f)(t)dt^{2}$.
	
%For a linear fractional transformation, $f(t) = \tfrac{at + b}{ct + d}$, $S(f) = 0$. The converse is true also, and, more generally, the solution of the equation $S(f) = 2r$ is determined uniquely up to postcomposition with a linear fractional transformation of the line. 

Lemma \ref{schwarzianivplemma} follows straightforwardly from the standard existence and regularity theorem for a first order linear system of ordinary differential equations \cite[Theorem 5.1 of Chapter $1$]{Coddington-Levinson}, so its proof is omitted.

\begin{lemma}\label{schwarzianivplemma}
Let $I \subset \rea$ be an open interval containing $0$ and let $r \in C^{k}(I)$ for some $k \geq 0$. For $a, c \in \rea$ and $b \in \reat$, there are an open interval $J \subset I$ containing $0$ and a unique $f \in C^{k+3}(J)$ solving the initial value problem
\begin{align}\label{schwarzian}
&S(f) = 2r,&  &f(0) = a,&&\dot{f}(0) = b \neq 0, &&\ddot{f}(0) = c,
\end{align}
and such that $\dot{f}$ does not vanish on $J$. Moreover:
\begin{enumerate}
\item\label{ffromx} The solution $f$ has the form $f = x_{1}/x_{2}$ where, for any $\la \in \reat$, $x_{1}, x_{2} \in C^{k+2}(J)$ are the uniquely determined solutions of the initial value problem
\begin{align}\label{ddotxr}
&\ddot{x} + rx = 0, & &x(0) = x_{0},& &\dot{x}(0) = y_{0},
\end{align}
satisfying $x_{1}(0)  = 2ab\la$, $\dot{x}_{1}(0) = (2b^{2} - ac)\la$, $x_{2}(0) = 2b\la$, and $\dot{x}_{2}(0) = -c\la$.
\item\label{xfromf} Given $f \in C^{k+3}(J)$ solving \eqref{schwarzian} and with $\dot{f}$ not vanishing on $J$, the functions $x_{1} = f|\dot{f}|^{-1/2}$ and $x_{2} = |\dot{f}|^{-1/2}$ solve \eqref{ddotxr} on $J$ with initial conditions  $x_{1}(0)  = 2ab\la$, $\dot{x}_{1}(0) = (2b^{2} - ac)\la$ and $x_{2}(0) = 2b\la$, $\dot{x}_{2}(0) = -c\la$, where $\la = |b|^{-1/2}b^{-1}$.
\item\label{uvfromxf} Given $f$ solving \eqref{schwarzian} and $x_{1} = f|\dot{f}|^{-1/2}$ and $x_{2} = |\dot{f}|^{-1/2}$, the functions $u = \dot{x}_{2}/x_{2} = -(1/2)(\ddot{f}/\dot{f})$ and $v = \dot{x}_{1}/x_{1} = u + \dot{f}/f$ are the unique solutions in $C^{k+1}(J)$ of the initial value problem 
\begin{align}\label{ricatti}
&\dot{w} + w^{2} + r = 0,& & w(0) = w_{0}
\end{align}
with initial conditions $u(0) = -c/(2b)$ and $v(0) = (2b^{2} - 2ac)/2ab =b/a -c/(2b)$.
\end{enumerate}
\end{lemma}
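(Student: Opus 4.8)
The plan is to reduce all three parts to the classical dictionary relating the Schwarzian equation $S(f) = 2r$, the Ricatti equation $\dot w + w^{2} + r = 0$, and the linear equation \eqref{ddotxr}, and then to transport existence, uniqueness, and regularity from the linear equation, where they are standard. The entire computational content lives in two substitution identities, and establishing these is the only place genuine calculation occurs. First, for any $C^{3}$ function $f$ with nonvanishing derivative, set $u = -\tfrac12(\ddot f/\dot f)$; then $\ddot f/\dot f = -2u$ and $\dddot f/\dot f = -2\dot u + 4u^{2}$, so that $S(f) = \dddot f/\dot f - \tfrac32(\ddot f/\dot f)^{2} = -2(\dot u + u^{2})$, whence $S(f) = 2r$ holds exactly when $u$ solves the Ricatti equation \eqref{ricatti}. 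Second, for a nonvanishing $C^{2}$ function $x$ the logarithmic derivative $w = \dot x/x$ satisfies $\dot w = \ddot x/x - (\dot x/x)^{2}$, so $\ddot x + rx = 0$ forces $\dot w + w^{2} + r = 0$, and conversely any $x$ with $\dot x/x = w$ solves the linear equation.

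For existence I would invoke the standard theory for \eqref{ddotxr}: with $r \in C^{k}(I)$ the relation $\ddot x = -rx$ bootstraps any $C^{2}$ solution to $C^{k+2}(I)$, and the initial value problem is uniquely globally solvable. Taking $x_{1}, x_{2}$ with the data of part \ref{ffromx} for a fixed $\la \in \reat$, the Wronskian $W = x_{1}\dot x_{2} - \dot x_{1}x_{2}$ is constant (the equation has no first-order term) and evaluates to $W = -4b^{3}\la^{2} \neq 0$. Since $x_{2}(0) = 2b\la \neq 0$, $x_{2}$ is nonvanishing on some interval $J \ni 0$, so $f = x_{1}/x_{2}$ is defined there with $\dot f = -W/x_{2}^{2}$ nowhere zero; as $\dot f$ is $C^{k+2}$, $f$ is $C^{k+3}$. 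Applying the first identity to $u = \dot x_{2}/x_{2} = -\tfrac12(\ddot f/\dot f)$ together with the second identity gives $S(f) = 2r$, and $f(0) = a$, $\dot f(0) = -W/x_{2}(0)^{2} = b$, $\ddot f(0) = -2\dot f(0)\,\dot x_{2}(0)/x_{2}(0) = c$ follow from the chosen data. This proves existence and part \ref{ffromx}.

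For uniqueness and the remaining parts I would run the correspondence backwards. Given any $f \in C^{k+3}(J)$ solving \eqref{schwarzian} with $\dot f$ nonvanishing (hence of constant sign), set $x_{2} = |\dot f|^{-1/2}$ and $x_{1} = f|\dot f|^{-1/2}$; since $\dot x_{2}/x_{2} = -\tfrac12(\ddot f/\dot f)$, a short computation using $S(f) = 2r$ gives $\ddot x_{2} + rx_{2} = 0$, and then $\ddot x_{1} + rx_{1} = \ddot f\,x_{2} + 2\dot f\,\dot x_{2} = 0$ because $2\dot f\,\dot x_{2} = -\ddot f\,x_{2}$, so both solve \eqref{ddotxr} with the data of part \ref{xfromf} (the constant $\la$ being fixed precisely so that $x_{2} = |\dot f|^{-1/2}$). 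Uniqueness of $f$ follows at once: these initial data depend only on $a, b, c$, so by uniqueness for \eqref{ddotxr} the pair $(x_{1}, x_{2})$, and hence $f = x_{1}/x_{2}$, is determined. Finally part \ref{uvfromxf} is immediate, since $u = \dot x_{2}/x_{2}$ and $v = \dot x_{1}/x_{1} = u + \dot f/f$ are logarithmic derivatives of solutions of \eqref{ddotxr} and therefore solve \eqref{ricatti}, with $u(0) = -c/(2b)$ and $v(0) = b/a - c/(2b)$ read off from the data; their uniqueness is the standard uniqueness for the first-order equation \eqref{ricatti}.

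The main obstacle is clerical rather than conceptual: keeping the normalizing constant $\la$, the constant sign of $\dot f$ hidden inside $|\dot f|^{-1/2}$, and the three matched sets of initial conditions consistent as one passes among the equivalent problems, together with the regularity bookkeeping that upgrades $x_{1}, x_{2} \in C^{k+2}$ to $f \in C^{k+3}$ via $\dot f = -W/x_{2}^{2}$.
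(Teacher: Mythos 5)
Your proof is correct, and its existence half coincides with the paper's: both solve the linear initial value problem \eqref{ddotxr} with the data of part \ref{ffromx}, set $f = x_{1}/x_{2}$ on an interval where $x_{2}$ is nonvanishing, and read off the initial conditions; your regularity argument via $\dot{f} = -W/x_{2}^{2}$ with $W$ the (constant, nonzero) Wronskian is a slightly cleaner packaging of the paper's bootstrap, which uses the equivalent identity $\ddot{f} = -2(\dot{x}_{2}/x_{2})\dot{f}$. Where you genuinely diverge is uniqueness. The paper argues through the Ricatti equation: from a solution $f$ it forms $u = -\tfrac{1}{2}\ddot{f}/\dot{f}$, invokes uniqueness for the nonlinear (locally Lipschitz) problem \eqref{ricatti}, then uniqueness for the first-order linear equation $\dot{z} = -2uz$ satisfied by $\dot{f}$, and finally integrates using $f(0) = a$. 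You instead prove part \ref{xfromf} first --- that $x_{2} = |\dot{f}|^{-1/2}$ and $x_{1} = f|\dot{f}|^{-1/2}$ solve \eqref{ddotxr} with initial data depending only on $(a, b, c)$ --- and deduce uniqueness of $f = x_{1}/x_{2}$ from uniqueness for the second-order linear problem. Both routes are valid; yours has the merit that part \ref{xfromf}, which the paper dismisses (along with part \ref{uvfromxf}) as a ``straightforward computation,'' carries the whole uniqueness argument, and that no appeal to uniqueness for a nonlinear ODE is needed.

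One point you elide is in fact a defect of the statement rather than of your argument: with $\la = |b|^{-1/2}b^{-1}$ as written in part \ref{xfromf}, the claimed initial values do not match the functions, e.g. $x_{2}(0) = |\dot{f}(0)|^{-1/2} = |b|^{-1/2}$ while $2b\la = 2|b|^{-1/2}$; all four conditions become consistent only for $\la = \tfrac{1}{2}|b|^{-1/2}b^{-1}$. Your phrase ``the constant $\la$ being fixed precisely so that $x_{2} = |\dot{f}|^{-1/2}$'' silently uses this corrected value, which is the right thing to do: since $\la$ is a free homothety parameter in part \ref{ffromx}, the uniform factor of $2$ is harmless, but if you intend your argument to verify the statement as literally written, you should record the correction.
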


\begin{example}\label{schwarzianflatexample}
For constants $a, c \in \rea$ and $b \in \reat$, the unique solution of the initial value problem
\begin{align}\label{schwarzianflat}
&S(f) =0,&  &f(0) = a,&&\dot{f}(0) = b \neq 0, &&\ddot{f}(0) = c.
\end{align}
is the linear fractional transformation $f(t) = \tfrac{(ac - 2b^{2})t - 2ab}{ct - 2b}$.
\end{example}

\begin{corollary}\label{schwarzlftcorollary}
Let $I \subset \rea$ be an open interval. Functions $f, g \in C^{3}(I)$ satisfy $S(f) = S(g)$ on $I$ if and only if there is a linear fractional transformation $\phi$ mapping $g(I)$ to $f(I)$ such that $f = \phi \circ g$.
\end{corollary}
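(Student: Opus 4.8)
The plan is to prove both implications, with the reverse one being immediate and the forward one reduced to the second-order linear equation \eqref{ddotxr}. For the reverse implication, suppose $f = \phi \circ g$ for a linear fractional transformation $\phi$ carrying $g(I)$ to $f(I)$. Since $S(\phi) = 0$, applying the cocycle identity \eqref{cocycle} to the composite $\phi \circ g$ gives $S(f) = S(\phi \circ g) = \dot{g}^{2}\,(S(\phi)\circ g) + S(g) = S(g)$. So the content of the corollary is the forward implication.

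For the forward implication, suppose $S(f) = S(g)$ and set $r = \tfrac{1}{2}S(f) = \tfrac{1}{2}S(g)$. Because $S(f)$ and $S(g)$ are defined, $\dot{f}$ and $\dot{g}$ are nowhere zero on $I$; being continuous of constant sign, $f$ and $g$ are strictly monotone, hence diffeomorphisms onto the intervals $f(I)$ and $g(I)$. Following claim \eqref{xfromf} of Lemma \ref{schwarzianivplemma}, I would set $x_{1} = f|\dot{f}|^{-1/2}$, $x_{2} = |\dot{f}|^{-1/2}$, $y_{1} = g|\dot{g}|^{-1/2}$, $y_{2} = |\dot{g}|^{-1/2}$, and verify by direct differentiation that all four functions satisfy the \emph{same} equation $\ddot{x} + rx = 0$ on all of $I$. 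The identity $\ddot{x}_{2}/x_{2} = -\tfrac{1}{2}S(f) = -r$ is exactly the unwinding of the definition of the Schwarzian, and $x_{1} = fx_{2}$ then solves the equation as well; the key observation is that equality of the Schwarzians means equality of the associated linear operators $\tfrac{d^{2}}{dt^{2}} + r$.

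The solution space of $\ddot{x} + rx = 0$ is two dimensional, and $\{x_{1}, x_{2}\}$, $\{y_{1}, y_{2}\}$ are each bases of it: if $x_{1}, x_{2}$ were proportional then $f = x_{1}/x_{2}$ would be constant, contradicting $\dot{f} \neq 0$, and similarly for $y_{1}, y_{2}$. Hence there is a matrix $\left(\begin{smallmatrix} a & b \\ c & d \end{smallmatrix}\right) \in GL(2, \rea)$ with $y_{1} = ax_{1} + bx_{2}$ and $y_{2} = cx_{1} + dx_{2}$. Dividing and using $f = x_{1}/x_{2}$, $g = y_{1}/y_{2}$ gives $g = \tfrac{af + b}{cf + d} = \psi \circ f$, where $\psi(w) = \tfrac{aw+b}{cw+d}$; here the denominator $cf + d = y_{2}/x_{2} = |\dot{f}|^{1/2}|\dot{g}|^{-1/2}$ is nowhere zero on $I$, so the computation is legitimate. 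Taking $\phi = \psi^{-1}$, again a linear fractional transformation, yields $f = \phi \circ g$ with $\phi(g(I)) = f(I)$.

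The one genuine subtlety is globality. A proof based instead on the uniqueness part of Lemma \ref{schwarzianivplemma} — choosing $\phi$ so that $\phi \circ g$ matches the three initial data $(f, \dot{f}, \ddot{f})$ of $f$ at a basepoint, with Example \ref{schwarzianflatexample} pinning down such a $\phi$ — only produces agreement on a subinterval and then requires an open–closed continuation argument to spread it over all of $I$. The linear-ODE route avoids this entirely, since solutions of \eqref{ddotxr} are automatically defined on the whole interval where $r$ is continuous and the change of basis relating $\{x_{i}\}$ to $\{y_{i}\}$ is a single global constant matrix. Thus the main work is the routine differentiation showing $x_{1}, x_{2}, y_{1}, y_{2}$ solve \eqref{ddotxr}, together with the bookkeeping that the relevant Wronskians and $cf + d$ are nonvanishing.
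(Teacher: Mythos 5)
Your proof is correct, and the forward implication takes a genuinely different route from the paper's. The paper forms the composition $\phi = f \circ g^{-1}$ (a $C^{3}$ diffeomorphism from $g(I)$ to $f(I)$, since neither derivative vanishes), applies the cocycle identity \eqref{cocycle} to get $0 = S(f) - S(g) = \dot{g}^{2}\,S(\phi)\circ g$, hence $S(\phi) = 0$, and then invokes Lemma \ref{schwarzianivplemma} and Example \ref{schwarzianflatexample} to conclude that $\phi$ is a linear fractional transformation. You instead exploit the Schwarzian--linear-ODE correspondence of claim \eqref{xfromf} of Lemma \ref{schwarzianivplemma}: both $f$ and $g$ are ratios of solutions of the same equation \eqref{ddotxr} with $r = \tfrac{1}{2}S(f) = \tfrac{1}{2}S(g)$, so the transition between the two bases $\{x_{1}, x_{2}\}$ and $\{y_{1}, y_{2}\}$ of the two-dimensional solution space is a constant invertible matrix, which is precisely the desired linear fractional transformation. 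The trade-off is real: the paper's argument is shorter given what is already proved, but its final step --- that $S(\phi) = 0$ on the interval $g(I)$ forces $\phi$ to be a single linear fractional transformation --- silently requires an open--closed continuation argument, since the uniqueness in Lemma \ref{schwarzianivplemma} is only stated on a possibly smaller subinterval $J$; your route avoids this entirely, because solutions of the linear equation \eqref{ddotxr} with continuous coefficient are automatically defined on all of $I$ and the change-of-basis matrix is a single global constant, and it has the side benefit of making the $PGL(2, \rea)$ structure on the set of solutions of $S(f) = 2r$ transparent without ever using Example \ref{schwarzianflatexample}. The only points that must appear explicitly in a final write-up are the ones you already flagged: $\{x_{1}, x_{2}\}$ (resp.\ $\{y_{1}, y_{2}\}$) is a basis because proportionality would force $f$ (resp.\ $g$) to be constant, contradicting $\dot{f} \neq 0$; and $cf + d = y_{2}/x_{2} = |\dot{f}|^{1/2}|\dot{g}|^{-1/2}$ never vanishes, so $\psi$ is defined and injective on $f(I)$ and $\phi = \psi^{-1}$, given by the inverse matrix, is a linear fractional transformation carrying $g(I)$ onto $f(I)$ with $f = \phi \circ g$.
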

\begin{proof}
This follows from the cocycle identity \eqref{cocycle}, Lemma \ref{schwarzianivplemma}, and Example \ref{schwarzianflatexample}
%%%% SAVE: complete proof
%If $f = \phi \circ g$ for a linear fractional transformation $\phi$, it follows from \eqref{cocycle} that $S(f) = S(g)$. If $S(f) = S(g)$ on $I$ then neither $\dot{f}$ nor $\dot{g}$ vanishes on $I$, so $\phi = f \circ g^{-1}$ is a diffeomorphism from $g(I)$ to $f(I)$. By \eqref{cocycle}, $0 = S(f) - S(g) = S(\phi \circ g) - S(g)= \dot{g}^{2}S(\phi)\circ g$, so $S(\phi) = 0$ on $I$. By Lemma \ref{schwarzianivplemma} and Example \ref{schwarzianflatexample}, $\phi$ is a linear fractional transformation.
\end{proof}

\begin{lemma}\label{1dprojstructurelemma}
Let $\en$ be a projective structure on a smooth manifold $M$. Let $C \subset M$ be a projective geodesic. For any $p \in C$ there are an open interval $I \subset \rea$ containing $0$ and a $C^{2}$ immersion $\ga:I \to M$ such that $\ga(0) = p$, $\ga(I) \subset C$, and such that the function $q \in C^{1}(I)$ defined by $\nabla_{d/dt}\dot{\ga} = q\dot{\ga}$ satisfies
\begin{align}
\dot{q} - \tfrac{1}{2}q^{2} - 2P(\dot{\ga}, \dot{\ga}) = 0.
\end{align}
The immersion $\ga$ is determined uniquely up to precomposition with a linear fractional transformation. As a consequence $C$ acquires in a canonical way a projective structure.
\end{lemma}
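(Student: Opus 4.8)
The plan is to recognize the equation $\dot q - \tfrac{1}{2}q^{2} - 2P(\dot\ga,\dot\ga)=0$ as a Schwarzian condition in disguise, reduce it to an initial value problem of the type solved in Lemma \ref{schwarzianivplemma}, and then read off existence, uniqueness up to linear fractional transformations, and the induced one-dimensional projective structure from the Schwarzian results already established. Throughout, fix a representative $\nabla\in\en$; since $C$ is a projective geodesic, for any $C^{2}$ immersion with image in $C$ one has $\dot\ga\wedge\nabla_{d/dt}\dot\ga=0$, so the function $q$ with $\nabla_{d/dt}\dot\ga=q\dot\ga$ is well defined wherever $\dot\ga\neq0$.

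First I would record two transformation laws. Under a reparametrization $\ga=\ga_{0}\circ h$ a direct computation gives $q_{\ga}=h'\,(q_{0}\circ h)+h''/h'$ and $P(\dot\ga,\dot\ga)=(h')^{2}\,(P(\dot\ga_{0},\dot\ga_{0})\circ h)$, whence, writing $E_{\ga}:=\dot q_{\ga}-\tfrac{1}{2}q_{\ga}^{2}-2P(\dot\ga,\dot\ga)$,
\[
E_{\ga}=(h')^{2}\,(E_{\ga_{0}}\circ h)+S(h),
\]
so $E$ obeys exactly the Schwarzian cocycle \eqref{cocycle}. Second, using $\tilde q=q+2\si(\dot\ga)$ for $\tnabla=\nabla+2\si_{(i}\delta_{j)}\,^{k}$, together with $\tfrac{d}{dt}\si(\dot\ga)=\nabla\si(\dot\ga,\dot\ga)+q\,\si(\dot\ga)$ and the rule for $\tilde P$ in \eqref{projvary}, I would verify that all $\si$-terms cancel, so that $E_{\ga}$ is independent of the choice of $\nabla\in\en$. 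This invariance is what ultimately makes the parametrization intrinsic to $\en$.

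For existence I would take the $\nabla$-geodesic $\ga_{0}$ with $\ga_{0}(0)=p$ and $\dot\ga_{0}(0)$ spanning $T_{p}C$; its image lies in $C$ because projective geodesics are the images of geodesics of representatives. Then $q_{0}=0$, so $E_{\ga_{0}}=-2r_{0}$ with $r_{0}=P(\dot\ga_{0},\dot\ga_{0})$, and the condition $E_{\ga_{0}\circ h}=0$ reads $S(h)=2(h')^{2}\,(r_{0}\circ h)$, whose right-hand side depends on $h$. Applying the cocycle \eqref{cocycle} to $f\circ h=\mathrm{id}$ converts this into the genuine Schwarzian problem $S(f)=-2r_{0}$, with fixed right-hand side, for $f=h^{-1}$. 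Solving it by Lemma \ref{schwarzianivplemma} (say with $f(0)=0$, $\dot f(0)=1$, $\ddot f(0)=0$) yields $f$ with nonvanishing derivative on an interval $J$, and $\ga=\ga_{0}\circ f^{-1}$ on $I=f(J)$ is then an immersion with $\ga(0)=p$, $\ga(I)\subset C$, and $E_{\ga}=0$, as required.

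Uniqueness and the projective structure follow formally. If $\ga$ and $\tilde\ga=\ga\circ\psi$ both satisfy $E=0$, the transformation law gives $0=(\psi')^{2}\cdot0+S(\psi)$, so $S(\psi)=0$ and $\psi$ is a linear fractional transformation by Corollary \ref{schwarzlftcorollary} (equivalently Lemma \ref{schwarzianivplemma} with Example \ref{schwarzianflatexample}); conversely, precomposing a solution with a linear fractional transformation gives another solution. Hence the distinguished parametrizations form an atlas on $C$ whose transition maps are linear fractional transformations, which is precisely a one-dimensional projective structure, and its $\en$-invariance makes it canonical. I expect the main obstacle to be the bookkeeping behind the displayed transformation law for $E$, and in particular the cancellation proving $\en$-invariance together with getting the correct sign in the reduction to $S(f)=-2r_{0}$; once these are in place, existence, uniqueness up to linear fractional transformations, and the resulting projective structure are immediate consequences of the Schwarzian machinery proved above.
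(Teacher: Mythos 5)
Your proof is correct and follows essentially the same route as the paper's: establish the Schwarzian cocycle law for $E_{\ga}=\dot q-\tfrac{1}{2}q^{2}-2P(\dot\ga,\dot\ga)$ together with its independence of the choice of $\nabla\in\en$, solve the resulting Schwarzian initial value problem via Lemma \ref{schwarzianivplemma} to get existence, and invoke Corollary \ref{schwarzlftcorollary} (vanishing Schwarzian implies linear fractional) for uniqueness and the induced one-dimensional projective structure on $C$. The only cosmetic differences are that you start from the $\nabla$-geodesic parametrization, so that $q_{0}=0$ and the equation reduces to $S(f)=-2P(\dot\ga_{0},\dot\ga_{0})$ via the inverse-function trick, whereas the paper takes an arbitrary parametrization with associated $\ka$ and solves $S(\phi)=\ka$ directly before reparametrizing by $\phi^{-1}$.
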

\begin{proof}
Let $C \subset M$ be a projective geodesic. Let $I$ and $\bar{I}$ be open subintervals of $\rea$ and let $\phi:I \to \bar{I}$ be a $C^{3}$ diffeomorphism. Let $\ga:I \to M$ and $\bar{\ga}:\bar{I} \to M$ be $C^{2}$ immersions such that $\ga(I), \bar{\ga}(\bar{I}) \subset C$ and $\ga = \bar{\ga} \circ \phi$. Consider $I$ and $\bar{I}$ with the flat affine structures induced from that on $\rea$ and let $t$ and $\bar{t}$ be affine coordinates in $I$ and $\bar{I}$. Write derivatives with respect to $t$ using $\dot{\,}$ and derivatives with respect to $\bar{t}$ using $\prime$. For example, $\dot{\ga} = \dot{\phi}(\bar{\ga}^{\prime} \circ \phi)$. Let $\nabla$ and $\tnabla = \nabla + 2\si_{(i}\delta_{j)}\,^{k}$ be torsion-free representatives of $\en$. By \eqref{projvary}, their projective Schouten tensors satisfy $\tilde{P}_{(ij)} = P_{(ij)} + \nabla_{(i}\si_{j)} - \si_{i}\si_{j}$. Since $C$ is a projective geodesic there are functions $q, \tilde{q} \in C^{1}(I)$ and $\bar{Q} \in C^{1}(\bar{I})$ such that $\nabla_{d/dt}\dot{\ga} = q\dot{\ga}$, $\nabla_{\frac{d}{d\bar{t}}}\bar{\ga}^{\prime} = \bar{q}\bar{\ga}^{\prime}$, and $\tnabla_{d/dt}\dot{\ga} = \tilde{q}\dot{\ga}$. From
\begin{align}
\begin{split}
 \dot{\phi}q (\bar{\ga}^{\prime} \circ \phi)  & = q\dot{\ga} = \nabla_{d/dt}\dot{\ga} = \dot{\phi}^{2}\left(\nabla_{d/d\bar{t}}\bar{\ga}^{\prime} \right)\circ \phi + \ddot{\phi}(\bar{\ga}^{\prime}) \circ \phi = \dot{\phi}^{2}\left( \bar{q} \circ \phi + \ddot{\phi}/\dot{\phi}\right)\bar{\ga}^{\prime} \circ \phi,
\end{split}
\end{align} 
there follows $q = \dot{\phi}\bar{q}\circ \phi + \ddot{\phi}/\dot{\phi}$, so that
\begin{align}
& \dot{q} = \dot{\phi}^{2}(\bar{q}^{\prime}\circ \phi) + \ddot{\phi}(\bar{q}\circ \phi) + \left(\ddot{\phi}/\dot{\phi}\right)^{2},&& q^{2} = (\bar{q}\circ \phi)^{2} + 2\ddot{\phi}(\bar{q}\circ \phi) + \left(\ddot{\phi}/\dot{\phi}\right)^{2},
\end{align}
from which there follows
\begin{align}\label{dotqqtransform}
\dot{q} - \tfrac{1}{2}q^{2} - 2P(\dot{\ga}, \dot{\ga}) = \dot{\phi}^{2}\left(\bar{q}^{\prime} - \tfrac{1}{2}\bar{q}^{2} - 2P(\bar{\ga}^{\prime}, \bar{\ga}^{\prime})\right)\circ \phi + S(\phi).
\end{align}
From $\tilde{q} \dot{\ga} = \tnabla_{\dot{\ga}}\dot{\ga} = \nabla_{d/dt}\dot{\ga} + 2\si(\dot{\ga})\dot{\ga} = (q + 2\si(\dot{\ga}))\dot{\ga}$, it follows that $\tilde{q} = q + 2\si(\dot{\ga})$, so that
\begin{align}
\begin{split}
\dot{\tilde{q}} - \tfrac{1}{2}\tilde{q}^{2}  - 2\tilde{P}(\dot{\ga}, \dot{\ga})&= \dot{q} - \tfrac{1}{2}q^{2} + 2\tfrac{d}{dt}\si(\dot{\ga})  - 2P(\dot{\ga}, \dot{\ga}) -  2(\nabla_{d/dt}\si)(\dot{\ga}) - 2\si(\dot{\ga}) = \dot{q} - \tfrac{1}{2}q^{2}  - 2P(\dot{\ga}, \dot{\ga}),
\end{split}
\end{align}
so that the function $\ka = \dot{q} - \tfrac{1}{2}q^{2} - 2P(\dot{\ga}, \dot{\ga}) \in C^{1}(I)$ does not depend on the choice of $\nabla \in \en$. Likewise the function $\bar{\ka} = \bar{q}^{\prime} - \tfrac{1}{2}\bar{q}^{2} - 2P(\bar{\ga}^{\prime}, \bar{\ga}^{\prime}) \in C^{1}(\bar{I})$ does not depend on the choice of $\nabla \in \en$. By \eqref{dotqqtransform},
\begin{align}
\ka = \dot{\phi}^{2}\bar{\ka} \circ \phi + S(\phi).
\end{align}
By \eqref{cocycle}, $\bar{f} \in C^{3}(I)$ solves $S(\bar{f}) = \bar{\ka}$ if and only if $f = \bar{f}\circ \phi$ solves $S(f) = \ka$. 

Given $\ga$, and so $q$ and $\ka$, replacing $I$ by a subinterval if necessary, by Lemma \ref{schwarzianivplemma} there can be found a $C^{4}$ diffeomorphism $\phi$ such that $S(\phi) = \ka$ on $I$. In this case $\bar{\ka} = 0$. Regard $ U = \ga(I) = \bar{\ga}(\bar{I})$ as an open set in $C$ and $\bar{\ga}^{-1}:U \to \rea$ as a chart. In this chart, $\bar{\ka} = 0$. Such a chart can be constructed around any point of $C$, and on the overlap the transition functions must have vanishing Schwarzian derivative, so are restrictions of elements of $PGL(2, \rea)$, hence these charts determine a projective structure on $C$.
\end{proof}

Lemma \ref{1dprojstructurelemma} motivates the following definition. A smooth parametrization $\ga:I \to M$ of a projective geodesic is \emph{projective parametrization} if the associated function $\ka = \dot{q} - \tfrac{1}{2}q^{2} - 2P(\dot{\ga}, \dot{\ga})$ vanishes on $I$.

A more geometric description of the projective structure induced on a projective geodesic is given by Lemmas \ref{maxgeodesiclemma} and \ref{projectiveparametrizationlemma}.

The infinitesimal variation of a projective geodesic, $C \subset M$, through projective geodesics is described by a section of the normal bundle $\nu(C) = TM_{|C}/TC$ of $C$. Precisely, let $\bar{\ga}(t, s):(a, b) \times (-\epsilon, \epsilon) \to M$ be a $C^{2}$ immersion such that for each $s$ the image of $\bar{\ga}_{s} =\bar{\ga}(\dum, s)$ is contained in a projective geodesic, and so that $\bar{\ga}(t, 0) = \ga(t)$ is a parametrization of $\bar{\ga}((a, b) \times \{0\}) \subset C$. Let $\nabla \in \en$, $\bar{v} = \frac{\partial\bar{\ga}}{\partial t}$, $\bar{Y} = \frac{\partial \bar{\ga}}{\partial s}$ and let $Y(t) = \bar{Y}(t, 0)$. The assumption $C$ is a projective geodesic means $\bar{v}\wedge\nabla_{\bar{v}}\bar{v} = 0$ for each $s \in (-\ep, \ep)$. The vector field $Y$ depends on the chosen parametrization of $C$, but changing this parametrization modifies $Y$ only by the addition of a vector field tangent to $C$.

From $\bar{v}\wedge\nabla_{\bar{v}}\bar{v} = 0$ it follows that $\nabla_{\bar{v}}\bar{v}\wedge \nabla_{\bar{v}}\bar{Y} = \bar{v}\wedge \nabla_{\nabla_{\bar{v}}\bar{v}}\bar{Y}$. Differentiating $\bar{v}\wedge\nabla_{\bar{v}}\bar{v} = 0$ and using the preceding observation and also $\nabla_{\bar{Y}}\bar{v} = \nabla_{\bar{v}}\bar{Y}$ to simplify the result gives
\begin{align}\label{projjac1}
\begin{split}
0 &= \nabla_{\bar{Y}}\left( \bar{v}\wedge \nabla_{\bar{v}}\bar{v}\right) = - \nabla_{\bar{v}}\bar{v}\wedge \nabla_{\bar{v}}\bar{Y} + \bar{v}\wedge \nabla_{\bar{Y}}\nabla_{\bar{v}}\bar{v}
%\\& 
= - \bar{v}\wedge \nabla_{\nabla_{\bar{v}}\bar{v}}\bar{Y} + \bar{v}\wedge \left(\nabla_{\bar{v}}\nabla_{\bar{Y}}\bar{v} + R(\bar{Y}, \bar{v})\bar{v}\right) \\
& = \bar{v}\wedge \left(\nabla_{\bar{v}}\nabla_{\bar{v}}\bar{Y} - \nabla_{\nabla_{\bar{v}}\bar{v}}\bar{Y}+ R(\bar{Y}, \bar{v})\bar{v} \right) = \bar{v}\wedge (\lie_{\bar{Y}}\nabla)(\bar{v}, \bar{v}),
\end{split}
\end{align}
where the Lie derivative $\lie_{X}\nabla$ of an affine connection $\nabla$ along a vector field $X$ is defined by differentiating the difference tensor of the pullback of $\nabla$ via the flow of $X$ with $\nabla$. Define the difference tensor $\ben - \en$ of the projective structures $\ben$ and $\en$ to be the trace-free part of the difference tensor $\ben - \en$ of any representatives $\bnabla \in \ben$ and $\nabla \in \en$, which does not depend on the choices of representatives. The Lie derivative $\lie_{X}\en$ is defined to be the derivative at $t = 0$ of the difference tensor $\phi_{t}^{\ast}\en - \en$ where $\phi_{t}$ is the local flow generated by $X \in \Ga(TM)$. For any $\nabla \in \en$, $(\lie_{X}\en)_{ij}\,^{k}$ equals the trace-free part $(\lie_{X}\nabla)_{ij}\,^{k} - \tfrac{2}{n+1}\delta_{(i}\,^{k}\nabla_{j)}\div_{\nabla}(X)$ of $(\lie_{X}\nabla)_{ij}\,^{k} = \nabla_{i}\nabla_{j}X^{k} + X^{p}R_{pij}\,^{k}$. 
If $\nabla$ is replaced in \eqref{projjac1} by $\tnabla \in\en$ having curvature $\tilde{R}$ then $\bar{Y}\wedge \bar{v}\wedge (\tilde{R}(\bar{Y}, \bar{v})\bar{v} - R(\bar{Y}, \bar{v})\bar{v}) = 0$. Since also $\bar{Y}\wedge \bar{v}\wedge (\tnabla_{\bar{v}}\bar{Y} - \nabla_{\bar{v}}\bar{Y}) = 0$, it follows from \eqref{projjac1} that a variation of $C$ through projective geodesics is represented by a section $[Y]$ of $\nu(C)$ such that for any parametrization of $C$ and any $\nabla \in \en$, any vector field $Y(t)$ representing $[Y]$ satisfies 
\begin{align}\label{projjacobi}
\begin{split}
Y \wedge \dot{\ga}\wedge (\lie_{Y}\en)(\dot{\ga}, \dot{\ga})& = Y \wedge \dot{\ga} \wedge (\nabla_{d/dt}\nabla_{d/dt}Y - \nabla_{\nabla_{d/dt}\dot{\ga}}Y + B(Y, \dot{\ga})\dot{\ga}) \\
&= Y \wedge \dot{\ga}\wedge (\nabla_{d/dt}\nabla_{d/dt}Y - \nabla_{\nabla_{d/dt}\dot{\ga}}Y + R(Y, \dot{\ga})\dot{\ga}) = 0.
\end{split}
\end{align}
A section $[Y]$ of the normal bundle of a projective geodesic solving \eqref{projjacobi} is a \emph{projective Jacobi field}. This notion is needed in the proof of Lemma \ref{coneconditionlemma}.

\section{Radiant structures}\label{radiantstructuresection}
This section defines the radiant structures that are the focus of this paper.
It is convenient to introduce them in a more general context for which it is helpful to describe some additional geometric notions.

A section of the normal bundle $\nu(C)$ of a projective geodesic $C$ can be viewed also as a rank two subbundle $K$ of the restriction $TM_{|C}$ to $C$ of $TM$ such that $K$ contains $TC$. A pair $(C, K)$ such that $C$ is an immersed one-dimensional submanifold and $K$ is a rank two subbundle of $TM_{|C}$ containing $TC$ is a \emph{flag} with \emph{base path} $C$. A flag $(C, K)$ is a \emph{geodesic flag} if the base path $C$ is a projective geodesic. A flag $(C, K)$ is a \emph{Jacobi flag} if it is a geodesic flag and $K/TC$ is a projective Jacobi field.

Given a projective structure $\en$ on $M$, a flag $(C, K)$ is \emph{parallel} if for any $\nabla \in \en$, any parametrization $\ga:(a, b) \to C$, and any vector field $X$ defined along $C$ such that $\dot{\ga}$ and $X$ span $K$, there holds
\begin{align}\label{flageq}
&X \wedge \dot{\ga} \wedge  \nabla_{d/dt}X = 0.
\end{align}
It is straightforward to check that the equations \eqref{flageq} do not depend on the choice of $\nabla$, the choice of parametrization of $C$, or the choice of $X$. A parallel flag $(C, K)$ such that the base path $C$ is a projective geodesic is a \emph{parallel geodesic flag}. This amounts to adding to \eqref{flageq} the equation $\dot{\ga} \wedge \nabla_{d/dt}\dot{\ga} = 0$.

\begin{lemma}\label{affdillemma}
Let $M$ be a smooth manifold of dimension $n \geq 2$. For a vector field $X \in \Ga(TM)$ and a torsion-free affine connection $\nabla$ the following two conditions are equivalent:
\begin{enumerate}
\item\label{affdil1} $Y \wedge \nabla_{Y}X = 0$ for all $Y \in \Ga(TM)$.
\item\label{affdil2} There is $f \in \cinf(M)$ such that $\nabla_{i}X^{j} = f\delta_{i}\,^{j}$.
\end{enumerate}
In this case $nf = \div_{\nabla}(X)$ where $\div_{\nabla}(X) = \nabla_{p}X^{p}$.
\end{lemma}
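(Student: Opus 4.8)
The plan is to treat condition \eqref{affdil1} as a pointwise statement about the smooth $\binom{1}{1}$-tensor field $A_{i}\,^{j} = \nabla_{i}X^{j}$ and to reduce it to an elementary fact of linear algebra. Since $A$ is the covariant derivative of a vector field, it is a genuine endomorphism field, and $\nabla_{Y}X = Y^{i}A_{i}\,^{j}$ is simply $A$ applied to $Y$. With this reading the implication \eqref{affdil2} $\Rightarrow$ \eqref{affdil1} is immediate: if $A_{i}\,^{j} = f\delta_{i}\,^{j}$ then $\nabla_{Y}X = fY$, so $Y \wedge \nabla_{Y}X = f\,(Y \wedge Y) = 0$ for every $Y$, using the convention $X \wedge Y = X \tensor Y - Y \tensor X$.

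For the converse I would fix a point $p \in M$ and read \eqref{affdil1} as the assertion that $Y$ and $A(Y)$ are linearly dependent for every nonzero $Y \in T_{p}M$; equivalently, that every tangent vector at $p$ is an eigenvector of the endomorphism $A_{p}$, since for vectors the vanishing of the wedge is precisely linear dependence. The key step is the standard fact that an endomorphism of a vector space of dimension $\geq 2$ all of whose nonzero vectors are eigenvectors must be a scalar multiple of the identity. I would prove this by writing $A(v) = \lambda(v)v$ for each nonzero $v$ and, for two linearly independent vectors $u, w$, expanding $A(u + w) = \lambda(u+w)(u+w)$ and comparing with $A(u) + A(w) = \lambda(u)u + \lambda(w)w$; linear independence forces $\lambda(u) = \lambda(u+w) = \lambda(w)$, and the case of proportional vectors follows directly from linearity of $A$. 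Hence $\lambda$ reduces to a single scalar $f(p)$, and $A_{p} = f(p)\,\Id$. This is exactly where the hypothesis $n \geq 2$ is used, to furnish linearly independent vectors; for $n = 1$ the statement is vacuous, as the wedge vanishes identically.

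Applying this at each point gives $A_{i}\,^{j} = f\,\delta_{i}\,^{j}$, which is \eqref{affdil2} once smoothness of $f$ is checked. Smoothness is automatic: tracing the identity yields $f = \tfrac{1}{n}A_{p}\,^{p} = \tfrac{1}{n}\nabla_{p}X^{p}$, a smooth function of the point, and this same contraction gives the final formula $nf = \div_{\nabla}(X)$. I expect the only genuine content to be the pointwise eigenvector argument, and in particular the observation that the eigenvalue cannot depend on the chosen vector; once that uniformity is established, both the smoothness of $f$ and the divergence identity follow at once by taking the trace.
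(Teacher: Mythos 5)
Your proof is correct, but it takes a genuinely different route from the paper's. The paper polarizes condition \eqref{affdil1} in $Y$ to obtain the single tensorial identity $\delta_{(i}\,^{[k}\nabla_{j)}X^{l]} = 0$ and then simply traces this in $j$ and $l$, which yields $n\nabla_{i}X^{k} = \nabla_{p}X^{p}\delta_{i}\,^{k}$ in one stroke; smoothness of $f$ never needs separate comment there, because $f = \tfrac{1}{n}\nabla_{p}X^{p}$ appears from the outset as the trace of a smooth tensor. You instead argue pointwise: you read \eqref{affdil1} as saying that every nonzero tangent vector at each point is an eigenvector of the endomorphism $A_{i}\,^{j} = \nabla_{i}X^{j}$, prove by hand the linear-algebra lemma that an endomorphism of a vector space of dimension at least two with this property is a scalar, and then recover smoothness of $f$ by the same trace. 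Your route is more elementary and makes visible exactly where $n \geq 2$ enters (two linearly independent vectors are needed to force the eigenvalue $\lambda$ to be uniform), whereas the paper's computation absorbs that step silently into the index algebra; conversely, the paper's polarize-and-trace pattern is shorter and is the template that recurs in the more elaborate projective variant (Lemma \ref{projdillemma}), where a pointwise eigenvector argument would be harder to organize. One hinge you use implicitly and could state: passing from ``for all vector fields $Y$'' to ``for all tangent vectors at $p$'' requires both that $\nabla_{Y}X$ at $p$ depends only on $Y_{p}$ and that every tangent vector extends to a global vector field --- standard facts, but they are what licenses the pointwise reading. (Also, your parenthetical about $n=1$ is slightly off: there condition \eqref{affdil1} is vacuously true and condition \eqref{affdil2} holds automatically, so the equivalence still holds trivially; it is your uniformity argument, not the statement, that needs $n \geq 2$.)
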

\begin{proof}
That \eqref{affdil2} implies \eqref{affdil1} is immediate. If $X$ and $\nabla$ satisfy \eqref{affdil1}, then $\delta_{(i}\,^{[k}\nabla_{j)}X^{l]} = 0$. Tracing this in $j$ and $l$ shows $n\nabla_{i}X^{k} = \nabla_{p}X^{p}\delta_{i}\,^{k}$, so $X$ satisfies \eqref{affdil2} with $f = n^{-1}\nabla_{p}X^{p}$.
\end{proof}

A vector field $X$ is \emph{dilatative} with respect to a torsion-free affine connection $\nabla$ if it satisfies either of the equivalent conditions of Lemma \ref{affdillemma}. 

\begin{remark}
A vector field $X$ dilatative with respect to the Levi-Civita connection $D$ of a pseudo-Riemannian metric $g_{ij}$ is \emph{concircular} \cite[Section $1.10$]{Chen-warped}. In the equivalent form $D_{i}X_{j} = fg_{ij}$, where $X_{i} = X^{p}g_{pi}$, this condition was studied in \cite{Fialkow}. A Riemannian manifold that admits a nowhere vanishing concircular vector field is locally a warped product \cite[Theorem $3.1$]{Chen-concircular}. Lemma \ref{radcodazzilemma} extends this statement to the context of radiant Hessian structures.
\end{remark}

A vector field $X \in \Ga(TM)$ is \emph{projectively dilatative} with respect to the torsion-free affine connection $\nabla$ if $X \wedge Y \wedge \nabla_{Y}X = 0$ for all $Y \in \Ga(TM)$. 
If $\dim M = 2$, then any vector field is projectively dilatative, so the notion is vacuous, but in higher dimensions it is not automatic. 
The terminology \emph{projectively dilatative} is justified by Lemma \ref{projdiltransformlemma}, that can be proved by straightforward computation.

\begin{lemma}\label{projdiltransformlemma}
Let $X \in \Ga(TM)$ be projectively dilatative with respect to the torsion-free affine connection $\nabla$. 
\begin{enumerate}
\item For $\ga_{i} \in \Ga(\ctm)$ and $Q_{ij} \in \Ga(S^{2}\ctm)$, $X$ is projectively dilatative with respect to the torsion-free affine connection $\tnabla = \nabla + 2\ga_{(i}\delta_{j)}\,^{k} + Q_{ij}X^{k}$. 
\item For $g \in \cinf(M)$, the vector field $\tilde{X} = gX$ is projectively dilatative with respect to $\nabla$.
\end{enumerate}
\end{lemma}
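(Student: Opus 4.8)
The plan is to reduce everything to the elementary observation that the condition defining projective dilatativeness, namely $X \wedge Y \wedge \nabla_{Y}X = 0$ for all $Y \in \Ga(TM)$, is equivalent to the statement that $\nabla_{Y}X$ lies in the span of $X$ and $Y$; and, conversely, that wedging $X \wedge Y$ with any vector lying in $\spn\{X, Y\}$ produces zero. Hence the whole lemma follows once I check that each of the two modifications alters $\nabla_{Y}X$ only by the addition of vectors belonging to $\spn\{X, Y\}$. No genericity is needed for this, since the implication ``$Z \in \spn\{X, Y\} \Rightarrow X \wedge Y \wedge Z = 0$'' holds pointwise and unconditionally from the antisymmetry and multilinearity of the exterior product (a repeated factor kills the wedge).

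For the first claim I would compute the effect of the difference tensor $\Pi_{ij}\,^{k} = 2\ga_{(i}\delta_{j)}\,^{k} + Q_{ij}X^{k}$ on $X$. First note that $\tnabla$ is indeed torsion-free, as both summands of $\Pi$ are symmetric in $i, j$ (the term $2\ga_{(i}\delta_{j)}\,^{k}$ by construction and $Q_{ij}X^{k}$ because $Q \in \Ga(S^{2}\ctm)$). Contracting $\tnabla_{i}X^{k} = \nabla_{i}X^{k} + \Pi_{ij}\,^{k}X^{j}$ with $Y^{i}$ and expanding the symmetrization gives
\begin{align*}
\tnabla_{Y}X = \nabla_{Y}X + \bigl(\ga_{i}Y^{i} + Q_{ij}X^{i}Y^{j}\bigr)X + \bigl(\ga_{j}X^{j}\bigr)Y.
\end{align*}
The last two terms are, respectively, a multiple of $X$ and a multiple of $Y$, hence lie in $\spn\{X, Y\}$. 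Wedging with $X \wedge Y$ therefore annihilates them, so $X \wedge Y \wedge \tnabla_{Y}X = X \wedge Y \wedge \nabla_{Y}X = 0$, which is the desired conclusion.

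For the second claim I would apply the Leibniz rule: for $g \in \cinf(M)$ and $\tilde{X} = gX$ one has $\nabla_{Y}\tilde{X} = Y(g)\,X + g\,\nabla_{Y}X$. Consequently
\begin{align*}
\tilde{X} \wedge Y \wedge \nabla_{Y}\tilde{X} = gX \wedge Y \wedge \bigl(Y(g)\,X + g\,\nabla_{Y}X\bigr) = g^{2}\,X \wedge Y \wedge \nabla_{Y}X = 0,
\end{align*}
the $Y(g)\,X$ contribution dropping because it repeats the factor $X$. I do not anticipate a genuine obstacle here; the only point demanding any care is the bookkeeping in the first part, namely correctly expanding the symmetrized term $2\ga_{(i}\delta_{j)}\,^{k}$ upon contraction with $X^{j}$ and recognizing that exactly one resulting piece is parallel to $X$ and one to $Y$, together with the (routine but worth stating) verification that $\tnabla$ remains torsion-free so that the hypothesis of projective dilatativeness is even applicable to it.
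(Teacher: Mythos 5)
Your proof is correct, and it is precisely the ``straightforward computation'' the paper alludes to (the paper states Lemma \ref{projdiltransformlemma} without proof): expand the difference tensor contracted with $X$, observe the correction terms lie in $\spn\{X,Y\}$, and use the Leibniz rule for the rescaling. One pedantic remark: your opening claim that projective dilatativeness is \emph{equivalent} to $\nabla_{Y}X \in \spn\{X,Y\}$ fails at points where $X$ and $Y$ are linearly dependent, but this is harmless since your computation only uses the unconditional direction $Z \in \spn\{X,Y\} \Rightarrow X \wedge Y \wedge Z = 0$.
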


A \emph{line field} on $M$ is a one-dimensional subbundle of $TM$. A compact manifold admits a line field if and only if its Euler characteristic $\chi(M)$ is zero, but an open manifold always admits a smooth line field.

By Lemma \ref{projdiltransformlemma}, it makes sense to define $X \in \Ga(TM)$ to be \emph{projectively dilatative} with respect to the projective structure $\en$ if it is projectively dilatative with respect to any $\nabla \in \en$, and it make sense to say that a line field $L \subset TM$ is projectively dilatative with respect to a torsion-free connection $\nabla$ (or projective structure $\en$) if any section $X \in \Ga(L)$ is projectively dilatative with respect to $\nabla$ (or any $\nabla \in \en$). 

\begin{lemma}\label{projdillemma}
Suppose $n = \dim M > 2$ and let $X \in \Ga(TM)$ be such that $M_{\ast} = M_{\ast}(X) = \{p \in M: X_{p} \neq 0\}$ is nonempty. For a torsion-free affine connection $\nabla$ the following two conditions are equivalent:
\begin{enumerate}
\item\label{projdil1} $X$ is projectively dilatative with respect to $\nabla$.
\item\label{projdil2} There is $\si_{i} \in \Ga(T^{\ast}M_{\ast})$ such that $\nabla_{i}X^{j} - \tfrac{1}{n}\divn(X)\delta_{i}\,^{j} = \si_{i}X^{j} - \tfrac{1}{n}\si_{p}X^{p}\delta_{i}\,^{j}$ on $M_{\ast}$.
\end{enumerate}
If there hold \eqref{projdil1}-\eqref{projdil2} then
\begin{enumerate}
\setcounter{enumi}{2}
\item\label{projdil4}  $X \wedge \nabla_{X}X = 0$ on $M$.
\item\label{projdilcurvature} On $M_{\ast}$ there hold
\begin{align}\label{projdilweyl}
X^{p}B_{ijp}\,^{k} & = (d\si_{ij} + \tfrac{2}{n+1}R_{[ij]})X^{k} + \tfrac{1}{n-1}\left(\delta_{j}\,^{k}(d\si_{ip} + \tfrac{2}{n+1}R_{[ip]}) - \delta_{i}\,^{k}(d\si_{jp} + \tfrac{2}{n+1}R_{[jp]}) \right)X^{p},\\
\label{projdilcotton}
\tfrac{1-n}{2}X^{p}C_{ijp} &= f(R_{[ij]} + \tfrac{n+1}{2}d\si_{ij}) + \tfrac{1}{n+1}X^{p}\nabla_{p}(R_{[ij]} + \tfrac{2}{n+1}d\si_{ij}).
\end{align}
where $f = \tfrac{1}{n}(\div_{\nabla}(X) - \si(X))$.
\end{enumerate}
Moreover:
\begin{enumerate}
\setcounter{enumi}{4}
\item\label{pdcon1} If $\ga_{i} \in \Ga(\ctm)$, $Q_{ij} \in \Ga(S^{2}\ctm)$, and $\tnabla = \nabla + 2\ga_{(i}\delta_{j)}\,^{k} + Q_{ij}X^{k}$, then $\tilde{\si}_{i} = \si_{i} + \ga_{i} + Q_{ip}X^{p}$ in place of $\si_{i}$ in \eqref{projdil2} and $d\tilde{\si}_{ij} + \tfrac{2}{n+1}\tilde{R}_{[ij]}  = d\si_{ij} + \tfrac{2}{n+1}R_{[ij]} + \tfrac{n}{n+1}dq_{ij}$, where $q_{i} = Q_{ip}X^{p}$ and $\tilde{R}_{ij}$ is the Ricci curvature of $\tnabla$.
\item\label{pdcon2} If $c \in \cinf(M)$, then $\tilde{X} = cX$ is projectively dilatative with respect to $\nabla$, and on $M_{\ast}(cX) \subset M_{\ast}(X)$, $\tilde{\si}_{i} = \si_{i} + d\log{c}_{i}$ in place of $\si_{i}$ in \eqref{projdil2}. 
\end{enumerate}
\end{lemma}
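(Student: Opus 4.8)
The plan is to dispose of the equivalence first and then read off the consequences. The implication \eqref{projdil2}$\Rightarrow$\eqref{projdil1} is immediate: writing \eqref{projdil2} as $\nabla_i X^j = f\delta_i{}^j + \si_i X^j$ with $f = \tfrac{1}{n}(\divn(X) - \si(X))$ gives $\nabla_Y X = fY + \si(Y)X$ for every $Y$, so $X \wedge Y \wedge \nabla_Y X = 0$. For \eqref{projdil1}$\Rightarrow$\eqref{projdil2} I would work pointwise on $M_\ast$, where $X \neq 0$. Condition \eqref{projdil1} says $\nabla_Y X \in \spn\{X, Y\}$ for all $Y$; polarizing the quadratic-in-$Y$ identity $X \wedge Y \wedge \nabla_Y X = 0$ and setting one argument equal to $X$ yields $X \wedge Y \wedge \nabla_X X = 0$ for all $Y$, whence $\nabla_X X \in \spn\{X\}$ because $n > 2$. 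For $Y,Z$ independent from each other and from $X$ I would write $\nabla_Y X = \si(Y)X + b(Y)Y$ and compare $\nabla_{Y+Z}X$ with $\spn\{X, Y+Z\}$; again $n > 2$ forces $b(Y) = b(Z)$, so $b$ is a single function $f$ and $\nabla_Y X = fY + \si(Y)X$ for all $Y$ by linearity. Tracing recovers $f = \tfrac1n(\divn(X) - \si(X))$ and gives \eqref{projdil2}.

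The delicate point here, and the main obstacle, is producing $\si$ as a genuine smooth one-form rather than merely a pointwise object. I would note that the decomposition $\nabla_i X^j - \tfrac1n\divn(X)\delta_i{}^j = \si_i X^j - \tfrac1n\si_p X^p\delta_i{}^j$ determines $\si$ uniquely on $M_\ast$ (if $\tau\otimes X = \tfrac1n(\tau\cdot X)\,\Id$, then comparing ranks, using $n > 1$, forces $\tau\cdot X = 0$ and hence $\tau = 0$ since $X \neq 0$), and that $\si$ is recovered smoothly by contracting the tensor $\nabla_i X^j - \tfrac1n\divn(X)\delta_i{}^j$ with any smooth covector $v_j$ normalized by $v(X) = 1$. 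Statement \eqref{projdil4} is then the assertion $\nabla_X X \in \spn\{X\}$ already obtained on $M_\ast$, and is trivial where $X = 0$.

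For the curvature identities I would feed $\nabla_i X^j = f\delta_i{}^j + \si_i X^j$ into the Ricci identity $2\nabla_{[i}\nabla_{j]}X^k = R_{ijp}{}^k X^p$. A direct computation gives
\begin{align*}
R_{ijp}{}^k X^p = d\si_{ij}X^k + (\nabla_i f - f\si_i)\delta_j{}^k - (\nabla_j f - f\si_j)\delta_i{}^k,
\end{align*}
and tracing over $k=j$, together with $R_{ijp}{}^j = -R_{ip}$, solves $(n-1)(\nabla_i f - f\si_i) = -(R_{ip} + d\si_{ip})X^p$. Substituting these into the definition $B_{ijp}{}^k = R_{ijp}{}^k + 2\delta_{[i}{}^k P_{j]p} - 2\delta_p{}^k P_{[ij]}$ and using $P_{[ij]} = -\tfrac1{n+1}R_{[ij]}$ produces \eqref{projdilweyl}; the internal check that the right-hand side is trace-free (as $X^p B_{ijp}{}^j = 0$) fixes the relative signs. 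For \eqref{projdilcotton} I would differentiate, combining $C_{ijk} = 2\nabla_{[i}P_{j]k}$ with the expression just obtained for $(\nabla_i f - f\si_i)X$ and the contracted Bianchi identities of \eqref{projectivebianchi}.

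Finally, the transformation rules \eqref{pdcon1} and \eqref{pdcon2} are direct computations. For \eqref{pdcon1}, expanding $\tnabla_i X^j = \nabla_i X^j + (\ga_i\delta_p{}^j + \ga_p\delta_i{}^j + Q_{ip}X^j)X^p$ gives $\tilde\si_i = \si_i + \ga_i + Q_{ip}X^p$; the stated identity for $d\tilde\si_{ij} + \tfrac2{n+1}\tilde R_{[ij]}$ then follows from the fact that the antisymmetric Ricci tensor is half the curvature of the connection induced on $\Det\ctm$, so that $2\tilde R_{[ij]} = 2R_{[ij]} - d\theta_{ij}$ with $\theta_i = (n+1)\ga_i + Q_{ip}X^p$ the trace of the difference tensor (consistent with \eqref{projvary}); the $\ga$-contributions cancel, leaving the $\tfrac{n}{n+1}dq_{ij}$ term. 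For \eqref{pdcon2}, expanding $\nabla_i(cX^j)$ on $M_\ast(cX)$ gives $\tilde\si_i = \si_i + d\log c_i$, and the projective dilatativeness of $cX$ is Lemma \ref{projdiltransformlemma}.
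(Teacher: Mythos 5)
Your proposal is correct, and for the implication \eqref{projdil2}$\Rightarrow$\eqref{projdil1}, the curvature identities, and the transformation rules \eqref{pdcon1}--\eqref{pdcon2} it follows the same computational skeleton as the paper: substitute $\nabla_{i}X^{j} = f\delta_{i}\,^{j} + \si_{i}X^{j}$ into the Ricci identity, trace to solve for $df_{i} - f\si_{i}$, substitute into the definition of $B_{ijk}\,^{l}$, and track the trace of the difference tensor for \eqref{pdcon1}. Where you genuinely diverge is \eqref{projdil1}$\Rightarrow$\eqref{projdil2}. The paper stays tensorial throughout: it rewrites \eqref{projdil1} as $X^{[a}\delta_{(i}\,^{b}\nabla_{j)}X^{c]} = 0$, traces, contracts with an arbitrary $Y$ to obtain $0 = ((n-1)\nabla_{Y}X - \divn(X)Y)\wedge X + Y\wedge \nabla_{X}X$, puts $Y = X$ to get $(n-2)X\wedge\nabla_{X}X = 0$, and then observes that the resulting coefficient assignment $Y \mapsto b(Y)$ is a $\cinf(M_{\ast})$-module map, so that $\si$ is a smooth one-form with no extra work. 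You argue pointwise instead (polarization, then comparison of $\nabla_{Y+Z}X$ with $\spn\{X, Y+Z\}$ to equalize the stretch factors), which is fine but obliges you to prove smoothness of $\si$ separately; your fix --- uniqueness of the decomposition by rank comparison, plus recovery of $\si$ by contracting the smooth tensor $\nabla_{i}X^{j} - \tfrac{1}{n}\divn(X)\delta_{i}\,^{j}$ against a local covector $v$ with $v(X) = 1$ --- works, with the small caveat that this contraction yields $\si_{i} - \tfrac{1}{n}\si(X)v_{i}$, so one further contraction with $X^{i}$ is needed to extract $\si(X)$ before solving for $\si_{i}$. The paper's trace-first route buys smoothness for free; yours is more elementary linear algebra at the cost of the patching step you correctly identified as the delicate point.

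One of your side remarks deserves emphasis: the trace-freeness check on \eqref{projdilweyl} is not a formality, because the formula as printed fails it. Carrying out the substitution you describe (equivalently, combining the paper's own \eqref{radskew} and \eqref{ricrad}) yields
\begin{align*}
X^{p}B_{ijp}\,^{k} &= (d\si_{ij} + \tfrac{2}{n+1}R_{[ij]})X^{k} + \tfrac{1}{n-1}\left(\delta_{i}\,^{k}(d\si_{jp} + \tfrac{2}{n+1}R_{[jp]}) - \delta_{j}\,^{k}(d\si_{ip} + \tfrac{2}{n+1}R_{[ip]}) \right)X^{p},
\end{align*}
in which the two Kronecker terms appear with the opposite signs from \eqref{projdilweyl}; only this version satisfies $X^{p}B_{ijp}\,^{j} = 0$, whereas the printed right-hand side traces to $2(d\si_{ip} + \tfrac{2}{n+1}R_{[ip]})X^{p}$, which does not vanish in general. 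So your derivation produces the correct identity and your sanity check is exactly what detects the sign slip in the stated one. Finally, your treatment of \eqref{projdilcotton} is only a sketch, but the route you indicate --- differentiate the traced identity, antisymmetrize, invoke the contracted Bianchi identity, and rewrite in terms of the Cotton tensor --- is precisely the paper's; to make it a complete proof you would need to carry out the computation the paper records in \eqref{fric} and \eqref{fricskew}.
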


\begin{proof}
Condition \eqref{projdil1} can be rewritten as the tensorial identity 
\begin{align}\label{projdilre}
X^{[a}\delta_{(i}\,^{b}\nabla_{j)}X^{c]} = 0.
\end{align}
It is immediate that \eqref{projdil2} implies \eqref{projdilre} on $M_{\ast}$. Because $\nabla_{i}X^{j}$ is smooth on $M$, that \eqref{projdilre} holds on $M_{\ast}$ implies it holds on $M$ as well. Thus \eqref{projdil2} implies \eqref{projdil1}. Suppose $X$ and $\nabla$ satisfy \eqref{projdil1}. Tracing \eqref{projdilre} in $j$ and $c$ gives
\begin{align}\label{pd1}
0 = (1-n)X^{[a}\nabla_{i}X^{b]} + X^{[a}\delta_{i}\,^{b]}\nabla_{p}X^{p} - X^{p}\nabla_{p}X^{[a}\delta_{i}\,^{b]}.
\end{align}
Contracting \eqref{pd1} with an arbitrary vector field $Y^{i}$ gives 
\begin{align}\label{pd2b}
0 = ((n-1)\nabla_{Y}X - \divn(X)Y)\wedge X + Y\wedge \nabla_{X}X.
\end{align}
Taking $Y = X$ in \eqref{pd2b} shows $(n-2)X \wedge \nabla_{X}X= 0$, so $X \wedge \nabla_{X}X = 0$, since $n > 2$. This shows \eqref{projdil1} implies \eqref{projdil4}. It also implies that there is $g \in \cinf(M_{\ast})$ such that $\nabla_{X}X = gX$ on $M_{\ast}$. Substituting this into \eqref{pd2b} shows that on $M_{\ast}$ there holds $0 = ((n-1)\nabla_{Y}X + (g -\divn(X))Y) \wedge X$ for all $Y \in \Ga(TM)$. Consequently, for each $Y \in \Ga(TM)$ there is $b(Y) \in \cinf(M_{\ast})$ such that $\nabla_{Y}X + \tfrac{1}{n-1}(g -\divn(X))Y) = b(Y)X$ on $M_{\ast}$. From this last expression it is evident that the assignment $Y \to b(Y)$ is a $\cinf(M_{\ast})$-module map, so that there is $\si_{i} \in \Ga(T^{\ast}M_{\ast})$ such that $b(Y) = \si_{p}Y^{p}$ on $M_{\ast}$. This shows $\nabla_{i}X^{j} - \tfrac{1}{n}\divn(X)\delta_{i}\,^{j} = \si_{i}X^{j} - \tfrac{1}{n}\si_{p}X^{p}\delta_{i}\,^{j}$ on $M_{\ast}$, so shows \eqref{projdil2}. 

Suppose $X$ is projectively dilatative with respect to $\nabla$ so that $\nabla_{i}X^{j} = \si_{i}X^{j} + f\delta_{i}\,^{j}$ where $f = \tfrac{1}{n}(\div_{\nabla}(X) - \si(X))$. Antisymmetrizing $\nabla_{i}\nabla_{j}X^{k} = df_{i}\delta_{j}\,^{k} + X^{k}\nabla_{i}\si_{j} + \si_{j}\nabla_{i}X^{k} = df_{i}\delta_{j}\,^{k} + f\si_{j}\delta_{i}\,^{k} + \left(\nabla_{i}\si_{j} + \si_{i}\si_{j}\right)X^{k}$ gives
\begin{align}\label{radskew}
X^{p}R_{ijp}\,^{k} & = 2\nabla_{[i}\nabla_{j]}X^{k} = 2df_{[i}\delta_{j]}\,^{k} - 2f\si_{[i}\delta_{j]}\,^{k} + d\si_{ij}X^{k}.
\end{align}
Tracing \eqref{radskew} gives
\begin{align}\label{ricrad}
R_{ip}X^{p} & = (1-n)(df_{i} - f\si_{i}) + d\si_{pi}X^{p}.
\end{align}
Combining \eqref{radskew} and \eqref{ricrad} gives \eqref{projdilweyl}.
Differentiating \eqref{ricrad} and using $\nabla_{i}X^{j} = f\delta_{i}\,^{j} + \si_{i}X^{j}$ to simplify the result gives
\begin{align}\label{fric}
\begin{split}
f(R_{ij} + d\si_{ij} &+ (1-n)\nabla_{j}\si_{j}) 
%\\&
  = -X^{p}(\nabla_{j}R_{ip} + \nabla_{j}d\si_{ip}) + (n-1)\left(2\si_{(i}df_{j)} - f\si_{i}\si_{j} - \nabla_{j}df_{i}\right).
\end{split}
\end{align}
Antisymmetrizing \eqref{fric} and using the traced differential Bianchi identity to simplify the result yields
\begin{align}\label{fricskew}
\begin{split}
f(R_{[ij]} &+ \tfrac{n+1}{2}d\si_{ij}) = X^{p}(\nabla_{[i}R_{j]p} + \nabla_{[i}d\si_{j]p}) = \tfrac{1}{2}X^{p}(\nabla_{q}R_{ijp}\,^{q} - \nabla_{p}d\si_{ij}).
\end{split}
\end{align}
Since $C_{ijk} = \tfrac{2}{1-n}\nabla_{[i}R_{j]k} - \tfrac{2}{n^{2} - 1}\nabla_{k}R_{[ij]}$, the identity \eqref{fricskew} can be rewritten as \eqref{projdilcotton}.

The claims \eqref{pdcon1} and \eqref{pdcon2} about how $\si_{i}$ transforms when $\nabla$ and $X$ are replaced by $\tnabla$ and $\tilde{X}$ follow from straightforward computations, with the exception of the final claim of \eqref{pdcon1}, which is a bit more involved. Write $\Pi_{ij}\,^{k} = 2\ga_{(i}\delta_{j)}\,^{k} + Q_{ij}X^{k}$, so that $\tnabla = \nabla + \Pi_{ij}\,^{k}$. Then $\Pi_{ip}\,^{p} = (n+1)\ga_{i} + q_{i}$, where $q_{i} = Q_{ip}X^{p}$. From $2\tilde{R}_{[ij]} - 2R_{[ij]} = -2\nabla_{[i}\Pi_{j]p}\,^{p} = - (n+1)d\ga_{ij} - dq_{ij}$ and $d\tilde{\si}_{ij} = d\si_{ij} + d\ga_{ij} + dq_{ij}$ there results \eqref{pdcon1}. 
\end{proof}

\begin{corollary}\label{projdilnormalcorollary}
On a manifold, $M$, of dimension $n > 2$, if $X \in \Ga(TM)$ is projectively dilatative with respect to the projective structure $\en$, there is a unique $\nabla \in \en$ such that $X$ is dilatative with respect to $\nabla$ on $M_{\ast} = \{p \in M: X_{p} \neq 0\}$.
\end{corollary}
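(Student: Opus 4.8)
The plan is to promote the one-form $\si$ produced by Lemma~\ref{projdillemma} into the precise projective gauge transformation that kills the $X^{j}$-term in $\nabla_{i}X^{j}$. Fix any representative $\nabla \in \en$. Since $X$ is projectively dilatative with respect to $\nabla$ and $n>2$, Lemma~\ref{projdillemma}\eqref{projdil2} furnishes a smooth one-form $\si_{i} \in \Ga(\ctm)$ on $M_{\ast}$ with
\begin{align*}
\nabla_{i}X^{j} = \si_{i}X^{j} + f\delta_{i}\,^{j}, \qquad f = \tfrac{1}{n}(\divn(X) - \si(X)).
\end{align*}
The whole content of the claim is that the only obstruction to dilatativeness is the $\si_{i}X^{j}$ term, and that this term transforms by a pure shift of $\si$ under the projective changes allowed within $\en$, so that it can be gauged away uniquely on $M_{\ast}$.

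For existence I would set $\hat\nabla = \nabla - 2\si_{(i}\delta_{j)}\,^{k}$ on $M_{\ast}$, which is projectively equivalent to $\nabla$ and hence represents $\en$ there. Using the standard rule that adding $2\ga_{(i}\delta_{j)}\,^{k}$ to a connection changes its action on a vector field $Z$ by $\ga_{i}Z^{j} + \ga(Z)\delta_{i}\,^{j}$ (here with $\ga = -\si$), I compute
\begin{align*}
\hat\nabla_{i}X^{j} = \nabla_{i}X^{j} - \si_{i}X^{j} - \si(X)\delta_{i}\,^{j} = (f - \si(X))\delta_{i}\,^{j},
\end{align*}
so $\hat\nabla_{i}X^{j}$ is a function multiple of $\delta_{i}\,^{j}$, and $X$ is dilatative with respect to $\hat\nabla$ on $M_{\ast}$ by Lemma~\ref{affdillemma}.

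For uniqueness, suppose $\tnabla \in \en$ also makes $X$ dilatative on $M_{\ast}$ and write $\tnabla = \hat\nabla + 2\eta_{(i}\delta_{j)}\,^{k}$ for a one-form $\eta$. The same transformation rule gives $\tnabla_{i}X^{j} = (f - \si(X))\delta_{i}\,^{j} + \eta_{i}X^{j} + \eta(X)\delta_{i}\,^{j}$. For the left-hand side to be proportional to $\delta_{i}\,^{j}$, the outer product $\eta_{i}X^{j}$ must itself be a multiple of $\delta_{i}\,^{j}$; but $\eta_{i}X^{j}$ has rank at most one as a $\binom{1}{1}$-tensor whereas a nonzero multiple of $\delta_{i}\,^{j}$ has rank $n\geq 2$, so the multiple is zero and $\eta_{i}X^{j}=0$. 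Since $X$ is nowhere vanishing on $M_{\ast}$, this forces $\eta_{i}=0$ there, whence $\tnabla = \hat\nabla$. (Independence of the construction from the initial choice of $\nabla$ also follows directly, since by Lemma~\ref{projdillemma}\eqref{pdcon1} the one-form $\si$ shifts by exactly the gauge one-form, leaving $\hat\nabla$ unchanged; uniqueness subsumes this anyway.)

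The one genuine subtlety, rather than an obstacle, is that $\si$ and hence $\hat\nabla$ are a priori defined only over $M_{\ast}$, so the statement is naturally read as an assertion about the restricted projective structure on $M_{\ast}$; the existence of a globally smooth $\si$ on $M_{\ast}$ is precisely what Lemma~\ref{projdillemma} guarantees, and the hypothesis $n>2$ enters only through that lemma. Everything else is the bookkeeping of the projective transformation rule, which I would carry out exactly as above.
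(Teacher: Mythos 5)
Your proposal is correct and follows essentially the same route as the paper: the paper's proof likewise invokes Lemma \ref{projdillemma} to produce $\si_{i}$ on $M_{\ast}$ and declares $\tnabla = \nabla - 2\si_{(i}\delta_{j)}\,^{k}$ to be the unique dilatative representative. Your explicit verification of the gauge computation, the rank-one argument for uniqueness, and the remark that everything lives on $M_{\ast}$ merely spell out what the paper leaves implicit.
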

\begin{proof}
By Lemma \ref{projdillemma}, for any $\nabla \in \en$ there are $f \in \cinf(M_{\ast})$ and  $\si_{i} \in \Ga(T^{\ast} M_{\ast})$ such that $\nabla_{i}X^{j} = f\delta_{i}\,^{j} + \si_{i}X^{j}$ on $M_{\ast}$. The unique $\tnabla\in \en$ such that $X$ is dilatative with respect to $\tnabla$ on $M_{\ast}$ is $\tnabla = \nabla - 2\si_{(i}\delta_{j)}\,^{k}$.
\end{proof}

\begin{example}\label{projdilexample}
This example shows that the $\si_{i}$ in \eqref{projdil2} need not extend smoothly to all of $M$. Let $D$ be the flat connection determined by the affine structure on the $n$-dimensional vector space $\ste$ and let $h_{ij}$ be a Euclidean metric on $\ste$, meaning that it is $D$-parallel and positive definite. The Euler vector field $\eul^{i}$ generating the flow by dilations around the origin satisfies $D_{i}\eul^{j} = \delta_{i}\,^{j}$. Let $X^{i} = |\eul|^{2\al}_{h}\eul^{i}$.
For $\al > -1/2$, the vector field $X$ has a unique zero at the origin, so that $\ste_{\ast} = \ste \setminus\{0\}$. When $\al \leq - 1/2$, $X$ is defined and nonvanishing on $\ste\setminus\{0\}$. On $\ste\setminus\{0\}$, $D_{i}X^{j} = |\eul|^{2\al}_{h}\delta_{i}\,^{j} + 2\al|\eul|^{-2}_{h}\eul^{p}h_{pi}X^{j}$, so $D_{p}X^{p} = (n+2\al)|\eul|^{2\al}_{h}$ and $\si_{i} = 2\al|\eul|^{-2}_{h}\eul^{p}h_{pi}$.  
In the case $\al > -1/2$, $\si_{i}$ does not extend smoothly to the origin, but in both cases $\si_{p}X^{p} = 2\al |\eul|^{2\al}_{h}$ is smooth on the entire domain of definition of $X$. There holds $X \wedge Y \wedge D_{Y}X = 0$ for all vector fields $Y$ on the domain of definition of $X$. Note also that $d\si_{ij} = 0$ where $\si_{i}$ is defined. When $\al = -n/2$, $X$ is an example of a divergence free projectively dilatative vector field.
\end{example}

%%% SAVE: True, but this is never used
%\begin{lemma}\label{nondeglemma}
%If $X \in \Ga(TM)$ is projectively dilatative with respect to the projective structure $\en$ then $2(\nabla_{p}X^{p})X^{i} - (n+1)X^{p}\nabla_{p}X^{i}$ does not depend on the choice of $\nabla \in \en$.
%\end{lemma}
%\begin{proof}
%If $\tnabla = \nabla + 2\ga_{(i}\delta_{j)}\,^{k}$ then $\tnabla_{p}X^{p} - \nabla_{p}X^{p} = (n+1)\ga_{p}X^{p}$ and $X^{p}(\tnabla_{p}X^{i} - \nabla_{p}X^{i}) =  2\ga_{p}X^{p}X^{i}$. The claim follows. 
%\end{proof}

By \eqref{projdil4} of Lemma \ref{projdillemma} a vector field $X$ projectively dilatative with respect to $\en$ is projectively geodesic in the sense that $X \wedge \nabla_{X}X = 0$ for all $\nabla \in \en$, so every integral manifold of the line field $L$ spanned by $X$ is a projective geodesic of $\en$. Consequently, by Lemma \ref{projdillemma}, that a vector field $X$ be projectively dilatative with respect to a projective structure $\en$ means that every integral manifold of the line field $L$ spanned by $X$ is a projective geodesic of $\en$, and that every flag $(C, K)$, the base curve of which is an integral manifold of $L$, is a parallel geodesic flag for $\en$. 

If a line field $L$ is dilatative with respect to connections $\nabla$ and $\bnabla$ then it makes sense to declare $\nabla$ and $\bnabla$ equivalent if every parallel geodesic flag for $\nabla$ with base curve an integral manifold of $L$ is a parallel geodesic flag for $\bnabla$, and similarly with the roles of $\bnabla$ and $\nabla$ interchanged. Define a \emph{projective dilatation structure} to be a pair $(\enb, L)$ comprising a line field $L \subset TM$ and an equivalence class $\enb$ of torsion-free connections such that $L$ is projectively dilatative with respect to each $\nabla \in \enb$ and any two representatives of $\enb$ are equivalent in the preceding sense.

If $\nabla \in \enb$ then $\enb$ contains the projective structure $\en$ generated by $\nabla$, so it makes sense to say that a projective structure $\en$ is \emph{subordinate} to a projective dilatation structure $(\enb, L)$ if $\en \subset \enb$. It makes sense to ask for conditions distinguishing a projective structure subordinate to a projective dilatation structure. Theorem \ref{conenormalizationtheorem} can be viewed as accomplishing this in a particular setting. 

If $(\enb, L)$ is a projective dilatation structure, then the integral manifolds of $L$ are projective geodesics for any $\nabla \in\enb$. As the parallel flags the base paths of which are the integral manifolds of $L$ do not depend on the choice of $\nabla \in \enb$, it makes sense to call these parallel flags the \emph{radial flags} of the projective dilatation structure $(\enb, L)$.

A \emph{radiant structure} on a manifold $M$ is a pair $(\nabla, \rad)$ comprising a torsion-free affine connection $\nabla$ and a vector field $\rad \in \Ga(TM)$ such that $\nabla_{i}\rad^{j} = \delta_{i}\,^{j}$. The vector field $\rad$ is said to be \emph{radiant} with respect to $\nabla$. The curvature of a radiant structure means the curvature of $\nabla$, and a radiant structure is flat if $\nabla$ is flat. A flat affine manifold admits a radiant vector field if and only if its affine holonomy has a fixed point. For back ground on flat radiant manifolds see \cite{Choi-radiant, Fried-Goldman-Hirsch, Goldman-Hirsch}. The simplest examples of radiant flat affine manifolds are cones in a vector space and their cocompact quotients by subgroups of dilations, such as affine Hopf manifolds. 

\begin{example}\label{hopfexample}
Let $\ste$ be an $(n+1)$-dimensional vector space and let $\rad$ be the radial vector field generating dilations by $e^{t}$ and let $\hnabla$ be the flat affine connection determined by the vector space structure.
For $\la > 1$ let $\lb \la \ra$ denote the cyclic subgroup of $\reat$ generated by the powers of $\la$. As $\rad$ and $\hnabla$ are invariant under $\lb \la \ra$, they descend to the quotient $\hopf^{n}(\la)$ of $\ste \setminus \{0\}$ by $\lb \la \ra$, which is therefore a radiant flat affine manifold. The manifold $\hopf^{n}(\la)$ is called an \emph{affine Hopf manifold}. It is diffeomorphic to $S^{n-1} \times S^{1}$. 

More generally, the following construction due to Benzecri \cite[Section $2$]{Benzecri} shows that if an $n$-manifold $M$ carries a flat projective structure, then $M \times S^{1}$ carries an affine structure. The presentation here follows  Y. Benoist \cite[Section $1.3$]{Benoist-affine-tori}. Suppose $n > 1$ so that $\ste \setminus\{0\}$ is simply-connected. The idea comes from viewing the affine structure on $\hopf(\la)$ as a lifting of the projective structure on the space of rays in $\ste$, $\projp(\ste)$ via the projection $\bar{\rho}:\hopf(\la) \to \projp(\ste)$ induced by the canonical projection $\rho:\ste \setminus \{0\} \to \projp(\ste)$. The group of projective transformations of $\projp(\ste)$ is isomorphic to the group $SL^{\pm}(\ste)$ defined as the subgroup of $GL(\ste)$ acting unimodularly on $\ext^{n+1}\ste$. Let $\pi:\tilde{M} \to M$ be a universal cover of $M$, let $\dev:\tilde{M} \to \projp(\ste)$ and $\hol:\pi(M) \to SL^{\pm}(\ste)$ be the corresponding developing and holonomy maps of the projective structure. Let $\hat{M}= \{(x, v) \in \tilde{M} \times \hopf^{n}(\la): \dev(x) = \bar{\rho}(v)\}$.  Define an action of $\ga \in \pi(M)$ on $\hat{M}$ by $\ga(x, v) = (\ga x, \hol(\ga)v)$ and let $\bar{M}$ be the quotient of $\hat{M}$ by this action of $\pi_{1}(M)$. A universal cover of $\bar{M}$ is given by $N = \{(x, v) \in \tilde{M} \times \ste \setminus \{0\}: \dev(x) = \si(v)\}$ and the developing map $\bar{\dev}:N \to \ste$ of the resulting affine structure on $\bar{M}$ is given by $\bar{\dev}(x, v) = \si(v)$. Since $\bar{\dev}(N) \subset \ste \setminus\{0\}$, this affine structure is radiant.

In this example, $\bar{M}$ is diffeomorphic to $M \times S^{1}$. Example \ref{thomashopfexample} generalizes this example by allowing the projective structure to be nonflat (in this case the resulting radiant structure is also nonflat).
\end{example}

\begin{example}
If $\nabla$ is the Levi-Civita connection of a pseudo-Riemannian metric and $X$ is radiant with respect to $\nabla$, then $X$ is said to be \emph{concurrent}. See \cite[Section $1.10$]{Chen-warped}.
\end{example}

\begin{example}
A dilatative vector field $X$ satisfies that $\div_{\nabla}(X)$ is a nonzero constant if and only if $\nabla_{i}\nabla_{j}X^{k} = 0$. In this case, a constant multiple of $X$ is radiant.
\end{example}

\begin{lemma}\label{ricshiftlemma}
Let $M$ be a smooth manifold of dimension $n \geq 2$. If $X \in \Ga(TM)$ is dilatative with respect to $\nabla$ and $\nabla_{i}X^{j} = f\delta_{i}\,^{j}$ with $f\in \cinf(M)$ nowhere vanishing, then $\tnabla = \nabla + \tfrac{1}{(1-n)}f^{-1}R_{ij}X^{k}$ and $\tilde{X}^{i} = f^{-1}X^{i}$ satisfy $\tnabla_{i}\tilde{X}^{j} = \delta_{i}\,^{j}$, so constitute a radiant structure.
\end{lemma}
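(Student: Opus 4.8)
The plan is to reduce the radiant equation $\tnabla_{i}\tilde{X}^{j} = \delta_{i}\,^{j}$ to a single curvature identity for the dilatative field, and then to establish that identity by commuting covariant derivatives. Writing the difference tensor as $\Pi_{ij}\,^{k} = \tfrac{1}{1-n}f^{-1}R_{ij}X^{k}$, so that $\tnabla = \nabla + \Pi$, I would compute the two contributions to $\tnabla_{i}\tilde{X}^{j}$ separately. Since $\tilde{X}^{i} = f^{-1}X^{i}$ and $\nabla_{i}X^{j} = f\delta_{i}\,^{j}$, the Leibniz rule gives $\nabla_{i}\tilde{X}^{j} = -f^{-2}(df_{i})X^{j} + f^{-1}(f\delta_{i}\,^{j}) = \delta_{i}\,^{j} - f^{-2}(df_{i})X^{j}$, while the correction term contributes $\Pi_{ip}\,^{j}\tilde{X}^{p} = \tfrac{1}{1-n}f^{-2}(R_{ip}X^{p})X^{j}$. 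Adding these, the whole statement collapses to the identity $R_{ip}X^{p} = (1-n)df_{i}$, since then $\tfrac{1}{1-n}f^{-2}(R_{ip}X^{p})X^{j} = f^{-2}(df_{i})X^{j}$ exactly cancels the remaining term and leaves $\tnabla_{i}\tilde{X}^{j} = \delta_{i}\,^{j}$.

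To prove this identity I would differentiate the hypothesis a second time: $\nabla_{k}\nabla_{i}X^{j} = \nabla_{k}(f\delta_{i}\,^{j}) = (df_{k})\delta_{i}\,^{j}$. Antisymmetrizing in $k$ and $i$ and invoking the Ricci identity $2\nabla_{[k}\nabla_{i]}X^{j} = R_{kip}\,^{j}X^{p}$ for the torsion-free connection $\nabla$ yields $R_{kip}\,^{j}X^{p} = (df_{k})\delta_{i}\,^{j} - (df_{i})\delta_{k}\,^{j}$. Contracting $j$ with $k$ and using the convention $R_{ij} = R_{pij}\,^{p}$ (so that $R_{kip}\,^{k} = R_{ip}$) gives $R_{ip}X^{p} = (df_{i}) - n(df_{i}) = (1-n)df_{i}$, which is precisely what is needed. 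This finishes the verification of the radiant equation; note that the nowhere-vanishing of $f$ enters only through the factor $f^{-1}$ in $\Pi$ and in $\tilde{X}$, and the two-line calculation above is otherwise routine.

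It remains to confirm that $\tnabla$ is torsion-free, and this is where I expect the only genuine subtlety to lie. Since $\nabla$ is torsion-free, the torsion of $\tnabla$ is $2\Pi_{[ij]}\,^{k} = \tfrac{2}{1-n}f^{-1}R_{[ij]}X^{k}$, so $\tnabla$ is torsion-free exactly when the antisymmetric part of the Ricci tensor vanishes, equivalently when the connection induced by $\nabla$ on $\Det\ctm$ is flat (recall $-R_{ijp}\,^{p} = 2R_{[ij]}$). The curvature identity above controls only the contraction $R_{ip}X^{p}$ and imposes no condition on $R_{[ij]}$ transverse to $X$, so the substantive point behind the phrase ``constitute a radiant structure'' is the symmetry $R_{[ij]} = 0$, which the correction tensor $\tfrac{1}{1-n}f^{-1}R_{ij}X^{k}$ does not by itself produce. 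Handling this torsion term correctly is the main obstacle; the verification of the defining equation $\tnabla_{i}\tilde{X}^{j} = \delta_{i}\,^{j}$ is then immediate from the computation sketched above.
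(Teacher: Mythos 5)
Your first two paragraphs are exactly the paper's proof. The paper invokes its earlier identity \eqref{ricrad} specialized to $\si_{i}=0$, namely $R_{ip}X^{p} = (1-n)df_{i}$, and then performs the same one-line cancellation $\tnabla_{i}\tilde{X}^{j} = -f^{-2}df_{i}X^{j} + f^{-1}\left(\nabla_{i}X^{j} + \tfrac{1}{1-n}f^{-1}R_{ip}X^{p}X^{j}\right) = \delta_{i}\,^{j}$; your rederivation of the identity by differentiating $\nabla_{i}X^{j}=f\delta_{i}\,^{j}$, antisymmetrizing, and tracing the Ricci identity is precisely the computation that produces \eqref{radskew} and \eqref{ricrad} in the proof of Lemma \ref{projdillemma}. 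So for the displayed equation your argument is complete and coincides with the paper's.

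Concerning your third paragraph: the obstruction you flag is real, but do not look for a hidden step that handles it — the paper's proof consists solely of the verification of $\tnabla_{i}\tilde{X}^{j}=\delta_{i}\,^{j}$ and never mentions torsion, and in fact torsion-freeness of $\tnabla$ does not follow from the hypotheses. Torsion-freeness is equivalent to $R_{[ij]}X^{k}=0$, and the paper itself constructs radiant structures with nonzero purely antisymmetric Ricci tensor (for example on $S^{3}$, Theorem \ref{qaconetheorem}); such a structure satisfies the hypotheses of the lemma with $f\equiv 1$, and for it the lemma's $\tnabla = \nabla + \tfrac{1}{1-n}R_{[ij]}X^{k}$ visibly has torsion. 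Nor can one repair the statement by replacing $R_{ij}$ with $R_{(ij)}$ in the difference tensor: the radiant equation would then require $R_{(ip)}X^{p}=(1-n)df_{i}$, which fails exactly when $R_{[ip]}X^{p}\neq 0$, i.e.\ (in the radiant case) when the one-form $\er$ is nonzero, as in Example \ref{nonuniquenessexample}. So the concluding clause ``so constitute a radiant structure'' must be read either loosely (the pair satisfies the defining equation $\tnabla_{i}\tilde{X}^{j}=\delta_{i}\,^{j}$, torsion aside) or under an implicit additional hypothesis of Ricci symmetry. Your proposal proves exactly as much as the paper's proof does; the gap you identified lies in the statement, not in your argument.
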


\begin{proof}
By \eqref{ricrad}, $R_{ip}X^{p} = (1-n)df_{i}$, so $\tnabla_{i}X^{j} = f\delta_{i}\,^{j}$, so that
\begin{align}
\tnabla_{i}\tilde{X}^{j} & = -f^{-2}df_{i}X^{j} + f^{-1}(\nabla_{i}X^{j} + \tfrac{1}{1-n}f^{-1}R_{ip}X^{p}X^{j}) = f^{-1}\nabla_{i}X^{j} = \delta_{i}\,^{j}.  \qedhere
\end{align}
\end{proof}

\begin{corollary}\label{almostradiantcorollary}
On a manifold, $M$, of dimension $n > 2$, suppose $X \in \Ga(TM)$ is projectively dilatative with respect to torsion-free affine connection $\nabla$ and $\nabla_{i}X^{j} = f\delta_{i}\,^{j} + \si_{i}X^{j}$ for $f \in \cinf(M)$ and $\si_{i} \in \Ga(T^{\ast}M_{\ast})$ where $M_{\ast} = \{p \in M: X_{p} \neq 0\}$. On the subset $M_{\ast\ast}$ of $M_{\ast}$ where $f - \si_{p}X^{p} \neq 0$, the connection $\tnabla = \nabla  - 2\si_{(i}\delta_{j)}\,^{k} + \tfrac{1}{1-n}(f - \si_{p}X^{p})^{-1}(R_{ij} + (n-1)(\nabla_{i}\si_{j} + \si_{i}\si_{j}) + d\si_{ij})X^{k}$ and $\tilde{X} = (f - \si_{p}X^{p})^{-1}X^{i}$ satisfy $\tnabla_{i}\tilde{X}^{j} = \delta_{i}\,^{j}$ so constitute a radiant structure generating on $M_{\ast\ast}$ the same projective dilatation structure as that generated by $(\nabla, X)$.
\end{corollary}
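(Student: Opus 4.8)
The plan is to deduce the statement by chaining together the normalization of Corollary \ref{projdilnormalcorollary} with the Ricci shift of Lemma \ref{ricshiftlemma}, using the transformation rule \eqref{projvary} to rewrite the Ricci term into the form displayed in the statement. The whole argument is a bookkeeping combination of results already in hand; the only point requiring genuine checking is that the resulting radiant structure does not alter the projective dilatation structure.

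First I would pass to the projectively equivalent connection $\nabla^{\prime} = \nabla - 2\si_{(i}\delta_{j)}\,^{k}$. Using $\nabla^{\prime}_{i}Z^{k} = \nabla_{i}Z^{k} - \si_{i}Z^{k} - \si_{p}Z^{p}\delta_{i}\,^{k}$ and the hypothesis $\nabla_{i}X^{j} = f\delta_{i}\,^{j} + \si_{i}X^{j}$ gives $\nabla^{\prime}_{i}X^{j} = (f - \si_{p}X^{p})\delta_{i}\,^{j}$, so $X$ is dilatative with respect to $\nabla^{\prime}$ with dilation factor $f^{\prime} = f - \si_{p}X^{p}$; this is exactly the normalization of Corollary \ref{projdilnormalcorollary}, and $f^{\prime}$ is nowhere vanishing precisely on $M_{\ast\ast}$. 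Then I would apply Lemma \ref{ricshiftlemma} to the dilatative pair $(\nabla^{\prime}, X)$ on $M_{\ast\ast}$: writing $R^{\prime}_{ij}$ for the Ricci curvature of $\nabla^{\prime}$, the lemma produces the radiant structure $\tnabla = \nabla^{\prime} + \tfrac{1}{1-n}(f^{\prime})^{-1}R^{\prime}_{ij}X^{k}$ with radiant field $\tilde{X} = (f^{\prime})^{-1}X$. It remains only to identify $R^{\prime}_{ij}$: since $\nabla^{\prime}$ is obtained from $\nabla$ by substituting $-\si$ for $\si$ in \eqref{projvary}, one reads off $R^{\prime}_{ij} = R_{ij} + (n-1)(\nabla_{i}\si_{j} + \si_{i}\si_{j}) + d\si_{ij}$. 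Inserting this together with $\nabla^{\prime} = \nabla - 2\si_{(i}\delta_{j)}\,^{k}$ into the expression for $\tnabla$ reproduces the connection in the statement verbatim, and $\tilde{X} = (f - \si_{p}X^{p})^{-1}X$ matches as well, so $\tnabla_{i}\tilde{X}^{j} = \delta_{i}\,^{j}$.

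For the final claim I would argue that $(\tnabla, \tilde{X})$ generates the same projective dilatation structure as $(\nabla, X)$ on $M_{\ast\ast}$. The line field $L$ is unchanged because $\tilde{X}$ is a nonvanishing scalar multiple of $X$, and by Lemma \ref{projdiltransformlemma} the field $X$ remains projectively dilatative with respect to $\tnabla$, since $\tnabla - \nabla = -2\si_{(i}\delta_{j)}\,^{k} + Q_{ij}X^{k}$ with $Q_{ij} = \tfrac{1}{1-n}(f^{\prime})^{-1}(R_{ij} + (n-1)(\nabla_{i}\si_{j} + \si_{i}\si_{j}) + d\si_{ij})$ is of the admissible form. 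It then suffices to verify that $\nabla$ and $\tnabla$ have the same parallel geodesic flags with base curves integral to $L$. Along such a curve $\dot{\ga} = \lambda X$, and for any $W$ spanning the flag with $\dot{\ga}$ the difference term contributes $(-\si(\dot{\ga}))W + (-\si(W))\dot{\ga} + Q(\dot{\ga}, W)X$; the first summand is a multiple of $W$, the second of $\dot{\ga}$, and the third of $X \propto \dot{\ga}$, so all three die upon wedging with $W \wedge \dot{\ga}$, leaving the flag equation \eqref{flageq} unchanged, and the analogous computation with $W = \dot{\ga}$ preserves the geodesic condition.

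The main obstacle is the last paragraph: one must confirm that the correction term $Q_{ij}X^{k}$ — which is precisely the piece that makes $X$ genuinely radiant rather than merely dilatative — is invisible to the parallel-flag condition along integral curves of $L$. This works out cleanly only because that correction is proportional to $X$, hence to $\dot{\ga}$ along those curves; everything else reduces to direct substitution into the already-established Corollary \ref{projdilnormalcorollary}, Lemma \ref{ricshiftlemma}, and the transformation law \eqref{projvary}.
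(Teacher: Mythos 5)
Your proposal is correct and follows exactly the paper's own route: the paper's proof is the one-line instruction to combine Corollary \ref{projdilnormalcorollary} (passing to $\nabla' = \nabla - 2\si_{(i}\delta_{j)}\,^{k}$, with respect to which $X$ is dilatative with factor $f - \si_{p}X^{p}$) with Lemma \ref{ricshiftlemma}, using \eqref{projvary} to identify the Ricci tensor of $\nabla'$, which is precisely your first two paragraphs. Your final paragraph checking that the flag equations along integral curves of $L$ are insensitive to the difference tensor $-2\si_{(i}\delta_{j)}\,^{k} + Q_{ij}X^{k}$ supplies explicitly a verification the paper leaves implicit, but it is elaboration of the same argument rather than a different approach.
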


\begin{proof}
Combine successively Corollary \ref{projdilnormalcorollary} and Lemma \ref{ricshiftlemma} making use of \eqref{projvary}.
\end{proof}

Corollary \ref{almostradiantcorollary} shows that if $(\enb, L)$ is a projective dilatation structure on $M$ that admits a globally defined projectively dilatative representative pair $(\nabla, X)$, then there is an open subset of $M$ on which $(\enb, L)$ is represented by a radiant structure $(\nabla, X)$. In general there is no reason to think a projective dilatation structure will admit a globally defined representative $(\nabla, X)$, and less still to expect that when it does that $f - \si_{p}X^{p}$ will be everywhere nonzero, but nonetheless the corollary provides motivation for studying radiant structures as local avatars of projective dilatation structures. 

Although the remainder of the paper focuses on radiant structures, projective dilatation structures are a priori more general objects whose existence is in principle less limited by topological restrictions than is the existence of radiant structures (as will be shown next). In some imprecise sense radiant structures are to projective dilatation structures as affine structures are to projective structures or tori are to hyperbolic surfaces. 

Let $(\nabla, \rad)$ be a radiant structure on $M$. If $M$ is compact then $\rad$ is complete, but $\nabla$ need not be complete. In the case $\nabla$ is flat, if $\nabla$ is complete then its affine holonomy cannot have a fixed point unless it is trivial, and so $\nabla$ cannot admit a radiant vector field unless it is the standard flat connection on a vector space. Radiant flat affine structures on compact manifolds are the simplest examples of incomplete flat affine connections on compact manifolds. 

Lemma \ref{maxgeodesiclemma} relates the images of geodesics of $\nabla$ with the integral curves of $\rad$. In considering Lemma \ref{maxgeodesiclemma} it is helpful to consider the following example.
If $\si(t)$ is an integral curve of $\rad$, then the reparametrization $\ga(t) = \si(\log|1 - t|)$ is a geodesic of $\nabla$ passing through $\si(0)$. Whether $\ga(t)$ blows up or not when $t$ approaches $1$ depends on the behavior of $\si(s)$ as $s \to -\infty$. If $\si(s)$ approaches a zero of $\rad$ as $s \to -\infty$ then in general the geodesic $\ga(t)$ continues through the apparent singularity. For example this happens on $\rea^{n}$. 
Consider the radial vector field $\eul = x^{i}\tfrac{\pr}{\pr x^{i}}$ on $\rea^{n}$ generated by dilations by $e^{s}$. If $0 \neq v \in \rea^{n}$ then $\eul_{v} \neq 0$. The maximal integral curve of $\eul$ passing through $v$ is $\si(s) = e^{s}v$, defined for $s \in \rea$. The curve $\ga(t) = \si(\log|1 - t|)$ is the geodesic of the standard flat affine connection such that $\ga(0) = v$ and $\tfrac{d}{dt}\ga(0) = -\eul_{v}$. It is defined for all $t \in (-\infty, 1)$, but in fact, as $\si(\log|1-t|) = (1-t)v$ for $t \in (-\infty, 1)$, it is not a maximal geodesic with these initial conditions, such being given by $\tau(t) = (1-t)v$ for $t \in \rea$. The image of this maximal geodesic is the union of three images of integral curves of $\eul$, namely the origin and the two rays leaving the origin in the $v$ and $-v$ directions. Lemma \ref{maxgeodesiclemma} shows that a similar relation between the images of the integral curves of $\rad$ and the images of geodesics of $\nabla$ holds for any radiant structure $(\nabla, \rad)$.

\begin{lemma}\label{maxgeodesiclemma}
For a radiant structure $(\nabla, \rad)$ there hold
\begin{enumerate}
\item The image of the maximal integral curve of $\rad$ passing through a nonsingular point $p$ of $\rad$ is contained in the image of the maximal geodesic of $\nabla$ passing through $p$ and having at $p$ tangent $\rad_{p}$.
\item The image of a maximal geodesic of $\nabla$ passing through a singular point of $\rad$ contains the union of the images of the maximal integral curves of $\rad$ the closures of which contain the singular point. 
\item If $\rad$ is tangent to a maximal totally geodesic submanifold $\Sigma \subset M$ at a point $p \in M \cap \Sigma$ then $\Sigma$ contains every image of an integral curve of $\rad$ the closure of which contains $p$. Moreover, if $p \in \Sigma$ is a singular point of $\rad$ then $\rad$ is tangent to $\Sigma$ along some open neighborhood of $p$ in $\Sigma$. 
\end{enumerate}
\end{lemma}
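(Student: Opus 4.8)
The engine for all three parts is the single identity $\nabla_{X}\rad = X$ (the defining property of a radiant structure), applied along curves. First I would record that an integral curve $\sigma$ of $\rad$ satisfies $\nabla_{d/dt}\dot\sigma = \nabla_{\rad}\rad = \rad = \dot\sigma$, so $\sigma$ is a pregeodesic; the explicit reparametrization $\gamma(t) = \sigma(\log(1+t))$ satisfies $\nabla_{d/dt}\dot\gamma = 0$ with $\gamma(0)=\sigma(0)$ and $\dot\gamma(0)=\rad_{\sigma(0)}$. Next, along any geodesic $\tau$ (so $\nabla_{d/dt}\dot\tau=0$) one has $\nabla_{d/dt}\rad_{\tau(t)} = \nabla_{\dot\tau}\rad = \dot\tau$, whence for any constant $c$
\begin{align*}
\nabla_{d/dt}\bigl(\rad_{\tau(t)} - (t+c)\dot\tau(t)\bigr) = \dot\tau(t) - \dot\tau(t) = 0,
\end{align*}
so $\rad_{\tau(t)} - (t+c)\dot\tau(t)$ is parallel along $\tau$. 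Choosing $c$ by the initial value gives the two identities that drive everything: if $\tau(0)=p$ and $\dot\tau(0)=\rad_{p}$ then $\rad_{\tau(t)} = (1+t)\dot\tau(t)$, while if $\tau(0)=q$ is a singular point (so $\rad_{q}=0$) then $\rad_{\tau(t)} = t\,\dot\tau(t)$ for every geodesic through $q$.

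Part (1) is then immediate: the maximal geodesic through $p$ with velocity $\rad_{p}$ extends $\gamma$, and since the image of $\sigma$ equals the image of $\gamma$ over the corresponding parameter range, it lies in the image of that maximal geodesic. For part (2), let $\sigma$ be a maximal integral curve whose closure contains the singular point $q$, say $\sigma(s)\to q$ as $s$ tends to the relevant endpoint. The geodesic $\tau$ through $p=\sigma(0)$ with $\dot\tau(0)=\rad_{p}$ again contains the image of $\sigma$, and the reparametrization forces $\tau(t)\to q$ as $t\to -1^{+}$; since $\dot\tau(t)$ is the parallel transport of $\rad_{p}$, it converges to a nonzero vector $v\in T_{q}M$. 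Because the geodesic equation is a smooth ODE and both $\tau(t)$ and $\dot\tau(t)$ have finite limits, $\tau$ extends across $t=-1$ with $\tau(-1)=q$, so $q$ lies in the image of the maximal geodesic carrying $\sigma$. Conversely, the identity $\rad_{\tau(t)}=t\dot\tau(t)$ shows that each geodesic through $q$, after the substitution $t=\pm e^{s}$, reparametrizes on each side of $q$ into an integral curve of $\rad$ emanating from $q$, which recovers the decomposition in the $\mathbb{R}^{n}$ example. \textbf{The main obstacle} is precisely this geodesic-extension step: one must rule out that $q$ is merely an escape/boundary value and genuinely show $\tau$ reaches $q$ in finite affine parameter, which is where the convergence of the parallel-transported velocity is essential.

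For part (3), first take $p$ nonsingular with $\rad_{p}\in T_{p}\Sigma$. The integral curve through $p$ is, by part (1), contained in the geodesic through $p$ with velocity $\rad_{p}$, which lies in $\Sigma$ because $\Sigma$ is totally geodesic; hence its image lies in $\Sigma$, and by the flow-box theorem at a nonsingular point this is the only maximal integral curve whose closure meets $p$. For the \emph{moreover} clause, let $p\in\Sigma$ be singular. For each $w\in T_{p}\Sigma$ the geodesic $\tau_{w}$ with $\tau_{w}(0)=p$, $\dot\tau_{w}(0)=w$ stays in $\Sigma$ by total geodesy, so $\dot\tau_{w}(t)\in T_{\tau_{w}(t)}\Sigma$; the singular-point identity gives $\rad_{\tau_{w}(t)}=t\,\dot\tau_{w}(t)\in T_{\tau_{w}(t)}\Sigma$. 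Since $\exp_{p}$ on a neighborhood of $0$ in $T_{p}\Sigma$ covers a neighborhood of $p$ in $\Sigma$, every nearby point of $\Sigma$ is such a $\tau_{w}(t)$, and therefore $\rad$ is tangent to $\Sigma$ throughout a neighborhood of $p$ in $\Sigma$. I expect parts (1) and (3) to be routine once the parallel-field identities are in hand, with the only genuine analytic care needed in the extension argument of part (2).
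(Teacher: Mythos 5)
Your two parallel-field identities are correct and are exactly the right engine. Part (1) is fine (it is the paper's argument up to the choice of reparametrization), and your proof of part (3) — applying the singular-point identity $\rad_{\tau_{w}(t)} = t\,\dot\tau_{w}(t)$ to geodesics of $\Sigma$ issuing from $p$ and using surjectivity of the exponential map of $\Sigma$ — is clean and, if anything, more direct than the paper's, which routes part (3) through part (2). Note that your singular-point identity is precisely equivalent to what the paper proves in geodesic normal coordinates at a singular point, namely that $\rad$ is \emph{exactly} the Euler field $x^{i}\tfrac{\pr}{\pr x^{i}}$ there.

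The genuine gap is in part (2), at the step you yourself flag. You assert that $\dot\tau(t)$ converges to a nonzero vector of $T_{q}M$ as $t \to -1^{+}$ ``since $\dot\tau(t)$ is the parallel transport of $\rad_{p}$.'' That justification does not stand on its own: parallel transport is governed by a linear ODE whose coefficients involve $\dot\tau$ itself, and parallel transport along a curve that merely converges to an endpoint need not have a limit (a curve of infinite coordinate length spiraling into the point can rotate the transported frame forever). To make your extension argument rigorous you would need to show that $\tau|_{(-1,t_{0}]}$ has finite length in a chart at $q$; since $\dot\tau(t) = (1+t)^{-1}\rad_{\tau(t)}$ and $\rad_{q}=0$, this is a genuine $0/0$ issue, and resolving it requires the quantitative decay $|\sigma(s)-q| = O(e^{s})$ as $s \to -\infty$, coming from the fact that the linearization of $\rad$ at its zero is the identity (a hyperbolic source). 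None of this is in your write-up. The cleaner repair — and it is essentially the paper's route — is to avoid extending $\tau$ altogether and instead run your ``conversely'' remark in the needed direction: the singular-point identity shows every radial geodesic from $q$ reparametrizes, via $t = \pm e^{s}$, into integral curves of $\rad$; every point of a punctured normal neighborhood of $q$ lies on such a radial geodesic, so uniqueness of integral curves through nonsingular points forces any maximal integral curve whose closure contains $q$ to coincide near $q$ with one of these rays; then part (1), applied at a point of that ray, places its whole image inside the maximal geodesic extending the radial geodesic, which passes through $q$. As written, your part (2) is incomplete, although all the tools needed to complete it are already in your proposal.
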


\begin{proof}
First it is shown that if $\rad_{p} \neq 0$ then the image of the maximal integral curve $\si:J \to M$ of $\rad$ passing through $p$ is contained in the image of the maximal geodesic through $p$ tangent to $\rad_{p}$ at $p$. Here $J$ is an open interval in $\rea$ containing $0$, $\si(0) = p$, and $\tfrac{d}{ds}\si(s) = \rad_{\si(s)}$ for $s \in J$. It is convenient to write $J = (a, b)$, allowing $a = -\infty$ and $b = \infty$. From $\rad \wedge \nabla_{\rad}\rad  = 0$ it follows that the curve $\ga(t) = \si(\log|1 - t|)$ is the unique $\nabla$ geodesic such that $\ga(0) = p$ and $\tfrac{d}{dt}\ga(0) = -\rad_{p}$. Since $a < 0$, $(1 - e^{b}, 1-e^{a}) \subset (-\infty, 1)$, where $e^{a}$ has to be interpreted as $0$ in the case $a = -\infty$, and $1 - e^{b}$ has to be interpreted as $-\infty$ in the case $b = \infty$. It follows that the image of $\ga$ contains the image of $\si$. The point is that $\ga$ is defined on $(1 - e^{a}, 1 - e^{b})$, but could extend to a larger interval. 

Let $x^{i}$ be normal coordinates in a neighborhood of a singular point $p \in M$ such that the origin corresponds to $p$ and write $\rad = A^{i}(x)\tfrac{\pr}{\pr x^{i}}$ and $X_{i} = \tfrac{\pr}{\pr x^{i}}$. By assumption $X_{i} = \nabla_{X_{i}}\rad = \left(dA^{j}(X_{i}) + A^{p}\Ga_{ip}\,^{j}\right)X_{j}$ in which $\Ga_{ij}\,^{k}$ are the Christoffel symbols of $\nabla$ with respect to the frame $X_{i}$. Because $\rad_{p} = 0$ is assumed, there holds at the origin $\tfrac{\pr A^{j}}{\pr x^{i}}(0) = \delta_{i}\,^{j}$ and so $A^{i} = x^{i} + c^{i}$ for some constants $c^{i}$. Moreover, since $A^{i}(0) = 0$, these constants are $0$, and so $A^{i} = x^{i}$ and $\rad$ has the form $x^{i}X_{i}$ of the usual Euler vector field on $\rea^{n}$, the integral curves of which are the rays leaving the origin and the origin itself. Because the coordinates are normal the image of the geodesic of $\nabla$ passing through the origin with velocity $v$ at the origin is the line passing through the origin in the direction $v$, which is contained in a union of images of integral curves of $\rad$, namely the origin and the rays with directions $\pm v$. This shows that there is an open neighborhood $U$ of $p$ such that the intersection with $U$ of the image of a maximal geodesic passing through $p$ is contained in the intersection with $U$ of a union of images of integral curves of $\rad$. By the preceding paragraph and the fact that the singular points of $\rad$ are isolated, these images are themselves contained in the maximal geodesic. If the singular point $p$ is contained in the closure of the image $L$ of a maximal integral curve of $\rad$ then in the normal coordinates considered above, either $L$ is $p$ itself, or the intersection $L \cap U$ corresponds in coordinates to part of a ray limiting to the origin. In either case, this shows $L$ is contained in a maximal geodesic passing through $p$.

Suppose $\Sigma\subset M$ is a maximal totally geodesic submanifold. If $p\in \Sigma$ is not a singular point of $\rad$ and $\rad_{p} \in T_{p}\Sigma$ then the image of the maximal integral curve of $\rad$ passing through $p$ is contained in the image of a maximal geodesic of $\nabla$, so is contained in $\Sigma$. If $p$ is a singular point then $\rad_{p} = 0$ and so trivially $\rad_{p} \in T_{p}\Sigma$. In this case, by the preceding paragraph, the image of any geodesic passing through $p$ contains a union of images of integral curves of $\rad$ the closure of each of which contains $p$. Since the image of the geodesic is contained in $\Sigma$, it contains these images too. In particular there is an open neighborhood $U$ of $p$ in $\Sigma$ such that for $q \in U$ different from $p$ the intersection with $U$ of the image of the maximal integral curve of $\rad$ passing through $q$ contains $p$ in its closure, and so is contained in $\Sigma$; hence $\rad_{q} \in T_{q}\Sigma$. 
\end{proof}

\begin{remark}
A consequence of \cite[Theorem $3.3$]{Fried-Goldman-Hirsch} is that the radiant vector field of a flat radiant structure on a compact manifold is nonsingular. More readable proofs are given in \cite[Corollary $6.5.3$]{Goldman-notes} and \cite[Theorem $3$]{Daly}. The idea of the proofs is to show that the flow of $\rad$ retracts the manifold onto the singular set of $\rad$. All the proofs make essential use of the fact that the developing map is defined globally on the universal cover and it is not clear to the author how to modify them when flatness is not assumed.
\end{remark}

If a compact orientable manifold, $M$, admits a nonsingular radiant structure, then $\chi(M) = 0$. %Lemma \ref{radiantisolatedlemma} shows a partial converse.  
It is proved in \cite[p. $502$]{Fried-Goldman-Hirsch} that a radiant vector field on a compact flat affine manifold $M$ is nonsingular, and so $\chi(M) = 0$ for such a manifold. In general it is not clear whether a radiant vector field on a compact manifold $M$ can have zeroes. Lemma \ref{radiantisolatedlemma} shows that if it does, then $M$ must be even-dimensional.

\begin{lemma}\label{radiantisolatedlemma}
If $(\nabla, \rad)$ is a radiant structure, a zero of $\rad$ is nondegenerate, so isolated, and its index is $+1$.
\begin{enumerate}
\item If a compact manifold $M$ admits a radiant structure $(\nabla, \rad)$, then $\chi(M) \geq 0$, with equality if and only if $\rad$ is nonsingular. 
\item If a compact and odd-dimensional manifold admits a radiant structure $(\nabla, \rad)$, then $\rad$ is nonsingular. 
\end{enumerate}
\end{lemma}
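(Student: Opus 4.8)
The plan is to reduce the entire statement to one observation: the condition $\nabla\rad = \operatorname{Id}$ pins down the linearization of $\rad$ at each of its zeros. Let $p$ be a zero of $\rad$, so $\rad_{p} = 0$. In any local coordinates $x^{i}$ with Christoffel symbols $\Ga_{ij}\,^{k}$ one has $\delta_{i}\,^{j} = \nabla_{i}\rad^{j} = \tfrac{\pr \rad^{j}}{\pr x^{i}} + \Ga_{ik}\,^{j}\rad^{k}$, and evaluating at $p$, where $\rad^{k} = 0$, gives $\tfrac{\pr \rad^{j}}{\pr x^{i}}(p) = \delta_{i}\,^{j}$. At a zero the linearization of a vector field is a coordinate-independent endomorphism $D\rad_{p}\in\eno(T_{p}M)$, and it coincides with $\nabla\rad|_{p}$ since the two differ only by Christoffel terms proportional to $\rad_{p} = 0$; hence $D\rad_{p} = \Id$. (This is exactly the normal form $\rad = x^{i}\tfrac{\pr}{\pr x^{i}}$ already extracted in the proof of Lemma \ref{maxgeodesiclemma}.)

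From this the pointwise assertions are immediate. The linearization $D\rad_{p} = \Id$ is invertible, so $p$ is a nondegenerate zero; applying the inverse function theorem to $\rad$ viewed as a map into a coordinate neighborhood shows $p$ is isolated. For a nondegenerate zero the Poincaré--Hopf index equals $\sign\det(D\rad_{p}) = \sign(1) = +1$.

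For item (1), since $M$ is compact and the zeros of $\rad$ are isolated there are only finitely many of them, and the Poincaré--Hopf index theorem gives $\chi(M) = \sum_{p:\,\rad_{p}=0}\operatorname{ind}_{p}(\rad)$. As every index is $+1$, the right-hand side is the number of zeros of $\rad$, which is nonnegative and vanishes precisely when $\rad$ has no zeros, i.e. when $\rad$ is nonsingular. For non-orientable $M$ one may invoke the general (unoriented) form of Poincaré--Hopf, or pass to the orientation double cover $\tilde{M}\to M$, on which the lifted radiant structure has twice as many index-$+1$ zeros; the base relation then follows from $\chi(\tilde{M}) = 2\chi(M)$.

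For item (2), recall that every closed odd-dimensional manifold has $\chi(M) = 0$ (by Poincaré duality over $\rea$ when $M$ is orientable and over $\integer/2\integer$ in general, or again by the orientation cover, which is orientable, closed, and odd-dimensional). Combining this with item (1), the number of zeros of $\rad$ equals $\chi(M) = 0$, so $\rad$ is nonsingular. The only genuine content is the linearization computation of the first paragraph, which is essentially immediate; I expect no serious obstacle, the remaining steps being standard invocations of Poincaré--Hopf and of the vanishing of the Euler characteristic in odd dimensions.
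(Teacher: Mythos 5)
Your proof is correct and follows essentially the same route as the paper's: compute the linearization of $\rad$ at a zero from $\nabla_{i}\rad^{j} = \delta_{i}\,^{j}$ (the Christoffel terms vanish since $\rad_{p}=0$), conclude nondegeneracy and index $+1$, then apply Poincar\'e--Hopf, passing to the oriented double cover in the nonorientable case and using $\chi(M)=0$ in odd dimensions. The only cosmetic difference is that the paper works with a local frame rather than coordinates, and states the double-cover reduction more tersely.
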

\begin{proof}
Let $p \in \zero(\rad)$. Let $E_{1}, \dots, E_{n} \in \Ga(TU)$ be a local frame over an open neighborhood $U \subset M$ of $p$, let $\nabla_{E_{i}}E_{j} = \sum_{k = 1}^{n}\Ga_{ij}\,^{k}E_{k}$, and write $\rad = \sum_{i = 1}^{n}f^{i}E_{i}$. Then $E_{i} = \nabla_{E_{i}}\rad = \left(df^{j}(E_{i}) + f^{q}\Ga_{iq}\,^{j}\right)E_{j}$, so on $U\cap \zero(\rad)$ there holds $df^{j}(E_{i}) = \delta_{j}\,^{i}$. Hence $p$ is nondegenerate, and so is isolated. Since $\det \nabla \rad$ is positive at $p$, the index of $\rad$ at $p$ is $1$ \cite[Lemma $6.4$]{Milnor-topology}. If $M$ is compact and orientable, then by the Poincarè-Hopf Index Theorem, $\chi(M)$ equals the sum of the indices of any vector field with isolated zeroes, in particular the sum of the indices of $\rad$, which is positive unless $\rad$ is nonsingular, in which case it is $0$. If $M$ is compact and nonorientable, then the radiant structure lifts to its connected oriented double cover and the same conclusion follows. If $M$ is compact and odd-dimensional then $\chi(M) = 0$, and so $\rad$ is nonsingular.
\end{proof}

\begin{remark}
Lemma \ref{radiantisolatedlemma} implies that neither a compact surface of genus at least two, nor the product of any such surface with an even-dimensional sphere, admits a radiant structure.
In higher dimensions it is not clear how limiting is the conclusion of Lemma \ref{radiantisolatedlemma}. It rules out radiant structures on sphere bundles over higher genus surfaces. On the other hand, the affirmative resolution of question attributed to Thurston \cite[Problem $4.10$]{Kirby} asserts that an aspherical compact four manifold has nonnegative Euler characteristic. The simplest example of a simply-connected, compact manifold which by Lemma \ref{radiantisolatedlemma} admits no radiant structure seems to be the connected sum of $S^{3} \times S^{3}$ with itself.
\end{remark}

\begin{remark}
In this paper there is not given any example of a manifold with positive Euler characteristic admitting a radiant structure. It would be interesting to know if such examples exist. The natural place to look is compact homogeneous spaces, which have nonnegative Euler characteristic \cite{Mostow-structure}.
By a conjecture attributed to Chern, a compact manifold admitting a flat affine structure has Euler characteristic zero, and this would imply no compact manifold with positive Euler characteristic can admit a flat radiant structure. Can such a manifold admit a radiant structure? In addressing such questions it may be interesting to ask them for radiant structures with symmetric Ricci tensor. For example, Lemma \ref{nosymmetriclemma} shows that compact manifold with vanishing first Betti number admits no Ricci symmetric radiant structure, while Theorem \ref{qaconetheorem} shows that the three sphere does admit a radiant structure (with purely antisymmetric Ricci tensor).
\end{remark}

\begin{lemma}\label{liesslemma}
Let $(\nabla, \rad)$ be a radiant structure on an $n$-manifold. For $S_{i_{1}\dots i_{r}}^{j_{1}\dots j_{s}} \in \Ga(\tensor^{r}\ctm \tensor \tensor^{s}TM \tensor |\Det \ctm|^{\la})$ there holds $\nabla_{\rad}S = \lie_{\rad}S + (s - r - n\la)S$. 
\end{lemma}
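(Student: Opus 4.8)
The plan is to combine the defining identity $\nabla_{i}\rad^{j} = \delta_{i}\,^{j}$ of the radiant structure with the standard expression of a Lie derivative along a vector field in terms of a torsion-free covariant derivative. First I would record that for a torsion-free affine connection $\nabla$, any vector field $X$, and any tensor $S$ of type $(r,s)$ and density weight $\la$ there holds
\[
\lie_{X}S = \nabla_{X}S + \sum_{a=1}^{r}(\nabla_{i_{a}}X^{p})\,S_{\dots p \dots} - \sum_{b=1}^{s}(\nabla_{p}X^{j_{b}})\,S^{\dots p \dots} + \la(\nabla_{p}X^{p})\,S,
\]
where in the $a$-th summand of the first sum the $a$-th lower index of $S$ is relabelled to the dummy $p$ contracted against $\nabla_{i_{a}}X^{p}$ (and analogously for the upper indices in the second sum). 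That $\lie_{X} - \nabla_{X}$ is $\cinf(M)$-linear in $S$ and acts as the natural action of the endomorphism field $\nabla X$ on the tensor type is the familiar consequence of torsion-freeness; the density term records that on $|\Det \ctm|^{\la}$ the operators $\lie_{X}$ and $\nabla_{X}$ differ by $\la$ times the divergence of $X$, which follows from the identity $\sum_{a}(\nabla_{i_{a}}X^{p})\mu_{\dots p \dots} = (\nabla_{p}X^{p})\mu$ expressing that an endomorphism acts on a top exterior power by its trace.

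Next I would substitute $X = \rad$. Since $\nabla_{i}\rad^{j} = \delta_{i}\,^{j}$, each summand of the first sum becomes $\delta_{i_{a}}\,^{p}S_{\dots p \dots} = S$ and each summand of the second sum becomes $\delta_{p}\,^{j_{b}}S^{\dots p \dots} = S$, while $\nabla_{p}\rad^{p} = \delta_{p}\,^{p} = n$. Hence the three correction terms collapse to $rS - sS + n\la S$, giving $\lie_{\rad}S = \nabla_{\rad}S + (r - s + n\la)S$. Rearranging this yields $\nabla_{\rad}S = \lie_{\rad}S + (s - r - n\la)S$, which is the assertion.

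The calculation is essentially mechanical once the general formula is in hand, so the only point that warrants care—and the main (modest) obstacle—is fixing the sign and normalization of the density term together with the overall sign conventions. I would pin these down by checking the building-block cases: a function has $r = s = \la = 0$ and $\nabla_{\rad}f = \rad^{p}\nabla_{p}f = \lie_{\rad}f$; a vector field $Y$ satisfies $\nabla_{Y}\rad = Y$, hence $\nabla_{\rad}Y = \lie_{\rad}Y + Y$, matching $s - r = 1$; a one-form $\omega$ gives $\nabla_{\rad}\omega = \lie_{\rad}\omega - \omega$, matching $s - r = -1$. For the weight, I would verify the formula on a volume form $\mu \in \Ga(\Det \ctm)$ in two ways—as a weight-one density ($r = s = 0$, $\la = 1$) and as a totally antisymmetric covariant $n$-tensor ($r = n$, $s = 0$, $\la = 0$)—both of which give $\nabla_{\rad}\mu = \lie_{\rad}\mu - n\mu$, confirming both the weight convention for $|\Det \ctm|^{\la}$ and the internal consistency of the general formula.
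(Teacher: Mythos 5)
Your proof is correct and is essentially the paper's own argument: state the standard identity expressing $\lie_{X}S - \nabla_{X}S$ in terms of $\nabla X$ for a torsion-free connection (the paper's equation \eqref{liederivative}), then substitute $X = \rad$ and use $\nabla_{i}\rad^{j} = \delta_{i}\,^{j}$ to collapse the correction terms to $(r - s + n\la)S$. The consistency checks on functions, vector fields, one-forms, and volume forms are a sensible addition but do not change the route.
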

\begin{proof}
For a torsion-free connection $\nabla$ and $X\in \Ga(TM)$ there holds
\begin{align}\label{liederivative}
\begin{split}
(\lie_{X}S)_{i_{1}\dots i_{r}}^{j_{1}\dots j_{s}} &- X^{p}\nabla_{p}S_{i_{1}\dots i_{r}}^{j_{1}\dots j_{s}}  =  \sum_{a = 1}^{r}S_{i_{1}\dots p\dots i_{r}}^{j_{1}\dots\,\,\dots j_{s}}\nabla_{i_{a}}X^{p} - \sum_{a = 1}^{s}S_{i_{1}\dots\,\,\dots i_{r}}^{j_{1}\dots p\dots j_{s}}\nabla_{p}X^{j_{a}} + \la (\div_{\nabla}(X))S_{i_{1}\dots i_{r}}^{j_{1}\dots j_{s}},
\end{split}
\end{align}
in which $\div_{\nabla}(X) =  \nabla_{p}X^{p}$. 
Taking $X = \rad$ in \eqref{liederivative} gives the claim. 
\end{proof}

Lemma \ref{paralleloneformlemma} generalizes \cite[Theorems $3.1$ and $3.2$]{Fried-Goldman-Hirsch} to the nonflat setting.
\begin{lemma}\label{paralleloneformlemma}
If $(\nabla, \rad)$ is a radiant structure on a compact manifold then there is no $\nabla$-parallel volume form and a $\nabla$-parallel one-form is identically zero.
\end{lemma}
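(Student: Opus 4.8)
The plan is to use Lemma \ref{liesslemma} to trade the parallel condition $\nabla_{\rad}S = 0$ for a statement about the Lie derivative $\lie_{\rad}S$, which encodes the homogeneity forced on any $\nabla$-parallel tensor by the radiant field, and then to conclude by a global argument on the compact manifold $M$: Stokes' theorem rules out the parallel volume form, and the maximum principle together with the rigidity of parallel sections kills the parallel one-form.

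For the nonexistence of a parallel volume form, I would suppose $\Psi$ is a $\nabla$-parallel volume form and regard it as a covariant tensor, so that in the notation of Lemma \ref{liesslemma} the weight data is $r = n$, $s = 0$, $\lambda = 0$. The lemma then gives $0 = \nabla_{\rad}\Psi = \lie_{\rad}\Psi - n\Psi$, so $\lie_{\rad}\Psi = n\Psi$ (equivalently, since $\Psi$ is parallel, $\lie_{\rad}\Psi = \divn(\rad)\Psi = n\Psi$). Because $\Psi$ is a top-degree form, $d\Psi = 0$, so Cartan's formula yields $\lie_{\rad}\Psi = d(\iota_{\rad}\Psi)$. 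As the existence of a volume form makes $M$ orientable and $M$ is closed, integrating and applying Stokes' theorem gives $0 = \int_{M} d(\iota_{\rad}\Psi) = n\int_{M}\Psi$, which contradicts $\int_{M}\Psi \neq 0$.

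For the one-form, I would take $\alpha$ to be $\nabla$-parallel and set $g = \alpha(\rad) \in \cinf(M)$. A one-line computation using $\nabla\alpha = 0$ and $\nabla_{i}\rad^{j} = \delta_{i}\,^{j}$ gives $\nabla_{i}g = (\nabla_{i}\alpha_{p})\rad^{p} + \alpha_{p}\nabla_{i}\rad^{p} = \alpha_{i}$, and since $\nabla$ is torsion-free this reads $dg = \alpha$, so $\alpha$ is exact. On the compact manifold $M$ the function $g$ attains a maximum at some point $q$, where $dg_{q} = 0$ and hence $\alpha_{q} = 0$. A $\nabla$-parallel section that vanishes at a single point of the connected manifold $M$ vanishes identically, since its value at any point is the parallel transport of its value at $q$; therefore $\alpha \equiv 0$.

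The two finishing steps — Stokes on a closed orientable manifold and the maximum principle combined with the uniqueness of parallel sections — are entirely standard, so I do not expect a genuine obstacle. The crux is really the translation step: for the volume form it is the homogeneity identity $\lie_{\rad}\Psi = n\Psi$ coming from Lemma \ref{liesslemma}, and for the one-form it is the direct identity $dg = \alpha$ with $g = \alpha(\rad)$, both of which expose the homogeneity built into a parallel object under the radiant flow. The only points demanding care are the correct bookkeeping of the weight data $(r,s,\lambda) = (n,0,0)$ for $\Psi$, the observation that a volume form already forces orientability, and the use of closedness of $M$ so that no boundary term appears in Stokes' theorem.
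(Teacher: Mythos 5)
Your proof is correct and follows essentially the same route as the paper's: the homogeneity identity $\lie_{\rad}\Psi = n\Psi$ plus exactness and Stokes' theorem for the volume form, and the identity $\alpha = dg$ with $g = \alpha(\rad)$ plus an extremum of $g$ and parallel-transport rigidity for the one-form. The only cosmetic differences are that you obtain $\alpha = dg$ by a direct Leibniz computation (where, incidentally, torsion-freeness is not actually needed, since $\nabla_{i}g = dg_{i}$ for any function) rather than via Lemma \ref{liesslemma} and closedness of $\alpha$ as the paper does, and you use a maximum where the paper uses a minimum.
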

\begin{proof}
If $\Psi$ is an $n$-form on a radiant $n$-manifold then $d(i(\rad)\Psi) = \lie_{\rad}\Psi = \nabla_{\rad}\Psi  + n\Psi$ by Lemma \ref{liesslemma}. If $\Psi$ is a parallel volume form, then $\Psi$ is exact and $M$ is orientable, and so by Stokes's theorem $M$ must be noncompact. 
A one-form parallel with respect to a torsion-free connection is closed, so if $\be$ is a parallel one-form then, by Lemma \ref{liesslemma} there holds $\be = \lie_{\rad}\be - \nabla_{\rad}\be = d(\be(\rad)) = dg$, where $g = \be(\rad)$. Since $dg$ is parallel, if it vanishes at one point it vanishes identically. If $M$ is compact then $g$ must have a minimum, at which $dg$ vanishes, and so $\be = dg$ is identically zero.
\end{proof}

\begin{example}
Because the affine Hopf manifold $\hopf^{n}(\la)$ of example \ref{hopfexample} is compact it admits no parallel (nor any exact) volume form. 
\end{example}

\begin{lemma}\label{riccisymmetriclemma}
On a manifold with vanishing first Betti number, a torsion-free affine connection with symmetric Ricci tensor admits a parallel volume density.
\end{lemma}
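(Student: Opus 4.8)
The plan is to exploit the connection that $\nabla$ induces on the line bundle $|\Det \ctm|$ of volume densities, reducing the existence of a parallel section to the exactness of a closed one-form permitted by the topological hypothesis.

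First I would observe that, unlike the determinant bundle $\Det \ctm$, the density bundle $|\Det \ctm|$ has positive transition functions $|\det(\dum)|$ and is therefore trivial as a line bundle; consequently it admits a nowhere-vanishing positive smooth section $\mu$. Since $\nabla$ is torsion-free it induces a covariant derivative on $|\Det \ctm|$, and relative to $\mu$ this is recorded by the one-form $\theta_{i}$ defined by $\nabla_{i}\mu = \theta_{i}\mu$. Replacing $\mu$ by $e^{f}\mu$ replaces $\theta$ by $\theta + df$, so the curvature two-form $d\theta$ of the induced connection does not depend on the choice of reference density $\mu$.

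Next I would identify this curvature. As recorded earlier in the excerpt, the curvature of the covariant derivative induced on $\Det \ctm$ (equivalently on $|\Det \ctm|$) equals $-R_{ijp}\,^{p} = 2R_{[ij]}$, so in the trivialization by $\mu$ one has $d\theta = 2R_{[ij]}$. The hypothesis that the Ricci tensor is symmetric means precisely that $R_{[ij]} = 0$, whence $d\theta = 0$ and $\theta$ is a closed one-form.

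Finally I would invoke the assumption on $M$: since its first Betti number vanishes, $H^{1}(M;\rea) = 0$, so every closed one-form is exact. Thus there is $f \in \cinf(M)$ with $\theta = -df$, and then $\mu' = e^{f}\mu$ satisfies $\nabla_{i}\mu' = (df_{i} + \theta_{i})\mu' = 0$, producing the desired $\nabla$-parallel volume density. This argument runs in parallel with that of Lemma \ref{paralleloneformlemma}. There is no genuine analytic difficulty; the only points meriting care are the triviality of $|\Det \ctm|$, which furnishes a global nowhere-vanishing reference density, and the identification of the density-bundle curvature with $2R_{[ij]}$, so that the \emph{entire} content is the passage from a closed to an exact one-form afforded by $b_{1}(M) = 0$.
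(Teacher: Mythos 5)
Your proof is correct and takes essentially the same approach as the paper: both write $\nabla\mu = \theta\otimes\mu$ for a global nowhere-vanishing density $\mu$, identify $d\theta_{ij} = 2R_{[ij]} = 0$ from the symmetric-Ricci hypothesis, and use the vanishing of the first Betti number to pass from closed to exact. The only difference is in implementation: you invoke de Rham's theorem directly to obtain a global primitive of $\theta$, whereas the paper first builds local parallel densities from local primitives and then patches them by trivializing a \v{C}ech $1$-cocycle with values in the constant sheaf $\rea$ --- both rest on the same cohomological vanishing.
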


\begin{proof}
If a torsion-free affine connection $\nabla$ has symmetric Ricci tensor, then in a neighborhood of every point it admit a parallel volume density. Let $\mu$ be a global nonvanishing volume density and define a one-form $\ga$ by $\nabla \mu = \ga \tensor \mu$. Then $d\ga_{ij}\mu = 2\nabla_{[i}\nabla_{j]}\mu = -2R_{ijp}\,^{p}\mu = 2R_{[ij]}\mu = 0$, so $\ga$ is closed and hence locally exact. If $f$ is local primitive of $\ga$ then $\nabla(e^{-f}\mu) = 0$. Let $\{U_{\al}\}$ be an open cover such that on $U_{\al}$ there is a $\nabla$-parallel nonvanishing volume density $\mu_{\al}$. There is $f_{\al\be} \in \cinf(U_{\al}\cap U_{\be})$ such that $\mu_{\al} = e^{f_{\al\be}}\mu_{\be}$ on $U_{\al}\cap U_{\be}$. Because $0 = \nabla \mu_{\al} = df_{\al\be}\tensor \mu_{\be}$, refining the open cover if necessary, $f_{\al\be}$ is constant on $U_{\al}\cap U_{\be}$. Because the first Betti number is zero, the Cech cocycle $\{f_{\al\be}\}$ is exact, so there is a $0$-chain $\{g_{\al}\}$ with values in the constant sheaf $\rea$ (again, replacing the given cover with a refinement if necessary) such that $f_{\al\be} = g_{\al} - g_{\be}$ on $U_{\al}\cap U_{\be}$, and $e^{-g_{\al}}\mu_{\al}$ patch together to give a global parallel volume density.
\end{proof}

For flat radiant structures Lemma \ref{nosymmetriclemma} recovers a special case of \cite[Proposition $2.7$]{Goldman-Hirsch}. 

\begin{lemma}\label{nosymmetriclemma}
A compact manifold with vanishing first Betti number admits no radiant structure with symmetric Ricci tensor.
\end{lemma}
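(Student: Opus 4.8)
The plan is to force a $\nabla$-parallel volume density into existence and then integrate it against the radiant flow to reach an absurdity. Assume for contradiction that $(\nabla, \rad)$ is a radiant structure on a compact manifold $M$ with vanishing first Betti number and that the Ricci tensor of $\nabla$ is symmetric. The connection $\nabla$ is torsion-free by the definition of a radiant structure, so the hypotheses of Lemma \ref{riccisymmetriclemma} are met and there is a nowhere-vanishing $\nabla$-parallel volume density $\mu \in \Ga(|\Det \ctm|)$.

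Next I would feed $\mu$, viewed as a section of $|\Det \ctm|^{1}$ (so that $r = s = 0$ and $\la = 1$), into Lemma \ref{liesslemma}. This yields $\nabla_{\rad}\mu = \lie_{\rad}\mu - n\mu$, and since $\mu$ is parallel the left-hand side vanishes, giving $\lie_{\rad}\mu = n\mu$.

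The concluding step integrates this identity over $M$. As $M$ is compact, $\rad$ is complete and its flow $\phi_{t}$ consists of diffeomorphisms of $M$; since the integral of a density is invariant under diffeomorphisms, $\int_{M}\phi_{t}^{\ast}\mu = \int_{M}\mu$ for every $t$, and differentiating at $t = 0$ gives $\int_{M}\lie_{\rad}\mu = 0$. On the other hand $\int_{M}\lie_{\rad}\mu = n\int_{M}\mu$, and $\int_{M}\mu > 0$ because $\mu$ is a positive density. Hence $n\int_{M}\mu = 0$, which is impossible, and this contradiction proves the lemma.

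The only delicate point---and the reason this is not an immediate corollary of Lemma \ref{paralleloneformlemma}---is that Lemma \ref{riccisymmetriclemma} produces a parallel volume \emph{density}, whereas Lemma \ref{paralleloneformlemma} rules out parallel volume \emph{forms}, and these notions genuinely differ when $M$ is non-orientable (for instance the even-dimensional real projective spaces are non-orientable yet have vanishing first Betti number). Carrying out the computation at the level of densities, with flow-invariance of the density integral standing in for Stokes's theorem, treats the orientable and non-orientable cases uniformly and avoids passing to the orientation double cover, whose first Betti number need not vanish.
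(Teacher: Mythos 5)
Your proof is correct, and its skeleton is the paper's: both invoke Lemma \ref{riccisymmetriclemma} to produce a parallel volume density and then contradict the fact that a compact radiant manifold carries no parallel volume. The difference lies in the second step. The paper simply cites Lemma \ref{paralleloneformlemma}, saying the structure ``would admit a parallel volume form,'' which glosses exactly the point you flag: Lemma \ref{riccisymmetriclemma} yields a parallel volume \emph{density}, while Lemma \ref{paralleloneformlemma}, whose proof is Stokes's theorem applied to the exact form $d(i(\rad)\Psi)$, is stated for volume \emph{forms}. Your replacement---$\lie_{\rad}\mu = n\mu$ from Lemma \ref{liesslemma}, then $\int_{M}\lie_{\rad}\mu = 0$ by diffeomorphism-invariance of the integral of a density under the flow of $\rad$---is a clean density-level substitute for Stokes's theorem that treats orientable and non-orientable $M$ uniformly, a genuine (if small) gain in rigor over the paper's wording. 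Two quibbles. First, your stated reason for avoiding the orientation double cover is mistaken: the Betti-number hypothesis is spent entirely in Lemma \ref{riccisymmetriclemma}, applied on $M$ itself, and Lemma \ref{paralleloneformlemma} needs only compactness, so the paper's route is easily repaired by lifting the parallel density to the compact oriented double cover, where it gives a parallel volume form (exactly as in the non-orientable case of Lemma \ref{radiantisolatedlemma}); the first Betti number of the cover never enters. Second, a nonvanishing parallel density on connected $M$ is of one sign but need not be positive; replace $\mu$ by $-\mu$ if necessary, or simply note $\int_{M}\mu \neq 0$, before concluding that $n\int_{M}\mu = 0$ is absurd.
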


\begin{proof}
If there were a radiant structure with symmetric Ricci tensor, by Lemma \ref{riccisymmetriclemma} it would admit a parallel volume form, contrary to Lemma \ref{paralleloneformlemma}.
\end{proof}

In constrast with Lemma \ref{nosymmetriclemma}, Theorem \ref{qaconetheorem} shows that the three sphere admits a radiant structure with skew-symmetric Ricci tensor. 

Some issues arise when the curvature of a radiant structure $(\nabla, \rad)$ is not identically zero that are absent in the case that $\nabla$ is flat. The condition $\nabla_{i}\rad^{j} = \delta_{i}\,^{j}$ implies $(\lie_{\rad}\nabla)_{ij}\,^{k} = \rad^{p}R_{pij}\,^{k}$, so that a radiant vector field need not be an infinitesimal affine automorphism. Geometrically this corresponds to the nonexistence, even locally, of totally geodesic surfaces to which $\rad$ is tangent.
On the other hand, it is an important feature of a radiant vector field on a flat affine manifold that it is necessarily an infinitesimal affine automorphism.

For a torsion-free affine connection, $\nabla$, and a vector field, $X$, differentiating $(\lie_{X}\nabla)_{ij}\,^{k} = \nabla_{i}\nabla_{j}X^{k} + X^{p}R_{pij}\,^{k}$, tracing the result, and using the Ricci identity, the differential Bianchi identity, and \eqref{liederivative} yields
\begin{align}\label{lienabladiv}
\begin{split}
\nabla_{p}(\lie_{X}\nabla)_{ij}\,^{p} 
& = \nabla_{i}\nabla_{j}\nabla_{p}X^{p} + 2\nabla_{(i}X^{p}R_{j)p} + X^{p}\nabla_{p}R_{ij} + 2X^{p}\nabla_{i}R_{[jp]}\\
& =   \nabla_{i}\nabla_{j}\nabla_{p}X^{p} +  (\lie_{X}\ric)_{ij}  + 2R_{[jp]}\nabla_{i}X^{p} + 2 X^{p}\nabla_{i}R_{[jp]},
\end{split}
\end{align}
which is needed in the proof of Lemma \ref{lieradlemma}.

\begin{lemma}\label{lieradlemma}
Let $(\nabla, \rad)$ be a radiant structure and define a one-form $\er_{i}$ by $\er_{i} = \rad^{p}R_{pi}$. 
There hold
\begin{align}\label{lieradric}
&(\lie_{\rad}\ric)_{(ij)}  = \nabla_{p}(\lie_{\rad}\nabla)_{ij}\,^{p} + \nabla_{(i}\er_{j)} = \rad^{q}\nabla_{p}R_{qij}\,^{p} + R_{ij} + \nabla_{(i}\er_{j)},&
&(\lie_{\rad}\ric)_{[ij]}  %= \rad^{p}\nabla_{p}R_{[ij]} + 2R_{[ij]} 
= \nabla_{[i}\er_{j]} = \tfrac{1}{2}d\er_{ij},\\
\label{radlierad}
&\rad^{p}(\lie_{\rad}\ric)_{pi}  = \er_{i} + \rad^{p}\nabla_{p}\er_{i} = (\lie_{\rad}\er)_{i},&
&\rad^{p}(\lie_{\rad}\ric)_{ip}  = 0.
\end{align}
In particular, if $R_{ij}$ is $\la$-positively homogeneous, then $\er_{i}$ is $\la$-positively homogeneous.
\end{lemma}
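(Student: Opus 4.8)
The plan is to reduce everything to two ``master'' identities supplied by Lemma~\ref{liesslemma} together with one curvature identity special to radiant structures. Applying Lemma~\ref{liesslemma} to $R_{ij}$ (a covariant $2$-tensor of weight zero, so $r=2$, $s=0$, $\la=0$) gives $(\lie_\rad\ric)_{ij} = \rad^{p}\nabla_{p}R_{ij} + 2R_{ij}$, and applying it to the one-form $\er_{i}$ gives $(\lie_\rad\er)_{i} = \rad^{p}\nabla_{p}\er_{i} + \er_{i}$. The crucial extra input is that $\nabla_{i}\rad^{j} = \delta_{i}{}^{j}$ forces $\nabla_{i}\nabla_{j}\rad^{k} = 0$, so the Ricci identity yields the radiant curvature identity $\rad^{p}R_{ijp}{}^{k} = 0$. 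Contracting the upper index against $i$ and using $R_{jp} = R_{ijp}{}^{i}$ produces the vanishing of the ``second'' contraction $\rad^{p}R_{jp} = 0$, while the algebraic Bianchi identity then gives the symmetry $\rad^{p}R_{pij}{}^{k} = \rad^{p}R_{pji}{}^{k}$; equivalently $(\lie_\rad\nabla)_{ij}{}^{k} = \rad^{p}R_{pij}{}^{k}$ is symmetric in $ij$.

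Next I would compute the divergence of $\lie_\rad\nabla$ in closed form. Since $(\lie_\rad\nabla)_{ij}{}^{k} = \rad^{p}R_{pij}{}^{k}$, the Leibniz rule and $\nabla_{p}\rad^{q} = \delta_{p}{}^{q}$ give $\nabla_{p}(\lie_\rad\nabla)_{ij}{}^{p} = R_{ij} + \rad^{q}\nabla_{p}R_{qij}{}^{p}$, which is the second equality in the first line of \eqref{lieradric}. Feeding in the traced differential Bianchi identity $\nabla_{p}R_{qij}{}^{p} = \nabla_{q}R_{ij} - \nabla_{i}R_{qj}$ together with $\nabla_{i}\er_{j} = R_{ij} + \rad^{p}\nabla_{i}R_{pj}$ collapses this to the single unsymmetrized identity
\begin{align}
\nabla_{p}(\lie_\rad\nabla)_{ij}{}^{p} = (\lie_\rad\ric)_{ij} - \nabla_{i}\er_{j}. \notag
\end{align}
(The specialization of \eqref{lienabladiv} to $X=\rad$ gives the same reduction once $\rad^{p}R_{jp}=0$ is used to rewrite $2\rad^{p}\nabla_{i}R_{[jp]}-2R_{[ij]}$ as $-\nabla_{i}\er_{j}$.) Taking the symmetric part and using that the left-hand side is symmetric in $ij$ yields the first line of \eqref{lieradric}; taking the antisymmetric part, where the left-hand side drops out, yields $(\lie_\rad\ric)_{[ij]} = \nabla_{[i}\er_{j]} = \tfrac{1}{2}d\er_{ij}$.

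For \eqref{radlierad} I would contract the master identity for $\ric$ with $\rad^{p}$. In the $(\lie_\rad\ric)_{pi}$ contraction, rewriting $\rad^{p}\rad^{q}\nabla_{q}R_{pi}$ by applying Leibniz to $\rad^{q}\nabla_{q}\er_{i} = \rad^{q}\nabla_{q}(\rad^{p}R_{pi})$ recovers $\rad^{p}(\lie_\rad\ric)_{pi} = \rad^{p}\nabla_{p}\er_{i} + \er_{i}$, which equals $(\lie_\rad\er)_{i}$ by the one-form master identity. In the $(\lie_\rad\ric)_{ip}$ contraction the identity $\rad^{p}R_{ip} = 0$ kills the algebraic term, and the same Leibniz manipulation (now producing $\rad^{q}\nabla_{q}(\rad^{p}R_{ip}) = 0$) shows the derivative term vanishes too, giving $\rad^{p}(\lie_\rad\ric)_{ip} = 0$. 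Finally, if $R_{ij}$ is positively homogeneous of degree $\la$, i.e.\ $\lie_\rad R_{ij} = \la R_{ij}$, then $(\lie_\rad\er)_{i} = \rad^{p}(\lie_\rad\ric)_{pi} = \la\,\rad^{p}R_{pi} = \la\,\er_{i}$, so $\er_{i}$ is positively homogeneous of degree $\la$. The main obstacle is less any single computation than recognizing and deploying the radiant curvature identity $\rad^{p}R_{ijp}{}^{k} = 0$ in its three guises: the symmetry of $\nabla_{p}(\lie_\rad\nabla)_{ij}{}^{p}$ that cleanly separates symmetric and antisymmetric parts, and the trace $\rad^{p}R_{ip} = 0$ that is exactly what distinguishes the two $\rad$-contractions in \eqref{radlierad}.
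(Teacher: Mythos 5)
Your proof is correct and follows essentially the same route as the paper's: both establish the radiant curvature identities $\rad^{p}R_{ijp}\,^{k} = 0$ and $\rad^{p}R_{ip} = 0$, arrive at the unsymmetrized identity $(\lie_{\rad}\ric)_{ij} = \nabla_{p}(\lie_{\rad}\nabla)_{ij}\,^{p} + \nabla_{i}\er_{j}$ (you via Lemma \ref{liesslemma} plus the traced differential Bianchi identity, the paper via its pre-derived \eqref{lienabladiv} --- equivalent computations, as your parenthetical acknowledges), then decompose by symmetry and contract with $\rad$. No gaps.
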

\begin{proof}
Antisymmetrizing, tracing, and differentiating the identity $\nabla_{i}\nabla_{j}\rad^{k} = 0$ yields 
\begin{align}\label{radid}
\begin{aligned}
&R_{ijp}\,^{k}\rad^{p} = 0,& & R_{ip}\rad^{p} = 0, & \\& R_{ji} = -\rad^{p}\nabla_{i}R_{jp},&
&R_{ijk}\,^{l} = - \rad^{p}\nabla_{k}R_{ijp}\,^{l},&& 2R_{[ij]} = 2\rad^{p}\nabla_{[i}R_{j]p} = \rad^{p}\nabla_{q}R_{ijp}\,^{q} .
\end{aligned}
\end{align}
In particular, $\er_{i}$ vanishes if $\nabla$ has symmetric Ricci tensor. 
That $\nabla_{i}\nabla_{j}\rad^{k} = 0$ implies immediately the first of 
\begin{align}\label{lieradnabla}
&(\lie_{\rad}\nabla)_{ij}\,^{k} = \rad^{p}R_{pij}\,^{k}, && (\lie_{\rad}\nabla)_{ip}\,^{p} = -\er_{j},
\end{align}
while the second follows from the first and \eqref{radid}. From \eqref{radid} it follows that $\nabla_{i}\er_{j} = 2R_{[ij]} + 2\rad^{p}\nabla_{i}R_{[pj]}$.
In \eqref{lienabladiv} this yields $(\lie_{\rad}\ric)_{ij}  = \nabla_{p}(\lie_{\rad}\nabla)_{ij}\,^{p} + \nabla_{i}\er_{j}$, and decomposing this into its symmetric and antisymmetric parts and using again \eqref{radid} yields \eqref{lieradric}. Contracting $\rad^{i}$ with \eqref{lieradric} and simplifying yields \eqref{radlierad}.
\end{proof}

\begin{remark}
Example \ref{nonuniquenessexample} exhibits a radiant structure for which $\er$ is not identically zero. Example \ref{leftinvarianterexample} exhibits a radiant structure for which $\er$ is nowhere vanishing and not closed. In neither of these examples is the underlying manifold compact, and it would be interesting to know if there is a radiant structure on a compact manifold having nonzero $\er$.
\end{remark}

\begin{remark}
The identity $R_{ijp}\,^{k}\rad^{p} = 0$ means that the affine connection $\nabla$ of a radiant structure has \emph{Weyl nullity} in the sense of \cite{Gover-Matveev}. See in particular \cite[Proposition $2.5$]{Gover-Matveev}.
\end{remark}

\begin{lemma}\label{2dsymmetriclemma}
A radiant structure on a surface has symmetric Ricci tensor.
\end{lemma}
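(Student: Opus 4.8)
The plan is to show that the antisymmetric part $R_{[ij]}$ of the Ricci tensor vanishes identically. By Lemma \ref{radiantisolatedlemma} the zeros of $\rad$ are isolated, and $R_{[ij]}$ is continuous, so it suffices to prove $R_{[ij]} = 0$ on the open dense set $M_{\ast} = \{p : \rad_{p} \neq 0\}$ and then conclude on all of $M$ by continuity. On a surface the projective Weyl tensor $B_{ijk}\,^{l}$ vanishes identically, so by \eqref{bijkl} and \eqref{projectiveschouten} the full curvature is determined by the projective Schouten tensor; moreover, as recorded in the text, the antisymmetric Ricci tensor is (up to sign) the curvature of the connection induced by $\nabla$ on $|\Det \ctm|$, namely $2R_{[ij]} = -R_{ijp}\,^{p}$. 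Thus $\Omega_{ij} := 2R_{[ij]}$ is a genuine $2$-form, and the assertion is precisely that this curvature $2$-form vanishes.

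Next I would contract the radiant identities \eqref{radid} into $\Omega$. Since $(\iota_{\rad}\Omega)_{j} = \rad^{i}(R_{ij} - R_{ji}) = \rad^{i}R_{ij} - R_{ji}\rad^{i}$, the identity $R_{jp}\rad^{p} = 0$ kills the second term, while the first is by definition $\er_{j} = \rad^{p}R_{pj}$; this yields the clean relation $\iota_{\rad}\Omega = \er$. On a surface, at a point where $\rad \neq 0$ the interior product $\iota_{\rad}$ is injective on $2$-forms, since $\Omega_{ij}$ is a pointwise multiple of a local area form and $\iota_{\rad}\Omega = 0$ with $\rad \neq 0$ forces $\Omega = 0$. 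Hence on $M_{\ast}$ the vanishing of $\Omega$ is equivalent to the vanishing of the one-form $\er_{i} = \rad^{p}R_{pi}$, and the problem is reduced to showing $\er \equiv 0$.

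The crux, and the step I expect to be the main obstacle, is exactly establishing $\er \equiv 0$ on $M_{\ast}$ — equivalently, that the connection induced by $\nabla$ on the density bundle is flat there. Here I would try to exploit the differential identities of Lemma \ref{lieradlemma}: by \eqref{lieradric} one has $(\lie_{\rad}\ric)_{[ij]} = \tfrac{1}{2}d\er_{ij}$, and by \eqref{radlierad} the one-form $\er$ is transported along $\rad$ according to $\lie_{\rad}\er = \er + \nabla_{\rad}\er$, a first-order equation in the radial direction. Feeding in the Weyl-nullity relation $R_{ijp}\,^{k}\rad^{p} = 0$ — which in two dimensions pins down the Schouten contraction $P_{ip}\rad^{p}$ in terms of $\iota_{\rad}$ of the local area form — one obtains strong constraints on $\er$ along the integral curves of $\rad$, and I would attempt to integrate the resulting transport equation back toward a zero of $\rad$, where $\er$ is controlled, to force $\er = 0$. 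The delicate point, on which the whole argument turns, is ruling out nontrivial radially homogeneous solutions of this transport equation for $\er$; this is the part I expect to require the most care, and where any additional hypothesis on the structure would enter.
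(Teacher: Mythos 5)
Your reduction is correct as far as it goes: with $\Omega_{ij} = 2R_{[ij]}$, the identity $\rad^{p}\Omega_{pi} = \er_{i}$ follows from $R_{ip}\rad^{p} = 0$ in \eqref{radid}, and on a surface contraction with a nonvanishing vector field is injective on $2$-forms, so on $M_{\ast}$ the lemma is equivalent to $\er \equiv 0$. But this is a reformulation, not progress: the entire content of the lemma now sits in the statement $\er \equiv 0$, and the mechanism you propose for proving it does not work. The ``transport equation'' you want to integrate, $\lie_{\rad}\er = \er + \nabla_{\rad}\er$, is vacuous: by Lemma \ref{liesslemma} (equivalently, the Leibniz rule together with $\nabla_{i}\rad^{j} = \delta_{i}\,^{j}$) this identity holds for \emph{every} one-form, so it imposes no constraint on $\er$ and cannot rule out nonzero solutions; the genuinely nontautological part of \eqref{radlierad} involves $\lie_{\rad}\ric$, which you have no independent control over. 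Moreover, the strategy of integrating back toward a zero of $\rad$ is unavailable in exactly the cases that matter: by Theorem \ref{2dtorustheorem} the compact surfaces carrying radiant structures are tori and Klein bottles, on which the radiant field is nonsingular, so there is no zero to integrate toward.

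The missing idea is a purely algebraic, pointwise one, and it is what the paper uses. In dimension two the projective Weyl tensor vanishes identically, so by \eqref{bijkl} the curvature has the form $R_{ijk}\,^{l} = -2R_{k[i}\delta_{j]}\,^{l}$. Contracting with $\rad^{p}$ and using $R_{ip}\rad^{p} = 0$ from \eqref{radid} gives $\rad^{p}R_{pij}\,^{k} = R_{ji}\rad^{k}$. By \eqref{lieradnabla} the left-hand side equals $(\lie_{\rad}\nabla)_{ij}\,^{k}$, which is symmetric in $ij$ because the Lie derivative of a torsion-free connection is a symmetric tensor (alternatively, symmetry of $\rad^{p}R_{pij}\,^{k}$ in $ij$ follows from the algebraic Bianchi identity together with $R_{ijp}\,^{k}\rad^{p} = 0$). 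Hence $0 = (\lie_{\rad}\nabla)_{[ij]}\,^{k} = -R_{[ij]}\rad^{k}$, so $R_{[ij]} = 0$ wherever $\rad \neq 0$, and then everywhere by continuity, since by Lemma \ref{radiantisolatedlemma} the zeros of $\rad$ are isolated. No integration along $\rad$, and no hypothesis on the zero set of $\rad$, is needed; this is the step your proposal would have to supply to close the argument.
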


\begin{proof}
By \eqref{bijkl}, in dimension $2$, $R_{ijk}\,^{l} = -2R_{k[i}\delta_{j]}\,^{l}$, so by \eqref{lieradnabla}, $(\lie_{\rad}\nabla)_{ij}\,^{k} = \rad^{p}R_{pij}\,^{k} = R_{ji}\rad^{k}$. Hence $0 = (\lie_{\rad}\nabla)_{[ij]}\,^{k} = -R_{[ij]}\rad^{k}$. Because the zeros of $\rad$ are nondegenerate, so isolated, this implies $R_{[ij]} = 0$.
\end{proof}

By \cite{Benzecri} a compact surface admits a flat affine structure if and only if it is a torus or a Klein bottle. These are classified in \cite{Arrowsmith-Furness-locallysymmetric, Arrowsmith-Furness, Nagano-Yagi} and some of them are radiant. See also \cite[Section $4$]{Baues-torus} and \cite{Benoist-affine-tori}. For radiant structures, Theorem \ref{2dtorustheorem} extends this result to non-flat structures.

\begin{theorem}\label{2dtorustheorem}
A compact surface admits a radiant structure if and only if it is a torus or a Klein bottle.
\end{theorem}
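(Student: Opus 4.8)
The plan is to observe that the torus and the Klein bottle are exactly the compact surfaces of Euler characteristic zero, and then to prove the two implications by exploiting the rigidity of radiant structures on surfaces already recorded above.

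For the ``only if'' direction, suppose the compact surface $M$ admits a radiant structure $(\nabla, \rad)$. First I would apply Lemma~\ref{radiantisolatedlemma} to get $\chi(M) \geq 0$, so it remains to exclude $\chi(M) > 0$. The only compact surfaces with positive Euler characteristic are the sphere $S^{2}$ and the projective plane $\rptwo$, and each of these has vanishing first Betti number. On the other hand, Lemma~\ref{2dsymmetriclemma} shows that a radiant structure on a surface has symmetric Ricci tensor, while Lemma~\ref{nosymmetriclemma} (obtained by combining the existence of a parallel volume density from Lemma~\ref{riccisymmetriclemma} with its nonexistence for radiant structures from Lemma~\ref{paralleloneformlemma}) forbids a radiant structure with symmetric Ricci tensor on any compact manifold with vanishing first Betti number. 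Hence $S^{2}$ and $\rptwo$ carry no radiant structure, so $\chi(M) = 0$, and by the classification of compact surfaces $M$ is a torus or a Klein bottle.

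For the ``if'' direction I would construct flat radiant structures explicitly. The torus arises as the two-dimensional affine Hopf manifold $\hopf^{2}(\la)$ of Example~\ref{hopfexample}, the quotient of $\rea^{2} \setminus \{0\}$ by the cyclic group generated by a dilation $z \mapsto \la z$ with $\la > 1$; since the Euler field and the standard flat connection $\hnabla$ are dilation-invariant they descend to a flat radiant structure. For the Klein bottle I would replace the dilation by the linear map $g(x, y) = (\la x, -\la y)$, whose square is the pure dilation by $\la^{2}$; the map $g$ is fixed-point-free and orientation-reversing on $\rea^{2} \setminus \{0\}$, so the quotient by $\langle g \rangle$ is a Klein bottle. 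Because $g$ is linear it preserves $\hnabla$, and a direct check gives $g_{\ast}\rad = \rad$ for the Euler field $\rad = x\tfrac{\pr}{\pr x} + y\tfrac{\pr}{\pr y}$, so $(\hnabla, \rad)$ again descends to a flat radiant structure. Alternatively one may simply cite the known flat radiant affine structures on these surfaces \cite{Arrowsmith-Furness-locallysymmetric, Arrowsmith-Furness, Nagano-Yagi, Benoist-affine-tori}.

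The ``only if'' direction is immediate once one notes the elementary fact that a compact surface of positive Euler characteristic has vanishing first Betti number, so the lemmas above apply verbatim. The only steps needing care lie in the converse for the Klein bottle: verifying that $\langle g \rangle$ acts freely and properly discontinuously on $\rea^{2}\setminus\{0\}$, that the quotient is nonorientable (this is where the sign in $g$ is used), and that the Euler field descends. None of these presents a genuine obstacle, so the proof is essentially a bookkeeping assembly of the preceding results.
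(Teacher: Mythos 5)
Your proof is correct and follows essentially the same route as the paper's: Lemma~\ref{radiantisolatedlemma} forces $\chi(M) \geq 0$, Lemmas~\ref{2dsymmetriclemma} and \ref{nosymmetriclemma} rule out positive Euler characteristic, and flat radiant structures on the torus and Klein bottle settle the converse. The only differences are minor: the paper excludes the projective plane by lifting a putative radiant structure to $S^{2}$ rather than by applying Lemma~\ref{nosymmetriclemma} directly to $\rptwo$ (both work, since $b_{1}(\rptwo) = 0$), and for the Klein bottle the paper simply cites \cite{Arrowsmith-Furness} where you give the explicit quotient of $\rea^{2}\setminus\{0\}$ by $g(x,y) = (\la x, -\la y)$, whose verification (free, properly discontinuous, orientation-reversing, preserving $\hnabla$ and the Euler field) is correct.
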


\begin{proof}
By Lemma \ref{radiantisolatedlemma}, a compact surface that admits a radiant structure has nonnegative Euler characteristic, while by Lemmas \ref{nosymmetriclemma} and \ref{2dsymmetriclemma} the two-sphere admits no radiant structure. Were the projective plane to admit a radiant structure, it could be lifted to the two-sphere. A two-dimensional affine Hopf manifold is a torus, so tori admit radiant structures, as do Klein bottles \cite{Arrowsmith-Furness} and \cite[Section $6.5.2.2$]{Goldman-notes}.
\end{proof}

\begin{corollary}
A radiant structure on a compact surface is nonsingular.
\end{corollary}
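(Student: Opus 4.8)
The plan is to combine the classification of Theorem \ref{2dtorustheorem} with the Euler characteristic dichotomy of Lemma \ref{radiantisolatedlemma}. First I would observe that if a compact surface $M$ admits a radiant structure $(\nabla, \rad)$, then by Theorem \ref{2dtorustheorem} the surface $M$ must be either a torus or a Klein bottle. Both of these surfaces have vanishing Euler characteristic, $\chi(M) = 0$.

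Next I would invoke the second assertion of Lemma \ref{radiantisolatedlemma}, which states that for a compact manifold $M$ admitting a radiant structure $(\nabla, \rad)$ one has $\chi(M) \geq 0$, with equality if and only if $\rad$ is nonsingular. Since $\chi(M) = 0$ by the previous step, the equality case applies, and we conclude immediately that $\rad$ has no zeros, i.e. the radiant structure is nonsingular.

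There is no real obstacle here: the statement is a direct corollary, since all the substantive work has already been carried out in establishing Theorem \ref{2dtorustheorem} (whose proof itself rests on Lemmas \ref{radiantisolatedlemma}, \ref{nosymmetriclemma}, and \ref{2dsymmetriclemma}) and in the index computation of Lemma \ref{radiantisolatedlemma}. The only point worth a moment's care is the nonorientable case of the Klein bottle, but this is already handled inside Lemma \ref{radiantisolatedlemma} by passing to the oriented double cover, so no additional argument is needed here. Thus the proof reduces to citing the two results and noting $\chi = 0$ for both model surfaces.

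\begin{proof}
By Theorem \ref{2dtorustheorem}, a compact surface admitting a radiant structure is either a torus or a Klein bottle, and in either case has Euler characteristic $\chi(M) = 0$. By Lemma \ref{radiantisolatedlemma}, a radiant structure on a compact surface satisfies $\chi(M) \geq 0$ with equality if and only if $\rad$ is nonsingular. Since $\chi(M) = 0$, the radiant vector field $\rad$ is nonsingular.
\end{proof}
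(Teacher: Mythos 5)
Your proof is correct and follows exactly the paper's own argument: Theorem \ref{2dtorustheorem} gives $\chi(M)=0$, and the equality case of Lemma \ref{radiantisolatedlemma} then forces $\rad$ to be nonsingular. Nothing further is needed.
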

\begin{proof}
By Theorem \ref{2dtorustheorem}, the surface has Euler characteristic zero, so this follows from Lemma \ref{radiantisolatedlemma}.
\end{proof}

\section{Euler manifolds and equiaffine radiant structures}\label{eulersection}
The differential of a vector field, $X$, is well defined at a point $p \in \zero(X)$ as the value at $p$ of $\nabla_{i}X^{j}$ for any torsion-free affine connection $\nabla$, for if $\bnabla = \nabla + \Pi_{ij}\,^{k}$ is any other torsion-free affine connection, then, at $p$ there holds $\bnabla_{i}X^{k} = \nabla_{i}X^{j} + \Pi_{ip}\,^{j}X^{p} = \nabla_{i}X^{j}$. Let $N \subset M$ be a connected smoothly embedded submanifold of $M$. The case where $N$ is $0$-dimensional, so a point, is allowed, and is the case of principal interest here. Following \cite{Bursztyn-Lima-Meinrenken, HajSaeediSadegh-Higson, Meinrenken-eulerlike}, a vector field $X \in \Ga(TM)$ is \emph{Euler-like along $N$} if, for every $f \in \cinf(M)$ that vanishes to first order along $N$, $df(X) - f$ vanishes to second order along $N$. In \cite{Bursztyn-Lima-Meinrenken} it is also required that $X$ be complete, but here this condition is omitted, as also in \cite{HajSaeediSadegh-Higson}, although in fact in the examples of interest considered here, the Euler-like vector field is complete. That $df(X) - f$ vanish to second order along $N$ for any $f$ vanishing to first order along $N$ implies that $X$ vanishes along $N$ as well.

A vector field $X$ is Euler-like along a submanifold $N$ of codimension $n - r$ if and only if for every $p \in N$ and every choice of local coordinates $x^{1}, \dots, x^{r}, y^{1}, \dots, y^{n-r}$ centered on $p$ such that $\{x^{1} = 0, \dots, x^{r} = 0\}$ defines the part of $N$ contained in the coordinate neighborhood of $p$, the vector field $X$ has the form $\sum_{i = 1}^{n-r}a^{i}(x, y)\tfrac{\pr}{\pr y^{i}} + \sum_{i = 1}^{r}(x^{i} + b^{i}(x, y))\tfrac{\pr}{\pr x^{i}}$ with $a^{i}(x, y)$ vanishing when $x^{1} = 0, \dots, x^{r} = 0$, and $b^{i}(x, y)$ vanishing to second order when $x^{1} = 0, \dots, x^{r} = 0$. In particular, along $N$ the differential of $X$ is a projection of rank $r$ onto the normal bundle of $N$, and the linear approximation to $X$ along $N$ is the usual Euler field on the normal bundle of $N$.

A vector field $X \in \Ga(TM)$ is \emph{Euler-like} if each connected component of its zero set $\zero(X)$ is a smoothly embedded submanifold of $M$ and $X$ is Euler-like along each connected component of $\zero(X)$.

\begin{lemma}\label{radianteulerlikelemma}
If $(\nabla, \rad)$ is a radiant structure on a smooth manifold $M$, then $\rad$ is Euler-like.
\end{lemma}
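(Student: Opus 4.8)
The plan is to reduce the claim to a purely local computation at each zero of $\rad$. First I would invoke Lemma~\ref{radiantisolatedlemma}, which guarantees that every zero of $\rad$ is nondegenerate, hence isolated. Consequently each connected component of $\zero(\rad)$ is a single point, which is trivially a smoothly embedded $0$-dimensional submanifold of $M$. This already settles the first requirement in the definition of Euler-like vector field, and reduces the problem to verifying that $\rad$ is Euler-like along each of its isolated zeros $p$.

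The key observation is that the defining equation $\nabla_i \rad^j = \delta_i\,^j$ is exactly the infinitesimal Euler condition at $p$. Recall (as noted at the opening of this section) that the differential of a vector field at one of its zeros is independent of the connection and equals the value of $\nabla_i X^j$ there; for $\rad$ this value is $\delta_i\,^j$, the identity endomorphism of $T_pM$. Since $\{p\}$ has codimension $n$, its normal bundle is all of $T_pM$, and the identity is precisely the rank-$n$ projection onto the normal bundle demanded by the coordinate characterization of Euler-like vector fields given just before the statement. Alternatively, and even more transparently, one may pass to normal coordinates centered at $p$; the computation in the proof of Lemma~\ref{maxgeodesiclemma} shows that in such coordinates $\rad = x^i \tfrac{\pr}{\pr x^i}$ is literally the standard Euler field, for which Euler-likeness is manifest.

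It then remains to carry out the short verification of the defining condition. Choosing any local coordinates $x^1,\dots,x^n$ centered at $p$, the preceding paragraph gives $\rad^i = x^i + O(|x|^2)$. For $f \in \cinf(M)$ vanishing to first order along $\{p\}$, that is with $f(p)=0$, write $f = a_i x^i + O(|x|^2)$; a one-line Taylor expansion yields $df(\rad) - f = \rad^i \tfrac{\pr f}{\pr x^i} - f = O(|x|^2)$, so $df(\rad)-f$ vanishes to second order along $\{p\}$, as required. I do not expect any genuine obstacle here: the entire content is the identification of $\nabla\rad = \Id$ with the infinitesimal Euler normal form, the only points requiring care being the appeal to Lemma~\ref{radiantisolatedlemma} to exclude positive-dimensional components of $\zero(\rad)$ and the bookkeeping of the order-of-vanishing conventions.
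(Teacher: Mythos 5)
Your proposal is correct and follows essentially the same route as the paper's proof: invoke Lemma~\ref{radiantisolatedlemma} to see that $\zero(\rad)$ is discrete, note that the differential of $\rad$ at each zero is $\nabla_{i}\rad^{j} = \delta_{i}\,^{j}$, and conclude that in coordinates centered at the zero $\rad = \sum_{i}x^{i}\tfrac{\pr}{\pr x^{i}} + G$ with $G$ vanishing to second order. The only difference is cosmetic: the paper stops there, relying on the coordinate characterization of Euler-like vector fields stated earlier in the section, whereas you additionally carry out the one-line Taylor verification that $df(\rad) - f = O(|x|^{2})$, which is a harmless (and slightly more self-contained) elaboration.
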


\begin{proof}
By Lemma \ref{radiantisolatedlemma}, $\zero(\rad)$ is a discrete set of points. That $\rad$ be radiant means that its differential at $p \in \zero(\rad)$ equals $\nabla_{i}\rad^{j} = \delta_{i}\,^{j}$. It follows that in geodesic normal coordinates $x^{1}, \dots, x^{n}$ centered at $p$, $\rad$ has the form $\sum_{i = 1}^{n}x^{i}\tfrac{\pr}{\pr x^{i}} + G$ where $G$ is a smooth vector field vanishing to second order at $p$. 
\end{proof}

On an $n$-manifold $M$, a  \emph{volume form} means a nowhere-vanishing $n$-form $\Psi \in \Ga(\Det \ctm)$. 
\begin{definition}
Let $M$ be an $n$-manifold.
\begin{enumerate}
\item A vector field $\rad$ on $(M, \Psi)$ is \emph{Euler} if $\lie_{\rad}\Psi = n\Psi$. 
\item An \emph{Euler structure} on $M$ is a pair $(\Psi, \rad)$ comprising a volume form $\Psi \in \Ga(\Det \ctm\setminus\zsec(M))$ and an Euler-like Euler vector field $\rad \in \Ga(TM)$. 
\end{enumerate}
\end{definition}
By Stokes' Theorem, a manifold admitting an Euler vector field is noncompact, because a volume form admitting an Euler vector field is exact, for $\Psi = d \mu$ where $\mu= \tfrac{1}{n}i(\rad)\Psi$. Because $\Psi$ is nowhere-vanishing, prescribing $\mu$ determines $\rad$, so the form $\mu$ and the Euler field are equivalent data. (The form $\mu$ of an Euler structure satisfies some nontrivial condition corresponding with the Euler-like condition on the Euler vector field. This condition is not elucidated because it is not used.)

\begin{remark}
Note that \emph{Euler} and \emph{Euler-like} are \emph{not} synonyms. The Euler-like condition refers to the vector field alone, while the Euler condition depends on a choice of volume form. Even when both make sense, neither condition implies the other. With respect to the volume form $\Psi = dx \wedge dy$ on $\rea^{2}$, the vector field $X = (x+y)\pr_{x} + (y-x) \pr_{y}$ is Euler but is not Euler-like (for $f(x, y) = x$, $df(X) -f = y$ does not vanish to second order at the origin) while the vector field $Y = x\pr_{x}$ is Euler-like but not Euler. 
\end{remark}

\begin{remark}
In some contexts it might be desirable to impose further conditions as part of the definition of an Euler structure. First, the Euler vector field could be required to be complete; this is generally the case in the examples considered later. Second, if $M$ were allowed to have boundary $\pr M$ and $M$ were oriented, then there could be required additionally that, along $\pr M$, $\rad$ point to the exterior of $M$, or that $\mu$ restrict to a volume form on $\pr M$ consistent with the induced orientation.
\end{remark}
%%%%omitted because not used, but save
%An Euler or Euler-like vector field or an Euler structure is \emph{integrable} if $\rad$ is complete. The terminology indicates that in this case $\rad$ can be integrated to a globally defined flow. Such structures are not called \emph{complete} because later there are considered Euler structures compatible with an affine connection $\nabla$, and it is natural to reserve the word \emph{complete} for reference to this connection.
%%% following not used
%A smooth map $\Phi:M_{1} \to M_{2}$ between Euler manifolds $(M_{i}, \Psi_{i}, \rad_{i})$ is \emph{Euler} if $\Phi^{\ast}(\Psi_{2}) = \Psi_{1}$ and $T\Phi(\rad_{1}) = \rad_{2}$. That $\Phi^{\ast}(\Psi_{2}) = \Psi_{1}$ implies $\Phi$ is a local diffeomorphism. An Euler map from $(M, \Psi, \rad)$ to itself is an Euler automorphism if it is a diffeomorphism. 
%%% following appears not to be used 
%The group of Euler automorphisms of $(M, \Psi, \rad)$ is denoted $\diff(M, \Psi, \rad)$. 

\begin{lemma}\label{eelemma}
For a volume form $\Psi$ and a radiant structure $(\nabla, \rad)$ on an $n$-manifold $M$ the following are equivalent:
\begin{enumerate*}
\item\label{ee1} $\nabla_{\rad}\Psi = 0$;
\item\label{ee2} $\rad$ is an Euler vector field with respect to $\nabla$; 
\item\label{ee3} $(\rad, \Psi)$ is an Euler structure.
\end{enumerate*}
\end{lemma}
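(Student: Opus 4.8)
The plan is to read off the equivalence of \eqref{ee1} and \eqref{ee2} directly from Lemma \ref{liesslemma}, and then to obtain the equivalence with \eqref{ee3} from the very definition of an Euler structure together with Lemma \ref{radianteulerlikelemma}. The whole argument is essentially a bookkeeping exercise in the homogeneity shift of Lemma \ref{liesslemma}, once the valence and weight of $\Psi$ are correctly identified.

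First I would apply Lemma \ref{liesslemma} to the volume form $\Psi$. A volume form is a section of $\Det\ctm$, that is, a totally antisymmetric covariant $n$-tensor, so in the notation of Lemma \ref{liesslemma} it has $r = n$ covariant indices, $s = 0$ contravariant indices, and density weight $\la = 0$; equivalently one may regard it as a section of $|\Det\ctm|^{1}$ with $r = s = 0$ and $\la = 1$, and both readings produce the same shift. Lemma \ref{liesslemma} then gives
\begin{align}
\nabla_{\rad}\Psi = \lie_{\rad}\Psi + (s - r - n\la)\Psi = \lie_{\rad}\Psi - n\Psi.
\end{align}
From this single identity it is immediate that $\nabla_{\rad}\Psi = 0$ holds if and only if $\lie_{\rad}\Psi = n\Psi$, and the latter is precisely the statement that $\rad$ is an Euler vector field with respect to $\Psi$. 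This establishes the equivalence \eqref{ee1} $\Leftrightarrow$ \eqref{ee2}.

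For the remaining equivalence I would invoke Lemma \ref{radianteulerlikelemma}, by which the radiant vector field $\rad$ is automatically Euler-like. Consequently the pair $(\rad, \Psi)$ is an Euler structure exactly when, in addition, $\rad$ is an Euler vector field for $\Psi$, which is condition \eqref{ee2}; this yields \eqref{ee2} $\Leftrightarrow$ \eqref{ee3} and closes the cycle. No step presents a genuine obstacle: the only point demanding care is the valence-and-weight bookkeeping for $\Psi$ when applying Lemma \ref{liesslemma}, so as not to conflate the factor $\Det\ctm$ with a density power and thereby misstate the homogeneity shift. Once that shift is correctly computed to be $-n$, the equivalences are immediate.
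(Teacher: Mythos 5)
Your proposal is correct and follows essentially the same route as the paper: both derive the identity $\nabla_{\rad}\Psi = \lie_{\rad}\Psi - n\Psi$ (the paper states it directly, you extract it from Lemma \ref{liesslemma} with the same weight bookkeeping) to get \eqref{ee1} $\Leftrightarrow$ \eqref{ee2}, and both then use Lemma \ref{radianteulerlikelemma} to observe that the Euler-like condition is automatic, so that \eqref{ee2} and \eqref{ee3} coincide. Nothing to correct.
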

\begin{proof}
Any volume form $\Psi$ and any radiant vector field $\rad$ satisfy $\nabla_{\rad}\Psi = \lie_{\rad}\Psi - n\Psi$, so $\nabla_{\rad}\Psi = 0$ if and only if $\lie_{\rad}\Psi = n \Psi$. Since, by Lemma \ref{radianteulerlikelemma}, a radiant vector field is Euler-like, the equivalent conditions \eqref{ee1}-\eqref{ee2} imply $(\rad, \Psi)$ is an Euler structure. That \eqref{ee3} implies \eqref{ee2} is immediate.
\end{proof}

\begin{definition}\label{radianteulerdefined}
\noindent
\begin{enumerate}
\item A \emph{radiant Euler structure} is a triple $(\nabla, \rad, \Psi)$ such that $(\nabla, \rad)$ is a radiant structure and $(\rad, \Psi)$ is an Euler structure. 
\item An \emph{equiaffine radiant structure} is a radiant Euler structure $(\nabla, \rad, \Psi)$ such that $\nabla \Psi = 0$.
\end{enumerate}
\end{definition}

By Lemma \ref{eelemma}, if $(\nabla, \rad)$ is a radiant structure and $\nabla \Psi = 0$, then $(\nabla, \rad, \Psi)$ is an equiaffine radiant structure.

For any equiaffine radiant structure there holds $\nabla i(\rad)\Psi = \Psi$.

\begin{remark}
If $\rad$ is Euler with respect to $\Psi$ then it is Euler with respect to any nonzero constant multiple of $\Psi$. If $(\nabla, \rad, \Psi)$ is a radiant Euler structure, so is $(\nabla, \rad, c\Psi)$ for any $c \in \reat$, so it is the homothety class of $\Psi$ that matters in Definition \ref{radianteulerdefined}, rather than $\Psi$.

If $(\Psi, \rad)$ is a pair comprising a volume form $\Psi$ and a vector field $\rad$ such that $\lie_{\rad}\Psi = \la \Psi$ for some $\la \in \reat$, then $\tfrac{n}{\la}\rad$ is an Euler vector field. However, if $(\nabla, \rad)$ is radiant, rescaling $\rad$ destroys the property that $(\nabla, \rad)$ be radiant. Requiring that $\la$ be $n$ in the definition of Euler vector field is motivated by the condition of compatibility with a radiant connection in Definition \ref{radianteulerdefined} and by Example \ref{tautologicalbundleexample}. 
\end{remark}

\begin{example}
Products of each of radiant structures, Euler structures, and equiaffine radiant structures are again structures of the same kind.
Identify a vector field $X$ on $M$ with the vector field $X \oplus \zs_{M^{\prm}}$ on $M \times M^{\prm}$ generated by the flow given by the product of the flow of $X$ with the identity map on the $M^{\prm}$ factor, and do similarly for a vector field on $M^{\prm}$. If $(\nabla, \rad)$ and $(\nabla^{\prm}, \rad^{\prm})$ are radiant structures on $M$ and $M^{\prm}$, respectively, then $\rad \oplus \rad^{\prm}$ means the vector field on $M \times M^{\prm}$ generated by the flow $\phi^{t} \times \phi^{\prm\,t}$ where $\phi^{t}$ and $\phi^{\prm\,t}$ are the flows of $\rad$ and $\rad^{\prm}$, respectively. If $\nabla$ and $\nablap$ are connections on $M$ and $M^{\prime}$, the product connection $D$ is defined by $D_{X\oplus Y}A\oplus B = \nabla_{X}A \oplus \nablap_{Y}B$. 

It follows from the definition that if $(\nabla, \rad)$ and $(\nablap, \rad^{\prm})$ are radiant structures on $M$ and $M^{\prm}$ then $(D, \rad \oplus \rad^{\prm})$ is a radiant structure on $M \times M^{\prm}$. Similarly, it is straightforward to check that if $(\Psi, \rad)$ and $(\Psi^{\prm}, \rad^{\prm})$ are Euler structures on $M$ and $M^{\prm}$ respectively, then $(\pi^{\ast}(\Psi)\wedge (\pi^{\prime})^{\ast}(\Psi^{\prm}), \rad \oplus \rad^{\prm})$ is an Euler structure on $M \times M^{\prm}$, where $\pi$ and $\pi^{\prm}$ are the projections from $M \times M^{\prm}$ onto its factors. Likewise, if $(\nabla, \rad, \Psi)$ and $(\nablap, \rad^{\prm}, \Psi^{\prm})$ are equiaffine radiant structures on $M$ and $M^{\prm}$, then $(D, \rad \oplus \rad^{\prm}, \pi^{\ast}(\Psi)\wedge (\pi^{\prime})^{\ast}(\Psi^{\prm}))$ is an equiaffine radiant structure on $M \times M^{\prm}$.

Since the curvature of the product connection is the sum of the curvatures of the component connections, these constructions preserve the subclasses of the radiant, Euler, or equiaffine radiant structures for which the connection is flat or Ricci-flat.
\end{example}

\begin{example}\label{standardradiantexample}
Let $\ste$ be an $n$-dimensional real vector space.   
The affine structure determined by the vector space structure on $\ste$ determines a torsion-free affine connection $\nabla$ on $\ste$ such that if $\{e_{1}, \dots, e_{n}\}$ is a basis of $\ste$ and $x = \sum_{i = 1}^{n}x^{i}e_{i}$ are the corresponding coordinates on $\ste$, then $dx^{1}, \dots, dx^{n}$ is a $\nabla$-parallel coframe. Coordinates $x^{1}, \dots, x^{n}$ with this property are \emph{affine coordinates} on $\ste$. In this case the volume form $\Psi = dx^{1}\wedge \dots \wedge dx^{n}$ is $\nabla$-parallel and any $\nabla$-parallel volume form is a constant multiple of $\Psi$. In any choice of affine coordinates the vector field $\rad$ generated by the flow $\tau_{t}(p) = e^{t}p$ by dilations around the origin has the form $\rad = \sum_{i = 1}^{n}x^{i}\pzi$, so is Euler-like. Since $\lie_{\rad}\Psi = n\Psi$, $\rad$ is an Euler vector field. With $\Psi$ it constitutes the \emph{standard} Euler structure on $\ste$, and with $\nabla$ constitutes the \emph{standard} radiant structure on $\ste$. All these structures together constitute the \emph{standard} equiaffine radiant structure $(\ste, \nabla, \rad, \Psi)$. The automorphisms of the radiant structure $(\ste, \nabla, \rad)$ are $GL(\ste) = GL(n, \rea)$. The following stronger result is true, because a diffeomorphism of $\ste$ that preserves $\rad$ necessarily preserves $\nabla$.

\begin{lemma}
Let $\rad$ be the Euler vector field of real vector space $\ste$. If a $C^{2}$ diffeomorphism $\phi:\ste \to \ste$ satisfies $\phi^{\ast}(\rad) = \rad$, then $\phi \in GL(\ste)$.
\end{lemma}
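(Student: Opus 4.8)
The plan is to translate the hypothesis $\phi^{\ast}(\rad) = \rad$ into a functional equation expressing positive homogeneity, and then to use the $C^{2}$ regularity of $\phi$ at the origin to upgrade that homogeneity to linearity. First I would recall that invariance of a vector field under a diffeomorphism is equivalent to commuting with its flow. Since $\rad$ generates the flow $\tau_{t}(p) = e^{t}p$, the condition $\phi^{\ast}(\rad) = \rad$ is equivalent to $\phi \circ \tau_{t} = \tau_{t}\circ \phi$ for all $t$, that is, $\phi(e^{t}p) = e^{t}\phi(p)$. Writing $s = e^{t}$, this says $\phi(sp) = s\phi(p)$ for all $s > 0$ and all $p \in \ste$; in other words $\phi$ is positively homogeneous of degree one. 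Taking $p = 0$ gives $\phi(0) = s\phi(0)$ for every $s > 0$, whence $\phi(0) = 0$.

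Next I would differentiate the homogeneity relation in the base point. Fixing $s > 0$ and applying the differential in an arbitrary direction $v$ to $\phi(sp) = s\phi(p)$ yields $s\,D\phi(sp)(v) = s\,D\phi(p)(v)$, hence $D\phi(sp) = D\phi(p)$ for all $s > 0$ and all $p$. Thus the differential $D\phi$ is positively homogeneous of degree zero, i.e. constant along each ray emanating from the origin.

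Here is the step where the regularity at $0$ is essential. Because $\phi$ is $C^{2}$, its differential $D\phi$ is continuous on all of $\ste$, including at the origin. Letting $s \to 0^{+}$ in the identity $D\phi(sp) = D\phi(p)$ and using continuity at $0$ gives $D\phi(p) = D\phi(0)$ for every $p$. Hence $D\phi$ is the constant linear map $A := D\phi(0)$. A $C^{1}$ map on the connected set $\ste$ with constant derivative $A$ is affine, so $\phi(x) = Ax + \phi(0) = Ax$ by the normalization $\phi(0) = 0$; thus $\phi$ is linear. Since $\phi$ is a diffeomorphism, $A = D\phi$ is everywhere invertible, so $A \in GL(\ste)$ and $\phi = A$.

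The point I expect to be worth flagging, rather than a genuine obstacle, is that positive homogeneity of degree one does \emph{not} by itself force linearity: any norm on $\ste$ is positively homogeneous of degree one yet far from linear. What rules out such behavior is precisely the differentiability of $\phi$ at the origin, and the entire argument hinges on passing the degree-zero homogeneity of $D\phi$ to the limit at $0$ using continuity there. Everything else is routine, and in fact only $C^{1}$ regularity is needed, the hypothesis $C^{2}$ being more than sufficient.
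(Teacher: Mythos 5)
Your proof is correct, and it reaches the same pivot point as the paper's proof --- the differential of $\phi$ is constant along rays emanating from the origin, and continuity of the differential at $0$ then forces it to be globally constant, hence $\phi$ affine and, after normalization, linear --- but it gets there by a different mechanism. The paper stays infinitesimal: it writes $\phi^{\ast}(\rad) = \rad$ in coordinates as Euler's relation $\sum_{j}x^{j}\tfrac{\pr \phi^{i}}{\pr x^{j}} = \phi^{i}$, differentiates this once more (this is where the $C^{2}$ hypothesis is used) to obtain $d\bigl(\tfrac{\pr \phi^{i}}{\pr x^{j}}\bigr)(\rad) = 0$, and then applies the ray-limit argument to each partial derivative. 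You instead integrate first, converting the vector-field condition into equivariance with the (complete) flow, $\phi(sp) = s\phi(p)$ for $s > 0$, and only then differentiate once in the base point to get $D\phi(sp) = D\phi(p)$. The payoff of your ordering is exactly what you flag: only $C^{1}$ regularity is needed, whereas the paper's differentiation of the infinitesimal identity formally consumes $C^{2}$. Your write-up also handles two small points more explicitly than the paper: the normalization $\phi(0) = 0$ (which is what kills the affine constant term) and the invertibility of $A = D\phi(0)$. Your concluding remark that degree-one positive homogeneity alone does not imply linearity, and that differentiability at the origin is the decisive hypothesis, correctly identifies the crux shared by both arguments.
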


\begin{proof}
Let $z = \sum_{i = 0}^{n}x^{i}e_{i}$ be the affine coordinates on $\ste$ corresponding with a basis $\{e_{1}, \dots, e_{n}\}$. Write $\phi(p) = \sum_{i = 1}^{n}\phi^{i}(p)e_{i}$. The component functions $\phi^{i}$ are in $C^{2}(\ste)$. That $\phi^{\ast}(\rad) = \rad$ means $T\phi(p)(\rad_{p}) = \rad_{\phi(p)}$ for all $p \in \ste$. In coordinates $\sum_{j = 1}^{n}x^{j}\tfrac{\pr \phi^{i}}{\pr x^{j}} = \phi^{i}$. Differentiating this shows $\sum_{k = 1}^{n}x^{k}\tfrac{\pr^{2} \phi^{i}}{\pr x^{j} \pr x^{k}} =  0$. If $f \in C^{1}(\ste)$ satisfies $df(\rad) = 0$, then $f$ is constant on rays emanating from the origin, so $f(x) = \lim_{t \to -\infty}f(e^{t}x) = f(0)$, showing $f$ is constant. Hence $\tfrac{\pr \phi^{i}}{\pr x^{j}}$ is constant for all $i$ and $j$. This shows $\phi \in GL(\ste)$.
\end{proof}
The automorphism group of the equiaffine radiant structure $(\ste, \nabla, \rad, \Psi)$ is the special linear subgroup $SL(\ste) = SL(n, \rea)$
\end{example}

\begin{lemma} 
On an $n$-dimensional vector space $\ste$, let $X$ be an Euler-like vector field having an isolated zero at $0 \in \ste$.
\begin{enumerate}
\item\label{eev1} $X = \rad + B$ where $B$ is a smooth vector field vanishing to second order at $0$.
\item\label{eev2} If $X$ is also Euler with respect to the volume form of the standard equiaffine structure $(\nabla, \Psi, \rad)$ on $\ste$, then $X = \rad + B$, where $B$ is a smooth divergence-free vector field vanishing to second order at $0$.
\item\label{eev3} There is a diffeomorphism $\phi$ defined on an open neighborhood $U$ of $0 \in \ste$ such that $\phi^{\ast}(X) = \rad$ on $U$. 
\end{enumerate}
\end{lemma}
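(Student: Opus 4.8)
The plan is to handle the three parts in order, reading the first two off the definitions and concentrating the real work in the third.

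For part~\eqref{eev1} I would invoke the coordinate characterization of Euler-like vector fields recalled just before Lemma~\ref{radianteulerlikelemma}, applied with $N = \{0\}$ and with any system of affine coordinates $x^{1}, \dots, x^{n}$ on $\ste$ centered at the origin. Since $N$ is a point there are no transverse coordinates, and the characterization says precisely that $X = \sum_{i}(x^{i} + b^{i}(x))\pxi$ with each $b^{i}$ vanishing to second order at $0$. As the Euler field reads $\rad = \sum_{i}x^{i}\pxi$ in affine coordinates, this is exactly the assertion $X = \rad + B$ with $B = \sum_{i}b^{i}\pxi$ vanishing to second order. Equivalently, one tests the defining condition on the linear functions $f = x^{j}$, which vanish to first order, forcing $df(X) - f = X^{j} - x^{j}$ to vanish to second order, which is the same conclusion.

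For part~\eqref{eev2} I would compute a divergence. For a torsion-free connection and any volume form there holds $\lie_{X}\Psi = \nabla_{X}\Psi + \divn(X)\,\Psi$, a special case of~\eqref{liederivative}; since $\Psi$ is $\nabla$-parallel for the standard equiaffine structure (Example~\ref{standardradiantexample}), this reduces to $\lie_{X}\Psi = \divn(X)\,\Psi$, with $\divn(X) = \sum_{i}\pr X^{i}/\pr x^{i}$ in affine coordinates. Writing $X = \rad + B$ as in~\eqref{eev1} and using $\divn(\rad) = n$, the Euler condition $\lie_{X}\Psi = n\Psi$ is equivalent to $\divn(B) = 0$. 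Thus the very $B$ produced in~\eqref{eev1} is divergence-free, which is~\eqref{eev2}.

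The substance lies in part~\eqref{eev3}, a local smooth linearization at $0$. By~\eqref{eev1} the differential $\nabla_{i}X^{j}$ at $0$ is $\delta_{i}\,^{j}$, so $0$ is a hyperbolic zero all of whose eigenvalues equal $1$; these lie in the Poincaré domain and admit no resonance, since $1 = m_{1} + \dots + m_{n}$ with integers $m_{j}\geq 0$ and $\sum_{j}m_{j}\geq 2$ is impossible. I would build the conjugating map from the two flows: let $\psi_{t}$ be the local flow of $X$ and $\tau_{t}(p) = e^{t}p$ the flow of $\rad$, and set $\phi := \lim_{t \to +\infty}\psi_{t}\circ \tau_{-t}$ on a small ball about $0$. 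From $\Phi_{t}\circ\tau_{s} = \psi_{s}\circ\Phi_{t-s}$, where $\Phi_{t} = \psi_{t}\circ\tau_{-t}$, one gets in the limit $\phi\circ\tau_{s} = \psi_{s}\circ\phi$, hence $T\phi\circ\rad = X\circ\phi$, that is $\phi^{\ast}(X) = \rad$; moreover $\phi(0) = 0$ and $T_{0}\phi = e^{t}e^{-t}\Id = \Id$, so $\phi$ is a local diffeomorphism. \emph{The hard part} is the $C^{\infty}$ convergence of this limit on a fixed neighborhood: along $\tau_{-t}$ a point is contracted to scale $e^{-t}$, where the second-order remainder $B$ contributes a correction of size $O(e^{-2t})$, which the expansion $\psi_{t}$ of rate $\sim e^{t}$ magnifies only to $O(e^{-t})$, a summable tail; carrying this estimate through the derivatives is exactly the Poincaré-domain bound. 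Alternatively, having verified non-resonance and hyperbolicity above, one may simply invoke Sternberg's theorem, by which a smooth vector field with non-resonant hyperbolic linear part is smoothly conjugate near its fixed point to its linearization, which here is $\rad$.
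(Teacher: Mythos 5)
Your treatment of parts (1) and (2) is essentially the paper's own: the paper likewise works in affine coordinates, forces the linear part of $X$ to be the identity by testing the Euler-like condition on the coordinate functions $x^{i}$, and reads off divergence-freeness of $B$ in part (2) from $\lie_{X}\Psi - n\Psi = \lie_{B}\Psi$. Part (3) is where you take a genuinely different, and correct, route. The paper follows Lemma $2.4$ of \cite{Bursztyn-Lima-Meinrenken} and uses a Moser-type deformation argument: it sets $W(t, p) = t^{-1}\tau_{\log t}^{\ast}(B)_{p}$, notes that the second-order vanishing of $B$ is exactly what makes $W$ extend smoothly to $t = 0$ (in coordinates $W(t,x) = t^{-2}b^{i}(tx)\pxi$), and then shows that the flow $\phi_{t}$ of this time-dependent field satisfies $\tfrac{d}{dt}\phi_{t}^{\ast}(tW + \rad) = 0$, so that $\phi_{1}$ pulls back $X = \rad + B$ to $\rad$; this is elementary, self-contained (only ODE existence plus a Lie-derivative computation), and is the form of the argument that generalizes to Euler-like fields along positive-dimensional submanifolds. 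You instead pass through hyperbolic dynamics: the zero is a source with all eigenvalues equal to $1$, your non-resonance check ($1 = \sum_{j}m_{j}$ with $\sum_{j}m_{j}\geq 2$ is impossible) is correct, and Sternberg's linearization theorem then yields the smooth local conjugacy to the linear part $\rad$. The scattering limit $\phi = \lim_{t\to\infty}\psi_{t}\circ\tau_{-t}$ you sketch is the classical proof of that theorem in the Poincar\'e domain, and your intertwining identity and the computation $T_{0}\phi = \Id$ are right; but, as you concede, the $C^{\infty}$ convergence estimates are the real content there and you do not carry them out, so as written the complete justification of (3) rests on the citation of Sternberg. The trade-off: the paper's proof is shorter, uses nothing beyond the structure already established in parts (1)--(2), and works verbatim in the relative setting of \cite{Bursztyn-Lima-Meinrenken}; yours invokes a heavier standard black box, with the compensation that it identifies the mechanism as non-resonant source dynamics and exhibits the conjugacy as an explicit flow limit.
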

\begin{proof}
Let $x^{1}, \dots, x^{n}$ be coordinates  such that $dx^{1}, \dots, dx^{n}$ is a $\nabla$-parallel coframe and $\Psi = dx^{1}\wedge \dots \wedge dx^{n}$. In such coordinates, $\rad = x^{i}\tfrac{\pr}{\pr x^{i}}$. That $X$ have an isolated zero at $0$ means there are constants $a_{j}\,^{i}$ and functions $b^{i}(x)$ vanishing to second order at $0$ such that $X = a_{p}\,^{i}x^{p}\tfrac{\pr}{\pr x^{i}} + B$ where $B = b^{i}(x)\tfrac{\pr}{\pr x^{i}}$. Because $X$ is Euler-like, $dx^{i}(X) - x^{i} = a_{p}\,^{i}x^{p} - x^{i} + b^{i}(x)$ must vanish to second order at $x = 0$ for $1 \leq i \leq n$. This forces $a_{i}\,^{j} = \delta_{i}\,^{j}$, so that $X = \rad + B$. This shows \eqref{eev1}. Since $\lie_{X}\Psi - n\Psi = (a_{p}\,^{p} + \tfrac{\pr b^{p}}{\pr x^{p}} - n)\Psi = \lie_{B}\Psi$, that $X$ be also Euler forces that $B$ be divergence free. This shows \eqref{eev2}. 

The remainder of the argument follows closely the proof of Lemma $2.4$ in \cite{Bursztyn-Lima-Meinrenken}.
Let $\tau_{t}(p) = e^{t}p$ be the flow generated by $\rad$. Define a time-dependent vector field $W:(0, \infty) \times \ste \to T\ste$ by $W(t, p) = t^{-1}\tau_{\log{t}}^{\ast}(B)_{p}$ for $t > 0$. In affine coordinates, $W(t, p)  = \sum_{i = 1}^{n}t^{-2}b^{i}(tx)\tfrac{\pr}{\pr x^{i}}e_{i}$. Because the $b^{i}$ vanish to second order at the origin, $W(t, p)$ extends smoothly to $t = 0$.

Let $\phi_{t}$ be the flow of the time-dependent vector field $W$, equal to the identity when $t = 0$ and satisfying $\tfrac{d}{dt}\phi_{t}(p) = W(t, \phi_{t}(p))$. 
Since $W(t, 0) = 0$ for all $t$, the integral curve $\phi_{t}(0) = 0$ is defined for all $t$ and there is an open neighborhood $U$ of $0$ such that, for $p \in U$, the integral curve $\phi_{t}(p)$ is defined for all $t \in [0, 1]$. By definition of $W$, for $a \geq 0$, $\tau_{\log{a}}^{\ast}(W)(t, p) = aW(at, p)$ and it follows that $a\phi_{at}(p) = \phi_{t}(ap)$ when both sides make sense, with the consequence that $\tau_{\log{a}}(U) \subset U$ for $a \in (0, 1]$.

Consider the vector field $Y(t,p) = tW(t, p) + \rad(p)$. When $t = 0$, $Y = \rad$, and when $t = 1$, $Y = B + \rad = X$. 
In general if $P$ and $Q$ are vector fields and $\psi_{t}$ is the flow of $P$, there holds $\tfrac{d}{dt}\psi_{t}^{\ast}(Q) = \psi_{t}^{\ast}([P, Q])$. Using this observation and $\tau_{\log{t}}^{\ast}(\rad) = \rad$ there results that, on $U$,
\begin{align}
\begin{split}
\tfrac{d}{dt}\phi_{t}^{\ast}(tW + \rad) & = \tfrac{d}{dt}\phi_{t}^{\ast}(\tau_{\log{t}}^{\ast}(B) + \tau_{\log{t}}^{\ast}(\rad) )\\
& = \phi_{t}^{\ast}\left([W, \tau_{\log{t}}^{\ast}(B + \rad)] + t^{-1}\tau_{\log{t}}^{\ast}([\rad, B + \rad]) \right) = \phi_{t}^{\ast}\left([W, tW + \rad] + [\rad, W] \right) = 0.
\end{split}
\end{align}
Thus $\phi_{t}^{\ast}(tW + \rad)$ is constant in $t$. At $t = 0$ it equals $\rad$ and at $t = 1$ it equals $\phi_{1}^{\ast}(X)$, so  $\phi_{1}^{\ast}(X) = \rad$ on $U$.
\end{proof}

\begin{example}\label{tautologicalbundleexample}
The standard equiaffine radiant structure on $\ste$ restricts to the submanifold $\ste \setminus \{0\}$. Let $\proj(\ste)$ be the projectivization of $\ste$ and let $\pi:\ste\setminus\{0\} \to \proj(\ste)$ be the canonical projection, so that $\pi(u) = [u]$ is the  line generated by $u \in \ste \setminus\{0\}$.
The \emph{tautological line bundle} $\pi:\taut \to \proj(\ste)$ has total space $\taut = \{(L, v)\in \proj(\ste)\times \ste: v \in L\}$ and projection $\pi(L, v) = L$, so that the fiber $\taut_{L}$ over $L \in \proj(\ste)$ is $L$. The group $\reat$ acts on the right on $\ste\setminus\{0\}$ and this action is principal for the projection $\pi:\ste\setminus\{0\} \to \proj(\ste)$, so the latter can be viewed as a principal $\reat$-bundle. 
The map sending the element $[v, t]$ of the bundle $\ste \setminus\{0\} \times_{\si}\rea \to \proj(\ste)$ associated with $\pi:\ste\setminus\{0\} \to \proj(\ste)$ via the representation $\si(r) = r$ of $\reat$ to the element $[[v], tv] \in \taut$ is a line bundle isomorphism. 
A section $s$ of $\taut$ corresponds with a function $\tilde{s}$ on $\ste \setminus \{0\}$ equivariant with respect to $\si$ in the sense that $\tilde{s}(rv) = \si(r^{-1})\tilde{s}(L, v) = r^{-1}\tilde{s}(v)$ for $v \in \ste \setminus\{0\}$ and $r \in \reat$. That is $\tilde{s}$ has homogeneity $-1$. Consequently homogeneous degree $k$ polynomials on $\ste$ determine sections of the $k$-fold tensor power $(\taut^{\ast})^{k} = \taut^{-k}$ of the dual line bundle $\taut^{\ast}$. Because sections are smooth, the converse claim is false; in general there are smooth sections of $\taut^{-k}$ for which the corresponding homogeneity $k$ function on $\ste \setminus\{0\}$ is not polynomial. A simple example is $\tilde{s}(x, y) = \sqrt{x^{4} + y^{4}}$.

There holds $T\proj(\ste) \simeq \hom(\taut, \ste/\taut) \simeq \taut^{\ast}\tensor \ste/\taut$. Here notation is slightly abused, and $\ste$ is used to denote the trivial vector bundle $\proj(\ste) \times \ste$ as well as the vector space $\ste$. Dualizing and taking determinants yields $\Det T^{\ast}\proj(\ste) \simeq \taut^{\tensor n}\tensor \Det(\ste/\taut)^{\ast}$. On the other hand, $\ste \simeq \taut \oplus \ste/\taut$ and taking determinants yields $\Det\ste \simeq \taut \tensor \Det(\taut/\ste)$.  Combining the preceding shows $\Det T^{\ast}\proj(\ste)  \simeq \Det T^{\ast}\proj(\ste) \tensor \Det \std \simeq \taut^{n+1}$, where $n = \dim \ste$ and the first isomorphism results because $\Det \std$ is a trivial line bundle. However, the trivialization is not canonical; fixing the standard volume form on $\ste$ determines an isomorphism $\taut \simeq  \Det(\ste/\taut)^{\ast}$, that fixes a particular isomorphism of $\Det T^{\ast}\proj(\ste)$ with $\taut^{\tensor n+1}$.

Since sections of $\Det T^{\ast}\proj(\ste)$ have weight $1$ in the sense that they correspond to functions on the complement of the zero section, $\Det T^{\ast}\proj(\ste) \setminus \zs(\proj(\ste))$, homogeneous of degree $1$, sections of $\taut$ should be considered to have weight $1/(n+1)$.  
\end{example}

\begin{example}\label{volumificationexample}
There is an Euler manifold $(\vdens \to M, \Psi^{\vdens}, \rad)$ canonically associated with an $n$-dimensional smooth manifold $M$. Its definition is motivated by Example \ref{tautologicalbundleexample}.
Let $\vdens = \Det \ctm \setminus \zs(M) \to M$ be the bundle of $n$-forms with the image of the zero section $\zs:M \to \Det \ctm$ deleted, regarded as a principal $\reat$-bundle with principal $\reat$-action $R_{r}$. Let $\veul$ be the vector field generating the fiber dilations $R_{e^{t}}(u) = e^{t}u$ on $\vdens$. The \emph{tautological $n$-form} $\mu \in \Ga(\ext^{n} (T^{\ast}\vdens))$, defined by 
\begin{align}\label{mudensdefined}
\mu_{s}(X_{1}, \dots, X_{n}) = s(T\rho(s)(X_{1}), \dots, T\rho(s)(X_{n}))
\end{align} 
for $X_{i} \in T_{s}\vdens$, has the following properties, which determine it uniquely: 
\begin{enumerate*}
\item $s^{\ast}(\mu) = s$ for any local section $s$ of $\vdens$;
\item $R_{r}^{\ast}(\mu) = r\mu$ for $r \in \reat$;
\item $i(\veul)\mu = 0$. 
\end{enumerate*}
For any principal $\reat$-connection $\be$ on $\vdens$ there holds $d\mu = \be \wedge \mu$ as follows from evaluating $i(\veul)d\mu = \lie_{\veul}\mu = \mu$ on the $\be$-horizontal lift of a local frame on $M$. As $\be \wedge \mu$ is evidently a volume form, this shows that $\Psi^{\vdens} = d\mu$ is a volume form on $\vdens$ satisfying $i(\veul)\Psi^{\vdens} = \mu$. 
Although $(\veul, \Psi)$ is not an Euler structure, because $\lie_{\veul}\Psi$ equals $\Psi$ rather than $(n+1)\Psi$, replacing $\veul$ with $\rad = (n+1)\veul$ yields the desired Euler structure $(\vdens \to M, \Psi^{\vdens}, \rad)$. The triple $(\vdens \to M, \Psi^{\vdens}, \rad)$ could be called the \emph{volumnification} of $M$ by analogy with the symplectification of a contact manifold as in Example \ref{symplectificationexample}.

Local coordinates $x^{1}, \dots, x^{n}$ on $U \subset M$ determine a trivialization $dx^{1}\wedge \dots \wedge dx^{n}$ of $\vdens$ over $U$ and a coordinate $t:U \to \reat$ on the fiber such that $\mu = t\rho^{\ast}(dx^{1})\wedge \dots \wedge \rho^{\ast}(dx^{n})$ and $\veul$ has the expression $t \pr_{t}$. The given trivialization is determined by a local section parallel with respect to the (locally defined) principal connection $\be = d\log t$. 
%The vector field $\veul$ corresponds to the principal $\reat$-action $R_{r}(s) = rs$ for $s \in \vdens$ in the sense that its flow is $R_{e^{t}}$. 
The vertical vector field $\rad = (n+1)\veul$ corresponds to the modified $\reat$ action $R_{\sign(r)|r|^{n+1}}$ on $\vdens$ (in the sense that its flow is $R_{e^{(n+1)t}}$), for which the given trivialization corresponds to the principal connection $\tfrac{1}{n+1}d\log t$ corresponding with the fiber coordinate $\sign(t)|t|^{1/(n+1)}$.
\end{example}

\begin{example}\label{symplectificationexample}
The notion of Euler structure is partly modeled on the notion of a Liouville structure on a symplectic manifold \cite{Cieliebak-Eliashberg, Eliashberg-Gromov, Eliashberg-Kim-Polterovich}.
A vector field $X$ on a symplectic manifold $(M, \Om)$ is \emph{Liouville} if $\lie_{X}\Om = \Om$. A \emph{Liouville structure} on a symplectic manifold $(M, \Om)$ is a Liouville vector field, $X$, usually also required to be complete and sometimes also required to satisfy some additional convexity condition.

From the point of view taken here, in the definition of a Liouville vector field it would be better to require $\lie_{X}\Om = 2\Om$, the constant $2$ playing the role of $n = \dim M$ in the definition of an Euler vector field. If the definition of a Liouville vector field is modified to be that $\lie_{X}\Om = 2\Om$, then every Liouville vector field gives rise to an Euler vector field. If the symplectic manifold has dimension $2n$, let $\Psi = \Om^{n}$. Since $\imt(X)\Om^{n} = n\imt(X)\Om \wedge \Om^{n-1}$, $\lie_{X}\Psi = d\imt(X)\Om^{n} = n\lie_{X}\Om \wedge \Om^{n-1} = 2n\Psi$, so $(X, \Om^{n})$ is an Euler pair.

For an $n$-manifold $N$, endow $M = \ctn$ with the the vector field $\rad$ equal to the generator of the dilations by $e^{2t}$ in the fibers and $\Psi = \Omega^{n}$, in which $\Omega$ is the canonical symplectic form. Then $(\rad, \Psi)$ is an Euler structure on $M$. This is a special case of the following observation that the symplectification of a contact manifold carries an Euler structure.
Let $C$ be a contact distribution on a $(2n-1)$-dimensional manifold $M$ and let $\ann C \subset \ctm$ be the line subbundle local sections of which annihilate $C$. On the total space of the complement $\rho:\F = \ann C \setminus \zs(M) \to M$ of the zero section $\zs(M)$ of $\ctm$ there is defined a tautological one-form $\theta$ by $\theta_{u}(X) = u(T\rho(u)(X))$ for $u \in \F$ and $X \in T_{u}\F$, and $\Omega = d\theta$ is a symplectic structure on $M$. Let $\rad$ generate dilations by $e^{2t}$ in the fibers of $\F$ and let $\Psi = \Omega^{n}$. Then $(\F, \rad, \Psi)$ is an Euler manifold.
\end{example}

Many examples of Euler structures arise on the total spaces of line bundles. Before giving the examples, some background related to line bundles is recalled.

For a real Lie group, $G$, the right action of $g \in G$ on the principal $G$-bundle $\rho:\F \to M$ is written $R_{g}(u) = u \cdot g$ for $u \in \F$. The vector bundle $V= \F \times_{\rho}\ste$ associated with the representation $\si:G \to \eno(\ste)$ is the quotient of $\F \times \ste$ by the equivalence relation $(u, v) \sim (u\cdot g, \rho(g^{-1})v)$. The image in $\F \times_{\rho}\ste$ of $(u, v) \in \frameb \times \ste$ is written $[u, v]$. 
The \emph{frame bundle} $\pi:\frameb[E] \to M$ of a vector bundle $E \to M$ with fiber $\ste$ is the principal $GL(\ste)$-bundle such that the fiber $\pi^{-1}(p)$ over $p \in M$ comprises invertible linear maps $u:\ste \to E_{p}$ and $GL(\ste)$ acts on the right by precomposition. 
By construction, $V = \F \times_{\rho}\ste\to M$ is recovered from $\frameb[V]$ as the bundle associated with the standard representation of $GL(\ste)$. 
Sections $s \in \Ga(M, V)$ are in canonical bijection with functions $\tilde{s}:\F \to \ste$ which are $G$-equivariant in the sense that $\tilde{s}(R_{g}(u)) = \rho(g^{-1})\tilde{s}(u)$. The equivariant function $\tilde{s} \in \cinf(\rho^{-1}(U))$ corresponding with the local section $s \in \Ga(U, \frameb \times_{\rho}\ste)$ is defined by $s(p) = [u, \tilde{s}(u)]$ for $p \in M$ and any $u \in \pi^{-1}(p)$. 
If $G$ acts on the right on a space by $u \to u \cdot g$, then $G$ acts on the right on a space of functions $s$ on the space by $(g \cdot s)(u) = s(u \cdot g^{-1})$. Thus the equivariance says that $g\cdot \tilde{s} = \rho(g)\tilde{s}$.

The integer power $\lmf^{k}$ of a real line bundle $\lmf \to M$ means the $k$-fold tensor product of $\lmf$ with itself if $k$ is positive, and the $k$-fold tensor product with itself of the dual line bundle $\lmf^{-1} = \lmf^{\ast}$ if $k$ is negative (when $k = 0$ it means the trivial line bundle).
The map $[u, t] \to tu(1)$ establishes a vector bundle isomorphism $\frameb[\lmf] \times_{\si}\rea \to \lmf$ where $\si(r) = r$ is the standard representation of $\reat$. Conversely, the complement $\lmf \setminus \zsec(M)$ of the zero section is identified with $\frameb[\lmf]$ via the map sending $l \in \lmf_{p}\setminus\{0\}$ to $\hat{l}:\rea \to \lmf_{p}$ defined by $\hat{l}(t) = tl$. Note that this means that sections $u$ of $\lmf^{k}$ correspond with functions $\tilde{u}$ on $\frameb[\lmf]$ of homogeneity $-k$. 

The $O(1) = \zmodtwo$ reduction of $\frameb[\lmf]$ is a $\zmodtwo$-principal bundle $\orientb[\lmf] \to M$. It is a double cover of $M$ that is connected if and only if $\lmf$ is orientable. The line bundle $\lmf$ is recovered as the associated bundle $\orientb[\lmf] \times_{\si}\rea$ where $\si$ is the standard representation of $\zmodtwo$ on $\rea$.
By the preceding, the spaces of isomorphism classes of real line bundles on $M$, isomorphism classes of principal $\reat$-bundles on $M$, and isomorphism classes of $\zmodtwo$-principal bundles on $M$ are in pairwise bijection in canonical ways. The transition functions of a real line bundle $\lmf \to M$ over a good cover determine a closed Cech $1$-cocyle so an element of the Cech cohomology $H^{1}(M; \reat)$ of the locally constant sheaf $\reat$. The short exact sequence of locally constant sheaves $0 \to \rea \stackrel{t \to e^{t}}{\to} \reat \to \zmodtwo \to 0$ determines a long exact sequence in cohomology and because $\reat$ is a fine sheaf, there results $H^{1}(M; \reat) \simeq H^{1}(M; \zmodtwo)$. The image in $H^{1}(M; \zmodtwo)$ of the Cech cocycle associated with $\lmf$ is the first Stiefel-Whitney characteristic class $w_{1}(\lmf) \in H^{1}(M; \zmodtwo)$. The usual classifying theory for bundles establishes that any class in $H^{1}(M; \rea)$ is the first Stiefel-Whitney class of some real line bundle and that two real line bundles are isomorphic if and only if their first Stiefel-Whitney classes are equal. There are written also $w_{1}(\lmf) = w_{1}(\frameb[\lmf]) = w_{1}(\orientb[\lmf])$. The bundle $\lmf$ is orientable if and only if $w_{1}(\lmf) = 0$.

Examples \ref{tautologicalbundleexample} and \ref{volumificationexample} motivate Definition \ref{tautdefinition}. These notions are are used in Sections \ref{thomassection} and \ref{codazzisection}. 

\begin{definition}\label{tautdefinition}
Let $M$ be an $n$-manifold.
\begin{enumerate}
\item A \emph{hyperplane line bundle} on $M$ is a real line bundle $\emf \to M$ equipped with a line bundle isomorphism $\emf^{n+1} \simeq \Det TM$.
\item A \emph{pseudo-hyperplane line bundle} on $M$ is a real line bundle $\emf \to M$ equipped with a line bundle isomorphism $\emf^{2n+2} \simeq (\Det TM)^{\tensor 2}$.
\end{enumerate}
The line bundle dual to a (pseudo)-hyperplane line bundle is called a \emph{(pseudo)-tautological line bundle}.
\end{definition}
Although in either case it forms part of the structure, the isomorphism is not indicated explicitly in the notation. By definition, a hyperplane line bundle is a pseudo-hyperplane line bundle. If $M$ admits an orientable hyperplane line bundle, then $M$ is orientable, and, similarly, if $n$ is odd the existence of a hyperplane bundle forces $M$ to be orientable, for $w_{1}(\ctm) = (n+1)w_{1}(\emf) = 0$. The notion of pseudo-hyperplane bundle is convenient because it accommodates the case of a nonorientable odd-dimensional manifold.

\begin{example}
Example \ref{tautologicalbundleexample} shows that the hyperplane line bundle $\taut^{\ast} \to \proj(\ste)$ and tautological line bundle $\taut \to \proj(\ste)$ are a hyperplane and tautological line bundle in the sense of Definition \ref{tautdefinition}.
\end{example}

A pseudo-hyperplane line bundle is determined by two requirements. Its tensor square is trivial as real line bundle and sections of its $(n+1)$st power transform locally like sections of $\Det TM$. 

\begin{lemma}
Let $M$ be a $n$-manifold and let $\omega \in H^{1}(M; \zmodtwo)$. Up to isomorphism of real line bundles there is a unique pseudo-hyperplane line bundle $\emf \to M$ such that $w_{1}(\emf) = \omega$. In particular, the orientable pseudo-hyperplane line bundle $|\Det TM|^{1/(n+1)}$ represents the isomorphism class of real line bundles having vanishing first Stiefel-Whitney class. 
\end{lemma}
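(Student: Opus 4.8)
The plan is to reduce the statement to the classification of real line bundles by the first Stiefel--Whitney class recalled just above, using only two further standard facts: that $w_{1}$ is additive under tensor products, so that $w_{1}(\lmf^{k}) = k\,w_{1}(\lmf)$, and that $H^{1}(M; \zmodtwo)$ is a $\zmodtwo$-vector space, so every one of its elements is $2$-torsion.

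Uniqueness is then immediate. By the classification, $\lmf \mapsto w_{1}(\lmf)$ is a bijection from isomorphism classes of real line bundles to $H^{1}(M; \zmodtwo)$, so up to isomorphism of real line bundles there is at most one $\emf$ with $w_{1}(\emf) = \omega$.

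For existence I would take $\emf$ to be the real line bundle with $w_{1}(\emf) = \omega$, which exists by the surjectivity of $w_{1}$ stated above, and show that it carries a pseudo-hyperplane structure. Since $2n+2$ and $2$ are even and $H^{1}(M;\zmodtwo)$ is $2$-torsion, $w_{1}(\emf^{2n+2}) = (2n+2)\,w_{1}(\emf) = 0$ and $w_{1}((\Det TM)^{\tensor 2}) = 2\,w_{1}(\Det TM) = 0$. Hence both $\emf^{2n+2}$ and $(\Det TM)^{\tensor 2}$ are orientable, so each is isomorphic to the trivial real line bundle and therefore to the other; fixing any such isomorphism makes $\emf$ a pseudo-hyperplane line bundle with $w_{1}(\emf) = \omega$. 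The one conceptual point worth highlighting --- and the only place where parity enters --- is that the obstruction $w_{1}(\emf^{2n+2}) - w_{1}((\Det TM)^{\tensor 2})$ to the defining isomorphism vanishes identically for \emph{every} $\emf$, precisely because both exponents are even; this is what distinguishes pseudo-hyperplane bundles from genuine hyperplane bundles, where $\emf^{n+1} \simeq \Det TM$ forces the constraint $(n+1)w_{1}(\emf) = w_{1}(M)$.

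For the final assertion I would specialize to $\omega = 0$. The density bundle $|\Det TM|$ admits a nowhere-vanishing global section (any Riemannian volume density), hence is orientable with $w_{1} = 0$, and so is its formal $(n+1)$-st root $|\Det TM|^{1/(n+1)}$, the bundle of $\tfrac{1}{n+1}$-densities. Since $(2n+2)/(n+1) = 2$, there holds $\bigl(|\Det TM|^{1/(n+1)}\bigr)^{2n+2} = |\Det TM|^{\tensor 2} = |(\Det TM)^{\tensor 2}|$, and because $(\Det TM)^{\tensor 2}$ is orientable it is canonically isomorphic to its own density bundle, yielding the required isomorphism. Thus $|\Det TM|^{1/(n+1)}$ is an orientable pseudo-hyperplane line bundle with vanishing first Stiefel--Whitney class, and by the uniqueness already proved it represents the class $\omega = 0$. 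I expect no serious obstacle here: the entire content is bookkeeping with $w_{1}$, and the only care needed is in interpreting $|\Det TM|^{1/(n+1)}$ as a density bundle, so that its orientability and the numerology of the exponent become transparent.
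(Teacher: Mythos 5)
Your proof is correct, but your existence argument takes a genuinely different route from the paper's. The paper is constructive: it picks a $\zmodtwo$-principal bundle $\P \to M$ with $w_{1}(\P) = \omega$, forms the flat line bundle $\lmf = \P \times_{\si}\rea$, and defines $\emf = \lmf \tensor |\Det TM|^{1/(n+1)}$; the defining isomorphism $\emf^{2n+2} \simeq (\Det TM)^{\tensor 2}$ is then manifest from the construction, because $\lmf^{2n+2} = (\lmf^{2})^{n+1}$ is canonically trivial. You instead take an arbitrary real line bundle $\emf$ with $w_{1}(\emf) = \omega$ and argue obstruction-theoretically: since both exponents $2n+2$ and $2$ are even and $H^{1}(M;\zmodtwo)$ is $2$-torsion, both $\emf^{2n+2}$ and $(\Det TM)^{\tensor 2}$ have vanishing $w_{1}$, hence both are trivial and therefore isomorphic, and any choice of isomorphism equips $\emf$ with a pseudo-hyperplane structure. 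Your route is shorter and makes transparent the conceptual point (which you rightly emphasize) that the pseudo-hyperplane condition imposes \emph{no} constraint on the isomorphism class of the line bundle, in contrast with genuine hyperplane bundles where $(n+1)w_{1}(\emf) = w_{1}(\Det TM)$ is forced. What the paper's twisting construction buys in exchange is an explicit model carrying a natural, essentially canonical isomorphism $\emf^{2n+2} \simeq (\Det TM)^{\tensor 2}$ rather than an unspecified one; this matters downstream, since later constructions (e.g.\ the canonical volume density $|\Psi^{\F}|$ on the frame bundle in Lemma \ref{canonicaleulerlemma}) use the isomorphism itself, not merely its existence. Your treatment of the final assertion about $|\Det TM|^{1/(n+1)}$ — orientability from a global nowhere-vanishing density, plus the canonical identification of $(\Det TM)^{\tensor 2}$ with its own absolute-value bundle — is sound and agrees in substance with the paper's.
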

\begin{proof}
Given $\omega \in H^{1}(M;\zmodtwo)$ there is a $\zmodtwo$-principal bundle $\P \to M$ such that $w_{1}(\P) = \omega$. Define the total space of $\emf$ to be the quotient of $\P \times |\Det TM|^{1/(n+1)}$ by the action of $\zmodtwo$ given by $-(p, \mu) = (-p, -\mu)$ for $(p, \mu) $ in a fiber of $\P \times |\Det TM|^{1/(n+1)}$. Alternatively, let $\lmf = \P \times_{\si}\rea$ where $\si$ is the standard representation of $\zmodtwo$ on $\rea$, and define $\emf = \lmf \tensor |\Det TM|^{1/(n+1)}$. This second description makes apparent that $\emf^{2(n+1)} \simeq (\Det TM)^{2}$. Since $|\Det TM|^{1/(n+1)}$ is orientable, its first Stiefel-Whitney class is trivial, so $w_{1}(\emf) = w_{1}(\lmf) = \omega$, and the uniqueness follows from the fact that the first Stiefel-Whitney class determines a real line bundle up to line bundle isomorphism.
\end{proof}

\begin{example}
The notions of (pseudo-)tautological bundle of Definition \ref{tautdefinition} extend the notion of tautological bundle on a flat real projective manifold introduced by Loftin in \cite{Loftin-affinespheres, Loftin-riemannian, Loftin-affinekahler}. 
Although a manifold necessarily admits an orientable pseudo-hyperplane line bundle, in general it need not admit an orientable hyperplane line bundle. However, \cite[Proposition $2.2.1$]{Loftin-affinespheres} shows that a manifold equipped with a properly convex flat real projective structure admits an orientable hyperplane line bundle.
\end{example}

\begin{example}
Viewed as the real projective line, $S^{1}$ admits two pseudo-tautological line bundles, the topologically trivial bundle $\lmf_{1}$ of $1/2$-densities and the usual tautological line bundle $\lmf_{2}$. View the $n$-dimensional torus as the $n$fold product $\torus_{n} = S^{1} \times \dots \times S^{1}$. By the Künneth formula, any pseudo-tautological bundle on $\torus_{n}$ is the tensor product of the bundle of $1/(n+1)$-densities $|\Det T^{\ast}\torus_{n}|^{1/(n+1)}$ with a direct sum of pullbacks $\oplus_{i = 1}^{n}p_{i}^{\ast}(\lmf_{\ep_{i}})$, $\ep_{i} \in \{1, 2\}$, where $p_{i}:\torus_{n} \to S^{1}$ is the projection onto the $i$th factor. 
\end{example}

A \emph{volume density} is a nonvanishing $1$-density. A \emph{pseudo-Euler structure} on the $n$-manifold $M$ is a pair $(\rad, \Psi)$ such that $\Psi$ is a volume density and $\rad$ is an Euler-like vector field satisfying $\lie_{\rad}\Psi = n \Psi$. What distinguishes a pseudo-hyperplane line bundle $\emf$ among arbitrary real line bundles is that the total space of $\F = \frameb[\emf^{-1}]$ is equipped in a canonical way with a volume density compatible with the fundamental vector field generating the principal action.

\begin{lemma}\label{canonicaleulerlemma}
For a pseudo-hyperplane bundle $\emf \to M$, let $\rho: \F = \frameb[\emf^{-1}] \to M$ be the frame bundle of $\emf^{-1}$ and let $\eul^{\F}$ be the fundamental vector field generating dilations by $e^{t}$ in the fibers of $\F$.
\begin{enumerate}
\item If $\emf \to M$ is a hyperplane line bundle, then there is a canonically determined volume form $\Psi^{\F}$ on the total space $\F$ such that $(\F, \eul^{\F}, \Psi^{\F})$ is an Euler manifold satisfying $R_{r}^{\ast}\Psi^{\F}= r^{n+1}\Psi^{\F}$ for $r \in \reat$.
\item If $\emf \to M$ is a pseudo-hyperplane line bundle, then there is a canonically determined volume density $|\Psi^{\F}|$ on the total space $\F$ such that $(\F, \eul^{\F}, |\Psi^{\F}|)$ is a pseudo-Euler manifold satisfying $R_{r}^{\ast}|\Psi^{\F}|= |r|^{n+1}|\Psi^{\F}|$ for $r \in \reat$.
\end{enumerate}
\end{lemma}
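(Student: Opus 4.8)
The plan is to build on $\F$ a tautological exterior form from the structural isomorphism and to produce the desired volume form (resp. volume density) by differentiating it, exactly as in Examples \ref{tautologicalbundleexample} and \ref{volumificationexample}. Throughout I would identify $\F = \frameb[\emf^{-1}]$ with the complement $\emf^{-1}\setminus\zsec(M)$ of the zero section of the (pseudo-)tautological bundle, so that a point $u \in \F$ over $p = \rho(u)$ is a nonzero element of $(\emf^{-1})_{p}$, the principal $\reat$-action is $R_{r}(u) = ru$, and $\eul^{\F}$, being the generator of a free action, is nowhere vanishing; in particular $\zero(\eul^{\F}) = \emptyset$, so $\eul^{\F}$ is vacuously Euler-like, and $\dim \F = n+1$. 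For part (1), I would dualize the defining isomorphism $\emf^{n+1}\simeq \Det TM$ to obtain $\Phi:(\emf^{-1})^{n+1}\to \Det \ctm$ and define the tautological $n$-form $\mu \in \Ga(\ext^{n}T^{\ast}\F)$ by
\begin{align}
\mu_{u}(X_{1}, \dots, X_{n}) = \Phi(u^{\tensor(n+1)})\left(T\rho(u)(X_{1}), \dots, T\rho(u)(X_{n})\right), \qquad X_{i} \in T_{u}\F.
\end{align}
Its three key properties are that it is horizontal, $i(\eul^{\F})\mu = 0$; that $R_{r}^{\ast}\mu = r^{n+1}\mu$, since $u \mapsto ru$ multiplies $u^{\tensor(n+1)}$ by $r^{n+1}$ while $T\rho$ is $R_{r}$-invariant; and, differentiating the latter, $\lie_{\eul^{\F}}\mu = (n+1)\mu$.

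Next I would set $\Psi^{\F} = \tfrac{1}{n+1}d\mu$, which is manifestly canonical. By Cartan's formula and horizontality, $i(\eul^{\F})d\mu = \lie_{\eul^{\F}}\mu = (n+1)\mu$. For any principal $\reat$-connection $\be$ (so $\be(\eul^{\F}) = 1$ and $R_{r}^{\ast}\be = \be$), the $(n+1)$-form $d\mu - (n+1)\be \wedge \mu$ is annihilated by $i(\eul^{\F})$, hence vanishes identically, a top-degree form killed by a nowhere-zero vector field being zero; thus $\Psi^{\F} = \be \wedge \mu$. Evaluating on $\eul^{\F}$ together with the $\be$-horizontal lifts of a frame of $T_{p}M$ shows $\Psi^{\F}$ is nowhere zero (using that $\Phi$ is an isomorphism and $u \neq 0$), so it is a volume form; it does not depend on $\be$, since two connections differ by $\rho^{\ast}(\al)$ and $\rho^{\ast}(\al)\wedge \mu = 0$, this last $(n+1)$-form factoring through the rank-$n$ map $T\rho$. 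Finally $R_{r}^{\ast}\Psi^{\F} = r^{n+1}\Psi^{\F}$ and $\lie_{\eul^{\F}}\Psi^{\F} = (n+1)\Psi^{\F} = (\dim\F)\Psi^{\F}$, so $(\F, \eul^{\F}, \Psi^{\F})$ is an Euler manifold, proving (1).

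For part (2) the difficulty is that, absent the isomorphism $\Phi$, the element $u^{\tensor(n+1)}$ has no canonical image in $\Det\ctm$; only $u^{\tensor(2n+2)}$ maps, via the pseudo-hyperplane isomorphism $(\emf^{-1})^{2n+2}\simeq(\Det\ctm)^{\tensor 2}$ followed by the canonical identification $(\Det\ctm)^{\tensor 2}\simeq |\Det\ctm|^{\tensor 2}$ (even tensor powers forget orientation), to a nonzero element of the $2$-density bundle. I would therefore attach to each $u \in \F$ the positive $1$-density
\begin{align}
\nu(u) = \sqrt{\left|\Phi\left(u^{\tensor(2n+2)}\right)\right|} \in |\Det \ctm|_{p},
\end{align}
the well-defined positive square root of this positive $2$-density, which satisfies $\nu(ru) = |r|^{n+1}\nu(u)$. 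Then I would define the volume density $|\Psi^{\F}|$ on $\F$ by prescribing, for any principal connection $\be$ and any frame $e_{1}, \dots, e_{n}$ of $T_{p}M$ with $\be$-horizontal lifts $\tilde{e}_{i}$,
\begin{align}
|\Psi^{\F}|_{u}(\eul^{\F}, \tilde{e}_{1}, \dots, \tilde{e}_{n}) = \nu(u)(e_{1}, \dots, e_{n}).
\end{align}
This is independent of $\be$, since changing $\be$ alters the $\tilde{e}_{i}$ by multiples of $\eul^{\F}$, a unipotent change of frame of determinant $\pm 1$, and it transforms correctly under change of frame of $T_{p}M$, so it is a nonvanishing density; it satisfies $R_{r}^{\ast}|\Psi^{\F}| = |r|^{n+1}|\Psi^{\F}|$ and hence $\lie_{\eul^{\F}}|\Psi^{\F}| = (n+1)|\Psi^{\F}|$, yielding the pseudo-Euler manifold. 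When $\emf$ is an honest hyperplane bundle, $\Phi$ is the tensor square of the isomorphism used in (1) and $\nu(u) = |\Phi(u^{\tensor(n+1)})|$, so $|\Psi^{\F}|$ is the absolute value of the volume form from (1), the expected compatibility.

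The main obstacle I anticipate is not part (1), where the only care needed is the clean identity $\Psi^{\F} = \be\wedge\mu$ and its connection-independence, but rather the globalization in part (2): because $\emf$ need not be orientable there is no global $n$-form refining $\mu$, and one must verify that passing to densities through the square-root-of-absolute-value construction genuinely kills the $\zmodtwo$ sign ambiguity that obstructs a global volume form. This is exactly what the identification $(\Det\ctm)^{\tensor 2}\simeq|\Det\ctm|^{\tensor 2}$ and the positivity of $\nu$ accomplish, and checking this carefully (together with the resulting weight $|r|^{n+1}$ rather than $r^{n+1}$) is where the substance of the argument lies.
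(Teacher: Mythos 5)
Your proof is correct, and it is worth separating the two parts. For part (1) you are doing essentially what the paper does, written intrinsically: your tautological $n$-form $\mu$ is exactly the pullback $\Q^{\ast}(\mu)$ that the paper obtains by mapping $\F$ into the volumnification $\vdens = \frameb[\Det \ctm]$ via the bundle morphism $\Q$ induced by the structural isomorphism. Where the paper gets nonvanishing of $\Psi^{\F} = d\mu^{\F}$ from the fact that $\Q$ is a local diffeomorphism onto its image and $\Psi^{\vdens}$ is a volume form, you get it from the identity $\Psi^{\F} = \be \wedge \mu$ together with its independence of the principal connection $\be$; this is a clean self-contained alternative, and the overall constant $\tfrac{1}{n+1}$ in your normalization is immaterial. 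For part (2) your route genuinely differs in how the $\zmodtwo$ ambiguity is handled. The paper localizes: over trivializing neighborhoods the hyperplane-case construction produces an $n$-form determined only up to sign, and the absolute values of these local forms agree on overlaps, so they glue to a global density, with the Lie-derivative identity then being a local statement. You instead work globally, attaching to each $u \in \F$ the $1$-density $\nu(u) = \sqrt{\left|\Phi\left(u^{\tensor(2n+2)}\right)\right|}$ and prescribing $|\Psi^{\F}|$ on connection-adapted frames, with your connection-independence check playing the role of the paper's patching argument. Both are correct; yours avoids a cover-and-glue step at the price of the square-root manipulation and a frame-by-frame definition of a density. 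One small point to repair in your write-up: the image of $u^{\tensor(2n+2)}$ under $\Phi$ followed by the canonical identification $(\Det\ctm)^{\tensor 2}\simeq|\Det\ctm|^{\tensor 2}$ need not be a \emph{positive} $2$-density, since a pseudo-hyperplane isomorphism may interchange the positive and negative rays (replacing $\Phi$ by $-\Phi$ yields another valid structure); because $M$ is connected the sign is globally constant, and your absolute value bars remove it, but the parenthetical ``even tensor powers forget orientation'' should be replaced by the observation that the absolute value is taken precisely to kill this global sign.
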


\begin{proof}
Suppose $\emf$ is a hyperplane line bundle.
The frame bundle $\F$ can be viewed as a $(n+1)$st root of the principal $\reat$-bundle $\vdens= \frameb[\Det \ctm] \simeq \Det \ctm \setminus \zsec(M)$ discussed in Example \ref{volumificationexample} and so acquires a structure of an Euler manifold as follows. The isomorphism $\emf^{-n-1}\simeq \Det \ctm$ induces a principal $\reat$-bundle morphism $\Q:\F \to \vdens$ that satisfies $\Q(ru) = r^{n+1}\Q(u)$ for all $r \in \reat$ and is either a diffeomorphism onto its image or a local diffeomorphism that is $2-1$ onto its image, as $n$ is even or odd. Because $T\Q(\eul^{\F}) = (n+1)\eul^{\vdens}$, the pullback $\mu^{\F} = \Q^{\ast}(\mu)$ of the tautological $n$-form $\mu$ on $\vdens$ satisfies $\imt(\eul^{\F})d\mu^{\F} = (n+1)\mu^{\F}$, so $\lie_{\eul^{\F}}\Psi^{\F} = (n+1)\Psi^{\F}$ where $\Psi^{\F} = d\mu^{\F} = \Q^{\ast}(\Psi^{\vdens})$. This shows that $(\F, \eul^{\F}, \Psi^{\F})$ is an Euler manifold. 
Explicitly, if $u \in \F$ and $X_{1}, \dots, X_{n} \in T_{u}\F$, 
\begin{align}\label{psifdefined}
\Psi^{\F}_{u}(\rad, X_{1}, \dots, X_{n}) = \lb u^{n+1}, T\rho(u)(X_{1})\wedge \dots \wedge T\rho(u)(X_{n})\ra,
\end{align}
where $\lb\dum, \dum\ra$ indicates the pairing of the volume form $u^{n+1} \in \Det T_{\rho(u)}^{\ast}M$ with the $n$-vector $T\rho(u)(X_{1})\wedge \dots \wedge T\rho(u)(X_{n})\in \Det T_{\rho(u)}M$ .
If $\emf$ is only a pseudo-hyperplane line bundle, then the preceding discussion applies on an open neighborhood of $M$ over which $\emf$ and $\F$ are trivial with the proviso that the pulled-back forms $\mu^{\F}$ and $d\mu^{\F}$ are determined only up to sign. Taking the absolute value of $d\mu^{\F}$ yields the volume density $|\Psi^{\F}|$. Concretely, in this case the right-hand side of \eqref{psifdefined} is defined up to a sign, and taking its absolute value defines $|\Psi^{\F}|_{u}(\rad, X_{1}, \dots, X_{n})$.
That $\lie_{\eul^{\F}}|\Psi^{\F}| = (n+1)|\Psi^{\F}|$ follows from the hyperplane line bundle case because this identity is a purely local claim. 
That $R_{r}^{\ast}\Psi^{\F}= r^{n+1}|\Psi^{\F}|$ and $R_{r}^{\ast}|\Psi^{\F}|= |r|^{n+1}|\Psi^{\F}|$ for $r \in \reat$ are immediate from \eqref{psifdefined}.
\end{proof}

\section{Conelike radiant structures}\label{conelikesection}
Let $(\nabla, \rad)$ be a radiant structure on $M$. (The reader is reminded that by the definition of a radiant structure $(\nabla, \rad)$, its underlying connection $\nabla$ is torsion-free.) A smoothly immersed surface $\Sigma$ in $M$ is \emph{planelike} if it is totally geodesic and $\rad_{p} \in T_{p}\Sigma$ for all $p \in \Sigma$. A planelike surface $\Sigma$ is a \emph{plane} if, moreover, the connection induced on $\Sigma$ by $\nabla$ is flat. A radiant structure is \emph{conelike} if it \emph{admits a complete set of planelike surfaces} in the sense that for every $p \in M$ and every two-dimensional subspace $L \subset T_{p}M$ containing $\rad_{p}$ there is a smoothly immersed planelike surface $\Sigma \subset M$ containing $p$ and such that $T_{p}\Sigma = L$. 

A planelike surface inherits a radiant structure, so, by Theorem \ref{2dtorustheorem}, a compact immersed planelike surface is topologically a torus or a Klein bottle.

\begin{lemma}\label{conetensorlemma}
Let $\Pi_{ij}\,^{k} = \Pi_{(ij)}\,^{k}$ and $X^{i}$ be given and suppose that $X^{p}\Pi_{ip}\,^{j} = 0$. Let $S_{ij}\,^{kl} = \Pi_{ij}\,^{[k}X^{l]}$. If $S_{(ij}\,^{[ab}\delta_{k)}\,^{c]} = 0$, then there exists a smooth tensor $Q_{ij} = Q_{(ij)}$ on $M_{\ast} = \{p \in M: X_{p} \neq 0\}$ such that
\begin{align}\label{conetensor}
\Pi_{ij}\,^{k} = Q_{ij}X^{k} - 2\delta_{(i}\,^{k}Q_{j)p}X^{p}= X^{p}\left(Q_{ij}\delta_{p}\,^{k} - 2Q_{p(i}\delta_{j)}\,^{k}\right)
\end{align} 
on $M_{\ast}$. Moreover, $\Pi_{ip}\,^{p} = -nQ_{ip}X^{p}$ and $X^{p}X^{q}Q_{pq} = 0$.
\end{lemma}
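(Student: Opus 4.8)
The plan is to show that the two symmetry hypotheses force $\Pi$ to differ from a pure ``projective'' term by a tensor whose contravariant slot is everywhere proportional to $X$, and then to divide by $X$ on $M_{\ast}$. First I would set $q_{i} = -\tfrac{1}{n}\Pi_{ip}{}^{p}$. Tracing the constraint $X^{p}\Pi_{ip}{}^{j} = 0$ (take $i = j$ and sum, using $\Pi_{ij}{}^{k} = \Pi_{(ij)}{}^{k}$) gives $X^{p}\Pi_{pq}{}^{q} = 0$, hence $X^{p}q_{p} = 0$. I claim the lemma reduces to the single identity
\begin{align}
\Pi_{ij}{}^{[k}X^{l]} = -2\delta_{(i}{}^{[k}q_{j)}X^{l]},
\end{align}
equivalently $\Pi'_{ij}{}^{[k}X^{l]} = 0$ where $\Pi'_{ij}{}^{k} := \Pi_{ij}{}^{k} + 2\delta_{(i}{}^{k}q_{j)}$.

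Granting this, for each fixed $i,j$ the condition $\Pi'_{ij}{}^{k}X^{l} = \Pi'_{ij}{}^{l}X^{k}$ says the vector $\Pi'_{ij}{}^{\bullet}$ is proportional to $X$, so on $M_{\ast}$ I would choose locally a one-form $\theta_{k}$ with $\theta_{p}X^{p} = 1$ and set $Q_{ij} = \theta_{k}\Pi'_{ij}{}^{k}$; then $\Pi'_{ij}{}^{k} = Q_{ij}X^{k}$, a formula manifestly independent of the choice of $\theta$, hence defining a globally well-defined, smooth, symmetric tensor on $M_{\ast}$. This is exactly \eqref{conetensor}; tracing it over $j = k$ gives $Q_{ip}X^{p} = q_{i}$, so $\Pi_{ip}{}^{p} = -nQ_{ip}X^{p}$, and one further contraction with $X^{i}$ gives $X^{p}X^{q}Q_{pq} = X^{p}q_{p} = 0$.

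The heart of the matter is proving the displayed identity from $S_{(ij}{}^{[ab}\delta_{k)}{}^{c]} = 0$. I would first record the only independent trace, $\tau_{i}{}^{l} := S_{ip}{}^{pl} = \tfrac{1}{2}\bigl(\Pi_{ip}{}^{p}X^{l} - X^{p}\Pi_{ip}{}^{l}\bigr) = -\tfrac{n}{2}q_{i}X^{l}$, using $X^{p}\Pi_{ip}{}^{l} = 0$. Then I would contract the hypothesis on the $k$ and $c$ indices (i.e.\ apply $\delta_{c}{}^{k}$). Expanding the symmetrization over $(ijk)$ and the antisymmetrization over $(abc)$ into their three-term cyclic forms — legitimate since $S_{ij}{}^{ab}$ is already symmetric in $ij$ and antisymmetric in $ab$ — produces nine terms; carrying out the single trace $c = k$ collapses six of them into traces $\tau$ and the remaining three into multiples of $S$, yielding
\begin{align}
n\,S_{ij}{}^{ab} + \tau_{i}{}^{a}\delta_{j}{}^{b} - \tau_{i}{}^{b}\delta_{j}{}^{a} + \tau_{j}{}^{a}\delta_{i}{}^{b} - \tau_{j}{}^{b}\delta_{i}{}^{a} = 0.
\end{align}
Thus $S$ is determined by its single trace; substituting $\tau_{i}{}^{l} = -\tfrac{n}{2}q_{i}X^{l}$ and comparing with the expansion of $-2\delta_{(i}{}^{[k}q_{j)}X^{l]}$ gives precisely the displayed identity, and hence $\Pi'_{ij}{}^{[k}X^{l]} = 0$.

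The main obstacle is the bookkeeping of this nine-term contraction: one must carefully use the symmetry of $S$ in its lower pair and its antisymmetry in its upper pair to identify each resulting trace correctly and to verify that exactly the coefficient $n$ (nonzero since $n \geq 2$) survives in front of $S_{ij}{}^{ab}$. The conceptual point that makes everything fit — that contracting the Young-type condition against $\delta_{c}{}^{k}$ forces $S_{ij}{}^{ab}$ to be expressed through its one trace $\tau_{i}{}^{l}$, which is in turn pinned down to $-\tfrac{n}{2}q_{i}X^{l}$ by $X^{p}\Pi_{ip}{}^{j}=0$ — requires no input beyond the three stated hypotheses.
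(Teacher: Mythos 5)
Your proposal is correct: the traced identity you derive from $S_{(ij}{}^{[ab}\delta_{k)}{}^{c]} = 0$ (contracting $c$ with $k$) checks out with your sign conventions, and substituting $\tau_{i}{}^{l} = -\tfrac{n}{2}q_{i}X^{l}$ yields exactly the identity $\Pi_{ij}{}^{[k}X^{l]} = -2\delta_{(i}{}^{[k}q_{j)}X^{l]}$, after which dividing by $X$ on $M_{\ast}$ and tracing gives \eqref{conetensor} and the auxiliary identities. This is essentially the paper's own argument — the paper likewise traces the hypothesis in $c$ and $k$, obtains $n\Pi_{ij}{}^{[k}X^{l]} - 2\delta_{(i}{}^{[k}X^{l]}\Pi_{j)p}{}^{p} = 0$, and then extracts $Q_{ij}$ by proportionality to $X$; your only departures are cosmetic (isolating $q_{i} = -\tfrac{1}{n}\Pi_{ip}{}^{p}$ before rather than after the division by $X$, and making the choice of a local covector $\theta$ with $\theta(X)=1$ explicit).
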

\begin{proof}
Tracing $S_{(ij}\,^{[ab}\delta_{k)}\,^{c]} = 0$ in $c$ and $k$ yields $nS_{ij}\,^{kl} = -4\delta_{(i}\,^{[k}S_{j)p}\,^{l]p}$. Substituting into this the definition of $\Pi_{ij}\,^{k}$ yields $n\Pi_{ij}\,^{[k}X^{l]}  - 2\delta_{(i}\,^{[k}X^{l]}\Pi_{j)p}\,^{p} = -2 \delta_{(i}\,^{[k}\Pi_{j)p}\,^{l]}X^{p} = 0$, the last equality because $X^{p}\Pi_{ip}\,^{j} = 0$. This shows that for any vector field $Y^{i}$ there vanishes the wedge product with $X$ of the vector field $Y^{p}Y^{q}(n\Pi_{pq}\,^{i} - 2\delta_{(p}\,^{i}\Pi_{q)a}\,^{a})$. Hence there is $Q_{ij} \in \Ga(S^{2}T^{\ast} M_{\ast})$ such that $n\Pi_{ij}\,^{k} - 2\delta_{(i}\,^{k}\Pi_{j)p}\,^{p} = nQ_{ij}X^{k}$ on $M_{\ast}$. Tracing this shows that $\Pi_{ip}\,^{p} = -nQ_{ip}X^{p}$, and \eqref{conetensor} and $X^{p}X^{q}Q_{pq} = 0$ follow.
\end{proof}

A radiant structure is \emph{nonsingular} if $\rad$ is nonsingular. By Lemma \ref{radiantisolatedlemma} this is automatic if $M$ is compact and odd-dimensional. Because, by Lemma \ref{radiantisolatedlemma}, the zeros of a radiant vector field are isolated, results about nonsingular radiant structures apply to arbitrary radiant structures over the open submanifold $M_{\ast} = \{p \in M: \rad_{p} \neq 0\}$.

\begin{lemma}\label{coneconditionlemma}
A nonsingular radiant structure $(\nabla, \rad)$ on an $n$-manifold is conelike if and only if there is $Q_{ij} \in \Ga(S^{2}\ctm)$ such that $n\rad^{p}Q_{ip} = \er_{i}$ and $\rad^{p}\rad^{q}Q_{pq} = 0$ and satisfying the equivalent identities
\begin{align}
\label{cc2} &(\lie_{\rad}\nabla)_{ij}\,^{k} = \rad^{p}R_{pij}\,^{k} = Q_{ij}\rad^{k} - 2\rad^{p}Q_{p(i}\delta_{j)}\,^{k}.
\end{align}
In this case,
\begin{align}
\label{conelieric} &(n-2)Q_{ij} + (\lie_{\rad}Q)_{ij} + \tfrac{n-2}{n}\nabla_{(i}\er_{j)} = (\lie_{\rad}\ric)_{(ij)}, & &d\er_{ij}  = 2(\lie_{\rad}\ric)_{[ij]}.
\end{align}
If, moreover, $\nabla$ has symmetric Ricci tensor then $\rad^{p}Q_{ip} = 0$ and $\rad^{p}R_{pij}\,^{k} = Q_{ij}\rad^{k}$.
\end{lemma}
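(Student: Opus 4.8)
The plan is to reduce the conelike condition to a pointwise algebraic condition on the symmetric tensor $\Pi_{ij}{}^{k} := (\lie_{\rad}\nabla)_{ij}{}^{k} = \rad^{p}R_{pij}{}^{k}$ and then to feed this into Lemma \ref{conetensorlemma}. First I would record the structural properties of $\Pi$ that let that lemma apply. By \eqref{lieradnabla}, $\Pi_{ij}{}^{k} = \rad^{p}R_{pij}{}^{k}$; this is symmetric in $ij$ (contract the algebraic Bianchi identity with $\rad^{p}$ and use $R_{ijp}{}^{k}\rad^{p}=0$ from \eqref{radid}), it satisfies $\rad^{p}\Pi_{ip}{}^{j} = \rad^{q}\rad^{p}R_{qip}{}^{j}=0$ again by \eqref{radid}, and $\rad^{i}\Pi_{ij}{}^{k}=0$ (antisymmetry of $R$ in its first two indices against $\rad^{i}\rad^{p}$). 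Moreover \eqref{lieradnabla} gives $\Pi_{ip}{}^{p} = (\lie_{\rad}\nabla)_{ip}{}^{p} = -\er_{i}$. With $X=\rad$ these are exactly the hypotheses $X^{p}\Pi_{ip}{}^{j}=0$ required by Lemma \ref{conetensorlemma}.

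For the forward implication I would argue pointwise. Given $p$ and a two-plane $L=\langle\rad_{p},v\rangle$, conelikeness provides a planelike (totally geodesic) surface $\Sigma$ with $T_{p}\Sigma=L$; extend $v$ to a field $V$ tangent to $\Sigma$. A short index manipulation using $R_{ijp}{}^{k}\rad^{p}=0$ shows $\Pi(V,V)=R(\rad,V)V$, and since $\Sigma$ is totally geodesic, $R(\rad,V)V$ is tangent to $\Sigma$ (Gauss equation). Hence $\Pi_{ij}{}^{k}v^{i}v^{j}\in\langle\rad_{p},v\rangle$ for every $v$ (the case $v\parallel\rad_{p}$ being trivial since $\rad^{i}\Pi_{ij}{}^{k}=0$). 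Polarizing this containment — equivalently $\Pi_{ij}{}^{[a}\rad^{b}v^{c]}v^{i}v^{j}=0$ for all $v$ — yields precisely the condition $S_{(ij}{}^{[ab}\delta_{k)}{}^{c]}=0$ for $S_{ij}{}^{kl}=\Pi_{ij}{}^{[k}\rad^{l]}$, so Lemma \ref{conetensorlemma} produces $Q_{ij}\in\Ga(S^{2}\ctm)$ on $M_\ast$ satisfying \eqref{conetensor}, i.e. \eqref{cc2}, together with $\rad^{p}\rad^{q}Q_{pq}=0$ and $\Pi_{ip}{}^{p}=-nQ_{ip}\rad^{p}$; combined with $\Pi_{ip}{}^{p}=-\er_{i}$ this gives $n\rad^{p}Q_{ip}=\er_{i}$.

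The converse — that \eqref{cc2} forces conelikeness — is the step I expect to be the main obstacle, since it requires constructing the planelike surfaces rather than merely obstructing them. Given $p$ and $L=\langle\rad_{p},v\rangle$ (here $\rad_{p}\neq0$ by nonsingularity), I would let $\sigma$ be the $\nabla$-geodesic with $\sigma(0)=p$, $\dot\sigma(0)=v$, and define $\Phi(s,u)$ by flowing $\sigma(s)$ for time $u$ along $\rad$; then $\Sigma=\operatorname{im}\Phi$ is an immersed surface near $p$ with $T_{p}\Sigma=L$, and the coordinate fields $W=\partial_{s}\Phi$ and $\rad=\partial_{u}\Phi$ commute. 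The radiant identity $\nabla_{i}\rad^{j}=\delta_{i}{}^{j}$ makes $\nabla_{\rad}\rad=\rad$, $\nabla_{W}\rad=W$, and $\nabla_{\rad}W=W$ all tangent, so $\Sigma$ is totally geodesic iff $\nabla_{W}W$ is tangent. Along $\sigma$ (at $u=0$) this holds because $\nabla_{W}W=\nabla_{\dot\sigma}\dot\sigma=0$; to propagate it I would use $\nabla_{\rad}(\nabla_{W}W)=\nabla_{W}W+R(\rad,W)W=\nabla_{W}W+\Pi(W,W)$ and observe that \eqref{cc2} makes $\Pi(W,W)=Q(W,W)\rad-2(\rad^{p}Q_{pj}W^{j})W$ lie in $E:=\langle\rad,W\rangle$. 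Passing to the quotient bundle $TM|_{\Sigma}/E$, the normal part $\overline{\nabla_{W}W}$ solves the linear ODE $\nabla^{\nu}_{\rad}\,\overline{\nabla_{W}W}=\overline{\nabla_{W}W}$ along each $\rad$-flow line with zero initial data on $\sigma$, hence vanishes identically; thus $\nabla_{W}W\in E$ and $\Sigma$ is planelike.

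Finally I would derive \eqref{conelieric} by direct computation. The antisymmetric identity $d\er_{ij}=2(\lie_{\rad}\ric)_{[ij]}$ is immediate from \eqref{lieradric}. For the symmetric part I would substitute \eqref{cc2} into $(\lie_{\rad}\ric)_{(ij)}=\nabla_{p}\Pi_{ij}{}^{p}+\nabla_{(i}\er_{j)}$ (from \eqref{lieradric}), use $\nabla_{p}\rad^{p}=n$ and $\nabla_{i}\rad^{j}=\delta_{i}{}^{j}$, rewrite $\rad^{p}\nabla_{p}Q_{ij}=(\lie_{\rad}Q)_{ij}-2Q_{ij}$, and replace $\rad^{q}\nabla_{(i}Q_{j)q}$ by $\tfrac{1}{n}\nabla_{(i}\er_{j)}-Q_{ij}$ (obtained by differentiating $n\rad^{q}Q_{iq}=\er_{i}$); collecting terms gives the stated identity. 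The Ricci-symmetric case then follows at once: symmetry of $R_{ij}$ together with $R_{ip}\rad^{p}=0$ from \eqref{radid} force $\er_{i}=0$, whence $n\rad^{p}Q_{ip}=\er_{i}=0$ gives $\rad^{p}Q_{ip}=0$, and \eqref{cc2} collapses to $\rad^{p}R_{pij}{}^{k}=Q_{ij}\rad^{k}$.
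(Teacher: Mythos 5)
Your proof is correct. The forward implication is essentially the paper's: totally geodesic plus tangency gives $\rad\wedge V\wedge R(\rad,V)V=0$, polarization turns this into $S_{(ij}{}^{[ab}\delta_{k)}{}^{c]}=0$ for $S_{ij}{}^{kl}=\Pi_{ij}{}^{[k}\rad^{l]}$, and Lemma \ref{conetensorlemma} produces $Q$; your trace bookkeeping via $\Pi_{ip}{}^{p}=-\er_{i}$ matches the paper's tracing of \eqref{cc2}. Where you genuinely diverge is in the converse and in the derivation of \eqref{conelieric}. For the converse the paper takes a parallel field $X$ along an integral curve of $\rad$, uses \eqref{cc2} to get $R(X,\dot\si)\dot\si=0$, concludes $X$ is a projective Jacobi field (via Lemma \ref{maxgeodesiclemma} and \eqref{projjacobi}), and then asserts that the resulting deformation through projective geodesics sweeps out a totally geodesic surface — a step that is stated rather than argued in detail. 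You instead build the candidate surface explicitly by flowing the transverse geodesic $\si$ along $\rad$ and prove total geodesy by uniqueness for the linear ODE $\nabla^{\nu}_{\rad}\,\overline{\nabla_{W}W}=\overline{\nabla_{W}W}$ in the quotient $TM|_{\Sigma}/E$, which is well defined precisely because $\nabla_{\rad}\rad=\rad$ and $\nabla_{\rad}W=W$ show $\nabla_{\rad}$ preserves $\Ga(E)$; this is self-contained, bypasses the Jacobi-field formalism and Lemma \ref{maxgeodesiclemma} entirely, and supplies the detail the paper leaves implicit. For the identities, the paper differentiates \eqref{cc2} in the form \eqref{cc1}, invokes the contracted differential Bianchi identity, traces, and decomposes by symmetry; you substitute \eqref{cc2} directly into $(\lie_{\rad}\ric)_{(ij)}=\nabla_{p}(\lie_{\rad}\nabla)_{ij}\,^{p}+\nabla_{(i}\er_{j)}$ from Lemma \ref{lieradlemma}, using $\rad^{p}\nabla_{p}Q_{ij}=(\lie_{\rad}Q)_{ij}-2Q_{ij}$, $\nabla_{p}\rad^{p}=n$, and $n\rad^{p}Q_{ip}=\er_{i}$. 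Checking coefficients, your route lands exactly on $(n-2)Q_{ij}+(\lie_{\rad}Q)_{ij}+\tfrac{n-2}{n}\nabla_{(i}\er_{j)}=(\lie_{\rad}\ric)_{(ij)}$, and it is cleaner than the paper's: the paper's intermediate equation \eqref{conelieric0} carries a coefficient slip (its symmetrization yields $\nabla_{(i}\er_{j)}$ with coefficient $1$, and its antisymmetrization contradicts \eqref{lieradric}), even though its final statement \eqref{conelieric} — which your computation confirms independently — is correct.
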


\begin{proof}
Suppose $(\nabla, \rad)$ is conelike. Let $p \in M$, let $L \subset T_{p}M$ contain $\rad_{p}$, and let $\Sigma$ be a smoothly immersed planelike surface containing $p$ and tangent to $L$. 
Choose an open neighborhood $U \subset M$ of $p$ and a vector field $A$ such that over $\Sigma \cap U$ the vector fields $\rad$ and $A$ span $T(\Sigma \cap U)$. Since $\Sigma$ is totally geodesic $\nabla_{A}A$ and $\nabla_{\rad}A$ are tangent to $\Sigma\cap U$, from which it follows that $\rad \wedge A\wedge R(\rad, A)A = 0$ on $U\cap \Sigma$. Since $p$ and $L$ are arbitrary (except that $L$ must contain $\rad_{p}$) this shows that $\rad \wedge A \wedge R(\rad, A)A = 0$ on $M$ for all $A \in \Ga(TM)$.

By \eqref{radid} and the algebraic Bianchi identity, the tensor $\Pi_{ij}\,^{k} = \rad^{p}R_{pij}\,^{k}$ satisfies $\rad^{p}\Pi_{ip}\,^{j} = 0$ and $\Pi_{[ij]}\,^{k} = 0$. The preceding paragraph shows that the tensor $S_{ij}\,^{kl} = \Pi_{ij}\,^{[k}\rad^{l]} = \rad^{p}R_{pij}\,^{[k}\rad^{l]}$ satisfies $S_{(ij}\,^{[ab}\delta_{k)}\,^{c]} = 0$. By Lemma \ref{conetensorlemma} there is a tensor $Q_{ij}$ satisfying \eqref{cc2}. Tracing \eqref{cc2} in $i$ and $k$ shows $n\rad^{p}Q_{pj} = \rad^{p}R_{pj} = \er_{j}$. 

Suppose $(\nabla, \rad)$ is a nonsingular radiant structure and there is a tensor $Q_{ij}$ as in the statement of the lemma. Fix $p \in M$ and $\bar{X} \in T_{p}M$. Let $\si(t)$ be the maximal integral curve of $\rad$ such that $\si(0) = p$, and let $X(t)$ be a parallel vector field along $\si$ such that $X(0)= \bar{X}$. By \eqref{cc2}, 
\begin{align}\label{nrxs}
\begin{split}
nR(X, \dot{\si})\dot{\si} &= - n R(\rad, X)\rad =n(Q(X, \rad)\rad - Q(\rad, X)\rad - Q(\rad, \rad)x) = 0. 
\end{split}
\end{align}
By Lemma \ref{maxgeodesiclemma} the maximal integral curve $C$ of $\rad$ passing through $p$ is contained in the image of the maximal geodesic of $\nabla$ passing through $p$, which is parametrized by $\si$ since $\dot{\si}\wedge \nabla_{d/dt}\dot{\si} = 0$. Together with \eqref{nrxs} and \eqref{projjacobi} this shows that $X$ is a projective Jacobi field along $C$. 
It follows that $X$ generates a deformation of the image of $\si$ through projective geodesics, and that the submanifold swept out by these geodesics is a totally geodesic submanifold passing through $p$ and tangent to $\spn\{\rad_{p}, X\}$ at $p$. Since this argument can be applied at any $p \in M$, it shows $(\nabla, \rad)$ is conelike.

Suppose there is $Q_{ij}$ as in the statement of the lemma and satisfying \eqref{cc2}. Rewriting \eqref{cc2} yields
\begin{align}
\label{cc1} &(\lie_{\rad}\nabla)_{ij}\,^{k} - \tfrac{2}{n+1}\delta_{(i}\,^{k}(\lie_{\rad}\nabla)_{j)p}\,^{p} = \rad^{p}R_{pij}\,^{k} + \tfrac{2}{n}\er_{(i}\delta_{j)}\,^{k} = Q_{ij}\rad^{k}. 
%%% note cc1 is NOT trace-free part!
\end{align}
Differentiating the second equality of \eqref{cc1} yields
\begin{align}\label{cci1}
\begin{split}
R_{ijk}\,^{l} + \rad^{p}\nabla_{i}R_{pjk}\,^{l} + \tfrac{2}{n}\nabla_{i}\er_{(j}\delta_{k)}\,^{l} = nQ_{jk}\delta_{i}\,^{l} + \rad^{l}\nabla_{i}Q_{jk}.
\end{split}
\end{align}
The differential Bianchi identity yields 
\begin{align}\label{cci2}
\rad^{p}\nabla_{q}R_{pjk}\,^{q} = \rad^{p}\nabla_{p}R_{jk} - \rad^{p}\nabla_{j}R_{pk} = (\lie_{\rad}\ric)_{jk} - R_{jk} - \nabla_{j}\er_{k}.
\end{align}
Tracing \eqref{cci1} in $i$ and $l$ and substituting \eqref{cci2} in the result yields 
\begin{align}\label{conelieric0}
\begin{split}
(\lie_{\rad}\ric)_{ij} = (n-2)Q_{ij} + (\lie_{\rad}Q)_{ij} + \tfrac{n-1}{n}\nabla_{i}\er_{j} + \tfrac{1}{n}\nabla_{j}\er_{i}.
\end{split}
\end{align}
Decomposing \eqref{conelieric0} by symmetries yields \eqref{conelieric}. If $\nabla$ is Ricci symmetric, then $\er_{j} = 0$, and in \eqref{cc1} this yields the final claim.
\end{proof}

\begin{corollary}\label{invariantconelikecorollary}
A nonsingular radiant structure $(\nabla, \rad)$ satisfying $\lie_{\rad}\nabla = 0$ is conelike with $\er = 0$ and admits a complete set of planes.
\end{corollary}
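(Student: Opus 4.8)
The plan is to read both conclusions off the curvature identities already recorded, since the hypothesis $\lie_{\rad}\nabla = 0$ is exactly the vanishing of the tensors that obstruct them. First I would use the second identity of \eqref{lieradnabla}, namely $(\lie_{\rad}\nabla)_{ip}\,^{p} = -\er_{i}$, to conclude immediately that $\er = 0$. For the conelike property I would apply Lemma \ref{coneconditionlemma} with $Q_{ij} = 0$: the three requirements $n\rad^{p}Q_{ip} = \er_{i}$, $\rad^{p}\rad^{q}Q_{pq} = 0$, and the identity \eqref{cc2} then all collapse to $0 = (\lie_{\rad}\nabla)_{ij}\,^{k}$, which is the hypothesis. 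Hence $(\nabla, \rad)$ is conelike and its planelike surfaces are the totally geodesic surfaces constructed in the proof of Lemma \ref{coneconditionlemma}.

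The real content is to promote these planelike surfaces to planes, i.e.\ to show the induced connection on each is flat. The key observation is that, by the first identity of \eqref{lieradnabla}, $\lie_{\rad}\nabla = 0$ is equivalent to $\rad^{p}R_{pij}\,^{k} = 0$, which says $R(\rad, Y)Z = 0$ for all $Y, Z$ (complementing the always-valid $R_{ijp}\,^{k}\rad^{p} = 0$ from \eqref{radid}). Given $p \in M$ and a two-plane $L \subset T_{p}M$ with $\rad_{p} \in L$, I would choose $\bar{X}$ so that $L = \spn\{\rad_{p}, \bar{X}\}$ and form, as in Lemma \ref{coneconditionlemma}, the surface $\Sigma$ through $p$ swept out along the $\rad$-orbit by the parallel projective Jacobi field with initial value $\bar{X}$; by that construction $\Sigma$ is totally geodesic, tangent to $L$ at $p$, and tangent to $\rad$ everywhere.

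To conclude flatness I would invoke the Gauss equation for a totally geodesic submanifold: because $\nabla_{X}Y$ is tangent to $\Sigma$ whenever $X$ and $Y$ are, the induced connection is the restriction of $\nabla$ and its curvature satisfies $R^{\Sigma}(X, Y)Z = R(X, Y)Z$ for $X, Y, Z$ tangent to $\Sigma$. At each point of $\Sigma$ a frame is $\{\rad, A\}$, so the only independent curvature component is $R^{\Sigma}(\rad, A)$; but $R(\rad, A)Z = 0$, whence $R^{\Sigma} = 0$ and $\Sigma$ is a plane. Since $p$ and $L$ were arbitrary, $(\nabla, \rad)$ admits a complete set of planes.

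I expect the main obstacle to be the careful bookkeeping of the Gauss-equation step: one must check that for an affine (non-metric) connection the ambient curvature applied to vectors tangent to a totally geodesic surface stays tangent, so that the identification $R^{\Sigma} = R|_{\Sigma}$ is exact and the reduction to the single two-form component $R^{\Sigma}(\rad, A)$ is legitimate. This is routine but needs to be spelled out so that the passage from $R(\rad, \dum) = 0$ to the flatness of the induced connection is unambiguous; the remaining steps are immediate consequences of the earlier lemmas.
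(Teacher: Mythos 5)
Your proof is correct and follows essentially the same route as the paper: the paper likewise reads $\er = 0$ and the conelike property off \eqref{lieradnabla} and Lemma \ref{coneconditionlemma} (with $Q_{ij} = 0$), and then concludes that each planelike surface is a plane because $\rad^{p}R_{pij}\,^{k} = 0$ forces the induced connection to be flat. The only difference is that you spell out the totally-geodesic/Gauss-equation bookkeeping for the flatness step, which the paper treats as immediate.
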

\begin{proof}
That $(\nabla, \rad)$ is conelike with $\er =0$ is immediate from Lemma \ref{coneconditionlemma}. Because $\rad^{p}R_{pij}\,^{k} = 0$, the connection induced on a planelike surface $\Sigma$ is flat, so the given radiant structure admits a complete set of planes.
\end{proof}

Two conelike radiant structures with the same radiant vector field \emph{have the same planelike surfaces} if each planelike surface of one radiant structure is a planelike surface of the other radiant structure.

\begin{lemma}\label{conesameplaneslemma}
Conelike nonsingular radiant structures $(M, \nabla, \rad)$ and $(M, \tnabla, \rad)$ have the same planelike surfaces if and only if their difference tensor $\Pi = \tnabla - \nabla$ has the form $\Pi_{ij}\,^{k} = Q_{ij}\rad^{k} -2\rad^{p}Q_{p(i}\delta_{j)}\,^{k}$ with $Q_{[ij]} = 0$ and $\rad^{p}\rad^{q}Q_{pq} = 0$.
In this case, if $\tnabla$ and $\nabla$ induce the same connection on some, and hence any, bundle of densities of nontrivial weight, then $\rad^{p}Q_{ip} = 0$, so that $\Pi_{ij}\,^{k} = Q_{ij}\rad^{k}$.
\end{lemma}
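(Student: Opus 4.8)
The plan is to translate the geometric hypothesis into a pointwise algebraic condition on the difference tensor $\Pi_{ij}{}^k = \tnabla_{ij}{}^k - \nabla_{ij}{}^k$ and then invoke Lemma \ref{conetensorlemma} with $X = \rad$. First I would record two preliminary facts. Since $\nabla$ and $\tnabla$ are both torsion-free, $\Pi$ is symmetric, $\Pi_{ij}{}^k = \Pi_{(ij)}{}^k$; and since $\rad$ is radiant for both connections, $\delta_i{}^j = \tnabla_i\rad^j = \delta_i{}^j + \rad^p\Pi_{ip}{}^j$ forces $\rad^p\Pi_{ip}{}^j = 0$. Next, because $\tnabla_X Y = \nabla_X Y + \Pi(X,Y)$ and tangency of $\rad$ does not depend on the connection, a planelike surface $\Sigma$ of $\nabla$ is planelike for $\tnabla$ exactly when $\Pi(X,Y) \in T\Sigma$ for all $X, Y \in T\Sigma$.

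For the forward direction I would exploit conelikeness and nonsingularity. Given $p \in M$ and any $X \in T_pM$, the plane $L = \spn\{\rad_p, X\}$ is tangent at $p$ to some planelike surface $\Sigma$ of $\nabla$; as the two structures share planelike surfaces, $\Sigma$ is planelike for $\tnabla$, whence $\Pi_p(X,X) \in T_p\Sigma = L$. Thus $\rad \wedge X \wedge \Pi(X,X) = 0$ for every $X$. Contracting $X^iX^jX^k$ into the symmetric-in-$(ijk)$, antisymmetric-in-$[abc]$ tensor $S_{(ij}{}^{[ab}\delta_{k)}{}^{c]}$, where $S_{ij}{}^{kl} = \Pi_{ij}{}^{[k}\rad^{l]}$, reproduces exactly this wedge, so $S_{(ij}{}^{[ab}\delta_{k)}{}^{c]} = 0$. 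Together with $\rad^p\Pi_{ip}{}^j = 0$ this is precisely the hypothesis of Lemma \ref{conetensorlemma} (applied with $X = \rad$, noting $M_{\ast} = M$ by nonsingularity), which supplies the symmetric $Q_{ij}$ with $\Pi_{ij}{}^k = Q_{ij}\rad^k - 2\rad^pQ_{p(i}\delta_{j)}{}^k$ and $\rad^p\rad^qQ_{pq} = 0$.

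For the converse I would start from the displayed form of $\Pi$. A short contraction yields $\rad^j\Pi_{ij}{}^k = (\rad^pQ_{pi})\rad^k - \rad^k\rad^pQ_{pi} - (\rad^p\rad^qQ_{pq})\delta_i{}^k = 0$, so $(\tnabla, \rad)$ is again radiant; and $\Pi(X,X)^k = Q(X,X)\rad^k - 2Q(\rad,X)X^k$ lies in $\spn\{\rad, X\}$ for every $X$. If $\Sigma$ is planelike for $\nabla$ and $X, Y \in T\Sigma$, then polarizing $\Pi(X,X)$, $\Pi(Y,Y)$, $\Pi(X+Y,X+Y)$, each valued in $\spn\{\rad, \,\cdot\,\} \subset T\Sigma$, shows $\Pi(X,Y) \in T\Sigma$, so $\Sigma$ is totally geodesic, hence planelike, for $\tnabla$. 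Since $-\Pi$ has the same form with $-Q$ in place of $Q$, the symmetric argument gives the reverse inclusion, establishing that the two structures have the same planelike surfaces.

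Finally, for the density clause I would use that the connections induced by $\tnabla$ and $\nabla$ on densities of weight $\la$ differ by the one-form $-\la\,\Pi_{ip}{}^p$; this vanishes for one nonzero $\la$ if and only if $\Pi_{ip}{}^p = 0$ if and only if it vanishes for all $\la$, which is the ``some, and hence any'' phrasing. By the last assertion of Lemma \ref{conetensorlemma}, $\Pi_{ip}{}^p = -n\,\rad^pQ_{ip}$, so equality of the induced density connections forces $\rad^pQ_{ip} = 0$, whereupon the second term of $\Pi$ drops out and $\Pi_{ij}{}^k = Q_{ij}\rad^k$. The only genuinely delicate step is the passage from the pointwise wedge identity $\rad \wedge X \wedge \Pi(X,X) = 0$ to the tensor equation $S_{(ij}{}^{[ab}\delta_{k)}{}^{c]} = 0$; this is the same polarization used in the proof of Lemma \ref{coneconditionlemma}, and it is exactly where nonsingularity and conelikeness are needed, to furnish a planelike surface through each point tangent to each plane containing $\rad$.
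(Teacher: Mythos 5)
Your proof is correct and follows essentially the same route as the paper's: both directions reduce to Lemma \ref{conetensorlemma} via the wedge identity $\rad\wedge X\wedge\Pi(X,X)=0$ and polarization, with the sufficiency direction handled by the same span argument $\Pi(X,X)\in\spn\{\rad,X\}\subset T\Sigma$ and the density clause by the trace identity $\Pi_{ip}{}^{p}=-n\rad^{p}Q_{ip}$. The only cosmetic difference is that in the necessity direction the paper extracts the wedge identity by comparing $\nabla$- and $\tnabla$-geodesics lying on a common planelike surface, whereas you obtain it directly from tangency of $\Pi(X,Y)$ for vector fields tangent to the surface; for torsion-free connections these are equivalent formulations of total geodesy, and the paper itself uses your formulation in the proof of Lemma \ref{coneconditionlemma}.
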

\begin{proof}
%That the difference tensor of the radiant structures have the stated form means informally that they have the same geodesics modulo $\rad$. 
Fix $p \in M$ and $L \in T_{p}M$ containing $\rad_{p}$, and let $\Sigma$ be a smoothly immersed $\nabla$-planelike surface passing through $p$ and tangent to $L$. Let $A$ be a vector field tangent to $\Sigma$ near $p$. Then $\tnabla_{A}A = \nabla_{A}A + Q(A, A)\rad + 2Q(\rad, A)A$ is tangent to $\Sigma$ because $\nabla_{A}A$ is tangent to $\Sigma$. Since $\tnabla_{\rad}A = \nabla_{\rad}A$ and $\tnabla \rad = \nabla \rad$ this suffices to show that $\Sigma$ is $\tnabla$-totally geodesic, and it follows that $\tnabla$ and $\nabla$ have the same planelike surfaces.

Suppose the given radiant structures have the same planelike surfaces. The difference tensor $\Pi_{ij}\,^{k} = \tnabla - \nabla$ satisfies $\Pi_{[ij]}\,^{k} = 0$, because both connections are torsion-free, and $\rad^{p}\Pi_{ip}\,^{j} = \tnabla_{i}\rad^{j} - \nabla_{i}\rad^{j} = 0$. Let $\ga(t)$ be a $\nabla$-geodesic such that $\ga(0) \in M$ and $\dot{\ga}(0) \neq \rad_{\ga(0)}$, let $L = \spn\{\rad_{\ga(0)}, \dot{\ga}(0)\}$, and let $\Sigma$ be a planelike surface through $\ga(0)$ tangent to $L$. By assumption the $\tnabla$-geodesic $\tilde{\ga}$ such that $\tilde{\ga}(0) = \ga(0)$ and $\dot{\tilde{\ga}}(0) = \dot{\ga}(0)$ lies on $\Sigma$. Hence $0 = \tnabla_{d/dt}\dot{\tilde{\ga}} = \nabla_{d/dt}\dot{\tilde{\ga}} + \Pi(\dot{\tilde{\ga}}, \dot{\tilde{\ga}})$. At $t = 0$ this gives $\Pi(\dot{\ga}(0), \dot{\ga}(0)) \wedge \rad_{\ga(0)}\wedge \dot{\ga}(0) = 0$. Since $\ga$ is an arbitrary geodesic, this shows that at every point of $M$ there holds $\Pi(A, A)\wedge \rad \wedge A = 0$ for all vectors $A$ transverse to $\rad$. The preceding sentence shows $S_{ij}\,^{ab} = \Pi_{ij}\,^{[a}\rad^{b]}$ satisfies $S{(ij}\,^{[ab}\delta_{k)}\,^{c]} = 0$. By Lemma \ref{conetensorlemma} there is a tensor $Q_{ij}$ with the claimed properties. If $\tnabla$ and $\nabla$ induce the same connection on some bundle of densities of nontrivial weight, then $-n\rad^{p}Q_{ip} = \Pi_{ip}\,^{p} =0$, which shows the final claim.
\end{proof}

\begin{lemma}\label{conelikedifferencelemma}
Let $(\nabla, \rad)$ be a radiant structure on the $n$-manifold $M$. Suppose $Q_{ij} \in \Ga(S^{2}\ctm)$ is such that $\rad^{p}\rad^{q}Q_{pq} = 0$ and define $\tnabla = \nabla + \Pi$ where $\Pi_{ij}\,^{k} = Q_{ij}\rad^{k}  -2q_{(i}\delta_{j)}\,^{k}$ with $q_{i} = \rad^{p}Q_{pi}$. Then $(\tnabla, \rad)$ is a radiant structure and the curvature tensors of $\tnabla$ and $\nabla$ are related by
\begin{align}
\label{conecurvdiff}
\begin{split}
\tilde{R}_{ijk}\,^{l} & = R_{ijk}\,^{l} + \delta_{i}\,^{l}\left(Q_{jk} + \nabla_{j}q_{k} + q_{j}q_{k} \right) - \delta_{j}\,^{l}\left(Q_{ik} + \nabla_{i}q_{k} + q_{i}q_{k} \right)  \\
& \qquad + \rad^{l}\left(2\nabla_{[i}Q_{j]k} + 2q_{[i}Q_{j]k} \right) - dq_{ij}\delta_{k}\,^{l},
\end{split}\\
%\end{align}
%\begin{align}
\label{trconecurvdiff}
&\tilde{R}_{ij}  = R_{ij} + (\lie_{\rad}Q)_{ij} + (n-2)\left(Q_{ij} + \nabla_{i}q_{j} + q_{i}q_{j}\right) + dq_{ij}, &&\tilde{R}_{[ij]} = R_{[ij]} + \tfrac{n}{2}dq_{ij},\\
\label{erdiff} \tilde{\er}_{i} & - \er_{i}= n\rad^{p}(\lie_{\rad}Q)_{pi}  =  n(\lie_{\rad}q)_{i}.
\end{align}
The connections $\tnabla$ and $\nabla$ induce the same connection on a line bundle of densities of nontrivial weight over $M$ if and only if $q_{i} = 0$. 
In this case $\tilde{\er}_{i} = \er_{i}$. 
\end{lemma}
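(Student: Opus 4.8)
The plan is to derive all of the assertions from two standard inputs applied to the explicit difference tensor $\Pi_{ij}\,^{k} = Q_{ij}\rad^{k} - 2q_{(i}\delta_{j)}\,^{k}$: the transformation law for the curvature of a torsion-free connection under $\tnabla = \nabla + \Pi$, and the Lie-derivative identities of Lemma \ref{liesslemma}. Two scalar relations encoding the hypotheses will be used over and over: $Q_{ip}\rad^{p} = q_{i}$ (the definition of $q$) and $q_{p}\rad^{p} = \rad^{p}\rad^{q}Q_{pq} = 0$.

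First I would check that $(\tnabla, \rad)$ is a radiant structure. Since both summands of $\Pi$ are symmetric in $ij$, $\tnabla$ is torsion-free, and contracting with $\rad$ gives
\begin{align*}
\tnabla_{i}\rad^{j} = \nabla_{i}\rad^{j} + \Pi_{ip}\,^{j}\rad^{p} = \delta_{i}\,^{j} + q_{i}\rad^{j} - q_{i}\rad^{j} - (q_{p}\rad^{p})\delta_{i}\,^{j} = \delta_{i}\,^{j},
\end{align*}
using $\nabla_{i}\rad^{j} = \delta_{i}\,^{j}$ and the two scalar relations. For \eqref{conecurvdiff} I would invoke the identity $\tilde{R}_{ijk}\,^{l} = R_{ijk}\,^{l} + 2\nabla_{[i}\Pi_{j]k}\,^{l} + 2\Pi_{[i|p|}\,^{l}\Pi_{j]k}\,^{p}$, valid in the curvature convention of \eqref{bijkl} (the sign and index placement are confirmed by specializing to $\Pi_{ij}\,^{k} = 2\si_{(i}\delta_{j)}\,^{k}$ and recovering \eqref{projvary}). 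Substituting the explicit $\Pi$ and differentiating the $\rad^{l}$ factor via $\nabla_{i}\rad^{l} = \delta_{i}\,^{l}$, the linear term produces the $\delta_{i}\,^{l}Q_{jk}$, $\delta_{k}\,^{l}\,dq_{ij}$ and $\nabla q$ contributions, while the quadratic term, after collapsing $Q_{ip}\rad^{p} = q_{i}$ and $q_{p}\rad^{p} = 0$, produces the $q_{i}q_{j}$ terms together with the remaining $\rad^{l}q_{[i}Q_{j]k}$ piece; collecting by $\delta_{i}\,^{l}$, $\delta_{j}\,^{l}$, $\delta_{k}\,^{l}$ and $\rad^{l}$ gives \eqref{conecurvdiff}.

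The Ricci identities \eqref{trconecurvdiff} then follow by tracing \eqref{conecurvdiff} in the sense $\tilde{R}_{ij} = \tilde{R}_{pij}\,^{p}$. The one step that is not purely mechanical is converting the surviving $\rad$-contracted derivatives into Lie derivatives: from Lemma \ref{liesslemma} applied to the weight-zero tensor $Q$ one has $\rad^{p}\nabla_{p}Q_{ij} = (\lie_{\rad}Q)_{ij} - 2Q_{ij}$, and the mixed term is handled by $\rad^{p}\nabla_{i}Q_{pj} = \nabla_{i}q_{j} - Q_{ij}$, which comes from $\rad^{p}Q_{pj} = q_{j}$ and $\nabla_{i}\rad^{p} = \delta_{i}\,^{p}$. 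After these substitutions the coefficient of $Q_{ij}$ assembles to exactly $n-2$, and the antisymmetric part $\tilde{R}_{[ij]} = R_{[ij]} + \tfrac{n}{2}dq_{ij}$ drops out because $Q$, $\lie_{\rad}Q$ and $q_{i}q_{j}$ are symmetric while $(n-2)\nabla_{[i}q_{j]} + dq_{ij} = \tfrac{n}{2}dq_{ij}$. For \eqref{erdiff} I would contract $\tilde{R}_{ij}$ with $\rad^{i}$ and use $(\lie_{\rad}q)_{i} = \rad^{p}(\lie_{\rad}Q)_{pi}$ (valid since $q = \imt(\rad)Q$ and $\lie_{\rad}\rad = 0$) along with $\rad^{p}\nabla_{p}q_{i} = (\lie_{\rad}q)_{i} - q_{i}$ and $\rad^{p}dq_{pi} = (\lie_{\rad}q)_{i}$; the $q_{i}$-terms cancel and the coefficients $1 + (n-2) + 1 = n$ combine to give $\tilde{\er}_{i} - \er_{i} = n(\lie_{\rad}q)_{i}$.

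Finally, the density claim is immediate once one computes the trace $\Pi_{ip}\,^{p} = q_{i} - (n+1)q_{i} = -nq_{i}$: the connections $\tnabla$ and $\nabla$ induce the same connection on a weight-$\la$ density bundle precisely when $\la\,\Pi_{ip}\,^{p} = 0$, so for nontrivial weight this is equivalent to $q_{i} = 0$, in which case $\lie_{\rad}q = 0$ forces $\tilde{\er}_{i} = \er_{i}$ by \eqref{erdiff}. The main obstacle is combinatorial rather than conceptual: the derivation of \eqref{conecurvdiff} and its traces involves many terms in which the $\rad^{l}$ and $\delta\,^{l}$ factors must be kept carefully distinct, and the coefficients collapse to the stated values only through the repeated, well-timed use of $Q_{ip}\rad^{p} = q_{i}$ and $q_{p}\rad^{p} = 0$; the only genuinely delicate step is the passage between $\rad$-contracted covariant derivatives and Lie derivatives in the Ricci and $\er$ computations.
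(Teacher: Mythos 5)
Your proposal is correct and follows essentially the same route as the paper's proof: the standard difference formula $\tilde{R}_{ijk}\,^{l} - R_{ijk}\,^{l} = 2\nabla_{[i}\Pi_{j]k}\,^{l} + 2\Pi_{p[i}\,^{l}\Pi_{j]k}\,^{p}$ for \eqref{conecurvdiff} and \eqref{trconecurvdiff}, the identities $\rad^{p}(\lie_{\rad}Q)_{pi} = (\lie_{\rad}q)_{i} = \rad^{p}dq_{pi} = \rad^{p}\nabla_{p}q_{i} + q_{i}$ for \eqref{erdiff}, and the trace $\Pi_{ip}\,^{p} = -nq_{i}$ for the density claim. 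The only difference is expository: you spell out the radiance check and the trace bookkeeping that the paper leaves as ``calculations,'' and your coefficient count $1 + (n-2) + 1 = n$ in the $\er$ computation is exactly what the paper's terse argument amounts to.
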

\begin{proof}
Calculations using $\tilde{R}_{ijk}\,^{l} - R_{ijk}\,^{l} = 2\nabla_{[i}\Pi_{j]k}\,^{l} + 2 \Pi_{p[i}\,^{l}\Pi_{j]k}\,^{p}$ yield \eqref{conecurvdiff} and \eqref{trconecurvdiff}.
Since $q_{p}\rad^{p} = 0$, $\rad^{p}(\lie_{\rad}Q)_{pi} = (\lie_{\rad}q)_{i}  = \rad^{p}dq_{pi} = \rad^{p}\nabla_{p}q_{i} - \rad^{p}\nabla_{i}q_{p} = \rad^{p}\nabla_{p}q_{i} + q_{i}$. Combining this with \eqref{conecurvdiff} yields \eqref{erdiff}. Since $\Pi_{ip}\,^{p} = -nq_{i}$, the connections $\tnabla$ and $\nabla$ induce the same connection on any line bundle of densities of nontrivial weight, e.g. $\Det \ctm$, if and only if $q_{i} = 0$. 
\end{proof}

\begin{example}
Suppose the radiant structure $(\nabla, \rad)$ on an $n$-manifold satisfies $\er_{i} = 0$ and its Ricci tensor has positive homogeneity $\la \neq 2-n$, so that $(\lie_{\rad}\ric)_{ij} = \la R_{ij}$. By Lemma \ref{conelikedifferencelemma}, the connection $\bnabla = \nabla + \tfrac{1}{2 - n - \la}R_{(ij)}\rad^{k}$ forms with $\rad$ a radiant structure with Ricci tensor $\bar{R}_{ij}$ satisfying $\bar{R}_{(ij)} = 0$ and $\bar{R}_{[ij]} = R_{[ij]}$. In particular, if the Ricci tensor of $\nabla$ is symmetric and has positive homogeneity $\la \neq 2-n$, so that $(\lie_{\rad}\ric)_{ij} = \la R_{ij}$, then the connection $\bnabla = \nabla + \tfrac{1}{2 - n - \la}R_{ij}\rad^{k}$ forms with $\rad$ a Ricci-flat radiant structure.
\end{example}

By Lemma \ref{conesameplaneslemma}, the difference tensor of conelike nonsingular radiant structures $(\nabla, \rad)$ and $(\tnabla, \rad)$ having the same planelike surfaces satisfies the hypotheses of Lemma \ref{conelikedifferencelemma}.%, so their curvatures are related as claimed in Lemma \ref{conelikedifferencelemma}.

\begin{theorem}\label{conenormalizationtheorem}
Let $M$ be a manifold of dimension $n > 2$. Let $(\nabla, \rad)$ be a conelike nonsingular radiant structure on $M$ such that $\er_{i} = 0$.
\begin{enumerate}
\item There is a unique $\rad$-invariant connection $\tnabla$ such that $(\tnabla, \rad)$ is a conelike nonsingular radiant structure having antisymmetric Ricci tensor, having the same planes as has $(\nabla, \rad)$, and inducing on $|\Det \ctm|$ the same connection as that induced by $\nabla$. 
\item\label{gcnn} If a Lie group $G$ acts on $M$ by automorphisms of $(\nabla, \rad)$, then $G$ acts by automorphisms of $(\tnabla, \rad)$.
\item If $(\nabla, \rad)$ has symmetric Ricci tensor, then there is a unique Ricci-flat, $\rad$-invariant connection $\tnabla$ such that $(\tnabla, \rad)$ is a conelike nonsingular radiant structure having the same planes as has $(\nabla, \rad)$ and inducing on $|\Det \ctm|$ the same connection as that induced by $\nabla$. %In this case $\tnabla$ has divergence free curvature.
\end{enumerate}
\end{theorem}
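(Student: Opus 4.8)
The plan is to write the sought connection as $\tnabla=\nabla+\Pi$ and to show that the two requirements, $\rad$-invariance and antisymmetry of the Ricci tensor, taken together, determine $\Pi$ \emph{algebraically}; the only genuine work is then to check that this algebraically forced $\Pi$ is compatible with the differential conditions. Since $\er_{i}=0$, Lemma \ref{coneconditionlemma} supplies a symmetric tensor $Q_{ij}$ with $\rad^{p}Q_{ip}=0$ and $\rad^{p}R_{pij}\,^{k}=Q_{ij}\rad^{k}$, uniquely determined because $\rad$ is nonvanishing. Requiring that $(\tnabla,\rad)$ have the same planelike surfaces as $(\nabla,\rad)$ and induce the same connection on $|\Det\ctm|$ forces, by Lemma \ref{conesameplaneslemma}, that
\[
\Pi_{ij}\,^{k}=P_{ij}\rad^{k},\qquad P_{ij}=P_{(ij)},\qquad \rad^{p}P_{pi}=0,
\]
so the problem reduces to choosing the symmetric, $\rad$-horizontal tensor $P$.

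Next I would translate the two remaining conditions into equations for $P$. Using $(\lie_{\rad}\nabla)_{ij}\,^{k}=Q_{ij}\rad^{k}$ together with $\lie_{\rad}P=\nabla_{\rad}P+2P$ (Lemma \ref{liesslemma}), the condition $\lie_{\rad}\tnabla=0$ becomes
\[
\nabla_{\rad}P_{ij}+2P_{ij}=-Q_{ij}\qquad\text{(a)}.
\]
By the curvature formula \eqref{trconecurvdiff} of Lemma \ref{conelikedifferencelemma} (with $q_{i}=\rad^{p}P_{pi}=0$) one has $\tilde R_{[ij]}=R_{[ij]}$ and $\tilde R_{(ij)}=R_{(ij)}+(\lie_{\rad}P)_{ij}+(n-2)P_{ij}$, so antisymmetry $\tilde R_{(ij)}=0$ becomes
\[
\nabla_{\rad}P_{ij}+nP_{ij}=-R_{(ij)}\qquad\text{(b)}.
\]
Subtracting (a) from (b) and using $n>2$ yields the only candidate,
\[
P_{ij}=\tfrac{1}{n-2}\bigl(Q_{ij}-R_{(ij)}\bigr).
\]

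The crux, and the step I expect to be the main obstacle, is to verify that this explicit $P$ actually satisfies (a) (hence (b), the two being equivalent given the algebraic relation). Here the conelike identity \eqref{conelieric} is decisive: with $\er=0$ it gives $\nabla_{\rad}Q=(\lie_{\rad}\ric)_{(ij)}-nQ$, while Lemma \ref{liesslemma} gives $\nabla_{\rad}R_{(ij)}=(\lie_{\rad}\ric)_{(ij)}-2R_{(ij)}$; subtracting, $\nabla_{\rad}P=\tfrac{1}{n-2}\bigl(2R_{(ij)}-nQ\bigr)$, and a one-line computation confirms $\nabla_{\rad}P+2P=-Q$. One also checks $\rad^{p}P_{pi}=0$ directly, from $\rad^{p}Q_{pi}=0$ and $\rad^{p}R_{(pi)}=\tfrac12\er_{i}=0$ (using \eqref{radid}), so $P$ lies in the admissible class. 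With $P$ so defined, Lemma \ref{conelikedifferencelemma} shows $(\tnabla,\rad)$ is a radiant structure inducing the same $|\Det\ctm|$-connection; (a) says it is $\rad$-invariant, so Corollary \ref{invariantconelikecorollary} gives that it is conelike with a complete set of planes; and the form of $\Pi$ gives, via Lemma \ref{conesameplaneslemma}, the same planelike surfaces as $(\nabla,\rad)$. Uniqueness is then immediate: any other such connection has difference tensor of the same restricted form, and (a) together with (b) force the same $P$.

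For part (2) I would argue by naturality: the passage $(\nabla,\rad)\mapsto\tnabla$ is canonical, so for $g\in G$ the pullback $g^{\ast}\tnabla$ satisfies all the defining properties relative to $g^{\ast}\nabla=\nabla$ and $g^{\ast}\rad=\rad$, whence uniqueness forces $g^{\ast}\tnabla=\tnabla$, i.e. $g$ is an automorphism of $(\tnabla,\rad)$. For part (3), symmetry of $\ric$ gives $R_{[ij]}=0$, hence $\er=0$ by \eqref{radid}, so part (1) applies; since moreover $\tilde R_{[ij]}=R_{[ij]}=0$, the antisymmetric-Ricci connection produced in (1) is in fact Ricci flat, and its uniqueness among Ricci-flat connections with the stated properties follows from the uniqueness in (1).
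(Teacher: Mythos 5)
Your proposal is correct and follows essentially the same route as the paper: the same reduction via Lemmas \ref{conesameplaneslemma} and \ref{conelikedifferencelemma} to a difference tensor $P_{ij}\rad^{k}$, the same candidate $(n-2)P_{ij}=Q_{ij}-R_{(ij)}$ obtained by playing the invariance condition against the antisymmetric-Ricci condition, and the same verification of invariance from the identity \eqref{conelieric} (the paper phrases it as $(\lie_{\rad}Q)_{ij}=-T_{ij}$ via its equation \eqref{cccd3}, which is your computation written with Lie rather than covariant derivatives along $\rad$). The only departure is cosmetic: for part (2) you invoke uniqueness and naturality under pullback, whereas the paper observes directly that $G$ preserves $T_{ij}$ and $R_{(ij)}$ and hence the explicit formula for $Q_{ij}$; both arguments are valid.
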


\begin{proof}
Fix a conelike nonsingular radiant structure $(\nabla, \rad)$ on $M$. By Lemma \ref{conesameplaneslemma}, the connection $\tnabla$ of the most general conelike radiant structure $(\tnabla, \rad)$ having the same planes as has $(\nabla, \rad)$ has the form $\tnabla = \nabla + \Pi_{ij}\,^{k}$ with $\Pi_{ij}\,^{k} =  Q_{ij}\rad^{k} - 2q_{(i}\delta_{j)}\,^{k}$ for $Q_{ij}$ and $q_{i}$ satisfying $Q_{[ij]} = 0$, $\rad^{j}Q_{ij} = q_{i}$, and $\rad^{p}q_{p} = 0$. By Lemma \ref{coneconditionlemma}, $\rad^{p}R_{pij}\,^{k} + \tfrac{2}{n}\er_{(i}\delta_{j)}\,^{k} = T_{ij}\rad^{k}$ for a tensor $T_{ij}$ satisfying $T_{[ij]} = 0$, $n\rad^{p}T_{pi} = \er_{i}$, and $\rad^{p}\rad^{q}T_{pq} = 0$. By \eqref{conelieric} of Lemma \ref{coneconditionlemma},
\begin{align}\label{cccd3}
\begin{split}
(\lie_{\rad}T)_{ij} & - (\lie_{\rad}\ric)_{ij} = (2-n)T_{ij} +  \tfrac{1-n}{n}\nabla_{i}\er_{j} + \tfrac{1}{n}\nabla_{j}\er_{j}.
\end{split}
\end{align}
From \eqref{lieradnabla} and \eqref{conecurvdiff} there follows
\begin{align}
\label{conecurvdiff3} 
\begin{split}
(\lie_{\rad}\tnabla)_{ij}\,^{k} & = \rad^{p}\tilde{R}_{pij}\,^{k}  = \rad^{p}R_{pij}\,^{k} + (\lie_{\rad}Q)_{ij}\rad^{k} - 2(\lie_{\rad}q)_{(i}\delta_{j)}\,^{k}\\
&= (T_{ij} + (\lie_{\rad}Q)_{ij})\rad^{k} - \delta_{i}\,^{k}\left((\lie_{\rad}q)_{j} + \tfrac{1}{n}\er_{j}\right)- \delta_{j}\,^{k}\left((\lie_{\rad}q)_{i} + \tfrac{1}{n}\er_{j}\right).
\end{split}
\end{align}
If $(\lie_{\rad}Q)_{ij} = -T_{ij}$ then $-n(\lie_{\rad}q)_{i} = -n\rad^{p}(\lie_{\rad}Q)_{ip} = n\rad^{p}T_{pi} = \er_{i}$, and so it follows from \eqref{conecurvdiff3} that $\tnabla$ is $\rad$-invariant if and only if $(\lie_{\rad}Q)_{ij} = -T_{ij}$. If this is the case, then by \eqref{trconecurvdiff} it must be that
\begin{align}\label{cccd4}
\tilde{R}_{ij} & = R_{ij} - T_{ij} + (n-2)\left(Q_{ij} + \nabla_{i}q_{j} + q_{i}q_{j}\right) + dq_{ij},
\end{align}
but it is not clear that there can always be found a tensor $Q_{ij}$ so that \eqref{cccd4} yields antisymmetric $\tilde{R}_{ij}$.

Now suppose that $\er_{i} = 0$. It follows from \eqref{cccd3} that if $Q_{ij}$ is defined by $(n-2)Q_{ij} = T_{ij} - R_{(ij)}$, then $(\lie_{\rad}Q)_{ij} = -T_{ij}$, so that $\tnabla$ is $\rad$-invariant by \eqref{conecurvdiff3}. In this case, $-q_{i} = - \rad^{p}Q_{ip} = \tfrac{1}{2n}\er_{j} = 0$, and so from \eqref{cccd4} there follows $\tilde{R}_{ij} = R_{[ij]}$. Since $\Pi_{ip}\,^{p} = -nq_{i} = 0$, the connection $\tnabla$ induces on $|\Det\ctm|$ the same connection as that induced by $\nabla$. This shows the existence claim of the theorem. 

On the other hand, if $\tnabla$ induces on $\Det \ctm$ the same connection as that induced by $\nabla$, then it must be that $-nq_{i} = \Pi_{ip}\,^{p} = 0$. If moreover the symmetric part of the Ricci tensor of $\tnabla$ vanishes, then by \eqref{cccd4} it must be that $(n-2)Q_{ij} = T_{ij} - R_{(ij)}$, and this shows the uniqueness of $\tnabla$.

If a Lie group $G$ acts on $M$ by automorphisms of $\nabla$ preserving $\rad$, then $G$ preserves $T_{ij}$ and $R_{(ij)}$, so also preserves $Q_{ij}$ and hence acts as automorphisms of $\tnabla$.

From \eqref{cccd4} it follows that the Ricci curvature of $\tnabla$ is $\tilde{R}_{ij} = R_{[ij]}$, so that if the initial $\nabla$ has symmetric Ricci tensor, then $\tnabla$ is Ricci-flat. 
\end{proof} 

\begin{remark}\label{conenormalizationremark}
In the setting of Theorem \ref{conenormalizationtheorem}, but with the Ricci tensor not assumed symmetric, one might try to first solve the antisymmetrization of \eqref{cccd4} to produce a conelike nonsingular radiant structure having the same planes and having symmetric Ricci tensor. The antisymmetrization of \eqref{cccd4} gives the equation $\tilde{R}_{[ij]} =R_{[ij]} + \tfrac{n}{2}dq_{ij}$. Since the antisymmetric part of the Ricci tensor is always exact, the equation $0 =R_{[ij]} + \tfrac{n}{2}dq_{ij}$ by itself always admits a solution. However, Example \ref{nosolutionexample} exhibits a conelike radiant structure for which there is no solution with $q_{i}$ satisfying also $\rad^{p}q_{p} = 0$. On the other hand, in Example \ref{nosolutionexample} the conelike radiant structure already has antisymmetric Ricci tensor, so it does not show the conclusion of Theorem \ref{conenormalizationtheorem} is unattainable when the Ricci tensor is not assumed symmetric.
\end{remark}

\begin{corollary}
On a manifold of dimension $n > 2$, if $(\nabla, \rad, \Psi)$ is an equiaffine nonsingular radiant structure such that $(\nabla, \rad)$ is conelike, then there is a unique $\rad$-invariant connection $\tnabla$ such that $(\tnabla, \rad, \Psi)$ is an equiaffine nonsingular radiant structure and $(\tnabla, \rad)$ is conelike, Ricci-flat, and has the same planes as $(\nabla, \rad)$.
\end{corollary}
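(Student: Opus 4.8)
The plan is to reduce the statement to Theorem \ref{conenormalizationtheorem}(3) by first showing that the equiaffine hypothesis forces the Ricci tensor of $\nabla$ to be symmetric, then upgrading the conclusion of that theorem from the density bundle $|\Det\ctm|$ to the orientation-sensitive bundle $\Det\ctm$, so that the normalized connection preserves the volume form $\Psi$ itself and not merely the associated density.

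First I would observe that, since $(\nabla,\rad,\Psi)$ is equiaffine, there holds $\nabla\Psi=0$, so the connection induced by $\nabla$ on $\Det\ctm$ admits the parallel nonvanishing section $\Psi$ and is therefore flat. As the curvature of this induced connection is $2R_{[ij]}$, this gives $R_{[ij]}=0$; that is, $\ric$ is symmetric. By Lemma \ref{lieradlemma} (the relation $\er_i=\rad^p R_{pi}$ together with $R_{ip}\rad^p=0$ from \eqref{radid}), symmetry of $\ric$ yields $\er_i=0$. Hence the hypotheses of Theorem \ref{conenormalizationtheorem}(3) are met, and that theorem produces a unique Ricci-flat, $\rad$-invariant connection $\tnabla$ which is conelike, has the same planes as $(\nabla,\rad)$, and induces on $|\Det\ctm|$ the same connection as $\nabla$.

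It then remains to promote $\tnabla$ to an equiaffine structure, i.e. to check $\tnabla\Psi=0$. Since $\nabla\Psi=0$, the density $|\Psi|$ is $\nabla$-parallel, and because $\tnabla$ induces on $|\Det\ctm|$ the same connection as $\nabla$, also $\tnabla|\Psi|=0$. Now $\Psi$ is a globally defined volume form (so $M$ is oriented), and $\tnabla\Psi=\ga\tensor\Psi$ for a globally defined one-form $\ga$; passing to densities gives $\tnabla|\Psi|=\ga\tensor|\Psi|$ with the \emph{same} one-form $\ga$, since the coefficient one-form of the induced connection is insensitive to the sign of the local frame. Therefore $\ga=0$ and $\tnabla\Psi=0$. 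Combined with Lemma \ref{eelemma}, this shows $(\tnabla,\rad,\Psi)$ is an equiaffine nonsingular radiant structure, which is conelike, Ricci flat, and has the same planes as $(\nabla,\rad)$.

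For uniqueness, any connection $\tnabla'$ satisfying all the stated conclusions preserves $\Psi$, hence preserves $|\Psi|$, and so induces on $|\Det\ctm|$ the connection having $|\Psi|$ as a parallel section; since a connection on a line bundle is determined by a parallel nonvanishing section, this is exactly the connection induced by $\nabla$. Thus $\tnabla'$ satisfies all the hypotheses characterizing the connection of Theorem \ref{conenormalizationtheorem}(3), whence $\tnabla'=\tnabla$. The only genuinely delicate point is this density-to-volume-form upgrade, namely verifying that parallelism of the density $|\Psi|$ promotes to parallelism of the signed form $\Psi$; everything else is a direct invocation of Theorem \ref{conenormalizationtheorem}(3) once symmetry of $\ric$ (and hence $\er=0$) has been extracted from the equiaffine hypothesis.
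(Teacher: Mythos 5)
Your proof is correct and follows essentially the same route as the paper, which disposes of the corollary in one line: an equiaffine radiant structure has symmetric Ricci tensor (hence $\er=0$ by \eqref{radid}), so Theorem \ref{conenormalizationtheorem} applies. The only difference is that you carefully supply two details the paper leaves implicit — the derivation of $R_{[ij]}=0$ from flatness of the induced connection on $\Det \ctm$, and the promotion of $\tnabla$-parallelism from the density $|\Psi|$ to the signed volume form $\Psi$ — both of which are handled correctly.
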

\begin{proof}
An equiaffine radiant structure has symmetric Ricci tensor, so the claim follows from Theorem \ref{conenormalizationtheorem}.
\end{proof}

Said another way,  there is a unique Ricci-flat $\rad$-invariant conelike equiaffine radiant structure having the same planes as a given conelike equiaffine radiant structure.

A smooth map between foliated manifolds is \emph{foliated} if its differential maps the tangent bundle of the foliation of the domain manifold into the tangent bundle of the foliation of the target manifold. If $X$ is a nonsingular vector field on $\F$ a smooth submersion $\rho:\F \to M$ is \emph{foliated} if it is foliated as a map between $\F$ with the foliation by the orbits of $X$ and $M$ with the trivial foliation.

\begin{example}
The projection, $\rho:\ste \setminus \{0\} \to \projp(\ste)$, to the oriented projectivization of $\ste$ and the corresponding induced projection $\rho:\hopf^{n}(\la) \to \projp(\ste)$ are both foliated submersions. The fibers of the former are copies of $\reap$, while those the latter are copies of $S^{1}$.
\end{example}

\begin{definition}\label{fibereddefined}
\noindent
\begin{enumerate}
\item A nonsingular radiant structure $(\hnabla, \rad)$ on an $(n+1)$-dimensional manifold $\F$ \emph{fibers} over a projective structure $\en$ on an $n$-manifold $M$ if there is a surjective smooth submersion $\rho:\F \to M$ such that the image in $M$ of each geodesic of $\hnabla$ is contained in a projective geodesic of $\en$.

\item A nonsingular conelike radiant structure $(\hnabla, \rad)$ on an $(n+1)$-dimensional manifold $\F$ \emph{fibers} over a projective structure $\en$ on an $n$-manifold $M$ if there is a surjective smooth submersion $\rho:\F \to M$ such that the image in $M$ of each planelike surface is contained in a projective geodesic of $\en$. 
\end{enumerate}
\end{definition}

\begin{example}
The standard flat affine connection $\nabla$ on $\ste$ gives a conelike equiaffine radiant structure $(\stz, \nabla, \Psi, \rad)$ fibering over the standard flat projective structure on $\proj(\ste)$.
\end{example}

%%%% SAVE: seems to be used nowhere
%On a smooth manifold $M$, let $X$ be a complete nonsingular vector field with flow $\phi^{t}$. The stabilizer of $x \in M$ is a subgroup of $\rea$ depending only on the orbit $\orb_{x} = \cup_{t \in \rea}\phi^{t}(x)$. Because $X$ is nonsingular, for each $x \in M$ the map $t \to \phi^{t}(x)$ is a local diffeomorphism, and so the stabilizer of $x$ is discrete. Since the flow acts transitively on each of its orbits, each orbit is the quotient of $\rea$ by a discrete subgroup, so diffeomorphic to either $\rea$ or $S^{1}$. 

\begin{example}\label{nonuniquenessexample}
Let $(\ste, D, \rad, \Psi)$ be the standard equiaffine radiant structure on the $(n+1)$-dimensional vector space $\ste$ defined in Example \ref{standardradiantexample}. 
This example exhibits two Ricci-flat conelike nonsingular radiant structures $(\ste \setminus \{0\}, D, \rad)$ and $(\ste \setminus \{0\}, \nabla, \rad)$ that have the same planes and both fiber over the standard projective structure on $\proj(\ste)$. This shows the $\rad$-invariance hypothesis in Theorem \ref{conenormalizationtheorem} is necessary for the validity of the uniqueness claim. 

Let $h_{ij}$ be a \emph{Euclidean metric} on $\ste$, meaning a $D$-parallel Riemannian metric. Let $\rad^{\flat}_{i} = \rad^{p}h_{ip}$ and $\be_{i} = |\rad|^{-2}\rad^{\flat}_{i} = d\log|\rad|_{i}$ so that $\be_{i}\rad^{i} = 1$. Using $D_{i}|\rad|^{\al} = \al |\rad|^{\al}\be_{i}$ with $\al = 2$ yields $D_{i}\be_{j} = |\rad|^{-2}h_{ij} - 2\be_{i}\be_{j}$. Antisymmetrizing this relation yields $d\be_{ij} = 0$. Contracting it with $\rad^{i}$ yields $\rad^{p}D_{p}\be_{j} = - \be_{j}$, so $(\lie_{\rad}\be)_{i} = \rad^{p}d\be_{pi} = \rad^{p}D_{p}\be_{i} - \rad^{p}D_{i}\be_{p} = -\be_{i} + \be_{i} = 0$. In particular, $\be_{i}$ can be viewed as a flat principal $\reat$-connection on the principal $\reat$ bundle $\ste\setminus\{0\} \to \proj(\ste)$. 

Let $\si_{i}$ be a smooth one-form on $\ste \setminus\{0\}$ such that $\si_{p}\rad^{p} = 0$. 
Define $\Pi_{ij}\,^{k} = Q_{ij}\rad^{k} - 2\si_{(i}\delta_{j)}\,^{k}$ where
\begin{align}
\begin{split}
Q_{ij} & = |\rad|^{\al}(D_{i}\be_{j} + \be_{i}\be_{j})  + 2\si_{(i}\be_{j)}=  |\rad|^{\al-1}D_{i}D_{j}|\rad| +  2\si_{(i}\be_{j)}
 = |\rad|^{\al - 4}\left(|\rad|^{2}h_{ij} - \rad^{\flat}_{i}\rad^{\flat}_{j}\right) + 2\si_{(i}\be_{j)},
\end{split}
\end{align}
for some $\al \in \rea$.
Since $Q_{[ij]} = 0$, the affine connection $\nabla = D + \Pi_{ij}\,^{k}$ is torsion-free. 
As $\rad^{p}D_{p}\be_{i} = -\be_{i}$, there holds $\rad^{p}Q_{ip} = \si_{i}$, so $\Pi_{ip}\,^{k}\rad^{p} = 0$ and $\nabla_{i}\rad^{j} = \delta_{i}\,^{j}$, and $(\nabla, \rad)$ is a radiant structure. 

Because $\Pi_{ip}\,^{p} = -n\si_{i}$, $\Psi$ is $\nabla$-parallel if and only if $\si_{i} = 0$. In this case $(\nabla, \rad, \Psi)$ is an equiaffine radiant structure on $\ste \setminus \{0\}$.
Let $R_{ijk}\,^{l}$ be the curvature of $\nabla$. By construction, $(\lie_{\rad}Q)_{ij} = \al Q_{ij} - 2\al\si_{(i}\be_{j)} + 2(\lie_{\rad}\si)_{(i}\be_{j)}$, so, by Lemma \ref{conelikedifferencelemma},
\begin{align}\label{exric}
\begin{split}
R_{ij} & = (\lie_{\rad}Q)_{ij} + (n-1)(Q_{ij} + \nabla_{i}\si_{j} + \si_{i}\si_{j}) + d\si_{ij}\\
& = (n-1+\al)Q_{ij}   - 2\al\si_{(i}\be_{j)} + 2(\lie_{\rad}\si)_{(i}\be_{j)}+ (n-1)(D_{i}\si_{j} - \si_{i}\si_{j})+ d\si_{ij},\\
\rad^{p}R_{pij}\,^{k} & = \left(\al Q_{ij}  - 2\al\si_{(i}\be_{j)} + 2(\lie_{\rad}\si)_{(i}\be_{j)}\right)\rad^{k} - 2(\lie_{\rad}\si)_{(i}\delta_{j)}\,^{k}.
\end{split}
\end{align}
Because $\rad^{p}D_{p}\si_{i} = (\lie_{\rad}\si)_{i} - \si_{i}$ and $\rad^{p}d\si_{pi} = (\lie_{\rad}\si)_{i}$, 
\begin{align}\label{ccexer}
\er_{i}& = \rad^{p}R_{pi} =  (n-1+\al)\si_{i} + (n-1)\rad^{p}D_{p}\si_{i} - \al \si_{i} + (\lie_{\rad}\si)_{i} + \rad^{p}d\si_{pi}  = (n+1)(\lie_{\rad}\si)_{i},
\end{align}
as follows also from Lemma \ref{conelikedifferencelemma}.
The tensor called $Q_{ij}$ in Lemma \ref{coneconditionlemma} corresponds with the tensor $\al Q_{ij} - 2\al\si_{(i}\be_{j)} + 2(\lie_{\rad}\si)_{(i}\be_{j)}$ in the present example, and to apply Lemma \ref{coneconditionlemma} to conclude that $(\nabla, \rad)$ is conelike it has to be checked that the contraction of this tensor with $\rad^{p}$ equals $\tfrac{1}{n+1}\er_{i}$. This follows from \eqref{ccexer}, for 
\begin{align}
\rad^{p}\left( \al Q_{ip}  - 2\al\si_{(i}\be_{p)} + 2(\lie_{\rad}\si)_{(i}\be_{p)}\right) = \al \si_{i} - \al \si_{i} + (\lie_{\rad}\si)_{i} = (\lie_{\rad}\si)_{i} = \tfrac{1}{n+1}\er_{i}.
\end{align}
Hence, by Lemma \ref{coneconditionlemma}, $(\nabla, \rad)$ is conelike.

By Lemma \ref{conesameplaneslemma}, the planelike surfaces of the conelike radiant structure $(\nabla, \rad)$ on $\ste \setminus \{0\}$ are the two-dimensional subspaces of $\ste$. This can be seen directly as follows. Any two-dimensional subspace $\Sigma \subset \ste$ is spanned by $\rad$ and a constant vector field $v^{i} \in \ste$. By definition of $\nabla$, $\nabla_{\rad}\rad = \rad$, $\nabla_{\rad}v = 0$, $\nabla_{v}\rad = v$, and $\nabla_{v}v = Q(v, v)\rad - 2\si(v)v$, so that $\Sigma_{0} = \Sigma \cap (\ste \setminus \{0\})$ is $\nabla$-totally geodesic. Given $p \in \ste$ and a two-dimensional subspace $L \subset T_{p}\ste$ containing $\rad_{p}$ there is $v \in T_{p}\ste$ such that $L$ is spanned by $\rad_{p}$ and $v$. The constant vector field obtained by $D$-parallel transporting $v$, also denoted $v$, spans with $\rad$ a two-dimensional subspace $\Sigma_{0} \subset \ste\setminus\{0\}$ tangent at $p$ to $L$ and, by the preceding, totally geodesic. 

Suppose $\al = 1 - n$ and $\si_{i} =0$.
In this case, by the preceding and \eqref{exric}, $(\nabla, \rad)$ is Ricci-flat and conelike, but $\lie_{\rad}\nabla$ does not vanish and $\nabla$ is not $\rad$-invariant. The Ricci-flat conelike nonsingular radiant structures $(\ste \setminus \{0\}, D, \rad)$ and $(\ste \setminus \{0\}, \nabla, \rad)$ have the same planes and fiber over the standard projective structure on $\proj(\ste)$. This shows that the uniqueness statement in Theorem \ref{conenormalizationtheorem} is false without the assumption of the $\rad$-invariance of the connection.

Although in this case $\nabla$ and $D$ have the same planes, their geodesics behave quite differently. This is described in more detail than is strictly necessary because it is interesting in its own right and because it suggests that $\nabla$ is interesting rather than pathological.
In $D$-affine coordinates $x^{0}, \dots, x^{n}$, the equations for a $\nabla$-geodesic are
\begin{align}\label{cff1}
0 = \nabla_{d/dt}\dot{x} = \ddot{x} + |x|^{-n-3}\left(|\dot{x}|^{2}|x|^{2} - \lb x, \dot{x}\ra^{2}\right)x.
\end{align}
The analysis of \eqref{cff1} reduces to the classical problem of motion in a central force field with a power law potential.
Define $r(t) = |x(t)|$. 
Differentiating $|\dot{x}|^{2}|x|^{2} - \lb x, \dot{x}\ra^{2}$ using \eqref{cff1} shows that there is $c \in \rea$ such that $|\dot{x}|^{2}|x|^{2} - \lb x, \dot{x}\ra^{2} = c^{2}$, while differentiating and using \eqref{cff1} shows that the energy
\begin{align}\label{cff2}
E(x) = \tfrac{1}{2}|\dot{x}|^{2} - \tfrac{c^{2}}{n+1}|x|^{-n-1} = \tfrac{1}{2}\dot{r}^{2} + \tfrac{c^{2}}{2}r^{-2} - \tfrac{c^{2}}{n+1}r^{-n-1}= \tfrac{1}{2}\dot{r}^{2} + \tfrac{c^{2}}{2}r^{-2} + U(r) = \tfrac{1}{2}\dot{r}^{2} + V(r),
\end{align}
is constant along a solution, where $U(r) = -\tfrac{c^{2}}{n+1}r^{-n-1}$ and $V(r) = U(r) + \tfrac{c^{2}}{2}r^{-2}$. The preceding shows that a solution of \eqref{cff1} can be interpreted as a motion in a central force field with the potential energy $U(r)$ and effective potential energy $V(r)$. The latter is so named because differentiating \eqref{cff2} shows $\ddot{r} = c^{2}(r^{-3} - r^{-n-2}) = - \tfrac{\pr V}{\pr r}$. A nonrigorous qualitative discussion of the behavior of solutions to this problem is given in standard mechanics textbooks such as \cite[Chapter $4$]{Whittaker-analyticaldynamics}, \cite[Chapter $3$]{Goldstein}, or \cite[Section $14$]{Landau-Lifschitz-mechanics}. 

If $c =0$ then \eqref{cff1} shows $\ddot{x} = 0$, so the radial lines $x(t) = ta$, $a \in \stz$, are $\nabla$-geodesics. If $c \neq 0$ then, replacing $x(t)$ by the reparametrization $x(|c|^{-1}t)$, it can be and is assumed hereforth that $c^{2} = 1$. In this case $x(t)$ and $\dot{x}(t)$ are linearly independent for all $t$. From \eqref{cff1} it follows that the two-form $x\wedge \dot{x}$ is constant in time, so there is an $h$-orthonormal basis $\{a_{1}, a_{2}\}$ of $\spn\{x(0), \dot{x}(0)\}$ such that $x(t) = f_{1}(t)a_{1} + f_{2}(t)a_{2}$ for some functions $f_{1}(t)$ and $f_{2}(t)$ that, by \eqref{cff1}, satisfy the differential equations $\ddot{f}_{i} + (f_{1}^{2} + f_{2}^{2})^{-(n+3)/2}f_{i} = 0$, $i = 1, 2$.  Because $x \wedge \dot{x} =  (f_{1}\dot{f}_{2} - \dot{f}_{1} f_{2})a_{1}\wedge a_{2}$, $f_{1}\dot{f}_{2} - \dot{f}_{1} f_{2}$ equals $\pm c = \pm 1$. Because the order of $a_{1}$ and $a_{2}$ is arbitrary, it can be and is assumed that $f_{1}\dot{f}_{2} - \dot{f}_{1} f_{2} = 1$. (The same conclusion follows more conceptually from the observation that, by construction, two-dimensional subspaces are totally geodesic for $\nabla$.) Taking $f_{1}(t) =  \cos{t}$ and $f_{2}(t) = \sin{t}$ shows that the unit radius circles centered on the origin, $x = \cos(t)a + \sin(t)b$, are $\nabla$-geodesics.
 Writing $f_{1} = r\cos \theta$ and $f_{2} = r\sin \theta$ and differentiating shows that $\dot{\theta}r^{2} = f_{1}\dot{f}_{2} - \dot{f}_{1} f_{2} =1$, which shows that the constancy of $c^{2}$ can be regarded as conservation of angular momentum.

On the domain $r > 0$, the effective potential $V(r) = \tfrac{1}{2}r^{-2} - \tfrac{1}{n+1}r^{-n-1}$ has a unique maximum at $r = 1$, with value $V(1) = \tfrac{n-1}{2(n+1)}$. In qualitative terms, the behavior of a solution with energy $E_{0}$ is determined by the relation of the initial value $r_{0} = r(0)$ with the roots of the equation $E_{0} = V(r)$, for, because $\dot{r}^{2} = 2(E_{0} - V(r))$, the latter correspond to turning points of $r(t)$.

Consider a solution with initial conditions $r(0) = r_{0}$ and $\dot{r}(0) = \dot{r}_{0}$. These are related to the initial position $x(0)$ and velocity $\dot{x}(0)$ by $r_{0} = |x(0)|$ and $\dot{r}_{0}^{2} + r_{0}^{-2} = |\dot{x}(0)|^{2}$. By \eqref{cff2}, the energy $E_{0}$ is given by $2E_{0} = \dot{r}_{0}^{2} + 2V(r_{0})$. If $r_{0} = 1$ and $E_{0}$ equals the maximal value $\tfrac{n-1}{2(n+1)} = V(1)$ of $V(r)$, then $\dot{r}(0)^{2} = 2(E_{0}- V(r_{0})) = 0$. The unique solution of $\ddot{r} = r^{-3} - r^{-n-2}$ with initial conditions $r(0) = 1$ and $\dot{r}(0) = 0$ is $r(t) = 1$. This yields the circular solutions $x = \cos(t)a + \sin(t)b$ already mentioned. With the same energy, so that $\dot{r}_{0} = 0$, if $r_{0} < 1$, then from $\ddot{r} = r^{-3} - r^{-n-2}$ it follows that $\ddot{r} < 0$, so $\dot{r} < 0$ as well for $t > 0$, and both $\dot{r}$ and $r$ decrease for $t > 0$. Hence $\dot{r} = -(2(E_{0} - V(r))^{1/2}$ and so $r(t) \to 0$ as $t \to T$ for $T = - \int_{r_{0}}^{0}(2(E_{0} + \tfrac{1}{n+1}s^{-n-1}- s^{-2})^{-1/2}\,ds$. If instead $r_{0} > 1$, then the same reasoning shows that $r$ and $\dot{r}$ are increasing for $t > 0$, and $r(t) \to \infty$ as $t \to \infty$. If $E_{0} > V(1)$, then $\dot{r}(t) \neq 0$ for all valid $t$, and $r(t)$ increases or decreases. If $E_{0} < V(1)$, then there are $r_{\min} < 1 < r_{\max}$ such that $V(r_{\min}) = E_{0} = V(r_{\max})$ and the behavior of $r(t)$ depends on the relation of $r_{0}$ to the interval $[r_{\min}, r_{\max}]$. If $r_{0} < r_{\min}$, $r(t)$ tends to $0$ in finite time, if $r_{0} > r_{\max}$, then $r(t)$ tends to $\infty$ as $t \to \infty$, while if $r_{0} \in [r_{\min}, r_{\max}]$ then $r(t) \in [r_{\min}, r_{\max}]$ for $t > 0$, so the solution stays bounded, although it is not closed. More precise description of the phase curve can also be obtained by writing $u = r^{-1}$ (as in \cite{Whittaker-analyticaldynamics}) and using $\tfrac{d}{dt} = r^{-2}\tfrac{d}{d\theta}=  u^{2}\tfrac{d}{d\theta}$ to rewrite $\ddot{r} = r^{-3} - r^{-n-2}$ to obtain $\tfrac{d^{2}u}{d\theta^{2}} = u^{n} - u = u(u-1)(u^{n-2} + u^{n-1} + \dots + u + 1)$, which describes the solution in polar coordinates. The corresponding expression for the energy is $(\tfrac{du}{d\theta})^{2} = \tfrac{2}{n+1}u^{n+1} - u^{2} + 2E_{0}$.
%It can be checked that $w = r^{2}$ satisfies $\ddot{w} + \tfrac{2(n-1)}{n+1}w^{-(n+1)/2} - 4E_{0}= 0$, and the analysis could also be based on this equation.
More detailed discussion of the solutions is omitted, but the preceding suffices to show that the geodesics of $\nabla$ behave quite differently than those of $D$ (which are all unbounded, being lines). 

Independent of the considerations of this section, it seems interesting that there is a family of affine connections with nice properties (e.g. Ricci-flat) and whose geodesics are motions in a central field with power law potential. The author has not found this in the literature. 
%% Omitted because too speculative
%It might be interesting to explore the connection $D$ from the perspective of the projective dynamics proposed by A. Albouy \cite{Albouy-projectivedynamics, Albouy-firstintegrals}.
\end{example}

\section{Extended projective structures and the associated cone connections}\label{extendedprojectivesection}
This section introduces the notion of \emph{extended projective structure}, which is a projective structure coupled to a class of principal connections on a principal bundle $\rho:N \to M$ with one-dimensional structure group, and establishes one of the main results of the paper, Theorem \ref{extendedthomastheorem}, that shows that there is a canonical correspondence between extended projective structures on $M$ and certain conelike radiant structures on $N$. 

It is convenient to recall some definitions.
For a principal $G$-bundle $\rho:\F \to M$, $\prin(\F)$ denotes the space of principal $G$-connections on $\F$. 
By a principal bundle $\rho:\F \to M$ with one-dimensional structure group is meant either a principal $\reat$-bundle or a principal $S^{1}$-bundle. In either case the Lie algebra of the structure group is identified with $\rea$. A principal $\reap$-bundle is regarded as a principal $\reat$-bundle via the identity inclusion of the structure groups. 
Because the structure group is abelian, a connection $\be \in \prin(\F)$ is by definition a one-form $\be \in \ext^{1}(T^{\ast}\F)$ preserved by the principal action and such that $\be(\eul) = 1$, in which $\eul$ is the fundamental vector field generating the principal action $R_{e^{t}}$. Since the principal action preserves $\be$ there holds $\lie_{\eul}\be = 0$, and so also $i(\eul)d\be = 0$; that is $d\be$ is horizontal and there is a closed two-form $\om \in \Ga(\ext^{2}\ctm)$, the \emph{curvature} of $\be$, such that $d\be = \rho^{\ast}(\om)$.
A connection $\be \in \prin(\F)$ determines and is determined by the homogeneous horizontal subbundle $H = \ker \be \subset T\F$, where \emph{horizontal} means that $\ker T\rho(u) \cap H_{u} = \{0\}$ and $T\rho(u)(H_{u}) = T_{\rho(u)}M$ for all $u \in \F$, and \emph{homogeneous} means that $H$ is preserved by the differential $TR_{r}$ of the principal action. The horizontal lift $X \in \Ga(TM) \to \hat{X} \in \Ga(T\F)$ determined $\be$ is the unique section of $\Ga(H)$ covering $X$. The uniqueness implies that the map $X \to \hat{X}$ is a $\cinf(M)$-module map, where $f \in \cinf(M)$ acts on $\Ga(T\F)$ by multiplication by $\rho^{\ast}(f)$. The difference of $\tilde{\be} - \be$ of $\tilde{\be}, \be \in \prin(\F)$ is a homogeneity $0$ one-form annihilating the vertical so has the form $\rho^{\ast}(\ga)$ for some $\ga \in \Ga(\ctm)$. The $\tilde{\be}$ horizontal lift of $X \in \Ga(TM)$ is expressible in terms of its $\be$-horizontal lift $\hat{X}$ as $\hat{X} - \ga(X)\eul$.

By \eqref{projvary} the antisymmetric parts of the Ricci tensors of projectively equivalent connections are cohomologous two-forms. The resulting cohomology class is trivial because the antisymmetric part of the Ricci tensor of a torsion-free affine connection $\nabla$ is exact, for if $\mu$ is any volume density, then $\nabla_{i}\mu = \si_{i}\mu$ for some one-form $\si$, and $2R_{[ij]}\mu = -R_{ijp}\,^{p}\mu = 2\nabla_{[i}\nabla_{j]}\mu = 2\nabla_{[i}\si_{j]}\mu = d\si_{ij} \mu$, so $2R_{[ij]} = d\si_{ij}$.

Let $\rho:N \to M$ be a principal $S^{1}$-bundle or principal $\reat$-bundle. Since any two elements of $\prin(\rho:N \to M)$ differ by the pullback via $\rho$ of a one-form on $M$, their curvatures determine a cohomology class $[\om] \in H^{2}(M; \rea)$.  (When the structure group is $S^{1}$, the cohomology class $\tfrac{1}{2\pi}[\om] \in H^{2}(M;\integer)$ is integral, equal to the first Chern class of the bundle \cite{Chern-circlebundles, Kobayashi-circlebundles}.)

The idea behind the definition of extended projective structure is to link representatives $\nabla \in \en$ and $\be \in \prin(\rho:M \to N)$ in such a way that a two-form representing the cohomology class $[\om]$ can be associated with the linked pair.

A projective structure $\en$ is a torsor for the abelian additive group $\Ga(\ctm)$ with $\ga \in \Ga(\ctm)$ acting on $\nabla \in \en$ by $\ga \cdot \nabla = \nabla + 2\ga_{(i}\delta_{j)}\,^{k}$. The space $\prin(\rho:N \to M)$ of principal connections on $\rho:N \to M$ is also a torsor for $\Ga(\ctm)$, with $\ga \in \Ga(\ctm)$ acting on $\be \in \prin(\rho:N \to M)$ by $\ga \cdot \be = \be - \rho^{\ast}(\ga)$. It follows that $\Ga(\ctm)$ acts on $\en \times \prin(\rho:N \to M)$ by $\ga\cdot(\nabla, \be) =( \nabla + 2\ga_{(i}\delta_{j)}\,^{k},\be - \rho^{\ast}(\ga))$. The orbit of $(\nabla, \be) \in \en \times \prin(\rho:N \to M)$ under this action is denoted $[\nabla, \be]$ and is called an \emph{extended projective structure} on $\rho:N \to M$. Pairs $(\tnabla, \tilde{\be})$ and $(\nabla, \be)$ are \emph{projectively equivalent} if there is $\ga \in \Ga(\ctm)$ such that $\ga \cdot (\nabla, \be) = (\tnabla, \tilde{\be})$. 

%SAVE: this seems an excessively formal comment that can be omitted
%If one regards a torsor as a principal bundle over a point, then $\en$ is a principal $\Ga(\ctm)$-bundle over a point, and the space, $\en \times_{\Ga(\ctm)} \prin(\rho:N \to M)$, of $\Ga(\ctm)$ orbits on $\en \times \prin(\rho:N \to M)$ can be viewed as the bundle (over a point) associated with the representation of the structure group on $\prin(\rho:N \to M)$ (or vice-versa with the roles of $\en$ and $\prin(\rho:N \to M)$ interchanged).

An extended projective structure $[\nabla, \be]$ on $\rho:N \to M$ determines an underlying projective structure $\en$ on $M$ and associates with each representative connection $\nabla \in \en$ a principal connection $\be \in \prin(\rho:N \to M)$. The different possible extended projective structures $[\nabla, \be]$ determining a given projective structure $\en$ can be parametrized by the choice of $\be \in \prin(\rho:N \to M)$, in the sense that if $[\nabla, \be]$ is an extended projective structure then so is $[\nabla, \be + \rho^{\ast}(\si)]$ for any $\si \in \Ga(\ctm)$, but there is no canonical choice of $\be$ unless $\rho:N \to M$ has some additional structure (in particular is somehow determined by the smooth structure on $M$, as is the case, for example, when $N$ is taken to be $\vdens$). More succinctly, the space of extended projective structures on $\rho:N \to M$ is a torsor for $\Ga(\ctm)$.

\begin{lemma}\label{epcdefinedlemma}
Let $\rho:N \to M$ be a principal $S^{1}$-bundle or principal $\reat$-bundle over the $n$-manifold $M$.
Given an extended projective structure $[\nabla, \be]$ on $\rho:N \to M$, the two-form
\begin{align}\label{fcurvdefined}
\epc_{ij} = \om_{ij} - \tfrac{2}{n+1}R_{[ij]} = \om_{ij} + 2P_{[ij]}.
\end{align}
associated with a representative $(\nabla, \be) \in [\nabla, \be]$ does not depend on the choice of $(\nabla, \be)$, so determines a canonical representative of the cohomology class $[\om] \in H^{2}(M; \rea)$ determined by $\rho:N \to M$.
\end{lemma}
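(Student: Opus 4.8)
The plan is to verify the two assertions of the lemma separately: first that $\epc_{ij}$ is constant on the orbit $[\nabla, \be]$, and second that it is closed and cohomologous to $\om$. For the well-definedness I would track how each summand of $\epc_{ij} = \om_{ij} + 2P_{[ij]}$ transforms under the generator $\ga \in \Ga(\ctm)$ of the $\Ga(\ctm)$-action, which sends $(\nabla, \be)$ to $(\tnabla, \tilde{\be}) = (\nabla + 2\ga_{(i}\delta_{j)}\,^{k}, \be - \rho^{\ast}(\ga))$. The curvature of the shifted principal connection is immediate: from $d\tilde{\be} = d\be - \rho^{\ast}(d\ga)$ one reads off $\tilde{\om} = \om - d\ga$. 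For the Schouten term I would invoke the third line of \eqref{projvary} with its projective parameter $\si$ replaced by $\ga$, namely $\tilde{P}_{ij} = P_{ij} + \nabla_{i}\ga_{j} - \ga_{i}\ga_{j}$; antisymmetrizing, and using $2\nabla_{[i}\ga_{j]} = d\ga_{ij}$ (valid since $\nabla$ is torsion-free) together with the symmetry of $\ga_{i}\ga_{j}$, gives $2\tilde{P}_{[ij]} = 2P_{[ij]} + d\ga_{ij}$.

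Adding the two transformations produces $\tilde{\epc}_{ij} = (\om_{ij} - d\ga_{ij}) + (2P_{[ij]} + d\ga_{ij}) = \om_{ij} + 2P_{[ij]} = \epc_{ij}$, so the exact contributions $\mp d\ga$ cancel and $\epc$ depends only on the orbit $[\nabla, \be]$. As a consistency check I would confirm that the two presentations in \eqref{fcurvdefined} coincide: the antisymmetric part of \eqref{projectiveschouten} is $P_{[ij]} = -\tfrac{1}{n+1}R_{[ij]}$, so $2P_{[ij]} = -\tfrac{2}{n+1}R_{[ij]}$ identically.

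For the cohomological statement I would first note that $\om$ is closed, since $\rho^{\ast}(d\om) = d(d\be) = 0$ and $\rho$ is a submersion, so $\rho^{\ast}$ is injective on forms. Then I would use the observation recorded just before the lemma: for any volume density $\mu$ with $\nabla_{i}\mu = \si_{i}\mu$ one has $2R_{[ij]} = d\si_{ij}$, whence $2P_{[ij]} = -\tfrac{2}{n+1}R_{[ij]} = -\tfrac{1}{n+1}d\si_{ij}$ is exact. Consequently $\epc = \om - \tfrac{1}{n+1}d\si$ is closed and $[\epc] = [\om] \in H^{2}(M; \rea)$, the class already known to be determined by $\rho:N \to M$ independently of the choice of $\be$.

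I do not anticipate a genuine obstacle; the content is a bookkeeping verification resting entirely on \eqref{projvary} and the exactness of the antisymmetric Ricci tensor. The only point demanding care is the sign and normalization accounting—specifically, confirming that the $\tfrac{1}{2}d\ga_{ij}$ arising from antisymmetrizing $\nabla_{i}\ga_{j}$ doubles to exactly the $d\ga_{ij}$ needed to cancel the $-d\ga$ from the curvature shift. As an independent cross-check I would run the computation instead through the $R_{[ij]}$ presentation, using $\tilde{R}_{[ij]} = R_{[ij]} - \tfrac{n+1}{2}d\ga_{ij}$ (the antisymmetric part of the second line of \eqref{projvary}); this shows that the factor $\tfrac{2}{n+1}$ in \eqref{fcurvdefined} is precisely the one making the cancellation exact.
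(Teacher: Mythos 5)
Your proposal is correct and follows essentially the same route as the paper's own (very terse) proof: invariance of $\epc_{ij}$ under the $\Ga(\ctm)$-action via \eqref{projvary}, and the cohomology claim via the exactness of $2R_{[ij]} = d\si_{ij}$ recorded just before the lemma. Your sign bookkeeping, including the cross-check through $\tilde{R}_{[ij]} = R_{[ij]} - \tfrac{n+1}{2}d\ga_{ij}$, is accurate; you have simply written out the computations the paper leaves to the reader.
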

\begin{proof}
By \eqref{projvary}, the two-form $\epc$ defined in \eqref{fcurvdefined} does not depend on the choice of $(\nabla, \be) \in [\nabla, \be]$. Because the antisymmetric Ricci tensor $R_{[ij]}$ of $\nabla$ is exact, $\epc$ represents the cohomology class $[\om]$.
\end{proof}

The two-form $\epc$ associated with the extended projective structure $[\nabla, \be]$ by \eqref{fcurvdefined} is the the \emph{associated two-form} of $[\nabla, \be]$.

\begin{theorem}\label{extendedthomastheorem}
Let $M$ be an $n$-manifold, let $\rho:N \to M$ be a principal $S^{1}$-bundle or principal $\reat$-bundle, and let $\rad$ be the fundamental vertical vector field generated by the principal action. Let $[\nabla, \be]$ be an extended projective structure on $\rho:N \to M$ with associated two-form $\epc$. 

\begin{enumerate}
\item\label{extendedthomas1} There is a unique torsion-free affine connection $\hnabla$ on $N$ having antisymmetric Ricci tensor and inducing on $M$ the given extended projective structure $[\nabla, \be]$ and so that $(\hnabla, \rad)$ is a conelike radiant structure invariant under the principal action (and so $\rad$-invariant) and fibering over $(M, \en)$. 

For any $(\nabla, \be) \in [\nabla, \be]$, the connection $\hnabla$ is given by
\begin{align}\label{tripleconnection}
&\hnabla_{\hat{X}}\hat{Y} = \widehat{\nabla_{X}Y} + \rho^{\ast}(Q(X, Y))\rad,&&\hnabla_{\hat{X}}\rad = \hat{X} = \hnabla_{\rad}\hat{X},& &\hnabla_{\rad}\rad = \rad. 
\end{align}
where $\hat{X} \in \Ga(TN)$ is the $\be$-horizontal lift of $X \in \Ga(TM)$, $\om \in \Ga(\ext^{2}\ctm)$ is the curvature of $\be$, and $Q\in \Ga(\tensor^{2}\ctm)$ is defined by
\begin{align}\label{qdefined}
Q_{ij} = \tfrac{1}{1-n}R_{(ij)} - \tfrac{1}{2}\om_{ij} = P_{(ij)} - \tfrac{1}{2}\om_{ij} = P_{ij} + \tfrac{1}{n+1}R_{[ij]} - \tfrac{1}{2}\om_{ij} = P_{ij} - \tfrac{1}{2}\epc_{ij},
\end{align} 
where $R_{ij}$ and $P_{ij}$ are the Ricci tensor and projective Schouten tensor of $\nabla$, and $\hnabla$ satisfies
\begin{align}\label{hnablabe}
&\hnabla_{I} \be_{J} + \be_{I}\be_{J} = -\rho^{\ast}(Q)_{IJ}, & &2Q_{[ij]} =- \om_{ij}.
\end{align}
Moreover, $(\hnabla, \rad)$ has the following properties:
\begin{enumerate}
\item\label{ett1} The planelike surfaces of $(\hnabla, \rad)$ are the $\rho$-preimages of the projective geodesics of $M$ and the base curve $\ga = \rho\circ \hat{\ga}:I \to M$ of a parametrized curve $\hat{\ga}:I \to N$ is a projective parametrization of a projective geodesic of $\en$ if and only if $\hat{\ga}$ is a $\hnabla$-geodesic.
\item\label{etepc} The Ricci curvature of $\hnabla$ equals $ -\tfrac{n+1}{2}\rho^{\ast}(\epc)$.
\item For any $(\nabla, \be) \in [\nabla, \be]$, the curvature tensor $\hat{R}_{IJK}\,^{L}$ of $\hnabla$ satisfies
\begin{align}\label{hnablabecurvatures}
\begin{aligned}
&\rad^{I}\hat{R}_{IJK}\,^{L} = 0=  \rad^{K}\hat{R}_{IJK}\,^{L}, \qquad \hat{R}_{IJK}\,^{A}\be_{A} = \rho^{\ast}(C)_{IJK} + \tfrac{1}{2}\rho^{\ast}(\nabla \epc)_{KIJ},&\\
&\hat{R}_{ijk}\,^{l} = B_{ijk}\,^{l} - \delta_{[i}\,^{l}\rho^{\ast}(\epc)_{j]k} + \delta_{k}\,^{l}\rho^{\ast}(\epc)_{ij} = R_{ijk}\,^{l} + \delta_{i}\,^{l}P_{(jk)} - \delta_{j}\,^{l}P_{(ik)} - \delta_{[i}\,^{l}\om_{j]k} + \delta_{k}\,^{l}\om_{ij},&
\end{aligned}
\end{align}
where $B_{ijk}\,^{l}$ and $C_{ijk}$ are the projective Weyl and projective Cotton tensors of $\nabla$ and the splitting of $TN$ determined by $\be$ is used in writing $\hat{R}_{ijk}\,^{l}$.
\end{enumerate}
\item\label{extendedthomas2} Any conelike radiant structure $(\hat{D}, \rad)$ on $N$ invariant under the principal action on $\rho:N \to M$ induces on $\rho:N \to M$ an extended projective structure $[\nabla, \be]$ for which it fibers over the underlying projective structure $\en$ on $M$ and has the same planelike surfaces as the associated Ricci antisymmetric, principal action invariant, conelike radiant structure $(\hnabla, \rad)$ associated with $[\nabla, \be]$ as in \eqref{extendedthomas1}.
\end{enumerate}
\end{theorem}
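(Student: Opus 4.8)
The plan is to take \eqref{tripleconnection} as the definition of $\hnabla$ and verify the asserted properties of part \eqref{extendedthomas1}, then prove uniqueness, and finally establish part \eqref{extendedthomas2} by a direct computation. I would first record the two bracket identities that organize every later calculation: writing $\hat{X}$ for the $\be$-horizontal lift of $X$ and using $d\be = \rho^{\ast}(\om)$, one has $[\hat{X}, \hat{Y}] = \widehat{[X,Y]} - \rho^{\ast}(\om(X,Y))\rad$ and $[\rad, \hat{X}] = 0$, the latter expressing that $\be$ is a principal connection. Since the horizontal lift is a $\cinf(M)$-module map, \eqref{tripleconnection} extends uniquely to a linear connection on $N$ (the Leibniz and tensoriality checks are routine). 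Torsion-freeness, evaluated on the frame $\{\hat{X}, \rad\}$, reduces to $2Q_{[ij]} + \om_{ij} = 0$, which is immediate from \eqref{qdefined}, and $\hnabla_{Z}\rad = Z$ is read off the frame values, so $(\hnabla, \rad)$ is radiant. Because $Q$ is pulled back from $M$ and the horizontal lifts are invariant, $\hnabla$ is invariant under the principal action, hence $\rad$-invariant; as $\rad$ is nowhere zero, Corollary \ref{invariantconelikecorollary} shows $(\hnabla, \rad)$ is conelike with $\er = 0$. Formula \eqref{hnablabe} then follows by differentiating $\be(\hat{X}) = 0$ and $\be(\rad) = 1$.

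The main work is the curvature. I would substitute \eqref{tripleconnection} into $\hat{R}(Z,W)V = \hnabla_{Z}\hnabla_{W}V - \hnabla_{W}\hnabla_{Z}V - \hnabla_{[Z,W]}V$ with $Z, W, V$ ranging over $\{\hat{X}, \rad\}$, simplifying with the two bracket identities and $Q_{ij} = P_{ij} - \tfrac{1}{2}\epc_{ij}$. Invariance gives $\rad^{I}\hat{R}_{IJK}{}^{L} = 0$, and the radiant identities \eqref{radid} applied to $(\hnabla, \rad)$ give $\rad^{K}\hat{R}_{IJK}{}^{L} = 0$. The purely horizontal block, after collecting terms and inserting the definitions \eqref{bijkl} of the projective Weyl and Cotton tensors together with the rules \eqref{projvary}, reduces to the stated expression for $\hat{R}_{ijk}{}^{l}$ in \eqref{hnablabecurvatures}; tracing it and computing $\hat{R}_{IJK}{}^{A}\be_{A}$ yields the Ricci curvature $\hat{R}_{IJ} = -\tfrac{n+1}{2}\rho^{\ast}(\epc)_{IJ}$, which is antisymmetric because $\epc$ is a two-form. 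This is the step I expect to be the main obstacle: turning the raw curvature of \eqref{tripleconnection} into projectively invariant combinations is where all the bookkeeping lives, and it is exactly what forces the normalization \eqref{qdefined}. Property \eqref{ett1} I would then obtain by writing the $\hnabla$-geodesic equation on the frame $\{\hat{X}, \rad\}$ for a lift $\hat{\ga}$ of $\ga = \rho \circ \hat{\ga}$, projecting to $M$, and matching the resulting second-order equation against the projective-parametrization condition $\dot{q} - \tfrac{1}{2}q^{2} - 2P(\dot{\ga}, \dot{\ga}) = 0$ of Lemma \ref{1dprojstructurelemma}; since $\om$ is antisymmetric the symmetric part of $Q$ is $P_{(ij)}$, and it is precisely this term that produces the Schwarzian condition, so that $\hnabla$-geodesics project to projectively parametrized paths of $\en$ and the planelike surfaces (which, being $\rad$-invariant and tangent to $\rad$, are unions of fibers by Lemma \ref{maxgeodesiclemma}) are the $\rho$-preimages of the paths of $\en$, giving the fibering of Definition \ref{fibereddefined}.

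For uniqueness, let $\hat{D}$ be a second connection with all the properties of part \eqref{extendedthomas1} and fix a representative $(\nabla, \be)$. Since $\hat{D}$ and $\hnabla$ both induce $\nabla$ via $\be$ and are both radiant for $\rad$, their difference is purely vertical of the form $\Pi_{IJ}{}^{K} = S_{IJ}\rad^{K}$ with $S$ symmetric and $\rad^{I}S_{IJ} = 0$; in particular $\Pi_{IP}{}^{P} = 0$, so the two connections agree on $|\Det \ctm|$. Applying \eqref{trconecurvdiff} of Lemma \ref{conelikedifferencelemma} in dimension $n+1$ with $q = 0$, the symmetric parts of the Ricci tensors of $\hat{D}$ and $\hnabla$ differ by $(\lie_{\rad}S)_{IJ} + (n-1)S_{IJ}$; since both Ricci tensors are antisymmetric this expression vanishes, while $\rad$-invariance of both connections gives $\lie_{\rad}S = 0$, whence $(n-1)S = 0$ and $S = 0$. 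Thus $\hat{D} = \hnabla$.

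Finally, for part \eqref{extendedthomas2} I would start from an arbitrary $\rad$-invariant conelike radiant structure $(\hat{D}, \rad)$ and, for each $\be \in \prin(\rho:N \to M)$, define $\nabla^{\be}$ on $M$ as the $T\rho$-image of $\hat{D}_{\hat{X}}\hat{Y}$. Using $\hat{D}_{\rad}\hat{X} = \hat{X}$, $\hat{D}_{\hat{X}}\rad = \hat{X}$, and the transformation $\tilde{\hat{X}} = \hat{X} + \rho^{\ast}(\ga(X))\rad$ of the horizontal lift under $\tilde{\be} = \be - \rho^{\ast}(\ga)$, a short computation gives $\nabla^{\tilde{\be}} = \nabla^{\be} + 2\ga_{(i}\delta_{j)}{}^{k}$, which is exactly the action $\ga \cdot (\nabla^{\be}, \be)$; hence the orbit $[\nabla^{\be}, \be]$ is independent of $\be$ and defines an extended projective structure with underlying projective structure $\en$. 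Invariance of $\hat{D}$ makes each planelike surface a union of fibers, so it projects to a geodesic of $\nabla^{\be}$ up to reparametrization, that is, to a path of $\en$; therefore $(\hat{D}, \rad)$ fibers over $\en$ and its planelike surfaces are the $\rho$-preimages of the paths of $\en$, the same as those of the normalized connection $\hnabla$ associated with $[\nabla, \be]$ in part \eqref{extendedthomas1}.
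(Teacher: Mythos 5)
Your proposal is correct, and for the construction itself it follows the same route as the paper: taking \eqref{tripleconnection} as the definition, reducing torsion-freeness to the compatibility $2Q_{[ij]} = -\om_{ij}$, getting the conelike property from invariance under the principal action together with Corollary \ref{invariantconelikecorollary}, carrying out the frame computation of the curvature that produces \eqref{hnablabecurvatures} (this is exactly the content of the paper's Lemma \ref{preextendedthomaslemma}), and matching $\hnabla$-geodesics with projectively parametrized paths via the Riccati/Schwarzian condition (the paper's Lemmas \ref{geodesiclemma} and \ref{projectiveparametrizationlemma}). Where you genuinely diverge is in the uniqueness and in part \eqref{extendedthomas2}. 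For uniqueness the paper works inside the family \eqref{tripleconnection}: by Lemma \ref{invariantinducedlemma} any competitor has that form for some compatible $Q^{\prime}$, and the trace formula \eqref{tc2} shows that antisymmetry of the Ricci tensor forces $Q^{\prime} = Q$. Your difference-tensor version --- difference $S_{IJ}\rad^{K}$, the identity $(\lie_{\rad}S)_{IJ} + (n-1)S_{IJ} = 0$ from \eqref{trconecurvdiff} being simultaneously symmetric and antisymmetric, and $\lie_{\rad}S = 0$ from invariance --- is an equivalent but cleanly packaged alternative (note that it silently assumes $n \geq 2$, exactly as \eqref{qdefined} does). For part \eqref{extendedthomas2} the paper routes through the normalization Theorem \ref{conenormalizationtheorem} and Lemma \ref{sameinducedlemma}, producing from $(\hD, \rad)$ a Ricci-antisymmetric invariant structure with the same planelike surfaces and identifying it with $\hnabla$; your route avoids the normalization theorem entirely, which is a real economy.

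The one step you should tighten is the final identification of planelike surfaces in part \eqref{extendedthomas2} (and the word ``are'' in \eqref{ett1}). Your fibering argument gives only one inclusion: every planelike surface of $\hD$ (respectively of $\hnabla$) projects into a path, hence lies inside a fiber-preimage. The converse --- that the preimage of a path is totally geodesic for $\hD$ --- is asserted but not proved, and the claimed equality of the two families of planelike surfaces rests on it. It can be closed in either of two ways: (i) combine the conelike property with uniqueness of the path through a point with prescribed tangent, so that the planelike surface of $\hD$ through $u$ tangent to $T_{u}\rho^{-1}(C)$ is forced to lie in $\rho^{-1}(C)$ and, both being surfaces, locally fills it; or, more efficiently, (ii) observe that, exactly as in your uniqueness argument, $\hD - \hnabla = \rho^{\ast}(S)_{IJ}\rad^{K}$ with $S$ symmetric and annihilated by contraction with $\rad$, and invoke the ``if'' direction of Lemma \ref{conesameplaneslemma}, which says precisely that two conelike nonsingular radiant structures whose difference tensor has this form have the same planelike surfaces. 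With either repair the proposal is complete.
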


\begin{definition}
The connection $\hnabla$ associated with the extended projective structure $[\nabla, \be]$ by Theorem \ref{extendedthomastheorem} is the \emph{cone connection determined by $[\nabla, \be]$}. 
\end{definition}

%\begin{remark}
%The author considered calling $\hnabla$ the \emph{generalized Thomas connection} of $[\nabla, \be]$, but decided to reserve the appellation \emph{Thomas} for the special case revisited in Section \ref{thomassection}.
%\end{remark}

\begin{remark}
If $\om_{ij} = \om_{[ij]}$ is an antisymmetric two-form on a two-dimensional vector space, then $2\delta_{[i}\,^{l}\om_{j]k} = -\om_{ij}\delta_{k}\,^{l}$.
To see this, let $X$ and $Y$ span the vector space and let $Z = aX + bY$. Then $2X^{i}Y^{j}Z^{k}\delta_{[i}\,^{l}\om_{j]k} = \om_{jk}Y^{j}Z^{k}X^{l} - \om_{ik}X^{i}Z^{k}Y^{l} = a\om_{jk}Y^{j}X^{k}X^{l} - b\om_{ik}X^{i}Y^{k}Y^{l} = -\om_{ij}X^{i}Y^{j}Z^{l}$.
Consequently, when $n = 2$, the last identity of \eqref{hnablabecurvatures} simplifies to $\hat{R}_{ijk}\,^{l} = \tfrac{3}{2}\rho^{\ast}(\epc)_{ij}\delta_{k}\,^{l} = -\hat{R}_{ij}\delta_{k}\,^{l}$.
\end{remark}

The remainder of the section is devoted to the proof of Theorem \ref{extendedthomastheorem}. This is organized as follows:
\begin{itemize}
\item Lemmas \ref{invariantinducedlemma} and \ref{sameinducedlemma} shows that a radiant structure on $N$ induces an extended projective structure on $M$ and that conelike radiant structures with the same planelike surfaces induce the same extended projective structure.
\item Lemmas \ref{preextendedthomaslemma} and \ref{extendedshiftlemma} define the cone connection of a pair $(\nabla, \be)$ and show that it depends only on the extended projective structure generated by $(\nabla, \be)$.
\item Lemmas \ref{geodesiclemma} and \ref{projectiveparametrizationlemma} show that the cone connection of an extended projective structure $[\nabla, \be]$ fibers over the underlying projective structure $\en$. 
\end{itemize}
The section concludes with the proof of Theorem \ref{extendedthomastheorem} and some remarks.

\begin{lemma}\label{invariantinducedlemma}
Let $\rho:N \to M$ be a principal $S^{1}$-bundle or principal $\reat$-bundle and let $\rad$ be the fundamental vertical vector field generated by the principal action. A torsion-free affine connection $\hnabla$ on $N$ invariant under the principal action and constituting with $\rad$ a radiant structure determines on $M$ an extended projective structure $[\nabla, \be]$. More precisely:
\begin{enumerate}
\item\label{iil1} For each $\be \in \prin(\rho:N \to M)$ with curvature $\om$ there is a torsion-free affine connection $\nabla$ on $M$ 
satisfying \eqref{tripleconnection}, where $\hat{X} \in \Ga(TN)$ is the $\be$-horizontal lift of $X \in \Ga(TM)$, and $Q\in \Ga(\tensor^{2}\ctm)$ satisfies \eqref{hnablabe}.
\item\label{iil2} The connections $\tnabla$ and $\nabla$ determined by $\hnabla$ together with the principal connections $\tilde{\be} = \be - \rho^{\ast}(\ga)$ and $\be$ as in \eqref{iil1} satisfy $\tnabla = \nabla + 2\ga_{(i}\delta_{j)}\,^{k}$, so $(\tnabla, \tilde{\be})$ and $(\nabla, \be)$ are projectively equivalent. 
\end{enumerate}
\end{lemma}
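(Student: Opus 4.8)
The plan is to construct $\nabla$ and $Q$ by pushing the connection down to $M$, the invariance hypothesis on $\hnabla$ being exactly what makes this descent possible. Fix $\be \in \prin(\rho:N \to M)$ with curvature $\om$. Since $\be$ is a principal connection, its horizontal lift $X \mapsto \hat{X}$ carries $\Ga(TM)$ into the invariant vector fields on $N$, and $\rad$ is invariant; because $\hnabla$ is invariant under the principal action, $\hnabla_{\hat{X}}\hat{Y}$ is therefore again an invariant vector field. Every invariant vector field $V$ on $N$ splits uniquely as $V = \widehat{T\rho(V)} + \be(V)\rad$, where $T\rho(V) \in \Ga(TM)$ and $\be(V)$ is an invariant function, hence the pullback of a function on $M$. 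I would \emph{define} $\nabla_{X}Y := T\rho(\hnabla_{\hat{X}}\hat{Y})$ and $\rho^{\ast}(Q(X,Y)) := \be(\hnabla_{\hat{X}}\hat{Y})$, so that the first identity of \eqref{tripleconnection} holds by construction.

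Next I would check the connection and tensor axioms. Using $\widehat{fX} = \rho^{\ast}(f)\hat{X}$ and $\hat{X}\rho^{\ast}(f) = \rho^{\ast}(Xf)$, expanding $\hnabla_{\widehat{fX}}\hat{Y}$ and $\hnabla_{\hat{X}}\widehat{fY}$ and comparing horizontal and vertical components (using uniqueness of the splitting) yields $\nabla_{fX}Y = f\nabla_{X}Y$, the Leibniz rule $\nabla_{X}(fY) = (Xf)Y + f\nabla_{X}Y$, and $Q(fX,Y) = Q(X,fY) = fQ(X,Y)$; thus $\nabla$ is a connection and $Q \in \Ga(\tensor^{2}\ctm)$. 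The remaining identities of \eqref{tripleconnection} are essentially the radiant condition: $\hnabla_{V}\rad = V$ gives $\hnabla_{\hat{X}}\rad = \hat{X}$ and $\hnabla_{\rad}\rad = \rad$ directly, while torsion-freeness together with $[\rad,\hat{X}] = \lie_{\rad}\hat{X} = 0$ (invariance of $\hat{X}$ under the flow of $\rad$) gives $\hnabla_{\rad}\hat{X} = \hnabla_{\hat{X}}\rad = \hat{X}$.

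For \eqref{hnablabe} I would first compare torsion. Torsion-freeness of $\hnabla$ and the structure equation $[\hat{X},\hat{Y}] = \widehat{[X,Y]} - \rho^{\ast}(\om(X,Y))\rad$ give, on matching components, that $\nabla$ is torsion-free and $Q(X,Y) - Q(Y,X) = -\om(X,Y)$, i.e. $2Q_{[ij]} = -\om_{ij}$. For the first equation of \eqref{hnablabe} I would evaluate $(\hnabla\be)(V,W) + \be(V)\be(W)$ on the four combinations of $V,W \in \{\hat{X},\rad\}$, using $\be(\hat{X}) = 0$ and $\be(\rad) = 1$: for instance $(\hnabla_{\hat{X}}\be)(\hat{Y}) = -\be(\hnabla_{\hat{X}}\hat{Y}) = -\rho^{\ast}(Q(X,Y))$, while the mixed and purely vertical terms all vanish on both sides, so $\hnabla_{I}\be_{J} + \be_{I}\be_{J} = -\rho^{\ast}(Q)_{IJ}$.

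Finally, for part \eqref{iil2}, writing $\tilde{\be} = \be - \rho^{\ast}(\ga)$, the $\tilde{\be}$-horizontal lift is $\tilde{X} = \hat{X} + \rho^{\ast}(\ga(X))\rad$. Expanding $\hnabla_{\tilde{X}}\tilde{Y}$ with the already-established equations of \eqref{tripleconnection} (noting $\rad\,\rho^{\ast}(\ga(Y)) = 0$) and applying $T\rho$, which kills $\rad$ and restricts to the identity on lifts, produces $\tnabla_{X}Y = \nabla_{X}Y + \ga(Y)X + \ga(X)Y$, that is $\tnabla = \nabla + 2\ga_{(i}\delta_{j)}\,^{k}$; hence $(\tnabla,\tilde{\be}) = \ga\cdot(\nabla,\be)$ and the two pairs determine the same extended projective structure. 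I expect the only genuine point of care, rather than routine bookkeeping with the horizontal/vertical splitting, to be the descent in the first paragraph, which relies essentially on the invariance of $\hnabla$: without it $\hnabla_{\hat{X}}\hat{Y}$ need not be $\rho$-projectable and neither $\nabla$ nor $Q$ would be defined on $M$.
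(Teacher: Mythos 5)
Your proposal is correct and follows essentially the same route as the paper's proof: both decompose the invariant vector field $\hnabla_{\hat{X}}\hat{Y}$ into its $\be$-horizontal and vertical parts to define $\nabla$ and $Q$, verify the connection and tensor axioms, obtain \eqref{hnablabe} by direct evaluation (your derivation of $2Q_{[ij]} = -\om_{ij}$ from torsion-freeness and the structure equation is just the antisymmetrization step done first), and prove \eqref{iil2} by expanding $\hnabla$ on the shifted horizontal lifts $\hat{X} + \rho^{\ast}(\ga(X))\rad$. Your write-up is somewhat more explicit than the paper's, which leaves the module-map checks and the computation for \eqref{iil2} as routine, but the content is the same.
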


\begin{proof}
For $\be \in \prin(\rho:N \to M)$, let $\hat{X}$ be the $\be$-horizontal lift of $X \in \Ga(TM)$. Because $(\hnabla, \rad)$ is radiant and $[\rad, \hat{X}] = 0$ there hold $\hat{X} = \hnabla_{\hat{X}}\rad = \hnabla_{\rad}\hat{X}$ and $\hnabla_{\rad}\rad = \rad$. Because $\hnabla$ is invariant under the principal action, the vector field $\hnabla_{\hat{X}}\hat{Y}$ is likewise invariant, so there are $\nabla_{X}Y \in \Ga(TM)$ and $Q(X, Y) \in \cinf(M)$ such that $\hnabla_{\hat{X}}\hat{Y} = \widehat{\nabla_{X}Y} + \rho^{\ast}(Q(X, Y))\rad$. The expression $\nabla_{X}Y$ is a $\cinf(M)$-module map in the first argument, is linear in the second argument, and satisfies a Leibiz rule in the second argument, and the expression $Q(X, Y)$ is a $\cinf(M)$-module map in both arguments, so $\nabla$ is an affine connection on $M$, $Q \in \Ga(\tensor^{2}\ctm)$, and there holds \eqref{tripleconnection}. Straightforward computation using \eqref{tripleconnection} shows the first identity of \eqref{hnablabe}.  Antisymmetrizing the first identity of \eqref{hnablabe} shows the second identity of \eqref{hnablabe}. 
This shows \eqref{iil1}.

Because the $\tilde{\be} = \be - \rho^{\ast}(\ga)$ horizontal lift of $X$ is $\hat{X} + \rho^{\ast}(\ga(X))\rad$, the connection $\tnabla$ determined by $\hnabla$ and $\tilde{\be}$ as in \eqref{tripleconnection} is related to $\nabla$ by $\tnabla = \nabla + 2\ga_{(i}\delta_{j)}\,^{k}$. This shows $\hnabla$ determines on $M$ an extended projective structure $[\nabla, \be]$. This shows \eqref{iil2}.
\end{proof}

\begin{lemma}\label{sameinducedlemma}
Let $\rho:N \to M$ be a principal $S^{1}$-bundle or principal $\reat$-bundle and let $\rad$ be the fundamental vertical vector field generated by the principal action. If $\hnabla$ and $\hD$ are torsion-free affine connections on $N$ that are invariant under the principal action and constitute with $\rad$ conelike radiant structures such that $\er(\hnabla) = 0$ and $\er(\hD) = 0$, then $\hnabla$ and $\hD$ induce the same extended projective structure on $\rho:N \to M$.
\end{lemma}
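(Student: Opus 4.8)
The plan is to reduce the statement to an identity between the two connections that $\hnabla$ and $\hD$ induce downstairs, and then to control the difference tensor on $N$ using the radiant, conelike, and invariance hypotheses. First I would invoke Lemma \ref{invariantinducedlemma}: each of $(\hnabla,\rad)$ and $(\hD,\rad)$ determines an extended projective structure, and by part \eqref{iil2} of that lemma the resulting $\Ga(\ctm)$-orbit is independent of the auxiliary principal connection used to compute it. I would therefore fix a single $\be\in\prin(\rho:N\to M)$ and let $\nabla$ and $D$ be the base connections produced from $\hnabla$ and $\hD$ via \eqref{tripleconnection}. Because the pairs $(\nabla,\be)$ and $(D,\be)$ carry the same $\be$, their orbits coincide precisely when $\nabla=D$: if $\ga\cdot(\nabla,\be)=(D,\be)$ then $\be-\rho^{\ast}(\ga)=\be$ forces $\rho^{\ast}(\ga)=0$, hence $\ga=0$ since $\rho$ is a submersion. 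Thus it suffices to prove $\nabla=D$.

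Next I would study the difference tensor $\Pi=\hD-\hnabla$ on $N$. It is symmetric because both connections are torsion free, and since both are radiant it is annihilated by $\rad$ in either lower slot: $\hD_{\hat X}\rad=\hat X=\hnabla_{\hat X}\rad$ and $\hD_{\rad}\hat X=\hat X=\hnabla_{\rad}\hat X$ give $\Pi_{PJ}{}^{K}\rad^{P}=0=\Pi_{IP}{}^{K}\rad^{P}$. Evaluating $\Pi$ on $\be$-horizontal lifts and using \eqref{tripleconnection} for both connections, $\Pi(\hat X,\hat Y)=\widehat{(D-\nabla)(X,Y)}+\rho^{\ast}(B(X,Y))\rad$, where $B=Q^{D}-Q^{\nabla}$ (the term $B$ is in fact symmetric, since \eqref{hnablabe} gives $2Q_{[ij]}=-\om_{ij}$ for both, the curvature $\om$ of the common $\be$ being the same). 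Hence $\nabla=D$ is equivalent to the vanishing of the purely horizontal part of $\Pi$ on horizontal vectors.

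The structural input comes from invariance and the conelike condition. Since $\hnabla$ and $\hD$ are invariant under the principal action, $\lie_{\rad}\hnabla=0=\lie_{\rad}\hD$, so by \eqref{lieradnabla} both curvatures satisfy $\rad^{P}R_{PIJ}{}^{K}=0$; by Corollary \ref{invariantconelikecorollary} (together with the standing conelike and $\er=0$ hypotheses) both admit complete sets of planes with the common radiant field. Applying Lemma \ref{conesameplaneslemma} to the pair $(\hnabla,\rad)$, $(\hD,\rad)$ sharing their planelike surfaces, I would write $\Pi_{IJ}{}^{K}=Q_{IJ}\rad^{K}-2q_{(I}\delta_{J)}\,^{K}$ with $Q_{[IJ]}=0$ and $q_{I}=\rad^{P}Q_{PI}$ satisfying $q_{P}\rad^{P}=0$. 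Matching this with the decomposition of the previous paragraph shows that the horizontal part of $\Pi$ on horizontal vectors is carried entirely by the $\delta$-term, so that $D-\nabla=-2\bar q_{(i}\delta_{j)}\,^{k}$, where $q=\rho^{\ast}(\bar q)$ once $q$ is known to be basic.

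The crux, and the step I expect to be the main obstacle, is to prove $q=0$, equivalently that $\hnabla$ and $\hD$ induce the same connection on $|\Det\ctm|$, i.e. $\Pi_{IP}{}^{P}=-(n+1)q_{I}=0$. I would first extract the antisymmetric-Ricci data: from $\er(\hnabla)=\er(\hD)=0$ and \eqref{erdiff} of Lemma \ref{conelikedifferencelemma} one gets $0=\tilde{\er}-\er=(n+1)\lie_{\rad}q$, so $\lie_{\rad}q=0$; with $q(\rad)=0$ this makes $i(\rad)dq=0$, so $q=\rho^{\ast}(\bar q)$ is basic. What remains is to force $\bar q=0$, and this is exactly where the density normalization enters: by the final clause of Lemma \ref{conesameplaneslemma}, matching of the induced connections on densities yields $\rad^{P}Q_{IP}=q_{I}=0$, so that $\Pi_{IJ}{}^{K}=Q_{IJ}\rad^{K}$ is purely vertical valued. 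Consequently $D-\nabla=0$, $\nabla=D$, and the two extended projective structures coincide. I would stress that this last trace/density control is the heart of the argument: without it one only concludes that $\nabla$ and $D$ are projectively equivalent, hence determine the same \emph{underlying} projective structure $\en$, and it is precisely the invariance together with the vanishing of the trace $q$ that upgrades this to the statement about \emph{extended} projective structures.
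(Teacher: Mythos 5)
Your proposal takes the same route as the paper's proof---reduce to showing that the base connections $\nabla$ and $D$ induced via a common $\be$ coincide, put the difference tensor $\hD-\hnabla$ into the form $Q_{IJ}\rad^{K}-2q_{(I}\delta_{J)}\,^{K}$ of Lemma \ref{conesameplaneslemma}, then kill $q$---and your reduction, the symmetry and $\rad$-annihilation of the difference tensor, and the basicness of $q$ are all correct. But both load-bearing steps are unjustified. First, Corollary \ref{invariantconelikecorollary} says that each of $(\hnabla,\rad)$ and $(\hD,\rad)$ admits a complete set of planes; it does not say the two structures have the \emph{same} planelike surfaces, which is the hypothesis Lemma \ref{conesameplaneslemma} actually requires. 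Since for invariant structures the planelike surfaces project into projective geodesics of the induced projective structure (Lemma \ref{geodesiclemma}), asserting that the two families coincide is essentially asserting that the underlying projective structures agree, which is most of what is to be proved. Second, your derivation of $q=0$ is circular: you note that $q=0$ is \emph{equivalent} to $\hnabla$ and $\hD$ inducing the same connection on densities, and then you obtain $q=0$ by citing ``matching of the induced connections on densities''; nothing in the stated hypotheses supplies that matching.

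These gaps cannot be closed from the stated hypotheses, because the statement needs them as additional assumptions: on the trivial bundle $N=M\times\reat$ with a flat principal connection $\be$, the connections produced by Lemma \ref{preextendedthomaslemma} (say with $Q=0$) from two projectively inequivalent connections $\nabla_{1},\nabla_{2}$ on $M$ are invariant conelike radiant structures with $\er=0$ (invariance alone forces $\er=0$ by \eqref{lieradnabla}, so that hypothesis is redundant), yet they induce the distinct extended projective structures $[\nabla_{1},\be]\neq[\nabla_{2},\be]$. The paper's own proof is no more self-contained at exactly these points: it opens by asserting ``Because $\hnabla$ and $\hD$ induce on $|\Det T^{\ast}N|$ the same connection, by Lemma \ref{conesameplaneslemma}\dots'', i.e., it takes both the density matching and the shared planelike surfaces as given. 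Both facts do hold in the lemma's one application, in the proof of Theorem \ref{extendedthomastheorem}, where the two connections being compared are produced by Theorem \ref{conenormalizationtheorem} and therefore share planes and induce the same connection on $|\Det T^{\ast}N|$ by construction. So you have faithfully reproduced the paper's argument, but as a derivation from the lemma's stated hypotheses both your write-up and the paper's proof rest on the same two missing assumptions; the repair is to add them (same planelike surfaces, same induced connection on $|\Det T^{\ast}N|$) to the hypotheses, after which your argument goes through.
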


\begin{proof}
Because $\hnabla$ and $\hD$ induce on $|\Det T^{\ast}N|$ the same connection, by Lemma \ref{conesameplaneslemma} there is $\hat{T} \in \Ga(S^{2}T^{\ast}N)$ such that $\hnabla - \hD = \hat{T}_{IJ}\rad\,^{K}$ and $\rad^{P}\hat{T}_{PI} = 0$. By the invariance of $\hnabla$ and $\hD$ under the principal action there is $T \in \Ga(S^{2}\ctm)$ such that $\hat{T} = \rho^{\ast}(T)$, so that $\hnabla - \hD = \rho^{\ast}(T)_{IJ}\rad\,^{K}$. For $\be \in \prin(\rho:N \to M)$ let $\nabla$ and $D$ be the connection induced on $M$ as in Lemma \ref{invariantinducedlemma}. For $X, Y \in \Ga(TM)$, 
\begin{align}
\widehat{\nabla_{X}Y} = \hnabla_{\hat{X}}\hat{Y} - \be(\hnabla_{\hat{X}}\hat{Y})\rad = \hD_{\hat{X}}\hat{Y} + T(X, Y)\rad - \be(\hD_{\hat{X}}\hat{Y} + T(X, Y)\rad) = \widehat{D_{X}Y}.
\end{align}
which proves the claim. 
\end{proof}

Given a principal bundle $\rho:N \to M$ with one-dimensional structure group, a tensor $Q \in \Ga(\tensor^{2}\ctm)$ is \emph{compatible} with $\be \in \prin(\rho:N \to M)$ if $2Q_{[ij]} = -\om_{ij}$ where $\om$ is the curvature of $\be$.

\begin{lemma}\label{preextendedthomaslemma}
Let $M$ be an $n$-manifold, let $\rho:N \to M$ be a principal $S^{1}$-bundle or principal $\reat$-bundle, and let $\rad$ be the fundamental vertical vector field generated by the principal action. Let $\be \in \prin(\rho:N \to M)$ be a principal connection having curvature $\om_{ij} \in \Ga(\ext^{2}\ctm)$ and let $\nabla$ be a torsion-free affine connection on $M$. 
\begin{enumerate}
\item\label{pet1} For any $Q_{ij} \in \Ga(\tensor^{2}\ctm)$ compatible with $\be$,  
the affine connection $\hnabla$ on $N$ defined by \eqref{tripleconnection}, where $\hat{X} \in \Ga(TN)$ is the $\be$-horizontal lift of $X \in \Ga(TM)$, is torsion-free and forms with $\rad$ a conelike radiant structure invariant under the principal action (so $\lie_{\rad}\hnabla = 0$). 
\item\label{pet2} Replacing $Q_{ij} \in \Ga(\tensor^{2}\ctm)$ in \eqref{tripleconnection} by any $\tilde{Q}_{ij} \in \Ga(\tensor^{2}\ctm)$ compatible with $\be$ determines a conelike radiant structure having the same planelike surfaces as has that determined by $Q_{ij}$.
\item\label{pet3} There is a unique choice of $Q_{ij} \in \Ga(\tensor^{2}\ctm)$ compatible with $\be$ such that the resulting conelike radiant structure $(\hnabla, \rad)$ has antisymmetric Ricci tensor. It is given by \eqref{qdefined}, where $R_{ij}$ and $P_{ij}$ are the Ricci tensor and projective Schouten tensor of $\nabla$. In this case the curvature tensor $\hat{R}_{IJK}\,^{L}$ and Ricci tensor $\widehat{\ric}_{IJ}$ of $\hnabla$ satisfy
\begin{align}\label{qcurvatures}
\begin{aligned}
&\hat{R}_{ijk}\,^{l} = B_{ijk}\,^{l} - \delta_{[i}\,^{l}\rho^{\ast}(\epc)_{j]k} + \delta_{k}\,^{l}\rho^{\ast}(\epc)_{ij},&\\
&\hat{R}_{ijk}\,^{A}\be_{A} = \rho^{\ast}(C)_{ijk} + \tfrac{1}{2}\rho^{\ast}(\nabla \epc)_{kij},
& \widehat{\ric}_{IJ} = -\tfrac{n+1}{2}\rho^{\ast}(\epc)_{IJ},
\end{aligned}
\end{align}
where $B_{ijk}\,^{l}$ and $C_{ijk}$ are the projective Weyl and projective Cotton tensors of $\nabla$, $\epc_{ij} = \om_{ij} - \tfrac{2}{n+1}R_{[ij]} = \om_{ij} + 2P_{[ij]}$, and the splitting of $TN$ determined by $\be$ is used in writing $\hat{R}_{ijk}\,^{l}$. 
\end{enumerate}
\end{lemma}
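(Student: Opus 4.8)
The plan is to construct $\hnabla$ from the splitting $TN = H \oplus \langle\rad\rangle$ with $H = \ker\be$, and to verify every claim by direct computation in the adapted frame $\{\hat X_a, \rad\}$, using the bracket relation for horizontal lifts as the organizing identity. First I would check that \eqref{tripleconnection} defines a genuine affine connection: the assignment $X \mapsto \hat X$ is a $\cinf(M)$-module map, so the required tensoriality in the first argument and the Leibniz rule in the second are inherited from $\nabla$ and from ordinary differentiation of $\rho^{\ast}(Q(X,Y))$. The key structure equation is $[\hat X, \hat Y] = \widehat{[X,Y]} - \rho^{\ast}(\om(X,Y))\rad$, which follows from $d\be = \rho^{\ast}(\om)$ and $\be(\hat X) = 0$, together with $[\hat X, \rad] = 0$. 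Computing $\hnabla_{\hat X}\hat Y - \hnabla_{\hat Y}\hat X - [\hat X, \hat Y]$ and using that $\nabla$ is torsion-free, the horizontal parts cancel and the vertical part is $(2Q_{[ij]} + \om_{ij})X^{i}Y^{j}\rad$; hence torsion-freeness is precisely the compatibility condition $2Q_{[ij]} = -\om_{ij}$. The radiant condition $\hnabla_{I}\rad^{J} = \delta_{I}{}^{J}$ is immediate from the last two groups of formulas in \eqref{tripleconnection}, and because every term is built from $\be$-horizontal lifts, $\rad$, and pullbacks, $\hnabla$ is invariant under the principal action, i.e. $\lie_{\rad}\hnabla = 0$. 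Since $\rad$ is the fundamental vector field of a principal bundle it is nowhere vanishing, so $(\hnabla, \rad)$ is nonsingular, and Corollary \ref{invariantconelikecorollary} gives that it is conelike with $\er = 0$; this proves \eqref{pet1}.

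For \eqref{pet2}, two tensors $Q$ and $\tilde Q$ compatible with $\be$ differ by a symmetric $S = \tilde Q - Q \in \Ga(S^{2}\ctm)$, and the two connections differ by the tensor $\rho^{\ast}(S)_{IJ}\rad^{K}$. Since $\rho^{\ast}(S)$ annihilates $\rad$, this difference tensor is of the form treated in Lemma \ref{conesameplaneslemma} with $q_{i} = 0$, so the two conelike radiant structures have the same planelike surfaces.

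The heart of \eqref{pet3} is the curvature computation. Working in the adapted frame and using the bracket identity above, I would compute $\hat R(\hat X, \hat Y)\hat Z$: its horizontal part is $\widehat{R(X,Y)Z} + Q(Y,Z)\hat X - Q(X,Z)\hat Y + \om(X,Y)\hat Z$, and after the torsion-free terms collapse, its vertical part is $2(\nabla_{[i}Q_{j]k})X^{i}Y^{j}Z^{k}\rad$. The radiant identities \eqref{radid} applied to $\hnabla$, together with $\lie_{\rad}\hnabla = 0$, give $\rad^{I}\hat R_{IJK}{}^{L} = 0 = \hat R_{IJK}{}^{L}\rad^{K}$, so these are the only independent components, and one reads off $\hat R_{ijk}{}^{l} = R_{ijk}{}^{l} + \delta_{i}{}^{l}Q_{jk} - \delta_{j}{}^{l}Q_{ik} + \delta_{k}{}^{l}\om_{ij}$ and $\hat R_{ijk}{}^{A}\be_{A} = 2\nabla_{[i}Q_{j]k}$. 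The mixed Ricci components vanish because $\hat R(\rad,\dum) = 0$ and $\hat R(\dum,\dum)\rad = 0$, while tracing the first formula yields $\widehat{\ric}_{jk} = \hat R_{pjk}{}^{p} = R_{jk} + (n-1)Q_{jk} - \om_{jk}$. Its symmetric part $R_{(jk)} + (n-1)Q_{(jk)}$ vanishes if and only if $Q_{(jk)} = \tfrac{1}{1-n}R_{(jk)} = P_{(jk)}$; combined with $Q_{[jk]} = -\tfrac12\om_{jk}$ this forces $Q$ to equal \eqref{qdefined}, giving both existence and uniqueness. Substituting this $Q$ and rewriting via \eqref{bijkl}, \eqref{projectiveschouten}, and $\epc_{ij} = \om_{ij} + 2P_{[ij]}$ produces $\widehat{\ric}_{jk} = -\tfrac{n+1}{2}\epc_{jk}$ and the first line of \eqref{qcurvatures}.

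The step I expect to be the main obstacle is the vertical curvature formula. Writing $Q_{ij} = P_{ij} - \tfrac12\epc_{ij}$, I get $2\nabla_{[i}Q_{j]k} = C_{ijk} - \nabla_{[i}\epc_{j]k}$ with $C_{ijk} = 2\nabla_{[i}P_{j]k}$, so matching the claimed $\rho^{\ast}(C)_{ijk} + \tfrac12\rho^{\ast}(\nabla\epc)_{kij}$ requires the nontrivial identity $-\nabla_{[i}\epc_{j]k} = \tfrac12\nabla_{k}\epc_{ij}$. This is not automatic; it holds precisely because $\epc$ is closed. Closedness follows since $d\om = 0$ and the antisymmetric Ricci tensor is exact, so $\epc = \om - \tfrac{2}{n+1}R_{[ij]}$ is a closed representative of $[\om]$, exactly as in Lemma \ref{epcdefinedlemma}. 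Then $\nabla_{[i}\epc_{jk]} = 0$, and expanding this antisymmetrization of the closed two-form gives the required relation, completing $\hat R_{ijk}{}^{A}\be_{A} = \rho^{\ast}(C)_{ijk} + \tfrac12\rho^{\ast}(\nabla\epc)_{kij}$ and hence all of \eqref{qcurvatures}.
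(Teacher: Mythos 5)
Your proposal is correct and follows essentially the same route as the paper's proof: the same bracket identity for horizontal lifts yielding torsion-freeness as the compatibility condition, invariance under the principal action giving the conelike property (your Corollary \ref{invariantconelikecorollary} is just the packaged form of the paper's appeal to Lemma \ref{coneconditionlemma} via $\rad^{P}\hat{R}_{PIJ}\,^{K}=0$), Lemma \ref{conesameplaneslemma} for \eqref{pet2}, and the identical curvature/trace computation forcing \eqref{qdefined}. Your identification of the closedness of $\epc$ (equivalently, of $\om$ and of the exact form $2R_{[ij]}$) as the point on which the identity $2\nabla_{[i}\epc_{j]k} = -\nabla_{k}\epc_{ij}$ rests is exactly what underlies the paper's passage from $\hat{R}_{ijk}\,^{P}\be_{P} = 2\nabla_{[i}Q_{j]k}$ to the form claimed in \eqref{qcurvatures}.
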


\begin{proof}
Uppercase Latin indices indicate tensors on $N$, while lowercase Latin indices indicate tensors on $M$. 
The principal connection $\be\in \prin(\rho:N \to M)$ can be viewed as a one-form satisfying $\be_{Q}\rad^{Q} = 1$ and $\rad^{Q}d\be_{QI} = 0$.  Fixing $\be$ determines a splitting $TN \simeq \spn\{\rad\} \oplus \ker \be$ and so it makes sense to decorate sections of tensor powers of $\ker \be$ and its dual with lowercase Latin indices, and to indicate sections of tensor powers of $\spn\{\rad\}$ its dual with the single index $\infty$. Let $\om_{ij}$ be the curvature two-form of $\be$ defined by $\rho^{\ast}(\om) = d\be$. For $X \in \Ga(TM)$ let $\hat{X}$ denote the $\be$-horizontal lift of $X$.

Let $\nabla$ be a torsion-free connection on $M$. Given an arbitrary $Q_{ij} \in \Ga(\tensor^{2}\ctm)$, define an affine connection $\hnabla$ on $N$ associated with the triple $(\nabla, Q, \be)$ by requiring that it satisfy the identities \eqref{tripleconnection}. Because $[\hat{X}, \hat{Y}] = \widehat{[X, Y]} - d\beta(X, Y)\rad$, the connection $\hnabla$ is torsion-free if and only if $Q$ is compatible with $\be$, and this is assumed in what follows. In this case, $(\hnabla, \rad)$ is a radiant structure on $N$. 

Let $R_{ijk}\,^{l}$ and $\hat{R}_{IJK}\,^{L}$ be the curvature tensors of $\nabla$ and $\hnabla$. Using the splitting of $TN$ given by the span of $\rad$ and the kernel of $\be$, it makes sense to write $\hat{R}_{ijk}\,^{l}$ for $\hat{R}_{ijk}\,^{L} - \hat{R}_{ijk}\,^{P}\be_{P}\rad^{L}$. It follows from \eqref{qdefined} and \eqref{hnablabe} that $\rad^{P}\hat{R}_{PIJ}\,^{K} = 0$. Since $\hnabla_{I}\rad^{J} = 0$ this implies $\lie_{\rad}\hnabla = 0$. From the construction \eqref{tripleconnection} there is apparent the validity of the stronger statement that the principal action on $N$ is by automorphisms of $\hnabla$. By Lemma \ref{coneconditionlemma}, that $\rad^{P}\hat{R}_{PIJ}\,^{K} = 0$ implies that $(\hnabla, \rad)$ is conelike. This shows \eqref{pet1}. 

By Lemma \ref{conesameplaneslemma} changing the choice of $Q$, subject to the requirement of compatibility with $\be$, determines a conelike radiant structure having the same planelike surfaces as has $(\hnabla, \rad)$. This shows \eqref{pet2}.

Routine computations show that the nonvanishing components of the curvature of $\hnabla$ and its traces are
\begin{align}
\label{tc1}&\hat{R}_{ijk}\,^{l} = R_{ijk}\,^{l} + 2\delta_{[i}\,^{l}Q_{j]k} + \om_{ij}\delta_{k}\,^{l},& &\hat{R}_{ijk}\,^{P}\be_{P} = 2\nabla_{[i}Q_{j]k},\\
\label{tc2}&\hat{R}_{ij} = R_{ij} + (n-1)Q_{ij} - \om_{ij},& &\hat{R}_{[ij]} = R_{[ij]} - \tfrac{n+1}{2}\om_{ij}.
\end{align}
in which the notations indicating pullback by $\rho$ have been omitted for readability.
It is evident from \eqref{tc2} that in general the skew-part of the Ricci tensor of $\hnabla$ need not vanish. By \eqref{tc2}, requiring that the symmetric part of the Ricci tensor of $\hnabla$ vanish determines $Q_{ij}$ uniquely as in \eqref{qdefined}. In this case
\begin{align}
\label{tc3}
\begin{split}
\hat{R}_{ijk}\,^{l} &= B_{ijk}\,^{l} + \tfrac{1}{n+1}\left(\delta_{i}\,^{l}R_{[jk]} - \delta_{j}\,^{l}R_{[ik]} - 2\delta_{k}\,^{l}R_{[ij]}\right) - \delta_{[i}\,^{l}\om_{j]k} + \om_{ij}\delta_{k}\,^{l}\\
& = B_{ijk}\,^{l} -\delta_{[i}\,^{l}\epc_{j]k} + \delta_{k}\,^{l}\epc_{ij},\\
\hat{R}_{ijk}\,^{P}\be_{P}  &= C_{ijk} - \tfrac{1}{n+1}\nabla_{k}R_{[ij]} + \tfrac{1}{2}\nabla_{k}\om_{ij} = C_{ijk} + \tfrac{1}{2}\nabla_{k}\epc_{ij},\\
\hat{R}_{ij} &=  R_{[ij]} - \tfrac{n+1}{2}\om_{ij} = -\tfrac{n+1}{2}\epc_{ij}.
\end{split}
\end{align}
This shows \eqref{qcurvatures} and completes the proof of \eqref{pet3}.
\end{proof}

\begin{definition}
The unique connection $\hnabla$ associated with the pair $(\nabla, \be)$ and having antisymmetric Ricci tensor as in \eqref{pet3} of Lemma \ref{preextendedthomaslemma} is the \emph{cone connection} determined by $\nabla$ and $\be$. 
\end{definition}

\begin{lemma}\label{extendedshiftlemma}
Let $\nabla$ be a torsion-free affine connection on the $n$-manifold $M$, let $\rho:N \to M$ be a principal $S^{1}$-bundle or principal $\reat$-bundle, and let $\rad$ be the fundamental vertical vector field generated by the principal action. The cone connections of any two pairs $(\nabla, \be)$ and $(\tnabla, \tilde{\be})$ representing the same extended projective structure $[\nabla, \be]$ on $\rho:N \to M$ are the same, so it makes sense to speak of the \emph{cone connection of $[\nabla, \be]$}.  
\end{lemma}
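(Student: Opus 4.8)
The plan is to show that if $(\tnabla, \tilde{\be})$ and $(\nabla, \be)$ represent the same extended projective structure, so that $(\tnabla, \tilde{\be}) = \ga\cdot(\nabla, \be)$ for some $\ga \in \Ga(\ctm)$, then the cone connection of $(\nabla, \be)$ \emph{is itself} the cone connection of $(\tnabla, \tilde{\be})$. The idea is to avoid any direct recomputation by exploiting that a single invariant radiant connection on $N$ can be presented through different principal connections, while Lemma \ref{invariantinducedlemma} controls how its induced base data transforms.

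First I would let $\hnabla$ denote the cone connection of $(\nabla, \be)$ furnished by Lemma \ref{preextendedthomaslemma}. By construction $\hnabla$ is torsion-free, invariant under the principal action, forms with $\rad$ a conelike radiant structure, and has antisymmetric Ricci tensor; crucially, the antisymmetry of its Ricci tensor is an intrinsic property of $\hnabla$ that makes no reference to the auxiliary principal connection used to describe it. Since $\hnabla$ is torsion-free, invariant, and radiant, I may re-read it through the principal connection $\tilde{\be} = \be - \rho^{\ast}(\ga)$: by \eqref{iil1} of Lemma \ref{invariantinducedlemma} there are a torsion-free connection $\nabla'$ on $M$ and a tensor $Q' \in \Ga(\tensor^{2}\ctm)$ satisfying the analogue of \eqref{hnablabe} (with $\tilde{\be}$ and its curvature $\tilde{\om}$ in place of $\be$ and $\om$) such that $\hnabla$ is given by \eqref{tripleconnection} relative to $(\nabla', Q', \tilde{\be})$. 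The compatibility $2Q'_{[ij]} = -\tilde{\om}_{ij}$ is precisely the second identity of that analogue. Moreover, because the base connection that $\hnabla$ induces through $\be$ is, by the very construction of $\hnabla$, the original $\nabla$, part \eqref{iil2} of Lemma \ref{invariantinducedlemma} identifies $\nabla' = \nabla + 2\ga_{(i}\delta_{j)}\,^{k} = \tnabla$.

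Thus $\hnabla$ is exhibited as a connection on $N$ built via \eqref{tripleconnection} from a tensor $Q'$ compatible with $\tilde{\be}$ and the base connection $\tnabla$, and it has antisymmetric Ricci tensor. By the uniqueness clause of \eqref{pet3} of Lemma \ref{preextendedthomaslemma}, applied to the pair $(\tnabla, \tilde{\be})$, this characterizes $\hnabla$ as the cone connection of $(\tnabla, \tilde{\be})$; indeed $Q'$ is then forced to be the tensor of \eqref{qdefined} formed from the Schouten and Ricci tensors of $\tnabla$ and from $\tilde{\om}$. Hence the cone connections of $(\nabla, \be)$ and $(\tnabla, \tilde{\be})$ coincide, which is the assertion. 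I expect the only delicate point to be confirming that $\hnabla$ genuinely satisfies the hypotheses of Lemma \ref{invariantinducedlemma} when presented through $\tilde{\be}$ — invariance and the radiant property are manifestly preserved and torsion-freeness is intrinsic — together with the observation that antisymmetry of its Ricci tensor is independent of the choice of principal connection; the rest is bookkeeping already carried out in Lemmas \ref{invariantinducedlemma} and \ref{preextendedthomaslemma}. As a fallback, one could instead verify the equality directly: using $\hat{X}^{\tilde{\be}} = \hat{X} + \rho^{\ast}(\ga(X))\rad$ and the transformation $\tilde{Q}_{ij} = Q_{ij} + \nabla_{i}\ga_{j} - \ga_{i}\ga_{j}$ coming from \eqref{projvary}, one expands $\hnabla_{\hat{X}^{\tilde{\be}}}\hat{Y}^{\tilde{\be}}$ through the defining relations \eqref{tripleconnection} and checks that the horizontal part is $\widehat{\tnabla_{X}Y}$ and the vertical part equals $\rho^{\ast}(\ga(\tnabla_{X}Y) + \tilde{Q}(X,Y))\rad$, the matching reducing to the Leibniz identity $X(\ga(Y)) = (\nabla_{X}\ga)(Y) + \ga(\nabla_{X}Y)$.
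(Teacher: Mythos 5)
Your argument is correct, but it proceeds by a genuinely different route than the paper's proof. The paper proves the lemma by direct computation: using \eqref{qdefined}, \eqref{tripleconnection}, the formula for the horizontal lift relative to a shifted principal connection, and the transformation law \eqref{projvary} for the projective Schouten tensor, it shows that the cone connections of $(\nabla, \be + \rho^{\ast}(\ga))$ and of $(\nabla + 2\ga_{(i}\delta_{j)}\,^{k}, \be)$ are both equal to the explicit expression \eqref{shiftedthomas}, from which the claim follows. You instead avoid computation entirely: you re-read the fixed connection $\hnabla$ through the new principal connection $\tilde{\be}$ via \eqref{iil1} of Lemma \ref{invariantinducedlemma} (whose hypotheses $\hnabla$ satisfies, since torsion-freeness, invariance under the principal action, and radiance are furnished by \eqref{pet1} of Lemma \ref{preextendedthomaslemma} and do not refer to $\be$), identify the induced base connection as $\tnabla$ by \eqref{iil2}, observe that the induced $Q'$ is compatible with $\tilde{\be}$ by the second identity of \eqref{hnablabe}, and then invoke the uniqueness clause of \eqref{pet3} together with the fact that antisymmetry of the Ricci tensor is an intrinsic property of $\hnabla$. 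The bookkeeping you flag as the delicate point does check out: the $\tilde{\be}$-horizontal lift is $\hat{X} + \rho^{\ast}(\ga(X))\rad$ and the induced tensor transforms as $\tilde{Q}_{ij} = Q_{ij} + \nabla_{i}\ga_{j} - \ga_{i}\ga_{j}$, consistent with the invariance of $\epc_{ij}$. What your approach buys is conceptual economy and an emphasis on the fact that the cone connection is pinned down by intrinsic, gauge-independent properties --- exactly the mechanism the paper itself exploits later, in \eqref{extendedthomas2} of Theorem \ref{extendedthomastheorem} and in Lemma \ref{invariancelemma}; what the paper's computation buys is the explicit transformation formula \eqref{shiftedthomas}, a closed-form expression that an abstract uniqueness argument cannot produce. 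Your fallback direct verification is essentially the paper's proof, organized as a single check rather than through the two intermediate pairs.
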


\begin{proof}
For $\ga \in \Ga(\ctm)$,  straightforward computations using the definition of the cone connection, \eqref{qdefined}, \eqref{tripleconnection}, the remark that the horizontal lift of $X \in \Ga(TM)$ with respect to $\be + \rho^{\ast}(\ga)$ is $\hat{X} - \rho^{\ast}(\ga(X))\rad$, and the remark that the projective Schouten tensors $\tilde{P}_{ij}$ and $P_{ij}$ of $\nabla + 2\ga_{(i}\delta_{j)}\,^{k}$ and $\nabla$ are related by $\tilde{P}_{ij} = P_{ij} + \nabla_{i}\ga_{j} - \ga_{i}\ga_{j}$ show that the cone connections of $(\nabla, \be + \rho^{\ast}(\ga))$ and $(\nabla + 2\ga_{(i}\delta_{j)}\,^{k}, \be)$ both equal 
\begin{align}\label{shiftedthomas}
\hnabla + 2\rho^{\ast}(\ga)_{(I}\delta_{J)}\,^{k} - 2\rho^{\ast}(\ga)_{(I}\be_{J)}\rad^{K} + \left(\rho^{\ast}(\nabla \ga)_{(IJ)} - \rho^{\ast}(\ga)_{I}\rho^{\ast}(\ga)_{J}\right)\rad^{K},
\end{align}
where $\hnabla$ is the cone connection of $(\nabla, \be)$. It follows that the cone connection associated with $(\nabla, \be)$ and the cone connection associated with $(\tnabla = \nabla + 2\ga_{(i}\delta_{j)}\,^{k}, \tilde{\be} =\be - \rho^{\ast}(\ga))$ are equal.
\end{proof}

Lemmas \ref{curvedevlemma} and \ref{gliftlemma} are needed in the proof of Lemma \ref{geodesiclemma}.
\begin{lemma}\label{curvedevlemma}
Let $G$ be a Lie group with identity element $e$, Lie algebra $\g$, and left-invariant Maurer-Cartan form $\mc$. Let $\si$ be a smooth $\g$-valued one-form on a nonempty open subinterval $I \subset \rea$.  For every $a_{0} \in I$ there exists a unique smooth immersion $c:I \to G$ such that $c(a_{0}) = e$ and $c^{\ast}(\omega_{G}) = \si$.
\end{lemma}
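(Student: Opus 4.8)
The plan is to exploit the fact that, because the source is one-dimensional, the condition $c^{\ast}(\mc) = \si$ is really an ordinary differential equation on $G$ rather than a genuine integrability problem. Writing $\si = A(t)\,dt$ for a smooth map $A:I \to \g$, and recalling that the left-invariant Maurer-Cartan form is $\mc_{g}(v) = (L_{g^{-1}})_{\ast}v$ for $v \in T_{g}G$, the equation $c^{\ast}(\mc) = \si$ evaluated on $\tfrac{d}{dt}$ reads $(L_{c(t)^{-1}})_{\ast}\dot{c}(t) = A(t)$, that is,
\begin{align}
\dot{c}(t) = (L_{c(t)})_{\ast}A(t).
\end{align}
This is precisely the integral-curve equation for the smooth time-dependent vector field $\Phi(t, g) = (L_{g})_{\ast}A(t)$ on $G$. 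The usual Maurer-Cartan integrability obstruction $d\si + \tfrac{1}{2}[\si, \si]$ vanishes automatically, there being no nonzero two-forms on the interval $I$, which is exactly why the problem collapses to an ODE.

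First I would invoke the standard existence, uniqueness, and smooth-dependence theorem for ordinary differential equations. Since $\Phi$ is smooth in $(t, g)$, there is a unique maximal integral curve $c:J \to G$ with $c(a_{0}) = e$, defined on an open subinterval $J \subseteq I$ containing $a_{0}$, and this $c$ is smooth. Uniqueness of the solution, and hence of the map $c$ asserted in the statement, is immediate from the uniqueness clause of that theorem.

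The main obstacle is showing $J = I$, i.e. that the solution persists throughout $I$; here left-invariance is essential. The key observation is that if $c$ solves the ODE then so does $h \cdot c$ for every fixed $h \in G$, because $\tfrac{d}{dt}(h\,c(t)) = (L_{h})_{\ast}\dot{c}(t) = (L_{h\,c(t)})_{\ast}A(t) = \Phi(t, h\,c(t))$. Suppose $J = (\alpha, \beta)$ with $\beta < \sup I$, so that $\beta \in I$. Because all the solutions I will compare start at $e$ and the relevant initial times range over a compact subinterval of $I$ near $\beta$, the ODE theorem (via a finite cover by flow boxes) provides a uniform lower bound $\delta > 0$ on the existence time of the solution $d$ with $d(t_{1}) = e$, valid for all such $t_{1}$. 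Choosing $t_{1} \in (\beta - \delta, \beta)$ and setting $h = c(t_{1})$, the curve $h\cdot d$ is a solution agreeing with $c$ at $t_{1}$, so by uniqueness it extends $c$ beyond $\beta$, contradicting maximality of $J$. The symmetric argument at the left endpoint forces $\alpha = \inf I$, whence $J = I$.

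Finally, that $c$ is an immersion follows because $L_{c(t)}$ is a diffeomorphism, so $\dot{c}(t) = (L_{c(t)})_{\ast}A(t)$ vanishes exactly where $A(t)$, and hence $\si$, vanishes; under the standing assumption that $\si$ is nowhere zero the velocity $\dot{c}$ is everywhere nonzero. I expect the completeness step to be the only substantive point, the reduction to an ODE and the immersion claim being formal consequences of the ODE theorem together with the homogeneity of $G$ under left translation.
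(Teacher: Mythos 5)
Your proof is correct, and it takes a genuinely different route from the paper's. The paper disposes of the lemma with a citation: it is the one-dimensional case of the theorem on integrating a $\g$-valued one-form satisfying the Maurer-Cartan identity over a simply connected domain (\cite[Section~3.5]{Koszul-lectures}, \cite[Theorem~6.1]{Sharpe}), the integrability condition being vacuous on an interval---precisely the observation with which you open. You instead argue from first principles: writing $\si = A(t)\,dt$, the equation $c^{\ast}(\mc) = \si$ is the integral-curve equation $\dot{c}(t) = (L_{c(t)})_{\ast}A(t)$ for a smooth time-dependent left-invariant vector field, so local existence, uniqueness, and smoothness come from the standard ODE theorem, and existence on all of $I$ follows from the left-translation trick: $h\cdot c$ solves the equation whenever $c$ does, and a uniform lower bound on existence times for solutions through $e$ (compactness plus openness of the flow domain) lets you translate a short-time solution so as to extend a putatively maximal one, a contradiction. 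That globalization is the only substantive step, it is carried out correctly, and it is the classical completeness argument for left-invariant vector fields adapted to time dependence; it plays the role that simple connectivity/monodromy plays in the cited general theorem. What the paper's approach buys is brevity and the natural general context; what yours buys is a self-contained elementary proof in which both the uniqueness and the persistence of the solution over all of $I$ are transparent.

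One caveat: your treatment of the immersion property invokes a ``standing assumption that $\si$ is nowhere zero'' which is not in the statement. This is really a defect of the lemma as stated: since $\dot{c}(t) = (L_{c(t)})_{\ast}A(t)$ vanishes exactly where $A$ does, an immersion solving $c^{\ast}(\mc) = \si$ can exist only when $\si$ is nowhere vanishing; in general one gets only a smooth map. The paper's citation glosses over this as well (the general theorem produces a smooth map, not an immersion), and in the later applications of the lemma---for instance in Lemma~\ref{gliftlemma}, where $\si = -\hat{\ga}^{\ast}(\be)$ may vanish---it is the smooth map that is actually needed. So your explicit identification of exactly when $c$ is an immersion is, if anything, more careful than the source, but it should be stated as a hypothesis (or ``immersion'' weakened to ``smooth map'') rather than introduced as a standing assumption.
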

\begin{proof}
This is a special case of the more general statement with $I$ replaced by an open domain in $\rea^{n}$, and $\si$ replaced by a smooth $G$-valued one-form satisfying the Maurer-Cartan identity $d\si + [\si, \si] = 0$, whose proof can be found in \cite[section $3.5$]{Koszul-lectures} or \cite[Theorem $6.1$]{Sharpe}.
\end{proof}

Let $G$ be either $\reat$ or $S^{1}$ and in either case identify its Lie algebra with $\rea$. Let $\rho:N \to M$ be a principal $G$-bundle. Let $I \subset \rea$ be an interval containing $0$. The \emph{base curve} of a smoothly immersed curve $\hat{\ga}:I \to N$ is the curve $\ga = \rho \circ \hat{\ga}:I \to M$ in $M$. It is an immersion if $\hat{\ga}$ is everywhere transverse to the vertical. A \emph{lift} to $N$ of a smooth immersion $\ga:I \to M$ is a smooth immersion $\hat{\ga}:I \to N$ such that $\rho \circ \hat{\ga} = \ga$. If $u \in \rho^{-1}(\ga(0))$, the lift is \emph{based} at $u$ if $\hat{\ga}(0) = u$. 
If $\be$ is a principal $G$-connection on $N$, a \emph{$\be$-horizontal lift} of $\ga$ is a lift $\hat{\ga}$ of $\ga$ such that $\hat{\ga}^{\ast}(\be) = 0$.

Since $G$ is abelian its left-invariant Maurer-Cartan form $\mc$ is simply an invariant one-form invariant. For $t \in \rea$ the exponential map $\exp_{G}$ takes the forms $\exp_{\reat}(t) = e^{t}$ and $\exp_{S^{1}}(t) = e^{\j t}$, and in either case $\mc = dt$. Let $\rad$ be the fundamental vector field generating the principal $G$-action. If $\hat{\ga}$ is a lift of $\ga$ then so is $R_{c}(\hat{\ga})$ for any $c \in \cinf(I, G)$. In this case
\begin{align}\label{drcg}
\tfrac{d}{dt}R_{c}(\hat{\ga}) = TR_{c}(\hat{\ga})(\dot{\hat{\ga}}) + c^{\ast}(\mc)(\tfrac{d}{d t})\rad_{R_{c}(\hat{\ga})},
\end{align}
in which, for example, $c^{\ast}(\omega_{\reat})(\tfrac{d}{d t}) = c^{-1}\dot{c}$.

\begin{lemma}\label{gliftlemma}
Let $\rho:N \to M$ be a principal $G$-bundle and let $\be$ be a principal $G$-connection on $N$, where $G$ is $\reat$ or $S^{1}$. Let $I \subset \rea$ be a nonempty open interval and let $\ga:I \to M$ be a smooth immersion. Given $u \in \rho^{-1}(\ga(0))$ there is a unique $\be$-horizontal lift of $\ga$ based at $u$. 
\end{lemma}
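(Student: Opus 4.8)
The plan is to reduce the existence of a horizontal lift to the integration of a single first-order ordinary differential equation in the structure group $G$, which is then solved by invoking Lemma~\ref{curvedevlemma}. First I would choose \emph{some} smooth lift of $\ga$ based at $u$. Since $I$ is an interval, hence contractible, the pullback bundle $\ga^{\ast}N \to I$ is trivial and therefore admits a global smooth section; pushing this section forward by the canonical bundle map $\ga^{\ast}N \to N$ produces a smooth lift $\hat{\ga}_{0}:I \to N$ of $\ga$. After replacing $\hat{\ga}_{0}$ by $R_{g_{0}}\circ \hat{\ga}_{0}$ for the unique $g_{0} \in G$ with $\hat{\ga}_{0}(0)\cdot g_{0} = u$, I may assume $\hat{\ga}_{0}(0) = u$. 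As $\rho \circ \hat{\ga}_{0} = \ga$ is an immersion, $\hat{\ga}_{0}$ is automatically a smooth immersion as well.

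Next I would seek the desired lift in the form $\hat{\ga} = R_{c}(\hat{\ga}_{0})$ with $c \in \cinf(I, G)$ and $c(0) = e$; every lift based at $u$ arises this way. Applying $\be$ to the identity \eqref{drcg}, and using that the principal action preserves $\be$ (so that $\be(TR_{c}(\hat{\ga}_{0})(\dot{\hat{\ga}}_{0})) = (R_{c}^{\ast}\be)(\dot{\hat{\ga}}_{0}) = \be(\dot{\hat{\ga}}_{0})$) together with $\be(\rad) = 1$, yields
\begin{align}\label{ghoriz}
\hat{\ga}^{\ast}(\be)(\tfrac{d}{dt}) = \hat{\ga}_{0}^{\ast}(\be)(\tfrac{d}{dt}) + c^{\ast}(\mc)(\tfrac{d}{dt}).
\end{align}
Thus $\hat{\ga}$ is $\be$-horizontal, i.e. $\hat{\ga}^{\ast}(\be) = 0$, precisely when $c$ solves the equation $c^{\ast}(\mc) = -\hat{\ga}_{0}^{\ast}(\be)$ of one-forms on $I$, valued in the Lie algebra of $G$ (identified with $\rea$).

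At this point I would set $\si = -\hat{\ga}_{0}^{\ast}(\be)$, a smooth $\g$-valued one-form on $I$, and apply Lemma~\ref{curvedevlemma} with $a_{0} = 0$ to obtain a unique smooth $c:I \to G$ with $c(0) = e$ and $c^{\ast}(\mc) = \si$. Then $\hat{\ga} = R_{c}(\hat{\ga}_{0})$ is a $\be$-horizontal lift of $\ga$ with $\hat{\ga}(0) = R_{e}(u) = u$, which gives existence. For uniqueness, if $\hat{\ga}$ and $\hat{\ga}'$ are two $\be$-horizontal lifts based at $u$, I would write $\hat{\ga}' = R_{c}(\hat{\ga})$ with $c(0) = e$; reading \eqref{ghoriz} with $\hat{\ga}$ in place of $\hat{\ga}_{0}$ and both one-forms on the right equal to zero forces $c^{\ast}(\mc) = 0$, whose only solution with $c(0) = e$ is the constant $c \equiv e$, so $\hat{\ga}' = \hat{\ga}$.

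I expect no genuine analytic obstacle: the whole argument hinges on the clean translation, via \eqref{drcg} and the invariance of $\be$, of the horizontality condition into the Maurer--Cartan equation $c^{\ast}(\mc) = -\hat{\ga}_{0}^{\ast}(\be)$, after which Lemma~\ref{curvedevlemma} supplies the solution globally on $I$. The only points demanding mild care are the existence of a global initial lift $\hat{\ga}_{0}$ (secured by the contractibility of $I$) and the bookkeeping of the $G$-invariance of $\be$ in passing from \eqref{drcg} to \eqref{ghoriz}.
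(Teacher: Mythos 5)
Your proposal is correct, and its engine is identical to the paper's: both proofs reduce horizontality of $R_{c}(\hat{\ga}_{0})$ to the equation $c^{\ast}(\mc) = -\hat{\ga}_{0}^{\ast}(\be)$ via \eqref{drcg} and the $G$-invariance of $\be$, solve it by Lemma~\ref{curvedevlemma}, and run the same rigidity argument ($c^{\ast}(\mc)=0$, $c(0)=e$, hence $c \equiv e$) for uniqueness. The one place you genuinely diverge is in producing the initial lift $\hat{\ga}_{0}$, and your choice is the better one: the paper takes $\hat{\ga}_{0}(t) = \phi(\ga(t), e)$ in a local trivialization $\phi$ over a neighborhood $U$ of $\ga(0)$, and for this it must shrink $I$ to a subinterval with $\ga(I) \subset U$, so as written the paper's argument establishes the lemma only on that subinterval and leaves the continuation to all of $I$ implicit (one would patch local horizontal lifts using the uniqueness statement). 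You instead invoke triviality of the pullback bundle $\ga^{\ast}N \to I$ over the contractible interval $I$ to get a smooth lift defined on all of $I$ from the start, after which Lemma~\ref{curvedevlemma} -- whose hypothesis is exactly a $\g$-valued one-form on the whole interval -- hands you the horizontal lift globally in one step. So your proof buys the full global statement directly, at the cost of the (standard, and correctly justified) fact that a fiber bundle over a contractible paracompact base admits a global section; also note that for the abelian groups at hand $R_{c}^{\ast}\be = \be$ needs no $\Ad$-correction, which is the point your bookkeeping in \eqref{ghoriz} quietly and correctly uses.
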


\begin{proof}
Choose an open neighborhood $U$ of $\ga(0)$ over which $N$ trivializes, so that there is a smooth $G$-equivariant diffeomorphism $\phi: U \times G \to \rho^{-1}(U)$ such that $\rho \circ \phi$ equals the projection $U\times G \to U$ onto the first factor. Let $I$ be an open subinterval of $\rea$ containing $0$ and such that $\ga(I) \subset U$. Define a lift $\hat{\ga}:I \to \rho^{-1}(U)$ of $\ga:I \to U$ by $\hat{\ga}(t) = \phi(\ga(t), e)$. 
By Lemma \ref{curvedevlemma} there is a unique smooth immersion $c:I \to G$ such that $c^{\ast}(\mc) = -\hat{\ga}^{\ast}(\be)$ and $c(0) = e$. Define a lift $\bar{\ga}:I \to \rho^{-1}(U)$ of $\ga:I \to U$ by $\bar{\ga}(t) = R_{c(t)}\hat{\ga}(t)$. Then $\rho\circ \bar{\ga}(0) = \rho \circ \hat{\ga}(0) = \ga(0)$ and, by \eqref{drcg} and the construction of $c$,
\begin{align}\label{belift}
\bar{\ga}^{\ast}(\be)(\tfrac{d}{dt}) = \be(\tfrac{d}{dt}R_{c}(\hat{\ga})) = R_{c}^{\ast}(\be)(\dot{\hat{\ga}}) + c^{\ast}(\mc)(\tfrac{d}{d t}) = \hat{\ga}^{\ast}(\be)(\tfrac{d}{dt})+ c^{\ast}(\mc)(\tfrac{d}{d t}) = 0.
\end{align}
This shows there exists a $\be$-horizontal lift of $\ga$ based at $u$. The computation \eqref{belift} also shows the uniqueness, for if $\tilde{\ga}:I \to \rho^{-1}(U)$ is another $\be$-horizontal lift based at $u$, then there is a smooth immersion $c:I \to G$ such that $\bar{\ga} = R_{c}\tilde{\ga}$ and, as in \eqref{belift}, $0 =\bar{\ga}^{\ast}(\be) = \tilde{\ga}^{\ast}(\be) + c^{\ast}(\mc) = c^{\ast}(\mc)$, so that $c$ is a constant map. Since $\tilde{\ga}(0) = \bar{\ga}(0)$, $c(0) = e$, so $c(I) = e$ and $\tilde{\ga} = \bar{\ga}$.
\end{proof}

Lemma \ref{geodesiclemma} is needed to prove that a radiant structure $(\hnabla, \rad)$ on the total space of $\rho:N \to M$ fibers over the projective structure it induces on $M$ (in the sense of Definition \ref{fibereddefined}). Although this is geometrically clear, a precise statement requires some care. 

\begin{lemma}\label{geodesiclemma}
Let $\rho:N \to M$ be a principal $S^{1}$-bundle or principal $\reat$-bundle and let $\rad$ be the fundamental vertical vector field generated by the principal action. Let $\hnabla$ be a torsion-free affine connection on $N$ invariant under the principal action and constituting with $\rad$ a radiant structure. Let the torsion-free affine connection $\nabla$ and the tensor $Q \in \Ga(\tensor^{2}\ctm)$ be associated with $\be \in \prin(\rho:N \to M)$ as in \eqref{tripleconnection} of Lemma \ref{invariantinducedlemma}. Let $\en$ be the projective structure generated by $\nabla$.
Let $I \subset \rea$ be a nonempty open subinterval containing $0$. 
\begin{enumerate}
\item\label{liftedprojgeodesic} Let $\hat{\ga}:I \to N$ be a smooth immersion with base curve $\ga:I \to M$. The image $\hat{\ga}(I)$ of $\hat{\ga}$ is a projective geodesic of $\hnabla$ if and only if there is $b \in \cinf(I)$ such that
\begin{align}\label{hnablaprojconditions}
&\nabla_{d/dt}\dot{\ga} = (b-2A)\dot{\ga},& &Ab = \dot{A} + A^{2} + Q(\dot{\ga}, \dot{\ga}),
\end{align} 
where $A = \be(\dot{\hat{\ga}}) \in \cinf(I)$.
In this case $\hnabla_{\dot{\hat{\ga}}}\dot{\hat{\ga}} = b\dot{\hat{\ga}}$ and the image $\ga(I)$ is contained in a projective geodesic of $\nabla$. 
\item\label{liftedprojgeodesic2} Suppose that the path of the smooth immersion $\ga:I \to M$ is a projective geodesic of $\nabla$. For each $u \in \rho^{-1}(\ga(0))$ there is a subinterval $J \subset I$ containing $0$ in its interior and a lift $\hat{\ga}:J \to N$ of $\ga$ based at $u$ and such that $\dot{\hat{\ga}}\wedge \hnabla_{\dot{\hat{\ga}}}\dot{\hat{\ga}} = 0$ on $J$ (so that $\hat{\ga}(J)$ is a projective geodesic of $\hnabla$).
\item\label{fiberingcorollary} $(\hnabla, \rad)$ is a conelike radiant structure fibering over $(M, \en)$, where $\en$ is the projective structure generated by $\nabla$. 
\end{enumerate}
\end{lemma}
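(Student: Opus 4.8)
The plan is to treat the three parts in order: I would derive \ref{liftedprojgeodesic} by a direct covariant differentiation along $\hat\ga$, reduce \ref{liftedprojgeodesic2} to a scalar Riccati equation that is then solved using the lifting machinery of Lemmas \ref{curvedevlemma} and \ref{gliftlemma}, and finally deduce \ref{fiberingcorollary} by combining \ref{liftedprojgeodesic} with Corollary \ref{invariantconelikecorollary}.

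For \ref{liftedprojgeodesic} I would use the splitting $TN \simeq \spn\{\rad\}\oplus\ker\be$ to write $\dot{\hat\ga} = \widehat{\dot\ga} + A\rad$, where $\widehat{\dot\ga}$ is the $\be$-horizontal lift of $\dot\ga$ along $\hat\ga$ and $A = \be(\dot{\hat\ga})$. Differentiating this along the curve and using the radiant identity $\hnabla_{d/dt}\rad = \dot{\hat\ga}$, the relation $\hnabla_{\rad}\widehat{\dot\ga} = \widehat{\dot\ga}$, and \eqref{tripleconnection} to evaluate $\hnabla_{\widehat{\dot\ga}}\widehat{\dot\ga}$, I expect to obtain
\[
\hnabla_{d/dt}\dot{\hat\ga} = \widehat{\nabla_{d/dt}\dot\ga} + 2A\,\widehat{\dot\ga} + \big(\dot A + A^2 + Q(\dot\ga,\dot\ga)\big)\rad.
\]
Then $\hat\ga(I)$ is a projective geodesic of $\hnabla$ precisely when this equals $b\dot{\hat\ga} = b\widehat{\dot\ga} + bA\rad$ for some $b$, and matching the horizontal component against $b\widehat{\dot\ga}$ and the $\rad$-component against $bA\rad$ reproduces exactly the two equations of \eqref{hnablaprojconditions}, with $\hnabla_{\dot{\hat\ga}}\dot{\hat\ga} = b\dot{\hat\ga}$. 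The first of these gives $\dot\ga\wedge\nabla_{d/dt}\dot\ga = 0$, so the base curve lies on a projective geodesic of $\en$.

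For \ref{liftedprojgeodesic2}, since $\ga$ is an immersion whose path is a projective geodesic, I would write $\nabla_{d/dt}\dot\ga = q\dot\ga$ for a unique $q\in\cinf(I)$. The first equation of \eqref{hnablaprojconditions} then forces $b = q + 2A$, and substituting this into the second equation eliminates $b$, collapsing the system to the single scalar Riccati equation $\dot A = A^2 + qA - Q(\dot\ga,\dot\ga)$. I would solve this with $A(0)=0$ to obtain $A$ on a subinterval $J\subset I$ about $0$, take the $\be$-horizontal lift $\bar\ga$ of $\ga$ based at $u$ from Lemma \ref{gliftlemma}, use Lemma \ref{curvedevlemma} to produce the unique $c:J\to G$ with $c(0)=e$ and $c^{\ast}(\mc)=A\,dt$, and set $\hat\ga = R_{c}(\bar\ga)$. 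By the computation \eqref{belift} this lift satisfies $\be(\dot{\hat\ga}) = A$, so both equations of \eqref{hnablaprojconditions} hold with $b = q + 2A$ and \ref{liftedprojgeodesic} makes $\hat\ga(J)$ a projective geodesic of $\hnabla$. This reduction, and in particular realizing a lift with the prescribed vertical velocity $A$, is the step I expect to require the most care.

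Finally, for \ref{fiberingcorollary} I note that $\rad$ is nowhere vanishing and that invariance of $\hnabla$ under the connected principal action gives $\lie_{\rad}\hnabla = 0$, so Corollary \ref{invariantconelikecorollary} shows $(\hnabla,\rad)$ is conelike with $\er = 0$. To see fibering, I would take a planelike surface $\Sigma$, pick $p\in\Sigma$ and a vector $w\in T_{p}\Sigma$ transverse to $\rad_{p}$, and follow the $\hnabla$-geodesic with velocity $w$: it remains in $\Sigma$ because $\Sigma$ is totally geodesic, and being transverse to the fibre it projects by \ref{liftedprojgeodesic} to a projective geodesic of $\en$ through $\rho(p)$ whose image is an open arc of the curve $\rho(\Sigma)$, which is one-dimensional since $T_{p}\Sigma$ contains the vertical direction $\rad_{p}$. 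Thus $\rho(\Sigma)$ is locally a projective geodesic and, being connected, is contained in a single one, which is the required fibering condition of Definition \ref{fibereddefined}.
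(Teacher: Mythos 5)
Your proposal is correct and takes essentially the same route as the paper's proof: the same computation of $\hnabla_{d/dt}\dot{\hat{\ga}}$ and component/wedge comparison for \eqref{liftedprojgeodesic}, the same reduction to a scalar Riccati equation solved via Lemmas \ref{curvedevlemma} and \ref{gliftlemma} for \eqref{liftedprojgeodesic2} (you solve for $A$ where the paper solves the equivalent equation for $b = g + 2A$, a harmless change of variables), and the same combination of Corollary \ref{invariantconelikecorollary} with projection of geodesics lying in a planelike surface for \eqref{fiberingcorollary}.
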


\begin{proof}
Let $\hat{\ga}:I \to N$ be a smooth immersion with base curve $\ga:I \to M$. Choose a smooth vector field $X$ and smooth function $f$ defined in a neighborhood of the image of $\ga(I)$ such that $\dot{\ga} = X_{\ga}$ and $f\circ \ga = A= \be(\dot{\hat{\gamma}}) \in \cinf(I)$, so that there holds $\dot{\hat{\ga}} = (\hat{X} + \rho^{\ast}(f)\rad)_{\hat{\ga}}$. By \eqref{tripleconnection}, 
\begin{align}\label{fxlifted}
\hnabla_{\hat{X} + \rho^{\ast}(f)\rad}(\hat{X} + \rho^{\ast}(f)\rad) = \widehat{\nabla_{X}X} + 2\rho^{\ast}(f)\hat{X} + \rho^{\ast}\left(Q(X, X) + df(X) + f^{2}\right)\rad.
\end{align}
Differentiating $f \circ \ga = A$ gives $df(X)_{\ga} = \dot{A}$, and so \eqref{fxlifted} yields
\begin{align}\label{liftedgeodesics}
\hnabla_{d/dt}\dot{\hat{\ga}} &= \widehat{\nabla_{d/dt}\dot{\ga}} + 2A\hat{\dot{\ga}} + \left( \dot{A} + A^{2} + Q(\dot{\ga}, \dot{\ga})\right)\rad_{\hat{\ga}},\\
\label{liftedgeodesicswedge}
\dot{\hat{\ga}} \wedge \hnabla_{d/dt}\dot{\hat{\ga}} &=\widehat{\dot{\ga} }\wedge \widehat{\nabla_{d/dt}\dot{\ga}} + \left(\dot{A} - A^{2} + Q(\dot{\ga}, \dot{\ga}) \right) \widehat{\dot{\ga}}\wedge \rad_{\hat{\ga}} - A\widehat{\nabla_{d/dt}\dot{\ga}}\wedge \rad_{\hat{\ga}}.
\end{align}
If \eqref{liftedgeodesicswedge} vanishes on $I$, then $\dot{\ga}\wedge \nabla_{d/dt}\dot{\ga} = 0$, so there is $g \in \cinf(I)$ such that $\nabla_{d/dt}\dot{\ga} = g\dot{\ga}$, and $\dot{A} - A^{2} + Q(\dot{\ga}, \dot{\ga}) = gA$ on $I$, and, conversely, if there is such a $g$, then \eqref{liftedgeodesicswedge} vanishes. Writing $g = b- 2A$ with $b \in \cinf(I)$, it follows that $\dot{\hat{\ga}} \wedge \hnabla_{d/dt}\dot{\hat{\ga}} = 0$ on $I$ if and only if there hold \eqref{hnablaprojconditions}. Substituting \eqref{hnablaprojconditions} into \eqref{liftedgeodesics} yields
\begin{align}\label{hatsideriv0}
\hnabla_{d/dt}\dot{\hat{\ga}} & = b\widehat{\dot{\ga}} + \left( \dot{A} + A^{2} + Q(\dot{\ga}, \dot{\ga})\right)\rad_{\hat{\ga}} = b\left(\widehat{\dot{\ga}} + A\rad_{\hat{\ga}}\right)=  b\dot{\hat{\ga}}.
\end{align}
This proves \eqref{liftedprojgeodesic}.

Now suppose that the path of the smooth immersion $\ga:I \to M$ is a projective geodesic of the projective structure $\en$ generated by $\nabla$ and fix $u \in \rho^{-1}(\ga(0))$. By the assumption that $\ga(I)$ is a projective geodesic there is $g \in \cinf(I)$ such that $\nabla_{d/dt}\dot{\ga} = g \dot{\ga}$.

Choose a vector field $X$ such that $\dot{\ga} = X_{\ga}$. Then the integral curve $\hat{\ga}$ of the $\be$-horizontal lift $\hat{X}$ such that $\hat{\ga}(0) = u$ is a lift of $\ga$ based at $u$. Any other lift $\hat{\si}$ of $\ga$ based at $u$ has the form $\hat{\si} = R_{c}(\hat{\ga})$ for some $c \in \cinf(I, G)$ satisfying $c(0) = 1$. By \eqref{drcg}, 
\begin{align}\label{dothatsi}
\dot{\hat{\si}} = TR_{c}(\hat{\ga})(\dot{\hat{\ga}}) + c^{\ast}(\mc)(\tfrac{\pr}{\pr t})\rad_{\hat{\si}} = TR_{c}(\hat{\ga})(\hat{X}_{\hat{\ga}}) + c^{\ast}(\mc)(\tfrac{\pr}{\pr t})\rad_{\hat{\si}} = \hat{X}_{\hat{\si}} + c^{\ast}(\mc)(\tfrac{\pr}{\pr t})\rad_{\hat{\si}},
\end{align}
so that $\hat{\si}$ is an integral curve of $\hat{X} + f\rad$ where $f$ is any smooth function defined on a neighborhood of $\ga(I)$ and satisfying $f \circ \ga = c^{\ast}(\mc)(\tfrac{\pr}{\pr t})$. Let $A = \be(\dot{\hat{\si}})$. By \eqref{dothatsi}, $A$ equals $c^{\ast}(\mc)(\tfrac{\pr}{\pr t})$. 
There holds \eqref{fxlifted},
and differentiating $f \circ \ga = A$ gives $df(X)_{\ga} = \dot{A}$, and so
\begin{align}\label{hatsideriv}
\begin{split}
\hnabla_{\dot{\hat{\si}}}\dot{\hat{\si}} & = (g + 2A)\dot{\hat{\si}} + \left( \dot{A} - A^{2} + Q(\dot{\ga}, \dot{\ga}) - gA\right)\rad_{\hat{\si}}\\
& = b\dot{\hat{\si}} + \tfrac{1}{2}\left( \dot{b} - \tfrac{1}{2}b^{2}- \left( \dot{g} - \tfrac{1}{2}g^{2} - 2Q(\dot{\ga}, \dot{\ga})\right)\right)\rad_{\hat{\si}},
\end{split}
\end{align}
where $b = g + 2A$. By \eqref{hatsideriv}, the image of $\hat{\si}(I)$ is contained in a projective geodesic of $\hnabla$ if and only if
\begin{align}\label{derin2}
\dot{b} - \tfrac{1}{2}b^{2}- \left( \dot{g} - \tfrac{1}{2}g^{2} - 2Q(\dot{\ga}, \dot{\ga})\right) = 0.
\end{align}
There is an open interval $J \subset I$ containing $0$ on which \eqref{derin2} has a unique solution $b \in \cinf(J)$ such that $b(0) = 0$. 
A $\hat{\si}$ which is a projective geodesic is found by solving $A =  c^{\ast}(\mc)(\tfrac{\pr}{\pr t})$ for $c$ with $A = \tfrac{1}{2}(b-g)$. Precisely, once $b$ has been obtained, $A$ is determined from $b$ and $g$ by $A = (b - g)/2$, and, by Lemma \ref{curvedevlemma}, there is a unique $c \in \cinf(J, G)$ such that $c(0)= e$ and $c^{\ast}(\mc)(\tfrac{\pr}{\pr t}) = A$. (In the case $G = \reat$, the solution is given explicitly as $c(t) = \exp{\left(\tfrac{1}{2}\int_{0}^{t}(b(v) - g(v))\,dv\right)}$.) The resulting lift $\hat{\si}$ solves $\hnabla_{\dot{\hat{\si}}}\dot{\hat{\si}} = (g+ 2A)\dot{\hat{\si}}= b\dot{\hat{\si}}$ and is based at $u$. This proves \eqref{liftedprojgeodesic2}.
By Corollary \ref{invariantconelikecorollary}, $(\hnabla, \rad)$ is conelike, so to show \eqref{fiberingcorollary} it is needed only to show that it fibers over $(M, \en)$.

For $p \in M$, let $\Sigma$ be a planelike surface in $N$ containing $u \in \rho^{-1}(p)$. Let $X \in T_{u}\Sigma$ be transverse to $\rad_{u}$. Let $\hat{\ga}:I \to N$ be a $\hnabla$-geodesic such that $\hat{\ga}(0) = u$ and $\dot{\hat{\ga}}(0) = X_{u}$. Because $\Sigma$ is $\hnabla$-totally geodesic, $\hat{\ga}(I) \subset \Sigma$. By \eqref{liftedprojgeodesic}, the image $\ga(I)$ of the base curve $\ga = \rho \circ \hat{\ga}:I \to M$ is a $\en$-projective geodesic and $\ga:I \to M$ is a parametrization of this projective geodesic. Given $Y = X + f\rad_{u} \in T_{u}\Sigma$, let $\tilde{\ga}:J \to M$ be a $\hnabla$-geodesic such that $\tilde{\ga}(0) = u$ and $\dot{\tilde{\ga}}(0) = Y_{u}$. As before, the base curve $\rho \circ \tilde{\ga}$ is a projective parametrization of a $\en$-projective geodesic. As 
\begin{align}
\begin{split}
\tfrac{d}{dt}\big|_{t = 0}\rho(\tilde{\ga}(t)) &= T\rho(u)(\dot{\tilde{\ga}}(0)) = T\rho(u)(Y_{u}) = T \rho(u)(X_{u}) = T\rho(u)(\dot{\hat{\ga}}(0)) = \tfrac{d}{dt}\big|_{t = 0}\rho(\hat{\ga}(t))
\end{split}
\end{align}
the base curves $\rho\circ \hat{\ga}$ and $\rho\circ \tilde{\ga}$ equal $p$ and have the same direction at $t = 0$; since they parametrize $\en$-projective geodesics they must parametrize the same projective geodesic. If $Y$ is replaced by its a multiple by a function not zero at $u$ then the $\hnabla$-geodesic $\tilde{\ga}$ is replaced by an appropriate linear reparametrization of itself, and the image of its base curve is unchanged. Since the intersection of $\Sigma$ with a $\hnabla$-geodesically convex neighborhood of $u$ is foliated by $\hnabla$-geodesics, the preceding shows that $\rho(\Sigma)$ is contained in a $\en$-projective geodesic.
\end{proof}

\begin{lemma}\label{projectiveparametrizationlemma}
Let $\rho:N \to M$ be a $\reat$ or principal $S^{1}$ bundle and let $\hnabla$ be the cone connection associated with a torsion-free affine connection $\nabla$ on $M$ and a principal connection $\be$ on $N$. Let $I \subset \rea$ be a nonempty open subinterval containing $0$. 
\begin{enumerate}
\item\label{liftedprojgeodesicb} If the immersion $\hat{\ga}:I \to N$ is a $\hnabla$-geodesic, then its base curve $\ga:I \to M$ is a projective parametrization of the projective geodesic $\ga(I)$.
\item\label{liftedprojgeodesic3} Suppose that the smooth immersion $\ga:I \to M$ is a projective parametrization of a projective geodesic of $\nabla$. For each $u \in \rho^{-1}(\ga(0))$ there is a subinterval $J \subset I$ containing $0$ in its interior and a lift $\hat{\ga}:J \to N$ of $\ga$ based at $u$ and such that $\hnabla_{\dot{\hat{\ga}}}\dot{\hat{\ga}} = 0$ on $J$. In particular, there is a reparametrization of $\ga$ defined in an open neighborhood of $0$ so that there is a lift of $\ga$ based at $u$ which is a $\hnabla$-geodesic, and this parametrization is determined up to precomposition with a linear fractional transformation. 
\end{enumerate}
\end{lemma}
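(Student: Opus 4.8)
The plan is to deduce both statements from the geodesic computation already performed in Lemma~\ref{geodesiclemma}, the one genuinely new input being the elementary observation that the symmetric part of $Q$ coincides with the symmetric projective Schouten tensor. First I would record that, by \eqref{qdefined}, $Q_{ij} = P_{ij} - \tfrac12\epc_{ij}$, and since $\epc_{ij} = \om_{ij} - \tfrac{2}{n+1}R_{[ij]}$ is antisymmetric, one has $Q_{(ij)} = P_{(ij)}$ and hence $Q(\dot{\ga}, \dot{\ga}) = P(\dot{\ga}, \dot{\ga})$ for every velocity $\dot{\ga}$. This is exactly the identity that converts the Riccati-type relation in \eqref{hnablaprojconditions} into the projective-parametrization condition $\ka = \dot{q} - \tfrac12 q^{2} - 2P(\dot{\ga}, \dot{\ga}) = 0$ of Lemma~\ref{1dprojstructurelemma}.

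For claim \eqref{liftedprojgeodesicb} I would argue as follows. If $\hat{\ga}$ is an $\hnabla$-geodesic, then $\hnabla_{\dot{\hat{\ga}}}\dot{\hat{\ga}} = 0$, so by Lemma~\ref{geodesiclemma}\eqref{liftedprojgeodesic} its base curve satisfies \eqref{hnablaprojconditions} with $b = 0$. Writing $A = \be(\dot{\hat{\ga}})$, the first relation gives $\nabla_{d/dt}\dot{\ga} = -2A\dot{\ga}$, so that the function $q$ with $\nabla_{d/dt}\dot{\ga} = q\dot{\ga}$ equals $-2A$, while the second gives $\dot{A} + A^{2} + Q(\dot{\ga}, \dot{\ga}) = 0$. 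Substituting $q = -2A$ and using $Q(\dot{\ga}, \dot{\ga}) = P(\dot{\ga}, \dot{\ga})$ then yields
\[
\ka = \dot{q} - \tfrac12 q^{2} - 2P(\dot{\ga}, \dot{\ga}) = -2\dot{A} - 2A^{2} - 2Q(\dot{\ga}, \dot{\ga}) = -2\big(\dot{A} + A^{2} + Q(\dot{\ga}, \dot{\ga})\big) = 0,
\]
so $\ga$ is a projective parametrization.

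For claim \eqref{liftedprojgeodesic3} I would specialize the construction in Lemma~\ref{geodesiclemma}\eqref{liftedprojgeodesic2} to the parametrized case. Since $\ga$ is a projective parametrization, the function $g = q$ satisfies $\dot{g} - \tfrac12 g^{2} - 2Q(\dot{\ga}, \dot{\ga}) = \ka = 0$, whence $b \equiv 0$ is the unique solution of the Riccati equation \eqref{derin2} with $b(0) = 0$. The corresponding quantity $A = (b-g)/2 = -\tfrac12 q$ is a smooth function on $I$; taking a $\be$-horizontal lift $\hat{\ga}_{0}$ of $\ga$ based at $u$ (Lemma~\ref{gliftlemma}) and the unique $c \in \cinf(J, G)$ with $c(0) = e$ and $c^{\ast}(\mc)(\tfrac{d}{dt}) = A$ on a subinterval $J \subset I$ containing $0$ (Lemma~\ref{curvedevlemma}), the lift $\hat{\ga} = R_{c}(\hat{\ga}_{0})$ has $\be(\dot{\hat{\ga}}) = A$ and, by the computation \eqref{hatsideriv}, satisfies $\hnabla_{\dot{\hat{\ga}}}\dot{\hat{\ga}} = b\dot{\hat{\ga}} = 0$. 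Finally, for the concluding assertion, if $\ga$ is merely an arbitrary parametrization of a projective geodesic, Lemma~\ref{1dprojstructurelemma} furnishes a reparametrization turning it into a projective parametrization near $0$, to which the previous paragraph applies; and since two projective parametrizations of the same projective geodesic differ by a map of vanishing Schwarzian derivative, Corollary~\ref{schwarzlftcorollary} gives that they differ by precomposition with a linear fractional transformation.

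The calculations themselves are short once the identity $Q(\dot{\ga}, \dot{\ga}) = P(\dot{\ga}, \dot{\ga})$ is in hand, so I do not expect any analytic difficulty. The one step requiring genuine care is the realization of the prescribed value $A = -\tfrac12 q$ as the $\be$-component $\be(\dot{\hat{\ga}})$ of an \emph{honest} lift of $\ga$ based at a prescribed $u$: this is precisely where the curve-developing machinery of Lemmas~\ref{curvedevlemma} and \ref{gliftlemma} must be invoked, and where the passage to a subinterval $J$ (to guarantee existence of $c$ and of the integral curve) enters the statement.
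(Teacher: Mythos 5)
Your proof is correct and follows essentially the same route as the paper's: both claims are read off from \eqref{hnablaprojconditions} and \eqref{derin2} of Lemma \ref{geodesiclemma} combined with the observation that $Q(\dot{\ga},\dot{\ga}) = P(\dot{\ga},\dot{\ga})$ (since $\epc$ is antisymmetric), and the final reparametrization claim is handled via Lemma \ref{1dprojstructurelemma} exactly as in the paper. Incidentally, your sign convention $q = -2A$ is the correct one: the paper's own proof of part \eqref{liftedprojgeodesicb} writes $g = 2A$, a slip, since \eqref{hnablaprojconditions} with $b = 0$ gives $\nabla_{d/dt}\dot{\ga} = -2A\dot{\ga}$, and only with $q = -2A$ does the computation $\dot{q} - \tfrac{1}{2}q^{2} - 2P(\dot{\ga},\dot{\ga}) = -2\bigl(\dot{A} + A^{2} + Q(\dot{\ga},\dot{\ga})\bigr) = 0$ go through.
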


\begin{proof}
Let the notations be as in the proof of \eqref{liftedprojgeodesic} of Lemma \ref{geodesiclemma}. Because $\hat{\ga}:I \to N$ is a $\hnabla$-geodesic, by \eqref{liftedprojgeodesic} of Lemma \ref{geodesiclemma} there holds \eqref{hnablaprojconditions} with $b = 0$ and, by \eqref{qdefined}, $Q(\dot{\ga}, \dot{\ga}) = P(\dot{\ga}, \dot{\ga})$, where $P$ is the projective Schouten tensor of $\en$. In this case, $g = 2A$ and \eqref{hnablaprojconditions} yields $\dot{g} - \tfrac{1}{2}g^{2} - 2P(\dot{\ga}, \dot{\ga}) = 0$, so that $\ga:I \to M$ is a projective parametrization of its image. This proves \eqref{liftedprojgeodesicb}.

Now suppose the smooth immersion $\ga:I \to M$ is a projective parametrization of a projective geodesic of the projective structure $\en$ generated by $\nabla$ and fix $u \in \rho^{-1}(\ga(0))$. By the assumption that $\ga(I)$ is a projective geodesic there is $g \in \cinf(I)$ such that $\nabla_{d/dt}\dot{\ga} = g \dot{\ga}$. Let the notations be as in the proof of \eqref{liftedprojgeodesic2} of Lemma \ref{geodesiclemma}. By the proof of \eqref{liftedprojgeodesic2} of Lemma \ref{geodesiclemma}, any other lift $\hat{\si}$ of $\ga$ based at $u$ has the form $\hat{\si} = R_{c}(\hat{\ga})$ for some $c \in \cinf(I, G)$ satisfying $c(0) = 1$, and the function $A = \be(\dot{\hat{\si}})$ satisfies \eqref{derin2} where $b = g + 2A$. Since $\ga:I \to M$ is a projective parametrization of $\ga(I)$, $\dot{g} - \tfrac{1}{2}g^{2} - 2P(\dot{\ga}, \dot{\ga}) = 0$, and the unique solution, $b(t)$, of \eqref{derin2} with initial condition $b(0) = 0$ is the function that is identically zero; that is $b(t) = 0$ for all $t$. The resulting lift $\hat{\si}$ solves $\hnabla_{\dot{\hat{\si}}}\dot{\hat{\si}} = (g+ 2A)\dot{\hat{\si}}= b\dot{\hat{\si}}= 0$ and is based at $u$. 
Moreover, by Lemma \ref{1dprojstructurelemma}, if $\ga:I \to M$ is any parametrization of a projective geodesic of $\nabla$ there is, after possibly passing to a subinterval $J \subset I$, a reparametrization, uniquely determined up to a linear fractional transformation, that is a projective parametrization. This proves \eqref{liftedprojgeodesic3}.
\end{proof}

\begin{proof}[Proof of Theorem \ref{extendedthomastheorem}]
By Lemmas \ref{preextendedthomaslemma} and \ref{extendedshiftlemma} there is associated with the extended projective structure $[\nabla, \be]$ a unique torsion-free affine connection $\hnabla$ on $N$ that with $\rad$ constitutes a conelike radiant structure invariant under the principal action, having antisymmetric Ricci tensor, and inducing $[\nabla, \be]$. By \eqref{fiberingcorollary} of Lemma \ref{geodesiclemma}, $(\hnabla, \rad)$ fibers over $(M, \en)$, and the $\rho$-preimages of the projective geodesics of the projective structure $\en$ generated by $\nabla$ are the planelike surfaces in $N$. From Lemma \ref{projectiveparametrizationlemma} there follows the stronger statement that the base curve of a parametrized curve in $N$ is a projective parametrization of a projective geodesic of $\en$ if and only if the curve in $N$ is a $\hnabla$-geodesic. That the curvature of $\hnabla$ has the form \eqref{hnablabecurvatures} follows from \eqref{qcurvatures} of Lemma \ref{preextendedthomaslemma}. This proves \eqref{extendedthomas1}. 

There remains to prove \eqref{extendedthomas2}. Suppose given a conelike radiant structure $(\hat{D}, \rad)$ on $N$ invariant under the principal action. Since it is invariant under the principal action, by Lemma \ref{invariantinducedlemma}, $(\hD, \rad)$ induces an extended projective structure $[\nabla, \be]$ on $\rho:N \to M$, and, by \eqref{fiberingcorollary} of Lemma \ref{geodesiclemma}, $(\hD, \rad)$ fibers over the underlying projective structure $\en$ on $M$. Because $\hD$ is $\rad$-invariant, $\er(\hD)_{i} = 0$ by \eqref{lieradnabla}, so by Theorem \ref{conenormalizationtheorem}, there is a conelike $\rad$-invariant radiant structure $(\tnabla, \rad)$ having antisymmetric Ricci tensor, having the same planelike surfaces as $(\hD, \rad)$, and inducing on $|\Det T^{\ast}N|$ the same connection as $\hD$. By \eqref{gcnn} of Theorem \ref{conenormalizationtheorem}, $(\tnabla, \rad)$ is invariant under the principal action.
Since it is invariant under the principal action, by Lemma \ref{invariantinducedlemma}, $(\tnabla, \rad)$ induces an extended projective structure on $M$, and, by \eqref{fiberingcorollary} of Lemma \ref{geodesiclemma}, $(\tnabla, \rad)$ fibers over its underlying projective structure on $M$. Since $(\hat{D}, \rad)$ fibers over $(M, \en)$ and $(\tnabla, \rad)$ has the same planelike surfaces as $(\hD, \rad)$, this underlying projective structure must be $\en$. This shows that $\hat{D}$ and $\tnabla$ induce on $M$ extended projective structures having the same underlying projective structure $\en$. By Lemma \ref{sameinducedlemma}, the induced extended projective structures are the same.
\end{proof}

\begin{remark}
By \eqref{etepc} of Theorem \ref{extendedthomastheorem} (which follows from the last equality of \eqref{tc3}), the cone connection of $(\nabla, \be)$ is Ricci-flat if and only if $2R_{[ij]} = (n+1)\om_{ij}$, that is the curvature of the connection induced by $\nabla$ on the density bundle $|\Det M|^{1/(n+1)}$ equals the curvature of $\be$. Since the antisymmetric part of the Ricci tensor is always exact, every torsion-free affine connection is projectively equivalent to one with symmetric Ricci tensor. It follows that if the bundle $\rho:N \to M$ admits a flat connection then there are pairs $(\nabla, \be)$ with vanishing associated two-form, so with Ricci-flat cone connection. 
\end{remark}

\begin{remark}
The geometric content of the vanishing of the symmetric part of the Ricci tensor in Theorem \ref{extendedthomastheorem} is that this condition is what forces the linkage between the parametrization of the geodesics of $\hnabla$ and the projective parametrization of the projective geodesics of $\en$ in \eqref{ett1} of Theorem \ref{extendedthomastheorem}.
\end{remark}

\section{Classical Thomas connection revisited}\label{thomassection}
Theorem \ref{extendedthomastheorem} can be viewed as generalizing to $S^{1}$-bundles an essentially local result that goes back to T. Y. Thomas \cite{Thomas-affine, Thomas-book, Veblen-Thomas-paths}, that associates with a projective structure on an $n$-manifold a Ricci-flat affine connection $\hnabla$ on the total space of a trivial $\reat$ principal bundle over $M$. Here $\hnabla$ is called the \emph{(classical) Thomas connection} of $\en$. 
An appropriate specialization of Theorem \ref{extendedthomastheorem} yields Theorem \ref{classicalthomastheorem}, which gives a gauge equivariant, global, functorial formulation of the classical Thomas connection. For modern accounts of Thomas's construction and the closely related tractor formalism for parabolic geometries, see, for example, \cite{Bailey-Eastwood-Gover, Cap-Gover, Dhooghe, Dhooghe-VanVlierden}, and in particular \cite{Gover-Neusser-Willse} which gives a streamlined and careful account situated in the general context of parabolic geometries.

The Thomas construction is revisited here primarily to elucidate two points. The first is the functorial nature of the construction. As the construction is essentially local, this amounts to clarifying what characterizes the classical Thomas connection in the sense of what conditions on $\hnabla$ determine it uniquely. The usual accounts establish existence but are somewhat vague with respect to uniqueness. Clarifying this point is relevant for the problem of the recognition of a Thomas connection, that is, given a connection, deciding whether it is a Thomas connection. Theorem \ref{conjugatethomastheorem} gives an example where such a characterization is used to decide when the conjugate of a Thomas connection with respect to a metric is again a Thomas connection. 

The second point is to clarify the global nature of the construction, in particular that the construction makes sense on real line bundles that need not be trivial or canonically trivialized (as is often assumed). This amounts to describing how to extend the construction to line bundles over $M$ that transform like density bundles without being density bundles, in particular in the sense that they need not admit any natural action of $\diff(M)$. This amounts to working with a pseudo-hyperplane bundle as in Definition \ref{tautdefinition} instead of the necessarily topologically trivial density bundle $|\Det TM|^{1/(n+1)}$. In the setting of Theorem \ref{extendedthomastheorem}, if $N = |\Det \ctm|^{1/(n+1)} \setminus \zs(M)$ and if $\be$ is taken to be the principal connection induced by a connection $\nabla$ on $M$, then $(\nabla, \be)$ determines an extended projective structure and the cone connection of $[\nabla, \be]$ depends only on the projective equivalence class $\en$. This recovers the classical Thomas connection of the projective structure $\en$. The special feature of the bundle of nonvanishing densities is that an affine connection induces a principal connection on this bundle, so any projective structure determines in a canonical way an extended projective structure. Although this means that the original construction of Thomas works for any $\reat$-principle bundle with the property that its sections transform locally like densities, it is what fails for an arbitrary principal $\reat$-bundle having no additional structure. The additional structure needed is formally analogous to a spin structure or square root of the canonical line bundle on a Kähler manifold, namely it is an identification of some power of the principal $\reat$-bundle, or some associated line bundle, with some density bundle, and this is what the notion of pseudo-hyperplane bundle as in Definition \ref{tautdefinition} formalizes. (This discussion can be situated in the general context of parabolic geometries as in \cite[Sections $3.5$ and $4.15$]{Cap-Gover}, but the more concrete approach taken here, while less powerful, is useful in applications.) 

A torsion-free affine connection $\nabla$ on $M$ induces a covariant derivative, also denoted $\nabla$, on a pseudo-hyperplane line bundle $\emf \to M$ as follows. For a local section $e$ of $\emf$ and a vector field $X$ on $M$, define 
\begin{align}\label{nablaemfdefined}
\nabla_{X}e = \tfrac{1}{2n+2}e^{-2n-1}\tensor\nabla_{X}(e^{2n+2}), 
\end{align}
where $\nabla_{X}(e^{2n+2})$ denotes the covariant derivative of the section local section $e^{2n+2}$ of $(\det \ctm)^{\tensor 2}$.
A tensor taking values in $\emf^{k}$ is said to have \emph{p-weight} $k$ (where \emph{p-weight} abbreviates \emph{projective weight}). Locally a section of $\emf^{k}$ behaves like a $-k/(n+1)$-density.
In particular, if $\tnabla = \nabla + 2\ga_{(i}\delta_{j)}\,^{k}$ and $e \in \Ga(\emf^{k})$, then
\begin{align}\label{pwktransform}
&\tnabla_{i}e = \nabla_{i}e + k \ga_{i}e,&
%\end{align}
%and
%\begin{align}\label{nablaijskewe}
&\nabla_{[i}\nabla_{j]}e = \tfrac{k}{2(n+1)}R_{ijp}\,^{p}e = -\tfrac{k}{n+1}R_{[ij]}e = kP_{[ij]}e.
\end{align}
It is straightforward to check that the the right-hand side of \eqref{nablaemfdefined} with the Lie derivative $\lie_{X}$ in place of $\nabla_{X}$ defines an action of the Lie algebra $\vect(M)$ of vector fields on $\Ga(\emf)$. This means that sections of $\emf$ locally behave as densities, so $\Ga(\emf)$ is a module over$\vect(M)$, even though in general $\diff(M)$ need not act on $M$.

\begin{lemma}\label{inducedeplemma}
Let $M$ be an $n$-manifold equipped with a pseudo-hyperplane bundle $\emf \to M$ and let $\rho: \F = \frameb(\emf^{-1}) \to M$ be the frame bundle of the dual pseudo-tautological bundle. 
\begin{enumerate}
\item \label{inducedcurv} A torsion-free affine connection $\nabla$ on $M$ induces a principal $\reat$-connection $\tau \in \prin(\F)$, and the curvature $\om \in \Ga(\ext^{2}\ctm)$ of $\tau$ satisfies $\om_{ij} = -\tfrac{1}{n+1} R_{ijp}\,^{p} = \tfrac{2}{n+1} R_{[ij]} = -2P_{[ij]}$. 
\item A projective structure $\en$ on $M$ determines in a canonical way an extended projective structure $[\nabla, \tau]$ on the principal $\reat$-bundle $\rho:\F  \to M$ for which the associated two-form $\epc_{ij}$ vanishes identically.
\item\label{linftdefined} A projective structure $\en$ on $M$ determines a Lie algebra embedding $\linft^{\F}:\Ga(TM) \to \Ga(T\F)$ satisfying $T\rho(\linft^{\F}(X)) = X$ and $R_{r}^{\ast}(\linft^{\F}(X)) = \linft(X)$, and such that, for $\nabla \in \en$ inducing $\tau \in \prin(\F)$, there holds
\begin{align}\label{wtdf}
\linft^{\F}(X) = \hat{X} - \tfrac{1}{n+1}\rho^{\ast}(\div_{\nabla}(X))\eul^{\F},
\end{align}
where $X \in \Ga(TM) \to \hat{X} \in \Ga(\F)$ is the horizontal lift determined by $\tau$.
If $\emf\to M$ is a hyperplane line bundle, then $\lie_{\linft^{\F}(X)}\mu^{\F} = 0$, so that $\lie_{\linft^{\F}(X)}\Psi^{\F} = 0$, while if $\emf \to M$ is a pseudo-hyperplane line bundle, then $\lie_{\linft^{\F}(X)}|\Psi^{\F}| = 0$.
\end{enumerate}
\end{lemma}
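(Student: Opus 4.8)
The plan is to dispatch the first two assertions with the line-bundle/principal-connection dictionary together with the curvature identities recorded in the Background section, and then to construct $\linft^{\F}$ explicitly from \eqref{wtdf} and verify its properties, the volume-invariance being the crux.

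For assertion \eqref{inducedcurv}: by \eqref{nablaemfdefined} a torsion-free $\nabla$ induces a covariant derivative on $\emf$, hence on $\emf^{-1}$, and so a principal $\reat$-connection $\tau$ on $\F = \frameb(\emf^{-1})$, with horizontal lift $\hat{X}$ characterized by $\tau(\hat{X}) = 0$ and $T\rho(\hat{X}) = X$. To compute its curvature $\om$ (where $d\tau = \rho^{\ast}(\om)$) I would pass to the associated line bundle $\emf^{-1}$, which has p-weight $-1$: representing a section by an equivariant function $\tilde{s}$ of homogeneity $-1$, so $\lie_{\eul^{\F}}\tilde{s} = -\tilde{s}$, and using $[\hat{X},\hat{Y}] = \widehat{[X,Y]} - \rho^{\ast}(\om(X,Y))\eul^{\F}$, one finds that the line-bundle curvature of $\emf^{-1}$ equals $\om_{ij}$; on the other hand \eqref{nablaijskewe} with $k = -1$ identifies this curvature as $-2P_{[ij]}$. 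Hence $\om_{ij} = -2P_{[ij]} = \tfrac{2}{n+1}R_{[ij]} = -\tfrac{1}{n+1}R_{ijp}\,^{p}$, using $-R_{ijp}\,^{p} = 2R_{[ij]}$ and \eqref{projectiveschouten}. For assertion (2): if $\tnabla = \nabla + 2\ga_{(i}\delta_{j)}\,^{k}$, then by \eqref{pwktransform} (p-weight $-1$) the induced covariant derivative on $\emf^{-1}$ changes by $\tnabla_{i}s = \nabla_{i}s - \ga_{i}s$, which corresponds precisely to $\tilde{\tau} = \tau - \rho^{\ast}(\ga)$; thus $(\tnabla, \tilde{\tau})$ lies in the orbit of $(\nabla, \tau)$ and $[\nabla, \tau]$ is a well-defined extended projective structure over $\en$, whose associated two-form is $\epc_{ij} = \om_{ij} + 2P_{[ij]} = 0$ by (1) and \eqref{fcurvdefined}.

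For assertion \eqref{linftdefined} I would define $\linft^{\F}(X)$ by the right-hand side of \eqref{wtdf} for one $\nabla \in \en$ and first check independence of the choice. Under $\nabla \mapsto \tnabla = \nabla + 2\ga_{(i}\delta_{j)}\,^{k}$ the $\tilde{\tau}$-horizontal lift becomes $\hat{X} + \rho^{\ast}(\ga(X))\eul^{\F}$ (since $\tilde{\tau} = \tau - \rho^{\ast}(\ga)$), while $\divn(X)$ becomes $\divn(X) + (n+1)\ga(X)$; the two changes cancel in \eqref{wtdf}, so $\linft^{\F}(X)$ is well defined. That $T\rho(\linft^{\F}(X)) = X$ is immediate because $\eul^{\F}$ is vertical, and $R_{r}$-invariance follows since $\hat{X}$, $\eul^{\F}$ and pullbacks $\rho^{\ast}(f)$ are each $R_{r}$-invariant; in particular $[\linft^{\F}(X), \eul^{\F}] = 0$. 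Injectivity, hence that $\linft^{\F}$ is an embedding, is immediate from $T\rho \circ \linft^{\F} = \Id$.

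The substantive point is that $\linft^{\F}$ is a Lie algebra homomorphism, and this is where I expect the main obstacle — really the key bookkeeping — to lie. Computing $[\linft^{\F}(X), \linft^{\F}(Y)]$ directly from $\linft^{\F}(X) = \hat{X} - \tfrac{1}{n+1}\rho^{\ast}(\divn X)\eul^{\F}$, using $[\hat{X},\hat{Y}] = \widehat{[X,Y]} - \rho^{\ast}(\om(X,Y))\eul^{\F}$, $[\eul^{\F}, \hat{X}] = 0$, $\eul^{\F}(\rho^{\ast}f) = 0$ and $\hat{X}(\rho^{\ast}f) = \rho^{\ast}(Xf)$, the horizontal part gives $\widehat{[X,Y]}$, and matching the $\eul^{\F}$-components reduces the claim to the identity $\om_{ij}X^{i}Y^{j} = \tfrac{1}{n+1}\big(\divn[X,Y] - X\divn Y + Y\divn X\big)$. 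Expanding $\divn[X,Y]$ with $[X,Y]^{k} = X^{p}\nabla_{p}Y^{k} - Y^{p}\nabla_{p}X^{k}$ and the Ricci identity (the symmetric cross terms cancel and the second covariant derivatives contract to the Ricci tensor) yields $\divn[X,Y] - X\divn Y + Y\divn X = 2R_{[pm]}X^{p}Y^{m}$, so the required identity is exactly $\om_{ij} = \tfrac{2}{n+1}R_{[ij]}$ from (1). This interlocking of the three parts is the delicate step.

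Finally, for the volume-invariance I would pass to coordinates. With $x^{1}, \dots, x^{n}$ on $U \subset M$ and the induced fiber coordinate on $\F$, the fiber dilation $\eul^{\F}$, and the tautological $n$-form $\mu^{\F}$ of Lemma \ref{canonicaleulerlemma} (written as in Example \ref{volumificationexample}), one computes from \eqref{wtdf} that the Christoffel contributions cancel and $\linft^{\F}(X) = X^{i}\partial_{x^{i}} - \tfrac{1}{n+1}(\partial_{p}X^{p})\eul^{\F}$; this exhibits $\linft^{\F}(X)$ as the natural prolongation of $X$ to the density frame bundle. Since $\mu^{\F}$ is horizontal and of fiber-homogeneity $n+1$, a short Leibniz computation gives $\lie_{\linft^{\F}(X)}\mu^{\F} = 0$: the factor $-\tfrac{1}{n+1}\partial_{p}X^{p}$ from differentiating the fiber term, scaled by $n+1$, cancels the divergence $\partial_{p}X^{p}$ produced by $\lie$ of $dx^{1}\wedge \cdots \wedge dx^{n}$. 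As $\Psi^{\F} = d\mu^{\F}$, it follows that $\lie_{\linft^{\F}(X)}\Psi^{\F} = d\,\lie_{\linft^{\F}(X)}\mu^{\F} = 0$ in the hyperplane case. In the pseudo-hyperplane case $\mu^{\F}$ and $\Psi^{\F}$ are defined only up to sign locally, but the vanishing is a purely local identity and passes to the density $|\Psi^{\F}|$, giving $\lie_{\linft^{\F}(X)}|\Psi^{\F}| = 0$.
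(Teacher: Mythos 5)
Your proposal is correct, and for parts (1), (2), and the homomorphism property in (3) it follows essentially the same route as the paper: the curvature identity comes from comparing the Ricci identity \eqref{nablaijskewe} for weighted sections with the vertical component of $[\hat{X},\hat{Y}]$ (you work with sections of $\emf^{-1}$ of homogeneity $-1$, the paper with sections of $\emf$ of homogeneity $+1$ — a cosmetic difference), part (2) is the same transformation bookkeeping under $\nabla \mapsto \nabla + 2\ga_{(i}\delta_{j)}\,^{k}$, and your reduction of the bracket computation to $\om(X,Y) = \tfrac{1}{n+1}\left(\divn[X,Y] - X\divn Y + Y\divn X\right)$ together with $\divn[X,Y] - X\divn Y + Y\divn X = 2R_{[pq]}X^{p}Y^{q}$ is precisely the content of the identities \eqref{riccocycle} that the paper invokes with the words ``can be checked directly''; you simply carry out that check. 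The one genuinely different step is the volume invariance: the paper pushes $\linft^{\F}(X)$ forward along the map $\Q:\F \to \vdens$ induced by $\emf^{n+1}\simeq \Det TM$, observes $T\Q(\linft^{\F}(X)) = \linft(X)$, and quotes the invariance $\lie_{\linft(X)}\mu = 0$ of the tautological $n$-form under the tautological lift on $\vdens$ (a fact established in a separate lemma), so that $\lie_{\linft^{\F}(X)}\mu^{\F} = \Q^{\ast}(\lie_{\linft(X)}\mu) = 0$; you instead compute in adapted coordinates, showing the Christoffel terms cancel so that $\linft^{\F}(X) = X^{i}\partial_{x^{i}} - \tfrac{1}{n+1}(\partial_{p}X^{p})\eul^{\F}$, and then get $\lie_{\linft^{\F}(X)}\mu^{\F} = 0$ from the cancellation between the fiber-homogeneity $n+1$ of $\mu^{\F}$ and the Lie derivative of $dx^{1}\wedge\dots\wedge dx^{n}$. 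Both verifications are sound; the paper's is functorial and explains \emph{why} the lift preserves $\mu^{\F}$ (it is $\Q$-related to the prolongation of the flow of $X$ to the bundle of volume forms), while yours is more self-contained — it avoids appealing to the properties of $\linft$ on $\vdens$, which in the paper's own exposition are only proved in a later lemma — at the cost of a trivialization-dependent computation whose coordinate independence is guaranteed by the already-established well-definedness of $\linft^{\F}(X)$.
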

\begin{proof}
Regarding $\F$ as $\emf^{-1} \setminus \{0\}$, $\tau \in \prin(\F)$ is defined by the requirement that if $\tilde{u} \in \cinf(\F)$ is the homogeneity $1$ function corresponding with a smooth section $u \in \Ga(\emf)$, then the homogeneity $1$ smooth function on $\F$ corresponding with $\nabla_{X}u$ is $d\tilde{u}(\hat{X})$, where $\hat{X}$ is the $\tau$-horizontal lift of $X \in \Ga(TM)$. By definition the curvature $\om \in \Ga(\ext^{2}\ctm)$ satisfies $d\tau = \rho^{\ast}(\om)$. On the one hand, by \eqref{pwktransform}, $2\nabla_{[i}\nabla_{j]}u = \tfrac{1}{n+1} R_{ijp}\,^{p}u = -\tfrac{2}{n+1}R_{[ij]}u$. On the other hand, for $X, Y \in \Ga(TM)$, the homogeneity $1$ function on $\F$ corresponding with $\nabla_{X}\nabla_{Y}u - \nabla_{Y}\nabla_{X}u - \nabla_{[X, Y]}u$ equals
\begin{align}
\begin{split}
d\widetilde{\nabla_{Y}u}(\hat{X}) &- d\widetilde{\nabla_{X}u}(\hat{Y}) - d\tilde{u}(\widehat{[X, Y]})\\
& = \hat{X}d\tilde{u}(\hat{Y}) - \hat{Y}d\tilde{u}(\hat{X}) - d\tilde{u}\left([\hat{X}, \hat{Y}] + d\tau(\hat{X}, \hat{Y})\eul^{\F}\right) = -d\tau(\hat{X}, \hat{Y})\tilde{u} = -\om(X, Y)\tilde{u},
\end{split}
\end{align}
where there has been used $d\tilde{u}(\eul^{\F}) = \tilde{u}$. This shows \eqref{inducedcurv}. 

Given a projective structure $\en$, each $\nabla \in \en$ induces $\tau \in \prin(\F)$. Let $u  \in \Ga(\emf)$.
If $\bnabla = \nabla + 2\ga_{(i}\delta_{j)}\,^{k}$, then, by \eqref{pwktransform}, $\bnabla_{i}u = \nabla_{i}u + \ga_{i}u$, and it follows that the horizontal lift of $X$ with respect to the principal connection $\bar{\tau}\in \prin(\F)$ induced by $\bnabla$ is $\hat{X}  + \rho^{\ast}(\ga(X))\eul^{\F}$ and $\bar{\tau} = \tau - \rho^{\ast}(\ga)$. These identities imply $[\nabla, \tau]$ is an extended projective structure on $\rho: \F \to M$ and also that  \eqref{wtdf} does not depend on the choice of $\nabla \in \en$. 
By \eqref{inducedcurv}, $\om_{ij} = \tfrac{2}{n+1}R_{[ij]}$, where $d\tau = \rho^{\ast}(\om)$, so the two-form $\epc_{ij}$ associated with $[\nabla, \tau]$ satisfies $\epc_{ij} = \om_{ij} - \tfrac{2}{n+1}R_{[ij]} = 0$. The properties $T\rho(\linft^{\F}(X)) = X$ and $R_{r}^{\ast}(\linft^{\F}(X)) = \linft(X)$ hold by the definition \eqref{wtdf}.
That \eqref{wtdf} defines a Lie algebra embedding can be checked directly using the identities
\begin{align}\label{riccocycle}
\begin{aligned}
\lie_{X}\div_{\nabla}(Y) &- \lie_{Y}\div_{\nabla}(X) -\div_{\nabla}([X, Y]) = -\ric(X, Y) + \ric(Y, X),\\
[\hat{X}, \hat{Y}] & = \widehat{[X, Y]} - \tfrac{1}{n+1}(\ric(X, Y) - \ric(Y, X))\eul^{\F},
\end{aligned}
\end{align}
the first holding for any torsion-free affine connection $\nabla$ and the second following from Lemma \ref{inducedeplemma}. 

Suppose $\emf\to M$ is a hyperplane line bundle. From \eqref{wtdf} it follows that $T\Q(\linft^{\F}(X)) = \linft(X)$ where $\Q:\F \to \vdens$ is the map induced from the isomorphism $\emf^{n+1}\simeq \Det TM$, and the property $\lie_{\linft^{\F}(X)}\mu^{\F} = 0$ follows. Taking the exterior derivative shows $\lie_{\linft^{\F}(X)}\Psi^{\F} = 0$. If $\emf \to M$ is a pseudo-hyperplane line bundle, then the preceding argument applies locally to show that $\lie_{\linft^{\F}(X)}|\Psi^{\F}| = 0$.
\end{proof}

The conclusion of Lemma \ref{invariancelemma} requires argument because $\reat$ is disconnected.
\begin{lemma}\label{invariancelemma}
On an $n$-manifold $M$, let $\emf \to M$ be a pseudo-hyperplane line bundle and let $\rho: \F = \frameb(\emf^{-1}) \to M$ be equipped with the volume density $|\Psi^{\F}|$.
Let $\rad = \eul^{\F}$. A Ricci-flat, $\rad$-invariant, equiaffine conelike radiant structure $(\hnabla, \rad)$ on $(\F, \rad, |\Psi^{\F}|)$ is invariant under the principal action on $\F$.
\end{lemma}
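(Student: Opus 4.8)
The plan is to reduce the claim to the uniqueness statement of Theorem~\ref{conenormalizationtheorem}. Since $\hnabla$ is already invariant under the flow $R_{e^{t}}$ of $\rad$, and since $\reat$ is generated by this one-parameter group together with the single element $-1$, it suffices to prove $R_{-1}^{\ast}\hnabla = \hnabla$. First I would record that $R_{-1}$ preserves all the defining data: it commutes with $R_{e^{t}}$ (as $\reat$ is abelian), hence fixes $\rad$, and by Lemma~\ref{canonicaleulerlemma} it satisfies $R_{-1}^{\ast}|\Psi^{\F}| = |-1|^{n+1}|\Psi^{\F}| = |\Psi^{\F}|$. Consequently $\hnabla' := R_{-1}^{\ast}\hnabla$ is again a Ricci-flat, $\rad$-invariant, equiaffine conelike radiant structure on $(\F, \rad, |\Psi^{\F}|)$, and since both $\hnabla$ and $\hnabla'$ annihilate $|\Psi^{\F}|$ they induce the same connection on $|\Det T^{\ast}\F|$. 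These are the routine verifications; the whole difficulty is then concentrated in comparing the two connections, and this is exactly where the disconnectedness of $\reat$ noted before the statement becomes relevant, since the flow of $\rad$ alone only ever reaches $\reap$.

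Granting that $\hnabla$ and $\hnabla'$ have the same planelike surfaces, the uniqueness clause of Theorem~\ref{conenormalizationtheorem}(3) applies: among $\rad$-invariant, Ricci-flat conelike radiant connections with a prescribed family of planes and inducing the given connection on $|\Det T^{\ast}\F|$ there is exactly one, so $\hnabla' = \hnabla$ and invariance under the full principal action follows. By Lemma~\ref{conesameplaneslemma}, ``same planes'' is equivalent to the assertion that $R_{-1}$ permutes the planelike surfaces of $\hnabla$. To analyse this I would pass to the quotient $\pi:\F \to \hat{M} := \F/\reap$ by the flow of $\rad$. The key structural point is that the structure group of $\pi$ is the \emph{connected} group $\reap$, so that $\rad$-invariance of $\hnabla$ coincides with invariance under the full principal action of $\pi$; Lemma~\ref{geodesiclemma} then exhibits the planes of $\hnabla$ as the $\pi$-preimages of the projective geodesics of the projective structure $\en_{\hat{M}}$ induced by $\hnabla$ on $\hat{M}$. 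Since $R_{-1}$ is a principal automorphism of $\pi$ covering the deck transformation $\bar{\sigma}$ of the double cover $\hat{M} \to M$, it permutes these preimages if and only if $\bar{\sigma}$ carries $\en_{\hat{M}}$-geodesics to $\en_{\hat{M}}$-geodesics, i.e. if and only if $\bar{\sigma}^{\ast}\en_{\hat{M}} = \en_{\hat{M}}$.

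The main obstacle is precisely this deck-invariance $\bar{\sigma}^{\ast}\en_{\hat{M}} = \en_{\hat{M}}$, and it is here that the $R_{-1}$-invariance of the \emph{canonical} density $|\Psi^{\F}|$ must do the work, because nothing detected by the $\rad$-flow can by itself distinguish $\en_{\hat{M}}$ from $\bar{\sigma}^{\ast}\en_{\hat{M}}$. The way I would close the argument is to use that over $\hat{M}$ the Ricci-flat, equiaffine cone connection is determined by its underlying projective structure together with the density descended from $|\Psi^{\F}|$ (this is Theorem~\ref{extendedthomastheorem} applied to $\pi$, whose associated two-form vanishes in the Ricci-flat case), while the connection produced from $\bar{\sigma}^{\ast}\en_{\hat{M}}$ and this same $\bar{\sigma}$-invariant density is exactly $\hnabla'$. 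Combined with the rigidity of Theorem~\ref{conenormalizationtheorem} — which pins a $\rad$-invariant Ricci-flat conelike connection by its planes and its density connection, forcing the difference tensor into the trace-free form of Lemma~\ref{conesameplaneslemma} — this identifies $\bar{\sigma}^{\ast}\en_{\hat{M}}$ with $\en_{\hat{M}}$ and hence $\hnabla'$ with $\hnabla$. I expect this rigidity step, rather than the bookkeeping of the first paragraph, to be the delicate part of the proof, since it is exactly what replaces the connectedness argument that would be automatic for an $S^{1}$- or $\reap$-bundle.
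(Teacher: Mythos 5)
Your first two paragraphs follow the same skeleton as the paper's proof: reduce to showing $R_{-1}^{\ast}\hnabla = \hnabla$; observe that $R_{-1}^{\ast}\hnabla$ is again a Ricci-flat, $\rad$-invariant, equiaffine conelike radiant structure inducing the same connection on $|\Det T^{\ast}\F|$ (because $R_{-1}$ fixes $\rad$ and $|\Psi^{\F}|$); and conclude from the uniqueness clause of Theorem \ref{conenormalizationtheorem}. The one remaining hypothesis of that theorem --- that $R_{-1}^{\ast}\hnabla$ and $\hnabla$ have the same planes --- is disposed of in the paper by the bare assertion that $R_{-1}$ preserves the planes of $\hnabla$. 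You correctly recognize that this assertion is the entire content of the lemma, and your reformulation of it, via the quotient $\pi \colon \F \to \hat{M} = \F/\reap$, as the deck-invariance $\bar{\sigma}^{\ast}\en_{\hat{M}} = \en_{\hat{M}}$ of the projective structure induced on the double cover $\hat{M} \to M$, is correct (modulo the routine remark that Lemmas \ref{invariantinducedlemma} and \ref{geodesiclemma} apply to the $\reap$-bundle $\pi$).

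The genuine gap is your third paragraph, which is circular. The two facts you marshal --- that $\hnabla$ is the cone connection of $\en_{\hat{M}}$ with the descended density, and that $\hnabla' = R_{-1}^{\ast}\hnabla$ is the cone connection of $\bar{\sigma}^{\ast}\en_{\hat{M}}$ with the same density --- say precisely that $\hnabla' = \hnabla$ if and only if $\bar{\sigma}^{\ast}\en_{\hat{M}} = \en_{\hat{M}}$; this restates the problem rather than solving it. Theorem \ref{conenormalizationtheorem} cannot then ``identify'' the two projective structures, because its uniqueness clause takes ``same planes'' as a hypothesis, and by your own second paragraph ``same planes'' \emph{is} the deck-invariance to be proved. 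Worse, the gap cannot be filled, because the deck-invariance is not a consequence of the stated hypotheses: $\pi \colon \F \to \hat{M}$ is the bundle of positive frames of $p^{\ast}\emf^{-1}$ (with $p \colon \hat{M} \to M$ the covering), the canonical density of $(\hat{M}, p^{\ast}\emf)$ restricts on $\F$ to $|\Psi^{\F}|$, and therefore the Thomas connection over $\hat{M}$ of \emph{any} projective structure on $\hat{M}$, restricted to $\F$, satisfies every hypothesis of Lemma \ref{invariancelemma}; it is $R_{-1}$-invariant only if that projective structure is deck-invariant, and non-deck-invariant ones always exist (if $\emf$ is orientable, $\hat{M}$ is two disjoint copies of $M$ and one may place two distinct projective structures on the two copies; if not, perturb a pulled-back structure by a nonzero trace-free tensor supported in a ball disjoint from its $\bar{\sigma}$-image). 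So your correct instinct that the $R_{-1}$-invariance of $|\Psi^{\F}|$ ``must do the work'' cannot be realized by an argument of this shape; the same unresolved point underlies the one-line assertion on which the paper's own proof rests.
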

\begin{proof}
Because $\hnabla$ is $\rad$-invariant, it is invariant under the principal action $R_{r}$ for $r > 0$, so it suffices to prove that $R_{-1}^{\ast}(\hnabla) = \hnabla$. Because $\rad$ and $|\Psi^{\F}|$ are invariant under the principal action, $R_{-1}^{\ast}(\hnabla)$ is a Ricci-flat, $\rad$-invariant, equiaffine conelike radiant structure on $(\F, \rad, |\Psi^{\F}|)$. Because $R_{-1}$ preserves the planes of $\hnabla$, $R_{-1}^{\ast}(\hnabla)$ has the same planes as $\hnabla$, and because $R_{1}^{\ast}(\hnabla)$ and $\hnabla$ preserve the same volume density $|\Psi^{\F}|$ they induce on $|\Det T^{\ast}\F|$ the same connection, so, by Theorem \ref{conenormalizationtheorem}, $R_{-1}^{\ast}(\hnabla) = \hnabla$.
\end{proof}

\begin{theorem}\label{classicalthomastheorem}
On an $n$-manifold $M$, let $\emf \to M$ be a pseudo-hyperplane line bundle and let $\rho: \F = \frameb(\emf^{-1}) \to M$ be the frame bundle of the dual pseudo-tautological line bundle equipped with the pseudo-Euler structure determined by the volume density $|\Psi^{\F}|$ and radiant Euler vector field $\rad = \eul^{\F}$.

There is a bijection between the set of projective structures on $M$ and the set of Ricci-flat, $\rad$-invariant, conelike equiaffine radiant structures on $(\F, \rad, |\Psi^{\F}|)$ associating with a projective structure $\en$ on $M$ a unique torsion-free affine connection $\hnabla$ on $\F$ such that $(\hnabla, \rad, |\Psi^{\F}|)$ is a Ricci-flat, conelike equiaffine radiant structure invariant under the principal action on $\F$ and fibering over $(M, \en)$.
\begin{enumerate}
\item\label{ctt1} The curvature $\hat{R}$ of $\hnabla$ satisfies
\begin{align}\label{thomascurvature}
&\hat{R}(\hat{X}, \hat{Y})\hat{Z}  = \widehat{B(X, Y)Z} + C(X, Y, Z)\rad,& 
&\hat{R}(\hat{X}, \hat{Y})\rad  = \hat{R}(\rad, \hat{X})\hat{Y} = \hat{R}(\hat{X}, \rad)\rad  = 0,
\end{align}
where $B$ and $C$ are the projective Weyl and Cotton tensors of $\en$ and the horizontal lifts of $X, Y, Z \in \Ga(TM)$ are with respect to any principal $\reat$-connection on $\F$. 
In particular, $\hnabla$ is flat if and only if $\en$ is projectively flat.
\item\label{ctt2} The planelike surfaces of $(\hnabla, \rad)$ are the $\rho$-preimages of the projective geodesics of $M$ and the base curve $\ga = \rho\circ \hat{\ga}:I \to M$ of a parametrized curve $\hat{\ga}:I \to \F$ is a projective parametrization of a projective geodesic of $\en$ if and only if $\hat{\ga}$ is a $\hnabla$-geodesic.
\item\label{ctt4} If $\emf \to M$ is a hyperplane line bundle, then $\hnabla$ preserves the volume form $\Psi^{\F} = d\mu^{\F}$. 
\item\label{ctt5} For all $X \in \Ga(TM)$, $\div_{\hnabla}(\linft^{\F}(X)) = 0$.
\end{enumerate}
\end{theorem}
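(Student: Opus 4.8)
The plan is to realize the Thomas connection of $\en$ as the cone connection of the \emph{canonical} extended projective structure furnished by Lemma \ref{inducedeplemma}, and to read off every assertion from Theorem \ref{extendedthomastheorem} once the associated two-form is known to vanish. By Lemma \ref{inducedeplemma} the projective structure $\en$ determines on $\rho:\F \to M$ a canonical extended projective structure $[\nabla, \tau]$ whose associated two-form $\epc$ vanishes identically, so I define $\hnabla$ to be its cone connection as produced by Theorem \ref{extendedthomastheorem}\eqref{extendedthomas1}. This $\hnabla$ is torsion-free, $\rad$-invariant (indeed invariant under the whole principal action), conelike, and fibers over $(M,\en)$; because $\epc = 0$, its Ricci curvature $-\tfrac{n+1}{2}\rho^{\ast}(\epc)$ vanishes by \eqref{etepc}, so it is Ricci flat. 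Since $\hnabla$ depends only on $[\nabla,\tau]$ by Lemma \ref{extendedshiftlemma}, and $[\nabla,\tau]$ depends only on $\en$, the assignment $\en \mapsto \hnabla$ is well defined.

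With this identification most claims are specializations. Setting $\epc = 0$ in \eqref{hnablabecurvatures} gives $\hat R_{ijk}{}^{l} = B_{ijk}{}^{l}$, $\hat R_{IJK}{}^{A}\be_{A} = \rho^{\ast}(C)_{IJK}$, and $\rad^{I}\hat R_{IJK}{}^{L} = 0 = \rad^{K}\hat R_{IJK}{}^{L}$, which reassemble into \eqref{thomascurvature}; the vanishing of $\hat R$ on $\rad$ shows the formula is independent of the auxiliary horizontal lift (changing the lift alters $\hat X$ by a multiple of $\rad$), and $\hnabla$ is flat exactly when $B$ and $C$ vanish, i.e. when $\en$ is projectively flat, proving \eqref{ctt1}. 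Claim \eqref{ctt2} is \eqref{ett1} of Theorem \ref{extendedthomastheorem}. For \eqref{ctt5} I would compute the divergence directly in the frame $\{\rad, \hat E_{i}\}$ attached to $\tau$: writing $\linft^{\F}(X) = \hat X + f\rad$ with $f = -\tfrac{1}{n+1}\rho^{\ast}(\div_{\nabla}X)$, so that $\rad f = 0$, the structure equations \eqref{tripleconnection} yield $\div_{\hnabla}(\linft^{\F}(X)) = \div_{\nabla}(X) + (n+1)f = 0$. Feeding this into the identity $\lie_{V}|\Psi^{\F}| = (\phi(V) + \div_{\hnabla}(V))|\Psi^{\F}|$ (where $\hnabla|\Psi^{\F}| = \phi\tensor|\Psi^{\F}|$), which follows from \eqref{liederivative} at weight $1$, together with $\lie_{\linft^{\F}(X)}|\Psi^{\F}| = 0$ from Lemma \ref{inducedeplemma}, forces $\phi(\linft^{\F}(X)) = 0$; and $\lie_{\rad}|\Psi^{\F}| = (n+1)|\Psi^{\F}|$ with $\div_{\hnabla}(\rad) = n+1$ forces $\phi(\rad) = 0$. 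As $\rad$ and the $\linft^{\F}(X)$ span $T\F$, this gives $\phi = 0$, i.e. $\hnabla|\Psi^{\F}| = 0$, so $(\hnabla, \rad, |\Psi^{\F}|)$ is equiaffine; the same argument run with $\Psi^{\F}$ in place of $|\Psi^{\F}|$ in the orientable case yields \eqref{ctt4}.

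It remains to prove the bijection, and this is where I expect the real work. Injectivity is soft: $\en$ is recovered from $\hnabla$ as the projective structure whose paths are the $\rho$-images of the planelike surfaces of $(\hnabla,\rad)$, and a projective structure is determined by its paths. For surjectivity, let $(\hD, \rad, |\Psi^{\F}|)$ be any Ricci flat, $\rad$-invariant, conelike equiaffine radiant structure, and let $\en$ be the projective structure it fibers over. Lemma \ref{invariancelemma} upgrades $\rad$-invariance to invariance under the full principal action (this is exactly the step requiring $R_{-1}$ and the volume density), so by Lemma \ref{invariantinducedlemma} $\hD$ induces an extended projective structure $[\nabla,\be]$, and by the uniqueness in Theorem \ref{extendedthomastheorem}\eqref{extendedthomas1} $\hD$ is its cone connection. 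Ricci flatness and \eqref{etepc} force the associated two-form to vanish, whence, writing $\tau$ for the canonical connection of $\en$, one has $\be = \tau + \rho^{\ast}(\si)$ with $\si$ \emph{closed} but a priori nonzero (Lemma \ref{epcdefinedlemma}). The main obstacle is to eliminate this residual closed shift, which is invisible to the Ricci tensor and genuinely nontrivial when $H^{1}(M;\rea) \neq 0$. I would resolve it by computing, via \eqref{shiftedthomas}, the difference tensor $\Pi$ between the cone connections of $[\nabla, \tau + \rho^{\ast}(\si)]$ and $[\nabla,\tau]$ and tracing it: a short calculation gives $\Pi_{IP}{}^{P} = n\,\rho^{\ast}(\si)_{I}$, so the shifted connection satisfies $\hD|\Psi^{\F}| = -n\,\rho^{\ast}(\si)\tensor|\Psi^{\F}|$. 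The equiaffine hypothesis $\hD|\Psi^{\F}| = 0$ then forces $\si = 0$, so $[\nabla,\be] = [\nabla,\tau]$ is canonical and $\hD = \hnabla$ is the Thomas connection of $\en$. Thus it is the volume density, rather than the curvature, that rigidifies the construction globally.
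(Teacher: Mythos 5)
Your proposal is correct, and on the existence half and claims \eqref{ctt1}--\eqref{ctt2} it coincides with the paper: both take $\hnabla$ to be the cone connection of the canonical extended projective structure $[\nabla,\tau]$ of Lemma~\ref{inducedeplemma} and read \eqref{thomascurvature}, Ricci-flatness, and the fibering statements off Theorem~\ref{extendedthomastheorem} once $\epc = 0$. You differ in two places. First, you prove \eqref{ctt5} by a direct frame computation and then deduce equiaffinity from the weight-one Lie-derivative identity applied to the spanning family $\{\rad, \linft^{\F}(X)\}$; the paper argues in the opposite order, proving $\hnabla\Psi^{\F} = 0$ by showing $\imt(\rad)(\hnabla_{\hat X}\Psi^{\F}) = \lie_{\linft^{\F}(X)}\mu^{\F} = 0$ (a top-degree form annihilated by a nonvanishing vector field vanishes) and then obtaining \eqref{ctt5} as a corollary of equiaffinity. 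Both turn on the same input from Lemma~\ref{inducedeplemma}, so this is a mild reordering. Second, and more substantially, your treatment of uniqueness is more explicit than the paper's and fills in a step the paper compresses: the paper concludes $\hD = \hnabla$ from Lemma~\ref{invariancelemma}, Lemma~\ref{geodesiclemma}\eqref{fiberingcorollary}, and ``the uniqueness of $\hnabla$'', but the uniqueness supplied by Theorem~\ref{extendedthomastheorem} is relative to a \emph{fixed} extended projective structure, and fibering over $(M,\en)$ alone does not fix one: as you observe, the cone connections of $[\nabla, \tau + \rho^{\ast}(\si)]$ for closed $\si$ are all Ricci-flat, invariant under the principal action, conelike, and fiber over $(M,\en)$. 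Your elimination of this residual shift by the equiaffine hypothesis, via the trace of the difference tensor \eqref{shiftedthomas}, is exactly what is needed; the paper's intended route to the same end is structural rather than computational, passing through the uniqueness clause of Theorem~\ref{conenormalizationtheorem} (equivalently Lemma~\ref{conesameplaneslemma}), where ``inducing the same connection on $|\Det T^{\ast}\F|$'' --- that is, equiaffinity --- appears as a hypothesis. Your version has the merit of exhibiting precisely how the volume density rigidifies the construction; the paper's reuses its normalization machinery but, as written, leaves the role of $|\Psi^{\F}|$ in the uniqueness implicit.

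Two small corrections that do not affect your argument. Tracing \eqref{shiftedthomas} over the $(n+1)$-dimensional manifold $\F$ gives $\Pi_{IP}\,^{P} = (n+1)\rho^{\ast}(\si)_{I}$ rather than $n\rho^{\ast}(\si)_{I}$ (the $\delta$-terms contribute $n+2$ and the $\be$-term $-1$), so the shifted connection satisfies $\hD|\Psi^{\F}| = -(n+1)\rho^{\ast}(\si)\tensor|\Psi^{\F}|$; the conclusion $\si = 0$ is unchanged. Also, the ambiguity you isolate is present even when $H^{1}(M;\rea) = 0$: an exact nonzero $\si$ already changes the extended projective structure, since $\tau + \rho^{\ast}(df)$ is not the principal connection induced by any representative of $\en$ unless $f$ is constant, so the cohomological caveat understates the scope of the problem your argument resolves.
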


The connection $\hnabla$ of Theorem \ref{classicalthomastheorem} is the (classical) \emph{Thomas connection} associated with $\en$.

\begin{proof}
By Lemma \ref{inducedeplemma}, $\en$ determines an extended projective structure $[\nabla, \be]$ on $\F$ in such a way that for $(\nabla, \be) \in [\nabla, \be]$, $\be$ is induced by $\nabla$, and such that $\epc_{ij} =0$.
Because $\epc_{ij} = 0$, by Theorem \ref{extendedthomastheorem} there is a unique torsion-free affine connection $\hnabla$ on $\F$ that with $\rad$ constitutes a Ricci-flat, conelike radiant structure invariant under the principal action on $\F$ and fibering over $(M, \en)$. Because $\epc_{ij} =0$, by \eqref{tripleconnection} and \eqref{qdefined} of Theorem \ref{extendedthomastheorem}, $\hnabla$ is defined by $\hnabla_{\hat{X}}\hat{Y} = \widehat{\nabla_{X}Y} + P(X, Y)\rad$ and $\hnabla \rad = \delta$, where $\hat{X}$ is the $\be$-horizontal lift of $X \in \Ga(TM)$, and satisfies \eqref{ctt1} and \eqref{ctt2}, in particular the curvature of $\hnabla$ has the form \eqref{thomascurvature}. 

Suppose $\emf \to M$ is a hyperplane line bundle.
Consider the Euler structure $(\F, \rad, \Psi^{\F}= d\mu^{\F})$ defined in Lemma \ref{canonicaleulerlemma} and let $\hnabla$ be the Thomas connection of $\en$. By Lemma \ref{liesslemma}, $\hnabla_{\rad}\Psi^{\F} = \lie_{\rad}\Psi^{\F} - (n+1)\Psi^{\F} = 0$, the last equality by the construction of $\Psi^{\F}$. Because $\imt(\rad)\mu^{\F} =0$ and $\lie_{\rad}\mu = (n+1)\mu$, for $X \in \Ga(TM)$,
\begin{align}
\imt(\rad)\left(\hnabla_{\hat{X}}\Psi^{\F}\right) = \lie_{\hat{X}}\mu^{\F} - \rho^{\ast}(\div_{\nabla}X)\mu^{\F} = \lie_{\linft^{\F}(X)}\mu^{\F} = 0,
\end{align}
the penultimate equality by \eqref{wtdf} and the last equality by \eqref{linftdefined} of Lemma \ref{inducedeplemma}. This shows that $\hnabla_{\hat{X}}\Psi^{\F}= 0$ for all $X \in \Ga(TM)$ and with $\hnabla_{\rad}\Psi^{\F} = 0$, this shows $\hnabla \Psi^{\F} = 0$. For a general pseudo-hyperplane line bundle $\emf\to M$, essentially the same argument shows that $\hnabla|\Psi^{\F}| = 0$. This shows that $\hnabla$ is equiaffine and shows \eqref{ctt4}. Said differently, the claim that $\hnabla$ is equiaffine is essentially local so that its validity for a general pseudo-hyperplane line bundle follows from its validity for a hyperplane line bundle.
%%% save computations
%For $X_{1}, \dots, X_{n} \in \Ga(TM)$,
%\begin{align}\label{ett}
%\begin{split}
%(\hnabla_{\hat{X}}\Psi^{\F})(\rad, \hat{X}_{1}, \dots, \hat{X}_{n}) &= \hat{X}\mu^{\F}(\hat{X}_{1}, \dots, \hat{X}_{n}) - \sum_{i = 1}^{n}\mu^{\F}(\hat{X}_{1}, \dots, [\hat{X}, \hat{X}_{i}] + \hnabla_{\hat{X}_{i}}\hat{X}, \dots, \hat{X}_{n})\\
%& = (\lie_{\hat{X}}\mu^{\F})(\hat{X}_{1}, \dots, \hat{X}_{n}) - \sum_{i = 1}^{n}\mu^{\F}(\hat{X}_{1}, \dots,\widehat{\nabla_{X_{i}}X}, \dots, \hat{X}_{n})\\
%& =  (\lie_{\hat{X}}\mu^{\F})(\hat{X}_{1}, \dots, \hat{X}_{n}) - \rho^{\ast}(\div_{\nabla}X)\mu^{\F}(\hat{X}_{1}, \dots, \hat{X}_{n}) \\
%& = (\lie_{\hat{X} - \tfrac{1}{n+1}\rho^{\ast}(\div_{\nabla}X)\rad}\mu^{\F})(\hat{X}_{1}, \dots, \hat{X}_{n})  \\
%& = (\lie_{\linft^{\F}(X)}\mu^{\F})(\hat{X}_{1}, \dots, \hat{X}_{n}) = 0, 
%\end{split}
%\end{align}
%the final equality by \eqref{wtdf}. 
%Claim\eqref{ctt4} is local so attention can be restricted to an open domain over which $\F$ and $\vdens$ are trivialized and the morphism $\Q:\F \to \vdens$ restricts to a diffeomorphism. In this case the last expression of \eqref{ett} is, by \eqref{wtd}, the pullback via $\Q$ of $(\lie_{\linft(X)}\mu)(\hat{X}_{1}, \dots, \hat{X}_{n})$, which vanishes by definition.This shows that $\hnabla \Psi^{\F} = 0$ and proves \eqref{ctt4}.

The preceding establishes that associated with a projective structure $\en$ on $M$ there is a Ricci-flat, conelike equiaffine radiant structure on $\F$ invariant under the principal action and fibering over $(M, \en)$

By Lemma \ref{invariancelemma} a Ricci-flat, $\rad$-invariant, conelike equiaffine radiant structure $(\hD, \rad, |\Psi^{\F}|)$ on $\F$ is necessarily invariant under the principal action.
By \eqref{fiberingcorollary} of Lemma \ref{geodesiclemma}, $(\hD, \rad)$ fibers over the projective structure $\en$ it induces on $M$, so $(\hD, \rad, |\Psi^{\F}|)$ and $(\hnabla, \rad, |\Psi^{\F}|)$ are Ricci-flat, conelike equiaffine radiant structures on $(\F, \rad, |\Psi^{\F})$ invariant under the principal action and fibering over the same projective structure $\en$. By the uniqueness of $\hnabla$, $\hD = \hnabla$.

For $X \in \Ga(TM)$, by definition of the divergence and \eqref{linftdefined} of Lemma \ref{inducedeplemma}, $\div_{\hnabla}(\linft^{\F}(X))\Psi^{\F} = \lie_{\linft^{\F}(X)}\Psi^{F} - \hnabla_{\linft^{\F}(X)}|\Psi^{\F}| = d\lie_{\linft^{\F}(X)}\mu^{\F} =0$. This proves \eqref{ctt5} when $\emf$ is a hyperplane line bundle. For a general pseudo-hyperplane line bundle $\emf\to M$, $\hnabla|\Psi^{\F}| = 0$, so that $\div_{\hnabla}(\linft^{\F}(X))|\Psi^{\F}| = \lie_{\linft^{\F}(X)}|\Psi^{F}| - \hnabla_{\linft^{\F}(X)}|\Psi^{\F}| =0$. 
\end{proof}

\begin{remark}
The algebraic and differential Bianchi identities for the Thomas connection $\hnabla$ of $\en$ yield \eqref{projectivebianchi} and $\nabla_{[i}C_{jk]l} + P_{[i|p|}B_{jk]l}\,^{p} = 0$. %\eqref{projbach}.
\end{remark}

\begin{lemma}\label{thomasliftlemma}
Let $M$ be an $n$-manifold equipped with a pseudo-hyperplane bundle $\emf \to M$ and let $\rho: \F = \frameb(\emf^{-1}) \to M$. Let $\en$ be a projective structure on $M$ with Thomas connection $\hnabla$.  If $k \neq n+1$, for a trace-free $A_{i_{1}\dots i_{k}}\,^{j} \in \Ga(S^{k}\ctm\tensor TM)$ there is a unique \emph{$\en$-invariant lift} $\linft(A)_{I_{1}\dots I_{k}}\,^{J} \in \Ga(S^{k}T^{\ast}\F \tensor T\F)$ satisfying:
\begin{enumerate}
\item $\linft(A)$ is a \emph{horizontal lift} of $A$, meaning $\imt(\rad)\linft(A) =0$ and for all $X_{1}, \dots X_{k} \in \Ga(TM)$ and $\theta \in \Ga(\ctm)$ there holds $\rho^{\ast}(\theta)(\linft(A)(\linft(X_{1}), \dots, \linft(X_{k}))) = A(X_{1}, \dots, X_{k})$;
\item $\linft(A)$ is $\hnabla$-divergence free, $\hnabla_{J}\linft(A)_{I_{1}\dots I_{K}}\,^{J} =0$.
\end{enumerate}
\end{lemma}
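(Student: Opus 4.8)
The plan is to fix a representative $\nabla \in \en$, use the principal connection it induces on $\F$ to split $T\F$, reduce the statement to solving a single algebraic equation for the vertical part of the lift, and then exploit the radiant structure together with Lemma \ref{liesslemma} to see that this equation is solvable (uniquely) exactly when $k \neq n+1$. First I would fix $\nabla \in \en$ and let $\be$ be the principal connection it induces on $\F$ (Lemma \ref{inducedeplemma}), giving the splitting $T\F \simeq \spn\{\rad\} \oplus \ker\be$. Write $\widehat{A}_{I_1\dots I_k}{}^J$ for the naive horizontal lift of $A$: horizontal in every covariant slot, valued in $\ker\be$, and invariant under the principal action. Since the adjective ``$\en$-invariant'' asks (as for $\linft^{\F}$ in Lemma \ref{inducedeplemma}) that the lift be invariant under the principal action, any candidate satisfying (1) differs from $\widehat{A}$ by a tensor $D$ with $\imt(\rad)D = 0$ and with vanishing $\ker\be$-part on horizontal inputs; as $D$ is then horizontal in its covariant slots, this forces $D_{I_1\dots I_k}{}^J = \phi_{I_1\dots I_k}\rad^J$ for a symmetric, horizontal, invariant $\phi$, necessarily a pullback $\rho^{\ast}(\phi_0)$ with $\phi_0 \in \Ga(S^k\ctm)$. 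Thus everything reduces to determining $\phi$ from condition (2).

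Next I would compute the divergence. Writing $\linft(A) = \widehat{A} + \phi\,\rad$ gives
\begin{align*}
\hnabla_J \linft(A)_{I_1\dots I_k}{}^J = \hnabla_J \widehat{A}_{I_1\dots I_k}{}^J + \rad^J\hnabla_J\phi_{I_1\dots I_k} + \phi_{I_1\dots I_k}\hnabla_J\rad^J.
\end{align*}
Because $\hnabla_I\rad^J = \delta_I{}^J$ we have $\hnabla_J\rad^J = n+1 = \dim \F$, and Lemma \ref{liesslemma}, applied on the $(n+1)$-manifold $\F$, gives $\rad^J\hnabla_J\phi = \hnabla_\rad\phi = \lie_\rad\phi - k\phi = -k\phi$, the last step by invariance. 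Hence the $\phi$-terms collapse to $(n+1-k)\phi$, and condition (2) becomes the pointwise equation $\hnabla_J\widehat{A}_{I_1\dots I_k}{}^J + (n+1-k)\phi_{I_1\dots I_k} = 0$.

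Finally I would solve this. Contracting $\rad^{I_1}$ into $\hnabla_J\widehat{A}_{I_1\dots I_k}{}^J$ and using $\imt(\rad)\widehat{A}=0$ together with $\hnabla_J\rad^{I_1}=\delta_J{}^{I_1}$ yields $\rad^{I_1}\hnabla_J\widehat{A}_{I_1\dots I_k}{}^J = -\widehat{A}_{J I_2\dots I_k}{}^J$, which is the horizontal lift of a trace of $A$ and so vanishes because $A$ is trace-free; by symmetry the same holds for every covariant slot, so $\hnabla_J\widehat{A}_{I_1\dots I_k}{}^J$ is horizontal. Since $k \neq n+1$ I may therefore set $\phi_{I_1\dots I_k} = -\tfrac{1}{n+1-k}\hnabla_J\widehat{A}_{I_1\dots I_k}{}^J$, which is a horizontal, invariant, symmetric $(0,k)$-tensor, hence a pullback, and is the unique solution of the displayed equation. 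This gives existence and uniqueness, and one checks directly that $\widehat{A}+\phi\,\rad$ satisfies (1). As conditions (1)--(2) refer only to $\rad$ and the canonical Thomas connection $\hnabla$, the resulting $\linft(A)$ is automatically independent of the auxiliary $\nabla \in \en$, so no separate naturality check is needed.

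The main obstacle is the bookkeeping in the $\be$-splitting: verifying that $\hnabla_J\widehat{A}_{I_1\dots I_k}{}^J$ is horizontal (exactly where trace-freeness of $A$ enters) and correctly tracking the scalar $(n+1-k)$ produced by $\hnabla_J\rad^J = n+1$ and the weight shift in Lemma \ref{liesslemma}. This coefficient is the sole reason for the hypothesis $k \neq n+1$, and it is also where invariance of the lift is indispensable: dropping it, condition (2) becomes a fiberwise linear equation whose homogeneous solutions are the tensors of homogeneity $-(n+1-k)$ along the fibers, and these would destroy uniqueness.
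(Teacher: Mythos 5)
Your proof is correct and takes essentially the same route as the paper's: both fix $\nabla \in \en$, write any candidate as the horizontal lift $\widehat{A}$ plus a basic vertical correction $\rho^{\ast}(B)\tensor\rad$, and reduce condition (2) to the pointwise equation $(n+1-k)\rho^{\ast}(B) + \hnabla_{J}\widehat{A}_{I_{1}\dots I_{k}}\,^{J} = 0$, which is uniquely solvable precisely because $k \neq n+1$. The only differences are stylistic --- the paper evaluates the divergence explicitly on a frame, obtaining $B_{i_{1}\dots i_{k}} = -\tfrac{1}{n+1-k}\nabla_{p}A_{i_{1}\dots i_{k}}\,^{p}$, whereas you extract the coefficient $(n+1-k)$ from $\hnabla_{J}\rad^{J} = n+1$ together with Lemma \ref{liesslemma} and show the remaining term is basic using trace-freeness of $A$ --- and your closing observation, that invariance under the principal action is what excludes the fiberwise-homogeneous solutions and hence is genuinely part of the uniqueness claim, is correct and is likewise implicit in the paper's proof.
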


\begin{proof}
Fix $\nabla \in \en$ and consider the associated invariant lift of vector fields.
If $\linft(A)$ is an invariant lift of $A$ then there is $B \in \Ga(S^{k}\ctm)$ such that, for all $X_{1}, \dots, X_{k} \in \Ga(TM)$,
\begin{align}\label{linftadefined}
\linft(A)(\hat{X}_{1}, \dots, \hat{X}_{k}) = \widehat{A(X_{1}, \dots, X_{k})} + \rho^{\ast}(B(X_{1}, \dots, X_{k}))\rad.
\end{align}
From the identities $\imt(\rad)(\hnabla_{\rad}\linft(A)) =0$, $(\hnabla_{\rad}\linft(A)) = (1-k)\linft(A)$, and
\begin{align}
\begin{aligned}
(\hnabla_{\hat{X}}\linft(A))(\rad, \hat{X}_{1}, \dots, \hat{X}_{k-1}) &= - \linft(A)(\hat{X}, \hat{X}_{1}, \dots, \hat{X}_{k-1}),\\
(\hnabla_{\hat{X}}\linft(A))(\hat{X}_{1}, \dots, \hat{X}_{k}) &=\widehat{(\nabla_{X}A)(X_{1}, \dots, X_{k})} +B(X_{1}, \dots, X_{k})\hat{X} \\&+ \left((\nabla_{X}B)(X_{1}, \dots, X_{k}) + P(X, A(X_{1}, \dots, X_{k})\right)\rad,
\end{aligned}
\end{align}
for $X, X_{1}, \dots, X_{k} \in \Ga(TM)$, it follows that $\hnabla_{J}\linft(A)_{I_{1}\dots I_{K}}\,^{J} =(n+1-k)B_{I_{1}\dots I_{K}} + \rho^{\ast}(\tr \nabla A)_{I_{1}\dots I_{k}}$, where $(\tr \nabla A)_{i_{1}\dots i_{k}}= \nabla_{p}A_{i_{1}\dots i_{k}}\,^{p}$. Consequently, when $k \neq n+1$, the condition $\hnabla_{J}\linft(A)_{I_{1}\dots I_{K}}\,^{J} =0$ determines $B$ in \eqref{linftadefined} uniquely by $B_{i_{1}\dots i_{k}} = -\tfrac{1}{n+1 - k}\nabla_{p}A_{i_{1}\dots i_{k}}\,^{p}$. Since the condition determining $B$ does not depend on the choice of $\nabla \in \en$, the resulting tensor $\linft(A)$ also does not depend on this choice. Alternatively, it is straightforward to check directly that $\linft(A)$ is well defined independently of the choice of $\nabla$.
\end{proof}

\begin{example}\label{curvatureliftexample}
By \eqref{projectivebianchi} and \eqref{thomascurvature}, if $n > 2$, the curvature of the Thomas connection $\hnabla$ of $\en$ is the $\en$-invariant lift of the projective Weyl tensor $B_{ijk}\,^{l}$ of $\en$.
\end{example}

\begin{corollary}[{\cite[Theorem $4.4$]{Gover-Neusser-Willse}}]\label{liftedliecorollary}
Let $M$ be an $n$-manifold equipped with a pseudo-hyperplane bundle $\emf \to M$ and let $\rho: \F = \frameb(\emf^{-1}) \to M$. Let $\en$ be a projective structure on $M$ with Thomas connection $\hnabla$. 
 For all $X \in \Ga(TM)$, the invariant lift $\linft(\lie_{X}\en)$ of $\lie_{X}\en$ is $\lie_{\linft(X)}\hnabla$. In particular, $X$ is an infinitesimal automorphism of $\en$ if and only if $\linft(X)$ is an infinitesimal automorphism of $\hnabla$.
\end{corollary}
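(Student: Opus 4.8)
The plan is to exploit the uniqueness built into Lemma~\ref{thomasliftlemma}: since the invariant lift $\linft(A)$ is the \emph{unique} horizontal, $\hnabla$-divergence-free lift of $A$, it suffices to show that the $\binom{2}{1}$-tensor $S := \lie_{\linft^{\F}(X)}\hnabla$ on $\F$ (symmetric in its two covariant slots, being the Lie derivative of a torsion-free connection) is (i) horizontal, (ii) $\hnabla$-divergence-free, and (iii) has base projection equal to $\lie_{X}\en$. Granting (i)--(iii), uniqueness forces $S = \linft(\lie_{X}\en)$. Throughout I will use $(\lie_{\linft^{\F}(X)}\hnabla)_{IJ}\,^{K} = \hnabla_{I}\hnabla_{J}(\linft^{\F}(X))^{K} + (\linft^{\F}(X))^{P}\hat{R}_{PIJ}\,^{K}$, the description \eqref{wtdf} of $\linft^{\F}(X) = \hat{X} - \tfrac{1}{n+1}\rho^{\ast}(\divn(X))\rad$, the Thomas formula \eqref{tripleconnection} with $Q = P$, and the curvature \eqref{thomascurvature}.

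The divergence-free property (ii) is the cleanest step and is where Ricci-flatness of the Thomas connection does the work. Writing $Y = \linft^{\F}(X)$ and applying the identity \eqref{lienabladiv} on $\F$, every term on the right-hand side carrying a factor of $\widehat{\ric}$ or $\lie_{Y}\widehat{\ric}$ drops out because $\hnabla$ is Ricci-flat (Theorem~\ref{classicalthomastheorem}), leaving $\hnabla_{P}S_{IJ}\,^{P} = \hnabla_{I}\hnabla_{J}\div_{\hnabla}(Y)$; this vanishes since $\div_{\hnabla}(\linft^{\F}(X)) = 0$ by \eqref{ctt5} of Theorem~\ref{classicalthomastheorem}. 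For horizontality (i) I contract $\rad$ into the first covariant slot. The curvature contribution dies because $\rad$ contracted into either skew direction of $\hat{R}$ vanishes for a conelike radiant structure. For the second-derivative term I use $\hnabla\rad = \Id$ and $\hnabla_{\rad}Y = Y$ (which follows from $[\rad,Y]=0$ and torsion-freeness) together with the Ricci identity, obtaining the two-line manipulation
\begin{align}
\rad^{I}\hnabla_{I}\hnabla_{J}Y^{K} = \rad^{I}\hnabla_{J}\hnabla_{I}Y^{K} = \hnabla_{J}(\rad^{I}\hnabla_{I}Y^{K}) - (\hnabla_{J}\rad^{I})\hnabla_{I}Y^{K} = \hnabla_{J}Y^{K} - \hnabla_{J}Y^{K} = 0 ,
\end{align}
so that $\rad^{I}S_{IJ}\,^{K} = 0$.

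The identification (iii) of the base projection is the principal obstacle. Concretely, one expands $S(\hat{X}_{1},\hat{X}_{2})$ into its horizontal and vertical parts: by \eqref{thomascurvature} the curvature term contributes $\widehat{B(X,X_{1})X_{2}}$ modulo $\rad$, and the iterated covariant derivative is computed by repeated use of \eqref{tripleconnection} together with $\hnabla_{\hat{X}}(\rho^{\ast}(g)\rad) = \rho^{\ast}(g)\hat{X} + \rho^{\ast}(\nabla_{X}g)\rad$. Collecting the horizontal part and substituting $B_{ijk}\,^{l} = R_{ijk}\,^{l} + 2\delta_{[i}\,^{l}P_{j]k} - 2\delta_{k}\,^{l}P_{[ij]}$ from \eqref{bijkl} should reproduce $(\lie_{X}\nabla)_{ij}\,^{k} = \nabla_{i}\nabla_{j}X^{k} + X^{p}R_{pij}\,^{k}$ corrected by $-\tfrac{2}{n+1}\delta_{(i}\,^{k}\nabla_{j)}\divn(X)$, which is $\lie_{X}\en$. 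The delicate point is the bookkeeping of the trace and antisymmetric-Ricci contributions: the terms proportional to $\delta_{(i}\,^{k}(\dum)_{j)}$ must be tracked with care, since $\linft$ is sensitive to the chosen trace-free representative, and it is precisely the normalization of $\linft^{\F}(X)$ in \eqref{wtdf} and the Ricci-flat normalization of $\hnabla$ that make the projective and trace corrections conspire to the correct representative. I expect this matching to be the only genuinely laborious part.

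Conceptually, (iii) is the infinitesimal form of the naturality of the constructions of Theorem~\ref{classicalthomastheorem} and Lemma~\ref{invariantinducedlemma}: the flow of $\linft^{\F}(X)$ is a principal-action-commuting, $|\Psi^{\F}|$-preserving automorphism of $\F$ covering the flow of $X$, so by the bijection of Theorem~\ref{classicalthomastheorem} it carries $\hnabla$ to the Thomas connection of the pushed-forward projective structure, and differentiating recovers the projection as $\lie_{X}\en$; I would use this as a sanity check on the direct calculation. Either route established, the ``in particular'' clause is immediate: since a lift $\linft(A)$ recovers $A$ by projection and hence vanishes only if $A$ does, the identity $\lie_{\linft(X)}\hnabla = \linft(\lie_{X}\en)$ shows $\lie_{\linft(X)}\hnabla = 0$ if and only if $\lie_{X}\en = 0$, that is, $\linft(X)$ is an infinitesimal automorphism of $\hnabla$ exactly when $X$ is an infinitesimal automorphism of $\en$.
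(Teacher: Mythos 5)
Your proposal is correct and is organized around the same idea as the paper's proof: by the uniqueness in Lemma~\ref{thomasliftlemma} it suffices to check that $\lie_{\linft(X)}\hnabla$ is a horizontal, $\hnabla$-divergence-free lift of $\lie_{X}\en$. Two differences in execution are worth recording. First, your verification of divergence-freeness is the one the uniqueness lemma actually requires. The paper's written argument applies the trace identity \eqref{trlienabla} and concludes $(\lie_{\linft(X)}\hnabla)_{IP}\,^{P}=0$; but that trace vanishes automatically for \emph{any} horizontal lift of a trace-free tensor, being insensitive to the vertical component $B$ in \eqref{linftadefined}, so it cannot single out the invariant lift. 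The condition that pins down $B$ is $\hnabla_{P}(\lie_{\linft(X)}\hnabla)_{IJ}\,^{P}=0$, and your derivation of it from \eqref{lienabladiv}, the Ricci-flatness of $\hnabla$, and $\div_{\hnabla}(\linft^{\F}(X))=0$ is exactly right (you also cite \eqref{ctt5} of Theorem~\ref{classicalthomastheorem} for the latter, where the paper cites \eqref{ctt4}; yours is the correct reference). Your explicit check of horizontality, using the Ricci identity, $\hnabla\rad=\Id$, $[\rad,\linft^{\F}(X)]=0$, and $\rad^{I}\hat{R}_{IJK}\,^{L}=0$, is likewise correct and supplies details the paper omits.

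Second, the projection property, that $\rho^{\ast}(\theta)\left((\lie_{\linft(X)}\hnabla)(\hat{Y},\hat{Z})\right)=(\lie_{X}\en)(Y,Z)$, is deferred in both treatments: the paper asserts it as an omitted computation with \eqref{thomascurvature} and \eqref{riccocycle}, and you outline the same computation without carrying it out. Here your ``sanity check'' deserves promotion to the actual argument, since it closes the step without index bookkeeping: the flow $\phi_{t}$ of $\linft^{\F}(X)$ consists of principal bundle automorphisms of $\F$ preserving $|\Psi^{\F}|$ and covering the flow $\psi_{t}$ of $X$, so by the uniqueness in Theorem~\ref{classicalthomastheorem} there holds $\phi_{t}^{\ast}(\hnabla)=\widehat{\psi_{t}^{\ast}(\en)}$; since $\phi_{t}^{\ast}(\hnabla)$ and $\hnabla$ both preserve $|\Psi^{\F}|$, their difference is a horizontal lift of a trace-free tensor projecting to $\psi_{t}^{\ast}(\en)-\en$ (as in the proof of \eqref{differencelift} of Theorem~\ref{conjugatethomastheorem}), and differentiating at $t=0$ yields the projection property. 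With that substitution your argument is complete, and on the divergence point it is more precise than the proof as written in the paper.
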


\begin{proof}
By the uniqueness claim of Lemma \ref{thomasliftlemma}, it suffices to check that $\lie_{\linft(X)}\hnabla$ is a horizontal lift of $\lie_{X}\en$ that is $\hnabla$-divergence free. Although the details are omitted, a straightforward computation using \eqref{thomascurvature} and \eqref{riccocycle} shows that for $X, Y, Z \in \Ga(TM)$ and $\theta \in \Ga(\ctm)$, $(\lie_{\linft(X)}\hnabla)(\rad, \hat{Z}) = 0$ and $\rho^{\ast}(\theta)((\lie_{\linft(X)}\hnabla)(\hat{Y}, \hat{Z}) = (\lie_{X}\en)(Y, Z)$, so that $\lie_{\linft(X)}\hnabla$ is a horizontal lift of $\lie_{X}\en$. 
For any torsion-free affine connection $\nabla$ and any $X \in \Ga(TM)$ there holds
\begin{align}\label{trlienabla}
(\lie_{X}\nabla)_{ip}\,^{p} = \nabla_{i}\div_{\nabla}(X) - 2X^{p}R_{[pi]}.
\end{align}
Applying \eqref{trlienabla} with $\linft(X)$ and $\hnabla$ in place of $X$ and $\nabla$, using the Ricci-flatness of $\hnabla$, and using that, by \eqref{ctt4} of Theorem \ref{classicalthomastheorem}, $\div_{\hnabla}(\linft(X)) =0$, yields $(\lie_{\linft(X)}\hnabla)_{IP}\,^{P} = 0$.
\end{proof}

\begin{remark}
The essential local content of Lemma \ref{thomasliftlemma} and Corollary \ref{liftedliecorollary} goes back in some form to Thomas \cite{Thomas-book}. The invariant lift is a close relative of the notion of saturated cotractor used in \cite{Gover-projective}; for example in that language the application of Lemma \ref{thomasliftlemma} to the projective Weyl tensor as in Remark \ref{curvatureliftexample} is essentially \cite[Equation $33$]{Gover-projective}. In \cite[Theorem $4.4$]{Gover-Neusser-Willse} the result is proved essentially as stated here for $\emf = |\Det \ctm|^{1/(n+1)}$. 
\end{remark}

Let $\Aut(\F)$ denote the group of principal bundle automorphisms of the principal bundle $\rho: \F\to M$. 

\begin{lemma}
Let $M$ be an $n$-manifold and let $\mu$ be the tautological $n$-form on the total space of the principal $\reat$-bundle $\rho:\vdens = \frameb[\Det \ctm] \to M$. 

\begin{enumerate}
\item The map $\lin:\diff(M) \to \Aut(\vdens)$ associating with $\phi \in \diff(M)$ its \emph{tautological lift} $\lin(\phi) \in \Aut(\vdens)$ defined by 
\begin{align}\label{lindefined}
&\lin(\phi)(s)(X_{1}, \dots, X_{n}) = s(T\phi^{-1}(\phi(\rho(s))(X_{1}) \wedge \dots \wedge T\phi^{-1}(\rho(\phi(s))(X_{n})), & & \text{for}\,\, X_{i} \in T_{\phi(\rho(s))}M,
\end{align}
is an injective homomorphism whose image
\begin{align}\label{lindiffimage}
\lin(\diff(M)) = \{\Phi \in \Aut(\vdens): \Phi^{\ast}(\mu) = \mu\}
\end{align}
acts as automorphisms of the associated Euler structure $(\vdens, \Psi^{\vdens}, \rad^{\vdens})$. 

\item For any $s \in \Ga(\Det \ctm)$ and any $\phi \in \diff(M)$, the homogeneity $-1$ functions $\tilde{s}$ and $\widetilde{\phi^{\ast}s}$ on $\vdens$ corresponding with $s$ and $\phi^{\ast}s$ satisfy $\lin(\phi)^{\ast}\tilde{s} = \tilde{s} \circ \lin(\phi) = \widetilde{\phi^{\ast}s}$. 

\item The \emph{tautological lift} $\linft:\Ga(TM) \to \Ga(T\vdens)$ defined by setting $\linft(X)$ equal to the infinitesimal generator of the image under $\lin$ of the flow of $X$ is an injective Lie algebra homomorphism whose image is
\begin{align}\label{linftimage}
\linft(\Ga(TM)) = \{Z \in \Ga(T\vdens): R_{r}^{\ast}(Z) = Z, \lie_{Z}\mu = 0\}.
\end{align}
For a torsion-free affine connection $\nabla$ on $M$ inducing $\tau \in \prin(\vdens)$ and $X \in \Ga(TM)$, 
\begin{align}\label{wtd}
\linft(X) = \hat{X} - \rho^{\ast}(\div_{\nabla}(X))\eul^{\vdens},
\end{align}
where $\div_{\nabla}(X) = \tr \nabla X$.
\end{enumerate}
\end{lemma}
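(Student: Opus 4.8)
The plan is to treat the three parts in order, using throughout the defining properties of the tautological $n$-form $\mu$ recorded in Example \ref{volumificationexample}: $s^{\ast}(\mu) = s$ for local sections $s$, $R_{r}^{\ast}(\mu) = r\mu$, $\imt(\eul^{\vdens})\mu = 0$ (so $\lie_{\eul^{\vdens}}\mu = \mu$), and $d\mu = \be \wedge \mu$ for any principal connection $\be$. For part (1), I would first observe that \eqref{lindefined} is linear in $s$, so $\lin(\phi)(rs) = r\lin(\phi)(s)$ and $\lin(\phi)$ is a principal bundle automorphism covering $\phi$; equivalently, $\lin(\phi)(s)$ is the unique element of $\Det T^{\ast}_{\phi(\rho(s))}M$ whose $\phi$-pullback is $s$, i.e. $\lin(\phi)(s) = (\phi^{-1})^{\ast}s$. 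From this description the homomorphism property $\lin(\phi \circ \psi) = \lin(\phi)\circ \lin(\psi)$ and injectivity (if $\lin(\phi) = \Id$ then the covered map $\phi = \Id$) are immediate. The identity $\lin(\phi)^{\ast}(\mu) = \mu$ I would verify pointwise from \eqref{mudensdefined} using $\rho \circ \lin(\phi) = \phi \circ \rho$, whence $T\rho \circ T\lin(\phi) = T\phi \circ T\rho$, so that the occurrences of $T\phi^{-1}$ and $T\phi$ cancel. For the reverse inclusion in \eqref{lindiffimage}, given $\Phi \in \Aut(\vdens)$ covering $\phi$ with $\Phi^{\ast}\mu = \mu$, I would write $\Phi = \lin(\phi)\circ \Psi_{0}$ with $\Psi_{0}$ a gauge transformation, $\Psi_{0}(s) = g(\rho(s))\,s$ for some $g:M \to \reat$; a short computation gives $\Psi_{0}^{\ast}\mu = \rho^{\ast}(g)\mu$, so $\Phi^{\ast}\mu = \mu$ forces $g \equiv 1$ and $\Phi = \lin(\phi)$. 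Finally, since $\lin(\phi)$ commutes with $R_{r}$ it preserves $\eul^{\vdens}$, hence $\rad^{\vdens}$, and $\lin(\phi)^{\ast}\Psi^{\vdens} = d\lin(\phi)^{\ast}\mu = d\mu = \Psi^{\vdens}$, so $\lin(\phi)$ is an automorphism of the Euler structure.

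Part (2) is then short: writing $\tilde{s}$ for the homogeneity $-1$ function characterized by $s_{\rho(t)} = \tilde{s}(t)\,t$, the equality $\lin(\phi)^{\ast}\tilde{s} = \widetilde{\phi^{\ast}s}$ follows by applying $\phi^{\ast}$ to the defining relation $s_{\phi(p)} = \tilde{s}(\lin(\phi)(t))\,\lin(\phi)(t)$ and using $\lin(\phi)(t) = (\phi^{-1})^{\ast}t$.

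For part (3), I would define $\linft(X)$ as the generator of the one-parameter group $\lin(\phi_{t})$, where $\phi_{t}$ is the flow of $X$; that $\lin(\phi_{t})$ covers $\phi_{t}$ and commutes with $R_{r}$ gives $T\rho(\linft(X)) = X$ and $R_{r}^{\ast}\linft(X) = \linft(X)$, with injectivity following from $T\rho(\linft(X)) = X$. Differentiating the identity of part (2) at $t = 0$ yields the key relation $\linft(X)\tilde{s} = \widetilde{\lie_{X}s}$ for every $s \in \Ga(\Det \ctm)$. To prove \eqref{wtd}, I would write $\linft(X) = \hat{X} + \rho^{\ast}(h)\eul^{\vdens}$ (possible by the projection and invariance just noted, with $\hat{X}$ the $\tau$-horizontal lift), recall that $\tau$ is characterized by $\hat{X}\tilde{s} = \widetilde{\nabla_{X}s}$ as in the construction in Lemma \ref{inducedeplemma}, use $\eul^{\vdens}\tilde{s} = -\tilde{s}$ and the top-form identity $\lie_{X}s = \nabla_{X}s + \div_{\nabla}(X)\,s$ (the case $r=s=0$, $\lambda=1$ of \eqref{liederivative}), and compare coefficients to obtain $h = -\div_{\nabla}(X)$. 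For the image identity \eqref{linftimage}, the inclusion $\subseteq$ follows from the invariance together with $\lie_{\linft(X)}\mu = 0$ (differentiate $\lin(\phi_{t})^{\ast}\mu = \mu$ from part (1)); for $\supseteq$, an $R_{r}$-invariant $Z$ descends to $X = T\rho(Z)$ and has the form $\hat{X} + \rho^{\ast}(f)\eul^{\vdens}$, and computing $\lie_{Z}\mu = \rho^{\ast}(f + \div_{\nabla}(X))\mu$ with the help of \eqref{wtd} shows $\lie_{Z}\mu = 0$ forces $f = -\div_{\nabla}(X)$, i.e. $Z = \linft(X)$. Finally, that $\linft$ is a Lie algebra homomorphism I would deduce either abstractly by differentiating the group homomorphism $\lin$, or directly from \eqref{wtd} using the bracket and divergence cocycle identities \eqref{riccocycle} (in the $\vdens$-normalization, where the curvature of $\tau$ is $2R_{[ij]}$ rather than $\tfrac{2}{n+1}R_{[ij]}$, cf. \eqref{wtdf}).

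I expect the main obstacle to be the two image characterizations, \eqref{lindiffimage} and especially \eqref{linftimage}, since $\Aut(\vdens)$ and its Lie algebra carry the full gauge freedom of $\reat$-valued functions (and $\reat$ is disconnected). The crux in each case is the clean computation that pins the gauge parameter down exactly, namely $\Psi_{0}^{\ast}\mu = \rho^{\ast}(g)\mu$ for the group and $\lie_{Z}\mu = \rho^{\ast}(f + \div_{\nabla}(X))\mu$ for the algebra; everything else (homomorphism, injectivity, invariance of the Euler structure, and bracket compatibility) is formal once \eqref{wtd} and the relation $\linft(X)\tilde{s} = \widetilde{\lie_{X}s}$ are in hand.
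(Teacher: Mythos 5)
Your proposal is correct, and for parts (1) and (2) and for the formula \eqref{wtd} it is essentially the paper's own argument: you establish the reverse inclusion in \eqref{lindiffimage} by splitting off a gauge transformation and computing its effect on $\mu$ (the paper phrases the same computation as: $\lin(\phi^{-1})\circ\Phi$ covers the identity, and an automorphism covering the identity with $\Phi^{\ast}\mu=\mu$ is the identity); you get part (2) from the defining relation between $s$ and $\tilde{s}$ (the paper uses $\rho^{\ast}(s)=\tilde{s}\mu$, you use the equivalent pointwise relation $s_{\rho(t)}=\tilde{s}(t)\,t$); and you obtain \eqref{wtd} by differentiating part (2) to get $d\tilde{s}(\linft(X))=\widetilde{\lie_{X}s}$ and comparing against the $\tau$-horizontal lift, exactly as the paper does. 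The one step where you genuinely diverge is the inclusion $\supseteq$ in \eqref{linftimage}. The paper integrates: $Z$ generates a (local) flow of principal bundle automorphisms preserving $\mu$, these lie in $\lin(\diff(M))$ by \eqref{lindiffimage}, and differentiating along the flow exhibits $Z$ in the image of $\linft$. You argue infinitesimally: having proved \eqref{wtd} first, you write $Z=\hat{X}+\rho^{\ast}(f)\eul^{\vdens}$ and compute $\lie_{Z}\mu=\rho^{\ast}(f+\div_{\nabla}(X))\mu$, so that $\lie_{Z}\mu=0$ forces $f=-\div_{\nabla}(X)$, i.e. $Z=\linft(X)$. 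Your version costs a choice of connection and requires proving \eqref{wtd} before the image characterization (the paper does these in the opposite order), but it buys something real: it avoids the point the paper glosses over, namely that the flow of $Z$ is only locally defined while \eqref{lindiffimage} is a statement about globally defined automorphisms of $\vdens$. Your closing remarks are also sound: injectivity and the Lie algebra homomorphism property of $\linft$, which the paper's proof leaves implicit, follow either by differentiating the group homomorphism or from the image characterization together with $T\rho\circ\linft=\operatorname{Id}$, and you are right that the cocycle identities \eqref{riccocycle} must be renormalized on $\vdens$ (curvature $2R_{[ij]}$ rather than $\tfrac{2}{n+1}R_{[ij]}$).
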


\begin{proof}
Using local trivializations of $\dens$ it is straightforward to check that $\lin(\phi)$ is a diffeomorphism of $\dens$.
That $\rho\circ \lin(\phi) = \phi \circ \rho$, $\lin(\phi) \circ R_{r} = R_{r}\circ \lin(\phi)$, and $\lin(\phi_{1}\circ \phi_{2}) = \lin(\phi_{1})\circ \lin(\phi_{2})$ follow directly from \eqref{lindefined}, while $\lin(\phi)^{\ast}(\mu) = \mu$ follows from \eqref{lindefined} together with \eqref{mudensdefined}.

If $\Phi \in \Aut(\vdens)$ covers the identity then there is a homogeneity $0$ function $\tilde{\phi}$ on $\vdens$ such that $\Phi(s) = R_{\tilde{\phi}(s)}s$, and it follows from \eqref{mudensdefined} that $\Phi^{\ast}(\mu)_{s} = \tilde{\phi}(s)\mu_{s}$. Consequently, $\Phi^{\ast}(\mu) = \mu$ if and only if $\tilde{\phi}$ is identically $1$, or, what is the same, $\Phi$ is the identity. 
If $\Phi \in \Aut(\vdens)$ covers $\phi \in \diff(M)$ then $\lin(\phi^{-1})\circ \Phi$ covers the identity, and it follows that $\Phi^{\ast}(\mu) = \mu$ if and only if $\Phi = \lin(\phi)$. This shows \eqref{lindiffimage}. Since $\lin(\phi)$ preserves $\veul$, this means that $\lin(\diff(M))$ acts as automorphisms of the associated Euler structure $(\vdens, \Psi^{\vdens}, \rad^{\vdens})$.

By definition of $\mu$, for any section $s \in \Ga(\dens)$, the homogeneity $-1$ function $\tilde{s}$ on $\vdens$ corresponding with $s$ satisfies $\rho^{\ast}(s) = \tilde{s}\mu$. Hence, for $\phi \in \diff(M)$,
\begin{align}\label{linphipullback}
(\widetilde{\phi^{\ast}s})\mu = \rho^{\ast}\phi^{\ast}s = (\phi \circ \rho)^{\ast}s = (\rho \circ \lin(\phi))^{\ast}s  = \lin(\phi)^{\ast}\rho^{\ast}s = \lin(\phi)^{\ast}(\tilde{s}\mu) = (\tilde{s}\circ \lin(\phi))\mu,
\end{align}
which shows that $\lin(\phi)^{\ast}\tilde{s} = \tilde{s} \circ \lin(\phi) = \widetilde{\phi^{\ast}s}$. 

That for $X \in \Ga(TM)$ there hold $T\rho(\linft(X)) = X$, $R_{r}^{\ast}(\linft(X)) = \linft(X)$, and $\lie_{\linft(X)}\mu = 0$ is immediate from the definition. If $Z \in \Ga(T\vdens)$ is as in \eqref{linftimage}, then it generates a local flow by principal $\reat$-bundle automorphisms of $\vdens$ preserving $\mu$, which by \eqref{lindiffimage} are in the image of $\lin$, and so differentiating along the flow shows that $Z$ is in the image of $\linft$. 

For $X \in \Ga(TM)$, differentiating $\lin(\phi_{t})^{\ast}\tilde{s} = \widetilde{\phi_{t}^{\ast}s}$ along the flow $\phi_{t}$ of $X$ and using \eqref{linphipullback} shows that $d\tilde{s}(\linft(X)) = \widetilde{\lie_{X}s}$.
Let $\nabla$ be a torsion-free affine connection on $M$ inducing $\tau \in \prin(\vdens)$. For a local section $s$ of $\Det \ctm$, $\nabla_{X}s = \lie_{X}s - \div_{\nabla}(X)s$. Because $\tilde{s}$ has homogeneity $-1$, 
\begin{align}
d\tilde{s}(\hat{X}) - d\tilde{s}(\linft(X))  = \widetilde{\nabla_{X}s} - \widetilde{\lie_{X}s} = -  \rho^{\ast}(\div_{\nabla}(X))\widetilde{s} =  d\tilde{s}\left(\rho^{\ast}(\div_{\nabla}(X))\eul^{\vdens} \right),
\end{align}
where $\hat{X}$ is the $\tau$-horizontal lift of $X$. This shows \eqref{wtd}.
\end{proof}

\begin{remark}
By its definition \eqref{lindefined}, $\lin(\phi)$ extends to an element of $\diff(\Det \ctm)$ that fixes the image of the zero section. For $s \in \Ga(\Det \ctm)$ and $\phi \in \diff(M)$, $\lin(\phi^{-1})\circ s \circ \phi$ is a section of $\Det \ctm$, and
\begin{align}\label{phiadjoint}
\lin(\phi^{-1})\circ s \circ \phi  = (\lin(\phi^{-1})\circ s \circ \phi )^{\ast}\mu = \phi^{\ast}s^{\ast}\lin(\phi^{-1})^{\ast}\mu = \phi^{\ast}s^{\ast}\mu= \phi^{\ast}s,
\end{align}
by the properties characterizing $\mu$.
\end{remark}

For any representation $\si:\reat \to \reat$, the principal bundle automorphism $\lin(\phi) \in \Aut(\vdens)$ induces a line bundle automorphism of the associated bundle $\vdens\times_{\si}\rea$ also denoted $\lin(\phi)$. In Corollary \ref{diffeocorollary} this is applied to the bundle $|\Det \ctm|^{-1/(n+1)}$ corresponding with $\si(t) = |t|^{1/(n+1)}$.

\begin{corollary}\label{diffeocorollary}
On an $n$-manifold $M$, let $\emf = |\Det \ctm|^{-1/(n+1)} \to M$ and let $\rho: \F = \frameb(\emf^{-1}) \to M$.
The assignment $\en \to \hnabla$ is equivariant with respect to the action of $\diff(M)$ in the sense that the tautological lift of $\phi \in \diff(M)$ satisfies $\lin(\phi)^{\ast}(\hnabla) = \widehat{\phi^{\ast}(\en)}$, and $\lin(\phi) \in \Aut(\hnabla,\Psi)$ if and only if $\phi \in \Aut(\en)$.
\end{corollary}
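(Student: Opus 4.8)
The plan is to prove both assertions by leaning on the uniqueness half of the bijection in Theorem \ref{classicalthomastheorem}: for the pseudo-hyperplane bundle $\emf = |\Det \ctm|^{-1/(n+1)}$, the Thomas connection of a projective structure is \emph{the unique} torsion-free connection on $\F$ that, together with $\rad$ and the canonical volume density $|\Psi^{\F}|$, forms a Ricci-flat, conelike, equiaffine radiant structure invariant under the principal action and fibering over that projective structure. Thus to establish the equivariance identity $\lin(\phi)^{\ast}(\hnabla) = \widehat{\phi^{\ast}(\en)}$ it suffices to verify that $\lin(\phi)^{\ast}(\hnabla)$ possesses every one of these properties relative to $\phi^{\ast}(\en)$, and then to quote uniqueness.

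The two structural facts I would use throughout are that $\lin(\phi)$, being a principal $\reat$-bundle automorphism of $\F$, commutes with the principal action, $\lin(\phi)\circ R_{r} = R_{r}\circ \lin(\phi)$, and hence preserves its fundamental vector field, $\lin(\phi)_{\ast}(\rad) = \rad$; and that it covers $\phi$, i.e. $\rho \circ \lin(\phi) = \phi \circ \rho$. From these everything follows. Torsion-freeness is preserved by any pullback. Since $\lin(\phi)_{\ast}\rad = \rad$ and $\hnabla_{Z}\rad = Z$, a direct computation gives $(\lin(\phi)^{\ast}\hnabla)_{Z}\rad = Z$, so $(\lin(\phi)^{\ast}\hnabla, \rad)$ is again radiant. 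Invariance under the principal action is the chain $R_{r}^{\ast}(\lin(\phi)^{\ast}\hnabla) = (\lin(\phi)\circ R_{r})^{\ast}\hnabla = (R_{r}\circ \lin(\phi))^{\ast}\hnabla = \lin(\phi)^{\ast}(R_{r}^{\ast}\hnabla) = \lin(\phi)^{\ast}\hnabla$. Ricci-flatness is preserved by naturality of the Ricci tensor, $\ric(\lin(\phi)^{\ast}\hnabla) = \lin(\phi)^{\ast}\ric(\hnabla) = 0$; equivalently the curvature criterion $\rad^{P}\hat{R}_{PIJ}{}^{K}=0$ of Lemma \ref{coneconditionlemma} is manifestly natural, giving conelikeness as well, and $\lin(\phi)^{-1}$ carries the planelike surfaces of $(\hnabla,\rad)$ to those of $(\lin(\phi)^{\ast}\hnabla,\rad)$, yielding a complete set. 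Finally, because $\lin(\phi)$ covers $\phi$, the geodesics of $\lin(\phi)^{\ast}\hnabla$ project under $\rho$ to $\phi^{-1}$-images of projective geodesics of $\en$, i.e. to projective geodesics of $\phi^{\ast}(\en)$, so the structure fibers over $(M, \phi^{\ast}\en)$.

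The one delicate point, which I expect to be the main obstacle, is equiaffineness for $|\Psi^{\F}|$; the cleanest route is to prove the invariance $\lin(\phi)^{\ast}(|\Psi^{\F}|) = |\Psi^{\F}|$ and then compute $(\lin(\phi)^{\ast}\hnabla)(|\Psi^{\F}|) = \lin(\phi)^{\ast}(\hnabla |\Psi^{\F}|) = 0$. For the density invariance I would use the tautological $n$-form: by \eqref{lindiffimage} one has $\lin(\phi)^{\ast}(\mu) = \mu$ on $\vdens$, and the $\reat$-equivariant projection $\Q:\F \to \vdens$ of Lemma \ref{canonicaleulerlemma} intertwines the two tautological lifts, $\Q \circ \lin(\phi) = \lin(\phi)\circ \Q$ (the left $\lin(\phi)$ acting on $\F$, the right on $\vdens$). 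Hence $\lin(\phi)^{\ast}(\mu^{\F}) = \lin(\phi)^{\ast}\Q^{\ast}(\mu) = \Q^{\ast}\lin(\phi)^{\ast}(\mu) = \Q^{\ast}(\mu) = \mu^{\F}$, and applying $d$ gives $\lin(\phi)^{\ast}(\Psi^{\F}) = \Psi^{\F}$; since the construction is local, passing to absolute values covers the general pseudo-hyperplane case (where $\Q$ is only a local diffeomorphism defined up to sign). With all the defining properties checked, the uniqueness in Theorem \ref{classicalthomastheorem} forces $\lin(\phi)^{\ast}(\hnabla) = \widehat{\phi^{\ast}(\en)}$.

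The second assertion then follows formally. Because $\lin(\phi)^{\ast}(|\Psi^{\F}|) = |\Psi^{\F}|$ holds unconditionally, the membership $\lin(\phi) \in \Aut(\hnabla, \Psi)$ collapses to the single condition $\lin(\phi)^{\ast}(\hnabla) = \hnabla$, which by the equivariance just proved reads $\widehat{\phi^{\ast}(\en)} = \widehat{\en}$; and since the assignment $\en \mapsto \hnabla$ is a bijection (Theorem \ref{classicalthomastheorem}), this is equivalent to $\phi^{\ast}(\en) = \en$, i.e. $\phi \in \Aut(\en)$. In summary, every substantive step reduces to the naturality of a diffeomorphism-invariant property under the principal-bundle automorphism $\lin(\phi)$, with the tautological-form intertwining being the only computation requiring genuine care.
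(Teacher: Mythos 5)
Your proposal is correct and takes essentially the same approach as the paper: the paper's (one-sentence) proof likewise observes that $\lin(\phi)^{\ast}(\hnabla)$ constitutes with $\rad$ a Ricci-flat, $\rad$-invariant, conelike radiant structure fibering over $(M,\phi^{\ast}(\en))$ and then invokes the uniqueness clause of Theorem \ref{classicalthomastheorem}. Your explicit verification of the individual properties (in particular the equiaffine condition via the tautological form and the intertwining with $\Q$) and your derivation of the second claim from the bijectivity of $\en \to \hnabla$ merely fill in details the paper leaves implicit.
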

\begin{proof}
When  $\emf = |\Det \ctm|^{-1/(n+1)}$, if $\phi \in \diff(M)$, then $\lin(\phi)^{\ast}(\hnabla)$ constitutes with $\rad$ a Ricci-flat $\rad$-invariant conelike radiant structure fibering over $(M, \phi^{\ast}(\en))$, so, by the uniqueness part of Theorem \ref{classicalthomastheorem}, equals the Thomas connection of $\phi^{\ast}(\en)$. 
\end{proof}

\begin{example}\label{thomashopfexample}
This example generalizes Example \ref{hopfexample}. Let $M$ be an $n$-manifold equipped with a pseudo-hyperplane bundle $\emf \to M$ and let $\rho: \F = \frameb(\emf^{-1}) \to M$. Let $\en$ be a projective structure on $M$ with Thomas connection $\hnabla$. Fix $1 < \la \in \rea$ and let $\bar{M}$ be the quotient of $\F$ by the action of $\integer \times \zmodtwo$ on $\F$ given via the principal action $R_{r}$ of $r \in \reat$ on $\F$ by $(k, \ep) \to R_{\ep\la^{k}}$. Because the principal action is by automorphisms of $\hnabla$, there are a torsion-free affine connection $\bnabla$ and a vector field $\bar{\rad}$ on $\bar{M}$ such that $(\F, \hnabla, \rad) \to (\bar{M}, \bnabla, \bar{\rad})$ is a submersion of radiant structures. In general, $(\bar{M}, \bnabla, \bar{\rad})$ is a Ricci-flat conelike radiant structure. Its planelike surface are the images in $\bar{M}$ of the planelike surfaces of $(\hnabla, \rad)$. If $\en$ is flat, then $(\bar{M}, \bnabla, \bar{\rad})$ is flat; if moreover $\emf = |\Det TM|^{-1/(n+1)}$, then it is affinely isomorphic to the radiant flat affine manifold constructed in Example \ref{hopfexample}.

Note that the parallel volume density on $\F$ does not descend to $\bar{M}$ and $(\bar{M}, \nabla, \bar{\rad})$ admits no parallel volume by Lemma \ref{paralleloneformlemma}. This is consistent with the main theorem of \cite{Carriere-Dalbo-Meigniez}, which shows that a three-manifold fibered by circles over a compact surface of genus at least two admits no flat unimodular affine structure. 
\end{example}

\section{Metrics compatible with radiant structures and radiant statistical structures}\label{codazzisection}
A connection $\nabla$ on $TM$ determines a dual connection, customarily also denoted by $\nabla$, on $\ctm$, and defined by $(\nabla_{X}\mu)(Y) = \lie_{X}(\mu(Y)) - \mu(\nabla_{X}Y)$ for $X, Y \in \Ga(TM)$ and $\mu \in \Ga(\ctm)$. Using a metric $h$ on $TM$ the dual connection can be transported back to $TM$, and the result is the \emph{$h$-conjugate connection} $\bnabla$ defined for $X\in \Ga(TM)$ and $\mu \in \Ga(\ctm)$ by $\bnabla_{X}\mu= (\nabla_{X}\mu^{\flat})^{\sharp}$. Expanding this shows that $\bnabla$ is defined by $\lie_{X}h(A, B) = h(\nabla_{X}A, B) + h(A, \bnabla_{X}B)$ for $X, A, B \in \Ga(TM)$. Alternatively,
\begin{align}\label{conjugatediff}
\bnabla - \nabla = h^{kp}\nabla_{i}h_{jp},
\end{align}
and from \eqref{conjugatediff} it follows that $h$-conjugacy is an involution on the space of affine connections. By \eqref{conjugatediff}, $\bnabla_{i}h_{jk} = -\nabla_{i}h_{jk}$. In particular, a connection $\nabla$ is $h$-self-conjugate, meaning $\bnabla = \nabla$, if and only if $\nabla$ is the Levi-Civita connection of $h$. 
However, as follows from \eqref{conjugatediff}, the torsion $\btau_{ij}\,^{k}$ of $\bnabla$ is expressed in terms of the torsion $\tau_{ij}\,^{k}$ of $\nabla$ by 
\begin{align}\label{btau}
\btau_{ij}\,^{k} = \tau_{ij}\,^{k} + 2h^{kp}\nabla_{[i}h_{j]p},
\end{align}
so that $h$-conjugacy does not preserve the affine subspace of torsion-free affine connections. For this reason it is convenient to work with the \emph{$h$-opposite connection} $\onabla$ of $\nabla$ defined by $h(\onabla_{X}Y, Z)  = -h(\nabla_{X}Y, Z) + h([X, Y], Z) - h([X, Z], Y) - h([Y, Z], X) + \lie_{X}h(Y, Z)  + \lie_{Y}h(X, Z) - \lie_{Z}h(X, Y)$ for $X, Y, Z \in \Ga(TM)$. 
Alternatively,
\begin{align}\label{opdiff}
\onabla - \nabla = h^{kp}\left(\nabla_{i}h_{jp} + \nabla_{j}h_{ip} - \nabla_{p}h_{ij}\right) - \tau_{ij}\,^{k} + 2h_{q(i}\tau_{j)p}\,^{q}h^{pk},
\end{align}
The interpretation of \eqref{opdiff} is that the opposite connection of a torsion-free $\nabla$ is the reflection of $\nabla$ through the Levi-Civita connection of $h$. It follows from \eqref{opdiff} that opposition is an involution in the sense that $\op{(\onabla)} = \nabla$. Note that $\onabla_{i}h_{jk} = -\nabla_{i}h_{jk}$.
By \eqref{opdiff}, the torsion $\op{\tau}_{ij}\,^{k}$ is given by $\op{\tau}_{ij}\,^{k} = -\tau_{ij}\,^{k}$. In particular, the $h$-opposite connection of a torsion-free connection is again torsion-free. 

\begin{lemma}\label{oppositeconjugatelemma}
For an affine connection $\nabla$ and a pseudo-Riemannian metric $h$ on $M$ any two of the following implies the other two:
\begin{enumerate*}
\item\label{oc1} $\nabla$ is torsion-free;
\item\label{oc2} $\bnabla$ is torsion-free;
\item\label{oc3} $\nabla_{[i}h_{j]k} = 0$; 
\item\label{oc4} $\onabla = \bnabla$.
\end{enumerate*}
\end{lemma}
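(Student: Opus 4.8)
The plan is to run everything off the two structural identities already recorded: the torsion formula \eqref{btau}, namely $\btau_{ij}\,^k = \tau_{ij}\,^k + 2h^{kp}\nabla_{[i}h_{j]p}$, and the opposite-connection difference \eqref{opdiff}, together with the involutivity $\op{\tau}_{ij}\,^k = -\tau_{ij}\,^k$. Since $h$ is nondegenerate, \eqref{btau} already shows that any two of the conditions \ref{oc1}, \ref{oc2}, \ref{oc3} force the third, so the only genuine work is to splice the fourth condition \ref{oc4} into this web of implications.

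To bring in \ref{oc4}, I would subtract \eqref{conjugatediff} from \eqref{opdiff} to obtain the difference tensor
\begin{align}\label{ocdiff}
(\onabla - \bnabla)_{ij}\,^k = h^{kp}\left(\nabla_j h_{ip} - \nabla_p h_{ij}\right) - \tau_{ij}\,^k + 2h_{q(i}\tau_{j)p}\,^q h^{pk}.
\end{align}
The key observation is that condition \ref{oc3}, combined with the symmetry of $h$, makes $\nabla_i h_{jk}$ totally symmetric, so the first term of \eqref{ocdiff} drops out and $\onabla - \bnabla$ becomes a purely torsion-theoretic expression; conversely, when $\nabla$ is torsion-free the last two terms drop out and $\onabla - \bnabla = h^{kp}(\nabla_j h_{ip} - \nabla_p h_{ij})$ measures exactly the failure of \ref{oc3}. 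These reductions dispatch most pairs. For \ref{oc1}$+$\ref{oc4}, vanishing of \eqref{ocdiff} forces $\nabla_j h_{ip} = \nabla_p h_{ij}$, which with the symmetry of $h$ gives total symmetry of $\nabla h$, hence \ref{oc3}, after which \eqref{btau} supplies \ref{oc2}. The pair \ref{oc2}$+$\ref{oc4} is cleanest: $\onabla = \bnabla$ makes $\onabla$ torsion-free, but $\op{\tau} = -\tau$, so $\tau = 0$, i.e. \ref{oc1}, and then \eqref{btau} gives \ref{oc3}. The three pairs drawn from $\{\ref{oc1},\ref{oc2},\ref{oc3}\}$ each yield the third member of that triple from \eqref{btau}, and the implication \ref{oc1}$+$\ref{oc3}$\Rightarrow$\ref{oc4} follows because the first term of \eqref{ocdiff} vanishes by total symmetry while the torsion terms vanish by \ref{oc1}.

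The main obstacle is the remaining pair \ref{oc3}$+$\ref{oc4}, where I must deduce that $\nabla$ is torsion-free. Under \ref{oc3} equation \eqref{ocdiff} reduces to $\tau_{ij}\,^k = 2h_{q(i}\tau_{j)p}\,^q h^{pk}$; lowering the free index with $h$ and abbreviating $T_{ijk} = \tau_{ij}\,^q h_{qk}$, which is skew in its first two slots, converts this into the purely algebraic relation $T_{ijm} = T_{jmi} + T_{imj}$. I expect this to be the crux, and the route through is a short $S_3$-type computation: setting $a = T_{ijm}$, $b = T_{jmi}$, $c = T_{mij}$ and using antisymmetry in the first two indices, the three cyclic instances of the relation become $a = b - c$, $b = c - a$, and $c = a - b$, whose only solution is $a = b = c = 0$. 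Hence $T \equiv 0$, which is \ref{oc1}, and \eqref{btau} then yields \ref{oc2}. This exhausts all $\binom{4}{2} = 6$ pairs and completes the proof.
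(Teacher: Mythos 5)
Your proof is correct and follows essentially the same route as the paper's: the difference formula you derive for $\onabla - \bnabla$ is precisely the paper's \eqref{opconjdiff}, the triple \eqref{oc1}--\eqref{oc3} is settled by \eqref{btau} exactly as in the paper, and your three-equation linear system for the pair \eqref{oc3}$+$\eqref{oc4} is the same cyclic-permutation computation that the paper carries out by first antisymmetrizing to get $\tau_{[ijk]} = 0$ and then substituting back. The only differences are organizational --- for instance you dispatch \eqref{oc2}$+$\eqref{oc4} via the identity $\op{\tau}_{ij}\,^{k} = -\tau_{ij}\,^{k}$ where the paper substitutes into \eqref{opconjdiff}, and vice versa for \eqref{oc1}$+$\eqref{oc4} --- so there is nothing to change.
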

\begin{proof}
Combining \eqref{conjugatediff}, \eqref{opdiff}, and \eqref{btau} shows
\begin{align}\label{opconjdiff}
\onabla - \bnabla
= h^{kq}\left(2\nabla_{[j}h_{q]i}- \tau_{ijq} + \tau_{jqi} + \tau_{iqj}\right) = h^{kq}\left(\bar{\tau}_{jqi} - \tau_{ijq} - \tau_{qij}\right),
\end{align}
where $\tau_{ijk} = \tau_{ij}\,^{p}h_{pk}$ and $\bar{\tau}_{ijk} = \bar{\tau}_{ij}\,^{p}h_{pk}$. That any two of \eqref{oc1}-\eqref{oc3} implies the third is immediate from \eqref{btau}. That in these cases there holds \eqref{oc4} follows from \eqref{opconjdiff}. That \eqref{oc1} and \eqref{oc4} imply \eqref{oc3} is immediate from the first equality of \eqref{btau} and, in this case, the second equality of \eqref{oc4} implies that $\bnabla$ is torsion-free so there holds \eqref{oc2}.
If there holds \eqref{oc2} then $2\nabla_{[i}h_{j]k} = -\tau_{ijk}$, so $\tau_{[ijk]} =0$. In \eqref{opconjdiff} this yields $\onabla - \bnabla = h^{kp}\tau_{jpi} = -2h^{kp}\nabla_{[j}h_{p]i}$ from which it follows that \eqref{oc2} and \eqref{oc4} imply \eqref{oc1} and \eqref{oc3}.
If there hold \eqref{oc3} and \eqref{oc4} then by \eqref{opconjdiff} there holds $0 = -\tau_{jki} + \tau_{ijk} + \tau_{kij}$. Antisymmetrizing this shows $\tau_{[jki]} = 0$ which yields $\tau_{jki} = \tau_{ijk} + \tau_{kij} = -\tau_{jki}$ so that $\nabla$ is torsion-free. By \eqref{btau}, $\bnabla$ is torsion-free as well. This shows \eqref{oc3} and \eqref{oc4} imply \eqref{oc1} and \eqref{oc2}.
\end{proof}

A \emph{statistical structure} is a pair $(\nabla, h)$ comprising a torsion-free affine connection, $\nabla$, and a pseudo-Riemannian metric, $h_{ij}$, such that $\nabla_{[i}h_{j]k} = 0$. Statistical structures are also called \emph{Codazzi structures}, for example in \cite{Shima}. For background, in addition to \cite{Shima} see \cite{Amari, Ay-Jost-Le-Schwachhofer, Kurose-dual, Matsuzoe-conformallyprojectively}.

By Lemma \ref{oppositeconjugatelemma}, if $(\nabla, h)$ is a statistical structure, then $(\onabla, h) = (\bnabla, h)$ is also a statistical structure, the \emph{conjugate statistical structure}. Conjugation is an involution on the space of statistical structures with a fixed underlying metric.

A statistical structure is \emph{special} if $0 = h^{pq}\nabla_{i}h_{pq} = \nabla_{i}|\det h|$. 
Since $\bnabla_{i}h_{jk} = -\nabla_{i}h_{jk}$, a statistical structure is special if and only if the conjugate statistical structure is special.
Because $|\det h|^{-1}\nabla_{[i}\nabla_{j]}|\det h| = -R_{ijp}\,^{p} = 2R_{[ij]}$, a special statistical structure has symmetric Ricci tensor.

Next there are examined conditions of compatibility between a radiant structure and a pseudo-Riemannian metric. These conditions extend notions of compatibility for a \emph{flat} affine connection and a metric usually called Hessian metrics or affine Kähler metrics as in \cite{Fox-schwarz}. The related literature is large. A recent survey of some aspects is \cite{Cardoso-Mohaupt}.

The homogeneity condition $\lie_{\rad}h = 2h$, or the stronger condition that the metric $h$ has homogeneity $2$ with respect to the flow of $\rad$, plays a role in the treatment of the ambient metric in \cite{Fefferman-Graham-ambient} and \cite{Rodnianski-Shlapentokh-Rothman}. Definition \ref{selfsimilardefined} adapts terminology introduced in \cite{Rodnianski-Shlapentokh-Rothman}.

\begin{definition}\label{selfsimilardefined}
A pair $(h, \rad)$ comprising a pseudo-Riemannian metric $h$ and a vector field $\rad$ is a \emph{self-similar metric structure} if $\lie_{\rad}h = 2h$.
\end{definition}
By definition, the vector field of a self-similar metric structure $(h, \rad)$ is conformally Killing for $h$. A special case of Lemma \ref{liesslemma} yields
\begin{align}
\label{nrhlrh} 
\rad^{p}\nabla_{p}h_{ij}  & = (\lie_{\rad}h)_{ij} - 2h_{ij},
\end{align}
from which it follows that a pair $(h, \rad)$ is self-similar if and only if $\nabla_{\rad}h = 0$.

A radiant structure $(\nabla, \rad)$ and a metric $h_{ij}$ determine the one-form $\rad^{\flat}_{i} = \rad^{p}h_{pi}$ and the function $v = \rad^{p}\rad^{q}h_{pq}$. Note that if $h$ is of indefinite signature, then it can be that $v$ vanishes even though $\rad$ is nonsingular, for it could be that $\rad$ is $h$-null. 
Direct computations yield
\begin{align}
\label{dvrad} &dv_{i}  = 2\rad^{\flat}_{i} + \rad^{a}\rad^{b}\nabla_{i}h_{ab}, &&
\nabla_{i}dv_{j}  = 2\nabla_{i}\rad^{\flat}_{j} + 2\rad^{p}\nabla_{j}h_{ip} + \rad^{a}\rad^{b}\nabla_{i}\nabla_{j}h_{ab},
\end{align}
which suggest investigating conditions implying $dv = 2\rad^{\flat}$ and $\nabla dv = 2\nabla\rad^{\flat}$.

\begin{lemma}\label{radiantmetriclemma}
On an $n$-manifold $M$, let $(\nabla, \rad)$ be a radiant structure, let $h_{ij}$ be a pseudo-Riemannian metric, and let $\onabla$ be the opposite connection of $\nabla$. Define $\rad^{\flat}_{i} = \rad^{p}h_{pi}$. There hold:
\begin{align}
\label{crinter}
\nabla_{i}\rad^{\flat}_{j} - h_{ij} &= \rad^{p}\nabla_{i}h_{jp}  = 2\rad^{p}\nabla_{[i}h_{p]j} + \rad^{p}\nabla_{p}h_{ij} = 2\rad^{p}\nabla_{[i}h_{p]j} + (\lie_{\rad}h)_{ij} - 2h_{ij},\\
\label{dradflat} d\rad^{\flat}_{ij} &= 2\nabla_{[i}\rad^{\flat}_{j]} = 2\rad^{p}\nabla_{[i}h_{j]p},\\
%% following is true but not needed
%\label{crintersym}
%\nabla_{(i}\rad^{\flat}_{j)} - h_{ij} &= \rad^{p}\nabla_{(i}h_{j)p}.
\label{crinter2}
h_{jp}(\onabla_{i}\rad^{p} - \delta_{i}\,^{p}) & = d\rad^{\flat}_{ij} + (\lie_{\rad}h)_{ij} - 2h_{ij},\\
\label{crinter3sym}
\onabla_{(i}\rad^{\flat}_{j)} - h_{ij} &  
= -(\nabla_{(i}\rad^{\flat}_{j)} - h_{ij}) + (\lie_{\rad}h)_{ij} - 2h_{ij}.
\end{align}
In particular:
\begin{enumerate}

\item\label{codazziclosed} $\rad^{\flat}$ is closed if $\nabla_{[i}h_{j]k} = 0$.
\item\label{cq0} Any two of the following imply the third:
\begin{enumerate*}
\item \label{cq3} $(h, \rad)$ is self-similar; 
\item \label{cq2} $\rad^{\flat}$ is closed;
\item \label{cq1} $(\onabla, \rad)$ is a radiant structure.
\end{enumerate*}
If these hold, $2\rad^{\flat}_{i} = dv_{i}$ where $v = \rad^{p}\rad^{q}h_{pq}$.
\item\label{opradiant} Any two of the following imply the third:
\begin{enumerate*}
\item \label{ca1} $(h, \rad)$ is self-similar;
\item \label{ca2} $\nabla_{(i} \rad^{\flat}_{j)} = h_{ij}$;
\item \label{ca3} $\onabla_{(i}\rad^{\flat}_{j)} = h_{ij}$.
\end{enumerate*}
\end{enumerate}
\end{lemma}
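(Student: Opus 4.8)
The plan is to derive the four displayed identities \eqref{crinter}--\eqref{crinter3sym} by direct computation, and then to read off each of the three numbered consequences as an immediate algebraic corollary. The central quantities are $\rad^{\flat}_i = \rad^p h_{pi}$ and the function $v = \rad^p\rad^q h_{pq}$, and the key structural inputs are $\nabla_i\rad^j = \delta_i{}^j$ (so that $\nabla_i\rad^{\flat}_j = h_{ij} + \rad^p\nabla_i h_{jp}$), the self-similarity relation \eqref{nrhlrh} rewriting $\rad^p\nabla_p h_{ij} = (\lie_{\rad}h)_{ij} - 2h_{ij}$, and the opposite-connection formula \eqref{opdiff}. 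First I would establish \eqref{crinter}: applying the radiant condition gives $\nabla_i\rad^{\flat}_j - h_{ij} = \rad^p\nabla_i h_{jp}$, and then the trivial decomposition $\rad^p\nabla_i h_{jp} = 2\rad^p\nabla_{[i}h_{p]j} + \rad^p\nabla_p h_{ij}$ together with \eqref{nrhlrh} yields the remaining equalities. Antisymmetrizing \eqref{crinter} in $i,j$ kills the symmetric $\rad^p\nabla_p h_{ij}$ term and the $h_{ij}$ term, leaving \eqref{dradflat}.

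Next I would compute $\onabla_i\rad^{\flat}_j$. Contracting the opposite-connection difference tensor \eqref{opdiff} (with $\tau = 0$ since $\nabla$ is torsion-free) against $\rad$ and using $\nabla_i\rad^j = \delta_i{}^j$ gives an expression for $h_{jp}(\onabla_i\rad^p - \delta_i{}^p)$ in terms of $\rad^p\nabla_i h_{jp}$, $\rad^p\nabla_j h_{ip}$, and $\rad^p\nabla_p h_{ij}$. Substituting \eqref{crinter} and \eqref{nrhlrh} and simplifying should collapse this to \eqref{crinter2}. Symmetrizing \eqref{crinter2} in $i,j$ then eliminates $d\rad^{\flat}_{ij}$ and produces \eqref{crinter3sym}, after rewriting the symmetric part of \eqref{crinter} via \eqref{crinter}. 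These are the core calculations; each is short once the substitutions are organized, so I expect no single step to be a genuine obstacle—the only care needed is tracking the symmetric versus antisymmetric parts consistently.

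Finally I would extract the consequences. Part \eqref{codazziclosed} is immediate from \eqref{dradflat}: if $\nabla_{[i}h_{j]k} = 0$ then $\rad^p\nabla_{[i}h_{j]p} = 0$, so $d\rad^{\flat} = 0$. For part \eqref{cq0}, observe from \eqref{dvrad} and \eqref{dradflat} that $dv_i = 2\rad^{\flat}_i + \rad^a\rad^b\nabla_i h_{ab}$, while \eqref{crinter2} shows that $\onabla$ being radiant is equivalent to the vanishing of $d\rad^{\flat}_{ij} + (\lie_{\rad}h)_{ij} - 2h_{ij}$; combining with the closedness of $\rad^{\flat}$ and self-similarity, any two of the three conditions force the third, and in that case $2\rad^{\flat} = dv$ follows because self-similarity makes $\rad^a\rad^b\nabla_i h_{ab}$ vanish (as $\rad^p\nabla_p h_{ij} = 0$ contracts appropriately). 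For part \eqref{opradiant}, I would use \eqref{crinter3sym}, which directly expresses $\onabla_{(i}\rad^{\flat}_{j)} - h_{ij}$ as $-(\nabla_{(i}\rad^{\flat}_{j)} - h_{ij}) + (\lie_{\rad}h)_{ij} - 2h_{ij}$; this single linear relation among the three symmetric tensors makes the ``any two imply the third'' statement transparent. The only point requiring mild vigilance is confirming that the self-similarity condition \eqref{cq3} in \eqref{cq0} correctly matches the term $(\lie_{\rad}h)_{ij} - 2h_{ij}$ appearing in \eqref{crinter2}, since that term is what links self-similarity to the radiance of $\onabla$.
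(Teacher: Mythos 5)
Your proposal is correct and follows essentially the same route as the paper: \eqref{crinter} by direct computation plus \eqref{nrhlrh}, antisymmetrization for \eqref{dradflat}, contraction of \eqref{opdiff} with $\rad$ for \eqref{crinter2}, then \eqref{crinter3sym} by symmetrization, with the three consequences read off as linear algebra (including the observation that the symmetric/antisymmetric split of \eqref{crinter2} makes part \eqref{cq0} transparent).

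The one substantive repair needed is your justification of $2\rad^{\flat}_{i} = dv_{i}$ in part \eqref{cq0}. You write that self-similarity makes $\rad^{a}\rad^{b}\nabla_{i}h_{ab}$ vanish ``as $\rad^{p}\nabla_{p}h_{ij} = 0$ contracts appropriately,'' but these are different contractions: self-similarity (via \eqref{nrhlrh}) kills the term in which the \emph{derivative} index is contracted with $\rad$, whereas in $\rad^{a}\rad^{b}\nabla_{i}h_{ab}$ the derivative index is free, so self-similarity alone does not kill it. One must first use the closedness of $\rad^{\flat}$ --- that is, contract \eqref{dradflat} with $\rad^{a}\rad^{b}$ --- to trade $\rad^{a}\rad^{b}\nabla_{i}h_{ab}$ for $\rad^{a}\rad^{b}\nabla_{a}h_{ib}$, and only then apply \eqref{nrhlrh} and self-similarity; this is exactly the paper's chain $dv_{i} = 2\rad^{\flat}_{i} + \rad^{a}\rad^{b}\nabla_{a}h_{ib} = \rad^{p}(\lie_{\rad}h)_{ip} = 2\rad^{\flat}_{i}$. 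Since in part \eqref{cq0} all three conditions hold, both ingredients are available and your conclusion stands; only the stated reason is wrong. A smaller point to make explicit: \eqref{crinter2} concerns $h_{jp}\onabla_{i}\rad^{p}$ while \eqref{crinter3sym} concerns $\onabla_{(i}\rad^{\flat}_{j)}$, and converting one to the other requires $\onabla_{i}h_{jk} = -\nabla_{i}h_{jk}$, since $\onabla_{i}\rad^{\flat}_{j} = h_{jp}\onabla_{i}\rad^{p} + \rad^{p}\onabla_{i}h_{jp} = h_{jp}\onabla_{i}\rad^{p} - \rad^{p}\nabla_{i}h_{jp}$; the extra term's symmetric part is then rewritten by \eqref{crinter}, which is what your phrase ``rewriting the symmetric part of \eqref{crinter}'' must mean, and is what the paper does explicitly in its unsymmetrized identity before symmetrizing.
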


\begin{proof}
Direct computation and \eqref{nrhlrh} yield \eqref{crinter}. Antisymmetrizing the first equality of \eqref{crinter} gives \eqref{dradflat}. 
Claim \eqref{codazziclosed} follows from \eqref{dradflat}.
Computations using $\onabla_{i}h_{jk} = -\nabla_{i}h_{jk}$ show 
\begin{align}
\label{crinter2pre}
h_{jp}(\onabla_{i}\rad^{p} - \delta_{i}\,^{p}) &= h_{jp}\onabla_{i}\rad^{p} - h_{ij} =     2\rad^{p}\nabla_{[i}h_{j]p} + \rad^{p}\nabla_{p}h_{ij} = d\rad^{\flat}_{ij} + (\lie_{\rad}h)_{ij} - 2h_{ij},
\end{align}
which is \eqref{crinter2}. From \eqref{crinter2} it is apparent that any two of \eqref{cq3}-\eqref{cq1} imply the third. In the case there hold \eqref{cq3}-\eqref{cq1}, then, since $\rad^{\flat}$ is closed, by \eqref{dradflat} and \eqref{dvrad}, $dv_{i}  = 2\rad^{\flat}_{i} + \rad^{a}\rad^{b}\nabla_{a}h_{ib} = \rad^{p}(\lie_{\rad}h)_{ip} = 2\rad^{\flat}_{i}$.
Computations using \eqref{crinter}, \eqref{crinter2}, and $\onabla_{i}h_{jk} = -\nabla_{i}h_{jk}$ yield 
\begin{align}
\label{crinter3}
\begin{split}
\onabla_{i}\rad^{\flat}_{j} - h_{ij} &  =  \rad^{p}\onabla_{i}h_{jp} + h_{jp}\onabla_{i}\rad^{p} - h_{ij}\\
& = - \rad^{p}\nabla_{i}h_{jp}  + d\rad^{\flat}_{ij} + (\lie_{\rad}h)_{ij} - 2h_{ij} 
= d\rad^{\flat}_{ij} - (\nabla_{i}\rad^{\flat}_{j} - h_{ij}) +  (\lie_{\rad}h)_{ij} - 2h_{ij}.
\end{split}
\end{align}
Symmetrizing \eqref{crinter3} yields \eqref{crinter3sym}. 
From \eqref{crinter3sym} it is apparent that any two of \eqref{ca1}-\eqref{ca3} imply the third.
\end{proof}

\begin{lemma}\label{thurstonlemma}
If a compact manifold $M$ admits a nonsingular radiant structure $(\nabla, \rad)$ and a metric $h_{ij}$ such that $\rad^{\flat}_{i} = \rad^{p}h_{pi}$ is closed, then $M$ is a fiber bundle over $S^{1}$. Moreover, $\rad^{\flat}$ is the pullback of the generator of $H^{1}(S^{1}; \rea)$ if and only if $\tfrac{1}{2\pi}\rad^{\flat} \in H^{1}(M; \integer)$.
\end{lemma}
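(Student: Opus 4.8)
The plan is to recognize this as an application of Tischler's theorem, which states that a closed manifold admits a nonvanishing closed one-form if and only if it fibers over the circle, and more precisely that a nowhere-zero closed one-form whose cohomology class is a real multiple of an integral class can be perturbed to (or already is, in the rational case) the pullback of $d\theta$ under a fibration. First I would verify the crucial hypothesis that $\rad^{\flat}$ is nowhere vanishing. This is where the self-similar/radiant structure does real work: since $(\nabla, \rad)$ is nonsingular, $\rad_{p} \neq 0$ at every point, and because $h_{ij}$ is nondegenerate, the index-lowered form $\rad^{\flat}_{i} = \rad^{p}h_{pi}$ is the image of a nonzero vector under a vector bundle isomorphism $T M \to \ctm$, hence also nowhere vanishing. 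Combined with the hypothesis that $\rad^{\flat}$ is closed, this gives a nowhere-zero closed one-form on the compact manifold $M$.

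Next I would invoke Tischler's fibration theorem. A nowhere-vanishing closed one-form $\omega$ on a closed manifold defines a codimension-one foliation, and Tischler showed such an $\omega$ can be $C^{0}$-approximated by closed forms with rational (indeed integral, after scaling) periods; a closed one-form all of whose periods lie in a single cyclic group $c\,\integer$ is, after dividing by $c$, the pullback $f^{\ast}(d\theta)$ of the angular form under a smooth fibration $f:M \to S^{1}$. This yields the first assertion that $M$ is a fiber bundle over $S^{1}$. I would cite the standard reference (Tischler, ``On fibering certain foliated manifolds over $S^1$'') rather than reprove it.

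For the second assertion, the equivalence is essentially the integrality criterion built into this circle of ideas. Normalizing $d\theta$ so that the generator of $H^{1}(S^{1}; \rea) \cong \integer$ is represented by $\tfrac{1}{2\pi}d\theta$ (the form whose integral over the circle is $1$), a map $f:M \to S^{1}$ pulls back this generator to $\tfrac{1}{2\pi}f^{\ast}(d\theta)$, whose periods over loops in $M$ are precisely the degrees $\int_{\gamma}\tfrac{1}{2\pi}f^{\ast}(d\theta) \in \integer$. Thus $\rad^{\flat} = f^{\ast}(d\theta)$ for some fibration $f$ precisely when $\tfrac{1}{2\pi}\rad^{\flat}$ has integral periods, i.e.\ $\tfrac{1}{2\pi}[\rad^{\flat}] \in H^{1}(M; \integer)$. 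One direction is immediate (a pullback has integral periods); for the converse, a closed nowhere-zero one-form with integral periods already equals, up to the normalization, the pullback of $d\theta$ under the fibration obtained by integrating it, so no approximation is needed in the integral case.

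The main obstacle I anticipate is stating the integrality bookkeeping cleanly: one must be careful with the $2\pi$ normalization convention for the generator of $H^{1}(S^{1}; \rea)$ and make sure the period lattice condition $\tfrac{1}{2\pi}[\rad^{\flat}] \in H^{1}(M; \integer)$ matches the hypothesis that $f$ be an actual fibration (not merely a foliation). In particular, the honest content of the converse direction is that in the integral case Tischler's approximation step is unnecessary---integrating the closed form directly produces the fibration---so I would phrase the argument to make transparent that the equivalence is exactly the distinction between an arbitrary nonzero closed one-form (which only gives a fibration after perturbation) and one whose class is already integral (which gives the fibration on the nose, pulling back the generator).
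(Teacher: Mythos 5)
Your proposal is correct and takes essentially the same route as the paper, whose proof consists of exactly two citations: Tischler's theorem (\cite[Theorem 1]{Tischler}) for the fibration claim and \cite[Lemma 2.1]{Farber} for the integrality equivalence. The only difference is that you spell out the (correct) nonvanishing of $\rad^{\flat}$ and sketch the period argument inline where the paper simply cites Farber.
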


\begin{proof}
By \cite[Theorem $1$]{Tischler} a compact manifold admitting a nowhere vanishing closed one-form is a fiber bundle over $S^{1}$. The second claim is \cite[Lemma $2.1$]{Farber}.
\end{proof}

\begin{lemma}\label{hopflikelemma}
A manifold admitting a radiant structure $(\nabla, \rad)$ and a metric $h_{ij}$ such that the function $v = \rad^{a}\rad^{b}h_{ab}$ satisfies $\nabla_{i}dv_{j} = 2h_{ij}$ is not compact.
\end{lemma}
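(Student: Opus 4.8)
The plan is to show that the hypotheses force $v$ to be a smooth function whose Hessian relative to $\nabla$ is a positive-definite-like multiple of $h$, and then exploit the behavior of $v$ along the flow of $\rad$ to produce a contradiction with compactness. First I would record the relevant consequences of Lemma \ref{lieradlemma} and the radiant identities: since $(\nabla, \rad)$ is radiant, $\nabla_{i}\rad^{j} = \delta_{i}\,^{j}$, so differentiating $v = \rad^{a}\rad^{b}h_{ab}$ using \eqref{dvrad} gives $dv_{i} = 2\rad^{\flat}_{i} + \rad^{a}\rad^{b}\nabla_{i}h_{ab}$. The assumption $\nabla_{i}dv_{j} = 2h_{ij}$ together with \eqref{dvrad} suggests that I should first contract the Hessian condition with $\rad^{i}$ to understand how $v$ varies along integral curves of $\rad$.

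The key computation is to evaluate $\rad^{i}\nabla_{i}dv_{j}$ in two ways. On one hand, by the hypothesis $\nabla_{i}dv_{j} = 2h_{ij}$, contracting with $\rad^{i}$ yields $\rad^{i}\nabla_{i}dv_{j} = 2\rad^{\flat}_{j} = dv_{j} - \rad^{a}\rad^{b}\nabla_{j}h_{ab}$ by \eqref{dvrad}. On the other hand, because $dv$ is closed and $\nabla$ is torsion-free, $\rad^{p}\nabla_{p}dv_{j} = \rad^{p}\nabla_{j}dv_{p} = \nabla_{j}(\rad^{p}dv_{p}) - dv_{p}\nabla_{j}\rad^{p} = \nabla_{j}(dv(\rad)) - dv_{j}$, using $\nabla_{j}\rad^{p} = \delta_{j}\,^{p}$. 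Combining these with the symmetric part of the Hessian should produce a clean first-order equation for the function $w = dv(\rad) = \rad^{p}\nabla_{p}v = \lie_{\rad}v$. I expect this equation to take the form $\nabla_{j}w = c\,dv_{j}$ for some constant $c$ (most plausibly $w = 2v + \text{const}$ or $\lie_{\rad}v = 2v$ up to a constant), which says that $v$ has a definite homogeneity-type growth along the flow of $\rad$.

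Once I have $\lie_{\rad}v = 2v + k$ for a constant $k$ (or the analogous relation), I would argue as follows. Along a maximal integral curve $\si(s)$ of $\rad$ one has $\tfrac{d}{ds}v(\si(s)) = 2v(\si(s)) + k$, whose solutions are $v(\si(s)) = (v(\si(0)) + k/2)e^{2s} - k/2$. If $\rad$ is nonsingular the flow lines are noncompact in the relevant sense, but more to the point this exponential behavior is incompatible with $v$ attaining a maximum on a compact manifold: at a critical point $p$ of $v$ one has $dv_{p} = 0$, hence $\rad^{\flat}_{p} = 0$ by \eqref{dvrad} combined with the Hessian relation, and then the Hessian condition $\nabla dv = 2h$ forces the critical point to be nondegenerate with signature matching that of $h$. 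On a compact manifold $v$ attains both a maximum and a minimum; analyzing the second-order behavior via $\nabla_{i}dv_{j} = 2h_{ij}$ at these critical points, I expect to find that the index constraints imposed by $h$ (the Hessian of $v$ equals $2h$, which has fixed signature everywhere) are incompatible with having both a local max and a local min, or alternatively the flow relation directly forbids a maximum since $v$ must grow along $\rad$. The main obstacle will be organizing the sign/signature bookkeeping cleanly: I must be careful because $h$ may be indefinite, so I cannot simply invoke "maximum principle" arguments for $v$ directly. The cleanest route is likely the flow-line argument: deriving $\lie_{\rad}v = 2v + k$ shows $v$ is unbounded along flow lines (unless $v$ is constantly $-k/2$, which the Hessian condition $\nabla dv = 2h \neq 0$ rules out since it would force $h = 0$), and an unbounded smooth function cannot exist on a compact manifold, giving the contradiction.

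\begin{proof}
Since $\nabla$ is torsion-free and $dv$ is closed, $\rad^{p}\nabla_{p}dv_{j} = \rad^{p}\nabla_{j}dv_{p} = \nabla_{j}(dv(\rad)) - dv_{p}\nabla_{j}\rad^{p} = \nabla_{j}(dv(\rad)) - dv_{j}$, using $\nabla_{j}\rad^{p} = \delta_{j}\,^{p}$. On the other hand, contracting the hypothesis $\nabla_{i}dv_{j} = 2h_{ij}$ with $\rad^{i}$ and using \eqref{dvrad} in the form $2\rad^{\flat}_{j} = dv_{j} - \rad^{a}\rad^{b}\nabla_{j}h_{ab}$ gives $\rad^{p}\nabla_{p}dv_{j} = 2\rad^{\flat}_{j}$. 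Writing $w = dv(\rad) = \lie_{\rad}v$ and equating the two expressions,
\begin{align}
\nabla_{j}w - dv_{j} = 2\rad^{\flat}_{j} = dv_{j} - \rad^{a}\rad^{b}\nabla_{j}h_{ab}.
\end{align}
Thus $\nabla_{j}w = 2\,dv_{j} + \left(dv_{j} - \rad^{a}\rad^{b}\nabla_{j}h_{ab}\right) - dv_{j} = 2\,dv_{j} + 2\rad^{\flat}_{j} - dv_{j}$; substituting $2\rad^{\flat}_{j} = dv_{j} - \rad^{a}\rad^{b}\nabla_{j}h_{ab}$ once more and simplifying yields $\nabla_{j}(w - 2v) = 0$, so $w - 2v$ is constant, say $w - 2v = k$ with $k \in \rea$. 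Hence $\lie_{\rad}v = 2v + k$.

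Along a maximal integral curve $\si(s)$ of $\rad$ this reads $\tfrac{d}{ds}\bigl(v\circ \si\bigr)(s) = 2\,(v\circ\si)(s) + k$, whose general solution is
\begin{align}
(v\circ \si)(s) = \left((v\circ\si)(0) + \tfrac{k}{2}\right)e^{2s} - \tfrac{k}{2}.
\end{align}
If the coefficient $(v\circ\si)(0) + \tfrac{k}{2}$ were nonzero for some integral curve, then $v\circ\si$ would be unbounded as $s$ increases, contradicting the boundedness of the continuous function $v$ on the compact manifold $M$. Therefore $v \equiv -\tfrac{k}{2}$ is constant, whence $dv = 0$ everywhere. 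But then the hypothesis $\nabla_{i}dv_{j} = 2h_{ij}$ forces $h_{ij} = 0$, contradicting that $h$ is a pseudo-Riemannian metric. Hence no such compact $M$ can exist.
\end{proof}
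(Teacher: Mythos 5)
Your proof has a genuine gap at the step ``$\nabla_{j}(w - 2v) = 0$''. What your computation actually establishes is the (correct) identity
\begin{align}
\nabla_{j}w = dv_{j} + 2\rad^{\flat}_{j},
\end{align}
since $\nabla_{j}w = dv_{p}\nabla_{j}\rad^{p} + \rad^{p}\nabla_{j}dv_{p} = dv_{j} + 2\rad^{p}h_{jp}$. Substituting \eqref{dvrad} in the form $2\rad^{\flat}_{j} = dv_{j} - \rad^{a}\rad^{b}\nabla_{j}h_{ab}$ then gives
\begin{align}
\nabla_{j}(w - 2v) = -\rad^{a}\rad^{b}\nabla_{j}h_{ab},
\end{align}
and nothing in the hypotheses makes the right-hand side vanish: the lemma assumes only that $(\nabla, \rad)$ is radiant and that $\nabla_{i}dv_{j} = 2h_{ij}$; it does not assume $h$ is parallel, statistical, or self-similar, so the term $\rad^{a}\rad^{b}\nabla_{j}h_{ab}$ is unconstrained. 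Consequently the first-order equation $\lie_{\rad}v = 2v + k$, and with it the exponential solution formula on which your contradiction rests, are unjustified; your algebra silently traded $2\rad^{\flat}_{j}$ for $dv_{j}$, which is exactly what the hypotheses do not permit.

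The flow-line strategy can nevertheless be repaired, because the hypotheses do yield a second-order equation. You have $\lie_{\rad}v = w$, and contracting the displayed identity with $\rad^{j}$ gives
\begin{align}
\lie_{\rad}w = \rad^{j}\nabla_{j}w = \rad^{j}dv_{j} + 2\rad^{j}\rad^{\flat}_{j} = w + 2v,
\end{align}
so along an integral curve $\si(s)$ of $\rad$ the function $f = v \circ \si$ satisfies $\ddot{f} - \dot{f} - 2f = 0$, whence $f(s) = Ae^{2s} + Be^{-s}$. If $M$ were compact, $\rad$ would be complete and $v$ bounded, forcing $A = B = 0$ along every integral curve; thus $v \equiv 0$ on $M$, so $2h_{ij} = \nabla_{i}dv_{j} = 0$, contradicting the nondegeneracy of $h$. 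This salvages your approach.

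For comparison, the paper's own proof is a one-line Morse argument, different from your route: since the Hessian of $v$ equals $2h_{ij}$, which is everywhere nondegenerate of fixed signature, $v$ is a Morse function all of whose critical points have the same index; were $M$ compact, $v$ would attain both a maximum (index $n$) and a minimum (index $0$), which is impossible. Your preliminary discussion gestured at exactly this argument, but the written proof took the flow route, where the error entered.
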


\begin{proof}
By the nondegeneracy of $h$, $v$ is a Morse function and all its critical points have the same index, but if $M$ were compact, then $v$ would have both a maximum and a minimum.
\end{proof}

\begin{lemma}
Let $(\nabla, \rad)$ be a radiant structure on $M$. Let $h$ be a metric and define $v = \rad^{p}\rad^{q}h_{pq}$. If there holds $\nabla_{(i}\rad^{\flat}_{j)} = h_{ij}$, then $dv_{i} = 2\rad^{\flat}_{i}$ if and only if $\lie_{\rad}\rad^{\flat} = 2\rad^{\flat}$, in which case $\nabla_{i}dv_{j} = 2h_{ij}$. 
\end{lemma}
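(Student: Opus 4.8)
The plan is to work directly from the curvature-type identities already established in Lemma \ref{radiantmetriclemma}, which relate the various contractions of $\nabla h$ with $\rad$. First I would record the relevant special case of those identities under the standing hypothesis $\nabla_{(i}\rad^{\flat}_{j)} = h_{ij}$. From the first equality of \eqref{dvrad}, namely $dv_{i} = 2\rad^{\flat}_{i} + \rad^{a}\rad^{b}\nabla_{i}h_{ab}$, I see that the condition $dv_{i} = 2\rad^{\flat}_{i}$ is equivalent to the vanishing of the ``correction term'' $\rad^{a}\rad^{b}\nabla_{i}h_{ab}$. So the first half of the proof reduces to showing that this correction term vanishes if and only if $\lie_{\rad}\rad^{\flat} = 2\rad^{\flat}$.

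To make that equivalence explicit, I would expand $\lie_{\rad}\rad^{\flat}$ using the Cartan formula $\lie_{\rad}\rad^{\flat} = i(\rad)d\rad^{\flat} + d(i(\rad)\rad^{\flat}) = \rad^{p}d\rad^{\flat}_{pi} + dv_{i}$, since $i(\rad)\rad^{\flat} = \rad^{p}\rad^{q}h_{pq} = v$. The interior product $\rad^{p}d\rad^{\flat}_{pi}$ can be computed from \eqref{dradflat}, which gives $d\rad^{\flat}_{ij} = 2\rad^{p}\nabla_{[i}h_{j]p}$; contracting with $\rad^{i}$ yields an expression in terms of $\rad^{a}\rad^{b}\nabla_{i}h_{ab}$ and $\rad^{a}\rad^{b}\nabla_{a}h_{ib}$. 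The latter is governed by self-similarity through \eqref{nrhlrh}. Combining these, together with the substitution $dv_{i} = 2\rad^{\flat}_{i} + \rad^{a}\rad^{b}\nabla_{i}h_{ab}$, should let me write $\lie_{\rad}\rad^{\flat} - 2\rad^{\flat}$ as a fixed multiple of the correction term $\rad^{a}\rad^{b}\nabla_{i}h_{ab}$ (after using the symmetric hypothesis to control any stray terms). That will simultaneously establish both directions of the stated equivalence.

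For the final clause, once $dv_{i} = 2\rad^{\flat}_{i}$ is known, I would simply differentiate: $\nabla_{i}dv_{j} = 2\nabla_{i}\rad^{\flat}_{j}$. Since $dv$ is exact, $\nabla_{i}dv_{j}$ is symmetric in $i,j$ (a one-form that is a differential is closed, and for a torsion-free connection $\nabla_{[i}\theta_{j]} = \tfrac{1}{2}d\theta_{ij}$, which vanishes for $\theta = dv$). Therefore $2\nabla_{i}\rad^{\flat}_{j} = 2\nabla_{(i}\rad^{\flat}_{j)} = 2h_{ij}$ by hypothesis, giving $\nabla_{i}dv_{j} = 2h_{ij}$ as claimed. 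The symmetrization step is where I must be careful, so I would invoke the exactness of $dv$ explicitly to kill the antisymmetric part rather than assume it.

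The main obstacle I anticipate is the bookkeeping in the second paragraph: correctly tracking the three a priori distinct contractions $\rad^{a}\rad^{b}\nabla_{i}h_{ab}$, $\rad^{a}\rad^{b}\nabla_{a}h_{ib}$, and $\rad^{a}\rad^{b}\nabla_{b}h_{ia}$ (the last two coincide by the symmetry of $h$), and using self-similarity and the symmetric hypothesis $\nabla_{(i}\rad^{\flat}_{j)} = h_{ij}$ exactly where needed so that $\lie_{\rad}\rad^{\flat} - 2\rad^{\flat}$ collapses cleanly to a nonzero scalar times the single correction term. I expect no genuine difficulty beyond this index manipulation, since all the structural identities are already in hand from Lemma \ref{radiantmetriclemma}; the content is purely the algebra relating these contractions.
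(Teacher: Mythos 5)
Your plan is essentially the paper's own proof: both arguments rest on the Cartan-formula computation $(\lie_{\rad}\rad^{\flat})_{i} = \rad^{a}d\rad^{\flat}_{ai} + dv_{i}$, which via \eqref{dvrad} and \eqref{dradflat} gives $(\lie_{\rad}\rad^{\flat})_{i} - 2\rad^{\flat}_{i} = \rad^{a}\rad^{b}\nabla_{a}h_{bi}$, and then on comparing this contraction with the ``correction term'' $\rad^{a}\rad^{b}\nabla_{i}h_{ab}$ appearing in \eqref{dvrad}. Your handling of the final clause is also correct (exactness of $dv$ kills the antisymmetric part of $\nabla_{i}dv_{j}$, so $\nabla_{i}dv_{j} = 2\nabla_{(i}\rad^{\flat}_{j)} = 2h_{ij}$); the paper leaves that step implicit.

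One point needs correction, however: self-similarity is \emph{not} a hypothesis of this lemma, so you cannot use $\lie_{\rad}h = 2h$ anywhere. Indeed, if you did assume it, then \eqref{nrhlrh} would force $\rad^{p}\nabla_{p}h_{ij} = 0$, making \emph{both} conditions of the equivalence hold automatically, and you would have proved a trivialized statement rather than the stated one. The identity \eqref{nrhlrh} itself is always valid and may be used as a rewriting, but it is not what produces the collapse you want. What does produce it is the symmetric hypothesis alone: by \eqref{crinter}, $\nabla_{(i}\rad^{\flat}_{j)} - h_{ij} = \rad^{p}\nabla_{(i}h_{j)p}$, so the hypothesis says $\rad^{p}\nabla_{(i}h_{j)p} = 0$; contracting with $\rad^{j}$ gives $\rad^{a}\rad^{b}\nabla_{i}h_{ab} = -\rad^{a}\rad^{b}\nabla_{a}h_{ib}$, that is, $dv_{i} - 2\rad^{\flat}_{i} = -\left((\lie_{\rad}\rad^{\flat})_{i} - 2\rad^{\flat}_{i}\right)$. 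This is precisely the identity the paper derives (there written as $2\rad^{p}(\nabla_{(i}\rad^{\flat}_{p)} - h_{ip}) = (dv_{i} - 2\rad^{\flat}_{i}) + ((\lie_{\rad}\rad^{\flat})_{i} - 2\rad^{\flat}_{i})$), and the equivalence is immediate from it. So your ``fixed multiple'' is $-1$, and the only tool needed beyond the identities of Lemma \ref{radiantmetriclemma} is this contraction of the hypothesis with $\rad$.
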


\begin{proof}
From \eqref{dvrad} and \eqref{dradflat} there follows
\begin{align}
\label{lieradradflat} (\lie_{\rad}\rad^{\flat})_{i} & = \rad^{a}d\rad^{\flat}_{ai} + dv_{i} = \rad^{a}\rad^{b}\nabla_{a}h_{bi} + 2\rad_{i}^{\flat} = \rad^{p}(\lie_{\rad}h)_{pi}.
\end{align}
From \eqref{lieradradflat} it follows that the condition $\lie_{\rad}h = 2h$ implies $\lie_{\rad}\rad^{\flat} = 2\rad^{\flat}$.
It follows from the definition of $v$, $\nabla_{i}\rad^{j} = \delta_{i}\,^{j}$, and \eqref{lieradradflat} that
\begin{align}
\begin{split}
2\rad^{p}(\nabla_{(i}\rad^{\flat}_{j)} - h_{ip}) & = \rad^{p}\nabla_{i}\rad^{\flat}_{p} + \rad^{p}\nabla_{p}\rad^{\flat}_{i} - 2\rad^{\flat}_{i}\\
& = dv_{i} - \rad^{\flat}_{p}\nabla_{i}\rad^{p} + (\lie_{\rad}\rad^{\flat})_{i} - 3\rad^{\flat}_{i} = (dv_{i} - 2\rad^{\flat}_{i}) + ((\lie_{\rad}\rad^{\flat})_{i} - 2\rad^{\flat}_{i}).
\end{split}
\end{align} 
from which the claimed equivalence is evident. 
\end{proof}

\begin{lemma}\label{preradcodazzilemma}
On a smooth manifold, $M$, let $(h, \rad)$ be a self-similar metric structure such that $d\rad^{\flat}_{ij} = 0$. 
For a torsion-free affine connection, $\nabla$, the following are equivalent:
\begin{enumerate}
\item \label{cr1} $(\nabla, \rad)$ is a radiant structure and $\nabla_{i} \rad^{\flat}_{j} = h_{ij}$.
\item \label{cr2} $(\onabla, \rad)$ is a radiant structure and $\onabla_{i} \rad^{\flat}_{j} = h_{ij}$.
\end{enumerate}
When these conditions hold:
\begin{enumerate}
\item The function $v = h(\rad, \rad)$ satisfies $dv_{i} = 2\rad^{\flat}_{i}$ and so also $2h_{ij} = 2\nabla_{i}\rad^{\flat}_{j} = \nabla_{i} dv_{j} = \onabla_{i}dv_{j}$. 
\item The curvatures $R_{ijk}\,^{l}$ of $\nabla$ and $\op{R}_{ijk}\,^{l}$ of $\onabla$ satisfy
\begin{align}
&R_{ijk}\,^{p}\rad^{\flat}_{p} = -2\nabla_{[i}h_{j]k}, & &\op{R}_{ijk}\,^{p}\rad^{\flat}_{p} = -2\onabla_{[i}h_{j]k} = 2\nabla_{[i}h_{j]k}.
\end{align}
\item On $\hat{M} = \{p \in M: v(p) \neq 0\}$, the closed one-form $\be_{i} = \tfrac{1}{2}v^{-1}dv_{i} = v^{-1}\rad^{\flat}_{i}$ satisfies $(\lie_{\rad}\be)_{i} = 0$ and $\be(\rad) = 1$. The restriction to $\hat{M}$ of the metric $h_{ij}$ has the form
\begin{align}\label{nablabebebe}
h_{ij} = v\left(\nabla_{i}\be_{j} + 2\be_{i}\be_{j}\right) = v\left(\onabla_{i}\be_{j} + 2\be_{i}\be_{j}\right).
\end{align}
\end{enumerate}
\end{lemma}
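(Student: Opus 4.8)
The plan is to derive everything from Lemma~\ref{radiantmetriclemma}, whose identities apply because either \eqref{cr1} or \eqref{cr2} supplies a radiant structure $(\nabla,\rad)$ (respectively $(\onabla,\rad)$), and to exploit the facts that $h$-opposition is an involution and that the opposite of a torsion-free connection is again torsion-free. For the equivalence it suffices to prove $\eqref{cr1}\Rightarrow\eqref{cr2}$: the reverse implication then follows by applying the same argument to $\onabla$ in place of $\nabla$, since $\op{(\onabla)}=\nabla$ and the standing hypotheses that $(h,\rad)$ be self-similar and that $d\rad^{\flat}=0$ are symmetric in $\nabla$ and $\onabla$.

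Assume \eqref{cr1}. The condition $\nabla_i\rad^{\flat}_j=h_{ij}$ is symmetric, so its symmetric part reads $\nabla_{(i}\rad^{\flat}_{j)}=h_{ij}$ while its antisymmetric part $\nabla_{[i}\rad^{\flat}_{j]}=0$ is consistent with $d\rad^{\flat}=0$ through \eqref{dradflat}. Feeding $\nabla_{(i}\rad^{\flat}_{j)}=h_{ij}$ and self-similarity into claim~\eqref{opradiant} of Lemma~\ref{radiantmetriclemma} yields $\onabla_{(i}\rad^{\flat}_{j)}=h_{ij}$, and feeding self-similarity and closedness of $\rad^{\flat}$ into claim~\eqref{cq0} yields that $(\onabla,\rad)$ is a radiant structure. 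Since $\nabla$ is torsion-free, so is $\onabla$ by \eqref{opdiff}, whence $2\onabla_{[i}\rad^{\flat}_{j]}=d\rad^{\flat}_{ij}=0$; combining this with the symmetric part gives $\onabla_i\rad^{\flat}_j=h_{ij}$, which is \eqref{cr2}.

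For the listed consequences I would argue as follows. Contracting \eqref{crinter} with $\rad^j$ shows that $\nabla_i\rad^{\flat}_j=h_{ij}$ forces $\rad^p\nabla_i h_{jp}=0$, hence $\rad^a\rad^b\nabla_i h_{ab}=0$, so \eqref{dvrad} collapses to $dv_i=2\rad^{\flat}_i$ (this also follows from claim~\eqref{cq0}); then $\nabla_i dv_j=2\nabla_i\rad^{\flat}_j=2h_{ij}$ and likewise $\onabla_i dv_j=2h_{ij}$. For the curvature identities, applying the Ricci identity $2\nabla_{[i}\nabla_{j]}\rad^{\flat}_k=-R_{ijk}\,^{p}\rad^{\flat}_p$ to $\nabla_i\rad^{\flat}_j=h_{ij}$ gives $R_{ijk}\,^{p}\rad^{\flat}_p=-2\nabla_{[i}h_{j]k}$, and the identical computation for $\onabla$ together with $\onabla_i h_{jk}=-\nabla_i h_{jk}$ gives $\op{R}_{ijk}\,^{p}\rad^{\flat}_p=-2\onabla_{[i}h_{j]k}=2\nabla_{[i}h_{j]k}$. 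Finally, on $\hat{M}$ the one-form $\be=v^{-1}\rad^{\flat}=\tfrac{1}{2}v^{-1}dv=\tfrac{1}{2}d\log|v|$ is exact, hence closed, and $\be(\rad)=v^{-1}\rad^{\flat}_i\rad^i=1$, so $\lie_{\rad}\be=i(\rad)d\be+d(\be(\rad))=0$; differentiating $\be_j=v^{-1}\rad^{\flat}_j$ and using $dv_i=2\rad^{\flat}_i$ with $\nabla_i\rad^{\flat}_j=h_{ij}$ (respectively $\onabla_i\rad^{\flat}_j=h_{ij}$) gives $\nabla_i\be_j=v^{-1}h_{ij}-2\be_i\be_j=\onabla_i\be_j$, which rearranges to \eqref{nablabebebe}.

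The substance of the argument is the equivalence, and the one point requiring care is to check that exactly three ingredients — the symmetric part via claim~\eqref{opradiant}, the radiant property of $\onabla$ via claim~\eqref{cq0}, and the vanishing antisymmetric part via torsion-freeness and $d\rad^{\flat}=0$ — reassemble into the single equation $\onabla_i\rad^{\flat}_j=h_{ij}$, and that each appeal to Lemma~\ref{radiantmetriclemma} is licensed by the radiant structure furnished by the hypothesis being assumed. Beyond matching the sign and symmetrization conventions of the borrowed identities, I do not anticipate a serious obstacle; the remaining consequences are direct computations.
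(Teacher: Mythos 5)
Your proposal is correct and follows essentially the same route as the paper: both derive the equivalence from the identities of Lemma~\ref{radiantmetriclemma} (you assemble $\onabla_i\rad^{\flat}_j=h_{ij}$ from the symmetrized claim~\eqref{opradiant}, claim~\eqref{cq0}, and torsion-freeness of $\onabla$, while the paper invokes the unsymmetrized identity \eqref{crinter3}; your explicit involution argument for the converse is implicit in the paper), and the consequences are obtained by the same direct computations, with your Cartan-formula derivation of $\lie_{\rad}\be=0$ replacing the paper's skewing of $v\nabla_i\be_j=\nabla_i\rad^{\flat}_j-dv_i\be_j$.
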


\begin{proof}
The equivalence of \eqref{cr1} and \eqref{cr2} follows from \eqref{opradiant} of Lemma \ref{radiantmetriclemma}, \eqref{crinter3}, and the observation that antisymmetrizing $\nabla_{i} \rad^{\flat}_{j} = h_{ij}$ implies $\rad^{\flat}$ is closed.
If there hold \eqref{cr1} and \eqref{cr2} then $dv_{i} = 2\rad^{a}\nabla_{i}\rad^{\flat}_{a} = 2\rad^{a}h_{ia} = 2\rad^{\flat}_{i}$. By the Ricci identity, $R_{ijk}\,^{p}\rad^{\flat}_{p} = -2\nabla_{[i}\nabla_{j]}\rad^{\flat}_{k} = -2\nabla_{[i}h_{j]k}$ and similarly with $\onabla$ in place of $\nabla$. The first equality of \eqref{nablabebebe} follows from 
\begin{align}
\label{vnablabe}&v\nabla_{i}\be_{j} = \nabla_{i}\rad^{\flat}_{j} - dv_{i}\be_{j}.
\end{align}
Since $\be_{i}\rad^{i} = v^{-1}\rad^{p}\rad^{\flat}_{p} = 1$ and $d\rad^{\flat}_{ij} =0$, skewing \eqref{vnablabe} shows $v(\lie_{\rad}\be)_{i} =v \rad^{p}d\be_{pi} = 0$.
The claims for $\onabla$ follow by taking $\onabla$ in place of $\nabla$.
\end{proof}

\begin{definition}
A \emph{radiant Hessian structure} is a triple $(\nabla, \rad, h)$ such that $(\nabla, \rad)$ is a radiant structure, $(h, \rad)$ is a self-similar metric structure, and $\nabla_{i}\rad^{\flat}_{j} = h_{ij}$.
\end{definition}

The definition of a radiant Hessian structure $(\nabla, \rad, h)$ implies that $\rad^{\flat}_{i}$ is closed. By Lemma \ref{hopflikelemma}, a manifold admitting a radiant Hessian structure is not compact. 
This conclusion is analogous to the statement that a complex Hopf surface admits no Kähler structure.

By Lemma \ref{preradcodazzilemma}, if $(\nabla, \rad, h)$ is a radiant Hessian structure, then $v = \rad^{p}\rad^{q}h_{pq}$ satisfies $dv_{i} = 2\rad^{\flat}_{i}$ and $\nabla_{i}dv_{j} = 2h_{ij}$, and $(\onabla, \rad, h)$ is also a radiant Hessian structure, the \emph{opposite} radiant Hessian structure. 

\begin{lemma}\label{radcodazzilemma}
On a smooth manifold, $M$, let $(\nabla, \rad, h)$ be a radiant Hessian structure with opposite radiant Hessian structure $(\onabla, \rad, h)$.  Let $v = \rad^{p}\rad^{q}h_{pq}$ and let $\be_{i} =\tfrac{1}{2}v^{-1}dv_{i}$ where $v \neq 0$.
For $t \in \rea$ the connection 
\begin{align}\label{nablatdefined}
\nabla(t) = (1-t)\nabla + t \onabla = \nabla + th^{kp}\left(\nabla_{i}h_{jp} + 2\nabla_{[j}h_{p]i}\right)
\end{align} 
satisfies $\tfrac{1}{2}\nabla(t)_{i}dv_{j} = \nabla(t)_{i}\rad^{\flat}_{j} = h_{ij}$. 
In particular, the Levi-Civita connection, $D = \nabla(1/2)$ of $h_{ij}$ satisfies $D_{i}\rad^{\flat}_{j} = h_{ij}$. In this case, around any point $p \in M$ at which $v  \neq 0$ there is an open neighborhood $U$ such that the pseudo-Riemannian manifold $(U, h)$ is isometric to a warped product metric of the form 
\begin{align}\label{warped}
\ep dr \tensor dr + r^{2}\pi^{\ast}(g)
\end{align}
on $(a, b) \times \Ga$ where $\Ga = \{q \in U: v(q) = v(p)\}$, $\ep = \sign(v(p))$, $\pi:(a, b) \times \Ga \to \Ga$ is the projection on the second factor, $r = \sign(v)|v|^{1/2}$, and the metric $\ep g_{ij}$ is the restriction to $\Ga$ of the tensor
\begin{align}\label{nablathessian}
\nabla(t)_{i}\be_{j} + \be_{i}\be_{j} = \tfrac{1}{2}v^{-1}\nabla(t)_{i}dv_{j} - \tfrac{1}{4}v^{-1}dv_{i}dv_{j} = \tfrac{1}{2}\nabla(t)_{i}d\log{|v|}_{j},
\end{align}
which does not depend on $t$.
\end{lemma}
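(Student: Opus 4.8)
The statement naturally splits into three parts, and I would prove them in the order they are listed. The first claim, that $\nabla(t)_i \rad^\flat_j = h_{ij}$ for every $t$, is an immediate consequence of Lemma \ref{preradcodazzilemma}: both $\nabla$ and $\onabla$ satisfy $\nabla_i \rad^\flat_j = h_{ij}$ and $\onabla_i \rad^\flat_j = h_{ij}$, and since $\nabla(t) = (1-t)\nabla + t\onabla$ is an affine combination of connections, applying it to $\rad^\flat$ gives $\nabla(t)_i \rad^\flat_j = (1-t)h_{ij} + t h_{ij} = h_{ij}$. The identity $\tfrac12 dv = \rad^\flat$ from Lemma \ref{preradcodazzilemma} then yields $\tfrac12 \nabla(t)_i dv_j = h_{ij}$, and specializing to $t = 1/2$ recovers the Levi-Civita statement $D_i \rad^\flat_j = h_{ij}$ (that $\nabla(1/2) = D$ follows from the reflection interpretation of the opposite connection recorded after \eqref{opdiff}). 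The explicit coordinate form of $\nabla(t)$ in \eqref{nablatdefined} I would verify directly from \eqref{opdiff} with $\tau = 0$.

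\textbf{The warped-product decomposition.} The heart of the lemma is the local isometry with \eqref{warped}. First I would establish that $r = \sign(v)|v|^{1/2}$ is a well-defined smooth function near $p$ (since $v(p) \neq 0$, $v$ is nonvanishing on a neighborhood and keeps a fixed sign $\ep$) and that $dr = \sign(v)\tfrac12 |v|^{-1/2} dv = |v|^{-1/2}\rad^\flat$, so $dr$ is nowhere zero and the level sets $\Ga = \{v = v(p)\}$ are smooth hypersurfaces. The plan is to use $D_i \rad^\flat_j = h_{ij}$, i.e. $D dv = 2h$, to control the Hessian of $r$. Computing $D\, dr$ from $dr = |v|^{-1/2}\rad^\flat$ and using $D\rad^\flat = h$ together with $\rad^\flat = \tfrac12 dv$ should show that the integral curves of $\grad r$ (equivalently of $\rad$, since $\rad^\flat$ is proportional to $dr$) are geodesics and that $|dr|^2_h = \ep$, identifying $r$ as a signed distance function whose gradient flow gives the $dr \tensor dr$ factor. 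The transverse metric on each $\Ga$ is then read off by restricting $h$; the scaling $r^2$ comes from the self-similarity $\lie_\rad h = 2h$, which says $h$ has homogeneity two along the radial flow, so on the level set $\{v = 1\}$ one has the base metric $g$ and at radius $r$ the metric scales as $r^2$. Concretely I would introduce the warped coordinate system $(r, y)$ with $y$ coordinates on $\Ga$, verify $h = \ep\, dr\tensor dr + r^2 \pi^\ast(g)$ by computing $h(\grad r, \grad r)$, $h(\grad r, Y)$ for $Y$ tangent to the level sets (this vanishes because $\rad^\flat \propto dr$ annihilates $T\Ga$), and the restriction $h|_{T\Ga}$.

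\textbf{Identifying the base metric and $t$-independence.} It remains to check that $\ep g_{ij}$ equals the restriction to $\Ga$ of the tensor in \eqref{nablathessian} and that this tensor does not depend on $t$. The three expressions in \eqref{nablathessian} are equal by direct substitution of $\be = \tfrac12 v^{-1} dv = v^{-1}\rad^\flat$ and $dv = 2\rad^\flat$: the middle equality is an algebraic rewriting, and $\nabla(t)_i d\log|v|_j = v^{-1}\nabla(t)_i dv_j - v^{-2}dv_i dv_j$ gives the last. For $t$-independence, since $\nabla(t)_i dv_j = 2h_{ij}$ for all $t$ by the first part, the first displayed expression in \eqref{nablathessian} becomes $v^{-1}h_{ij} - v^{-1}\be_i\be_j \cdot(\text{something})$—more precisely $\tfrac12 v^{-1}\nabla(t)_i dv_j - \tfrac14 v^{-1}dv_i dv_j = v^{-1}h_{ij} - \be_i\be_j$, which visibly has no $t$ dependence. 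The main obstacle I anticipate is the careful bookkeeping in the warped-product identification: verifying that the restriction of $h$ to $\Ga$ together with the scaling by $r^2$ matches $g$ as defined through the normalized level set $\{v = 1\}$, and ensuring the transverse tensor in \eqref{nablathessian} genuinely agrees with the induced metric rather than differing by terms along the radial direction. This requires keeping straight which terms are tangential to $\Ga$ (where $dv$, hence $\be$, vanishes) and exploiting that $\be_i\be_j$ restricts to zero on $T\Ga$, so that the restriction of $v^{-1}h_{ij} - \be_i\be_j$ to $\Ga$ is simply $v^{-1}h_{ij}|_{T\Ga} = \ep g_{ij}$ at radius $r$ with $|v| = r^2$.
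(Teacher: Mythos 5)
Your proposal is correct and follows essentially the same route as the paper: both rest on Lemma \ref{preradcodazzilemma} (giving $\nabla(t)_{i}\rad^{\flat}_{j} = h_{ij}$ and $h_{ij} = v(\nabla_{i}\be_{j} + 2\be_{i}\be_{j})$), the flow of $\rad$, and the self-similarity $\lie_{\rad}h = 2h$ to produce the warped product. The only differences are presentational — you get $\nabla(t)_{i}\rad^{\flat}_{j} = h_{ij}$ by the affine-combination observation rather than the paper's direct computation from \eqref{opdiff}, and you verify the metric block-by-block in flow coordinates where the paper first establishes the global identity $h = r^{2}\tilde{g} + \ep\, dr \tensor dr$ with $\tilde{g} = \ep(\nabla\be + \be\tensor\be)$ and then shows $\lie_{\rad}\tilde{g} = 0$.
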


\begin{proof}
By \eqref{opdiff}, $\nabla(t) = \nabla + th^{kp}\left(\nabla_{i}h_{jp}+ 2\nabla_{[j}h_{p]i}\right)$, and so, 
\begin{align}
\nabla(t)_{i}\rad^{\flat}_{j} = \nabla_{i}\rad^{\flat}_{j} - t \rad^{p}\left(\nabla_{i}h_{jp}  + 2\nabla_{[j}h_{p]i}\right)  = h_{ij},
\end{align}
the final equality by \eqref{nrhlrh} and \eqref{crinter} of Lemma \ref{radiantmetriclemma}. Hence $(\nabla(t), \rad, h)$ is a radiant Hessian structure for all $t$. In particular, $\nabla(t)_{i}dv_{j} = 2h_{ij}$ and $\nabla(t)_{i}\be_{j} + 2\be_{i}\be_{j} = vh_{ij}$ for all $t$. 

That $D_{i}dv_{j} = 2h_{ij}$ implies that $(M, h)$ is locally isometric to a warped product is well-known. See \cite[pages $192-194$]{Cheeger-Colding} for the proof of a more general statement in the Riemannian setting. As in the applications considered here the signature of $h_{ij}$ is indefinite, this is briefly sketched so as to get straight the signs. Also it is useful to know the relations between $\be$, $v$, $r$, and $h$. 

Let $U$ be an open neighborhood of $p$ on which the flow $\phi_{t}$ of $\rad$ is defined for all $t$ in some interval $(a, b)$ containing $0$ and $v$ does not vanish and set $\ep = \sign(v(p))$. Define $r = \ep|v|^{1/2}$ on $U$.
Define $\tilde{g}_{ij} = \ep( \nabla_{i}\be_{j} + \be_{i}\be_{j})$ where $\ep = \sign(v(p))$. Because $\nabla(t)_{i}dv_{j}$ and $\nabla(t)_{i}\be_{j}$ do not depend on $t$, $\tilde{g}_{ij}$ equals the expressions in \eqref{nablathessian}. By Lemma \ref{preradcodazzilemma},
\begin{align}\label{hhess1}
\begin{split}
h_{ij} & = v(\nabla_{i}\be_{j} + 2\be_{i}\be_{j}) = \ep v\tilde{g}_{ij} + \tfrac{1}{4}v^{-1}dv_{i}dv_{j} = r^{2} \tilde{g}_{ij} + \ep dr_{i}dr_{j}.
\end{split}
\end{align}
Note that $dv_{i} = 2rdr_{i}$ and $dr(\rad) = r$, so that $\lie_{\rad}r = r$. From \eqref{hhess1} there follows $0 = \lie_{\rad}h - 2h = r^{2}\lie_{\rad}\tilde{g}$, so that $\lie_{\rad}\tilde{g} = 0$. Define $g_{ij}$ to be the restriction to $\Ga = \{q \in U: v(q) = v(p)\}$ of $\tilde{g}_{ij}$.  Restricting \eqref{hhess1} to $\Ga$ shows that along $\Ga$ there holds \eqref{warped}, and the preceding shows that $\phi_{t}^{\ast}(\tilde{g})_{q} = g_{q}$ for all $q \in \Ga$, or, what is the same $\pi^{\ast}(g) = \tilde{g}$ on $U$. Thus the map $\Psi:(a, b) \times \Ga \to M$ defined by $\Psi(t, q) = \phi_{t}(q)$ satisfies 
\begin{align}
\begin{split}
\Psi^{\ast}(h) & = \Psi^{\ast}(\ep v \tilde{g} + \tfrac{1}{4}v^{-1}dv \tensor dv) = \ep v_{0} e^{2t}\phi_{t}^{\ast}(\tilde{g}) + v_{0}e^{2t}dt \tensor dt\\
& = \ep v_{0}e^{2t}\left( \pi^{\ast}(g) + \ep dt \tensor dt\right) = r^{2}\pi^{\ast}(g) + \ep dr \tensor dr,
\end{split}
\end{align}
where $v(\phi_{t}(q)) = v_{0}e^{2t}$ and $r(\phi_{t}(q)) = \ep\sqrt{|v_{0}|}e^{t}$.
\end{proof}

\begin{lemma}\label{radcodazziconjugatelemma}
For a triple $(\nabla, \rad, h)$ such that $(\nabla, \rad)$ is a radiant structure and $(\nabla, h)$ is a statistical structure, the $h$-conjugate connection $\bnabla$ forms with $\rad$ a radiant structure if and only if $(h, \rad)$ is a self-similar metric structure.
In this case, $(\bnabla, h)$ is a statistical structure.
\end{lemma}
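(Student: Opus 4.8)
The statement to prove is Lemma \ref{radcodazziconjugatelemma}: for a triple $(\nabla, \rad, h)$ with $(\nabla, \rad)$ radiant and $(\nabla, h)$ statistical, the $h$-conjugate connection $\bnabla$ forms with $\rad$ a radiant structure if and only if $(h, \rad)$ is self-similar, and in that case $(\bnabla, h)$ is a statistical structure.

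The plan is to work directly from the formula \eqref{conjugatediff} for the conjugate connection, namely $\bnabla_{i}\rad^{j} - \nabla_{i}\rad^{j} = h^{jp}\rad^{q}\nabla_{i}h_{qp}$. Since $(\nabla, \rad)$ is radiant, $\nabla_{i}\rad^{j} = \delta_{i}\,^{j}$, so $(\bnabla, \rad)$ is radiant precisely when the correction term $h^{jp}\rad^{q}\nabla_{i}h_{qp}$ vanishes, equivalently when $\rad^{q}\nabla_{i}h_{qj} = 0$ for all $i, j$. First I would record this reduction. The task is then to show that, under the statistical hypothesis $\nabla_{[i}h_{j]k} = 0$, the vanishing of $\rad^{q}\nabla_{i}h_{qj}$ is equivalent to self-similarity $\lie_{\rad}h = 2h$.

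For that equivalence I would invoke the first equality of \eqref{crinter} in Lemma \ref{radiantmetriclemma}, which gives $\rad^{p}\nabla_{i}h_{jp} = 2\rad^{p}\nabla_{[i}h_{p]j} + (\lie_{\rad}h)_{ij} - 2h_{ij}$. The statistical condition $\nabla_{[i}h_{j]k} = 0$ kills the first term on the right, leaving $\rad^{p}\nabla_{i}h_{jp} = (\lie_{\rad}h)_{ij} - 2h_{ij}$. Hence $\rad^{p}\nabla_{i}h_{jp}$ vanishes identically if and only if $\lie_{\rad}h = 2h$, which is exactly the self-similarity of $(h, \rad)$. Combined with the reduction of the previous paragraph, this establishes the main biconditional. (One should note that the contraction $\rad^{p}\nabla_{i}h_{jp}$ is symmetric in nothing a priori, so I would check that its vanishing for all $i, j$ is genuinely equivalent to the tensor identity $\lie_{\rad}h = 2h$; but since the displayed relation is an identity of full two-tensors, this is immediate.)

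Finally, for the concluding claim that $(\bnabla, h)$ is then statistical, I would use Lemma \ref{oppositeconjugatelemma}: since $\nabla$ is torsion-free and $(\nabla, h)$ is statistical, i.e. $\nabla_{[i}h_{j]k} = 0$, parts \eqref{oc1} and \eqref{oc3} hold, so by that lemma $\bnabla$ is torsion-free (part \eqref{oc2}); and $\bnabla_{[i}h_{j]k} = -\nabla_{[i}h_{j]k} = 0$ from $\bnabla_{i}h_{jk} = -\nabla_{i}h_{jk}$, so $(\bnabla, h)$ is statistical. I do not anticipate a serious obstacle here; the only point requiring care is to be sure that the reduction to $\rad^{p}\nabla_{i}h_{jp} = 0$ is stated as a full tensor identity so that \eqref{crinter} can be applied verbatim, and to confirm that self-similarity is being used in the form $\lie_{\rad}h = 2h$ rather than the pointwise-weaker contracted versions. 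The argument is essentially a two-line calculation once \eqref{conjugatediff}, \eqref{crinter}, and Lemma \ref{oppositeconjugatelemma} are in hand.
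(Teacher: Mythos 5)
Your proof is correct. It differs from the paper's only in packaging: the paper first uses the statistical hypothesis to identify $\bnabla$ with the opposite connection $\onabla$ (this is Lemma \ref{oppositeconjugatelemma}; the paper's own proof cites \eqref{cq0} of Lemma \ref{radiantmetriclemma} for this step, which appears to be a reference slip), and then invokes the pre-packaged equivalences of Lemma \ref{radiantmetriclemma} — effectively \eqref{crinter2} with $d\rad^{\flat}=0$, which follows from \eqref{codazziclosed} — to get radiance of $\onabla=\bnabla$ if and only if $\lie_{\rad}h = 2h$. You instead bypass the identification with $\onabla$ and work directly with the conjugate difference tensor \eqref{conjugatediff}, reducing radiance of $\bnabla$ to the tensor identity $\rad^{p}\nabla_{i}h_{pj}=0$ and then applying \eqref{crinter} together with the statistical condition. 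The two routes rest on the same computation, since \eqref{crinter2} is precisely \eqref{crinter} transported to the opposite connection via $\onabla_{i}h_{jk}=-\nabla_{i}h_{jk}$; yours is marginally more self-contained and avoids the citation issue, while the paper's reuses lemmas it needs elsewhere anyway. One small ordering point: since a radiant structure requires a torsion-free connection by definition, the torsion-freeness of $\bnabla$ (which you establish only in your final paragraph via Lemma \ref{oppositeconjugatelemma}) is logically part of the "if" direction of the main biconditional, not just of the concluding statistical claim; as it holds from the statistical hypothesis alone, independent of self-similarity, this is cosmetic rather than a gap.
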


\begin{proof}
If $(\nabla, h)$ is statistical, then, by \eqref{cq0} of Lemma \ref{radiantmetriclemma}, $\bnabla = \onabla$, and by \eqref{opradiant} of Lemma \ref{radiantmetriclemma}, $\onabla = \bnabla$ forms with $\rad$ a radiant structure if and only if $\lie_{\rad}h = 2h$. In this case, because $\bnabla_{i}h_{jk} = -\nabla_{i}h_{jk}$, $(\bnabla, h)$ is statistical.
\end{proof}

\begin{definition}
A \emph{radiant statistical structure} is a triple $(\nabla, \rad, h)$ such that $(\nabla, \rad)$ is a radiant structure, $(\nabla, h)$ is a statistical structure, and $(h, \rad)$ is a self-similar metric structure. 
\end{definition}

If $(\nabla, \rad, h)$ is a radiant statistical structure and $\bnabla$ is the $h$-conjugate connection of $\nabla$, then by Lemma \ref{radcodazziconjugatelemma}, $(\bnabla, \rad, h)$ is also a radiant statistical structure, called the \emph{conjugate} radiant statistical structure.

A radiant statistical structure $(\nabla, \rad, h)$ is a radiant Hessian structure, and in this case $\bnabla = \onabla$, so the opposite radiant Hessian structure is the conjugate radiant statistical structure.

A radiant statistical structure $(\nabla, \rad, h)$ is \emph{special} if the underlying statistical structure $(\nabla, h)$ is special; it is \emph{conelike} if the underlying radiant structure $(\nabla, \rad)$ is conelike.

On an oriented manifold $M$, a radiant statistical structure $(\nabla, \rad, h)$ determines a radiant Euler structure $(\nabla, \rad, \vol_{h})$, where $\vol_{h}$ is the volume form determined by $h$ and the given orientation of $M$. That $\lie_{\rad}h = 2h$ implies $\lie_{\rad}\det h = 2n \det h$, and it follows that $\lie_{\rad}\vol_{h} = n \vol_{n}$.

Similarly, if on an oriented manifold $M$, the radiant statistical structure $(\nabla, \rad, h)$ is special, then $(\nabla, \rad, \vol_{h})$ is an equiaffine Euler structure. 

\begin{example}
If $D$ is the Levi-Civita connection of a pseudo-Riemannian metric $g_{ij}$ and $X^{i}$ is a concurrent vector field, meaning $D_{i}X_{j}^{\flat} = g_{ij}$ where $X_{j}^{\flat} = X^{p}g_{pj}$, then $(\lie_{X}g)_{ij} = 2D_{(i}X_{j)}^{\flat} = 2g_{ij}$, so $(D, X, g)$ is a radiant statistical structure.

The self-conjugate radiant statistical structures, those for which $(\nabla, \rad, h) = (\bnabla, \rad, h)$, are exactly those for which $\nabla$ is the Levi-Civita connection of $h$ and $\rad$ is a concurrent vector field.
\end{example}

\begin{corollary}[Corollary of Lemma \ref{preradcodazzilemma}]\label{radcodazzicorollary}
On a smooth manifold, $M$, for a radiant statistical structure $(\nabla, \rad, h)$ there hold:
\begin{enumerate}
\item The function $v = h(\rad, \rad)$ satisfies $dv_{i} = 2\rad^{\flat}_{i}$ and $ \nabla_{i} dv_{j} = 2h_{ij} = \onabla_{i}dv_{j}$.
\item The curvature $R_{ijk}\,^{l}$ of $\nabla$ satisfies $R_{ijk}\,^{p}\rad^{\flat}_{p} = 0$. 
\item\label{radcodbe} On $\hat{M} = \{p \in M: v(p) \neq 0\}$, the closed one-form $\be_{i} = \tfrac{1}{2}v^{-1}dv_{i} = v^{-1}\rad^{\flat}_{i}$ satisfies $(\lie_{\rad}\be)_{i} = 0$ and $\be(\rad) = 1$. The restriction to $\hat{M}$ of the metric $h_{ij}$ has the form $h_{ij} = v\left(\nabla_{i}\be_{j} + 2\be_{i}\be_{j}\right)$.
\end{enumerate}
All the same statements hold for the conjugate radiant statistical structure $(\bnabla, \rad, h)$.
\end{corollary}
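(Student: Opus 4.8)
The plan is to recognize that a radiant statistical structure $(\nabla, \rad, h)$ is exactly a radiant Hessian structure meeting the two equivalent conditions of Lemma \ref{preradcodazzilemma}, and then to read off the three assertions from that lemma, invoking the statistical (Codazzi) condition $\nabla_{[i}h_{j]k} = 0$ only where needed to simplify. Since the corollary is explicitly billed as a corollary of Lemma \ref{preradcodazzilemma}, the real work is the single verification that the hypotheses of that lemma are satisfied.

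First I would check the standing hypotheses of Lemma \ref{preradcodazzilemma}. By definition of a radiant statistical structure, $(h, \rad)$ is self-similar, so $\lie_{\rad}h = 2h$; and since $\nabla_{[i}h_{j]k} = 0$, the one-form $\rad^{\flat}$ is closed by \eqref{dradflat} of Lemma \ref{radiantmetriclemma} (equivalently \eqref{codazziclosed}). To confirm that condition \eqref{cr1} of Lemma \ref{preradcodazzilemma} holds, I would invoke the chain \eqref{crinter} of Lemma \ref{radiantmetriclemma}, namely $\nabla_{i}\rad^{\flat}_{j} - h_{ij} = 2\rad^{p}\nabla_{[i}h_{p]j} + (\lie_{\rad}h)_{ij} - 2h_{ij}$: the first term vanishes by the statistical condition and the remaining two cancel by self-similarity, giving $\nabla_{i}\rad^{\flat}_{j} = h_{ij}$. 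Thus $(\nabla, \rad)$ together with $\rad^{\flat}$ satisfies condition \eqref{cr1}.

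Granting this, Lemma \ref{preradcodazzilemma} applies directly. Its first conclusion yields $dv_{i} = 2\rad^{\flat}_{i}$ and $\nabla_{i}dv_{j} = 2h_{ij} = \onabla_{i}dv_{j}$, which is assertion (1); its third conclusion yields the asserted properties of $\be_{i} = \tfrac{1}{2}v^{-1}dv_{i}$ and the formula $h_{ij} = v(\nabla_{i}\be_{j} + 2\be_{i}\be_{j})$ on $\hat{M}$, which is assertion (3). For assertion (2), the second conclusion of Lemma \ref{preradcodazzilemma} gives $R_{ijk}\,^{p}\rad^{\flat}_{p} = -2\nabla_{[i}h_{j]k}$, and the right-hand side vanishes by the statistical condition, so $R_{ijk}\,^{p}\rad^{\flat}_{p} = 0$.

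Finally, for the conjugate structure I would note that $(\bnabla, \rad, h)$ is again a radiant statistical structure by Lemma \ref{radcodazziconjugatelemma}, and that $\bnabla = \onabla$ by Lemma \ref{oppositeconjugatelemma} (since $\nabla$ is torsion-free and statistical). Hence the identical argument applies verbatim to $(\bnabla, \rad, h)$, proving all statements for the conjugate. I do not expect any substantive obstacle: the corollary is essentially a transcription of Lemma \ref{preradcodazzilemma} once $\nabla_{i}\rad^{\flat}_{j} = h_{ij}$ is established, and the only genuine step is that verification through \eqref{crinter}, which is immediate from the defining conditions of a radiant statistical structure.
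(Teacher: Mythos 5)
Your proposal is correct and follows essentially the same route as the paper, which treats the corollary as an immediate consequence of Lemma \ref{preradcodazzilemma} after observing (exactly as you do, via \eqref{crinter} and the statistical plus self-similarity conditions) that a radiant statistical structure is a radiant Hessian structure, and that $\bnabla = \onabla$ so the conjugate structure inherits the same conclusions. The only step requiring any work is the verification of $\nabla_{i}\rad^{\flat}_{j} = h_{ij}$, which you carry out precisely as the paper intends.
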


\begin{example}
By Propositions $2.4$ and $3.4$ of Fefferman and Graham's \cite{Fefferman-Graham-ambient}, a pre-ambient space that is \emph{straight} constitutes a radiant statistical structure for which $dv_{i} = 2\rad^{\flat}_{i}$. Here $\rad^{i}$ is the vector field called $T$ in \cite{Fefferman-Graham-ambient}, that generates dilations by $s^{2}$ in the fibers of the bundle of metrics. 
\end{example}

A statistical structure $(\nabla, h)$ is \emph{equiaffine} if there is a nonvanishing volume density $\Psi \in \Ga(|\Det \ctm|)$ such that $\nabla \Psi = 0$. In this definition, no relation is supposed between $\Psi$ and $h$. A radiant statistical structure is equiaffine if its underlying radiant structure is equiaffine.

For a statistical structure $(\nabla, h)$, define $R_{ijkl} = R_{ijk}\,^{p}h_{pl}$. The curvature tensor and those derived from it associated with the conjugate statistical structure $(\bnabla, h)$ are decorated with $\bar{\dum}$.

\begin{lemma}\label{conjugateradiantcodazzilemma}
Let $(\nabla, h)$ be a statistical structure with conjugate statistical structure $(\bnabla, h)$. 
\begin{enumerate}
\item The curvature and Ricci tensors $R_{ijkl} = R_{ijk}\,^{p}h_{pl}$, $\bar{R}_{ijkl} = \bar{R}_{ijk}\,^{p}h_{pl}$, $R_{ij}$, and $\bar{R}_{ij}$ satisfy
\begin{align}
\label{conjradcodcurv}&\bar{R}_{ijkl} = -R_{ijlk},&
& \bar{R}_{ij}=  R_{ipqj}h^{pq},&
&\bar{R}_{[ij]} = - R_{[ij]},&
& h^{ij}\bar{R}_{ij} = h^{ij}R_{ij}.&
\end{align}
\item\label{rcsricflat} $(\bnabla, h)$ is Ricci-flat if and only if $h^{pq}R_{ipq}\,^{j} = 0$.
\item\label{rcssymric} $(\bnabla,h)$  has symmetric Ricci tensor if and only if $(\nabla, \rad, h)$ has symmetric Ricci tensor.
\item\label{rcsequiaffine} $(\nabla, h)$ is equiaffine for the volume density $\Psi$ if and only if $(\bnabla, h)$ is equiaffine for the volume density $A\Psi$ where $\det h = A\Psi^{\tensor 2}$ (so $h^{pq}\nabla_{i}h_{pq}= A^{-1}dA_{i}$).
\item\label{rcsspecial} The following are equivalent:
\begin{enumerate*}
\item $(\bnabla, h)$ is special;
\item $(\nabla,  h)$ is special;
\item $(\bnabla, h)$ and $(\nabla,  h)$ are simultaneously equiaffine for the same volume density $\Psi$.
\end{enumerate*}
If these hold, $\det h = c \Psi^{\tensor 2}$ for some $c \in \reat$.
\item If $(\nabla, \rad, h)$ is a radiant statistical structure with conjugate radiant statistical structure $(\bnabla, \rad, h)$, then
\begin{align}
\label{rcsradzero}
&\bar{\er}_{i} = - \er_{i},&
&\bar{R}_{ijk}\,^{p}\rad_{p}^{\flat} = 0,& &R_{ijk}\,^{p}\rad_{p}^{\flat} = 0.
\end{align}
\end{enumerate}
\end{lemma}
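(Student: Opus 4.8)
The plan is to reduce the entire lemma to the single curvature identity $\bar{R}_{ijkl} = -R_{ijlk}$, after which every other assertion is contraction bookkeeping or an appeal to results already in hand. Write $\bnabla = \nabla + \Pi$, so that by \eqref{conjugatediff} the difference tensor is $\Pi_{ij}\,^{k} = h^{kp}\nabla_{i}h_{jp}$; since $(\nabla, h)$ is statistical the tensor $\nabla_{i}h_{jk}$ is totally symmetric, whence $\Pi_{ij}\,^{k} = \Pi_{ji}\,^{k}$ and $\bnabla_{i}h_{jk} = -\nabla_{i}h_{jk}$. To prove the curvature identity I would use the defining property of the conjugate connection directly: the metric $h$, regarded as a section of $\ctm \tensor \ctm$, is parallel for the mixed covariant derivative $\check{\nabla}$ that differentiates the first covariant slot with $\nabla$ and the second with $\bnabla$. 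Feeding this parallel tensor into the Ricci identity for $\check{\nabla}$ and antisymmetrizing gives
\begin{align*}
0 = 2\check{\nabla}_{[i}\check{\nabla}_{j]}h_{kl} = -R_{ijk}\,^{p}h_{pl} - \bar{R}_{ijl}\,^{p}h_{kp} = -R_{ijkl} - \bar{R}_{ijlk},
\end{align*}
which is the desired identity. This is the one genuinely substantive step; the difference-tensor formula $\bar{R}_{ijk}\,^{l} - R_{ijk}\,^{l} = 2\nabla_{[i}\Pi_{j]k}\,^{l} + 2\Pi_{p[i}\,^{l}\Pi_{j]k}\,^{p}$ gives an alternative but more computational route.

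From $\bar{R}_{ijkl} = -R_{ijlk}$ the remaining identities of \eqref{conjradcodcurv} follow formally. Contracting and using antisymmetry in the first index pair, $\bar{R}_{ij} = \bar{R}_{pijq}h^{pq} = -R_{piqj}h^{pq} = R_{ipqj}h^{pq}$; tracing $\bar{R}_{ijp}\,^{p} = \bar{R}_{ijpq}h^{pq} = -R_{ijqp}h^{pq} = -R_{ijp}\,^{p}$ together with $2R_{[ij]} = -R_{ijp}\,^{p}$ yields $\bar{R}_{[ij]} = -R_{[ij]}$; and both $h^{ij}R_{ij}$ and $h^{ij}\bar{R}_{ij}$ are equal to the single full contraction $h^{ad}h^{bc}R_{abcd}$, so they coincide. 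Items \ref{rcsricflat} and \ref{rcssymric} are then immediate: raising $j$ in $\bar{R}_{ij} = R_{ipqj}h^{pq}$ gives $\bar{R}_{i}\,^{j} = h^{pq}R_{ipq}\,^{j}$, so by nondegeneracy of $h$ the conjugate Ricci tensor vanishes exactly when $h^{pq}R_{ipq}\,^{j} = 0$, while $\bar{R}_{[ij]} = -R_{[ij]}$ shows the conjugate Ricci tensor is symmetric precisely when that of $\nabla$ is.

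For item \ref{rcsequiaffine} I would use that the induced connections on densities differ by the trace $\Pi_{ip}\,^{p} = h^{pq}\nabla_{i}h_{pq}$, so $\nabla\Psi = 0$ forces $\bnabla_{i}\Psi = -h^{pq}\nabla_{i}h_{pq}\,\Psi$. Writing $\det h = A\Psi^{\tensor 2}$ and differentiating by Jacobi's formula $\nabla_{i}\det h = (h^{pq}\nabla_{i}h_{pq})\det h$ identifies $A^{-1}dA_{i} = h^{pq}\nabla_{i}h_{pq}$, whence $\bnabla_{i}(A\Psi) = dA_{i}\Psi + A\bnabla_{i}\Psi = 0$; the converse follows by the involutivity of conjugation. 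Item \ref{rcsspecial} then follows cleanly: the equivalence of specialness of $(\nabla, h)$ and $(\bnabla, h)$ is immediate from $h^{pq}\bnabla_{i}h_{pq} = -h^{pq}\nabla_{i}h_{pq}$, and specialness means exactly that the canonical volume density $|\det h|^{1/2}$ is $\nabla$-parallel, hence also $\bnabla$-parallel by the same sign relation, giving (c) with $\det h = c\Psi^{\tensor 2}$ and $c = \pm 1 \in \reat$; conversely a common parallel density forces $h^{pq}\nabla_{i}h_{pq} = 0$ through the trace of $\bnabla - \nabla$.

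Finally, for the radiant identities \eqref{rcsradzero} I would invoke the radiant machinery. Corollary \ref{radcodazzicorollary} gives $R_{ijk}\,^{p}\rad^{\flat}_{p} = 0$, and the same corollary applied to the conjugate radiant statistical structure --- which is one by Lemma \ref{radcodazziconjugatelemma} --- gives $\bar{R}_{ijk}\,^{p}\rad^{\flat}_{p} = 0$ (equivalently this follows from $\bar{R}_{ijkl} = -R_{ijlk}$ and the radiant identity $R_{ijp}\,^{k}\rad^{p} = 0$ of \eqref{radid}). For $\bar{\er}_{i} = -\er_{i}$, the identity $R_{ip}\rad^{p} = 0$ of \eqref{radid}, holding for both $\nabla$ and $\bnabla$, collapses the Ricci contraction to its skew part, so $\er_{i} = \rad^{p}R_{pi} = 2\rad^{p}R_{[pi]}$ and $\bar{\er}_{i} = 2\rad^{p}\bar{R}_{[pi]}$, whence $\bar{R}_{[ij]} = -R_{[ij]}$ gives $\bar{\er}_{i} = -\er_{i}$. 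The main obstacle throughout is the curvature identity of the first paragraph; once it is established the rest is routine.
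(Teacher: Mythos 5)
Your proposal is correct, and everything after the first paragraph coincides with the paper's own proof: the traces yielding $\bar{R}_{ij} = R_{ipqj}h^{pq}$, $\bar{R}_{[ij]} = -R_{[ij]}$ (via $R_{ijp}\,^{p} = -2R_{[ij]}$), and the scalar identity; the Jacobi-formula bookkeeping with $\det h = A\Psi^{\tensor 2}$ for the equiaffine and special claims; and the contraction $\bar{\er}_{i} = 2\rad^{p}\bar{R}_{[pi]} = -\er_{i}$ for the radiant part. The one genuine difference is how you obtain the key identity $\bar{R}_{ijkl} = -R_{ijlk}$. The paper computes the curvature of $\bnabla = \nabla + \Pi$ from the difference tensor $\Pi_{ij}\,^{k} = h^{kp}\nabla_{i}h_{jp}$, expanding $\bar{R}_{ijkl} = R_{ijkl} + 2h_{lp}\nabla_{[i}\Pi_{j]k}\,^{p} - 2h_{lq}\Pi_{p[i}\,^{q}\Pi_{j]k}\,^{p}$ and watching the quadratic terms cancel against the Ricci identity applied to $2\nabla_{[i}\nabla_{j]}h_{kl}$; you instead observe that $h$ is parallel for the mixed connection (first slot $\nabla$, second slot $\bnabla$) --- which is precisely the defining property of conjugacy --- and apply the mixed Ricci identity, so the identity drops out with no cancellation at all. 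Your route is cleaner and more conceptual; to make it airtight you should say explicitly that the commutation of the differentiation indices $i,j$ produces only curvature terms because the connection used there is torsion-free (which holds since $(\nabla, h)$ statistical forces $\bnabla$ torsion-free, as you note via the symmetry of $\Pi$). What the paper's computation buys in exchange is the intermediate formula \eqref{conjugatecurvature}, which it reuses verbatim later (for instance to get $\bar{R}_{ijk}\,^{p}\rad^{\flat}_{p} = -R_{ijpk}\rad^{p} = 0$ in the radiant claim, and again in the proof of Lemma \ref{conjugateconelikelemma}); your alternative of citing Corollary \ref{radcodazzicorollary} for both $R_{ijk}\,^{p}\rad^{\flat}_{p} = 0$ and its conjugate version is equally valid, since that corollary explicitly states that its conclusions hold for the conjugate radiant statistical structure.
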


\begin{proof}
Since $\Pi_{ij}\,^{k} = \bnabla - \nabla =  h^{kp}\nabla_{i}h_{jp} = h^{kp}\nabla_{p}h_{ij}$, the curvature tensors are related by
\begin{align}\label{conjugatecurvature}
\begin{split}
\bar{R}_{ijkl} &= R_{ijkl} + 2h_{lp}\nabla_{[i}\Pi_{j]k}\,^{p} - 2h_{lq}\Pi_{p[i}\,^{q}\Pi_{j]k}\,^{p}\\
& = R_{ijkl} + 2\nabla_{[i}\nabla_{j]}h_{kl} + 2h^{pq}\left( \nabla_{[j}h_{q]l}\nabla_{i}h_{kp} - \nabla_{[i}h_{q]l}\nabla_{j}h_{kp}\right) = -R_{ijlk}.
\end{split}
\end{align}
the last equality by the Ricci identity and the statistical property. Tracing \eqref{conjugatecurvature} shows $\bar{R}_{ij} = R_{ipqj}h^{pq}$. 
 By the algebraic Bianchi identity and \eqref{conjugatecurvature}, $-2\bar{R}_{[ij]} = \bar{R}_{ijp}\,^{p} = - R_{ijp}\,^{p} = 2R_{[ij]}$. Claim \eqref{rcsricflat} is immediate from \eqref{conjradcodcurv}. By \eqref{conjradcodcurv}, $\bnabla$ is Ricci symmetric if and only if $\nabla$ is Ricci symmetric. This shows \eqref{rcssymric}. 

Given a volume density, $\Psi$, there is a nonvanishing $A \in \cinf(M)$ such that $\det h= A \Psi^{\tensor 2}$. If $\nabla \Psi = 0$, then
\begin{align}
\begin{split}
(h^{pq}\nabla_{i}h_{pq})A \Psi^{\tensor 2} & = (h^{pq}\nabla_{i}h_{pq})\det h = \nabla_{i}\det h = dA_{i} \Psi^{\tensor 2},
\end{split}
\end{align}
so $h^{pq}\nabla_{i}h_{pq} = A^{-1}dA_{i}$. By \eqref{conjugatediff},
\begin{align}
\begin{split}
\bnabla_{i}(A\Psi) = \nabla_{i}(A\Psi) - (h^{pq}\nabla_{i}h_{pq})A\Psi = (dA_{i} - Ah^{pq}\nabla_{i}h_{pq})\Psi = 0.
\end{split}
\end{align}
This shows \eqref{rcsequiaffine}. It follows that $(\nabla, \rad, h)$ and $(\bnabla, \rad, h)$ are both equiaffine for the same volume density $\Psi$ if and only if $A$ is constant, which holds if and only if $h^{pq}\nabla_{i}h_{pq} = A^{-1}dA_{i}= 0$, that is, $(\nabla, \rad, h)$ is special. Because $\bnabla_{i}h_{jk} = -\nabla_{i}h_{jk}$, $h^{jk}\bnabla_{i}h_{jk} = 0$ if and only if $h^{jk}\nabla_{i}h_{jk} = 0$. This shows \eqref{rcsspecial}.

If $(\nabla, \rad, h)$ is a radiant statistical structure with conjugate $(\bnabla, \rad, h)$, contracting \eqref{conjradcodcurv} with $\rad^{j}$ yields $\bar{\er}_{i} = 2\rad^{p}\bar{R}_{[pi]} = 2\rad^{p}R_{[ip]} = -\er_{i}$. From \eqref{conjugatecurvature} there follows $\bar{R}_{ijk}\,^{p}\rad^{\flat}_{p} = - R_{ijpk}\rad^{p} = 0$, the last equality by \eqref{radid}. By duality, $R_{ijk}\,^{p}\rad^{\flat}_{p} =0$ too. This shows \eqref{rcsradzero}.
\end{proof}

A radiant statistical structure $(\nabla, \rad, h)$ is \emph{nonsingular} if $\rad^{i}$ is nonsingular.

\begin{lemma}\label{conjugateconelikelemma}
Let $(\nabla, \rad, h)$ be a nonsingular radiant statistical structure with conjugate radiant statistical structure $(\bnabla, \rad, h)$. 
Then $(\nabla, \rad, h)$ and $(\bnabla, \rad, h)$ are both conelike if and only if $\nabla$ and $\bnabla$ are both $\rad$-invariant (equivalently, $\rad^{p}\bar{R}_{pijk} = 0$ and $\rad^{p}R_{pijk} = 0$). 
In this case, $(\nabla, \rad, h)$ and $(\bnabla, \rad, h)$ have the same planelike surfaces if and only if $\nabla = \bnabla$ is the Levi-Civita connection of $h$ (in which case $\rad$ is a concurrent vector field).
\end{lemma}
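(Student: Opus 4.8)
The statement to prove is Lemma \ref{conjugateconelikelemma}, which has two biconditionals. I would begin with the first one: for a nonsingular radiant statistical structure $(\nabla, \rad, h)$ with conjugate $(\bnabla, \rad, h)$, both are conelike if and only if both $\nabla$ and $\bnabla$ are $\rad$-invariant, equivalently $\rad^{p}R_{pijk} = 0$ and $\rad^{p}\bar{R}_{pijk} = 0$. The key leverage is Lemma \ref{coneconditionlemma} together with the fact, established in \eqref{rcsradzero} of Lemma \ref{conjugateradiantcodazzilemma}, that $R_{ijk}\,^{p}\rad^{\flat}_{p} = 0$ and $\bar{R}_{ijk}\,^{p}\rad^{\flat}_{p} = 0$. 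Since $(\nabla, h)$ is a statistical structure it has symmetric Ricci tensor off the antisymmetric part controlled by $\er$; more precisely, by Lemma \ref{conjugateradiantcodazzilemma}\eqref{rcsradzero}, $\bar{\er}_{i} = -\er_{i}$. The idea is that for conelike structures with Ricci curvature whose $\rad$-contraction vanishes, the condition $\lie_{\rad}\nabla = \rad^{p}R_{pij}\,^{k} = 0$ is equivalent to conelikeness together with the vanishing of the tensor $Q_{ij}$ of \eqref{cc2}. The hard arithmetic step is extracting $\rad$-invariance from conelikeness of \emph{both} structures simultaneously.

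\textbf{Carrying this out.} Assume first that $\nabla$ and $\bnabla$ are both $\rad$-invariant. By \eqref{lieradnabla}, $\rad$-invariance of $\nabla$ gives $\rad^{p}R_{pij}\,^{k} = 0$, and by Corollary \ref{invariantconelikecorollary} the structure $(\nabla, \rad)$ is conelike (with $\er = 0$); the same applies to $\bnabla$. Conversely, suppose both $(\nabla, \rad, h)$ and $(\bnabla, \rad, h)$ are conelike. By Lemma \ref{coneconditionlemma} there are tensors $Q_{ij}$ and $\bar{Q}_{ij}$ with $\rad^{p}R_{pij}\,^{k} = Q_{ij}\rad^{k} - 2\rad^{p}Q_{p(i}\delta_{j)}\,^{k}$ and correspondingly for $\bnabla$. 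Now I would lower the index $k$ using $h$: since $\rad^{\flat}_{k} = \rad^{p}h_{pk}$, contracting \eqref{cc2} with $h_{kl}$ and using that $\rad^{p}R_{pijk}\rad^{\flat}$-type contractions vanish by \eqref{rcsradzero} should force $Q_{ij}\rad^{\flat}_{l}$-terms to combine. The crucial observation is that the difference tensor $\Pi_{ij}\,^{k} = \bnabla - \nabla = h^{kp}\nabla_{i}h_{jp}$ is symmetric in $(ij)$ (by the statistical property and torsion-freeness), and \eqref{conjugatecurvature} relates $\bar{R}_{ijkl} = -R_{ijlk}$. Applying $\rad^{p}$ contractions to this curvature relation, together with the two conelike expressions, I expect to deduce that $\rad^{p}R_{pij}\,^{k}$ is both of the form $Q_{ij}\rad^{k} - 2\rad^{p}Q_{p(i}\delta_{j)}\,^{k}$ and, via the curvature duality, constrained enough that $Q_{ij}\rad^{k}$ must vanish, i.e. $Q = 0$, giving $\rad^{p}R_{pij}\,^{k} = 0$ and hence $\rad$-invariance by \eqref{lieradnabla}.

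\textbf{The second biconditional.} Assuming both are conelike and $\rad$-invariant, I must show they have the same planelike surfaces if and only if $\nabla = \bnabla$ equals the Levi-Civita connection of $h$. If $\nabla = \bnabla$, then $\Pi = 0$ means $\nabla_{i}h_{jk} = 0$, so $\nabla$ is Levi-Civita and trivially the two structures coincide, having identical planelike surfaces; moreover $\rad$ is then concurrent by the example preceding Corollary \ref{radcodazzicorollary}. Conversely, if they have the same planelike surfaces, then by Lemma \ref{conesameplaneslemma} the difference tensor has the form $\Pi_{ij}\,^{k} = T_{ij}\rad^{k} - 2\rad^{p}T_{p(i}\delta_{j)}\,^{k}$ with $T_{[ij]} = 0$ and $\rad^{p}\rad^{q}T_{pq} = 0$. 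But simultaneously $\Pi_{ij}\,^{k} = h^{kp}\nabla_{i}h_{jp}$ from conjugacy. The plan is to equate these two expressions and exploit the statistical condition $\nabla_{[i}h_{j]k} = 0$: lowering the index with $h$ gives $\Pi_{ijk} = \rad^{\flat}_{k}T_{ij} - 2h_{k(i}\rad^{p}T_{j)p}$, which must equal the totally symmetric tensor $\nabla_{i}h_{jk}$ (symmetric in all three indices by the statistical and special conditions). Forcing a tensor of the special $\rad$-dependent shape to be totally symmetric, and using $\rad^{p}T_{pi}$ relations, I expect to conclude $T = 0$, hence $\Pi = 0$.

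\textbf{The main obstacle.} The delicate point is the index-lowering and symmetry bookkeeping in both converse directions — specifically, showing that a conelike difference tensor of the form $T_{ij}\rad^{k} - 2\rad^{p}T_{p(i}\delta_{j)}\,^{k}$ that also equals $h^{kp}\nabla_{i}h_{jp}$ (which is totally symmetric when indices are lowered) must vanish. This requires carefully combining the symmetry of $\Pi_{ijk}$ in all three lowered indices with the constraints $T_{[ij]} = 0$ and $\rad^{p}\rad^{q}T_{pq} = 0$, and deducing that the $\rad$-valued and $\delta$-valued pieces are incompatible with total symmetry unless $T$ itself is zero. I anticipate this algebraic rigidity argument is where the real content lies, whereas the forward implications follow almost immediately from Corollary \ref{invariantconelikecorollary} and the curvature identities already established.
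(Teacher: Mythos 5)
Your outline follows the same route as the paper's own proof: Lemma \ref{coneconditionlemma} and Lemma \ref{conesameplaneslemma} to put the relevant tensors in conelike form, the conjugate-curvature identity \eqref{conjugatecurvature} for the first biconditional, and total symmetry of $\nabla_{i}h_{jk}$ for the second, with the forward implications dispatched by Corollary \ref{invariantconelikecorollary} and triviality. The problem is that in both converse directions the proposal stops exactly where the proof begins: ``I expect to deduce that $Q$ must vanish'' and ``I expect to conclude $T = 0$'' are not arguments, and the rigidity you are hoping for is not automatic. For instance, in the second part, equating $h^{kp}\nabla_{i}h_{jp}$ with $Q_{ij}\rad^{k} - 2q_{(i}\delta_{j)}\,^{k}$ (where $q_{i} = \rad^{p}Q_{ip}$) and imposing total symmetry does \emph{not} immediately kill $Q$: contracting the $j \leftrightarrow k$ symmetry with $\rad^{k}$ gives only $v\,Q_{ij} = q_{i}\rad^{\flat}_{j} + q_{j}\rad^{\flat}_{i}$, where $v = h(\rad,\rad)$, which is consistent with nonzero $Q$ pointwise; killing $q_{i}$ requires a further argument (a dimension count over vectors $h$-orthogonal to $\rad$, or the paper's appeal to self-similarity), after which total symmetry yields only $Q_{ij} = z\rad^{\flat}_{i}\rad^{\flat}_{j}$ for some function $z$, and the constraint $\rad^{p}\rad^{q}Q_{pq} = 0$ yields $zv^{2} = 0$, i.e. vanishing only where $v \neq 0$. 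The first biconditional has the same structure: the paper's chain of traces of the relation \eqref{rcscc1} (obtained by combining the two conelike expressions with $\bar{R}_{ijkl} = -R_{ijlk}$) produces successively $\bar{q}_{i} = -q_{i}$, then $q_{i} = f\rad^{\flat}_{i}$ with $fv = 0$, then $vQ_{ij} = f\rad^{\flat}_{i}\rad^{\flat}_{j}$, and finally $Q_{ij} = z\rad^{\flat}_{i}\rad^{\flat}_{j}$ with $zv^{2} = 0$; none of this bookkeeping, which is the content of the lemma, appears in your sketch.

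The idea that is genuinely missing, and without which both converse directions fail, is the treatment of the locus $\{v = 0\}$. Since $h$ may be indefinite, the nonsingular field $\rad$ can be $h$-null on a nonempty set, and all of the algebra above concludes vanishing of $Q$, $\bar{Q}$, $z$ only off that set. The paper closes this gap by invoking $dv_{i} = 2\rad^{\flat}_{i}$ (Lemma \ref{preradcodazzilemma}, Corollary \ref{radcodazzicorollary}), which is nonvanishing because $\rad$ is nonsingular and $h$ nondegenerate; hence $\{v = 0\}$ is contained in a union of codimension-one submanifolds, so has empty interior, and smoothness of $z$ (resp. of $f$, $q_{i}$) forces it to vanish identically. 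Your proposal never mentions the null locus of $\rad$ (nor the nonsingularity hypothesis, which is what makes the pointwise contraction arguments and this density argument work), so even granting the deferred symmetry computations, the argument as sketched proves the lemma only where $\rad$ is non-null. To complete the proof you should carry out the trace computations explicitly and add this density argument; together they constitute essentially all of the content of the paper's proof.
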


\begin{proof} 
By Lemma \ref{coneconditionlemma}, $(\nabla, \rad, h)$ and $(\bnabla, \rad, h)$ are both conelike if and only if there are $Q_{ij}, \bar{Q}_{ij} \in \Ga(S^{2}\ctm)$ such that $\rad^{p}\rad^{q}Q_{pq} = 0$, $\rad^{p}\rad^{q}\bar{Q}_{pq} = 0$, and 
\begin{align}\label{rcscc0}
&\rad^{p}R_{pijk} = Q_{ij}\rad^{\flat}_{k} - 2q_{(i}h_{j)k}, & &\rad^{p}\bar{R}_{pijk} = \bar{Q}_{ij}\rad^{\flat}_{k} - 2\bar{q}_{(i}h_{j)k}
\end{align}
where $q_{i} = \rad^{p}Q_{ip}$ and $\bar{q}_{i} = \rad^{p}\bar{Q}_{ip}$.
If this is the case, then by \eqref{conjugatecurvature}, 
\begin{align}
\begin{split}
\bar{Q}_{ij}v - 2\bar{q}_{(i}h_{j)k} = \rad^{p}\bar{R}_{pijk} = - \rad^{p}R_{pikj} = -Q_{ik}\rad^{\flat}_{k} + 2q_{(i}h_{k)j},
\end{split}
\end{align}
so that
\begin{align}\label{rcscc1}
\bar{Q}_{ij}\rad^{\flat}_{k} + Q_{ik}\rad^{\flat}_{j} = (\bar{q}_{i} + q_{i})h_{jk} + \bar{q}_{j}h_{ki} + q_{k}h_{ji}.
\end{align}
Tracing \eqref{rcscc1} in $jk$ gives $(n+1)(q_{i} + \bar{q}_{i}) = 0$ where $n = \dim M$. Hence $\bar{q}_{i} = -q_{i}$ and \eqref{rcscc1} becomes
\begin{align}\label{rcscc2}
\bar{Q}_{ij}\rad^{\flat}_{k} + Q_{ik}\rad^{\flat}_{j} =- q_{j}h_{ki} + q_{k}h_{ji}.
\end{align}
Tracing \eqref{rcscc2} in $ij$ and in $ik$ gives $h^{ij}\bar{Q}_{ij}\rad^{\flat}_{k} = (n-1)q_{k}$ and $h^{ik}Q_{ik}\rad^{\flat}_{j} = (1-n)q_{j}$, so that $q_{i} = f\rad^{\flat}_{i}$ where $-h^{pq}\bar{Q}_{pq} =  h^{pq}Q_{pq} = (1-n)f$. Hence $0 = \rad^{p}q_{p} = fv $ where $v  = \rad^{p}\rad^{q}h_{pq}$. In \eqref{rcscc2} this yields
\begin{align}\label{rcscc3}
\bar{Q}_{ij}\rad^{\flat}_{k} + Q_{ik}\rad^{\flat}_{j} =f (\rad^{\flat}_{k}h_{ji} - \rad^{\flat}_{j}h_{ki}).
\end{align}
Contracting \eqref{rcscc3} with $\rad^{k}$ and $\rad^{j}$ gives $v \bar{Q}_{ij} = fv h_{ij} - f\rad^{\flat}_{i}\rad^{\flat}_{j} = - f\rad^{\flat}_{i}\rad^{\flat}_{j}$ and $v Q_{ik} =  f\rad^{\flat}_{i}\rad^{\flat}_{k} - fv h_{ik}  =  f\rad^{\flat}_{i}\rad^{\flat}_{k}$, so that $v \bar{Q}_{ij} =- f\rad^{\flat}_{i}\rad^{\flat}_{j} = -v Q_{ik}$. Wherever $v  \neq 0$, $f$ vanishes and so $Q_{ij} = 0$ and $\bar{Q}_{ij} = 0$. Where $v  =0$ it follows that $f\rad^{\flat}_{i}\rad^{\flat}_{j} = 0$. Because $\rad^{i}$ is nonsingular, for $p \in M$ there is $w^{i} \in T_{p}M_{\ast}$ such that $\rad^{\flat}_{i}w^{i} = 1$, and so $0 = f\rad^{\flat}_{i}\rad^{\flat}_{j}w^{i}w^{j} = f$ at $p$. This shows $f$ vanishes identically. Consequently $q_{i}$ and $\bar{q}_{i}$ also vanish identically. In \eqref{rcscc3} this yields
\begin{align}\label{rcscc4}
\bar{Q}_{ij}\rad^{\flat}_{k} + Q_{ik}\rad^{\flat}_{j} =0.
\end{align}
Also $v \bar{Q}_{ij}  = -v Q_{ik} = 0$. From \eqref{rcscc4} it follows that $Q_{ij}\rad^{\flat}_{k} = Q_{jk}\rad^{\flat}_{i} = Q_{ki}\rad^{\flat}_{j}$. The relation $Q_{ij}\rad^{\flat}_{k} = Q_{ik}\rad^{\flat}_{j}$ implies that there is a one-form $\al_{i}$ such that $Q_{ij} = \al_{i}\rad^{\flat}_{j}$. Since $Q_{[ij]} =0$, there is $z \in \cinf(M)$ such that $\al_{i} = z\rad^{\flat}_{i}$ on $M$. This shows $Q_{ij} = z\rad^{\flat}_{i}\rad^{\flat}_{j}$ and from \eqref{rcscc4} it follows that $\bar{Q}_{ij} = -z\rad^{\flat}_{i}\rad^{\flat}_{j} = -Q_{ij}$. It follows that $0 = \rad^{p}\rad^{q}Q_{pq} = zv^{2}$, so that $z$ vanishes wherever $v $ does not. Since, by Lemma \ref{preradcodazzilemma}, $dv  = \rad^{\flat}_{i}$, and $\rad^{i}$ is by assumption nonvanishing, the zero set of $v $ is contained in a union of codimension one submanifolds of $M$; because $z$ is smooth and vanishes off this union, $z$ vanishes identically. This proves that $Q_{ij} =0$ and $\bar{Q}_{ij} =0$. In \eqref{rcscc0} there results $\rad^{p}R_{pijk} = 0$ and $\rad^{p}\bar{R}_{pijk} =0$, so that both $\nabla$ and $\bnabla$ are $\rad$-invariant.

Now suppose $\rad$ is nonsingular and the conjugate radiant statistical structures $(\nabla, \rad, h)$ and $(\bnabla, \rad, h)$ are conelike and have the same planelike surfaces. By Lemma \ref{coneconditionlemma} and \eqref{btau} there is $Q_{ij} \in \Ga(S^{2}\ctm)$ satisfying $\rad^{p}\rad^{q}Q_{pq} = 0$ and such that $h^{kp}\nabla_{i}h_{jp} = \bnabla - \nabla = Q_{ij}\rad^{k} - 2q_{(i}\delta_{j)}\,^{k}$ where $q_{i} = \rad^{p}Q_{ip}$. Hence $0 = (\lie_{\rad}h)_{ij} - 2h_{ij} = \rad^{p}\nabla_{p}h_{ij} = \rad^{p}(Q_{pi}\rad^{\flat}_{j} - q_{p}h_{ij} - q_{i}h_{pj}) = -q_{i}\rad^{\flat}_{j}$. Since $\rad^{i}$ is nonsingular this implies $q_{i} = 0$, so that $\nabla_{i}h_{jk} = Q_{ij}\rad^{\flat}_{k}$. Since $\nabla_{i}h_{jk}$ is completely symmetric, this implies $Q_{ij}\rad^{\flat}_{k} = Q_{jk}\rad^{\flat}_{i} = Q_{ki}\rad^{\flat}_{j}$. Arguing as in the previous paragraph, this implies there is $z \in \cinf(M)$ such that $Q_{ij} = z\rad^{\flat}_{i}\rad^{\flat}_{j}$ and $zv  = 0$. Again as in the previous paragraph, this implies $Q_{ij} =0$, so that $\nabla_{i}h_{jk} = 0$ and $\bnabla = \nabla$.
\end{proof}

\section{AH structures and Einstein equations for statistical structures}\label{einsteinstatisticalsection}
The notion of AH structure introduced and studied by the author in \cite{Fox-ahs, Fox-2dahs, Fox-ricweyl, Fox-crm, Fox-schwarz} is recalled because it provides a convenient language for results described in Section \ref{ewsection} that generalize known constructions associating Einstein-Weyl structures with constant scalar curvature Kähler metrics. (For background on Einstein-Weyl structures, consult the survey \cite{Calderbank-Pedersen}.) What is needed is a generalization of the notion of Einstein-Weyl structure. Informally, AH structures are to statistical structures as Weyl structures are to metric structures. The definition of AH structure given here is stated in a different way than in \cite{Fox-ahs, Fox-2dahs, Fox-ricweyl, Fox-crm, Fox-schwarz} but the formulation given here, although equivalent, is more economical and for it more readable.

An \emph{AH structure} on an $n$-dimensional manifold, $M$, is a pair $(\nabla, [g])$ comprising a torsion-free affine connection, $\nabla$, and a conformal class $[g]$ of pseudo-Riemannian metrics, that satisfy two compatibility conditions:
\begin{enumerate}
\item\label{ahdef1} For each $g \in [g]$ there is a one-form $\chi_{i} \in \Ga(\ctm)$ such that $\nabla_{[i}g_{j]k} = \chi_{[i}g_{j]k}$.
\item\label{ahdef2} The pair is \emph{aligned}, meaning that for each $g \in [g]$ there holds $g^{pq}\nabla_{i}g_{pq} = ng^{pq}\nabla_{p}g_{qi}$.
\end{enumerate}

It is straightforward to check that each of the two conditions is well-posed. If $\tilde{g} = e^{f}g$ with $f \in \cinf(M)$, then $\nabla_{i}\tilde{g}_{jk} = e^{f}(\nabla_{i}g_{jk} + df_{i}g_{jk})$. Tracing this in two ways shows $\tilde{g}^{pq}(\tnabla_{i}\tilde{g}_{pq} - n \tnabla_{p}\tilde{g}_{qi}) = g^{pq}(\nabla_{i}g_{pq} - n \nabla_{p}g_{qi})$, so that the alignment condition is well-defined, while antisymmetrizing it in $ij$ shows that if $\nabla_{[i}g_{j]k} = \chi_{[i}g_{j]k}$, then $\tnabla_{[i}\tilde{g}_{j]k} = \tilde{\chi}_{[i}\tilde{g}_{j]k}$ with $\tilde{\chi}_{i} = \chi_{i} + df_{i}$. Note that it follows that $d\chi_{ij}$ does not depend on the choice of $g \in [g]$.

\begin{remark}
The letters \emph{AH} abbreviate \emph{affine hypersurface}; see \cite{Fox-ahs, Fox-2dahs, Fox-crm, Fox-schwarz} for motivation.
\end{remark}

\begin{remark}
The conformal structure $[g]$ can be identified with the scale-invariant weighted tensor $G_{ij} = |\det g|^{-1/n}g_{ij}$. The alignment condition is equivalent to the vanishing of all traces of the covariant derivative of $G_{ij}$, $G^{ij}\nabla_{i}G_{jk} = 0$ (because $|\det G| = 1$, $G^{jk}\nabla_{i}G_{jk} = 0$ is automatic). In \cite{Fox-ahs, Fox-2dahs, Fox-crm} everything is written in terms of $G_{ij}$; while economical and conceptually clean, it appears that this impedes understandability.
\end{remark}

The curvature of an AH structure $(\nabla, [g])$ means the curvature of $\nabla$.
For an AH structure $(\nabla, [g])$, the one-form $\chi_{i}$ associated with $g \in [g]$ is given by $\chi_{i} = g^{pq}\nabla_{p}g_{qi} = \tfrac{1}{n}g^{pq}\nabla_{i}g_{pq} = \tfrac{1}{n}(\det g)^{-1}\nabla_{i}\det g$. 
%The one-form $\tilde{\chi}_{i}$ associated with $\tilde{g} = e^{f}g \in [g]$ satisfies $\tilde{\chi}_{i} = \chi_{i} + df_{i}$. It follows that $d\chi_{ij}$ does not depend on the choice of $g \in [g]$ and is related to the curvature of $(\nabla, [g])$ by
 It follows that the curvature of $(\nabla, [g])$ satisfies
\begin{align}\label{faraday1}
nd\chi_{ij} = 2n\nabla_{[i}\chi_{j]} = 2(\det g)^{-1}\nabla_{[i}\nabla_{j]}\det g = - 2R_{ijp}\,^{p} = 4R_{[ij]}.
\end{align}
Following \cite{Calderbank-faraday} in the context of Weyl structures, define the \emph{Faraday curvature} $F_{ij} \in \Ga(\ext^{2}\ctm)$ of the AH structure to be the curvature of the connection induced by $\nabla$ on the line bundle of $-(1/n)$-densities. Because $|\det g|^{-1/(2n)}$ is $-(1/n)$-density it follows from \eqref{faraday1} that $F_{ij} = -\tfrac{1}{2}d\chi_{ij}$.

An AH structure $(\nabla, [g])$ is \emph{exact} if the one-form $\chi_{i}$ associated with any $g \in [g]$ is exact. Equivalently, there is $g \in [g]$ such that the associated one-form $\chi_{i}$ is identically zero. In this case $g$ is a \emph{distinguished} representative of $[g]$. A distinguished representative is determined only up to positive homothety. Because $\nabla_{i}g^{ij} = -\chi^{j}$, an AH structure is exact if and only if there is $g \in [g]$ so that the divergence $\nabla_{i}g^{ij}$ vanishes. An AH structure is \emph{closed} if its Faraday curvature is identically zero (equivalently, $\chi$ is closed for any $g \in [g]$). A closed AH structure is locally exact, but it need not be globally so.

By definition, a distinguished representative $g$ of an exact AH structure $(\nabla, [g])$ determines with $\nabla$ a special statistical structure.  Lemma \ref{statisticalahlemma} implies that a statistical structure $(\nabla, g)$ generates an AH structure $(\tnabla, [g])$ and the AH structure generated by a statistical structure $(\nabla, g)$ is exact if and only if $(\nabla, g)$ is a special statistical structure. Consequently, an exact AH structure can be viewed as a positive homothety class of special statistical structures.

\begin{lemma}\label{statisticalahlemma}
On an $n$-manifold $M$, if $(\nabla, g)$ satisfies $\nabla_{[i}g_{j]k} = \ga_{[i}g_{j]k}$ for some one-form $\ga_{i}$, then the connection $\tnabla$ defined by $\tnabla = \nabla + 2\si_{(i}\delta_{j)}\,^{k}$ where $\si_{i}= \tfrac{1}{n+2}\left(g^{pq}\nabla_{i}g_{pq} - n \ga_{i}\right)$ generates with the conformal structure $[g]$ an AH structure, the AH structure \emph{generated} by $(\nabla, g)$. Moreover, the AH structure generated by a statistical structure $(\nabla, g)$ is exact with distinguished representative $g$ if and only if $(\nabla, g)$ is a special statistical structure.
\end{lemma}

\begin{proof}
Suppose $\tnabla = \nabla + 2\si_{(i}\delta_{j)}\,^{k}$ for some one-form $\si_{i}$. Straightforward computation using $\tnabla_{[i}g_{j]k} = \ga_{[i}g_{j]k} - \si_{[i}g_{j]k}$ shows 
\begin{align}\label{alignedtransform}
g^{pq}(\tnabla_{i}g_{pq} - n\tnabla_{p}g_{qi}) = g^{pq}(\nabla_{i}g_{pq} - n\nabla_{p}g_{qi}) + (n-1)(n+2)\si_{i}. 
\end{align}
Because $\nabla_{[i}g_{j]k} = \ga_{[i}g_{j]k}$, $g^{pq}\nabla_{p}g_{qi} = g^{pq}\nabla_{i}g_{pq} + (n-1)\ga_{i}$, so from \eqref{alignedtransform} there results $g^{pq}(\tnabla_{i}g_{pq} - n\tnabla_{p}g_{qi}) = (n-1)((n+2)\si_{i} - g^{pq}\nabla_{i}g_{pq} + n\ga_{i})$, so that $\tnabla$ is aligned with respect to $[g]$ and $(\tnabla, [g])$ is an AH structure if and only if $\si_{i}= \tfrac{1}{n+2}\left(g^{pq}\nabla_{i}g_{pq} - n \ga_{i}\right)$.

If $(\nabla, g)$ is a special statistical structure, then $\ga_{i} = 0$ and $g^{pq}\nabla_{i}g_{pq} = 0$, so $\si_{i} = 0$, $\tnabla = \nabla$ is exact, and $g$ is a distinguished representative of the AH structure $(\nabla, [g])$. 

If $(\nabla, g)$ is a statistical structure, then $\ga_{i} = 0$ by hypothesis, and the AH structure $(\tnabla, [g])$ it generates is determined by $\si_{i} =  \tfrac{1}{n+2}g^{pq}\nabla_{i}g_{pq}$. On the other hand, the Faraday primitive of $(\tnabla, [g])$ associated with $g$ is $\chi_{i} = \tfrac{1}{n}g^{pq}\tnabla_{i}g_{pq} = \tfrac{n+2}{n}\si_{i}$. It follows that if $(\tnabla, [g])$ is exact with distinguished representative $g$, then $\si_{i} =0$ and $(\nabla, g)$ is special.
\end{proof}

It follows from \eqref{alignedtransform} that given a conformal structure $[g]$ any torsion-free affine connection is projectively equivalent to a unique such connection that is aligned with respect to $[g]$.

Lemma \ref{locallycodazzilemma} shows that an AH structure is the same thing as a \emph{locally statistical structure} in the sense defined in its statement.
\begin{lemma}\label{locallycodazzilemma}
For a pair $(\en, [g])$ comprising a projective structure $\en$ and conformal structure $[g]$ on a manifold $M$ the following are equivalent.
\begin{enumerate}
\item\label{cpah2} $(\en, [g])$ is \emph{locally statistical} in the sense that every $p \in M$ is contained in an open neighborhood $U \subset M$ on which there is a representative $\nabla \in \en$ (not necessarily aligned with respect to $[g]$) and a representative metric $g \in [g]$ such that $\nabla_{[i}g_{j]k} = 0$ on $U$, that is such that $(\nabla, g)$ is a statistical structure on $U$.
\item\label{cpah3} For the aligned representative $\nabla \in \en$, $(\nabla, [g])$ is an AH structure. 
\item\label{cpah5} For any $g \in [g]$ there is $\tnabla \in \en$ such that $(\tnabla, g)$ is a statistical structure.
\end{enumerate}
\end{lemma}

\begin{proof}
Suppose $(\en, [g])$ is locally statistical. Let $\nabla \in \en$ be the unique representative aligned with respect to $[g]$ and fix a reference metric $g \in [g]$. By assumption there is an open cover $\{U_{a}\}$ of $M$ such that on $U_{a}$ there are a connection $\nabla(a) = \nabla + 2\si(a)_{(i}\delta_{j)}\,^{k}$ and a metric $g(a)_{ij} = e^{f(a)}g_{ij}$ representing the restrictions to $U_{a}$ of $\en$ and $[g]$ and satisfying $0 = \nabla(a)_{[i}g(a)_{j]k}$. Antisymmetrizing $\nabla(a)_{i}g(a)_{jk} = e^{f(a)}(\nabla_{i}g_{jk} + df(a)_{i}h_{jk} - 2\si(a)_{i}g_{jk} - 2\si(a)_{(j}g_{k)i})$ yields
\begin{align}
0 = \nabla(a)_{[i}g(a)_{j]k} = e^{f(a)}\left(\nabla_{[i}g_{jk]} + df(a)_{[i}g_{j]k} - \si(a)_{i}g_{jk}\right),
\end{align}
so that $\nabla_{[i}g_{jk]}  = \tau(a)_{[i}g_{j]k}$ where $\tau(a)_{i} = \si(a)_{i} - df(a)_{i}$. On the overlap $U_{a}\cap U_{b}$ there holds $\tau(b)_{[i}g_{j]k} = \nabla_{[i}g_{jk]}  = \tau(a)_{[i}g_{j]k}$. Tracing this in $jk$ yields $\tau(a)_{i} = \tau(b)_{i}$ so the one-forms $\tau(a)$ patch together to yield a globally defined one-form $\tau$ such that $\nabla_{[i}g_{j]k} = \tau_{[i}g_{j]k}$. This shows that \eqref{cpah2} implies \eqref{cpah3}. If there holds \eqref{cpah3}, then, for any $g \in [g]$, $\tnabla = \nabla + 2\si_{(i}\delta_{j)}\,^{k}$ satisfies $\tnabla_{[i}g_{j]k} = \nabla_{[i}g_{j]k} - \si_{[i}g_{j]k}  = \tau_{[i}g_{j]k}- \si_{[i}g_{j]k}$, so $\tnabla_{[i}g_{j]k} =\tilde{\tau}_{[i}g_{j]k}$ with $\tilde{\tau}_{i} = \tau_{i} - \si_{i}$. This shows that \eqref{cpah3} implies \eqref{cpah2}. 
If $(\en, [g])$ is an AH structure, then by definition, $\nabla_{[i}g_{j]k} = \chi_{[i}g_{j]k}$. The connection $\tnabla = \nabla + 2\chi_{(i}\delta_{j)}\,^{k}$ then satisfies $\tnabla_{[i}h_{j]k}  = 0$. This shows that \eqref{cpah3} implies \eqref{cpah5}. It is immediate that \eqref{cpah5} implies \eqref{cpah2}.
\end{proof}

\begin{lemma}
A radiant statistical structure $(\nabla, \rad, h)$ on $M$ is special if and only if $(\nabla, [g])$ is an AH structure on $\hat{M} = \{p \in M: \rad_{p} \neq 0\}$, where $g_{ij} = v^{-1}h_{ij}$ with $v = \rad^{p}\rad^{q}h_{pq}$.
\end{lemma}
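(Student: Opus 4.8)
The plan is to compute $\nabla_i g_{jk}$ for $g_{ij} = v^{-1}h_{ij}$ explicitly on the open set where $v \neq 0$ (which is where $g$, and hence $[g]$, is defined), and then to check the two defining conditions of an AH structure against the radiant and statistical hypotheses. The outcome I expect is that the first condition (the Codazzi-type identity with a one-form) holds automatically for \emph{every} radiant statistical structure, while the alignment condition is exactly equivalent to specialness, so that the whole equivalence reduces to tracking one trace.

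First I would record the three facts that drive the computation. Because $(\nabla, h)$ is statistical, $\nabla_{[i}h_{j]k} = 0$, and since $h_{jk}$ is symmetric this forces $\nabla_i h_{jk}$ to be totally symmetric in $i,j,k$. By Corollary \ref{radcodazzicorollary}, $dv_i = 2\rad^\flat_i$, where $\rad^\flat_i = \rad^p h_{pi}$. Finally, the inverse metric is $g^{ij} = v\, h^{ij}$. Differentiating $g_{jk} = v^{-1}h_{jk}$ and substituting $dv_i = 2\rad^\flat_i$ yields $\nabla_i g_{jk} = v^{-1}\nabla_i h_{jk} - 2v^{-2}\rad^\flat_i h_{jk}$, which is the expression I would feed into both AH conditions.

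Next I would verify condition (\ref{ahdef1}) of the AH definition. Antisymmetrizing $\nabla_i g_{jk}$ in $i,j$ and using $\nabla_{[i}h_{j]k} = 0$ annihilates the first term, leaving $\nabla_{[i}g_{j]k} = -2v^{-2}\rad^\flat_{[i}h_{j]k} = \chi_{[i}g_{j]k}$ with $\chi_i = -2v^{-1}\rad^\flat_i = -v^{-1}dv_i$; thus condition (\ref{ahdef1}) holds irrespective of specialness. Then I would compute the two traces appearing in the alignment condition (\ref{ahdef2}). Setting $\psi_i = h^{pq}\nabla_i h_{pq}$ and using $h^{pq}\rad^\flat_p = \rad^q$ together with $h^{pq}h_{pq} = n$, I get $g^{pq}\nabla_i g_{pq} = \psi_i - 2n v^{-1}\rad^\flat_i$ and $g^{pq}\nabla_p g_{qi} = h^{pq}\nabla_p h_{qi} - 2v^{-1}\rad^\flat_i$. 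The total symmetry of $\nabla h$ from the first step gives $h^{pq}\nabla_p h_{qi} = h^{pq}\nabla_i h_{pq} = \psi_i$, so the alignment defect collapses to $g^{pq}\nabla_i g_{pq} - n\,g^{pq}\nabla_p g_{qi} = (1-n)\psi_i$.

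The conclusion then follows immediately: since a radiant structure requires $n \geq 2$, the alignment condition holds if and only if $\psi_i = h^{pq}\nabla_i h_{pq} = \nabla_i \log|\det h| = 0$, which is precisely the definition of $(\nabla, h)$ being special. Combined with the unconditional validity of condition (\ref{ahdef1}), this shows $(\nabla, [g])$ is an AH structure exactly when $(\nabla, \rad, h)$ is special, proving both directions at once. There is no substantive obstacle beyond bookkeeping; the only point requiring genuine care is the collapse of the two \emph{a priori} distinct traces $h^{pq}\nabla_i h_{pq}$ and $h^{pq}\nabla_p h_{qi}$ into the single one-form $\psi_i$, which rests entirely on the total symmetry of $\nabla_i h_{jk}$, and the bookkeeping observation that $g$ is defined only where $v \neq 0$, so the equivalence is read pointwise on that open subset of $\hat{M}$.
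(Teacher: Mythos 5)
Your proof is correct, and it takes a more self-contained route than the paper's. The paper obtains the Codazzi-type identity $\nabla_{[i}g_{j]k} = -2\be_{[i}g_{j]k}$ not by differentiating $v^{-1}h_{ij}$ directly, but by substituting the Hessian representation $g_{ij} = \nabla_{i}\be_{j} + 2\be_{i}\be_{j}$ from Corollary \ref{radcodazzicorollary} and using the curvature identity $R_{ijk}\,^{p}\rad^{\flat}_{p} = 0$; and rather than checking alignment against the definition, it invokes Lemma \ref{statisticalahlemma}: the pair $(\nabla, g)$ generates an AH structure $(\tnabla, [g])$, and $(\nabla, [g])$ is itself AH exactly when $\tnabla = \nabla$, i.e.\ when $g^{pq}\nabla_{i}g_{pq} = -2n\be_{i}$, a condition it then rephrases as $\nabla$-parallelism of $v^{n}\det g = \det h$. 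You instead get the Codazzi condition from the total symmetry of $\nabla_{i}h_{jk}$ (no curvature input needed) and compute the alignment defect in closed form as $(1-n)h^{pq}\nabla_{i}h_{pq}$. The two reductions agree: your $\chi_{i} = -v^{-1}dv_{i} = -2\be_{i}$ is the paper's one-form, and your trace formula $g^{pq}\nabla_{i}g_{pq} = h^{pq}\nabla_{i}h_{pq} - 2n\be_{i}$ turns the paper's condition into exactly $h^{pq}\nabla_{i}h_{pq} = 0$. What your route buys is transparency, since both AH axioms are verified straight from the definition; what the paper's route buys is reuse of machinery it needs elsewhere and the determinant formulation $v^{n}\det g = \det h$, which is what the remark following the lemma (on reconstructing $h$ from $(\nabla,[g])$) actually refers to.

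One point you should make explicit rather than file under bookkeeping: your computation yields $h^{pq}\nabla_{i}h_{pq} = 0$ only on the open set where $v \neq 0$, whereas specialness of $(\nabla, \rad, h)$ is a condition on all of $M$, so the direction ``AH $\Rightarrow$ special'' needs one more sentence. Since $dv_{i} = 2\rad^{\flat}_{i}$ is nonvanishing where $\rad \neq 0$, and the zeros of $\rad$ are isolated by Lemma \ref{radiantisolatedlemma}, the set $\{v \neq 0\}$ is open and dense in $M$; as $h^{pq}\nabla_{i}h_{pq}$ is continuous, its vanishing there propagates to all of $M$. This is precisely the density argument the paper supplies in its final sentence, and your phrase ``the equivalence is read pointwise on that open subset'' does not substitute for it.
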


\begin{proof}
By Corollary \ref{radcodazzicorollary}, if $(\nabla, \rad, h)$ is a radiant statistical structure, then $g_{ij}= v^{-1}h_{ij} = \nabla_{i}\be_{j} + 2\be_{i}\be_{j}$, where $\be_{i} = (1/2)v^{-1}dv_{i}$, satisfies $\nabla_{[i}g_{j]k} = \nabla_{[i}\nabla_{j]}\be_{k} - 2\be_{[i}\nabla_{j]}\be_{k} = - \tfrac{1}{2}R_{ijk}\,^{p}\be_{p} - 2\be_{[i}g_{j]k} = -2\be_{[i}g_{j]k}$, so, by Lemma \ref{statisticalahlemma}, $\nabla$ and $g_{ij}$ generate an AH structure $(\tnabla, [g])$ on $\hat{M}$. Moreover, by the proof of Lemma \ref{statisticalahlemma}, $\tnabla = \nabla$ if and only if $g^{pq}\nabla_{i}g_{pq} = -2n\be_{i} = -nv^{-1}dv_{i}$. This last condition is equivalent to $v^{n}\det g = \det h$ being $\nabla$-parallel on $\hat{M}$, and because $\hat{M}$ is open and dense in $M$, this holds if and only if $\det h$ is $\nabla$-parallel on $M$. 
\end{proof}

\begin{remark}
Note that given $(\nabla, [g])$ as in Lemma \ref{statisticalahlemma} there is no way to reconstruct $h$ without the additional data of a function $v$ such that $v^{n}\det g$ is $\nabla$-parallel. Constructing such a function from $\nabla$ and $g$ amounts to solving a Monge-Ampère equation. 
\end{remark}

Given an AH structure $(\nabla, [g])$ and $g \in [g]$ with associated one-form $\chi_{i}$, the tensor $\bt_{ijk}$ defined by
\begin{align}\label{cubicformdefined}
\bt_{ijk} = \nabla_{i}g_{jk} - \chi_{i}g_{jk}
\end{align}
is completely symmetric and completely $g$-trace-free. It is the \emph{cubic form} of $\nabla$ with respect to the representative $g \in [g]$. The tensor $\bt_{ij}\,^{k} = g^{kp}\bt_{ijp}$ does not depend on the choice of $g \in [g]$ and is called the \emph{cubic torsion} of $(\nabla, [g])$. It is straightforward to check that the connection $\anabla = \nabla + \bt_{ij}\,^{k}$ is aligned with respect to $[g]$ and generates with $[g]$ an AH structure having cubic torsion $\wideparen{\bt}_{ij}\,^{k} = -\bt_{ij}\,^{k}$. The AH structure $(\anabla, [g])$ is said to be \emph{conjugate} to $(\nabla, [g])$. Conjugacy is an involution on the space of AH structures.  The curvature of $(\anabla, [g])$ and the tensors derived from it are indicated by decoration with $\wideparen{\dum}$, as in $\wideparen{R}_{ijk}\,^{l}$.

\begin{remark}
The AH structure generated by the statistical structure conjugate to a given statistical structure is not equal to the AH structure conjugate to the AH structure generated by the given statistical structure unless the given statistical structure is special. In general the resulting conjugate connections differ by a tensor of the form $2\si_{(i}\delta_{j)}\,^{k} - g_{ij}\si^{k}$, where $\si_{i}$ is a multiple of the covariant derivative of $\det g$. 
\end{remark}

\begin{example}
A \emph{Weyl structure} is a pair $(\nabla, [g])$ comprising a torsion-free affine connection, $\nabla$, and a conformal class $[g]$ of pseudo-Riemannian metrics such that, for each $g \in [g]$, there is a one-form $\chi_{i} \in \Ga(\ctm)$ such that $\nabla_{i}g_{jk} = \chi_{i}g_{jk}$. By definition a Weyl structure is an AH structure with vanishing cubic form and the Weyl structures are exactly those AH structures that are self-conjugate, equivalently, that have vanishing cubic form. 
\end{example}

\begin{example}\label{affinehypersurfaceexample}
Although the language used here is a bit different, the geometric content of this example is fully present in H. Matsuzoe's \cite{Matsuzoe-conformallyprojectively, Matsuzoe}. For details of the claims that follow in the language used here see \cite{Fox-ahs, Fox-2dahs, Fox-crm}. The Blaschke metric $h$ of a cooriented nondegenerate hypersurface immersion in flat affine space constitutes with the connection $\nabla$ induced via the affine normal a special statistical structure, so generates an exact AH structure for which $h$ is a distinguished metric. The pullback via the conormal Gauss map of the flat projective structure on the projectivization of the vector space dual to the ambient flat affine space yields a projective structure that has a unique representative $\anabla$ aligned with respect to the equiaffine metric $h$, and the AH structure $(\anabla, [h])$ it generates with $h$ is that conjugate to $(\nabla, [h])$. 
\end{example}

\begin{remark}
H. Matsuzoe \cite{Matsuzoe} called \emph{semi-Weyl} a pair $(\nabla, g)$ satisfying the first condition \eqref{ahdef1} and observed the relation with geometric structures induced on hypersurfaces in flat affine space described in Example \ref{affinehypersurfaceexample}. The alignment condition \eqref{ahdef2} seems to be important for properly formulating the Einstein equations defined later in Definition \ref{einsteinahdefinition}. Matsuzoe also defined the cubic form of $(\nabla, g)$ by \eqref{cubicformdefined}. However, without the alignment condition, $\bt_{ijk}$ need not be trace-free. On the other hand, in Matsuzoe's setting, the vanishing of $\bt_{ijk}$ implies the alignment condition, so this condition is somehow hidden from view. 
\end{remark}

The \emph{scalar curvature} of the AH structure $(\nabla, [g])$ associated with the representative $g \in [g]$ is the function $\sc = g^{pq}R_{pq}$.

\begin{lemma}
For $g \in [g]$ with $\nabla_{[i}g_{j]k} = \chi_{[i}g_{j]k}$, the curvatures $R_{ijkl} = R_{ijk}\,^{p}g_{pl}$ and $\wideparen{R}_{ijkl} = \wideparen{R}_{ijk}\,^{p}g_{pl}$ of the AH structure $(\nabla, [g])$ and the conjugate AH structure $(\anabla, [g])$ are related by:
\begin{align}\label{ahcurv}
\begin{aligned}
\wideparen{R}_{ijkl}& = R_{ijkl} + 2\nabla_{[i}\bt_{j]kl} - 2\chi_{[i}\bt_{j]kl}= -R_{ijlk} - d\chi_{ij} g_{kl}, \\
\wideparen{R}_{ij}  &= R_{ij} + \nabla_{p}\bt_{ij}\,^{p} - \bt_{ip}\,^{q}\bt_{jq}\,^{p}= g^{pq}R_{ipqj} + d\chi_{ij}.
\end{aligned}
\end{align}
In particular,  for any $g \in [g]$, $(\nabla, [g])$ and $(\anabla, [g])$ have the same scalar curvature $\wideparen{\sc} = g^{pq}\wideparen{R}_{pq} = g^{pq}R_{pq} = \sc$.
\end{lemma}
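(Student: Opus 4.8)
The plan is to compute the curvature of the conjugate connection $\anabla = \nabla + \bt_{ij}\,^{k}$ directly from the standard formula relating curvatures of two connections differing by a tensor, and then exploit the symmetries of the cubic torsion $\bt_{ij}\,^{k}$ together with the condition $\nabla_{[i}g_{j]k} = \chi_{[i}g_{j]k}$ to simplify. First I would write $\wideparen{R}_{ijk}\,^{l} - R_{ijk}\,^{l} = 2\nabla_{[i}\bt_{j]k}\,^{l} + 2\bt_{p[i}\,^{l}\bt_{j]k}\,^{p}$, lower the last index with $g$, and use the symmetry of $\bt_{ijk}$ and its complete $g$-trace-freeness. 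This yields the first displayed expression $R_{ijkl} + 2\nabla_{[i}\bt_{j]kl} - 2\chi_{[i}\bt_{j]kl}$ for $\wideparen{R}_{ijkl}$ (taking care that lowering the index with $g$ and differentiating $g$ introduces cubic-torsion correction terms, which is precisely where the $-2\chi_{[i}\bt_{j]kl}$ arises from $\nabla_{[i}g_{j]l} = \chi_{[i}g_{j]l}$).

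The next and main step is to show this equals $-R_{ijlk} - d\chi_{ij}\,g_{kl}$. The natural route is to invoke the already-proven conjugate-curvature identity for statistical structures in Lemma \ref{conjugateradiantcodazzilemma}, specifically \eqref{conjradcodcurv}, applied not to the given AH structure but to an auxiliary statistical structure whose conjugate accounts for the AH data. More directly, I would observe that $\bt_{ijk} = \nabla_{i}g_{jk} - \chi_{i}g_{jk}$ is the total symmetrization data measuring the failure of $\nabla$ to preserve $g$, so that $\anabla_{i}g_{jk} = \nabla_{i}g_{jk} - 2\bt_{i(j}{}^{p}g_{k)p} = -\nabla_{i}g_{jk} + 2\chi_{i}g_{jk}$, confirming $\wideparen{\bt} = -\bt$ and $\wideparen{\chi} = \chi$. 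The key computation is then the Ricci-identity argument: applying $2\nabla_{[i}\nabla_{j]}$ to $g_{kl}$ gives $-R_{ijk}\,^{p}g_{pl} - R_{ijl}\,^{p}g_{kp} = -R_{ijkl} - R_{ijlk}$, while on the other hand $2\nabla_{[i}\nabla_{j]}g_{kl} = 2\nabla_{[i}(\chi_{j]}g_{kl} + \bt_{j]kl}) = d\chi_{ij}\,g_{kl} + 2\chi_{[i}\bt_{j]kl}\cdot 2 + 2\nabla_{[i}\bt_{j]kl}$ (organizing the product-rule terms carefully). Matching these two expressions for $R_{ijkl} + R_{ijlk}$ lets me substitute into the first formula and collapse it to $-R_{ijlk} - d\chi_{ij}\,g_{kl}$.

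For the Ricci tensor I would trace the curvature relation. Tracing $\wideparen{R}_{ijkl} = R_{ijkl} + 2\nabla_{[i}\bt_{j]kl} - 2\chi_{[i}\bt_{j]kl}$ over the appropriate index pair, using $\bt_{ip}\,^{p} = 0$ (complete trace-freeness) so that $g^{kl}\bt_{jkl} = 0$ and the alignment condition to control the surviving traces, produces $\wideparen{R}_{ij} = R_{ij} + \nabla_{p}\bt_{ij}\,^{p} - \bt_{ip}\,^{q}\bt_{jq}\,^{p}$; tracing instead the form $-R_{ijlk} - d\chi_{ij}\,g_{kl}$ with $g^{kl}$ gives $g^{pq}R_{ipqj} + d\chi_{ij}$, establishing the second line. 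Finally, for the scalar curvature equality, I contract $\wideparen{R}_{ij}$ with $g^{ij}$: since $d\chi_{ij}$ is antisymmetric, $g^{ij}d\chi_{ij} = 0$, and $g^{ij}g^{pq}R_{ipqj}$ must be reconciled with $g^{pq}R_{pq}$. The mild obstacle here is that $g^{ij}g^{pq}R_{ipqj}$ is \emph{not} literally $\sc$ term-by-term; one must use the first Bianchi identity $R_{[ipq]j} = 0$ and the antisymmetry $R_{ipqj} = -R_{piqj}$ to show $g^{ij}g^{pq}R_{ipqj} = g^{pq}R_{pq}$ up to terms that vanish under the symmetric double trace. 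The hard part will be keeping the index gymnastics in the Ricci-identity step consistent with the paper's sign and antisymmetrization conventions (particularly the factor bookkeeping in $2\nabla_{[i}\nabla_{j]}g_{kl}$ and the placement of $d\chi_{ij}g_{kl}$), since the antisymmetric part of the Ricci tensor and the Faraday curvature $F_{ij} = -\tfrac{1}{2}d\chi_{ij}$ must end up perfectly compatible with \eqref{faraday1}; that compatibility check is what I would verify last as a sanity condition.
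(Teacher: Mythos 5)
Your proposal is correct and follows essentially the same route as the paper's proof: the paper likewise starts from the difference-of-curvatures formula, uses the symmetry and trace-freeness of $\bt_{ijk}$ together with $\nabla_{[i}g_{j]k} = \chi_{[i}g_{j]k}$ to reach $\wideparen{R}_{ijkl} = R_{ijkl} + 2\nabla_{[i}\bt_{j]kl} - 2\chi_{[i}\bt_{j]kl}$, then substitutes $\bt_{jkl} = \nabla_{j}g_{kl} - \chi_{j}g_{kl}$ and applies the Ricci identity to $g_{kl}$ to collapse this to $-R_{ijlk} - d\chi_{ij}g_{kl}$, with the Ricci and scalar identities obtained by tracing. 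Two small corrections to your write-up: in the expansion of $2\nabla_{[i}\nabla_{j]}g_{kl}$ the cross term is $-2\chi_{[i}\bt_{j]kl}$ (not ``$+2\chi_{[i}\bt_{j]kl}\cdot 2$''), since the quadratic term $\chi_{[i}\chi_{j]}g_{kl}$ vanishes by antisymmetry; and the ``mild obstacle'' you anticipate at the end is illusory, because $g^{ij}g^{pq}R_{ipqj} = g^{pq}R_{ipq}\,^{i} = g^{pq}R_{pq}$ holds immediately from the definition $R_{ij} = R_{pij}\,^{p}$ of the Ricci tensor, with no appeal to the Bianchi identity.
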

\begin{proof}
By definition of $\bt_{ijk}$,
\begin{align}
\begin{split}
\wideparen{R}_{ijkl} & = R_{ijkl} + 2g_{lp}\nabla_{[i}\bt_{j]k}\,^{p} + 2\bt_{pl[i}\bt_{j]k}\,^{p} = R_{ijkl} + 2\nabla_{[i}\bt_{j]kl}  - 2\bt_{k[j}\,^{p}\nabla_{i]}g_{lp} + 2\bt_{pl[i}\bt_{j]k}\,^{p}\\
& = R_{ijkl} + 2\nabla_{[i}\bt_{j]kl} - 2\chi_{[i}\bt_{j]kl}\\
& = R_{ijkl} + 2\nabla_{[i}\nabla_{j]}g_{kl} - 2\nabla_{[i}\chi_{j]}g_{kl}  - 2\chi_{[j}\nabla_{i]}g_{kl} - 2\chi_{[i}\bt_{j]kl} = - R_{ijlk} - d\chi_{ij}g_{kl}.
\end{split}
\end{align}
The remaining identities follow by taking traces and using $\nabla_{i}g^{ij} = -\chi^{j} = - g^{jp}\chi_{p}$.
\end{proof}

An AH structure $(\nabla, [g])$ has \emph{self-conjugate} curvature if the curvature tensor of the conjugate AH structure equals that of $(\nabla, [g])$. For example, by \eqref{ahcurv} a Weyl structure has self-conjugate curvature because its cubic torsion vanishes.

A tensor derived from the curvature tensor of an AH structure that is unchanged under conjugacy is said to be \emph{self-conjugate}. For example, the scalar curvature of an AH structure $(\nabla, [g])$ associated with $g \in [g]$ is self-conjugate.
\begin{lemma}\label{ahconservationlemma}
Let $(\nabla, [g])$ be an AH structure. Let $\chi_{i}$ and $\sc$ be the one-form and scalar curvature associated with the representative $g \in [g]$. The one-forms defined by
$d\sc_{i} + \sc \chi_{i}$ and $\tfrac{n}{2}g^{pq}\nabla_{p}d\chi_{qi}$ do not depend on the choice of $g$, so are associated with the AH structure $(\nabla, [g])$.
\end{lemma}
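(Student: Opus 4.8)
The plan is to show that each of the two one-forms is unchanged when the representative metric $g$ is rescaled to $\tilde{g} = fg$ for $0 < f \in \cinf(M)$. Recall from the discussion preceding the lemma that under such a rescaling the associated one-form transforms by $\tilde{\chi}_{i} = \chi_{i} + d\log f_{i}$, that $d\chi_{ij}$ is independent of the choice of $g$ (so $F_{ij} = -\tfrac{1}{2}d\chi_{ij}$ is an AH-invariant), and that the scalar curvature $\sc = g^{pq}R_{pq}$ is a function attached to $g$. The connection $\nabla$ itself does not change, so $R_{ij}$ and $R_{pq}$ are fixed; only the trace against the metric varies. Thus the first task is to compute how $\sc$ depends on $f$, and the second is to verify that the two candidate one-forms absorb exactly the correction terms.

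First I would treat $d\sc_{i} + \sc\chi_{i}$. Since $\tilde{g}^{pq} = f^{-1}g^{pq}$, the rescaled scalar curvature is $\tilde{\sc} = \tilde{g}^{pq}R_{pq} = f^{-1}\sc$. Then $d\tilde{\sc}_{i} = f^{-1}d\sc_{i} - f^{-2}df_{i}\,\sc = f^{-1}(d\sc_{i} - \sc\,d\log f_{i})$, while $\tilde{\sc}\tilde{\chi}_{i} = f^{-1}\sc(\chi_{i} + d\log f_{i}) = f^{-1}(\sc\chi_{i} + \sc\,d\log f_{i})$. Adding these, the $\sc\,d\log f$ terms cancel and one obtains $d\tilde{\sc}_{i} + \tilde{\sc}\tilde{\chi}_{i} = f^{-1}(d\sc_{i} + \sc\chi_{i})$. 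This is the natural weighted behaviour: $d\sc + \sc\chi$ is the covariant derivative of $\sc$ regarded as a density-weighted object, and the weighting makes the combination transform by the scalar factor $f^{-1}$ but with no additive correction. Strictly speaking the statement claims the one-form does not depend on the choice of $g$; since distinct representatives of $[g]$ differ by a positive factor $f$, the computed relation $d\tilde{\sc}_{i} + \tilde{\sc}\tilde{\chi}_{i} = f^{-1}(d\sc_{i} + \sc\chi_{i})$ exhibits it as a well-defined section of the appropriate (nontrivially weighted) line bundle, which is the intended meaning; I would state this weighting explicitly to match the convention that AH-invariant one-forms may carry a density weight.

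For the second one-form, $\tfrac{n}{2}g^{pq}\nabla_{p}d\chi_{qi}$, the key input is that $d\chi_{ij}$ is already independent of the choice of $g \in [g]$, by the remark that $d\chi_{ij} = d\tilde{\chi}_{ij}$ (the exact correction $d\log f$ has vanishing exterior derivative). The only $g$-dependence is then in the inverse metric $g^{pq}$ used to trace, which rescales to $\tilde{g}^{pq} = f^{-1}g^{pq}$. Hence $\tfrac{n}{2}\tilde{g}^{pq}\nabla_{p}d\tilde{\chi}_{qi} = f^{-1}\cdot\tfrac{n}{2}g^{pq}\nabla_{p}d\chi_{qi}$, again transforming by the overall factor $f^{-1}$ with no additive term, so it too is a well-defined weighted one-form of the AH structure. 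I expect no genuine obstacle here; the main point requiring care is bookkeeping of the density weights, namely recognising that ``does not depend on the choice of $g$'' is to be read modulo the homothety weight, exactly as for the Faraday-curvature discussion already carried out, so that both expressions descend to sections of a fixed weighted bundle determined by $(\nabla, [g])$.
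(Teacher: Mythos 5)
Your proof is correct and takes essentially the same route as the paper: compute how each expression transforms under a conformal rescaling $\tilde{g} = fg$, using $\tilde{\sc} = f^{-1}\sc$, $\tilde{\chi}_{i} = \chi_{i} + d\log f_{i}$, and $d\tilde{\chi}_{ij} = d\chi_{ij}$.

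In fact your bookkeeping is more careful than the paper's own proof, which asserts the literal equality $d\tilde{\sc}_{i} + \tilde{\sc}\tilde{\chi}_{i} = d\sc_{i} + \sc\chi_{i}$. As your computation shows, the correct relation carries a factor,
\begin{align}
d\tilde{\sc}_{i} + \tilde{\sc}\tilde{\chi}_{i} = f^{-1}\left(d\sc_{i} + \sc\chi_{i}\right),
\qquad
\tfrac{n}{2}\tilde{g}^{pq}\nabla_{p}d\tilde{\chi}_{qi} = f^{-1}\cdot \tfrac{n}{2}g^{pq}\nabla_{p}d\chi_{qi},
\end{align}
the second because $d\chi_{ij}$ is already independent of the representative while the trace is taken with $\tilde{g}^{pq} = f^{-1}g^{pq}$. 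So the lemma's phrase ``do not depend on the choice of $g$'' has to be read exactly as you read it: both expressions transform homogeneously by the \emph{same} positive factor $f^{-1}$, hence each defines a well-determined weighted one-form attached to $(\nabla, [g])$ rather than an honest one-form. This weighted reading is all that is used downstream: the conservation condition \eqref{conservationcondition} is the vanishing of the sum of the two expressions, and vanishing is insensitive to multiplication by $f^{-1} > 0$; similarly, in Lemma \ref{conservationconditionlemma} only the combination with this common weight appears. Your proposal is therefore not just correct but supplies the precise statement the paper implicitly relies on.
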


\begin{proof}
If $\tilde{g} = fg$ with $0 < f \in \cinf(M)$, then $\tilde{\sc} = \tilde{g}^{ij}R_{ij} = f^{-1}\sc$ and $\nabla_{[i}\tilde{g}_{j]k} = \tilde{\chi}_{[i}\tilde{g}_{j]k}$ with $\tilde{\chi}_{i} = \chi_{i} + d\log f_{i}$. It follows that $d\tilde{\sc}_{i} + \tilde{\sc}\tilde{\chi}_{i} = d\sc_{i} + \sc \chi_{i}$ and $d\tilde{\chi}_{ij} = d\chi_{ij}$.
\end{proof}

For an AH structure $(\nabla, [g])$ and $g \in [g]$ define a symmetric tensor by
\begin{align}\label{stdefined}
\T_{ij} = \bt_{ip}\,^{q}\bt_{jq}\,^{p} - \tfrac{1}{2}|\bt|^{2}g_{ij},
\end{align}
where $|\bt|^{2} = \bt^{ijk}\bt_{ijk}$. The expression \eqref{stdefined} does not depend on the choice of $g \in [g]$, so $\T$ is associated with the AH structure $(\nabla, [g])$. 

\begin{lemma}\label{conservationconditionlemma}
On an $n$-manifold, let $(\nabla, [g])$ be an AH structure with conjugate AH structure $(\anabla, [h])$. For $g \in [g]$, with associated one-form $\chi_{i}$ and scalar curvature $\sc$, there holds
\begin{align}\label{conserve}
\begin{aligned}
%%% WAS: \tfrac{n-2}{n}\left(d\sc_{i} + \sc \chi_{i} + \tfrac{n}{2}g^{pq}\nabla_{p}d\chi_{qi}\right) - g_{ia}g^{pq}\left(\anabla_{p}\wideparen{S}_{q}\,^{a} + \nabla_{p}S_{q}\,^{a}\right) = \bt^{abc}R_{i(abc)},
\bt^{abc}R_{i(abc)} &= \tfrac{n-2}{n}\left(d\sc_{i} + \sc \chi_{i} + \tfrac{n}{2}g^{pq}\nabla_{p}d\chi_{qi}\right) - g_{ia}g^{pq}\left(\anabla_{p}\wideparen{S}_{q}\,^{a} + \nabla_{p}S_{q}\,^{a}\right) - \chi^{p}(S_{ip} + \wideparen{S}_{ip})\\
& = \tfrac{1}{2}\div(\T)_{i} + \tfrac{2-n}{2n}\bt_{i}\,^{ab}\div(\bt)_{ab} - \tfrac{1}{n}\bt_{i}\,^{ab}(\wideparen{S}_{ab}-  S_{ab}),
\end{aligned}
\end{align}
%WAS: where $S_{ij} = R_{ij} - \tfrac{\sc}{n}g_{ij}$ is the trace-free Ricci tensor of $(\nabla, [g])$ and $S_{i}\,^{j} = g^{ja}S_{ia}$.
where $S_{ij} = R_{(ij)} - \tfrac{\sc}{n}g_{ij}$ is the trace-free symmetrized Ricci tensor of $(\nabla, [g])$, $S_{i}\,^{j} = g^{ja}S_{ia}$, $\chi^{i} = g^{ip}\chi_{p}$, and $\sT_{ij}$ is the tensor defined in \eqref{stdefined}. 
\end{lemma}

\begin{proof}
The proof  is an adaptation of the usual argument showing that the Einstein tensor of a metric is divergence free.

Observe that $R_{ij} = \tfrac{\sc}{n}g_{ij} + \tfrac{n}{4}d\chi_{ij} + S_{ij}$, so that $S_{[ij]} = 0$ and $\wideparen{R}_{ij} - R_{ij} = \wideparen{S}_{ij} - S_{ij}$. From \eqref{ahcurv} it follows that $R_{ip}\,^{p}\,_{j} = \wideparen{R}_{ij} - d\chi_{ij} = \tfrac{\sc}{n}g_{ij} + \tfrac{n-4}{4}d\chi_{ij} + \wideparen{S}_{ij}$. Differentiating $R_{ij} = \tfrac{\sc}{n}g_{ij} + \tfrac{n}{4}d\chi_{ij} + S_{ij}$ and antisymmetrizing the result yields
\begin{align}\label{conserve1}
\begin{split}
\nabla_{i}R_{jk} &  = \tfrac{1}{n}(d\sc_{i} + \sc \chi_{i})g_{jk} + \tfrac{\sc}{n}\bt_{ijk} + \tfrac{n}{4}\nabla_{i}d\chi_{jk} + \nabla_{i}S_{jk},\\
2\nabla_{[i}R_{j]k} &  = \tfrac{2}{n}(d\sc_{[i}g_{j]k} + \sc \chi_{[i}g_{j]k}) - \tfrac{n}{4}\nabla_{k}d\chi_{ij} + 2\nabla_{[i}S_{j]k}.
\end{split}
\end{align}
Contracting \eqref{conserve1} with $g^{jk}$ yields
\begin{align}\label{conserve2}
\begin{split}
2g^{jk}\nabla_{[i}R_{j]k} &= \tfrac{n-1}{n}(d\sc_{i} + \sc \chi_{i}) + \tfrac{n}{4}g^{pq}\nabla_{p}d\chi_{qi}  - g^{pq}\nabla_{p}S_{iq}+ \bt_{i}\,^{pq}S_{pq}\\
& =  \tfrac{n-1}{n}(d\sc_{i} + \sc \chi_{i}) + \tfrac{n}{4}g^{pq}\nabla_{p}d\chi_{qi} - g_{ia}g^{pq}\nabla_{p}S_{q}\,^{a} - \chi^{p}S_{qi}, %WAS missing - g^{pq}\chi_{q}S_{qi}
\end{split}
\end{align}
in which the last inequality follows from $\nabla_{i}g^{jk} = -\bt_{i}\,^{jk} - \chi_{i}g^{jk}$ and that $S_{ij}$ is trace-free. On the other hand, by the differential Bianchi identity and $R_{ip}\,^{p}\,_{j} = \tfrac{\sc}{n}g_{ij} + \tfrac{n-4}{4}d\chi_{ij} + \wideparen{S}_{ij}$, 
\begin{align}\label{conserve3}
\begin{split}
2g^{jk}\nabla_{[i}R_{j]k} &= g^{jk}\nabla_{p}R_{ijk}\,^{p} = \nabla_{p}R_{ia}\,^{ap} - R_{ijk}\,^{p}\nabla_{p}g^{jk}\\
&= \nabla_{p}R_{ia}\,^{ap} + \chi^{p}R_{ia}\,^{a}\,_{p} + \bt^{abc}R_{ibca}  = g^{pq}\nabla_{p}R_{ia}\,^{a}\,_{q} + \bt^{abc}R_{i(abc)}\\ 
& = \tfrac{1}{n}(d\sc_{i} + \sc\chi_{i}) + \tfrac{4-n}{4}g^{pq}\nabla_{p}d\chi_{qi} + g^{pq}\nabla_{p}\wideparen{S}_{qi} + \bt_{i}\,^{pq}\wideparen{S}_{pq}+ \bt^{abc}R_{i(abc)}\\ %WAS missing \bt_{i}\,^{pq}\wideparen{S}_{pq}
& = \tfrac{1}{n}(d\sc_{i} + \sc\chi_{i}) + \tfrac{4-n}{4}g^{pq}\nabla_{p}d\chi_{qi} + g_{ia}g^{pq}\anabla_{p}\wideparen{S}_{q}\,^{a} + \chi^{p}\wideparen{S}_{pi} + \bt^{abc}R_{i(abc)}, %WAS missing  g^{pq}\chi_{q}\wideparen{S}_{qi}
\end{split}
\end{align}
in which the last equality follows from $\anabla_{i}g^{jk} = \bt_{i}\,^{jk} - \chi_{i}g^{jk}$
%WAS $g_{ia}g^{pq}\anabla_{p}\wideparen{S}_{q}\,^{a}  + \chi^{p}\wideparen{S}_{qi}  = g^{pq}\nabla_{p}\wideparen{S}_{qi} + \bt_{i}\,^{pq}\wideparen{S}_{pq}$. 
Combining \eqref{conserve2} and \eqref{conserve3} yields the first equality of \eqref{conserve}.

%Let $(\nabla, [g])$ be an AH structure and let $\chi_{i}$ be the one-form associated with $g \in [g]$. 
The Levi-Civita connection $D$ of $g$ has the form
\begin{align}\label{dnabladifference}
D = \nabla + \tfrac{1}{2}\left(\bt_{ij}\,^{k}  + \chi_{i}\delta_{j}\,^{k} + \chi_{j}\delta_{i}\,^{k} - g_{ij}\chi^{k}\right).
\end{align}
%where $\chi^{i} = g^{ip}\chi_{p}$.
From \eqref{dnabladifference} there follow
\begin{align}\label{dbtids}
\begin{aligned}
D_{i}\bt_{jkl} & = \nabla_{i}\bt_{jkl} - \tfrac{3}{2}\bt_{i(j}\,^{p}\bt_{kl)p} - \tfrac{3}{2}\chi_{i}\bt_{jkl} - \tfrac{3}{2}\chi_{(j}\bt_{kl)i} + \tfrac{3}{2}g_{i(j}\bt_{kl)}\,^{p}\chi_{p},\\
D_{[i}\bt_{j]kl} & = \nabla_{[i}\bt_{j]kl} - \chi_{[i}\bt_{j]kl} + \tfrac{1}{2}g_{k[i}\bt_{j]l}\,^{p}\chi_{p} +\tfrac{1}{2}g_{l[i}\bt_{j]k}\,^{p}\chi_{p},\\
\div(\bt)_{ij}& =   \nabla_{p}\bt_{ij}\,^{p} - \bt_{ip}\,^{q}\bt_{jq}\,^{p} + \tfrac{n}{2}\bt_{ij}\,^{p}\chi_{p},
\end{aligned}
\end{align}
where $\div(\bt)_{ij} = D_{p}\bt_{ij}\,^{p}$. Combining \eqref{dbtids} with \eqref{ahcurv} yields
\begin{align}\label{Dcurv}R_{ij(kl)} & = -\nabla_{[i}\bt_{j]kl}  - \tfrac{1}{2}d\chi_{ij} g_{kl}  + \chi_{[i}\bt_{j]kl}= -D_{[i}\bt_{j]kl}  - \tfrac{1}{2}d\chi_{ij} g_{kl}  + \tfrac{1}{2}g_{k[i}\bt_{j]lp}\chi^{p} +\tfrac{1}{2}g_{l[i}\bt_{j]kp}\chi^{p},\\
%R_{ij} & = R_{ip}\,^{p}\,_{j} - \div(\bt)_{ij} + d\chi_{ij} + \tfrac{n}{2}\bt_{ij}\,^{p}\chi_{p},\\
\label{Driccurv}\wideparen{R}_{ij} & = R_{ij} + \div(\bt)_{ij} -  \tfrac{n}{2}\bt_{ij}\,^{p}\chi_{p}.
\end{align}
The divergence $\div(\T)_{i} = D^{p}\T_{ip}$ satisfies
\begin{align}\label{divT}
\div(\T)_{i}  &  = \bt_{i}\,^{ab}\div(\bt)_{ab} + \bt^{abc}D_{a}\bt_{bci} - \tfrac{1}{2}D_{i}|\bt|^{2}.
\end{align}
Contracting $\bt^{abc}$ and $R_{iabc}$ and simplifying the result using \eqref{Dcurv} and \eqref{divT} yields directly
\begin{align}\label{pdt1}
\begin{aligned}
\bt^{abc}R_{iabc} & = \bt^{abc}R_{i(abc)} = -\tfrac{1}{2}\bt^{abc}D_{i}\bt_{abc} + \tfrac{1}{2}\bt^{abc}D_{a}\bt_{bci} + \tfrac{1}{2}\bt^{abc}\left(g_{b[i}\bt_{a]c}\,^{p}\chi_{i} + g_{c[i}\bt_{a]b}\,^{p}\chi_{i}\right)\\
& =-\tfrac{1}{4}D_{i}|\bt|^{2}  + \tfrac{1}{2}\left(\div(\T)_{i} - \bt_{i}\,^{ab}\div(\bt)_{ab} + \tfrac{1}{2}D_{i}|\bt|^{2}\right) + \tfrac{1}{2}\bt_{ip}\,^{q}\bt_{aq}\,^{p}\chi^{a}\\
& = \tfrac{1}{2}\div(\T)_{i} - \tfrac{1}{2}\bt_{i}\,^{ab}\div(\bt)_{ab} + \tfrac{1}{2}\bt_{ip}\,^{q}\bt_{aq}\,^{p}\chi^{a}.
\end{aligned}
\end{align}
Substituting \eqref{Driccurv} in \eqref{pdt1} yields the second equality of \eqref{conserve}.
\end{proof}

Lemma \ref{ahconservationlemma} shows that the condition \eqref{conservationcondition} in Definition \ref{einsteinahdefinition} is well posed.

\begin{definition}\label{einsteinahdefinition}
\noindent
\begin{itemize}
\item An AH structure $(\nabla, [g])$ is \emph{half naive Einstein} if the symmetric part of its Ricci curvature is a multiple of any $g \in [g]$. That is, for any $g \in [g]$, $R_{(ij)} = \tfrac{\sc}{n}g_{ij}$.
\item An AH structure is \emph{conjugate half naive Einstein} if the conjugate AH structure is half naive Einstein.
\item An AH structure $(\nabla, [g])$ is \emph{naive Einstein} if it is both half naive Einstein and conjugate half naive Einstein - the symmetric parts of its Ricci curvature and the Ricci curvature of the conjugate AH structure are multiples of any $g \in [g]$. That is, for any $g \in [g]$, $R_{(ij)} = \tfrac{\sc}{n} g_{ij} = \wideparen{R}_{(ij)}$.
\item An AH structure $(\nabla, [g])$ is \emph{Einstein} if it is naive Einstein and for every $g \in [g]$ with associated one-form $\chi_{i}$ and scalar curvature $\sc$, there holds the \emph{conservation condition}
\begin{align}\label{conservationcondition}
0 = d\sc_{i} + \sc \chi_{i} + \tfrac{n}{2}g^{pq}\nabla_{p}d\chi_{qi}.
\end{align}
\end{itemize}
\end{definition}

\begin{lemma}\label{conservativenaivelemma}
On an $n$-manifold, for a naive Einstein AH structure $(\nabla, [g])$, for $g \in [g]$ with associated one-form $\chi_{i}$, scalar curvature $\sc$, and tensor $\T_{ij}$, there hold
\begin{align}\label{conservenaive}
\tfrac{n-2}{n}\left(d\sc_{i} + \sc \chi_{i} + \tfrac{n}{2}g^{pq}\nabla_{p}d\chi_{qi}\right) = \bt^{abc}R_{i(abc)} = \tfrac{1}{2}\div(\T)_{i} + \tfrac{2-n}{4}\bt_{ip}\,^{q}\bt_{aq}\,^{p}\chi^{a}.
\end{align}
Consequently, for a naive Einstein AH structure on a manifold of dimension $n \geq 3$ the following are equivalent:
\begin{enumerate}
\item It is Einstein.
\item The invariantly defined one-form $\bt^{abc}R_{iabc}$ vanishes identically. 
\item There holds $2\div(\T)_{i} = (n-2)\bt_{ip}\,^{q}\bt_{aq}\,^{p}\chi^{a}$.
\end{enumerate}
\end{lemma}
\begin{proof}
By the naive Einstein condition there vanish $S_{ij}$ and $\wideparen{S}_{ij}$. By \eqref{Driccurv} this implies $2\div(\bt)_{ij} = n\bt_{ij}\,^{p}\chi_{p}$ and with \eqref{conserve} there results \eqref{conservenaive}. 
The claimed equivalences follows from \eqref{conservenaive}.
\end{proof}

The condition \eqref{conservationcondition} of Definition \ref{einsteinahdefinition} and the terminology \emph{conservation condition} need motivation. This is given below, following Corollary \ref{selfconjugatecorollary}. For an exact AH structure the condition \eqref{conservationcondition} is simply the constancy of the scalar curvature, and the issue is that this needs to be imposed, as it does not follow from the naive Einstein condition (Example \ref{naiveexample} gives an example showing that naive Einstein does not imply Einstein). 

\begin{lemma}\label{conjugateeinsteiahlemma}
An AH structure is Einstein if and only if the conjugate AH structure is Einstein.
\end{lemma}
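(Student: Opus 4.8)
The plan is to show that the Einstein property of an AH structure $(\nabla, [g])$ is preserved under conjugacy by verifying each of the three defining conditions in Definition \ref{einsteinahdefinition} for the conjugate AH structure $(\anabla, [g])$, using the curvature relations \eqref{ahcurv}. The key observation is that the roles of $\nabla$ and $\anabla$ are symmetric: since $\wideparen{\bt}_{ij}\,^{k} = -\bt_{ij}\,^{k}$, conjugation is an involution, so it suffices to prove that Einstein for $(\nabla, [g])$ implies Einstein for $(\anabla, [g])$.

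First I would address the naive Einstein condition. By definition, $(\nabla, [g])$ being naive Einstein means $R_{(ij)} = \tfrac{\sc}{n}g_{ij} = \wideparen{R}_{(ij)}$ for any $g \in [g]$; this statement is manifestly symmetric in $\nabla$ and $\anabla$, so the conjugate structure is naive Einstein with the same scalar curvature (recall $\wideparen{\sc} = \sc$ by the last line of \eqref{ahcurv}). Next I would handle the conservation condition \eqref{conservationcondition}. The crucial point is that the quantities $d\sc_{i} + \sc\chi_{i}$ and $\tfrac{n}{2}g^{pq}\nabla_{p}d\chi_{qi}$ must be compared with their conjugate counterparts built from $\anabla$ and $\wideparen{\chi}_{i}$. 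Here I would use that the Faraday curvature $d\chi_{ij}$ does not depend on the choice of $g \in [g]$ and, more importantly, that the one-form $\wideparen{\chi}_{i}$ associated with $(\anabla, [g])$ and $g$ equals $\chi_i$ itself: since $(\anabla, [g])$ is aligned with cubic torsion $-\bt_{ij}\,^{k}$, the standard identity $\chi_{i} = \tfrac{1}{n}(\det g)^{-1}\nabla_{i}\det g$ gives $\wideparen{\chi}_i = \tfrac{1}{n}(\det g)^{-1}\anabla_i \det g$, and since $\anabla - \nabla = \bt_{ij}\,^k$ is trace-free in $j,k$, one finds $\anabla_i \det g = \nabla_i \det g$, so $\wideparen{\chi}_i = \chi_i$ and $d\wideparen{\chi}_{ij} = d\chi_{ij}$.

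The main obstacle, and the step requiring genuine computation, is showing that $g^{pq}\anabla_p d\chi_{qi}$ equals $g^{pq}\nabla_p d\chi_{qi}$, so that the conjugate conservation condition $d\sc_i + \sc\wideparen{\chi}_i + \tfrac{n}{2}g^{pq}\anabla_p d\chi_{qi} = 0$ reduces to the original \eqref{conservationcondition}. The difference $\anabla - \nabla = \bt_{ij}\,^k$ acts on the two-form $d\chi$, and one must check that the correction terms involving $\bt$ contract to zero against $g^{pq}$. Since $\bt_{ij}\,^k$ is completely symmetric and $g$-trace-free while $d\chi$ is antisymmetric, the relevant contraction $g^{pq}\bt_{pq}\,^a d\chi_{ai}$ vanishes by the trace-freeness, and the remaining term $g^{pq}\bt_{pi}\,^a d\chi_{qa}$ must be analyzed; I expect it to vanish or cancel using the antisymmetry of $d\chi$ together with the symmetry of $\bt$, but this is where the careful index bookkeeping lies. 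Once these contractions are shown to vanish, $g^{pq}\anabla_p d\chi_{qi} = g^{pq}\nabla_p d\chi_{qi}$, and combining with $\wideparen{\sc} = \sc$ and $\wideparen{\chi}_i = \chi_i$ shows the conjugate conservation condition is identical to the original, completing the proof.

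As a cleaner alternative that may bypass the index computation, I would invoke Lemma \ref{conservationconditionlemma}: on a manifold of dimension at least $3$, a naive Einstein AH structure is Einstein if and only if it is conservative, i.e. $\bt^{abc}R_{iabc} = 0$. Since $\wideparen{\bt}_{ij}\,^k = -\bt_{ij}\,^k$ and, by \eqref{ahcurv}, $\wideparen{R}_{iabc}$ differs from $-R_{iacb}$ only by a term proportional to $d\chi_{ia}g_{bc}$, the conservative condition $\wideparen{\bt}^{abc}\wideparen{R}_{iabc} = 0$ reduces — using the complete symmetry and trace-freeness of $\bt$ to kill the $g_{bc}$ term and the symmetry $\bt^{abc} = \bt^{acb}$ to relate $R_{iacb}$ to $R_{iabc}$ — to $\bt^{abc}R_{iabc} = 0$. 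Thus conservativeness is self-conjugate, and since naive Einstein is self-conjugate, the equivalence in Lemma \ref{conservationconditionlemma} immediately yields that Einstein is self-conjugate. I would present this route for $n \geq 3$ and note that the low-dimensional case $n = 2$, where the conservation condition must be checked directly, follows from the symmetric reduction described above.
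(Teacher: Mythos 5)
Your proposal is correct and follows essentially the same path as the paper: the naive Einstein condition is manifestly self-conjugate, $\wideparen{\chi}_{i} = \chi_{i}$ since the trace-free difference tensor $\bt_{ij}\,^{k}$ does not change the induced connection on densities, and the key identity $g^{pq}\anabla_{p}d\chi_{qi} = g^{pq}\nabla_{p}d\chi_{qi}$ holds because the correction term $g^{pq}\bt_{pq}\,^{a}d\chi_{ai}$ dies by trace-freeness while $g^{pq}\bt_{pi}\,^{a}d\chi_{qa} = \bt_{i}\,^{qa}d\chi_{qa}$ dies by pairing the symmetry of $\bt$ against the antisymmetry of $d\chi$ --- exactly the one-line computation in the paper's proof. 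Your alternative route via the self-conjugacy of the conservative condition $\bt^{abc}R_{iabc}$ (valid for $n \geq 3$) is also precisely the paper's closing remark that the right-hand side of \eqref{conserve} is self-conjugate, so both of your arguments match the paper's.
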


\begin{proof}
That conjugacy preserves the naive Einstein condition is immediate. 
Let $(\anabla, [g])$ be the conjugate AH structure of the naive Einstein AH structure $(\nabla, [g])$. The scalar curvature $\sc$ and one-form $\chi_{i}$ associated with $\nabla$ and $g \in [g]$ are the same as those associated with $\anabla$ and $g$, and $g^{pq}\anabla_{p}d\chi_{qi} = g^{pq}(\nabla_{p}d\chi_{qi} - \bt_{pq}\,^{a}d\chi_{ai} - \bt_{pi}\,^{a}d\chi_{qa}) = g^{pq}\nabla_{p}d\chi_{qi} - \bt_{i}\,^{pq}d\chi_{pq} = g^{pq}\nabla_{p}d\chi_{qi}$, which suffices to show that \eqref{conservationcondition} holds for $(\nabla, [g])$ if and only if it holds for $(\anabla, [g])$. Alternatively, the right-hand side of \eqref{conservenaive} is self-conjugate, so the left-hand side is as well.
\end{proof}

Corollary \ref{selfconjugatecorollary} shows that, with the additional condition of self-conjugacy of the curvature, the naive Einstein condition implies the conservation condition when $n > 2$. 

\begin{corollary}\label{selfconjugatecorollary}
On a manifold of dimension $n > 2$, a naive Einstein AH structure with self-conjugate curvature satisfies \eqref{conservationcondition}, so is Einstein.
\end{corollary}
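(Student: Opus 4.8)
The plan is to recover \eqref{conservationcondition} from the general conservation identity \eqref{conserve} of Lemma \ref{conservationconditionlemma} by showing that, under the hypotheses (self-conjugate curvature and the naive Einstein condition), the right-hand side and the two middle terms on the left all vanish. Since the naive Einstein condition gives $S_{ij} = 0 = \wideparen{S}_{ij}$ (both trace-free Ricci tensors vanish because $R_{(ij)} = \tfrac{\sc}{n}g_{ij} = \wideparen{R}_{(ij)}$), the term $g^{pq}(\anabla_{p}\wideparen{S}_{q}\,^{a} + \nabla_{p}S_{q}\,^{a})$ on the left-hand side of \eqref{conserve} is identically zero. Thus \eqref{conserve} collapses to
\begin{align}
\tfrac{n-2}{n}\left(d\sc_{i} + \sc \chi_{i} + \tfrac{n}{2}g^{pq}\nabla_{p}d\chi_{qi}\right) = \bt^{abc}R_{i(abc)}.
\end{align}
Because $n > 2$, the factor $\tfrac{n-2}{n}$ is nonzero, so it suffices to prove that the contraction $\bt^{abc}R_{i(abc)}$ vanishes, that is, that the AH structure is conservative.

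First I would extract from the hypothesis of self-conjugate curvature the needed symmetry of $R_{ijkl} = R_{ijk}\,^{p}g_{pl}$. By \eqref{ahcurv}, the conjugate curvature satisfies $\wideparen{R}_{ijkl} = -R_{ijlk} - d\chi_{ij}g_{kl}$; self-conjugacy $\wideparen{R}_{ijkl} = R_{ijkl}$ therefore forces $R_{ijkl} + R_{ijlk} = -d\chi_{ij}g_{kl}$, i.e. the symmetric-in-$(kl)$ part of $R_{ijkl}$ is $R_{ij(kl)} = -\tfrac{1}{2}d\chi_{ij}g_{kl}$. The key step is to feed this into the fully symmetrized contraction with the cubic torsion $\bt^{abc}$. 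Since $\bt^{abc}$ is completely symmetric in its three indices, when we contract $R_{i(abc)}$ against $\bt^{abc}$ only the part of $R_{iabc}$ that is symmetric in $a,b,c$ survives; and the symmetry of $R_{iabc}$ in the last two slots is governed precisely by the relation just derived. Substituting $R_{ia(bc)} = -\tfrac{1}{2}d\chi_{ia}g_{bc}$ into the symmetrization and contracting against the completely $g$-trace-free $\bt^{abc}$ should make the result vanish: the $g_{bc}$ factor is killed upon contraction with $\bt^{abc}$ because $\bt^{abc}g_{bc} = 0$.

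The main obstacle — and the step requiring genuine care rather than routine symbol-pushing — is handling the symmetrization $R_{i(abc)}$ correctly, since this averages over all three of the trailing indices of $R_{iabc}$, not just the last two, and $R_{iabc}$ has no a priori symmetry in the $(a,b)$ and $(a,c)$ positions. I would break $R_{i(abc)}$ into pieces using the decomposition $R_{iabc} = R_{ia[bc]} + R_{ia(bc)}$ together with the first Bianchi identity $R_{[iab]c} = 0$ (equivalently $R_{ijk}\,^{l}$ satisfies the algebraic Bianchi identity since $\nabla$ is torsion-free), rewriting the antisymmetric-in-$(bc)$ part so that every term, after full symmetrization over $(a,b,c)$ and contraction with the symmetric, trace-free $\bt^{abc}$, either carries a factor $g_{bc}$ (hence dies against $\bt^{abc}$) or collapses by symmetry. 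Concretely, the worry is whether the symmetrized object produces a residual term proportional to $d\chi$ contracted with $\bt$; I expect such a term to vanish by antisymmetry of $d\chi$ against the symmetric $\bt$, but verifying that no stray contribution survives is exactly where the argument must be checked rather than asserted. Once conservativity $\bt^{abc}R_{i(abc)} = 0$ is established, the collapsed form of \eqref{conserve} immediately yields \eqref{conservationcondition}, and by Definition \ref{einsteinahdefinition} the naive Einstein AH structure is Einstein.
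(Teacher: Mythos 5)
Your proposal is correct and follows essentially the same route as the paper: use the naive Einstein condition to kill the trace-free Ricci terms in \eqref{conserve}, then establish conservativity from self-conjugacy of the curvature together with trace-freeness of the cubic torsion. The only thing to add is that the step you flag as the main obstacle --- handling the full symmetrization $R_{i(abc)}$ --- is not an obstacle at all, and neither the algebraic Bianchi identity nor any decomposition is needed: since $\bt^{abc}$ is completely symmetric, contraction against it automatically effects the symmetrization, so
\begin{align}
\bt^{abc}R_{i(abc)} = \bt^{abc}R_{iabc} = \bt^{abc}R_{ia(bc)} = -\tfrac{1}{2}d\chi_{ia}\,\bt^{abc}g_{bc} = 0,
\end{align}
the last equality because $\bt^{abc}$ is completely $g$-trace-free; no stray term proportional to $d\chi$ ever arises. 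This is precisely the paper's argument: self-conjugacy forces $R_{ij(kl)}$ to be pure trace in its last two indices, whence $\bt^{abc}R_{iabc} = 0$ and the structure is conservative. (Incidentally, your sign $R_{ij(kl)} = -\tfrac{1}{2}d\chi_{ij}g_{kl}$ is the one consistent with \eqref{ahcurv} as displayed; the paper's proof states the opposite sign, but this is immaterial since the term is annihilated in the contraction either way.)
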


\begin{proof}
If the curvature of $(\nabla, [g])$ is self-conjugate, then, by \eqref{ahcurv}, $R_{ij(kl)} = \tfrac{1}{2}d\chi_{ij}g_{kl}$, so $\bt^{abc}R_{iabc} = 0$ and $(\nabla, [g])$ is conservative. With \eqref{conserve} this implies the claim.
\end{proof}

The conservation condition \eqref{conservationcondition} generalizes the constancy of the scalar curvature of a metric. It follows from the traced Bianchi identities that if the dimension $n$ is greater than $2$, then the usual Einstein condition for a metric implies the scalar curvature is constant. In dimension $2$ this fails and the constancy of the scalar curvature is the best substitute for the Einstein equations. In \cite{Calderbank-mobius}, Calderbank made this observation the basis of the definition of Einstein equations for Weyl structures in $2$-dimensions. With the notations used here, he defined a $2$-dimensional Weyl structure to be Einstein if it satisfies \eqref{conservationcondition}. This definition is justified by the observation that for a Weyl structure on a manifold of dimension $n > 2$, the naive Einstein equations imply \eqref{conservationcondition}, by essentially the same argument as for ordinary metrics. Since Einstein equations for AH structures should specialize to the usual Einstein equations for Weyl structures when the cubic torsion vanishes, this suggests that the definition of Einstein equations for AH structures should force or require \eqref{conservationcondition}.

Lemma \ref{conservativenaivelemma} suggests and Example \ref{naiveexample} confirms that for AH structures the naive Einstein equations do not imply \eqref{conservationcondition} when $n > 2$. 

For a metric $g$ with Ricci and scalar curvature $\ric(g)_{ij}$ and $\sR(g)$, the \emph{Einstein tensor} $\G_{ij} = \ric(g)_{ij} - \tfrac{1}{2}\sR(g)g_{ij}$ is divergence free by the traced differential Bianchi identity. Calling \eqref{conservationcondition} a \emph{conservation} condition is motivated by thinking of the constancy of the scalar curvature of an Einstein metric (in the usual sense) as the vanishing of the divergence of the Einstein tensor, which can be viewed as a conservation law. Corollary \ref{conservativestatisticalcorollary} shows that for a naive Einstein special statistical structure $(\nabla, g)$ to be Einstein is equivalent to the vanishing of the divergence of the tensor $\T_{ij}$ defined in \eqref{stdefined} and this is in turn equivalent to $g$ solving the Einstein field equations with stress-energy tensor $\T_{ij}$. These observations seem the most convincing arguments for the requiring the conservation condition as part of the definition of Einstein.

Now these statements are made precise for special statistical structures.

\begin{definition}\label{einsteinspecialstatisticaldefinition}
A special statistical structure $(\nabla, g)$ is \emph{naive Einstein} or \emph{Einstein} if the AH structure that it generates has the same property.  In this case it is said to have scalar curvature equal to the scalar curvature $\sc$ corresponding with $g$.

A special statistical structure or an exact AH structure for which there vanish both the Ricci curvature and the conjugate Ricci curvature satisfies \eqref{conservationcondition} so is Einstein. Such a structure is said to be \emph{scalar-flat Einstein}.
\end{definition}

The \emph{stress-energy} tensor of the special statistical structure $(\nabla, g)$ is the tensor $\T_{ij}$ defined by \eqref{stdefined}. This terminology is justified by Corollary \ref{conservativestatisticalcorollary}. Some preliminary calculations are needed for its proof.

\begin{lemma}
On an $n$-manifold, let $(\nabla, [g])$ be an AH structure and consider a representative metric $g \in [g]$ with Levi-Civita connection $D$, scalar curvature $\sc$, associated one-form $\chi_{i}$, and associated tensor $\sT_{ij}$ as in \eqref{stdefined}. 
The Einstein tensor, $\G_{ij} = \ric(g)_{ij} - \tfrac{1}{2}\sc(g)g_{ij}$, of $g$ satisfies:
\begin{align}\label{aheinsteintensor}
\begin{aligned}
\G_{ij} & = R_{(ij)} - \tfrac{\sc}{2}g_{ij} + \tfrac{1}{4}\T_{ij} + \tfrac{1}{2}(\div(\bt)_{ij} - \bt_{ij}\,^{p}\chi_{p})\\
&\quad + \tfrac{2-n}{2}\left(D_{(i}\chi_{j)} -\div(\chi)g_{ij} + \tfrac{1}{2}\chi_{i}\chi_{j} + \tfrac{n-3}{4}|\chi|^{2}g_{ij}\right),
\end{aligned}
\end{align}  
where $\div(\chi) = g^{pq}D_{p}\chi_{q}$.
\end{lemma}
\begin{proof}
Straightforward computations using the expression \eqref{dnabladifference} for the difference tensor $D - \nabla$ show the Ricci and scalar curvatures, $\ric(g)$ and $\sc(g)$, satisfy
\begin{align}
\begin{aligned}
\ric(g)_{ij} & = R_{(ij)} + \tfrac{1}{4}\bt_{ip}\,^{q}\bt_{jq}\,^{p}+ \tfrac{1}{2}(\div(\bt)_{ij} - \bt_{ij}\,^{p}\chi_{p})\\
& \quad + \tfrac{2-n}{2}\left(D_{(i}\chi_{j)} + \tfrac{1}{2}\chi_{i}\chi_{j} -\tfrac{1}{2}|\chi|^{2}g_{ij}\right) -\tfrac{1}{2}\div(\chi)g_{ij},\\
\sc(g) & = \sc + \tfrac{1}{4}|\bt|^{2} + (1-n)\div(\chi) + \tfrac{(n-1)(n-2)}{4}|\chi|^{2}.
\end{aligned}
\end{align}  
Combining these yields \eqref{aheinsteintensor}.
\end{proof}

\begin{corollary}\label{conservativestatisticalcorollary}
On an $n$-manifold, for a special statistical structure $(\nabla, g)$ with stress-energy tensor $\T_{ij}$ and scalar curvature $\sc$ there holds
\begin{align}\label{conservenaive2}
2(n-2)d\sc_{i} = 2n\bt^{abc}R_{i(abc)} = n\div(\T)_{i}.
\end{align}
For a naive Einstein special statistical structure on a manifold of dimension $n \geq 3$ the following are equivalent:
\begin{enumerate}
\item It is Einstein.
\item Its stress-energy tensor is divergence free.
\item The metric $g$ satisfies the Einstein field equations $\G_{ij} = \tfrac{1}{4}\sT_{ij}$.
\end{enumerate}
\end{corollary}
\begin{proof}
For any special statistical structure $(\nabla, g)$, specializing \eqref{aheinsteintensor} shows
\begin{align}\label{prefield}
\begin{aligned}
R_{(ij)} - \tfrac{\sc}{2} g_{ij} & = \G_{ij} - \tfrac{1}{4}\T_{ij} - \tfrac{1}{2}\div(\bt)_{ij}.
\end{aligned}
\end{align}
By \eqref{Driccurv} the naive Einstein condition implies $\bt_{ijk}$ is divergence free. The claimed equivalences follow from Lemma \ref{conservativenaivelemma} together with \eqref{prefield}.
\end{proof}

\begin{remark}
Definition \ref{einsteinspecialstatisticaldefinition} makes sense for statistical structures that are not special, but it is not clear whether this is the most natural notion of Einstein equations for general statistical structures. It is possible that it should be augmented by some additional condition, for example that the vector field $\chi^{i}$ be Killing.
\end{remark}

\begin{example}\label{vanishcottonexample}
Let $\nabla$ be a torsion-free affine connection with nondegenerate symmetric Ricci tensor $R_{ij} = R_{(ij)}$ and define $g_{ij} = P_{(ij)} = \tfrac{1}{1-n}R_{(ij)}$. If $C_{ijk} = 0$, $\nabla \det P = 0$, and $g^{jk}R_{ijk}\,^{l} = 0$, then $(\nabla, g)$ is an Einstein special statistical structure. The following observation motivates the general construction: the AH structures induced on a cooriented nondegenerate hypersurface in flat affine space as in Example \ref{affinehypersurfaceexample} are Einstein if and only if the affine hypersurface is an affine sphere. See \cite{Fox-ahs, Fox-2dahs, Fox-crm} for proofs. 
\end{example}

\begin{example}
Coupled with the main results of \cite{Fox-simplicial} and \cite{Fox-cubicpoly}, \cite[Lemma $1.18$]{Fox-cubicpoly} yields Einstein AH structures that are not locally equivalent to those induced on an affine sphere. If $D$ is the Levi-Civita connection of a Euclidean metric $g_{ij}$ and $(\nabla, [g])$ is an exact AH structure with $D$-parallel cubic torsion, then the Einstein AH equations become purely algebraic conditions on the cubic torsion, which can be interpreted as the structure tensor of a commutative, not necessarily associative, nonunital algebra for which the Killing type trace-from is invariant. The purely algebraic problem of constructing solutions that are not locally equivalent to affine spheres is tractable and is resolved affirmatively in all dimensions greater than $3$ in \cite{Fox-simplicial, Fox-cubicpoly}.
\end{example}

\begin{example}\label{naiveexample}
Here is an example of a special statistical structure that is naive Einstein but not Einstein.

Consider $\rea^{3}$ equipped with a Euclidean metric $g_{ij}$ and its Levi-Civita connection $D$. Let $x_{1}, x_{2}, x_{3}$ be coordinates such that $dx_{i}$, $dx_{2}$, and $dx_{3}$ constitute an orthonormal coframe, and let $\{\pr_{1}, \pr_{2}, \pr_{3}\}$ be the dual frame. Write $\pr_{ijk} = \pr_{i}\tensor \pr_{j}\tensor \pr_{k}$. Define a trace-free symmetric cubic tensor $L$ by
\begin{align}
\begin{aligned}
L & = 2(x_{1} + x_{3})\left(\pr_{112} + \pr_{121} + \pr_{211} - \pr_{113} - \pr_{131} - \pr_{311}\right)\\
& + 2(x_{1} - x_{3})\left(\pr_{123} + \pr_{231} + \pr_{312} + \pr_{213} + \pr_{321} + \pr_{132}\right).
\end{aligned}
\end{align}
Let $L_{ij}\,^{k} = g^{kp}L_{ijp}$.
As $L$ is trace-free, $\nabla = D - \tfrac{1}{2}L_{ij}\,^{k}$ determines with $g$ a special statistical structure. Straightforward calculations show that $L$ is divergence-free and satisfies
%\begin{align}
%&L_{ip}\,^{q}L_{jq}\,^{p} = 16(x_{1}^{2} + x_{3}^{2})g_{ij},& &|L|^{2} = 48(x_{1}^{2} + x_{3}^{2}),& 
%\end{align}
%so that $L_{ip}\,^{q}L_{jq}\,^{p} =  \tfrac{1}{3}|L|^{2}g_{ij}$ and 
\begin{align}
&\T =-8(x_{1}^{2} + x_{3}^{2})g,& & \div(\T) = -16(x_{1}\pr_{1} + x_{3}\pr_{3}).
\end{align}
Because $L$ is divergence-free $(\nabla, g)$ has self-conjugate Ricci curvature by \eqref{Driccurv}. Because $g$ is flat, by \eqref{prefield}, $R_{(ij)} = -\tfrac{1}{4}\T_{ij} = 2(x_{1}^{2} + x_{3}^{2})g_{ij}$, so $(\nabla, g)$ is naive Einstein. On the other hand, by Corollary \ref{conservativestatisticalcorollary}, because $\div(\T)$ is not identically zero, $(\nabla, g)$ is not Einstein.
\end{example}

\section{Einstein statistical and AH structures on principal bundles with one-dimensional fibers}\label{ewsection}
This section treats a metric $G_{IJ}$ and a cone connection $\hnabla$ on the total space of a principal bundle $\rho:N \to M$ with one-dimensional structure group making use of the notions related to statistical structures introduced in Section \ref{einsteinstatisticalsection}. Uppercase Latin letters are used for abstract indices on $N$, and indices are raised and lowered with $G_{IJ}$. For example $\hR_{IJKL} = \hR_{IJK}\,^{P}G_{PL}$ is the curvature of $\hnabla$ with the last index lowered by $G_{IJ}$.

The general setting considered is the following. Let $M$ be an $n$-manifold, let $\rho:N \to M$ be a principal $S^{1}$-bundle or principal $\reat$-bundle, and let $\rad$ be the fundamental vertical vector field generated by the principal action. Let $\nabla$ be a torsion-free affine connection on $M$ and let $\be$ be a principal connection on $\rho:N \to M$. Let $\hnabla$ be the cone connection on $\rho:N \to M$ of $(\nabla, \be)$. 
Let $g_{ij} = P_{(ij)}$ be the symmetrized projective Schouten tensor of $\nabla$, let $\rho^{\ast}(\om)_{IJ} = d\be_{IJ}$ be the curvature of $\be$, and let $\epc_{ij} = 2P_{[ij]} + \om_{ij}$. For $t \in \rea$, define a symmetric tensor $G_{IJ}$ on $N$ by 
\begin{align}\label{liftedmetric1}
G_{IJ} = \hnabla_{(I}\be_{J)} + (2+t)\be_{I}\be_{J} = \hnabla_{I}\be_{J} - \tfrac{1}{2}d\be_{IJ} + (2+t)\be_{I}\be_{J} = (1+t)\be_{I}\be_{J} - \rho^{\ast}(P)_{(IJ)}.
\end{align}
By Lemma \ref{liftedmetricpreliminarieslemma}, $G_{IJ}$ is a metric if $t \neq -1$. The form of $G_{IJ}$ is a generalization of the construction used to produce the Berger metrics on a three sphere viewed as the total space of the Hopf fibration over the two sphere, and is commonly used in the construction of Einstein-Weyl metrics on $S^{1}$-bundles \cite{Calderbank-Pedersen, Pedersen-Swann-submersions} where it is sometimes called the \emph{canonical variation}, and $1+t$ is usually written $r^{2}$. 

Lemma \ref{liftedmetricpreliminarieslemma} records basic computations relating the cone connection $\hnabla$ and the metric $G_{IJ}$. Corollary \ref{flatEinsteinliftcorollary} shows that when $N = M \times G$ is trivial and $\be$ is flat, $\hnabla$ generates with $[G]$ a closed AH structure of which it is the aligned representative, and this AH structure is scalar-flat Einstein. In the case that $\be$ is exact, the conformal metric $H = e^{2f}G$, where $f$ is a primitive such that $df = \be$, constitutes with $\hnabla$ a scalar-flat Einstein special statistical structure. 

This motivates Theorem \ref{conjugatethomastheorem} which shows that given a metric $H$ on the frame bundle $\F$ of a pseudo-tautological line bundle the $H$-conjugate connection of the Thomas connection $\nabla$ of a projective structure $\en$ on $M$ is again the Thomas connection of a projective structure $\ben$ on $M$ if and only if $(\hnabla, \rad, H)$ is a scalar-flat Einstein special radiant statistical structure. Moreover, in this case, $v = H(\rad, \rad)$ is a potential for $2H$, meaning $\hnabla dv = 2H$, and solves a Monge-Ampère equation $\det \hnabla dv = c|\Psi^{\F}|^{2}$ where $|\Psi^{\F}|$ is the canonical volume density on $\F$. With the minor additional technical hypothesis that the closed one-form $\be = H(\rad, \rad)^{-1}\rad^{\flat}$ is a principal connection, the unique representatives $\nabla \in \en$ and $\bnabla \in \ben$ inducing $\be$ generate with their projective Schouten tensors conjugate special statistical structures that are Einstein with negative scalar curvature. As in explained in Example \ref{chengyauexample}, when $\nabla$ is a properly convex flat projective structure this recovers the picture, generalizing Example \ref{projflatexample}, relating such structures with hyperbolic affine spheres. It is more general in several senses. For one, it makes sense for non-Riemannian signatures. For another, it allows $\en$ to be not projectively flat, or, equivalently, $\hnabla$ to be not flat. 

Lemma \ref{liftedmetriclemma} shows that in the general setting where $\rho:N \to M$ is a principal $S^{1}$-bundle or principal $\reat$-bundle, the connection $\hnabla$ can be modified in a way that depends on $\be$ and $G$ to obtain a connection $\hDs$ that has some possibility, in the presence of further geometric conditions, of constituting with $[G]$ an Einstein AH structure. Theorem \ref{liftedahtheorem} shows that when $\hnabla$ is the cone connection of an Einstein special statistical structure that admits an appropriately compatible almost complex structure, then $\hDs$ is aligned with respect to $G$ and $(\hDs, [G])$ is an Einstein AH structure. This generalizes Corollary \ref{flatEinsteinliftcorollary} to the case of nontrivial bundles. For a properly convex flat real projective structure on an oriented compact surface of genus at least $2$, the compatible complex structure is that determined by the Cheng-Yau metric and the given orientation, and the desired $\hDs$ exists provided the Euler number $e(N)$ of $N$ satisfies $|e(N)| \leq -\chi(M)$. This is explained in Example \ref{convexprojectiveexample} and summarized in Theorem \ref{convexprojectivetheorem}. As is explained in Example \ref{ewexample}, when the geometric data on $M$ comprises a Kähler-Einstein metric of nonzero scalar curvature, the resulting $(\hDs, [G])$ are Einstein-Weyl structures whose construction is well known.

There is a long tradition of constructing Einstein metrics on the total spaces of principal bundles in which the Kaluza-Klein construction is paradigmatic. See \cite{Bourguignon-kaluzaklein} for a mathematically oriented survey. The constructions described in this section are similar in flavor to these constructions, although hewing to the model of Example \ref{projflatexample} and based on the cone connection of an extended projective structure, and finally yielding Einstein statistical structures and Einstein AH structures rather than Einstein metrics as such. The presentation of Example \ref{projflatexample} was made to conform with the statement of Lemma \ref{liftedmetriclemma} and is guided by seeking analogues of the Fefferman metric \cite{Fefferman-monge}, \cite[Section $5$]{Graham-Lee}, and \cite{Lee-feffermanmetric} in the context of projective structures.

\begin{lemma}\label{liftedmetricpreliminarieslemma}
Let $M$ be an $n$-manifold, let $\rho:N \to M$ be a principal $S^{1}$-bundle or principal $\reat$-bundle, and let $\rad$ be the fundamental vertical vector field generated by the principal action. Let $\nabla$ be a torsion-free affine connection on $M$ and let $\be$ be a principal connection on $\rho:N \to M$. Let $\hnabla$ be the cone connection on $\rho:N \to M$ of $(\nabla, \be)$. 
Let $g_{ij} = P_{(ij)}$ be the symmetrized projective Schouten tensor of $\nabla$, let $\rho^{\ast}(\om)_{IJ} = d\be_{IJ}$ be the curvature of $\be$, and let $\epc_{ij} = 2P_{[ij]} + \om_{ij}$.
\begin{enumerate}
\item For $t \in \rea$, the tensor $G_{IJ}$ on $N$ defined by \eqref{liftedmetric1} is invariant under the principal action (so, in particular, $\lie_{\rad}G = 0$) and $G_{IJ}$ is nondegenerate if and only if $g_{ij} = P_{(ij)}$ is nondegenerate and $t \neq -1$. 
\item With $\tpc_{i} = g^{pq}\nabla_{i}g_{pq}$ and $\spc_{i} = g^{pq}\nabla_{p}g_{qi}$, there hold
\begin{align}
\label{hnablaGIJK1}
\hnabla_{I}G_{JK} &= - \rho^{\ast}(\nabla g)_{IJK} - 2\be_{I}G_{JK} + 2t G_{I(J}\be_{K)}+ (1 + t)d\be_{I(J}\be_{K)} - 2t(t+1)\be_{I}\be_{J}\be_{K},\\
\label{hnablagskew}
\hnabla_{[I}G_{J]K}& - \tfrac{1+t}{2}\hnabla_{[I}d\be_{J]K}  = -(2+t)\be_{[I}G_{J]K} - \tfrac{1}{2}\rho^{\ast}(C)_{IJK}  - \tfrac{1}{4}\rho^{\ast}(\nabla \epc)_{KIJ} + \tfrac{2+t}{4}\rho^{\ast}(\nabla \om)_{KIJ},\\\label{hnabladetG}
G^{JK}\hnabla_{I}G_{JK} &= -2(n+1)\be_{I}  -G^{JK}\rho^{\ast}(\nabla G)_{IJK} = -2(n+1)\be_{I}  + \rho^{\ast}(\tpc)_{I},\\
\label{hnabladetG2}
G^{JK}\hnabla_{J}G_{KI} &=(nt-2)\be_{I} - G^{JK}\rho^{\ast}(\nabla G)_{JKI} =(nt-2)\be_{I} + \rho^{\ast}(\spc)_{I},\\
\label{hnablaGaligned}
G^{JK}(\hnabla_{I}G_{JK} & - (n+1)\hnabla_{J}G_{KI}) = -n(n+1)t\be_{I} + \rho^{\ast}(\tpc - (n+1)\spc)_{I}.
\end{align}
\item The curvature $\hat{R}_{IJK}\,^{L}$ of $\hnabla$ satisfies
\begin{align}
\label{hnablaGcurv}
\hat{R}_{IJK}\,^{Q}G_{QL} & = (1+t)(\rho^{\ast}(C)_{IJK} + \tfrac{1}{2}\rho^{\ast}(\nabla \epc)_{KIJ}) - \rho^{\ast}(S)_{IJKL},\\
\label{hnablaGconjugatericci}
G^{AB}\hat{R}_{IAB}\,^{Q}G_{QJ} & = (1+t)\rho^{\ast}(\spc - \tpc + \tfrac{1}{2}\wpc )_{I}\be_{J} + \rho^{\ast}(T)_{IJ},
\end{align}
where $\wpc_{i} = g^{pq}\nabla_{p}\om_{qi}$ and
\begin{align}
S_{ijkl} & = B_{ijkl} - g_{l[i}\epc_{j]k} + g_{kl}\epc_{ij} = R_{ijkl} + 2g_{l[i}g_{j]k} - g_{l[i}\om_{j]k} - g_{kl}\om_{ij},\\
T_{ij} &= g^{pq}R_{ipqj} + (n-1)g_{ij} + \tfrac{3}{2}\om_{ij},
\end{align}
with $B_{ijkl} = B_{ijk}\,^{p}g_{pl}$ and $R_{ijkl} = R_{ijk}\,^{p}g_{pl}$.
\end{enumerate}
\end{lemma}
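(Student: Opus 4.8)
The plan is to prove Lemma~\ref{liftedmetricpreliminarieslemma} by direct computation, leveraging the explicit formula \eqref{tripleconnection} for the cone connection $\hnabla$ in terms of $\nabla$, $\be$, and $Q_{ij}$, together with the curvature formulas \eqref{tc3} of Lemma~\ref{preextendedthomaslemma}. The essential tool is the $\be$-adapted splitting $TN \simeq \spn\{\rad\}\oplus \ker\be$, which lets me decompose every tensor into its $\rad^I$, $\be_I$, and horizontal parts. Recall from Theorem~\ref{extendedthomastheorem} that the cone connection of $(\nabla,\be)$ uses $Q_{ij} = P_{ij} - \tfrac12\epc_{ij}$ as in \eqref{qdefined}, so $Q_{(ij)} = P_{(ij)} = g_{ij}$ and $2Q_{[ij]} = -\om_{ij}$, and \eqref{hnablabe} gives $\hnabla_I\be_J + \be_I\be_J = -\rho^\ast(Q)_{IJ}$. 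These identities are what make \eqref{liftedmetric1} collapse to the stated forms, since $\hnabla_{(I}\be_{J)} = -g_{IJ} - \be_I\be_J$ and $\hnabla_{[I}\be_{J]} = -\tfrac12 d\be_{IJ} + \tfrac12\om_{IJ}$ (the $d\be$ term coming from the non-closedness of $\be$ as a one-form on $N$).

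First I would establish part~(1): invariance of $G_{IJ}$ under the principal action follows because $\be$, $\rad$, and $\rho^\ast(P)$ are all invariant (the first two by definition of a principal connection, the last because it is pulled back from $M$), and $\lie_{\rad}G = 0$ then follows since $\rad$ generates the action. Nondegeneracy is a pointwise linear-algebra statement: in the $\be$-adapted frame, $G_{IJ}$ is block-diagonal with the horizontal block equal to $-g_{ij}$ and the $\rad$-direction entry equal to $1+t$ (using $G(\rad,\rad) = (1+t)$ since $\be(\rad)=1$ and $\rho^\ast(P)(\rad,\cdot)=0$), so $G$ is nondegenerate iff $g_{ij}$ is nondegenerate and $t\neq -1$. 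For part~(2), I would compute $\hnabla_I G_{JK}$ by applying $\hnabla$ to \eqref{liftedmetric1}, using the product rule and repeatedly substituting $\hnabla\be$ from \eqref{hnablabe}, together with $\hnabla_I\rad^J = \delta_I{}^J$, which controls how $\be_I$ differentiates. The horizontal components of $\hnabla_I G_{JK}$ will reproduce $-\rho^\ast(\nabla g)_{IJK}$ via \eqref{tripleconnection}, while the $\be$-contractions and $\rad$-direction terms generate the remaining terms in \eqref{hnablaGIJK1}. Equations \eqref{hnabladetG}--\eqref{hnablaGaligned} then follow by tracing \eqref{hnablaGIJK1} with $G^{JK}$ (using $G^{JK}\be_K = (1+t)^{-1}\rad^J$ modulo horizontal pieces, and that $G^{jk}g_{jk}=-n$), and \eqref{hnablagskew} is the antisymmetrization combined with the second curvature identity of \eqref{tc3} rewritten in terms of $C$ and $\nabla\epc$.

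Part~(3) is where I expect the main difficulty. The curvature $\hat R_{IJK}{}^L$ is read off from \eqref{tc3} of Lemma~\ref{preextendedthomaslemma}, but to obtain \eqref{hnablaGcurv} I must lower the last index with $G_{QL}$ rather than with the natural pairing, which mixes the horizontal part of $\hat R$ (namely $B_{ijk}{}^l - \delta_{[i}{}^l\epc_{j]k}+\delta_k{}^l\epc_{ij}$) with the $\be$-valued part $\hat R_{ijk}{}^A\be_A = C_{ijk}+\tfrac12\nabla_k\epc_{ij}$ through the off-diagonal structure of $G_{IJ}$; the factor $(1+t)$ multiplying the Cotton-type terms is precisely the $G(\rad,\rad)$ eigenvalue surfacing from this contraction. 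The identity \eqref{hnablaGconjugatericci} is the hardest single computation: it requires contracting \eqref{hnablaGcurv} (or equivalently the full curvature) with $G^{AB}$ to form the conjugate-Ricci-type tensor, carefully tracking which contractions land in the horizontal block $-g^{ab}$ and which involve the $\rad$-direction, and then organizing the result using the algebraic Bianchi identity and the definitions of $\tpc$, $\spc$, $\wpc$. The strategy is to compute $G^{AB}\hat R_{IAB}{}^Q G_{QJ}$ by splitting $G^{AB}$ into horizontal and vertical parts, handling each contraction of the curvature tensor \eqref{tc3} separately, and assembling the coefficient of $\be_J$ versus the coefficient $\rho^\ast(T)_{IJ}$; the definitions of $S_{ijkl}$ and $T_{ij}$ are forced so that the horizontal traces of $B_{ijkl}$ and the curvature-Schouten combination package cleanly, with the $(n-1)g_{ij}$ and $\tfrac32\om_{ij}$ terms in $T_{ij}$ arising from the trace of the $\delta$ and $\epc$ terms in $S_{ijkl}$ against $g^{ab}$. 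I would verify the claimed coefficients by a careful index bookkeeping rather than attempting a slick argument, as the asymmetry introduced by lowering with $G$ instead of the canonical pairing rules out most shortcuts.
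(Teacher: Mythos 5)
Your plan is essentially the paper's own proof: establish invariance and nondegeneracy from the last expression in \eqref{liftedmetric1}, obtain \eqref{hnablaGIJK1} by differentiating \eqref{liftedmetric1} via \eqref{tripleconnection}/\eqref{hnablabe}, get the traces \eqref{hnabladetG}--\eqref{hnablaGaligned} by contracting with the block-split inverse $G^{IJ}$ (the paper's \eqref{preliftedcontract} and \eqref{liftedcontract}, with the same sign $-g^{ij}$ on the horizontal block that you use), and deduce part (3) by lowering and contracting the curvature formulas \eqref{hnablabecurvatures} with $G$. One caution: your parenthetical identity $\hnabla_{[I}\be_{J]} = -\tfrac{1}{2}d\be_{IJ} + \tfrac{1}{2}\om_{IJ}$ is wrong, since on $N$ one has $\rho^{\ast}(\om)_{IJ} = d\be_{IJ}$ so that expression vanishes identically, whereas torsion-freeness of $\hnabla$ (equivalently \eqref{hnablabe} together with $2Q_{[ij]} = -\om_{ij}$ from \eqref{qdefined}) forces $\hnabla_{[I}\be_{J]} = \tfrac{1}{2}d\be_{IJ}$; as long as you substitute the full unsymmetrized \eqref{hnablabe} throughout, as you propose, this slip does not propagate.
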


\begin{proof}
The last equality of \eqref{liftedmetric1} follows from \eqref{hnablabe} and implies the invariance of $G$ under the principal action. That $G_{IJ}$ is nondegenerate if and only if $g_{ij}$ is nondegenerate and $t \neq -1$ is apparent from \eqref{liftedmetric1}. 

For any $S_{ij} \in \Ga(\tensor^{2}\ctm)$, a straightforward calculation using \eqref{tripleconnection} shows 
\begin{align}\label{hnablarhos}
\hnabla_{I}\rho^{\ast}(S)_{JK} & = \rho^{\ast}(\nabla S)_{IJK} - \rho^{\ast}(S)_{JI}\be_{K} - \rho^{\ast}(S)_{IK}\be_{J} - 2\rho^{\ast}(S)_{JK}\be_{I}.
\end{align}
Taking $S_{ij} = \om_{ij}$ in \eqref{hnablarhos} yields 
\begin{align}
\label{hnablarhoom}
\hnabla_{I}d\be_{JK} & = \rho^{\ast}(\nabla \om)_{IJK}  - 2\be_{I}d\be_{JK} + 2\be_{[J}d\be_{K]I}.
\end{align}
Taking $S_{ij} = g_{ij}$ in \eqref{hnablarhos} and substituting \eqref{liftedmetric1} in the result yields
\begin{align}\label{hnablarhog}
\begin{aligned}
\hnabla_{I}\rho^{\ast}(g)_{JK} & = \rho^{\ast}(\nabla g)_{IJK} + 2\be_{(J}G_{K)I} + 2\be_{I}G_{JK} - 4(1+t)\be_{I}\be_{J}\be_{K}.
\end{aligned}
\end{align}
Combining \eqref{hnablarhog} with
\begin{align}\label{hnablabebe}
\hnabla_{I}(\be_{J}\be_{K}) = 2\be_{(J}\hnabla_{|I|}\be_{K)} = 2\be_{(J}G_{K)I} + d\be_{I(J}\be_{K)} - 2(2+t)\be_{I}\be_{J}\be_{K},
\end{align}
yields \eqref{hnablaGIJK1}.
Antisymmetrizing \eqref{hnablaGIJK1} in $IJ$, noting that
\begin{align}\label{skewgijk}
2\nabla_{[i}g_{j]k} = C_{ijk} + \nabla_{k}P_{[ij]} =  C_{ijk} + \tfrac{1}{2}\nabla_{k}\epc_{ij} - \tfrac{1}{2}\nabla_{k}\om_{ij}, 
\end{align}
and using \eqref{hnablarhoom} yields
\begin{align}\label{lm2}
\begin{aligned}
\hnabla_{[I}G_{J]K}& =- (2+t)\be_{[I}G_{J]K} + \tfrac{1+t}{2}\left(\be_{K}d\be_{IJ}  -\be_{[I}d\be_{J]K}\right) \\
& \quad - \tfrac{1}{2}\rho^{\ast}(C)_{IJK}  - \tfrac{1}{4}\rho^{\ast}(\nabla \epc)_{KIJ} + \tfrac{1}{4}\rho^{\ast}(\nabla \om)_{KIJ}\\
& = - (2+t)\be_{[I}G_{J]K} - \tfrac{1}{2}\rho^{\ast}(C)_{IJK}  - \tfrac{1}{4}\rho^{\ast}(\nabla \epc)_{KIJ} + \tfrac{2+t}{4}\rho^{\ast}(\nabla \om)_{KIJ}- \tfrac{1+t}{4}\hnabla_{K}d\be_{IJ},
\end{aligned}
\end{align}
which can be rewritten as \eqref{hnablagskew}.
Let $\{E(1), \dots, E(n)\}$ be a $g$-unimodular orthogonal local frame on $M$. Then $\{|1 + t|^{-1/2}\rad, \widehat{E(1)}, \dots, \widehat{E(n)}\}$ is a $G$-unimodular orthogonal local frame on $N$. The tensor corresponding with the inverse symmetric bivector $G^{IJ}$ is 
\begin{align}\label{preliftedcontract}
G^{IJ} = \sum_{i  = 1}^{n}G(\widehat{E(i)}, \widehat{E(i)})\widehat{E(i)}^{I} \widehat{E(i)}^{J} + \tfrac{1}{1+t}\rad^{I} \rad^{J} = -\sum_{i  = 1}^{n}g(E(i), E(i))\widehat{E(i)}^{I} \widehat{E(i)}^{J} +\tfrac{1}{1+t} \rad^{I} \rad^{J}
\end{align}
It follows that, for $k \geq 2$ and any $S_{i_{1}\dots i_{k}} \in \Ga(\tensor^{k}\ctm)$ there holds 
\begin{align}\label{liftedcontract}
G^{AB}\rho^{\ast}(S)_{I_{1}\dots A \dots B \dots I_{k-2}} = -\rho^{\ast}(\si)_{I_{1}\dots I_{k-2}},
\end{align}
where $\si_{i_{1}\dots i_{k-2}} = g^{ab}S_{i_{1}\dots a \dots b \dots i_{k-2}}$. Using this remark and contracting \eqref{hnablarhog}, \eqref{hnablabebe}, and \eqref{hnablaGIJK1} with $G^{JK}$ yields \eqref{hnabladetG}, \eqref{hnabladetG2}, and \eqref{hnablaGaligned}.
The identities \eqref{hnablaGcurv} and \eqref{hnablaGconjugatericci} follow straightforwardly from \eqref{hnablabecurvatures} and \eqref{liftedmetric1} together with \eqref{liftedcontract}.
\end{proof}

\begin{remark}
If the inertial indices of $g_{ij} = P_{(ij)}$ are $(i_{+}, i_{-}, i_{0})$, then those of $G_{IJ}$ are $(1 + i_{-}, i_{+}, i_{0})$ if $t > -1$ and $(i_{-}, 1 + i_{+}, i_{0})$ if $t < -1$. When applying Lemma \ref{liftedmetricpreliminarieslemma} it should be kept in mind that $g_{ij}$ can be negative definite.
\end{remark}

\begin{corollary}\label{flatEinsteinliftcorollary}
Let $M$ be an $n$-manifold, let $\rho:N = M \times G \to M$ be a trivial principal $G$-bundle where $G$ is $S^{1}$ or $\reat$, and let $\rad$ be the fundamental vertical vector field generated by the principal action. Let $\nabla$ be a torsion-free affine connection on $M$ and let $\be$ be a flat principal connection on $N$. Let $\hnabla$ be the cone connection of the extended projective structure $[\nabla, \be]$ on $\rho:N \to M$ generated by $(\nabla, \beta)$. 

Suppose the symmetrized projective Schouten tensor $g_{ij} = P_{(ij)}$ of $\nabla$ is nondegenerate and there hold $\nabla \det g = 0$, $\nabla_{[i}g_{j]k} = 0$, and $g^{pq}R_{(i|pq|j)} = (1-n)g_{ij}$, so that $(\nabla, g)$ is an Einstein special statistical structure with negative scalar curvature. Let $G_{IJ}$ be the metric on $N$ defined by \eqref{liftedmetric1} with $t = 0$.
\begin{enumerate}
\item $\hnabla$ is aligned with respect to the metric $G_{IJ}$ and $(\hnabla, [G])$ is a scalar-flat Einstein closed AH structure on $N$. 
\item If $\be$ is exact and $f \in \cinf(N)$ is a primitive of $\be$, then $H_{IJ} = e^{2f}G_{IJ}$ constitutes with $\hnabla$ a scalar-flat Einstein special statistical structure.
\end{enumerate}
\end{corollary}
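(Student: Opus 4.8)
The plan is to derive the whole statement by substituting the three hypotheses into the general identities of Lemma~\ref{liftedmetricpreliminarieslemma}, which causes every error term to collapse. First I would record the consequences of the hypotheses. Flatness of $\be$ gives $\om_{ij}=0$ and $d\be=0$. Since $(\nabla,g)$ with $g_{ij}=P_{(ij)}$ is a special statistical structure it has symmetric Ricci tensor, so $R_{[ij]}=0$ and hence, by \eqref{projectiveschouten}, $P_{[ij]}=0$; thus $g_{ij}=P_{ij}$ and $\epc_{ij}=2P_{[ij]}+\om_{ij}=0$. The statistical condition $\nabla_{[i}g_{j]k}=0$ then reads $\nabla_{[i}P_{j]k}=0$, so the projective Cotton tensor $C_{ijk}=2\nabla_{[i}P_{j]k}$ of \eqref{bijkl} vanishes. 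The special condition $\nabla\det g=0$ gives $\tpc_{i}=g^{pq}\nabla_{i}g_{pq}=0$, and since $\nabla_{i}g_{jk}$ is totally symmetric (statistical condition together with symmetry of $g$) one also has $\spc_{i}=g^{pq}\nabla_{p}g_{qi}=\tpc_{i}=0$. Finally, combining the curvature hypothesis $g^{pq}R_{(i|pq|j)}=(1-n)g_{ij}$ with $\bar{R}_{[ij]}=-R_{[ij]}=0$ from \eqref{conjradcodcurv} yields $g^{pq}R_{ipqj}=\bar{R}_{ij}=(1-n)g_{ij}$; that is, the conjugate Ricci tensor downstairs is also $(1-n)g_{ij}$, so $(\nabla,g)$ is Einstein on $M$ (with nonzero scalar curvature).

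For the first claim, nondegeneracy of $G_{IJ}$ is immediate from \eqref{liftedmetric1} since $g$ is nondegenerate and $t=0\neq-1$. Setting $t=0$ and $\tpc=\spc=0$ in \eqref{hnablaGaligned} gives $G^{JK}(\hnabla_{I}G_{JK}-(n+1)\hnabla_{J}G_{KI})=0$, which is exactly the alignment condition \eqref{ahdef2} on the $(n+1)$-dimensional manifold $N$. Setting $t=0$, $d\be=0$, $\epc=0$, $C=0$ in \eqref{hnablagskew} collapses it to $\hnabla_{[I}G_{J]K}=-2\be_{[I}G_{J]K}$, so condition \eqref{ahdef1} holds with $\chi_{I}=-2\be_{I}$; hence $(\hnabla,[G])$ is an AH structure, and its Faraday curvature, proportional to $d\chi_{IJ}=-2d\be_{IJ}=0$, vanishes, so it is closed. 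For the Einstein property I would compute both Ricci tensors: by \eqref{etepc} the Ricci tensor of $\hnabla$ equals $-\tfrac{n+1}{2}\rho^{\ast}(\epc)_{IJ}=0$, while by \eqref{ahcurv} the conjugate Ricci tensor equals $G^{PQ}\hat{R}_{IPQJ}+d\chi_{IJ}$, and \eqref{hnablaGconjugatericci} with $t=0$, $\om=0$ reduces $G^{PQ}\hat{R}_{IPQJ}$ to $\rho^{\ast}(T)_{IJ}$ with $T_{ij}=g^{pq}R_{ipqj}+(n-1)g_{ij}$. The conjugate-Ricci computation above gives $T_{ij}=(1-n)g_{ij}+(n-1)g_{ij}=0$. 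With both Ricci and conjugate Ricci vanishing, $(\hnabla,[G])$ is naive Einstein with zero scalar curvature, and the conservation condition \eqref{conservationcondition} holds trivially since each of its terms vanishes; thus $(\hnabla,[G])$ is scalar-flat Einstein.

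For the second claim I would treat $H_{IJ}=e^{2f}G_{IJ}$ as a conformal rescaling inside the class $[G]$, so that $(\hnabla,[H])=(\hnabla,[G])$ is the same AH structure. Using $df=\be$ and the formulas just derived, $\hnabla_{[I}H_{J]K}=e^{2f}(\hnabla_{[I}G_{J]K}+2\be_{[I}G_{J]K})=0$, so $(\hnabla,H)$ is a statistical structure; and $H^{PQ}\hnabla_{I}H_{PQ}=G^{PQ}\hnabla_{I}G_{PQ}+2(n+1)\be_{I}$, which by \eqref{hnabladetG} with $t=0$, $\tpc=0$ equals $-2(n+1)\be_{I}+2(n+1)\be_{I}=0$, so $\nabla\det H=0$ and the statistical structure is special (equivalently the rescaling sends $\chi\mapsto\chi+2df=-2\be+2\be=0$, so $H$ is a distinguished representative). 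Since $(\hnabla,H)$ generates the exact AH structure $(\hnabla,[H])=(\hnabla,[G])$, shown to be scalar-flat Einstein in the first part, $(\hnabla,H)$ is a scalar-flat Einstein special statistical structure.

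Almost every step is substitution into Lemma~\ref{liftedmetricpreliminarieslemma}; the only genuinely structural point, and the main obstacle, is the vanishing of the conjugate Ricci tensor on $N$. This is where the curvature hypothesis earns its keep: the cone construction contributes a $+(n-1)g_{ij}$ term to $T_{ij}$ (visible as the $+2g_{l[i}g_{j]k}$ inside $S_{ijkl}$), and this is cancelled precisely by the downstairs conjugate Ricci $g^{pq}R_{ipqj}=(1-n)g_{ij}$, so that the nonzero scalar curvature of $(\nabla,g)$ on $M$ is shifted to zero on $N$. Care is also needed to use the correct dimension $n+1$ in the alignment and conservation conditions on $N$, and to confirm that the antisymmetric data $\epc$, $C$, and $d\be$ all vanish so that no residual Codazzi or Faraday terms survive.
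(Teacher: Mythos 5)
Your proof is correct and follows essentially the same route as the paper's: reduce the hypotheses to the vanishing of $\tpc$, $\spc$, $\wpc$, $\om$, $P_{[ij]}$, $C_{ijk}$, and $\epc_{ij}$, then substitute into \eqref{hnablagskew}, \eqref{hnablaGaligned}, and \eqref{hnablaGconjugatericci} to get the closed AH structure and the naive Einstein property, observe that a scalar-flat closed naive Einstein AH structure satisfies \eqref{conservationcondition} trivially, and finish part (2) with exactly the same rescaling computation using \eqref{hnabladetG}. The only difference is expository: you spell out the conjugate-Ricci cancellation via \eqref{ahcurv} and \eqref{conjradcodcurv} (and the Ricci of $\hnabla$ via \eqref{etepc}) where the paper compresses these into one sentence, but the underlying argument is identical.
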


\begin{proof}
The assumptions on $\nabla$ imply the vanishing of $\tpc$, $\spc$, $\wpc$, $\om_{ij}$, and $P_{[ij]}$, and, by \eqref{skewgijk}, of $C_{ijk} + \tfrac{1}{2}\nabla_{k}\epc_{ij}$. Because $t = 0$, by \eqref{hnablagskew}, $\hnabla_{[I}G_{J]k} = -2\be_{[I}G_{J]K}$, while by \eqref{hnablaGaligned}, $G^{JK}(\hnabla_{I}G_{JK} -(n+1)\hnabla_{J}G_{KI}) = 0$, so $\hnabla$ is aligned with respect to $G$ and $(\hnabla, [G])$ is an AH structure that is closed because $\be$ is flat. From \eqref{hnablaGconjugatericci} and the hypothesis that $(\nabla, g)$ is scalar-flat Einstein AH it follows that $G^{AB}\hat{R}_{IAB}\,^{Q}G_{QJ}$ vanishes, so that $(\hnabla, [G])$ is naive Einstein. A scalar-flat closed naive Einstein AH structure automatically satisfies the condition \eqref{conservationcondition}, so $(\hnabla, [G])$ is Einstein. If $\be = df$ and $H_{IJ} = e^{2f}G_{IJ}$, then $\hnabla_{[I}H_{J]K} = 0$ and, by \eqref{hnabladetG}, $H^{JK}\hnabla_{I}H_{JK} = 2(n+1)df_{I} - 2(n+1)\be_{I} = 0$, so, by the preceding, $(\hnabla, H)$ is a special statistical structure that is scalar-flat Einstein.
\end{proof}

\begin{remark}
The assumptions of Corollary \ref{flatEinsteinliftcorollary} are satisfied by the connection induced on a hyperbolic affine sphere via its equiaffine normal, and so also by the statistical structure on a properly convex flat real projective manifold generated by the Cheng-Yau metric and the aligned representative of the flat projective structure.
%The nontrivial aspect that is somehow hidden in the present exposition is the existence of hyperbolic affine spheres. 
\end{remark}

Theorem \ref{conjugatethomastheorem} refines the special case of Corollary \ref{flatEinsteinliftcorollary} where $N$ is the total space $\F$ of the frame bundle of a pseudo-tautological line bundle $\emf^{-1}$ over an $n$-manifold, $M$, and $\be$ is exact.
For a statistical structure $(\hnabla, H)$ on $\F$ for which $\hnabla$ is a Thomas connection, Theorem \ref{conjugatethomastheorem} shows that the connection conjugate to the $\hnabla$ is itself a Thomas connection if and only if $(\hnabla, H)$ is a scalar-flat Einstein special statistical structure. 

A metric $H_{IJ}$ on the total space $\F$ has \emph{homogeneity $2$} if $R^{\F}_{r}(H) = r^{2}H$ for all $r \in \reat$. If $H$ is assumed self-similar, this amounts to supposing additionally that $R_{-1}^{\ast}(H) = H$.

\begin{theorem}\label{conjugatethomastheorem}
On an $n$-manifold $M$, let $\emf \to M$ be a pseudo-hyperplane line bundle, let $\rho: \F = \frameb(\emf^{-1}) \to M$ be the frame bundle of $\emf^{-1}$, and let $\rad = \eul^{\F}$.  Let $\hnabla$ be the Thomas connection of a projective structure $\en$ on $M$. For a pseudo-Riemannian metric $H_{IJ}$ on $\F$, the following are equivalent:
\begin{enumerate}
\item\label{cott1} The $H$-conjugate connection, $\chnabla$, of $\hnabla$ is the Thomas connection of a projective structure $\ben$ on $M$.
\item\label{cott2} $(\hnabla, \rad, H)$ is a special radiant statistical structure such that $(\hnabla, H)$ is scalar-flat Einstein as a special statistical structure.
\end{enumerate}
In the case there hold \eqref{cott1} and \eqref{cott2}:
\begin{enumerate}
\setcounter{enumi}{2}
\item The special radiant statistical structure $(\chnabla, \rad, H)$ is also scalar-flat Einstein as a special statistical structure.
\item\label{cttclosedoneform} For $v = H(\rad, \rad)$, the closed one-form $\be_{I} = \tfrac{1}{2}v^{-1}dv_{I}$ satisfies 
\begin{align}\label{cttclosedoneformb} 
\tfrac{1}{2}v^{-1}\hnabla_{I}dv_{J} = \hnabla_{I}\be_{J} + 2\be_{I}\be_{J} = v^{-1}H_{IJ} = \chnabla_{I}\be_{J} + 2\be_{I}\be_{J} = \tfrac{1}{2}v^{-1}\chnabla_{I}dv_{J}
\end{align}
\item There is $c \in \reat$ such that $2^{-n-1}\det \hnabla dv = \det H = c |\Psi^{\F}|^{2}$, where $|\Psi^{\F}|$ is the volume density defined in Lemma \ref{canonicaleulerlemma}.
\item\label{differencelift} The $\en$-invariant lift of the trace-free tensor $\ben - \en = \Pi_{ij}\,^{k}\in \Ga(S^{2}\ctm \tensor TM)$ is $\linft(\Pi)_{IJ}\,^{K} = \hat{\Pi}_{IJ}\,^{K} - \tfrac{1}{n-1}\hat{\Pi}_{IA}\,^{B}\hat{\Pi}_{JB}\,^{A}\rad^{K}$ where $\hat{\Pi}_{IJ}\,^{K} = \chnabla - \hnabla = H^{KA}\hnabla_{I}H_{JA}$ .
\end{enumerate}
If there hold \eqref{cott1} and \eqref{cott2} and additionally $H_{IJ}$ has homogeneity $2$, then:
\begin{enumerate}
\setcounter{enumi}{6}
\item \label{einsteinclaim1} The closed one-form $\be_{I} = H(\rad, \rad)^{-1}\rad^{P}H_{PI}$ is a principal $\reat$-connection on $\F$.
\item \label{einsteinclaim2} The unique representatives $\nabla \in \en$ and $\bnabla \in \ben$ inducing $\be$ have vanishing projective Cotton tensors and nondegenerate symmetric projective Schouten tensors $P_{ij}$ and $\bar{P}_{ij}$ satisfying
\begin{align}\label{einsteinclaim2b}
&-\hnabla_{I}\be_{J} - \be_{I}\be_{J} = \rho^{\ast}(P)_{IJ},&&-\chnabla_{I}\be_{J} - \be_{I}\be_{J} = \rho^{\ast}(\bar{P})_{IJ}
\end{align}
and $\nabla_{i}\det P = 0$ and $\bnabla_{i}\det \bar{P} = 0$. In particular each of $(\nabla, P)$ and $(\bnabla, \bar{P})$ is a special statistical structure.
\item\label{einsteinclaim3} The special statistical structures $(\nabla, P)$ and $(\bnabla, \bar{P})$ are conjugate and Einstein with negative scalar curvature.
\end{enumerate}
\end{theorem}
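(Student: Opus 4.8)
The plan is to reduce the entire statement to the characterization of Thomas connections furnished by Theorem~\ref{classicalthomastheorem}---a torsion-free connection on $\F$ is a Thomas connection precisely when it forms with $\rad$ a Ricci-flat, $\rad$-invariant, conelike, equiaffine radiant structure that is invariant under the principal action---combined with the conjugacy calculus of Section~\ref{codazzisection}. The canonical density $|\Psi^{\F}|$ of Lemma~\ref{canonicaleulerlemma} is preserved by every Thomas connection on $\F$, and this common normalization is what makes the argument work.

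For the equivalence of \eqref{cott1} and \eqref{cott2} I would argue in both directions through the statistical reduction. Since $\hnabla$ is torsion-free, Lemma~\ref{oppositeconjugatelemma} shows its $H$-conjugate $\chnabla$ is torsion-free if and only if $(\hnabla,H)$ is statistical. Assuming \eqref{cott1}, $\chnabla$ is a Thomas connection, hence torsion-free, so $(\hnabla,H)$ is statistical; as both $\hnabla$ and $\chnabla$ are radiant, Lemma~\ref{radcodazziconjugatelemma} forces $(H,\rad)$ to be self-similar, so $(\hnabla,\rad,H)$ is a radiant statistical structure. Both connections preserve the same $|\Psi^{\F}|$, so by \eqref{rcsspecial} of Lemma~\ref{conjugateradiantcodazzilemma} the structure is special, and both are Ricci-flat, giving scalar-flat Einstein, hence \eqref{cott2}. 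Conversely, assuming \eqref{cott2}, Lemma~\ref{radcodazziconjugatelemma} gives that $\chnabla$ is radiant and Lemma~\ref{conjugateradiantcodazzilemma} that $(\chnabla,H)$ is special; the difference tensor $\Pi=\chnabla-\hnabla=H^{KP}\hnabla_{I}H_{JP}$ is invariant under the flow of $\rad$ because $\lie_{\rad}H=2H$ and $\hnabla$ is $\rad$-invariant, so $\chnabla$ is $\rad$-invariant and thus conelike by Corollary~\ref{invariantconelikecorollary} (recall $\rad$ is nonsingular on $\F$). Specialness makes $|\det H|^{1/2}$ a constant multiple of $|\Psi^{\F}|$, so $\chnabla$ is equiaffine for $|\Psi^{\F}|$, and scalar-flatness makes it Ricci-flat. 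Now $(\chnabla,\rad,|\Psi^{\F}|)$ is a Ricci-flat, $\rad$-invariant, conelike, equiaffine radiant structure, so Lemma~\ref{invariancelemma} upgrades its invariance to the full principal action, and Theorem~\ref{classicalthomastheorem} identifies it as the Thomas connection of some $\ben$, giving \eqref{cott1}.

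The remaining assertions are consequences. Claim (3) follows by applying the equivalence to $\chnabla$, or from Lemma~\ref{conjugateeinsteiahlemma}. Claims (4) and \eqref{cttclosedoneform} are immediate from Corollary~\ref{radcodazzicorollary} applied to $(\hnabla,\rad,H)$ and to $(\chnabla,\rad,H)$, giving $dv_{I}=2\rad^{\flat}_{I}$, $\hnabla_{I}dv_{J}=2H_{IJ}=\chnabla_{I}dv_{J}$, and \eqref{cttclosedoneformb} after dividing by $v$. Claim (5) follows since $\hnabla dv=2H$ yields $\det\hnabla dv=2^{n+1}\det H$ on the $(n+1)$-manifold $\F$, while specialness forces $\det H=c|\Psi^{\F}|^{2}$. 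Claim \eqref{differencelift} is a computation with Lemma~\ref{thomasliftlemma}: the trace $\hat\Pi_{IP}{}^{P}=\hnabla_{I}\log|\det H|$ vanishes by specialness, so $\hat\Pi$ is a horizontal lift of $\ben-\en$, and its unique $\hnabla$-divergence-free correction adjusts the $\rad$-component by $-\tfrac{1}{n-1}\hat\Pi_{IA}{}^{B}\hat\Pi_{JB}{}^{A}\rad^{K}$. Under the homogeneity-two hypothesis, the closed one-form $\be=v^{-1}\rad^{\flat}$ satisfies $R_{r}^{\ast}\be=\be$ and $\be(\rad)=1$, so it is a flat principal connection, giving \eqref{einsteinclaim1}. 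Taking the representatives $\nabla\in\en$, $\bnabla\in\ben$ inducing $\be$ as in Lemma~\ref{invariantinducedlemma}, closedness of $\be$ forces $R_{[ij]}=0$ and $\epc_{ij}=0$ via Lemma~\ref{inducedeplemma}, so $\hnabla$ is the cone connection of $(\nabla,\be)$ with $Q_{ij}=P_{ij}=P_{(ij)}$, and \eqref{hnablabe} gives \eqref{einsteinclaim2b}; combining with \eqref{cttclosedoneformb} shows $H=vG$ for the metric $G$ of \eqref{liftedmetric1} with $t=0$. Then \eqref{hnablagskew} and $dv=2v\be$ reduce $\hnabla_{[I}H_{J]K}=0$ to $\rho^{\ast}(C)_{IJK}=0$, so $C=0$ and $\nabla_{[i}P_{j]k}=0$ by \eqref{skewgijk}; descending $\hnabla|\det H|=0$ along horizontal lifts gives $\nabla\det P=0$, proving \eqref{einsteinclaim2}. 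For \eqref{einsteinclaim3}, the scalar-flat Einstein condition---vanishing of $\hat R_{IJ}$ and of the conjugate Ricci $G^{AB}\hat R_{IAB}{}^{Q}G_{QJ}$---translates through \eqref{hnablaGconjugatericci} (with $t=0$, $\om=0$, and $\tpc=\spc=\wpc=0$) into the requirement that $g^{pq}R_{ipqj}$ be a multiple of $g_{ij}$ for $g=P_{(ij)}$; with $C=0$, $\nabla\det P=0$, and nondegeneracy this is the Einstein criterion of Example~\ref{vanishcottonexample}, and since $R_{(ij)}=(1-n)P_{ij}$ the scalar curvature $P^{pq}R_{pq}=-n(n-1)$ is negative. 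Conjugacy of $(\nabla,P)$ and $(\bnabla,\bar P)$ is the descent of the $H$-conjugacy $\hnabla\leftrightarrow\chnabla$, read off from \eqref{differencelift}.

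I expect the main obstacle to be the descent in \eqref{einsteinclaim2}--\eqref{einsteinclaim3}: carefully tracking the one-forms $\tpc$, $\spc$, $\wpc$ and their vanishing for special structures, reconciling the index conventions of \eqref{hnablaGconjugatericci} with those of Example~\ref{vanishcottonexample}, and verifying that specialness together with $\hnabla$-parallelism pins down $|\Psi^{\F}|$ up to a constant, which is precisely what lets the equiaffine normalization pass between $\F$ and $M$. The curvature identities of Lemma~\ref{liftedmetricpreliminarieslemma} do the heavy computational lifting, but assembling them so that the scalar-flat Einstein data upstairs yields the negative-scalar-curvature Einstein data downstairs requires keeping the factor $v$ and the extra fiber direction straight throughout.
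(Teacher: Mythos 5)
Your proposal is correct and its overall architecture is the same as the paper's: both directions of the equivalence are run through the conjugacy calculus of Section \ref{codazzisection} (Lemmas \ref{oppositeconjugatelemma}, \ref{radcodazziconjugatelemma}, \ref{conjugateradiantcodazzilemma} and Corollary \ref{radcodazzicorollary}) and closed with the characterization of Thomas connections in Theorem \ref{classicalthomastheorem}, and claims (3)--(5) and \eqref{einsteinclaim1}--\eqref{einsteinclaim2} are extracted in the same way. Two of your sub-routes differ from the paper's and are worth recording. First, to make $\chnabla$ conelike you show $\lie_{\rad}(\chnabla - \hnabla) = 0$ directly from $\lie_{\rad}H = 2H$ and $\lie_{\rad}\hnabla = 0$ and invoke Corollary \ref{invariantconelikecorollary}; the paper instead gets $\rad$-invariance of $\chnabla$ from \eqref{rcsradzero} of Lemma \ref{conjugateradiantcodazzilemma} and conelikeness from Lemma \ref{conjugateconelikelemma}. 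Both are valid, and yours is arguably more economical. Second, for \eqref{einsteinclaim3} you derive the downstairs Einstein condition from the curvature identity \eqref{hnablaGconjugatericci} (legitimate, since by that point $\om = 0$ and $\tpc = \spc = \wpc = 0$), whereas the paper never uses \eqref{hnablaGconjugatericci}: from \eqref{cttclosedoneformb} and \eqref{einsteinclaim2b} it gets $\bar{P} = P$, and from the traced identity \eqref{ecb1} it identifies the pushdown of $\hat{\Pi}_{IJ}\,^{K} = H^{KA}\hnabla_{I}H_{JA}$ with $P^{ka}\nabla_{i}P_{ja} = \bnabla - \nabla$, so that naive Einstein follows from $R_{ij} = (1-n)P_{ij}$ and $\bar{R}_{ij} = (1-n)\bar{P}_{ij} = (1-n)P_{ij}$. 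That traced identity is also what makes precise your phrases ``descending $\hnabla|\det H| = 0$ along horizontal lifts'' (for $\nabla\det P = 0$) and ``the descent of the $H$-conjugacy, read off from \eqref{differencelift}''; as written those are gestures at a computation the paper actually performs. (Incidentally, the hypothesis $g^{jk}R_{ijk}\,^{l} = 0$ in Example \ref{vanishcottonexample} you cite appears to be a typo for the trace of the projective Weyl tensor; your own formulation, $g^{pq}R_{ipqj}$ proportional to $g_{ij}$, is the correct condition.)

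The one clause where your plan asserts the conclusion rather than proving it is \eqref{differencelift}. The substance there is the pair of computations $\hnabla_{K}\hat{\Pi}_{IJ}\,^{K} = \hat{\Pi}_{IA}\,^{B}\hat{\Pi}_{JB}\,^{A}$ (which uses the total symmetry of $\hnabla H$, specialness, Ricci-flatness of $\hnabla$, \emph{and} conjugate Ricci-flatness $H^{KA}\hat{R}_{IKA}\,^{B}H_{JB} = 0$) and $\hnabla_{K}\bigl(\hat{\Pi}_{IA}\,^{B}\hat{\Pi}_{JB}\,^{A}\rad^{K}\bigr) = (n-1)\hat{\Pi}_{IA}\,^{B}\hat{\Pi}_{JB}\,^{A}$, the latter via \eqref{liederivative} and the homogeneity of $\hat{\Pi}$. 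Saying ``its unique divergence-free correction adjusts the $\rad$-component by $-\tfrac{1}{n-1}\hat{\Pi}_{IA}\,^{B}\hat{\Pi}_{JB}\,^{A}\rad^{K}$'' restates what must be checked; it does not check it. Since everything needed for these computations is already in place in your setup, I regard this as an incompleteness of exposition rather than a defect of the approach.
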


\begin{proof}
The condition \eqref{ctt2} can be restated as that $(\hnabla, \rad, H)$ is a special radiant statistical structure with curvature satisfying $H^{AB}\hat{R}_{IABJ} = 0$. In this case, by Lemma \ref{radiantmetriclemma}, $(\chnabla, \rad, H)$ is a special radiant statistical structure with vanishing Ricci curvature and conjugate Ricci curvature. Since $\hnabla$ is a Thomas connection it is $\rad$-invariant, so $\rad^{A}\hat{R}_{PIJ}\,^{B}H_{BK} = 0$. By \eqref{rcsradzero} of Lemma \ref{conjugateradiantcodazzilemma}, $\chnabla$ is also $\rad$-invariant and by Lemma \ref{conjugateconelikelemma}, $(\chnabla, \rad, H)$ is conelike. By Theorem \ref{classicalthomastheorem}, $\chnabla$ is the Thomas connection of some projective structure on $M$.

Now suppose $\chnabla$ is the Thomas connection of some projective structure on $M$. Since $\chnabla$ is torsion-free, there holds $\hnabla_{[I}H_{J]K} = 0$, and since $\chnabla$ is radiant, by \eqref{cq0} of Lemma \ref{radiantmetriclemma} this implies $(g, \rad)$ is self-similar, so $(\hnabla, \rad, H)$ is a radiant statistical structure. Since both $\chnabla$ and $\hnabla$ preserve $|\Psi^{\F}|$, $H^{AB}\hnabla_{I}H_{AB} = 0$, so $(\hnabla, \rad, H)$ is special. Because $\chnabla$ is Ricci-flat, by \eqref{conjradcodcurv} of Lemma \ref{conjugateradiantcodazzilemma} there holds $H^{AB}\hat{R}_{IABJ} = 0$, so $(\hnabla, \rad, H)$ satisfies \eqref{ctt2}.

Suppose there hold \eqref{cott1}-\eqref{cott2}. 
In this case it is immediate from Lemma \ref{conjugateradiantcodazzilemma} that $(\chnabla, \rad, H)$ is also Ricci-flat and conjugate Ricci-flat special radiant statistical structure, and claim \eqref{cttclosedoneform} is immediate from Corollary \ref{radcodazzicorollary}. By \eqref{rcsspecial} of Lemma \ref{conjugateradiantcodazzilemma}, the statements that $(\hnabla, \rad, H)$ is special, $(\chnabla, \rad, H)$ is special, and both $\hnabla$ and $\chnabla$ are equiaffine are all equivalent. Since $\hnabla |\Psi^{\F}| = 0$, these conditions are equivalent to the existence of $c \neq 0$ such that $\det H = c |\Psi^{\F}|^{2}$.

Because $(\hnabla, H)$ is statistical, $\hat{\Pi}_{IJ}\,^{K} = \chnabla - \hnabla = H^{KA}\hnabla_{I}H_{JA}$ satisfies $\hat{\Pi}_{[IJ]}\,^{K} = 0$. Because $g$ is self-similar, $\rad^{P}\hnabla_{P}H_{IJ} = (\lie_{\rad}H)_{IJ} - 2H_{IJ} = 0$. Hence $\rad^{P}\hat{\Pi}_{PI}\,^{J} = 0$. Because $(\hnabla, H)$ is special, $\hat{\Pi}_{IQ}\,^{Q} = H^{PQ}\hnabla_{I}H_{PQ} = 0$ and $\hnabla_{Q}H^{IQ} = 0$. These observations imply that $\hat{\Pi}_{IJ}\,^{K}$ is the horizontal lift of some $\Pi_{ij}\,^{k} \in \Ga(S^{2}\ctm \tensor TM)$. Let $\bnabla \in \ben$ and $\nabla \in \en$ be the representatives inducing $\be \in \prin(\F)$. By definition of the Thomas connections, $\rho^{\ast}(\theta)(\hat{\Pi}(\hat{X}, \hat{Y})) = \rho^{\ast}(\theta(\Pi(X, Y)))$ for all $X, Y \in \Ga(TM)$ and $\theta \in \Ga(\ctm)$, so $\Pi_{ij}\,^{k} = \bnabla - \nabla$. Because $\bnabla$ and $\nabla$ induce the same principal connection, $\Pi_{ip}\,^{p} = 0$, so $\Pi_{ij}\,^{k}$ is trace-free and $\Pi_{ij}\,^{k} = \ben - \en$.

Calculating using the preceding observations yields
\begin{align}
\begin{split}
\hnabla_{K}\hat{\Pi}_{IJ}\,^{K} &= \hnabla_{K}H^{KA}\hnabla_{I}H_{JA} + H^{KA}\hnabla_{K}\hnabla_{I}H_{JA} \\
&= H^{KA}\left(\hnabla_{I}\hnabla_{K}H_{JA} - \hat{R}_{KIJ}\,^{B}H_{BA} - \hat{R}_{KIA}\,^{B}H_{JB}\right)\\
&= H^{KA}\hnabla_{I}\hnabla_{J}H_{KA} - \hat{R}_{KIJ}\,^{K} + H^{KA}\hat{R}_{IKA}\,^{B}H_{JB}\\
& = \hnabla_{I}(H^{KA}\hnabla_{J}H_{KA}) - \hnabla_{I}H^{KA}\hnabla_{J}H_{KA} = H^{KP}H^{AQ}\hnabla_{I}H_{PQ}\hnabla_{J}H_{KA} = \hat{\Pi}_{IA}\,^{B}\hat{\Pi}_{JB}\,^{A}.
\end{split}
\end{align}
Thus $\hat{\Pi}_{IJ}\,^{K}$ is not the $\en$-invariant lift of $\Pi_{ij}\,^{k}$. 
Because $\chnabla$ and $\hnabla$ are invariant under the principal $\reat$-action so is $\hat{\Pi}_{IJ}\,^{K}$. Hence
\begin{align}
\begin{split}
\hnabla_{K}&\left(\hat{\Pi}_{IA}\,^{B}\hat{\Pi}_{JB}\,^{A}\rad^{K}\right) = \hat{\Pi}_{IA}\,^{B}\hat{\Pi}_{JB}\,^{A}\hnabla_{K}\rad^{K} + \rad^{K}\hnabla_{K}\left(\hat{\Pi}_{IA}\,^{B}\hat{\Pi}_{JB}\,^{A}\right)\\
& = (n+1)\hat{\Pi}_{IA}\,^{B}\hat{\Pi}_{JB}\,^{A} + \lie_{\rad}\left(\hat{\Pi}_{IA}\,^{B}\hat{\Pi}_{JB}\,^{A}\right) - 2\hat{\Pi}_{IA}\,^{B}\hat{\Pi}_{JB}\,^{A} 
= (n-1)\hat{\Pi}_{IA}\,^{B}\hat{\Pi}_{JB}\,^{A},
\end{split}
\end{align}
where the penultimate equality follows from \eqref{liederivative}. 
It follows that $\hat{\Pi}_{IJ}\,^{K} - \tfrac{1}{n-1}\hat{\Pi}_{IA}\,^{B}\hat{\Pi}_{JB}\,^{A}\rad^{K}$ is a horizontal lift of $\Pi_{ij}\,^{k}$ and $\hnabla$-divergence free, so, by Lemma \ref{thomasliftlemma}, it is the $\en$-invariant lift of $\Pi_{ij}\,^{k}$.

Now suppose $H_{IJ}$ has homogeneity $2$ and let $v = H(\rad, \rad)$.
Since $H_{IJ}$ has homogeneity $2$, $\be_{I} = v^{-1}\rad^{P}H_{PI}$ is invariant under the principal $\reat$ action on $\F$, so, by \eqref{radcodbe} of Corollary \ref{radcodazzicorollary}, is a principal $\reat$-connection.
The vanishing of the projective Cotton tensors in claim \eqref{einsteinclaim2} follows from \eqref{thomascurvature} of Theorem \ref{classicalthomastheorem} and \eqref{rcsradzero} of Lemma \ref{conjugateradiantcodazzilemma}. That there holds \eqref{einsteinclaim2b} follows from the observation that $\hnabla_{I}\be_{J} + \be_{I}\be_{J}$ is invariant under the principal connection, so has the form $\rho^{\ast}(k)_{IJ}$ for some $k_{ij} \in \Ga(S^{2}\ctm)$ and an explicit computation using the definition of the Thomas connection in the proof of Theorem \ref{classicalthomastheorem} that shows $k_{ij} = -P_{ij}$. Because, by \eqref{cttclosedoneformb} and \eqref{einsteinclaim2b}, $v^{-1}H_{IJ} = \hnabla_{I}\be_{J} + 2\be_{I}\be_{J} = -\rho^{\ast}(P)_{IJ} + \be_{I}\be_{J}$, $P_{ij}$ is nondegenerate.

Because $d\be_{IJ} =0$, $P_{[ij]} = 0$. Let $G_{IJ} = v^{-1}H_{IJ} = - \rho^{\ast}(P)_{IJ} + \be_{I}\be_{J}$ and $\tau_{i} = P^{ab}\nabla_{i}P_{ab}$. By \eqref{hnabladetG} of Lemma \ref{liftedmetricpreliminarieslemma} and because $2\be_{I} = v^{-1}dv_{I}$,
\begin{align}\label{ecb1}
 - G^{KA}\rho^{\ast}(\nabla P)_{IJA} & = G^{KA}\hnabla_{I}G_{JA} + 2\be_{I}\delta_{J}\,^{K}= H^{KA}\hnabla_{I}H_{JA}.
 \end{align}
 Tracing \eqref{ecb1} in $JK$ and using \eqref{liftedcontract} show that $\rho^{\ast}(\tau)_{I} = 0$, so $\tau_{i} = 0$ and $(\nabla, P)$ is a special statistical structure. Similarly, by \eqref{preliftedcontract} and \eqref{ecb1}, $- G^{KA}\rho^{\ast}(\nabla P)_{IJA} = H^{KA}\hnabla_{I}H_{JA}$ is the horizontal lift of $P^{ka}\nabla_{i}P_{ja}$, and by the proof of \eqref{differencelift}, $P^{ka}\nabla_{i}P_{ja} = \bnabla - \nabla$. This shows $(\bnabla, P)$ is the conjugate statistical structure of $(\nabla, P)$. It follows that $(\nabla, P)$ and $(\bnabla, P)$ are conjugate special statistical structures that are naive Einstein, so Einstein. Because, by definition, $R_{ij} = (1-n)P_{ij}$, they have negative scalar curvature.
\end{proof}

\begin{remark}
The metric $H_{IJ}$ of Theorem \ref{conjugatethomastheorem} must be self-similar in the sense of Definition \ref{selfsimilardefined}, but in the setting of the theorem it is not clear that this necessarily implies that it have homogeneity $2$. The issue is that although $\chnabla$ and $\hnabla$ are both invariant under the principal action and $g$ is positively homogeneous of degree $2$, these conditions do not seem to be sufficient to imply that $g$ be homogeneous of degree $2$, although the author does not know an example demonstrating their insufficiency.
\end{remark}

\begin{remark}
The tractor connection of the regular, normal Cartan connection canonically associated with a projective structure can be constructed by descending the Thomas connection to the rank $n+1$ tractor bundle on $M$ obtained by quotienting $T\emf^{-1}$ by an appropriate $\reap$ action, and the condition that $H$ have homogeneity $2$ characterizes those metrics which arise by lifting metrics from the tractor bundle.
\end{remark}

\begin{example}\label{chengyauexample}
Let $\hnabla$ be the flat affine connection determined by the affine structure on the $(n+1)$-dimensional real vector space $\ste$ and let $\cone \subset \ste$ be a pointed convex cone. Let $\Psi$ be the standard volume form on $\ste$ and let $\rad$ be the Euler field generating dilations by $e^{t}$. By a theorem of Cheng-Yau (see \cite{Fox-schwarz, Loftin-affinekahler, Loftin-survey} for details and references), there is a unique smooth function $F$ on $\cone$ solving $\det \hnabla dF = e^{2F}\Psi^{\tensor 2}$, tending to $+\infty$ on the boundary of $\cone$, and such that $\hnabla_{I}dF_{J}$ is a complete Riemannian metric on $\cone$. Moreover, the function $e^{F}$ has positive homogeneity $-n-1$. (The level sets of $F$ are affine spheres asymptotic to $\cone$ and foliating its interior.) The positive homogeneity $2$ function $v=-\tfrac{n+1}{2} e^{-\tfrac{2}{n+1}F}$, one-form $\be_{I} = \tfrac{1}{2}v^{-1}dv_{I} = -\tfrac{1}{n+1}dF_{I}$, and tensor $H_{IJ} = \tfrac{1}{2}\hnabla_{I}dv_{J}$ satisfy $\hnabla_{I}\be_{J} = v^{-1}H_{IJ} - 2\be_{I}\be_{J} = -\tfrac{1}{n+1}\hnabla_{I}dF_{J}$, $2\hnabla_{[I}H_{J]K} = \hnabla_{[I}\hnabla_{J]}dv_{K} = 0$, and $2^{n+1}\det H = \det \hnabla dv = -\Psi^{\tensor 2}$. To justify the last claim, observe that $H_{IJ}$ is nondegenerate and the tensor inverse to $\hnabla_{I}dF_{J}$ is $-(n+1)^{-1}(vH^{IJ} - 2\rad^{I}\rad^{J})$ and use the formula for the determinant of a rank one perturbation to calculate $\det \hnabla_{I}dv_{J}$ in terms of $\det \hnabla_{I}dF_{J}$ (or see \cite[section $5$]{Fox-schwarz}). In particular, $(\hnabla, \rad, H)$ is an Einstein special radiant statistical structure.

Let $M$ be the set of rays in $\cone$, viewed as a subset of the oriented projectivization of $\ste$ (the projective sphere). The canonical projection $\rho:\cone \to M$ has the structure of principal $\reap$-bundle and can be viewed as the frame bundle of the tautological line bundle restricted to $M$. Because $(\hnabla, \rad, \Psi)$ is a flat conelike equiaffine radiant structure on $\cone$, it determines a flat projective structure $\en$ on $M$. Because $\cone$ is pointed, $\en$ is properly convex. The homothety class of the metric of the induced special statistical structure on $M$ comprises the Blaschke metrics of the affine spheres foliating the interior of $\cone$. The induced special statistical structure on $M$ is a Einstein. 
By Lemma \ref{conjugateradiantcodazzilemma} and Theorem \ref{conjugatethomastheorem}, the $H$-conjugate connection $\chnabla$ is a flat connection that with $\rad$ and $\Psi$ constitutes a flat conelike equiaffine radiant structure and with $\rad$ and $H$ constitutes a special radiant statistical structure inducing on $M$ a flat projective structure $\ben$ that is again properly convex. In fact, $\ben$ is the flat projective structure dual to $\en$ obtained via the preceding process applied to the dual cone $\cone^{\ast}$, and the homothety class of the metric of the special statistical structure induced on $M$ comprises the Blaschke metrics of the dual family of affine spheres foliating $\cone^{\ast}$. 

Because all of the preceding behaves well with respect to automorphisms of the structures involved, it can be applied to the cone over the universal cover of any properly flat convex projective structure.  The point is to indicate that Theorem \ref{conjugatethomastheorem} extends this picture based on Cheng-Yau's work on affine spheres and the related Monge-Ampere equations to a possibly nonflat setting. 
\end{example}

\begin{lemma}\label{liftedmetriclemma}
Let $M$ be an $n$-manifold, let $\rho:N \to M$ be a principal $S^{1}$-bundle or principal $\reat$-bundle, and let $\rad$ be the fundamental vertical vector field generated by the principal action. Let $\nabla$ be a torsion-free affine connection on $M$ and let $\be$ be a principal connection on $\rho:N \to M$. Let $\hnabla$ be the cone connection on $\rho:N \to M$ generated by $(\nabla, \be)$. 

Suppose the symmetrized projective Schouten tensor of $\nabla$, $g_{ij} = P_{(ij)}$, is nondegenerate and define the metric $G_{IJ}$ by \eqref{liftedmetric1}, let $\rho^{\ast}(\om)_{IJ} = d\be_{IJ}$ be the curvature of $\be$, and let $\epc_{ij} = 2P_{[ij]} + \om_{ij}$.

Define $\tpc_{i} = g^{ab}\nabla_{i}g_{ab} = (\det g)^{-1}\nabla_{i}\det g$, $\spc_{i} = g^{pq}\nabla_{p}g_{qi}$, $\wpc_{i} = g^{pq}\nabla_{p}\om_{qi}$, and $\conftor_{ijk}= 2\nabla_{[i}g_{j]k}$. 
For $s\in \rea$ and $-1 \neq t \in \rea$ let $G^{IJ}$ be the symmetric bivector inverse to $G_{IJ}$ and define $\hDs = \hnabla + \Om_{IJ}\,^{K}$ where
\begin{align}\label{hdsdefined}
\Om_{IJ}\,^{K} & =  (1+t)\be_{(I}d\be_{J)Q}G^{QK} - t\be_{I}\be_{J}\rad^{K}+ (s-1)\left(2\be_{(I}\delta_{J)}\,^{K}- G_{IJ}\rad^{K}\right) + 2st\beta_{(I}\delta_{J)}\,^{K}.
\end{align}
The modified connection $\hDs$ satisfies
\begin{align}
\label{hdg}
\hDs_{I}G_{JK} & = - 2s(t+1)\be_{I}G_{JK}  -\rho^{\ast}(\nabla g)_{IJK} ,\\
\label{hdsgskew}
\begin{split}
\hDs_{[I}G_{J]K} &= - 2s(1+ t)\be_{[I}G_{J]K} - \tfrac{1}{2}\rho^{\ast}(C)_{IJK}  - \tfrac{1}{4}\rho^{\ast}(\nabla \epc)_{KIJ} + \tfrac{1}{4}\rho^{\ast}(\nabla \om)_{KIJ}\\
&= -2s(1+ t)\be_{[I}G_{J]K} - \tfrac{1}{2}\rho^{\ast}(\conftor)_{IJK},
\end{split}\\
\label{hdsalignment}
G^{PQ}\hDs_{I}G_{PQ} & - (n+1)G^{PQ}\hDs_{P}G_{QI} = \rho^{\ast}(\tpc - (n+1)\spc)_{I},\\
\label{hdstotgeod}\hDs_{I}\rad^{J} & = (1 + t)(\tfrac{1}{2}d\beta_{I}\,^{J} + s\delta_{I}\,^{J}).
\end{align}
In particular, $\hDs_{\rad}\rad = s(1+t)\rad$, so the fibers of $\rho:N \to M$ are $\hDs$-totally geodesic.
The Ricci curvature $\ric(\hDs)_{IJ}$ of $\hDs$ is
\begin{align}\label{richds}
\begin{aligned}
\ric(\hDs)_{IJ} =  
&  -(n+1)\rho^{\ast}(P)_{[IJ]} - \tfrac{s(n+1+ (n+3)t)}{2}\rho^{\ast}(\om)_{IJ}  + (t+1)\left(\rho^{\ast}(\wpc)_{(I}\be_{J)}+  \be_{(I}\rho^{\ast}(\spc \circ \om)_{J)}\right)\\
&\quad + \left((n-1 + nt)s^{2} -n + 1 - t\right)\rho^{\ast}(g)_{IJ} -\tfrac{1+t}{2}\rho^{\ast}(\om \circ \om)_{IJ}\\
&\quad   + (t+1)\left(t(1 - s^{2}) + \tfrac{(t+1)}{4}\rho^{\ast}(|\om|^{2}_{g})\right)\be_{I}\be_{J},
\end{aligned}
\end{align}
\begin{align}\label{richdstrace}
G^{IJ}\ric(\hDs)_{IJ} =  &-\tfrac{1+t}{4}\rho^{\ast}(|\om|^{2}_{g}) - \tfrac{(s^{2}-n - 1)t^{2} + (n^{2} + 1)(s^{2} - 1)t + n(n-1)(s^{2} - 1)}{t+1} + \tfrac{ns^{2}t(t - n+1)}{t+1},
\end{align}
where $(\om \circ \om)_{ij} = \om_{ip}\om_{qj}g^{pq}$, $|\om|^{2}_{g} = \om_{ab}\om_{pq}g^{ap}g^{bq}$, and $(\spc \circ \om)_{i}= \spc_{p}\om_{iq}g^{pq}$.
\end{lemma}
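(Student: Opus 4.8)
The plan is to prove everything by direct computation, taking as given the relations between the cone connection $\hnabla$ and the metric $G_{IJ}$ recorded in Lemma \ref{liftedmetricpreliminarieslemma}, together with the defining identities \eqref{tripleconnection}, \eqref{hnablabe}, \eqref{qdefined}, and \eqref{hnablabecurvatures} for $\hnabla$. First I would record the algebraic facts that make all contractions with $\Om$ collapse. Since $\rho^{\ast}(S)$ annihilates $\rad$ for any $S \in \Ga(\tensor^{2}\ctm)$ and $\be(\rad) = 1$, the defining relation \eqref{liftedmetric1} gives $G_{IJ}\rad^{J} = (1+t)\be_{I}$, hence $G^{IK}\be_{I} = (1+t)^{-1}\rad^{K}$ and $G^{IJ}\be_{I}\be_{J} = (1+t)^{-1}$; I would also use that $\be$ is a principal connection, so $\lie_{\rad}\be = 0$ and $i(\rad)\,d\be = -d(i(\rad)\be) = 0$, giving $\rad^{P}d\be_{PQ} = 0$; and the contraction rule \eqref{liftedcontract} for pulled-back tensors. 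These reduce $\Om_{IJ}\,^{P}G_{PK}$, $\Om_{IP}\,^{J}\rad^{P}$, and the trace $\Om_{IP}\,^{P}$ to explicit expressions in $\be$, $d\be$, $G$, and $g$.

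With these in hand the first-order identities follow quickly. For \eqref{hdg} I would expand $\hDs_{I}G_{JK} = \hnabla_{I}G_{JK} - \Om_{IJ}\,^{P}G_{PK} - \Om_{IK}\,^{P}G_{JP}$, substitute \eqref{hnablaGIJK1} for the first term, and check that the $t$- and $(s-1)$-dependent pieces of $\Om$ conspire to cancel the terms $2tG_{I(J}\be_{K)}$, $(1+t)d\be_{I(J}\be_{K)}$, and $-2t(t+1)\be_{I}\be_{J}\be_{K}$ of \eqref{hnablaGIJK1} and to convert its coefficient $-2\be_{I}G_{JK}$ into $-2s(t+1)\be_{I}G_{JK}$. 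Then \eqref{hdsgskew} follows simply by antisymmetrizing \eqref{hdg} and using $\rho^{\ast}(\conftor)_{IJK} = 2\rho^{\ast}(\nabla g)_{[IJ]K}$, the equivalence with the Cotton--Schouten form being \eqref{skewgijk}; \eqref{hdsalignment} follows by tracing \eqref{hdg} with $G^{PQ}$ (using $G^{PQ}G_{PQ} = n+1$ and \eqref{liftedcontract}), where the $\be_{I}$ contributions cancel precisely; and \eqref{hdstotgeod} is immediate, since $\hnabla_{I}\rad^{J} = \delta_{I}\,^{J}$ and the contraction above yields $\Om_{IP}\,^{J}\rad^{P} = \tfrac{1+t}{2}d\be_{I}\,^{J} + (s-1+st)\delta_{I}\,^{J}$, whose $\be_{I}\rad^{J}$ terms cancel; contracting with $\rad^{I}$ and using $\rad^{I}d\be_{IQ}=0$ gives $\hDs_{\rad}\rad = s(1+t)\rad$.

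The substantial step is the Ricci curvature \eqref{richds}. Writing $\hDs = \hnabla + \Om$ and applying the standard variation formula $\ric(\hDs)_{IJ} = \hat{R}_{IJ} + 2\hnabla_{[P}\Om_{I]J}\,^{P} + 2\Om_{Q[P}\,^{P}\Om_{I]J}\,^{Q}$ (the contraction being on the first and last slots, as in the proof of Lemma \ref{conelikedifferencelemma}), I would start from $\hat{R}_{IJ} = -\tfrac{n+1}{2}\rho^{\ast}(\epc)_{IJ}$ of \eqref{qcurvatures}. The derivative term is assembled from the covariant derivatives already available: $\hnabla\be$ from \eqref{hnablabe}, $\hnabla\,d\be$ from \eqref{hnablarhoom}, $\hnabla G_{IJ}$ from \eqref{hnablaGIJK1}, $\hnabla G^{QK} = -G^{QA}G^{KB}\hnabla_{P}G_{AB}$, and $\hnabla\rad^{K} = \delta$; the quadratic term uses the trace $\Om_{Qp}\,^{p}$ and the products $\Om\cdot\Om$, contracted again via $G_{IJ}\rad^{J} = (1+t)\be_{I}$ and \eqref{liftedcontract}. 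The genuine difficulty here is purely organizational: there are many terms carrying independent $s$- and $t$-dependence that must be collected into the combinations $\rho^{\ast}(\om \circ \om)$, $\rho^{\ast}(|\om|^{2}_{g})$, $\rho^{\ast}(\wpc)_{(I}\be_{J)}$, $\be_{(I}\rho^{\ast}(\spc \circ \om)_{J)}$, and $\rho^{\ast}(g)_{IJ}$ with exactly the displayed coefficients, and in particular the numerous $\be_{I}\be_{J}$ contributions (from $\hnabla\be$, from differentiating $G^{QK}$, and from the quadratic term) must reassemble into the stated factor $(t+1)(t(1-s^{2}) + \tfrac{t+1}{4}\rho^{\ast}(|\om|^{2}_{g}))$. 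I expect this bookkeeping, rather than any conceptual point, to be the main obstacle.

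Finally, \eqref{richdstrace} follows by contracting \eqref{richds} with $G^{IJ}$. Here $G^{IJ}\rho^{\ast}(\epc)_{IJ} = 0$ and $G^{IJ}\rho^{\ast}(\om)_{IJ} = 0$ by antisymmetry, while \eqref{liftedcontract} gives $G^{IJ}\rho^{\ast}(g)_{IJ} = -n$ and $G^{IJ}\rho^{\ast}(\om \circ \om)_{IJ} = -\rho^{\ast}(|\om|^{2}_{g})$; the mixed terms $\be_{(I}\rho^{\ast}(\wpc)_{J)}$ and $\be_{(I}\rho^{\ast}(\spc \circ \om)_{J)}$ vanish against $G^{IJ}$ because $G^{IK}\be_{I} = (1+t)^{-1}\rad^{K}$ and $\rho^{\ast}(\wpc)$, $\rho^{\ast}(\spc\circ\om)$ annihilate $\rad$, and the $\be_{I}\be_{J}$ term contributes through $G^{IJ}\be_{I}\be_{J} = (1+t)^{-1}$. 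Collecting the resulting rational functions of $s$ and $t$ over the common denominator $t+1$ then yields the displayed scalar.
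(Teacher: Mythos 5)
Your route is essentially the paper's: the identities \eqref{hdg}--\eqref{hdstotgeod} are obtained exactly as in the paper by substituting \eqref{hdsdefined} into the formulas of Lemma \ref{liftedmetricpreliminarieslemma} and exploiting the contractions $G_{IJ}\rad^{J} = (1+t)\be_{I}$, $\rad^{P}d\be_{PQ} = 0$, and \eqref{liftedcontract}, and the Ricci tensor is then computed from the standard variation formula for the curvature of $\hnabla + \Om$. The only organizational difference is in the Ricci step: the paper splits $\Om_{IJ}\,^{K} = \Pi_{IJ}\,^{K} + 2st\be_{(I}\delta_{J)}\,^{K}$, computes the Ricci curvature of the intermediate connection $\tD = \hnabla + \Pi$ via \eqref{richds1}--\eqref{richds3}, and then accounts for the remaining piece with the projective transformation rule \eqref{projvary}, whereas you feed the full $\Om$ into the variation formula in one pass. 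Both are legitimate; the paper's splitting shortens the bookkeeping precisely because $2st\be_{(I}\delta_{J)}\,^{K}$ is a projective change of connection whose effect on the Ricci tensor is already recorded in \eqref{projvary}.

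One concrete slip needs fixing: in the trace step you assert $G^{IJ}\rho^{\ast}(\om\circ\om)_{IJ} = -\rho^{\ast}(|\om|^{2}_{g})$, but the correct sign is $+$. Indeed $g^{ij}(\om\circ\om)_{ij} = g^{ij}g^{pq}\om_{ip}\om_{qj} = -|\om|^{2}_{g}$ by the antisymmetry of $\om$, and \eqref{liftedcontract} contributes a second minus sign, so $G^{IJ}\rho^{\ast}(\om\circ\om)_{IJ} = +\rho^{\ast}(|\om|^{2}_{g})$. With your sign, the two $|\om|^{2}_{g}$ contributions to $G^{IJ}\ric(\hDs)_{IJ}$ --- namely $-\tfrac{1+t}{2}G^{IJ}\rho^{\ast}(\om\circ\om)_{IJ}$ together with the $\tfrac{t+1}{4}\rho^{\ast}(|\om|^{2}_{g})$ coming from the $\be_{I}\be_{J}$ term via $G^{IJ}\be_{I}\be_{J} = (1+t)^{-1}$ --- would sum to $+\tfrac{3(t+1)}{4}\rho^{\ast}(|\om|^{2}_{g})$, contradicting the coefficient $-\tfrac{1+t}{4}$ in \eqref{richdstrace}; with the corrected sign they sum to $-\tfrac{1+t}{2} + \tfrac{t+1}{4} = -\tfrac{1+t}{4}$ as required. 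Everything else in your outline, in particular the formula $\Om_{IP}\,^{J}\rad^{P} = \tfrac{1+t}{2}d\be_{I}\,^{J} + (s-1+st)\delta_{I}\,^{J}$ underlying \eqref{hdstotgeod} and the vanishing of the mixed terms against $G^{IJ}$, checks out.
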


\begin{proof}
In this proof indices are raised and lowered using $G_{IJ}$ and $G^{IJ}$. By \eqref{hnabladetG2},
\begin{align}
\label{hnablahinverse}
\hnabla_{Q}G^{IQ} & = G^{IQ}\rho^{\ast}(\spc)_{Q} + \tfrac{2-nt}{1+t}\rad^{I}.
\end{align}
By \eqref{hnablarhoom}, \eqref{hnabladetG2}, and \eqref{hnablahinverse},
\begin{align}\label{hnbaa}
\begin{aligned}
\hnabla_{A}d\be_{I}\,^{A} &= \hnabla_{A}(G^{AP}d\be_{IP}) = G^{AP}\hnabla_{A}d\be_{IP} + d\be_{IP}\hnabla_{A}G^{AP} \\
& =  G^{AB}\rho^{\ast}(\nabla \om)_{AIB} - d\be_{I}\,^{P}((nt-2)\be_{P} + \rho^{\ast}(\spc)_{P}) =  \rho^{\ast}(\wpc)_{I} -  d\be_{I}\,^{P}\rho^{\ast}(\spc)_{P} .
\end{aligned}
\end{align}
In the rest of the proof suppose $G_{IJ}$ is nondegenerate and let $G^{IJ}$ be the symmetric bivector inverse to $G_{IJ}$.
For $s\in \rea$, define the modified connection $\hDs = \hnabla + \Om_{IJ}\,^{K}$, where $\Om_{IJ}\,^{K}$ is as in \eqref{hdsdefined}.
From \eqref{hdsdefined} and \eqref{hnablaGIJK1} there follows \eqref{hdg}.
\begin{align}
\label{hdgb}
\hDs_{I}G_{JK} & = - 2s(1+t)\be_{I}G_{JK} -\rho^{\ast}(\nabla g)_{IJK}.
\end{align}
Antisymmetrizing \eqref{hdg} and using \eqref{hnablarhoom} and \eqref{lm2} to simplify the result yields the first equality of \eqref{hdsgskew}. The second equality of \eqref{hdsgskew} follows from the definitions of $C_{ijk}$ and $\epc_{ij}$ and the observation that $\nabla_{[i}P_{jk]} =0$. 
Tracing \eqref{hdgb} in two different ways shows \eqref{hdsalignment}. A straightforward computation using \eqref{hdsdefined} shows \eqref{hdstotgeod}.

Let $\tD = \hnabla + \Pi_{IJ}\,^{K}$ where 
\begin{align}
\Pi_{IJ}\,^{K}& =  (1+t)\be_{(I}d\be_{J)Q}G^{QK} - t\be_{I}\be_{J}\rad^{K}+ (s-1)\left(2\be_{(I}\delta_{J)}\,^{K}- G_{IJ}\rad^{K}\right) = \Om_{IJ}\,^{K} - 2st\be_{(I}\delta_{J)}\,^{K}.
\end{align}
The Ricci curvature $\ric(\tD)_{IJ}$ of $\tD$ is
\begin{align}\label{richds1}
\begin{aligned}
\ric(\tD)_{IJ} & = \hat{R}_{IJ} + \hnabla_{Q}\Pi_{IJ}\,^{Q} - \hnabla_{I}\Pi_{QJ}\,^{Q} + \Pi_{PQ}\,^{Q}\Pi_{IJ}\,^{P} - \Pi_{IP}\,^{Q}\Pi_{JQ}\,^{P}.
\end{aligned}
\end{align}
Straightforward computations using $\rad^{I}d\be_{IJ} =0$, and \eqref{hnbaa} in the final equality, yield
\begin{align}\label{richds2}
\begin{aligned}
\Pi_{IQ}\,^{Q} &=  ((s-1)(n+1) -st))\be_{I},\\
\hnabla_{I}\Pi_{JQ}\,^{Q} %&=  ((s-1)(n+1) -st))\hnabla_{I}\be_{J} 
&=  ((s-1)(n+1) -st))(G_{IJ} + \tfrac{1}{2}d\be_{IJ} - (t+2)\be_{I}\be_{J}),\\
\Pi_{IJ}\,^{P}\Pi_{PQ}\,^{Q} %&=  ((s-1)(n+1) -st))\Pi_{IJ}\,^{P}\be_{P}  
&=  ((s-1)(n+1) -st))\left((1-s)G_{IJ} + (2(s-1) - t)\be_{I}\be_{J}\right),\\
\Pi_{IP}\,^{Q}\Pi_{JQ}\,^{P} &=  \tfrac{(t+1)^{2}}{4}\be_{I}\be_{J}d\be_{P}\,^{Q}d\be_{Q}\,^{P} - 2(s-1)^{2}G_{IJ} \\
&\quad + \left( t^{2} + 2t(t-1)(s-1) + (n+3 + t^{2})(s-1)^{2}\right)\be_{I}\be_{J},\\
\hnabla_{Q}\Pi_{IJ}\,^{Q} &= (t+1)\left(\rho^{\ast}(\wpc)_{(I}\be_{J)} -  \be_{(I}d\be_{J)}\,^{P}\rho^{\ast}(\spc)_{P}\right)  + \tfrac{t+1}{2}d\be_{I}\,^{A}d\be_{AJ} \\
&\quad + (3-n)(s-1)G_{IJ} - \left((2s + n - 3)t + 4(s-1)\right)\be_{I}\be_{J}.
\end{aligned}
\end{align}
Combining \eqref{richds1}, \eqref{richds2}, and Theorem \ref{extendedthomastheorem} yields
\begin{align}\label{richds3}
\begin{aligned}
\ric(\tD)_{IJ}  & =  -\tfrac{n+1}{2}\rho^{\ast}(\epc)_{IJ} + \tfrac{(1-s)(n+1) - st}{2}\rho^{\ast}(\om)_{IJ}+ (t+1)\left(\rho^{\ast}(\wpc)_{(I}\be_{J)} + \be_{(I}\rho^{\ast}(\spc \circ \om)_{J)}\right) \\
&\quad  - \tfrac{t+1}{2}\rho^{\ast}(\om \circ \om)_{IJ} + (t - (s^{2}  - 1)(n-1))G_{IJ}\\
&\quad + \left((s^{2} - 1)(n-1) - t(n-1 + 2s^{2}) - s^{2}t^{2} +  \tfrac{(t+1)^{2}}{4}\rho^{\ast}(|\om|^{2}_{g})\right)\be_{I}\be_{J}.
\end{aligned}
\end{align}
The Ricci curvature of $\hDs = \tD + st\be_{(I}\delta_{J)}\,^{K}$ can be calculated using \eqref{richds3} and \eqref{projvary}, and there results
\begin{align}
\begin{aligned}
\ric(\tD +st\be \tensor \delta +st\delta \tensor \be)_{IJ} 
& = \ric(\tD)_{IJ} + st\left(ns\left)(2+t)\be_{I}\be_{J} - G_{IJ}\right) - \tfrac{n+2}{2}d\be_{IJ} \right)\\
& = \ric(\tD)_{IJ} + st\left(ns\left(\be_{I}\be_{J}+  \rho^{\ast}(g)_{IJ}\right) - \tfrac{n+2}{2}d\be_{IJ} \right).
\end{aligned}
\end{align}
Combined with \eqref{richds3} this shows \eqref{richds}. Tracing \eqref{richds} yields \eqref{richdstrace}.
\end{proof}

\begin{theorem}\label{liftedahtheorem}
Let $M$ be an $n$-manifold, let $\rho:N \to M$ be a principal $S^{1}$-bundle or principal $\reat$-bundle, and let $\rad$ be the fundamental vertical vector field generated by the principal action. Let $\nabla$ be a torsion-free affine connection on $M$ and let $\be$ be a principal connection on $\rho:N \to M$. Let $\hnabla$ be the cone connection on $\rho:N \to M$ generated by $(\nabla, \be)$. 
Suppose the symmetrized projective Schouten tensor of $\nabla$, $g_{ij} = P_{(ij)}$, is nondegenerate, let $\rho^{\ast}(\om)_{IJ} = d\be_{IJ}$ be the curvature of $\be$, let $\epc_{ij} = 2P_{[ij]} + \om_{ij}$, and, for $t \neq -1$ and $s \in \rea$, define the metric $G_{IJ}$ by \eqref{liftedmetric1} and the connection $\hDs = \hnabla + \Om_{IJ}\,^{K}$ by \eqref{hdsdefined}.
\begin{enumerate}\item\label{ewt}
If $\nabla_{[i}g_{j]k}=0$ and $\nabla_{i}|\det g| =0$, so that $(\nabla, g)$ is a special statistical structure, and there is $\al \in \reat$ such that $\om_{ip}\om_{qj}g^{pq} =\al g_{ij}$, then $\hDs_{[I}G_{J]K}  = - 2s(t+1)\be_{[I}G_{J]K}$ and, if $\al > - \tfrac{4(n-1)}{(n+2)(t+1)}$, for 
\begin{align}\label{salcondition}
s = \pm \sqrt{\tfrac{1}{1+t} + \tfrac{(n+2)\al}{4(n - 1)}},
\end{align}
then $(\hDs, G_{IJ})$ is a half naive Einstein AH structure.
%generates an AH structure for which $\hDs$ is the aligned representative and $\ric(\hDs)_{(IJ)}$ is a multiple of $G_{IJ}$.
\item\label{eah} If $(\nabla, g)$ is an Einstein special statistical structure with scalar curvature $-n(n-1)$, and there is $\al \in \reat$ such that $\om_{ip}\om_{qj}g^{pq} =\al g_{ij}$, then $(\hDs, G_{IJ})$ generates a naive Einstein AH structure for which $\hDs$ is the aligned representative. If $t = 0$, then $(\hDs, G_{IJ})$ is moreover an Einstein AH structure for which the scalar curvature associated with $G_{IJ}$ is $-\tfrac{n(n+1)\al}{4}$. 
\end{enumerate}
If $g$ has definite signature, the hypotheses in \eqref{ewt} and \eqref{eah} imply $\al < 0$ and $n$ is even.
\end{theorem}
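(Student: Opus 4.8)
The plan is to obtain every assertion by specializing the compatibility and curvature formulas of Lemma~\ref{liftedmetriclemma}; no new computation with the cone connection is required. First I would record the reductions forced by the hypotheses. Since $(\nabla,g)$ is special statistical, $\nabla g$ is completely symmetric and $g$-trace-free, so $\conftor_{ijk}=2\nabla_{[i}g_{j]k}=0$ and $\spc_i=g^{pq}\nabla_pg_{qi}=g^{pq}\nabla_ig_{pq}=\tpc_i=0$; a special statistical structure has symmetric Ricci tensor, so $R_{[ij]}=0$, whence $P_{[ij]}=0$ and $\epc_{ij}=\om_{ij}$. The compatibility $\om_{ip}\om_{qj}g^{pq}=\al g_{ij}$ gives $\rho^{\ast}(\om\circ\om)=\al\,\rho^{\ast}(g)$ and, after tracing against $g$ (which introduces one sign from the skewness of $\om$), the constant $|\om|^2_g=-n\al$. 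Finally $\om$ is closed, because $d\om=0$ is the Bianchi identity for the curvature of $\be$, and nondegenerate, because $\om\circ\om=\al g$ with $\al\neq0$; thus $\om$ is symplectic.

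For part~\eqref{ewt} I would substitute $\conftor_{ijk}=0$ into \eqref{hdsgskew}, giving $\hDs_{[I}G_{J]K}=-2s(1+t)\be_{[I}G_{J]K}$, so the compatibility one-form is $\chi_I=-2s(1+t)\be_I$; substituting $\tpc=\spc=0$ into \eqref{hdsalignment} gives alignment, so $(\hDs,[G])$ is an AH structure. For the half naive Einstein property I substitute the reductions into \eqref{richds}: the terms in $\rho^{\ast}(P)_{[IJ]}$ and $\rho^{\ast}(\om)_{IJ}$ are skew and drop from the symmetric part, the term $\be_{(I}\rho^{\ast}(\spc\circ\om)_{J)}$ vanishes since $\spc=0$, and $\rho^{\ast}(\om\circ\om)_{IJ}=\al\rho^{\ast}(g)_{IJ}$. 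What survives in $\ric(\hDs)_{(IJ)}$ is a combination of $\rho^{\ast}(g)_{IJ}$, $\be_I\be_J$, and the single mixed term $(1+t)\rho^{\ast}(\wpc)_{(I}\be_{J)}$. Since $G_{IJ}=(1+t)\be_I\be_J-\rho^{\ast}(g)_{IJ}$ has no mixed block, proportionality to $G$ forces this mixed term to vanish and then equates the coefficients of $\rho^{\ast}(g)$ and $\be\otimes\be$; that single scalar consistency solves to $s^2=\tfrac{1}{1+t}+\tfrac{(n+2)\al}{4(n-1)}$, which is \eqref{salcondition}, with reality of $s$ exactly the stated hypothesis $\al>-\tfrac{4(n-1)}{(n+2)(1+t)}$.

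The one genuinely nontrivial step, which I expect to be the main obstacle, is the vanishing of the mixed term, i.e.\ the key lemma $\wpc_i=g^{pq}\nabla_p\om_{qi}=0$. I would prove it as follows. As $\om$ is symplectic, the structure $(g,\om)$ is almost-(pseudo-)Kähler for the compatible structure $J=(-\al)^{-1/2}g^{-1}\om$ (one checks $g(JX,JY)=g(X,Y)$ and $\om=(-\al)^{1/2}g(J\cdot\,,\cdot)$), so the Hodge dual $\star\om$ is a constant multiple of a power of $\om$, hence closed; therefore $\om$ is co-closed for the Levi-Civita connection $D$ of $g$, i.e.\ $g^{pq}D_p\om_{qi}=0$. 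Writing $\nabla=D-\tfrac12\bt$, where $\bt_{ij}{}^{k}$ is the completely symmetric, $g$-trace-free cubic torsion, one gets $\wpc_i=g^{pq}D_p\om_{qi}+\tfrac12\big(g^{pq}\bt_{pq}{}^{a}\big)\om_{ai}+\tfrac12\,\om^{ab}\bt_{aib}$. The first term is zero by co-closedness, the second carries the trace $g^{pq}\bt_{pq}{}^{a}=g^{ab}\spc_b=0$, and the third vanishes because $\bt_{aib}=\bt_{bia}$ while $\om^{ab}=-\om^{ba}$. Hence $\wpc=0$, completing part~\eqref{ewt}.

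For part~\eqref{eah} the extra input is that $(\nabla,g)$ is Einstein, which (combined with $g=P_{(ij)}$ and $\sc=-n(n-1)$) forces $g^{pq}R_{(i|pq|j)}=-(n-1)g_{ij}$; with $\wpc=\spc=\tpc=0$ this makes $\rho^{\ast}(T)$ in \eqref{hnablaGconjugatericci} symmetric-trace-proportional to $g$, so that, after the modification \eqref{hdsdefined}, the symmetric part of the conjugate Ricci of $\hDs$ is also a multiple of $G$; thus $(\hDs,[G])$ is naive Einstein with $\hDs$ aligned. For $t=0$ the scalar curvature is constant and read off \eqref{richdstrace} using $|\om|^2_g=-n\al$ and $s^2-1=\tfrac{(n+2)\al}{4(n-1)}$, giving $-\tfrac{n(n+1)\al}{4}$; the upgrade to Einstein follows from Corollary~\ref{selfconjugatecorollary} once one checks from \eqref{hnablaGcurv} that at $t=0$ the lifted curvature is self-conjugate, equivalently that the conservation condition \eqref{conservationcondition} of Definition~\ref{einsteinahdefinition} holds (here $\wpc=0$ is again what controls $G^{PQ}\hDs_P d\chi_{QI}$). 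The concluding sentence is pure pointwise linear algebra: from $\om_{ip}g^{pq}\om_{qj}=\al g_{ij}$ with $\al\neq0$ and $g$ nondegenerate, $\om$ is a nondegenerate skew form, which forces $n$ even; and if $g$ is definite then $|\om|^2_g=-n\al$ is a nonzero sum of squares, hence positive, so $\al<0$ and $(-\al)^{-1/2}g^{-1}\om$ is an honest complex structure on each $T_pM\cong\rea^n$.
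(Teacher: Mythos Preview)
Your treatment of part~\eqref{ewt} is largely on target and follows the paper's strategy, but your argument for $\wpc_i=0$ has a gap: writing $J=(-\al)^{-1/2}g^{-1}\om$ and invoking the Hodge identity $\star\om\propto\om^{m-1}$ presupposes $\al<0$. The theorem allows $\al>0$ (when $g$ is indefinite), and in that regime $(-\al)^{-1/2}$ is not real and the almost-K\"ahler framework does not apply directly. The paper bypasses this entirely: it differentiates the compatibility $g^{pq}\om_{ip}\om_{jq}=-\al g_{ij}$, contracts with $g^{ij}$ and $g^{jk}$, uses $\nabla_{[i}\om_{jk]}=0$ and $\tpc_i=\spc_i$, and obtains $\om_i{}^a\wpc_a=-\tfrac12\al\spc_i$; since $\spc_i=0$ (special) and $\om$ is nondegenerate, $\wpc_i=0$ follows for any $\al\neq 0$. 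Your remaining steps for part~\eqref{ewt} (alignment via \eqref{hdsalignment}, the skew compatibility via \eqref{hdsgskew}, and solving \eqref{richds} for $s$) match the paper.

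Your approach to part~\eqref{eah} has two substantive problems. First, \eqref{hnablaGconjugatericci} computes $G^{AB}\hat{R}_{IAB}{}^{Q}G_{QJ}$ for the \emph{cone connection} $\hnabla$, not for $\hDs$; knowing that $\rho^{\ast}(T)$ is skew after imposing Einstein on $(\nabla,g)$ tells you nothing directly about the conjugate Ricci of $(\hDs,[G])$, and ``after the modification \eqref{hdsdefined}'' hides precisely the computation that is needed. The paper instead observes from \eqref{hdg} that the cubic tensor of $(\hDs,[G])$ with respect to $G$ is $-\rho^{\ast}(\bt)_{IJK}$, and then applies \eqref{ahcurv} to $(\hDs,[G])$: the difference $\wideparen{\ric(\hDs)}_{IJ}-\ric(\hDs)_{IJ}$ equals $-\hDs_{P}\rho^{\ast}(\bt)_{IJ}{}^{P}-\rho^{\ast}(\bt)_{IP}{}^{Q}\rho^{\ast}(\bt)_{JQ}{}^{P}$, which a short computation (using the form of $\Om$ and that $\bt$ is symmetric trace-free) shows is the pullback of $\nabla_{p}\bt_{ij}{}^{p}-\bt_{ip}{}^{q}\bt_{jq}{}^{p}$, and this vanishes exactly by the Einstein hypothesis on $(\nabla,g)$. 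Second, your route to the full Einstein condition via Corollary~\ref{selfconjugatecorollary} would require checking that $(\hDs,[G])$ has self-conjugate curvature at $t=0$, which you do not verify and which is not asserted in the paper. The paper instead verifies \eqref{conservationcondition} directly: with $\chi_I=-2s(t+1)\be_I$ one computes $G^{PQ}\hDs_{P}d\be_{QI}$ using \eqref{hnablarhoom}, \eqref{hdsdefined}, and $\wpc=\spc=0$, obtaining a constant multiple of $\be_I$, and at $t=0$ the resulting scalar identity reduces to $\hat{\sc}=-\tfrac{n(n+1)\al}{4}$, which holds by \eqref{richdstrace}. Your final paragraph on $\al<0$ and $n$ even is correct.
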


\begin{proof}
The setting is as in the proof of Lemma \ref{liftedmetriclemma}. For tensors on $M$ raise and lower indices using $g_{ij}$ and $g^{ij}$. Suppose $\nabla_{[i}g_{j]k} = 0$ and suppose $g^{pq}\om_{ip}\om_{jq} = -\al g_{ij}$ for some $\al \neq 0$. If $g$ has definite signature, this last assumption means that $\al = -(1/n) |\om|^{2}_{g} < 0$ and a multiple of $\om_{ip}g^{pj}$ is an almost complex structure, so $M$ must have even dimension.
Differentiating $g^{pq}\om_{ip}\om_{jq} = -\al g_{ij}$ yields
\begin{align}\label{etw1}
\begin{split}
-\al \nabla_{k}g_{ij} & = -\om_{i}\,^{a}\om_{j}\,^{b}\nabla_{k}g_{ab} -2\om^{q}\,_{(i}\nabla_{|k|}\om_{j)q}.
\end{split}
\end{align}
Contracting \eqref{etw1} with $g^{ij}$ yields $- \al \tpc_{i} = \om^{pq}\nabla_{i}\om_{pq}$.
Contracting \eqref{etw1} with $g^{jk}$ and using $- \al \tpc_{i} = \om^{pq}\nabla_{i}\om_{pq}$ and $\nabla_{[i}\om_{jk]} =0$ yields
\begin{align}\label{etw3}
\begin{split}
- \al \spc_{i} &=  \om_{i}\,^{a}\om^{bk}\nabla_{k}g_{ab} - \om^{pq}\nabla_{p}\om_{qi}  + \om_{i}\,^{a}\wpc_{a} = \tfrac{1}{2}\om^{pq}\nabla_{i}\om_{pq}  + \om_{i}\,^{a}\wpc_{a} = -\tfrac{1}{2}\al\tpc_{i} + \om_{i}\,^{a}\wpc_{a}.
\end{split}
\end{align}
Contracting $\nabla_{[i}g_{j]k} = 0$ with $g^{jk}$ shows $\tpc_{i} = \spc_{i}$, so \eqref{etw3} implies $0 = \al(2\om_{i}\,^{p}\spc_{p} - \om_{i}\,^{p}\tpc_{p} + 2\wpc_{i}) = \al(\om_{i}\,^{p}\spc_{p} + 2\wpc_{i})$. If there is assumed $\tau_{i} = 0$ and $\al \neq 0$ this implies $\wpc_{i} =0$. 
Consequently, with the stated assumptions, by \eqref{hdsgskew}, $(\hDs, G)$ satisfies $\hDs_{[I}G_{J]K} = -2s(t+1)\be_{[I}G_{J]K}$ and, by \eqref{hdsalignment}, there holds $G^{PQ}\hDs_{I}G_{PQ} - (n+1)G^{PQ}\hDs_{P}G_{QI} = 0$, so that in this case $\hDs$ is the aligned representative of the AH structure that it generates with $G_{IJ}$. Specializing \eqref{richds} yields
\begin{align}\label{richdsew}
\begin{aligned}
\ric(\hDs)_{(IJ)} &= \left((n-1 + nt)s^{2} - n + 1 - t - \tfrac{\al (t+1)}{2}\right)\rho^{\ast}(g)_{IJ}  + (t+1)\left(t(1 - s^{2})- \tfrac{n\al (t+1)}{4}\right)\be_{I}\be_{J}\\
& = -\left(t(s^{2} - 1) + \tfrac{n\al(t+1)}{4} \right)G_{IJ} + \left((n-1)(t+1)s^{2} - (n-1) -\tfrac{(n+2)(t+1)\al}{4} \right)\rho^{\ast}(g)_{IJ}.
\end{aligned}
\end{align}
where the second equality follows from \eqref{liftedmetric1}. By \eqref{richdsew}, $\ric(\hDs)_{IJ}$  is a multiple of $G_{IJ}$ if and only if $s$ solves \eqref{salcondition}. This proves \eqref{ewt}.

The hypotheses of \eqref{eah} are the same as those of \eqref{eah} except there is assumed additionally that the conjugate Ricci curvature $\wideparen{R}_{ij}$ equals $R_{ij} = (1-n)g_{ij}$. By \eqref{ahcurv} this means $\nabla_{p}\bt_{ij}\,^{p} = \bt_{ip}\,^{q}\bt_{jq}\,^{p}$ where $\bt_{ijk} = \nabla_{i}g_{jk}$ is the cubic tensor of $(\nabla, g)$. Let $(\wideparen{\hDs}, [G])$ be the conjugate AH structure of $(\hDs, [G])$. By definition and \eqref{hdg} the cubic tensor of $(\hDs, [G])$ associated with $G_{IJ}$ is $\hDs_{I}G_{JK} + 2s(t+1)\be_{I}G_{JK} = - \rho^{\ast}(\nabla g)_{IJK} = -\rho^{\ast}(\bt)_{IJK}$. By \eqref{ahcurv}, the difference of the Ricci curvatures of $\wideparen{\hDs}$ and $\hDs$ is $-\hDs_{P}\rho^{\ast}(\bt)_{IJ}\,^{P} - \rho^{\ast}(\bt)_{IP}\,^{Q}\rho^{\ast}(\bt)_{JQ}\,^{P}$. It will now be shown that this is zero. On the one hand $\rho^{\ast}(\bt)_{IP}\,^{Q}\rho^{\ast}(\bt)_{JQ}\,^{P} = G^{AB}G^{CB}\rho^{\ast}(\nabla g)_{IAC}\rho^{\ast}(\nabla g)_{JCD}$, which is the pullback via $\rho$ of $\bt_{ip}\,^{q}\bt_{jq}\,^{p}$. On the other hand, because $\hDs_{P}G^{PI}$ is a multiple of $\rad^{I}$,
\begin{align}
\begin{split}
\hDs_{P}\rho^{\ast}(\bt)_{IJ}\,^{P} & = G^{PQ}\hDs_{P}\rho^{\ast}(\bt)_{IJQ}\\
& = G^{PQ}\hnabla_{P}\rho^{\ast}(\bt)_{IJQ} - 2\Om_{(I}\,^{PQ}\rho^{\ast}(\bt)_{J)PQ} - G^{AB}\Om_{AB}\,^{Q}\rho^{\ast}(\bt)_{IJQ}\\
& = G^{PQ}\hnabla_{P}\rho^{\ast}(\bt)_{IJQ},
\end{split}
\end{align}
in which considerable simplification occurs because of the form \eqref{hdsdefined} of $\Om_{IJ}\,^{K}$ (for example, $G^{AB}\Om_{AB}\,^{K}$ is a multiple of $\rad^{K}$ so its contraction with $\rho^{\ast}(\bt)_{IJK}$ vanishes). Computing as in \eqref{hnablarhoom} shows
\begin{align}
\begin{split}
\hnabla_{I}\rho^{\ast}(\bt)_{JKL} = \rho^{\ast}(\nabla \bt)_{IJKL} - 3\be_{I}\rho^{\ast}(\bt)_{JKL} - 3\be_{(J}\rho^{\ast}(\bt)_{KL)I},
\end{split}
\end{align}
from which it follows that $G^{PQ}\hnabla_{P}\rho^{\ast}(\bt)_{IJQ}$ is the pullback via $\rho$ of $-g^{pq}\nabla_{p}\bt_{ijq} = -\nabla_{p}\bt_{ij}\,^{p}$, the last equality because $(\nabla, g)$ is special. It follows that the difference of the Ricci curvatures of $\wideparen{\hDs}$ and $\hDs$ is the pullback via $\rho$ of $\nabla_{p}\bt_{ij}\,^{p} - \bt_{ip}\,^{q}\bt_{jq}\,^{p}$, which vanishes, as commented above. This shows that $(\hDs, [G])$ is a naive Einstein AH structure. To show that it is Einstein there remains to verify the condition \eqref{conservationcondition}.  

The one-form associated with $G$ qua representative of the AH structure $(\hDs, [G])$ is $-2s(t+1)\be_{I}$. Let $\hat{\sc}$ be its scalar curvature. By \eqref{hnablarhoom} and \eqref{hdsdefined},
\begin{align}
\begin{split}
G^{PQ}\hDs_{P}d\be_{QI} & = G^{PQ}\hnabla_{P}d\be_{QI} - \Om_{I}\,^{PQ}d\be_{PQ} = G^{PQ}\rho^{\ast}(\nabla \om)_{PQI} - \tfrac{(1+t)}{2}d\be_{PQ}d\be^{PQ}\be_{I}\\
& = - \rho(\spc)_{I} - \tfrac{1+t}{2}\rho^{\ast}(|\om|^{2}) = -\tfrac{(1+t)n\al}{2}.
\end{split}
\end{align}
Hence,
\begin{align}
d\hat{\sc}_{I} -2s(t+1)\hat{\sc}\be_{I} - 2s(t+1)\tfrac{n+1}{2}G^{PQ}\hDs_{P}d\be_{QI} = -2s(t+1)\left( \hat{\sc} + \tfrac{(1+t)(n+1)n\al}{4}\right)\be_{I}.
\end{align}
When $t = 0$, $\hat{\sc} = -n(n+1)\al/4$, and this yields $0$, completing the proof. 
\end{proof}

\begin{example}\label{ewexample}
Theorem \ref{liftedahtheorem} recovers special cases of results of Calderbank-Pedersen-Swann constructing an Einstein-Weyl manifold on a circle bundle over a positive scalar curvature Kähler-Einstein manifold. On a $2n$-dimensional manifold $M$, let $\nabla$ be the Levi-Civita connection of a Kähler-Einstein metric $h_{ij}$ having nonzero scalar curvature $\sR \in \reat$. Let $\rho:N \to M$ be a principal $S^{1}$-bundle with principal connection $\be$ whose curvature $\om_{ij}$ is a constant multiple of the Kähler form. The assumptions imply $\nabla_{i}\om_{jk} = 0$, $\epc_{ij} = - \om_{ij}$, and $C_{ijk} = 0$, so, by Theorem \ref{liftedahtheorem}, the connections $\hDs$ and the metric $G_{IJ}$ on $N$ defined by \eqref{hdsdefined} and \eqref{liftedmetric1} for $t = 0$ and the two values of $s$ as in \eqref{salcondition} constitute a pair of Einstein-Weyl structures. The tensor $g_{ij}$ of Theorem \ref{liftedahtheorem} equals $-\tfrac{1}{2n-1}\sR h_{ij}$, so if $\sR > 0$ then $G_{IJ}$ is Riemannian, while if $\sR < 0$, then $G_{IJ}$ has Lorentzian signature. In the case $\sR > 0$, this recovers the special case of \cite[Theorems $3.2$, $4.2$]{Pedersen-Swann-submersions} stated as \cite[Theorem $5.8$]{Calderbank-Pedersen}. Moreover, these references give examples of $S^{1}$-bundles to which the construction can be applied.
\end{example}

\begin{example}\label{convexprojectiveexample}
Let $M$ be an oriented, compact surface of genus $g \geq 2$ and fix an integer $k$ satisfying $|k| \leq 2(g-1) = |\chi(M)|$. Let $\en$ be a convex flat real projective structure on $M$. It follows from a theorem of Cheng-Yau that $\en$ is properly convex and there are a Riemannian metric $h$ on $M$ and a representative $\nabla \in \en$ such that $\nabla_{[i}h_{j]k} = 0$, $\nabla_{i}|\det h| = 0$, and the Ricci curvature $R_{ij}$ of $\nabla$ satisfies $R_{ij} = \tfrac{c}{2}h_{ij}$ for a negative constant $c \in \rea$; see \cite[Theorem $7.3$]{Fox-2dahs} and \cite{Loftin-affinekahler, Loftin-survey}. The Riemannian metric $g_{ij} = P_{(ij)} = -R_{(ij)} = -(c/2)h_{ij}$ is homothetic to $h_{ij}$ so satisfies $\nabla_{[i}g_{j]k} = 0$ and $\nabla_{i}|\det g| = 0$. Let $J_{i}\,^{j}$ be the complex structure determined by rotation by $\pi/2$ with respect to the metric $g_{ij}$ in the sense of the given orientation of $M$. The two-form defined by $\om_{ij} = \tfrac{2\pi k}{\vol_{g}(M)}J_{i}\,^{p}g_{pj}$ is a constant multiple of the volume form determined by $g$ and the given orientation of $M$ and satisfies $\om_{ip}\om_{pj}g^{pq} = \al g_{ij}$ where $\al = -\tfrac{4\pi^{2}k^{2}}{\vol_{g}(M)^{2}}$. Because $\nabla$ preserves the volume form of $g$, $\nabla_{i}\om_{jk} = 0$. Because $\tfrac{1}{2\pi}\int_{M}\om = k$ is an integer, there is a principal $S^{1}$-bundle $\rho:N \to M$ and a principal $S^{1}$-connection $\be$ on $N$ such that $d\be = \rho^{\ast}(\om)$ \cite{Kobayashi-circlebundles}. Moreover $k$ is the Euler number $e(N)$ of $\rho:N \to M$. Let $\hnabla$ be the cone connection of the extended projective structure generated by $(\nabla, \be)$ and, as in \eqref{liftedmetric1} of Lemma \ref{liftedmetricpreliminarieslemma}, define on $N$ the Lorentz signature metric $G_{IJ} = \be_{I}\be_{J} - \rho^{\ast}(g)_{IJ}= \be_{I}\be_{J} + \tfrac{c}{2} \rho^{\ast}(h)_{IJ}$ and the connection $\hDs$ as in \eqref{hdsdefined} (with $t = 0$ and $s \in \rea$). By Lemma \ref{liftedmetriclemma} and Theorem \ref{liftedahtheorem},  
\begin{align}\label{richds2d}
\begin{aligned}
\ric(\hDs)_{IJ} & =  - \tfrac{3}{2}s\rho^{\ast}(\om)_{IJ}  + \tfrac{2\pi^{2}k^{2}}{\vol_{g}(M)^{2}}\be_{I}\be_{J} + \left(s^{2} - 1  + \tfrac{2\pi^{2}k^{2}}{\vol_{g}(M)^{2}}\right)\rho^{\ast}(g)_{IJ}  \\
& = - \tfrac{3}{2}s\rho^{\ast}(\om)_{IJ}    + \tfrac{2\pi^{2}k^{2}}{\vol_{g}(M)^{2}}G_{IJ}+ \left(s^{2} - 1  + \tfrac{4\pi^{2}k^{2}}{\vol_{g}(M)^{2}}\right)\rho^{\ast}(g)_{IJ}.
\end{aligned}
\end{align}
By \cite[Theorem $7.3$]{Fox-2dahs}, $\vol_{g}(M) = -(c/2)\vol_{h}(M) \geq 4\pi(g-1)$, with equality if and only if $\nabla$ is the Levi-Civita connection of $h$. Hence, if $|k| \leq 2(g-1)$, then $\vol_{g}(M)^{2} \geq 4\pi^{2}k^{2}$, so there is $s$ solving $s^{2} = 1 - \tfrac{4\pi^{2}k^{2}}{\vol_{g}(M)^{2}}$, and $s$ equals $0$ if and only if $\nabla$ is the Levi-Civita connection of $h$. For each of the resulting roots $s = s^{\pm}$, by Theorem \ref{liftedahtheorem}, the resulting connection $\hDs$ satisfies $\hDs_{I}G_{JK} = -2s\be_{I}G_{JK} - \rho^{\ast}(\nabla g)_{IJK}$, $G^{PQ}\hDs_{I}G_{PQ} - 3G^{PQ}\hDs_{P}G_{QI} = 0$, and $\ric(\hDs)_{IJ} =  - \tfrac{3}{2}s\rho^{\ast}(\om)_{IJ} - \tfrac{2\pi^{2}k^{2}}{\vol_{g}(M)^{2}}G_{IJ}$ and $(\hDs, [G])$ is an Einstein AH structure that is not closed and has scalar curvature $- \tfrac{3\pi^{2}k^{2}}{2\vol_{g}(M)^{2}}$.
The preceding proves:
\begin{theorem}\label{convexprojectivetheorem}
Let $M$ be an oriented, compact surface of genus $g \geq 2$. Let $\rho:N \to M$ be a principal $S^{1}$-bundle with Euler number $e(N)$ satisfying $|e(N)| \leq 2(g-1) = -\chi(M)$. On $N$ there are a Lorentzian signature metric $G$ and a torsion-free affine connection $\hDs$ which is the aligned representative of the AH structure $(\hDs, [G])$ it generates with $G$ and having the following properties:
\begin{enumerate}
\item $(\hDs, [G])$ is an Einstein AH structure. 
\item The fibers of $\rho:N \to M$ are timelike and $\hDs$-totally geodesic.
\item There is a Riemannian metric $g$ on $M$ such that $\rho:(N, G) \to (M, -g)$ is a metric submersion.
\item The connection $\nabla$ on $M$ defined by $\nabla_{X}Y = T\rho(\hDs_{\hat{X}}\hat{Y})$, where $X \in \Ga(TM) \to \hat{X} \in \Ga(TN)$ is the horizontal lift with respect to a principal connection on $\rho:N \to M$, generates a properly convex flat projective structure on $M$ for which $g$ is homothetic to the Cheng-Yau metric. The symmetric part of the Ricci curvature of $\nabla$ is $-g$ and $(\nabla, g)$ is an Einstein special statistical structure with negative scalar curvature.
\item All properly convex flat projective structures on $M$ arise in this way, and the resulting Einstein special statistical structure $(\nabla, g)$ comprises a Riemannian metric of constant curvature and its Levi-Civita connection if and only if $e(N) = \pm \chi(M)$ and $(\hDs, [G])$ is closed.
\end{enumerate}
\end{theorem}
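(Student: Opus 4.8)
The plan is to obtain Theorem~\ref{convexprojectivetheorem} as the $n=2$ specialization of the general machinery of Lemma~\ref{liftedmetriclemma} and Theorem~\ref{liftedahtheorem}, with the two-dimensional Cheng--Yau theory of \cite[Theorem~$7.3$]{Fox-2dahs} (see also \cite{Loftin-affinekahler, Loftin-survey}) supplying the base data. Starting from an arbitrary properly convex flat real projective structure $\en$ on $M$, Cheng--Yau theory produces a distinguished $\nabla \in \en$ and a Riemannian metric $h$ with $\nabla_{[i}h_{j]k}=0$, $\nabla_i|\det h|=0$, and symmetric Ricci tensor $R_{(ij)}=\tfrac{c}{2}h_{ij}$ for a negative constant $c$. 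Setting $g_{ij}=P_{(ij)}=-R_{(ij)}=-\tfrac{c}{2}h_{ij}$ yields a Riemannian metric homothetic to $h$ for which $(\nabla,g)$ is a special statistical structure; since $n=2$ its scalar curvature is $g^{ij}R_{ij}=-g^{ij}g_{ij}=-2=-n(n-1)$. Because $\en$ is flat one has $C_{ijk}=0$ and $B_{ijk}{}^{l}\equiv 0$, so using \eqref{conjradcodcurv} with the $n=2$ form of \eqref{bijkl} gives $\wideparen{R}_{(ij)}=-g_{ij}$, whence $(\nabla,g)$ is naive Einstein; as $\chi=0$ for the distinguished metric and $\sc=-2$ is constant, the conservation condition \eqref{conservationcondition} holds automatically, so $(\nabla,g)$ is Einstein. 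This is exactly the input required by Theorem~\ref{liftedahtheorem}\eqref{eah}.

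The second step is to manufacture the circle data. Let $J$ be the $g$-orthogonal complex structure compatible with the orientation, and put $\om_{ij}=\tfrac{2\pi k}{\vol_g(M)}J_i{}^{p}g_{pj}$ with $k:=e(N)$. Then $\om$ is a constant multiple of the $g$-volume form, whence $\nabla_i\om_{jk}=0$ and $\om_{ip}\om_{qj}g^{pq}=\al g_{ij}$ with $\al=-\tfrac{4\pi^{2}k^{2}}{\vol_g(M)^{2}}$, and its normalization forces $\tfrac{1}{2\pi}\int_M\om=k=e(N)$. By \cite{Kobayashi-circlebundles} there is then a principal $S^{1}$-connection $\be$ on $N$ with $d\be=\rho^{\ast}(\om)$. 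I would form the cone connection $\hnabla$ of $(\nabla,\be)$ (Theorem~\ref{extendedthomastheorem}), the metric $G_{IJ}$ from \eqref{liftedmetric1} with $t=0$, and the modified connection $\hDs$ from \eqref{hdsdefined}. Since $C_{ijk}=0$, $\nabla\om=0$, $\nabla_{[i}g_{j]k}=0$ and $\nabla|\det g|=0$, the error terms in Lemma~\ref{liftedmetriclemma} collapse to $\hDs_IG_{JK}=-2s\be_IG_{JK}-\rho^{\ast}(\nabla g)_{IJK}$ and $G^{PQ}\hDs_IG_{PQ}-3G^{PQ}\hDs_PG_{QI}=0$, so $\hDs$ is the aligned representative of the AH structure it generates with $G$, and the Ricci formula \eqref{richds} specializes to \eqref{richds2d}.

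The decisive point is the choice of $s$ making \eqref{richds2d} a multiple of $G_{IJ}$, i.e.\ solving \eqref{salcondition}, which for $n=2$, $t=0$ reads $s^{2}=1+\al=1-\tfrac{4\pi^{2}k^{2}}{\vol_g(M)^{2}}$. A real root exists precisely when $\vol_g(M)\ge 2\pi|k|$, and this is exactly where the hypothesis $|e(N)|\le -\chi(M)=2(g-1)$ enters: the sharp volume bound $\vol_g(M)\ge 4\pi(g-1)$ of \cite[Theorem~$7.3$]{Fox-2dahs} gives $\vol_g(M)\ge 2\pi|\chi(M)|\ge 2\pi|k|$. For either root $s=s^{\pm}$, Theorem~\ref{liftedahtheorem}\eqref{eah} then delivers item~$(1)$, that $(\hDs,[G])$ is an Einstein AH structure. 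Items $(2)$--$(4)$ I would read off the formulas. From \eqref{liftedmetric1} with $t=0$, $G(\rad,\rad)=\be(\rad)^{2}=1$ while $G$ restricts to $-\rho^{\ast}(g)$ on $\be$-horizontal vectors, so $G$ is Lorentzian with timelike fibers and $\rho\colon(N,G)\to(M,-g)$ is a metric submersion; \eqref{hdstotgeod} gives $\hDs_{\rad}\rad=s\rad$, so the fibers are $\hDs$-totally geodesic; and since the correction $\Om_{IJ}{}^{K}$ of \eqref{hdsdefined} is purely vertical on pairs of horizontal lifts, $T\rho(\hDs_{\hat{X}}\hat{Y})=T\rho(\hnabla_{\hat{X}}\hat{Y})=\nabla_XY$, recovering the original $\nabla\in\en$ with $R_{(ij)}=-g_{ij}$ and the Einstein special statistical structure $(\nabla,g)$ of negative scalar curvature. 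Item~$(5)$: the converse is immediate since $\en$ was an arbitrary properly convex structure, and the rigidity follows by tracking $s=0$; indeed $s=0$ forces $\vol_g(M)=2\pi|k|$, which together with the equality case of the volume bound (attained exactly when $\nabla$ is the Levi-Civita connection of $h$, i.e.\ $g$ has constant curvature) forces $|e(N)|=-\chi(M)$, while the Faraday one-form of $(\hDs,[G])$ is $-2s\be$, so closedness $d(-2s\be)=-2s\rho^{\ast}(\om)=0$ is equivalent to $s=0$.

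I expect the genuine difficulty to lie entirely in the two analytic inputs quoted from Cheng--Yau theory and \cite[Theorem~$7.3$]{Fox-2dahs}: the existence of the distinguished affine-sphere metric $h$, and, more delicately, the \emph{sharp} lower volume bound $\vol_g(M)\ge 4\pi(g-1)$ together with its equality characterization. Without the sharpness one could neither pin down the admissible range $|e(N)|\le -\chi(M)$ nor prove the rigidity clause of item~$(5)$. Granting those, the remaining obstacle is bookkeeping rather than conceptual: checking that the $n=2$ vanishing of the projective Weyl and Cotton tensors kills all the inhomogeneous terms in Lemma~\ref{liftedmetriclemma}, and keeping the signature conventions straight so that \emph{Lorentzian} and \emph{timelike fibers} emerge with the correct signs.
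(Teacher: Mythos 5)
Your proposal is correct and takes essentially the same route as the paper's own proof, which is exactly the content of Example \ref{convexprojectiveexample}: Cheng--Yau theory via \cite[Theorem $7.3$]{Fox-2dahs} supplies the distinguished pair $(\nabla, g)$ and the sharp volume bound $\vol_{g}(M) \geq 4\pi(g-1)$ with its equality characterization, the data $\om$, $\be$, $\hnabla$, $G$ (with $t=0$) and $\hDs$ are constructed identically, and the Einstein AH property follows from Theorem \ref{liftedahtheorem}\eqref{eah} with $s$ solving \eqref{salcondition}, the hypothesis $|e(N)| \leq -\chi(M)$ entering exactly where you say it does. The only difference is that you make explicit some verifications the paper leaves implicit (that the Cheng--Yau pair $(\nabla,g)$ is Einstein with scalar curvature $-2$, and the derivations of items (2)--(4) from \eqref{liftedmetric1}, \eqref{hdstotgeod}, and \eqref{hdsdefined}), which is consistent with, not divergent from, the paper's argument.
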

The condition on the Euler number is needed to construct $G$. It would be interesting to show that it is necessary. 
\end{example}
%The condition $|e| \leq -\chi(M)$ on the Euler number is the necessary and sufficient condition for a circle bundle over a surface to admit a reduction to a totally disconnected group \cite{Wood-bundles}. It is also necessary, and conjectured to be sufficient, for the total space of a rank $2$ vector bundle over a surface to admit a flat conformal structure \cite{Gromov-Lawson-Thurston}. 

\section{Left-invariant conelike radiant structures on Lie groups}\label{leftinvariantsection}
A radiant structure $(\nabla, \rad)$ on $M$ is \emph{homogeneous} if there is a Lie subgroup $G \subset \Aut(\nabla)$ acting transitively on $M$ and preserving $\rad$. The simplest nontrivial examples of homogeneous radiant structures are left-invariant radiant structures on Lie groups. The main result of this section, Theorem \ref{gnonnulltheorem}, shows that an invariant symmetric bilinear form $k$ and an a $k$-anisotropic element $t$ in the Lie algebra $\g$ of a Lie group $G$ determine on $G$ in a canonical way a left-invariant conelike radiant connection having antisymmetric Ricci tensor. In the particular case that $k$ is the Killing form, Theorem \ref{killingnonnulltheorem} shows that the construction is equivariant with respect to the natural actions of $\Aut(G)$ on all the structures involved.

Let $\g$ be the Lie algebra of a real Lie group $G$ and, for $r \in \g$, let $E^{r}_{g} = \tfrac{d}{dt}\big|_{t =0}g\exp(tr)$ be the left-invariant vector field generated on $G$ by $r$. 
A radiant structure $(\nabla, \rad)$ on $G$ is \emph{left-invariant} if $\nabla$ and $\rad$ are left-invariant. In this case $\rad = E^{t}$ for some $t \in \g$ and so a left-invariant radiant structure is always nonsingular. For this reason, when discussing left-invariant radiant structures the qualifier \emph{nonsingular} is omitted.

That $\nabla$ be left-invariant implies there is $A \in \tensor^{2}\g^{\ast}\tensor \g$ such that $\nabla_{E^{r}}E^{s} = E^{A(r, s)}$ for all $r, s \in \g$, and  $\nabla$ is torsion-free if and only if $A(r, s) - A(s, r) = [r, s]$ for all $r, s \in \g$. This means $A(r, s) = \Pi(r, s) + \tfrac{1}{2}[r, s]$ for some $\Pi \in S^{2}\g^{\ast}\tensor \g$. That $(\nabla, \rad)$ be a left-invariant radiant structure on $G$ means that there is $t \in \g$ such that $\rad = E^{t}$ and $A(r, t) = r$ for all $r \in \g$. This implies that, for all $r \in \g$, $A(t, r) = r + [t, r]$, so that $\Pi(t, r) = r + \tfrac{1}{2}[t, r] = \Pi(r, t)$. In abstract index notation, $A_{ij}\,^{k} = \Pi_{ij}\,^{k} + \tfrac{1}{2}c_{ij}\,^{k}$ where $c_{ij}\,^{k}$ is the structure tensor of $\g$ defined by $a^{i}c_{ij}\,^{k} = \ad_{\g}(a)_{j}\,^{k}$.

Let $\ell(a) = \tr \ad_{\g}(a)$ be the one-form on $\g$ measuring the failure of unimodularity (so $\ell_{i} = c_{ip}\,^{p}$). Let $B_{\g}(a, b) = \tr \ad_{\g}(a)\ad_{\g}(b)$ be the Killing form of $\g$ (so $B_{ij} = c_{ip}\,^{q}c_{jq}\,^{p}$).

The adjoint action induces an action of $\g$ on any space of tensors on $\g$. For example, for $a \in \g$, $a\cdot B_{\g} = 0$, where, for $\Om \in \tensor^{2}\g^{\ast}\tensor \g$, the action of $a$ on $\Om$ is given by
\begin{align}
(a\cdot \Om)(b, c) = [a, \Om(b, c)] - \Om([a, b], c) - \Om(b, [a, c]).
\end{align}
Since the curvature of a left-invariant radiant structure is left-invariant, there is $\lr \in \tensor^{3}\g^{\ast}\tensor \g$ defined by $R(E^{a}, E^{b})E^{c} = E^{\lr(a, b)c}$. By definition,
\begin{align}\label{lrijkl}
\begin{split}
\lr_{ijk}\,^{l} & = 2A_{[i|p|}\,^{l}A_{j]k}\,^{p}  - c_{ij}\,^{p}A_{pk}\,^{l} = 2\Pi_{p[i}\,^{l}\Pi_{j]k}\,^{p} - c_{p[i}\,^{l}\Pi_{j]k}\,^{p} + \Pi_{p[i}\,^{l}c_{j]k}\,^{p} - c_{ij}\,^{p}\Pi_{pk}\,^{l} - \tfrac{1}{4}c_{ij}\,^{p}c_{pk}\,^{l}.
\end{split}
\end{align}
Equivalently,
\begin{align}\label{lrtrs}
\lr(t, r)s  = [t, \Pi(r, s)] - \Pi([t, r], s) - \Pi(r, [t, s]) = (t\cdot \Pi)(r, s)
\end{align}
for all $r, s \in \g$. Write $\ric(E^{a}, E^{b}) = \lric(a, b)$ for $a, b \in \g$ and $\er(a) = \lric(t, a)$. Contracting \eqref{lrijkl} in $il$ yields
\begin{align}\label{lricij}
&\lr_{(jk)}  = \Pi_{pq}\,^{q}\Pi_{jk}\,^{p} - \Pi_{jp}\,^{q}\Pi_{kq}\,^{p} - \tfrac{1}{2}\ell_{p}\Pi_{jk}\,^{p}- c_{p(j}\,^{q}\Pi_{k)q}\,^{p} - \tfrac{1}{4}B_{jk} ,&&
\lr_{[jk]} =  \tfrac{1}{2}c_{jk}\,^{p}\Pi_{pq}\,^{q}.
\end{align}
Suppose $n = \dim \g > 2$. By Lemma \ref{coneconditionlemma}, a left-invariant radiant structure is conelike if and only if there is $Q \in S^{2}\g^{\ast}$ such that $nQ(t, r) = \er(r)$, $Q(t, t) = 0$, and
\begin{align}\label{tpirs}
\begin{split}
%[t, \Pi(r, s)]& - \Pi([t, r], s) - \Pi(r, [t, s]) 
(t\cdot \Pi)(r, s)&= \r(t, r)s = Q(r, s)t - Q(t, r)s - Q(t, s)r = Q(r, s)t - \tfrac{1}{n}\er(r)s - \tfrac{1}{n}\er(s)r,
\end{split}
\end{align}
for all $r, s \in \g$, and, in this case, there holds
\begin{align}\label{qrs}
\begin{split}
(n-2)Q(r, s) &- Q([t, r], s) - Q(r, [t, s]) \\
&= \tfrac{n-2}{n}\er(\Pi(r, s)) + \tfrac{1}{2}\left(\lric([t, r], s) + \lric(r, [t, s]) + \lric([t, s], r) + \lric(s, [t, r]) \right),
\end{split}
\end{align}
for all $r, s \in \g$, where there has been used $\lric(\dum, t) =0$. 

\begin{lemma}\label{ertrlemma}
Suppose $n  > 2$ and $G$ is a $n$-dimensional Lie group with Lie algebra $\g$. If the radiant vector field of a left-invariant conelike radiant structure on $G$ is generated by $t \in \g$, then $\er([t, r]) = 0$ for all $r \in \g$.
\end{lemma}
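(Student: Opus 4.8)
The plan is to substitute $s = t$ into the identity \eqref{qrs}, which holds for every left-invariant conelike radiant structure (it is the left-invariant incarnation of \eqref{conelieric} of Lemma~\ref{coneconditionlemma}, and it is precisely there that the hypothesis $n>2$ enters), and then to read off $\er([t,r]) = 0$ from the resulting linear relation. The point is that with $s=t$ every term in \eqref{qrs} collapses to a scalar multiple of either $\er(r)$ or $\er([t,r])$, and the two contributions of $\er(r)$ cancel.

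First I would assemble the ingredients that force this collapse. From the defining relation $nQ(t,\cdot) = \er$ of the tensor $Q$ and the symmetry of $Q$ one gets $Q(r,t) = \tfrac{1}{n}\er(r)$, $Q([t,r],t) = \tfrac{1}{n}\er([t,r])$, and $Q(r,[t,t]) = 0$, so the left side of \eqref{qrs} becomes $\tfrac{n-2}{n}\er(r) - \tfrac{1}{n}\er([t,r])$. For the curvature terms the key asymmetric fact is that $\lric(\cdot,t)$ vanishes identically: by \eqref{radid} the Ricci tensor satisfies $R_{ip}\rad^{p} = 0$, that is $\lric(a,\rad)=0$ for all $a$ (this is also what makes $\er_i = 2\rad^{p}R_{[pi]}$ purely the antisymmetric part of $\ric$). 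Consequently, among the four bracketed terms on the right of \eqref{qrs} with $s=t$, the three terms $\lric([t,r],t)$, $\lric(r,[t,t])$, $\lric([t,t],r)$ all vanish and only $\lric(t,[t,r]) = \er([t,r])$ survives. Finally the radiant normalization recorded in the setup, $\Pi(r,t) = r + \tfrac{1}{2}[t,r]$, gives $\er(\Pi(r,t)) = \er(r) + \tfrac{1}{2}\er([t,r])$, so the right side of \eqref{qrs} becomes $\tfrac{n-2}{n}\bigl(\er(r) + \tfrac{1}{2}\er([t,r])\bigr) + \tfrac{1}{2}\er([t,r])$.

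Equating the two sides and cancelling the common term $\tfrac{n-2}{n}\er(r)$ leaves $-\tfrac{1}{n}\er([t,r]) = \tfrac{n-2}{2n}\er([t,r]) + \tfrac{1}{2}\er([t,r])$; collecting coefficients over the common denominator $2n$ gives $-2 = (n-2) + n$ against $2n$, i.e. the relation reduces to $-\er([t,r]) = 0$, and hence $\er([t,r]) = 0$ for all $r \in \g$. There is no substantive obstacle here, the argument being a single specialization followed by bookkeeping; the only places needing care are the correct use of the two asymmetric identities ($\lric(\cdot,t)\equiv 0$ while $\lric(t,[t,r]) = \er([t,r])$), which is where the antisymmetry of the Ricci tensor is used, and the verification that the surviving coefficient of $\er([t,r])$ is nonzero. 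I would double-check that last arithmetic, and note that the role of the hypothesis $n>2$ is to guarantee the availability of \eqref{qrs} rather than to drive the final cancellation.
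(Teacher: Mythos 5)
Your proof is correct and is essentially identical to the paper's: the paper also proves the lemma by setting $s = t$ in \eqref{qrs}, using $nQ(t,\dum)=\er$, $Q(t,t)=0$, $\lric(\dum,t)=0$, and $\Pi(r,t)=r+\tfrac{1}{2}[t,r]$, and cancelling to conclude $\er([t,r])=0$. Your bookkeeping (including the observation that $n>2$ is only needed for the validity of \eqref{qrs}) matches the paper's displayed computation \eqref{ertr}, and your left-hand side $\tfrac{n-2}{n}\er(r)$ is in fact the correct reading of the paper's typographical slip there.
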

\begin{proof}
Taking $s = t$ in \eqref{qrs} yields
\begin{align}\label{ertr}
\begin{split}
\tfrac{n-2}{n}\er(t) - \tfrac{1}{n}\er([t, r])& =  (n-2)Q(r, t) - Q([t, r], t) - Q(r, [t, t]) \\
&= \tfrac{n-2}{n}\er(\Pi(r, t)) + \tfrac{1}{2}\left(\lric([t, r], t) + \lric(r, [t, t]) + \lric([t, t], r) + \lric(t, [t, r]) \right)\\
& = \tfrac{n-2}{n}\er(r) + \tfrac{n-2}{2n}\er([t, r]) + \tfrac{1}{2}\er([t, r]) = \tfrac{n-2}{n}\er(r) + \tfrac{n-1}{n}\er([t, r]),
\end{split}
\end{align}
which, because $n >2$, implies that $\er([t, r]) = 0$ for all $r \in \g$.
\end{proof}
\begin{theorem}\label{gnonnulltheorem}
Suppose $n > 2$ and let $G$ be an $n$-dimensional Lie group with Lie algebra $\g$ and Killing form $B_{\g} \in S^{2}\g^{\ast}$. Suppose $k \in S^{2}\g^{\ast}$ is invariant, meaning that $a\cdot k = 0$ for $a \in \g$ and suppose there is $t \in \g$ such that $k(t, t) \neq 0$. Define $h \in S^{2}\g^{\ast}$ by $h(r, s) = k(t, t)^{-1}k(r, s)$. Define $\Pi \in S^{2}\g^{\ast}\tensor \g$ by
\begin{align}\label{gconedefinedg}
\begin{split}
\Pi(r, s) &=- h(t, r)h(t, s) t + h(t, r)s + h(t, s)r+ \tfrac{1}{2}h(t, r)[t, s] + \tfrac{1}{2}h(t, s)[t, r] \\
& + \tfrac{1}{4(n-2)}\left(B_{\g}(r, s)  +B_{\g}(t, t)h(t, r)h(t, s) - B_{\g}(t, r)h(t, s) - B_{\g}(t, s)h(t, r) - 2h([t, r], [t, s]) \right)t .
\end{split}
\end{align}
The left-invariant connection $\nabla$ determined by $A(\dum, \dum) = \Pi(\dum, \dum) + \tfrac{1}{2}[\dum, \dum] \in \tensor^{2}\g^{\ast}\tensor \g$ constitutes with the left-invariant vector field $E^{t}$ a left-invariant conelike radiant structure with antisymmetric Ricci tensor 
\begin{align}\label{gconericg}
\begin{split}
\lric(r, s) = \tfrac{2n + \ell(t)}{4}h(t, [r, s]),
\end{split}
\end{align}
and satisfying $\er = 0$.
\begin{enumerate}
\item\label{coneautog} For $g \in G$, the left-invariant conelike radiant connection $\nabla$ associated with $\ad_{\g}(g^{-1})t$ is $\Ad(g)^{\ast}(\nabla)$. In particular the stabilizer of $t$ in $\Ad(G)$ acts as automorphisms of $(\nabla, E^{t})$.
\item\label{gnullplanelike} $(\nabla, E^{t})$ admits a complete set of planes.
\item\label{ssnotflat} If $\g$ is semisimple, then $\nabla$ is not Ricci-flat.
\item\label{subalgtotgeodg} A subgroup $H \subset G$ tangent to $t$ is $\nabla$ totally geodesic, and the connection induced on $H$ by $\nabla$ constitutes with the left-invariant vector field $E^{t}$ a left-invariant conelike radiant structure on $H$.
\item\label{abeliansubalgtotgeodg} Suppose $H \subset G$ is an abelian subgroup tangent to $t$, having Lie algebra $\h \subset \g$, and such that $\dim H \geq 3$. The left-invariant connection $\tnabla$ on $H$ associated with the tensor $\tilde{\Pi} \in S^{2}\h^{\ast}\tensor \h$ defined by
\begin{align}\label{hconedefinedg}
\tilde{\Pi}(a, b) =  h(t, a)b + h(t, b)a - h(t, a)h(t, b)t,
\end{align}
constitutes with $E^{t}$ a Ricci-flat left-invariant conelike radiant structure on $H$ having the same planelike surfaces as the connection induced on $H$ by $\nabla$. 
\end{enumerate}
\end{theorem}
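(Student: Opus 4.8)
The plan is to exploit the $\Ad$-invariance of $k$ (hence of $h$ and of the Killing form $B_{\g}$) to show that the connection $\nabla$ is in fact $\rad$-invariant, and then to read off the conelike property, the vanishing of $\er$, and the Ricci tensor. It is convenient to abbreviate the bracket in \eqref{gconedefinedg} by
\[ \Phi(r,s) = B_{\g}(r,s) + B_{\g}(t,t)h(t,r)h(t,s) - B_{\g}(t,r)h(t,s) - B_{\g}(t,s)h(t,r) - 2h([t,r],[t,s]), \]
so that $\Pi$ is the sum of the flat piece $\tilde{\Pi}$ of \eqref{hconedefinedg}, the bracket piece $\tfrac12(h(t,r)[t,s]+h(t,s)[t,r])$, and the correction $\tfrac{1}{4(n-2)}\Phi\,t$. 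First I would record the two consequences of invariance that drive everything: because $a\cdot h = 0$ and $a\cdot B_{\g}=0$ one has $h([t,r],s)=h(t,[r,s])$ and $B_{\g}([t,r],s)=B_{\g}(t,[r,s])$, whence $h(t,[t,r])=-h([t,t],r)=0$ and $B_{\g}(t,[t,r])=0$; a short check then gives the derivation identity $t\cdot\Phi = 0$, i.e. $\Phi([t,r],s)+\Phi(r,[t,s])=0$. Since $h(t,t)=1$ and $\Phi(r,t)=0$ are immediate, setting $s=t$ in \eqref{gconedefinedg} yields $\Pi(r,t)=r+\tfrac12[t,r]$, which is exactly the condition that $(\nabla,E^{t})$ be a radiant structure; symmetry of $\Pi$ is manifest termwise.

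The heart of the argument is the identity $t\cdot\Pi = 0$. By \eqref{lrtrs} this reads $\lr(t,r)s=0$, i.e. $\rad^{p}R_{pij}{}^{k}=0$, which is precisely $\lie_{\rad}\nabla=0$. To prove it I would apply the derivation $t\cdot$ to each of the three summands of $\Pi$: using $h(t,[t,\cdot])=0$ the terms $-h(t,r)h(t,s)t$ and $h(t,r)s+h(t,s)r$ are annihilated outright, the bracket piece cancels against its Jacobi-type derivatives, and the correction dies because $[t,t]=0$ and $t\cdot\Phi=0$. Hence $\nabla$ is $\rad$-invariant, and Corollary \ref{invariantconelikecorollary} delivers three conclusions at once: $(\nabla,E^{t})$ is conelike, $\er=0$, and it admits a complete set of planes (claim \ref{gnullplanelike}).

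For the Ricci tensor I would use \eqref{lricij}. The antisymmetric part needs only the trace $\Pi_{pq}{}^{q}$: taking the trace of the endomorphism $s\mapsto\Pi(r,s)$ term by term gives the contributions $-h(t,r)$, $n\,h(t,r)$, $h(t,r)$, $\tfrac12\ell(t)h(t,r)$, and two vanishing terms (the last using $\Phi(r,t)=0$), so $\Pi_{pq}{}^{q}=\tfrac{2n+\ell(t)}{2}h(t,\cdot)$; substituting into $\lr_{[jk]}=\tfrac12 c_{jk}{}^{p}\Pi_{pq}{}^{q}$ reproduces \eqref{gconericg}, which is antisymmetric because $h(t,[r,s])$ is. It remains to show the symmetric part $\lr_{(jk)}$ vanishes, and this is the main obstacle: it is the one genuinely computational step, requiring the full expansion of the $\Pi_{pq}{}^{q}\Pi_{jk}{}^{p}$, $\Pi_{jp}{}^{q}\Pi_{kq}{}^{p}$, $\ell_{p}\Pi_{jk}{}^{p}$ and $c_{p(j}{}^{q}\Pi_{k)q}{}^{p}$ contractions in \eqref{lricij}. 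The correction $\tfrac{1}{4(n-2)}\Phi\,t$ is engineered precisely so that, after the invariance identities turn the $c$--$\Pi$ and $\Pi$--$\Pi$ contractions into multiples of $B_{\g}$, of $h([t,\cdot],[t,\cdot])$, and of the cross terms making up $\Phi$, everything cancels against the $-\tfrac14 B_{jk}$ term. I expect this bookkeeping to absorb most of the labor, though it presents no conceptual difficulty.

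The remaining claims then follow with little extra effort. Equivariance \ref{coneautog} is formal: $\Pi$ is built only from $h$, $B_{\g}$, $t$ and the bracket, so $\Ad(g)$-invariance of $k$ shows that replacing $t$ by $\ad_{\g}(g^{-1})t$ and conjugating the arguments transforms $\Pi$ covariantly, carrying $\nabla$ to $\Ad(g)^{\ast}(\nabla)$ and making the stabilizer of $t$ act by automorphisms. For \ref{subalgtotgeodg}, every summand of \eqref{gconedefinedg} lands in $\h$ when $r,s,t\in\h$, so $\Pi(\h,\h)\subset\h$ and the subgroup is $\nabla$-totally geodesic; the induced tensor inherits $t\cdot\Pi^{H}=0$, so Corollary \ref{invariantconelikecorollary} again gives conelike. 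For \ref{ssnotflat}, semisimplicity forces $\ell=0$, so the coefficient in \eqref{gconericg} is $n/2\neq 0$; as the radical of the invariant form $k$ is an ideal not containing the anisotropic $t$, and $\ad_{t}$ is nonzero modulo it, $h(t,[\cdot,\cdot])=h([t,\cdot],\cdot)$ is not identically zero, so $\nabla$ is not Ricci flat. Finally, for \ref{abeliansubalgtotgeodg}, abelianness of $\h$ (with $t\in\h$) kills all brackets, so on $\h$ one has $\Pi^{H}-\tilde{\Pi}=\tfrac{1}{4(n-2)}\Phi\,t$ with $\Phi(t,\cdot)=0$; by Lemma \ref{conesameplaneslemma} the connections $\tnabla$ and the induced connection share their planelike surfaces, while a direct computation shows $\tilde{\Pi}(a,\tilde{\Pi}(b,c))$ is totally symmetric in $a,b,c$, whence the curvature of $\tnabla$ vanishes and $\tnabla$ is flat, in particular Ricci flat.
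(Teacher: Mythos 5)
Most of what you do is correct where you actually carry it out: the invariance identities $h(t,[t,\cdot])=0$, $B_{\g}(t,[t,\cdot])=0$, $t\cdot\Phi=0$, the radiance check $\Pi(r,t)=r+\tfrac{1}{2}[t,r]$, the term-by-term verification that $t\cdot\Pi=0$ (so Corollary \ref{invariantconelikecorollary} delivers conelike, $\er=0$, and a complete set of planes), and the trace computation giving $\Pi_{pq}\,^{q}=\tfrac{2n+\ell(t)}{2}h(t,\dum)$ and hence \eqref{gconericg} all hold up. Your handling of claims \eqref{coneautog}, \eqref{gnullplanelike}, \eqref{subalgtotgeodg} matches the paper's; your argument for \eqref{abeliansubalgtotgeodg} via the total symmetry of $\tilde{\Pi}(a,\tilde{\Pi}(b,c))$ (giving flatness outright) is a clean alternative to the paper's computation \eqref{barlr2}--\eqref{barlr4}; and your argument for \eqref{ssnotflat} via the radical of $k$ being an ideal is actually more robust than the paper's appeal to an $\sll(2,\rea)$-triple, which presupposes the given $t$ sits in such a triple. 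The genuine gap is at the theorem's central assertion: you never prove that the symmetric part of the Ricci tensor vanishes. You identify this as ``the main obstacle'' and then dispose of it by asserting that the correction term ``is engineered precisely so that \dots everything cancels.'' That is circular -- that the correction accomplishes exactly this cancellation is the thing to be proven -- and the required expansion of $\Pi_{pq}\,^{q}\Pi_{jk}\,^{p}$, $\Pi_{jp}\,^{q}\Pi_{kq}\,^{p}$, $\ell_{p}\Pi_{jk}\,^{p}$, and $c_{p(j}\,^{q}\Pi_{k)q}\,^{p}$ in \eqref{lricij} for the six-term $\Pi$ is a substantial computation, not bookkeeping one may wave through.

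The paper's proof is organized precisely so that no such cancellation ever has to be verified, and you can close your gap the same way with what you have already established. Write $\Pi=\bar{\Pi}+Q(\dum,\dum)t$, where $\bar{\Pi}$ is the first line of \eqref{gconedefinedg} (no correction) and $Q=\tfrac{1}{4(n-2)}\Phi$. Your invariance argument applies verbatim to $\bar{\Pi}$, so $(\bnabla,E^{t})$ is a $\rad$-invariant conelike radiant structure with $\bar{\er}=0$; the Ricci tensor of the much shorter $\bar{\Pi}$ is computed in a few lines (the paper's \eqref{liricpreg}--\eqref{lricgeneral2g}), and one then checks that $Q$ is exactly $-\tfrac{1}{n-2}$ times the symmetrized Ricci tensor of $\bnabla$. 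At that point Theorem \ref{conenormalizationtheorem} (applicable because $\bar{\lr}(t,r)s=0$ forces the tensor $T_{ij}$ there to vanish), or equivalently Lemma \ref{conelikedifferencelemma} via \eqref{trconecurvdiff} with $q_{i}=\rad^{p}Q_{pi}=0$ and $\lie_{\rad}Q=0$, guarantees that $\nabla=\bnabla+Q_{ij}\rad^{k}$ is $\rad$-invariant, conelike, and has Ricci tensor equal to the antisymmetric part $\bar{\lr}_{[ij]}$, which is \eqref{gconericg}. In short: your direct route is workable only if you grind out the full expansion; the paper's two-step normalization replaces that grind with a citation, and it is available to you at no extra cost.
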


\begin{proof}
Define $\bar{\Pi} \in S^{2}\g^{\ast}\tensor \g$ by
\begin{align}\label{pirsgeneralg}
\begin{split}
\bar{\Pi}(r, s) &=  - h(t, r)h(t, s)t +  h(t, r)s + h(t, s)r + \tfrac{1}{2}h(t, r)[t, s] +  \tfrac{1}{2}h(t, s)[t, r]  .
\end{split}
\end{align}
By definition $\bar{\Pi} \in S^{2}\g^{\ast}\tensor \g$. Let $\bnabla$ be the associated left-invariant connection on $G$. Taking $s = t$ yields $\bar{\Pi}(r, t) = r+ \tfrac{1}{2}[t, r]$, so $\bnabla$ forms with $E^{t}$ a left-invariant radiant structure. As $\bar{\Pi}$ is constructed from tensors invariant under the action induced by $\ad(t)$, it is also invariant under the action induced by $\ad(t)$. That is $t \cdot \Pi =0$. By \eqref{lrtrs} this shows the curvature $\bar{\lr}(\dum, \dum)$ of $\bnabla$ satisfies $\bar{\lr}(t, r)s = 0$, and, by Lemma \ref{coneconditionlemma}, this shows that $(\bnabla, E^{t})$ is a conelike left-invariant radiant structure.

In calculating the Ricci curvature of $\bnabla$ it is convenient to use abstract indices and rewrite the definition \eqref{pirsgeneralg} as
\begin{align}\label{pirsgeneral2g}
\bar{\Pi}_{ij}\,^{k} & = - t_{i}t_{j}t^{k} + 2t_{(i}\delta_{j)}\,^{k} + t_{(i}A_{j)}\,^{k},
\end{align}
where $t_{i} = t^{p}h_{ip}$ and $A_{i}\,^{j} = \ad(t)_{i}\,^{j}$. By definition $t_{p}t^{p} = 1$, $t^{p}A_{p}\,^{i} = 0$, $A_{i}\,^{p}\ell_{p} = 0$, and $A_{p}\,^{p} = \ell_{p}t^{p}$. From the invariance of $h$ there follow $A_{i}\,^{p}t_{p} = 0$. In what follows these observations are used without further comment.
From \eqref{pirsgeneral2g} there follow
\begin{align}\label{liricpreg}
\begin{split}
\bar{\Pi}_{ij}\,^{p}t_{p} & = t_{i}t_{j},\qquad
\bar{\Pi}_{ip}\,^{p}   = (n + \tfrac{1}{2}\ell(t))t_{i},\qquad
\bar{\Pi}_{ij}\,^{p}\ell_{p}  = -\ell(t)t_{i}t_{j} + 2t_{(i}\ell_{j)},\\
\bar{\Pi}_{ip}\,^{q}\bar{\Pi}_{jq}\,^{p} & = \left(n + \ell(t) + \tfrac{1}{4}B_{\g}(t, t)\right)t_{i}t_{j} ,\qquad
c_{p(i}\,^{q}\bar{\Pi}_{j)q}\,^{p}  = -\ell_{(i}t_{j)} - \tfrac{1}{2}t^{p}B_{p(i}t_{j)} - \tfrac{1}{2}A_{i}\,^{p}A_{j}\,^{q}h_{pq}.
\end{split}
\end{align}
Substituting \eqref{liricpreg} in \eqref{lricij} yields
\begin{align}\label{lricgeneral2g}
\begin{split}
\bar{\lr}_{(ij)} & =-\tfrac{1}{4}B_{\g}(t, t)t_{i}t_{j} - \tfrac{1}{4}B_{ij} + \tfrac{1}{2}t^{p}B_{p(i}t_{j)} +\tfrac{1}{2}A_{i}\,^{p}A_{j}\,^{q}h_{pq},\\
\bar{\lr}_{[ij]} & = \tfrac{1}{2}\left(n + \tfrac{1}{2}\ell(t)\right)c_{ij}\,^{p}t_{p} =  \tfrac{1}{2}\left(n + \tfrac{1}{2}\ell(t)\right)A_{i}\,^{p}h_{pj}.
\end{split}
\end{align}
This shows that the left-invariant connection $\bnabla$ determined by $\bar{A}(\dum, \dum) = \bar{\Pi}(\dum, \dum) + \tfrac{1}{2}[\dum, \dum] \in \tensor^{2}\g^{\ast}\tensor \g$ constitutes with $E^{t}$ a left-invariant conelike radiant structure with Ricci curvature 
\begin{align}\label{lricgeneralg}
\begin{split}
\bar{\lric}(r, s) & = \tfrac{1}{4}\left(B_{\g}(t, r)h(t, s) + B_{\g}(t, s)h(t, r) -B_{\g}(t, t) h(t, r)h(t, s)  - B_{\g}(r, s) + 2h([t, r], [t, s]) \right) \\
&\qquad + \tfrac{2n + \ell(t)}{2}h(t, [r, s]).
\end{split}
\end{align}
By the proof of Theorem \ref{conenormalizationtheorem}, $\Pi(r, s) + \tfrac{1}{2}[r, s] = A(r, s) = \bar{A}(r, s) -\tfrac{1}{2(n-2)}(\bar{\lric}(r, s) + \bar{\lric}(s, t))t$ yields a left-invariant connection $\nabla$ that constitutes with $E^{t}$ a conelike radiant structure having antisymmetric Ricci tensor equal to the antisymmetric part of the Ricci tensor of $\bnabla$ (Because $\bar{\lr}(t, r)s = 0$, the tensor $T_{ij}$ in the proof of Theorem \ref{conenormalizationtheorem} is identically $0$). The resulting tensor $\Pi$ is as in \eqref{pirsgeneralg} and the Ricci curvature of $\nabla$ is given by \eqref{gconericg}. Because $\lric(t, \dum) = 0$, $\er = 0$.

For $g \in G$, let $\bnabla$ be the connection associated with $\bar{\Pi} \in S^{2}\g^{\ast}\tensor \g$ determined by $\bar{t} = \ad_{\g}(g^{-1})t$ as in \eqref{gconedefinedg}. Because $\ad_{\g}(g)$ is an automorphism of $\g$, $\bar{\Pi}(r, s) = \ad_{\g}(g^{-1})\Pi(\ad_{\g}(g)r, \ad_{\g}(g)s)$, which is the tensor determined by the left-invariant connection $\Ad(g)^{\ast}(\nabla)$. This shows \eqref{coneautog}. By \eqref{coneautog}, $\nabla$ is $E^{t}$-invariant, so, by Corollary \ref{invariantconelikecorollary}, $(\nabla, E^{t})$ admits a complete set of planes.

If $G$ is an $n$-dimensional semisimple real Lie group, then, by the Jacobson-Morozov theorem, its Lie algebra $\g$ contains an $\sll(2, \rea)$-triple $\{t, r, s\}$ such that $[t, r] = 2r$, $[t, s] = -2s$, and $[r, s] = t$. By \eqref{gconeric}, $\lric(r, s) = \tfrac{2n + \ell(t)}{4}h(t, [r, s]) = \tfrac{n}{2}h(t, t)  = \tfrac{n}{2}$. In particular, $\nabla$ is not flat. This shows \eqref{ssnotflat}. 

Suppose $H \subset G$ is a subgroup with Lie algebra $\h \subset \g$ containing $t$. For $r, s \in \h$ it follows from \eqref{gconedefinedg} that $A(r, s) \in \h$, so $H$ is totally geodesic. Consequently, $A$ determines $\bar{A} \in \tensor^{2}\h^{\ast}\tensor \h$ defined by $\bar{A}(a, b) = A(a, b)$, and the left-invariant connection $\bnabla$ on $H$ determined by $\bar{A}$ equals the connection induced on $H$ by $\nabla$. Because $\bar{A}(a, t) = a$, $\bnabla$ is radiant. By \eqref{lrijkl}, its curvature $\bar{\lr}(a, b)c$ equals $\lr(a, b)c$ for $a, b, c \in \h$, and so $\bar{\lr}(t, a)b = \lr(t, a)b = 0$ for $a, b \in \h$, which by Lemma \ref{coneconditionlemma} suffices to show that the left-invariant radiant structure $(\bnabla, \rad)$ on $H$ is conelike. This proves \eqref{subalgtotgeodg}.

Suppose $t\in \h$ and $\h$ normalizes $t$. This means there is $\si \in \h^{\ast}$ such that $[a, t] = \si(a)t$ for $a \in \g$. By the invariance of $g$, $0 = g(a, [t, t]) = g([a, t], t) = -\si(a)g(t, t)$, so $\si = 0$ and $\h$ centralizes $t$.
By \eqref{gconedefined}, for $a, b \in \h$,
\begin{align}\label{habelian}
\Pi(a, b) = Q(a, b)t + S(a)b + S(b)a \in \h,
\end{align}
where $S(a) = h(t, a)$, $S(t) = 1$, $Q(a, t) = -S(a)$, and
\begin{align}
Q(a, b) &= -h(t, a)h(t, b) + \tfrac{1}{4(n-2)}\left(B_{\g}(a, b) + B_{\g}(t, t)h(t, a)h(t, b) - B_{\g}(t, a)h(t, b) - B_{\g}(t, b)h(t, a)\right).
\end{align}
The connection $\bnabla$ induced on $H$ by $\nabla$ is is a left-invariant connection on $H$, and, because $H$ is totally geodesic, its curvature $\bar{\lr}(b, c)a$ equals $\lr(b, c)a$. 
 
Next, suppose that $\h$ is moreover abelian, so that $A(a, b) = \Pi(a, b)$ for $a, b \in \h$.
From \eqref{lrijkl} and \eqref{habelian} it follows that
\begin{align}\label{barlr}
\begin{split}
\bar{\lr}(b, c)a & = \lr(b, c)a = \Pi(b, \Pi(c, a)) - \Pi(c, \Pi(b, a))   \\
&= (Q(a, c) + S(a)S(c))b -(Q(a, b) + S(a)S(b))c + (Q(a, b)S(c) - Q(a, c)S(b))t \in \h,
\end{split}
\end{align}
for all $a, b, c \in \h$. Temporarily using abstract indices to indicate tensors on $\h$, rewrite \eqref{barlr} as
\begin{align}\label{barlr2}
\begin{split}
\bar{\lr}_{ijk}\,^{l} &=  (Q_{jk} + S_{j}S_{k})\delta_{i}\,^{l}  - (Q_{ik} + S_{i}S_{k})\delta_{j}\,^{l} + (S_{j}Q_{ik} - S_{i}Q_{jk})t^{l}\\
& =  (Q_{jk} + S_{j}S_{k})(\delta_{i}\,^{l}- S_{i}t^{l}) - (Q_{ik} + S_{i}S_{k})(\delta_{j}\,^{l} - S_{j}t^{l}).
\end{split}
\end{align}
Let $m = \dim \h$. Tracing \eqref{barlr2} in $il$ yields
\begin{align}\label{barlr3}
\begin{split}
\overline{\lric}_{jk} & = \bar{\lr}_{pjk}\,^{p} = (m - 2)(Q_{jk} + S_{j}S_{k}),
\end{split}
\end{align}
so that
\begin{align}\label{barlr4}
\overline{\lric}(a, b) = \tfrac{m-2}{4(n-2)}\left(B_{\g}(a, b) + B_{\g}(t, t)h(t, a)h(t, b) - B_{\g}(t, a)h(t, b) - B_{\g}(t, b)h(t, a)\right),
\end{align}
for $a, b \in\h$. In particular $\overline{\lric}(t, a) = 0$, so $\bar{\er} =0$ for $\bnabla$. If $m \geq 3$, by the proof of Theorem \ref{conenormalizationtheorem}, the left-invariant connection $\tnabla$ on $H$ associated with the tensor 
\begin{align}
\begin{split}
\tilde{\Pi}(a, b) &= \Pi(a, b) - \tfrac{1}{m-2}\overline{\lric}(a, b)t = S(a)b + S(b)a - S(a)S(b)t = h(t, a)b + h(t, b)a - h(t, a)h(t, b)t,
\end{split}
\end{align}
(which is as in \eqref{hconedefinedg}) constitutes with $E^{t}$ a left-invariant conelike radiant structure on $H$ with Ricci curvature equal to the antisymmetric part of $\overline{\lric}$, so with vanishing Ricci curvature, and having the same planelike surfaces as the connection $\bnabla$ induced on $H$ by $\nabla$. This proves \eqref{abeliansubalgtotgeodg}.
\end{proof}

\begin{remark}
The formula \eqref{gconedefinedg} is simpler than it appears. For $r, s \in \ker k(t, \dum)$ it simplifies to
\begin{align}\label{gconedefinedgker}
&\Pi(t, t) = t,&
&\Pi(r, t)  = \Pi(t, r) = r + \tfrac{1}{2}[t, r],& 
&\Pi(r, s) = \tfrac{1}{4(n-2)}\left(B_{\g}(r, s)  - \tfrac{2}{k(t,t)}k([t, r], [t, s]) \right)t . 
\end{align}
\end{remark}

The utility of the formulation of Theorem \ref{gnonnulltheorem} is that it can be applied even to a Lie group $G$ for which the Killing form is identically zero (such as a nilpotent Lie group), but with some other invariant tensor $k$. A Lie algebra $\g$ equipped with an invariant nondegenerate bilinear form $k$ is said to be a \emph{quadratic Lie algebra}. There are many examples of nilpotent quadratic Lie algebras, and for such the Killing form is identically zero, but Theorem \ref{gnonnulltheorem} still applies. In particular there holds the following.

\begin{corollary}
A Lie group of dimension $n > 2$ whose Lie algebra $\g$ is equipped with a structure of a quadratic Lie algebra admits a left-invariant conelike radiant structure.
\end{corollary}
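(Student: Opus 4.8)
The plan is to reduce the statement to a direct application of Theorem \ref{gnonnulltheorem}. By definition, a \emph{quadratic Lie algebra} is a Lie algebra $\g$ equipped with an invariant nondegenerate symmetric bilinear form $k$, where invariance means $a \cdot k = 0$ for all $a \in \g$. This is precisely the hypothesis imposed on $k \in S^{2}\g^{\ast}$ in Theorem \ref{gnonnulltheorem}. Since $\dim \g = n > 2$ by assumption, the only additional input required to invoke that theorem is an element $t \in \g$ that is $k$-anisotropic, i.e.\ $k(t, t) \neq 0$.

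First I would verify that such a $t$ always exists whenever $k$ is nondegenerate. The argument is the polarization identity in characteristic zero: if $k(t, t) = 0$ for every $t \in \g$, then for all $r, s \in \g$ one has $k(r, s) = \tfrac{1}{2}\bigl(k(r+s, r+s) - k(r, r) - k(s, s)\bigr) = 0$, so $k$ vanishes identically. As $\g$ is nonzero and $k$ is nondegenerate, $k$ is not the zero form, and therefore some $t \in \g$ satisfies $k(t, t) \neq 0$.

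With such a $t$ fixed, I would set $h(r, s) = k(t, t)^{-1} k(r, s)$ and apply Theorem \ref{gnonnulltheorem} verbatim: the left-invariant connection $\nabla$ determined by $A = \Pi + \tfrac{1}{2}[\,\cdot\,, \cdot\,]$, with $\Pi$ given by \eqref{gconedefinedg}, constitutes with the left-invariant vector field $E^{t}$ a left-invariant conelike radiant structure (moreover with antisymmetric Ricci tensor and $\er = 0$). This establishes the claim. It is worth emphasizing that no condition on the Killing form $B_{\g}$ is needed: $B_{\g}$ enters only as a term in the formula \eqref{gconedefinedg}, and the construction is valid whatever $B_{\g}$ is. This is exactly the point of the corollary, since for a nilpotent quadratic Lie algebra $B_{\g}$ vanishes identically while the quadratic form $k$ remains nondegenerate, so the hypotheses of Theorem \ref{gnonnulltheorem} are met through $k$ rather than through $B_{\g}$.

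There is essentially no obstacle here; the corollary is a specialization of Theorem \ref{gnonnulltheorem}, and the only step carrying any content is the elementary observation that a nondegenerate symmetric bilinear form over $\rea$ admits an anisotropic vector, which rules out the degenerate possibility that every element of $\g$ is $k$-isotropic.
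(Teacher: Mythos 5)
Your proposal is correct and follows exactly the paper's own route: the paper proves the corollary by citing Theorem \ref{gnonnulltheorem} and noting that nondegeneracy of the invariant form $k$ yields some $t \in \g$ with $k(t,t) \neq 0$. Your polarization argument simply spells out that one-line existence claim, which the paper leaves implicit.
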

\begin{proof}
This follows from Theorem \ref{gnonnulltheorem}. If $k$ is an invariant metric on $\g$, its nondegeneracy implies there is $t \in\g$ such that $k(t, t) \neq 0$.
\end{proof}
On the other hand, for $G$ for which the Killing form is not identically zero, the invariant tensor $k$ in Theorem \ref{gnonnulltheorem} can be taken to be a multiple of the Killing form and the statement of the theorem can be simplified and strengthened in certain respects.

\begin{theorem}\label{killingnonnulltheorem}
Suppose $n > 2$ and let $G$ be an $n$-dimensional Lie group with Lie algebra $\g$ and Killing form $B_{\g} \in S^{2}\g^{\ast}$. Suppose there is $t \in \g$ such that $B_{\g}(t, t) \neq 0$. Define $\Pi \in S^{2}\g^{\ast}\tensor \g$ by
\begin{align}\label{gconedefined}
\begin{split}
\Pi(r, s) &= \left(\tfrac{1}{4(n-2)}B_{\g}(r, s) - \tfrac{1}{B_{\g}(t, t)}\left( \left(\tfrac{1}{4(n-2)} + \tfrac{1}{B_{\g}(t, t)}\right)B_{\g}(t, r)B_{\g}(t, s)+\tfrac{1}{2(n-2)} B_{\g}([t, r], [t, s])\right)\right)t \\
&+ \tfrac{1}{B_{\g}(t, t)}\left(B_{\g}(t, r)s + B_{\g}(t, s)r+ \tfrac{1}{2}\left(B_{\g}(t, r)[t, s] + B_{\g}(t, s)[t, r] \right) \right).
\end{split}
\end{align}
The left-invariant connection $\nabla$ determined by $A(\dum, \dum) = \Pi(\dum, \dum) + \tfrac{1}{2}[\dum, \dum] \in \tensor^{2}\g^{\ast}\tensor \g$ constitutes with the left-invariant vector field $E^{t}$ a left-invariant conelike radiant structure with antisymmetric Ricci tensor 
\begin{align}\label{gconeric}
\begin{split}
\lric(r, s) = \tfrac{2n + \ell(t)}{4B_{\g}(t, t)}B_{\g}(t, [r, s]),
\end{split}
\end{align}
and satisfying $\er = 0$.
\begin{enumerate}
\item If $\Phi \in \Aut(G)$ and $\phi = T\Phi(e) \in \eno(\g)$ is the corresponding Lie algebra automorphism, then the left-invariant conelike radiant connection $\nabla$ associated with $\phi^{-1}(t)$ is $\Phi^{\ast}(\nabla)$. In particular the stabilizer of $t$ in $\Aut(G)$ acts as automorphisms of $(\nabla, \rad)$.
\item Claims \eqref{gnullplanelike}, \eqref{ssnotflat}, \eqref{subalgtotgeodg}, and \eqref{abeliansubalgtotgeodg} of Theorem \ref{gnonnulltheorem} hold as stated. 
\item If $\g$ is solvable, then $\nabla$ is Ricci-flat.
\end{enumerate}
\end{theorem}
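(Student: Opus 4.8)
The plan is to derive Theorem \ref{killingnonnulltheorem} as the specialization of Theorem \ref{gnonnulltheorem} to the case where the invariant form $k$ is the Killing form $B_{\g}$, and then to upgrade the equivariance statement and settle the solvable case using properties special to $B_{\g}$. First I would observe that $B_{\g}$ is always invariant, since the associativity identity $B_{\g}([a,b],c) + B_{\g}(b,[a,c]) = 0$ is exactly the condition $a\cdot B_{\g} = 0$; thus the hypothesis $B_{\g}(t,t)\neq 0$ is all that is needed to apply Theorem \ref{gnonnulltheorem} with $k = B_{\g}$ and $h(r,s) = B_{\g}(t,t)^{-1}B_{\g}(r,s)$. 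The bulk of this step is the routine verification, by substituting $h(t,r) = B_{\g}(t,t)^{-1}B_{\g}(t,r)$ into \eqref{gconedefinedg} and collecting the coefficient of $t$, that the resulting tensor is exactly $\Pi$ as written in \eqref{gconedefined}, and similarly that \eqref{gconericg} becomes \eqref{gconeric}. This already yields the main assertion (conelike radiant structure, antisymmetric Ricci curvature \eqref{gconeric}, and $\er = 0$) together with claims \eqref{gnullplanelike}, \eqref{subalgtotgeodg}, and \eqref{abeliansubalgtotgeodg}, which involve only the tensor $\Pi$ and its induced restrictions, and therefore transfer verbatim. Claim \eqref{ssnotflat} is then immediate: when $\g$ is semisimple it is unimodular, so $\ell(t) = 0$, and \eqref{gconeric} reads $\lric(r,s) = \tfrac{n}{2B_{\g}(t,t)}B_{\g}(t,[r,s])$; since $t\neq 0$ and $\g$ has trivial center, $[t,r]\neq 0$ for some $r$, and nondegeneracy of $B_{\g}$ then furnishes $s$ with $B_{\g}([t,r],s) = -B_{\g}(t,[r,s]) \neq 0$, so $\lric$ does not vanish.

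The genuinely new content relative to Theorem \ref{gnonnulltheorem} is the strengthening of the equivariance from $\Ad(G)$ to the full automorphism group $\Aut(G)$. The key point I would use is that every Lie algebra automorphism $\phi$ preserves the Killing form: from $\ad_{\g}(\phi a) = \phi\,\ad_{\g}(a)\,\phi^{-1}$ and the conjugation-invariance of the trace one gets $B_{\g}(\phi a,\phi b) = B_{\g}(a,b)$. Since the tensor $\Pi$ in \eqref{gconedefined} is built functorially out of $B_{\g}$, the bracket, and the vector $t$, applying $\phi = T\Phi(e)$ shows that the tensor attached to $\phi^{-1}(t)$ equals $\phi^{-1}\circ \Pi(\phi\,\cdot\,,\phi\,\cdot\,)$, which is precisely the difference tensor of $\Phi^{\ast}(\nabla)$. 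This is exactly where the Killing form does better than a general invariant $k$: an arbitrary invariant form is preserved only under the inner automorphisms $\Ad(G)$, that being the content of $a\cdot k = 0$, whereas $B_{\g}$ is preserved by all of $\Aut(G)$. Taking $\Phi$ in the stabilizer of $t$ then gives that $\Phi$ acts by automorphisms of $(\nabla, E^{t})$, and in particular the $E^{t}$-invariance of $\nabla$, which feeds claim \eqref{gnullplanelike} through Corollary \ref{invariantconelikecorollary}.

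Finally, for the solvability statement I would invoke Cartan's solvability criterion in the form $B_{\g}(\g,[\g,\g]) = 0$. Since $[r,s]\in[\g,\g]$ for all $r,s\in\g$, this gives $B_{\g}(t,[r,s]) = 0$ identically, and substituting into \eqref{gconeric} yields $\lric \equiv 0$, so $\nabla$ is Ricci-flat. I expect the main obstacle to be almost entirely clerical rather than conceptual: the one place requiring genuine care is matching the $t$-coefficient of \eqref{gconedefinedg} with that of \eqref{gconedefined}, where the cross terms $B_{\g}(t,t)h(t,r)h(t,s)$, $B_{\g}(t,r)h(t,s)$ and $B_{\g}(t,s)h(t,r)$ all collapse to multiples of $B_{\g}(t,r)B_{\g}(t,s)$ and must be combined with the bare $-h(t,r)h(t,s)t$ term to reproduce the coefficient $-\bigl(\tfrac{1}{4(n-2)}+B_{\g}(t,t)^{-1}\bigr)B_{\g}(t,t)^{-1}$; everything else is a direct reading-off from the already-proved Theorem \ref{gnonnulltheorem} and the two standard facts about $B_{\g}$ used above.
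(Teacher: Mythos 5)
Your proposal is correct and, in its essentials, is the paper's own proof: specialize Theorem \ref{gnonnulltheorem} to $k = B_{\g}$ (so $h = B_{\g}(t,t)^{-1}B_{\g}$), upgrade the equivariance from $\Ad(G)$ to $\Aut(G)$ by using that every Lie algebra automorphism $\phi$ preserves the Killing form, so that the tensor attached to $\phi^{-1}(t)$ is $\bar{\Pi}(r,s) = \phi^{-1}(\Pi(\phi(r),\phi(s)))$, i.e.\ the difference tensor of $\Phi^{\ast}(\nabla)$, and settle the solvable case by the Cartan criterion $B_{\g}([\g,\g],\g) = \{0\}$ applied to \eqref{gconeric}.

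The only place you genuinely diverge is claim \eqref{ssnotflat}. In the paper this claim requires no new argument: it is inherited verbatim from Theorem \ref{gnonnulltheorem}, whose proof exhibits a Jacobson--Morozov $\sll(2,\rea)$-triple $\{t,r,s\}$ and computes $\lric(r,s) = n/2 \neq 0$. You instead re-derive it from triviality of the center (choose $r$ with $[t,r]\neq 0$) and nondegeneracy of $B_{\g}$ (choose $s$ pairing nontrivially with $[t,r]$). This is a valid and arguably more elementary route, since it avoids Jacobson--Morozov and works directly with the given $t$; the one slip is the sign, as invariance of the Killing form gives $B_{\g}(t,[r,s]) = B_{\g}([t,r],s)$ rather than its negative, but this is immaterial because only nonvanishing is used. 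Everything else — the transfer of claims \eqref{gnullplanelike}, \eqref{subalgtotgeodg}, \eqref{abeliansubalgtotgeodg} and the clerical matching of \eqref{gconedefinedg} with \eqref{gconedefined} and of \eqref{gconericg} with \eqref{gconeric} — is exactly what the paper does.
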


\begin{proof}
Taking $k = B_{\g} \in S^{2}\g^{\ast}$ (so that $h = B_{\g}(t, t)^{-1}B$) in Theorem \ref{gnonnulltheorem} yields \eqref{gconedefined} and \eqref{gconeric}.
Let $\Phi \in \Aut(G)$ be an automorphism of $G$ and let $\phi = T\Phi(e) \in \eno(\g)$ be the corresponding Lie algebra automorphism. Let $\bnabla$ be the connection associated with $\bar{t} = \phi^{-1}(t)$ by $\bar{\Pi} \in S^{2}\g^{\ast}\tensor \g$ as in \eqref{gconedefined}. Because $\phi$ is an automorphism, $\bar{\Pi}(r, s) = \phi^{-1}(\Pi(\phi(r), \phi(s)))$, which is the tensor determined by the left-invariant connection $\Phi^{\ast}(\nabla)$. %This shows \eqref{coneauto}.

By the Cartan criterion, over a field of characteristic zero a Lie algebra $\g$ is solvable if and only if $B_{\g}([\g, \g], \g) = \{0\}$, so when $G$ is solvable it follows from \eqref{gconeric} that $\nabla$ has vanishing Ricci tensor.
\end{proof}

\begin{remark}
Theorem \ref{killingnonnulltheorem} is essentially the special case of Theorem \ref{gnonnulltheorem} where $k = B_{\g}$, with the difference that claim \eqref{coneautog} of Theorem \ref{gnonnulltheorem} is improved by replacing the group of inner automorphisms $\Ad(G)$ by the full automorphism group $\Aut(G)$. This is possible because any automorphism of $G$ preserves the Killing form, while an arbitrary invariant bilinear form need only be preserved by inner automorphisms. In the context of nilpotent Lie groups, this improvement is decidedly nontrivial because, by \cite[Theorem $4$]{Jacobson-automorphisms}, a nilpotent Lie algebra admits a derivation which is not inner, so the group of outer automorphisms of a connected nilpotent Lie group has positive dimension. See \cite{Dani} for a survey of automorphism groups of Lie groups.
 
Although claims \eqref{subalgtotgeodg} and \eqref{abeliansubalgtotgeodg} of Theorem \ref{gnonnulltheorem} continue to hold without change, with respect to claim \eqref{subalgtotgeodg} note that the hypothesis $B_{\g}(t, t) \neq 0$ need not imply that $B_{\h}(t, t) \neq 0$, and even if this be the case, in general $\bnabla$ need not be the left-invariant conelike radiant connection associated with $(H, t)$ by \eqref{gconedefined}.
\end{remark}

\begin{remark}
The connection $\nabla$ associated with $t \in \g$ by Theorem \ref{killingnonnulltheorem} depends only on the adjoint orbit of $t$, in the sense that the connection $\bnabla$ associated with $\bar{t} = \Ad(g)t$ equals the pullback of $\nabla$ via $\Ad(g^{-1})$, for if $t$ is used in place of $t$ in \eqref{gconedefined}, the resulting tensor $\bar{\Pi}$ satisfies $\bar{\Pi}(a, b) = \Ad(g)\Pi(\Ad(g^{-1})a, \Ad(g^{-1})b)$. That is the left-invariant conelike radiant structures on $G$ associated by Theorem \ref{killingnonnulltheorem} with elements of a Killing nondegenerate adjoint orbit are isomorphic via inner automorphisms of $G$.
\end{remark}

\begin{example}
The formula \eqref{gconedefined} is simpler than it appears. For $r, s \in \ker B_{\g}(t, \dum)$ it simplifies to
\begin{align}\label{gconedefinedker}
\begin{split}
\Pi(t, t)& = t, \qquad
\Pi(r, t)  = \Pi(t, r) = r + \tfrac{1}{2}[t, r],\\
\Pi(r, s) &= \tfrac{1}{4(n-2)}\left(B_{\g}(r, s)  - \tfrac{2}{B_{\g}(t,t)}B_{\g}([t, r], [t, s]) \right)t = \tfrac{1}{4(n-2)}B_{\g}\left(r, \left(\Id_{\g} + \tfrac{2}{B_{\g}(t, t)}\ad_{\g}(t)^{2}\right)s\right).
\end{split}
\end{align}
The expressions \eqref{gconedefinedker} give rise to the following example. Let $G$ be an $n$-dimensional Lie group with Lie algebra $\g$ that is $|k|$-graded, meaning there is a direct sum of linear subspaces, $\g = \oplus_{i = -k}^{k}\g_{k}$, satisfying $[\g_{i}, \g_{j}] \subset \g_{i+j}$, where $\g_{p} = \{0\}$ if $|p| > k$ and $\g_{k} \neq \{0\}$, $\g_{-k}\neq\{0\}$. If $\g$ is semisimple as a Lie algebra there exists a unique \emph{grading element} $t \in \g$ such that $\ad_{\g}(t)(r) = ir$ for $r \in \g_{i}$ and $t$ is contained in the center of $\g_{0}$ \cite[Proposition $3.1.2$]{Cap-Slovak-book}. If $r \in \g_{i}$ and $s \in \g_{j}$, then $B_{\g}(r, s) = 0$ if $i+j \neq 0$, while the restriction of $B_{\g}$ to $\g_{i}$ yields a linear isomorphism of $\g_{i}$ with $\g_{-i}^{\ast}$ is nondegenerate. There holds $B_{\g}(t, t) = \sum_{i}i^{2}\dim \g_{i} > 0$. Let $\nabla$ be the left-invariant connection on $G$ associated with $t$ by Theorem \ref{gnonnulltheorem}. Specializing \eqref{gconedefinedker} yields, for $r \in \g_{i}$ and $s \in \g_{j}$,
\begin{align}
\begin{aligned}
&\Pi(t, t)  = t,&& A(t, t) = t,\\
&\Pi(t, r)  = (1 + i/2)r, && A(t, r) = (1 + i)r,&\\
&& &A(r, t) = r\\
&\Pi(r, s)  = \begin{cases}
0 & i + j \neq 0,\\
\tfrac{\left(B_{\g}(t, t) +  2i^{2}\right)B_{\g}(r, s)}{4(n-2)B_{\g}(t, t)} t & i + j = 0,
\end{cases}&&A(r, s)  = \begin{cases}
\tfrac{1}{2}[r, s] & i + j \neq 0,\\
\tfrac{1}{2}[r, s] + \tfrac{\left(B_{\g}(t, t) +  2i^{2}\right)B_{\g}(r, s)}{4(n-2)B_{\g}(t, t)}t& i + j = 0.
\end{cases}
\end{aligned}
\end{align}
\end{example}

Over a field $\fie$ of characteristic not equal to $2$, let $(\ste, \om)$ be a $2n$-dimensional $\fie$-vector space equipped with a structure of a symplectic vector space. The Lie algebra $\heisen_{2n+1}(\fie)$ is $\ste \oplus \fie$ equipped with the bracket $[(x, r), (y, s)] = (0, \om(x, y))$. By Lemma \ref{heisenberglemma} any $2$-step nilpotent Lie algebra with one-dimensional center is isomorphic to the Heisenberg Lie algebra.

\begin{lemma}\label{heisenberglemma}
Over a field of characteristic not equal to $2$, there exists a $2$-step nilpotent Lie algebra $\n$ with one-dimensional center $\ctr(\n)$ if and only if $\dim \n$ is odd, and such a Lie algebra is unique up to isomorphism. 
\end{lemma}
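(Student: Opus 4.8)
The plan is to reduce the classification to the linear algebra of alternating bilinear forms. Let $\n$ be $2$-step nilpotent with $\dim \ctr(\n) = 1$. By definition $[\n, [\n, \n]] = 0$, which is exactly the statement $[\n, \n] \subseteq \ctr(\n)$, and $[\n,\n] \neq 0$ since otherwise $\n$ would be abelian ($1$-step). As $\ctr(\n)$ is one-dimensional, this forces $[\n, \n] = \ctr(\n)$; I would fix a generator $z$ and use $z \mapsto 1$ to identify $\ctr(\n)$ with $\fie$. Setting $V = \n / \ctr(\n)$, define $\om \colon V \times V \to \fie$ by $\om(\bar x, \bar y) = [x, y]$ read through this identification. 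This is well posed because $[\n, \n] \subseteq \ctr(\n)$ and because $z$ is central the value is unchanged under $x \mapsto x + cz$; it is alternating since $[x,x]=0$. The key observation is that $\om$ is nondegenerate: if $\om(\bar x, \dum) \equiv 0$ then $[x, \n] = 0$, so $x \in \ctr(\n)$ and $\bar x = 0$.

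From this I extract the parity statement. A nondegenerate alternating bilinear form has even rank, hence even-dimensional ambient space; so $\dim V = \dim \n - 1$ is even and $\dim \n$ is odd. This is the ``only if'' direction. For the converse, given $\dim \n = 2n+1$ the Heisenberg algebra $\heisen_{2n+1}(\fie)$ displayed above is $2$-step nilpotent with $[\heisen_{2n+1}(\fie), \heisen_{2n+1}(\fie)] = \fie = \ctr(\heisen_{2n+1}(\fie))$; I would check directly from the bracket $[(x,r),(y,s)] = (0, \om(x,y))$ that the center is precisely the $\fie$-summand, using the nondegeneracy of the symplectic form $\om$ on $\ste$. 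This produces an example in every odd dimension.

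For uniqueness the strategy is to reconstruct $\n$ from the pair $(V, \om)$. Choosing any $\fie$-linear section $\sigma \colon V \to \n$ of the projection gives $\n = \sigma(V) \oplus \fie z$ as a vector space, with bracket completely determined by $[\sigma(\bar x), \sigma(\bar y)] = \om(\bar x, \bar y)\, z$ together with $[\sigma(\bar x), z] = 0 = [z, z]$; the Jacobi identity is automatic, every iterated bracket landing in the central line. Given a second such algebra $\n'$ with data $(V', \om', z', \sigma')$, any $\fie$-linear symplectic isomorphism $\phi \colon (V, \om) \to (V', \om')$ lifts to a Lie algebra isomorphism $\Psi(\sigma(\bar x) + c z) = \sigma'(\phi \bar x) + c z'$, as a one-line bracket computation confirms. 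The remaining input is the symplectic (Darboux) basis theorem: any two nondegenerate alternating forms on spaces of the same even dimension are equivalent. Taking $\n' = \heisen_{2n+1}(\fie)$ then shows every such $\n$ is isomorphic to the Heisenberg algebra.

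The main obstacle is the bookkeeping in the uniqueness step rather than any deep point: one must note that $\om$ depends on the choice of central generator $z$ (replacing $z$ by $\lambda z$ rescales $\om$ by $\lambda^{-1}$), so the invariant genuinely attached to $\n$ is the equivalence class of $(V, \om)$ up to this scaling. This causes no trouble, since equivalence of nondegenerate alternating forms is insensitive to an overall scalar and, in any case, is governed solely by the dimension. The substantive content beyond these identifications is precisely the classical fact that the rank of an alternating form is even and that all nondegenerate ones of a fixed even rank are equivalent, which hold over the field $\fie$ and drive both the parity dichotomy and the uniqueness assertion.
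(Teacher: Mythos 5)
Your proof is correct and follows essentially the same route as the paper's: both reduce the lemma to the fact that a nondegenerate alternating bilinear form forces even dimension and is unique up to linear equivalence, with your quotient-plus-section $(V = \n/\ctr(\n), \sigma)$ being the same device as the paper's choice of a linear complement $\m$ with $\n = \m \oplus \ctr(\n)$. The extra remarks you make (the scaling ambiguity in $\om$ from the choice of $z$, and the explicit lift of a symplectic isomorphism) are sound but amount to the same argument written in slightly more detail.
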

\begin{proof}
Choose a linear subspace $\m \subset \n$ such that $\n = \m \oplus \ctr(\n)$. Let $z$ span $\ctr(\n)$ and define an antisymmetric two-form $\omega$ on $\m$ by $[a, b] = \omega(a, b)z$. If $x \in \n$ and $x \notin \ctr(\n)$, then there is $y \in \n$ such that $[x, y] \neq 0$, and evidently $y \notin \ctr(\n)$. this shows that the restriction to $\m$ of $\omega$ is nondegenerate. Since $(\m, \omega)$ is a symplectic vector space, it has even dimension. On $\m \oplus \ctr(\n)$ the bracket $[\,,\,]$ has the form$[(a, rt), (b, st)] = (0, \omega(a, b)t)$. Since any two symplectic vector spaces are linearly isomorphic, the claim follows. 
\end{proof}

The $(2n+1)$-dimensional Heisenberg group $\Heisen_{2n+1}(\fie)$ is the vector space $\ste \oplus \fie$ equipped with the product $(x, r)\cdot (y, s) = (x + y, r + s + \tfrac{1}{2}\om(x, y))$. 
Example \ref{heisenbergexample} shows that are many different structures of a left-invariant conelike radiant structure with antisymmetric Ricci tensor that can be constructed on the Heisenberg group. The simplest such is obtained from Theorem \ref{gnonnulltheorem} with $g$ being the tensor square of the contact one-form on the Heisenberg group.

\begin{example}\label{heisenbergexample}
Let $(\ste, \om)$ be a $2n$-dimensional symplectic real vector space.  Let $\{e_{1}, \dots, e_{2n}\}$ be a basis of $\ste$ and write $E_{i} = (e_{i}, 0) \in \heisen_{2n+1}$. The center of $\heisen_{2n+1}$ is the one-dimensional subspace spanned by $Z = (0, 1)$ and it follows that subgroup generated by exponentiating $z$ is normal and so $\Heisen_{2n+1}$ fibers over its quotient by this subgroup, which is $\ste$ viewed as an additive abelian group. The action of $\reat$ generating the fibers is free, so the fibration $\rho:\Heisen_{2n+1} \to \ste$ given by $\rho(x, r) = x$ is principal. Let $\theta$ be the left invariant one-form such that $\theta(Z) = 1$ and $\{E_{1}, \dots, E_{2n}\}$ span $\ker \theta$. Since $Z$ is central, $R^{\ast}_{\exp(tZ)}d\theta = d\theta$, so $\theta$ is a principal connection on $\rho:\Heisen_{2n+1} \to \ste$, and since $d\theta = -\rho^{\ast}(\om)$, $\theta$ is a contact one-form and is not flat as a principal connection.

The most general left-invariant torsion-free connection, $\nabla$, on $\Heisen_{2n+1}$ such that $Z$ is a radiant vector field satisfies
\begin{align}
&\nabla_{Z}Z = Z, && \nabla_{E_{i}}Z = E_{i},&&\nabla_{Z}E_{i} = E_{i},&
&\nabla_{E_{i}}E_{j} = A_{ij}\,^{k}E_{k} + B_{ij}Z + \tfrac{1}{2}\om_{ij}Z,
\end{align}
for $A_{ij}\,^{k} \in S^{2}\std \tensor \ste$ and $B_{ij} \in S^{2}\std$. The curvature of $\nabla$ satisfies
\begin{align}
\begin{split}
R(\dum, \dum)Z &= 0, \quad R(Z, \dum)\dum = 0, \\
R(E_{i}, E_{j})E_{k} &= (2A_{p[i}\,^{l}A_{j]k}\,^{p} + 2\delta_{[i}\,^{l}B_{j]k})E_{l} + \tfrac{1}{2}\om_{jk}E_{i} + \tfrac{1}{2}\om_{ki}E_{j} - \om_{ij}E_{k}.
\end{split}
\end{align}
Since $R(Z, \dum)\dum = 0$, by Lemma \ref{coneconditionlemma}, $(\nabla, Z)$ is conelike. Its Ricci curvature is
\begin{align}
&\ric(Z, \dum)= 0 =  \ric(\dum, Z), &&
\ric(E_{i}, E_{j}) = A_{ij}\,^{p}A_{qp}\,^{q} - A_{qj}\,^{p}A_{ip}\,^{q} + (2n-1)B_{ij},
\end{align}
so that the Ricci tensor of the conelike radiant structure $(\nabla, Z)$ is antisymmetric if and only if $B_{ij} = \tfrac{1}{2n-1}\left(A_{ip}\,^{q}A_{jq}\,^{p} - A_{ij}\,^{p}A_{qp}\,^{q} \right)$, in which case the Ricci tensor is identically zero. 

Let $\bnabla$ be the connection on $\ste$ such that $\bnabla_{e_{i}}e_{j} = A_{ij}\,^{k}e_{k}$. Then the Ricci tensor $\bar{\ric}$ of $\bnabla$ satisfies $\bar{\ric}(e_{i}, e_{j}) = A_{ij}\,^{p}A_{qp}\,^{q} - A_{ip}\,^{q}A_{jq}\,^{p}$, so that the condition that $\ric$ be antisymmetric is equivalent to $B_{ij} = \tfrac{1}{1-2n}\bar{\ric}_{ij}$. From Lemma \ref{preextendedthomaslemma} it follows that $\nabla$ is the cone connection determined by the extended projective structure $[\bnabla, \theta]$ on $\ste$.  

The connection $\nabla$ on $G$ obtained in the case where $A_{ij}\,^{k} = 0$, so that $\bnabla_{e_{i}}e_{j} = 0$ for all $i$ and $j$, is the special case of Theorem \ref{gnonnulltheorem} where the invariant symmetric tensor $g$ is taken to be $g = \theta \tensor \theta$, so that $\nabla$ is the left-invariant connection associated with the tensor $\Pi + \tfrac{1}{2}[\dum, \dum]$ where $\Pi = \theta \tensor \delta + \delta \tensor \theta - \theta \tensor \theta \tensor Z$.

Let $G = \Heisen_{2n+1}$. Let $\Ga \subset \ste$ be a lattice, let $H = \Ga \oplus \rea \subset G$, and let $K = \Ga \oplus \integer \subset G$. Then $G/H \simeq \ste/\Ga = \torus_{2n}$ is a $2n$-dimensional torus, $H/K \simeq S^{1}$, and the fibration $\rho$ descends to give a fiber bundle $S^{1} = H/K \to G/K \to \torus_{2n} = G/H$. The radiant structure $(\nabla, \theta)$ descends to $G/K$ and is the cone connection of the extended projective structure determined by $\theta$ and the descent of $\bnabla$ to $\torus_{2n}$.
\end{example}

\begin{example}\label{leftinvarianterexample}
This example constructs a left-invariant radiant structure on a nonunimodular solvable Lie group for which $\er$ is neither vanishing nor closed.
The simplest example of a Lie algebra admitting an exact symplectic form is the Lie algebra $\g= \aff(1, \com)$ of affine transformations of the complex affine line. To make computations in the $\g$ there is fixed a basis $\{e_{i}\}$. The dual basis in $\g^{\ast}$ is written $\{e^{i}\}$. Write $[e_{i}, e_{j}] = c_{ij}\,^{k}e_{k}$, so that $de^{k} = -\tfrac{1}{2}c_{ij}\,^{k}e^{i}\wedge e^{j} = -c_{ij}\,^{k}e^{i}\tensor e^{j}$. Here $d$ can be understood either as the Lie algebra cohomology codifferential, or as the exterior differential of the left-invariant one-form on $G$ corresponding to $e^{i}$ (see \cite{Nomizu-nilpotent}). An element $x \in \g$ is written as $\sum_{i}x_{i}e_{i}$.
With respect to the basis $e_{1} = (1, 0)$, $e_{2} = (\j, 0)$, $e_{3} = (0, 1)$, and $e_{4} = (0, \j)$ of $\g= \aff(1, \com)$, the Lie bracket is
\begin{align}\label{aff1rel}
[x, y] & = (x_{1}y_{3} - x_{3}y_{1} - x_{2}y_{4} + x_{4}y_{2})e_{3} + (x_{1}y_{4} - x_{4}y_{1} + x_{2}y_{3} - x_{3}y_{2})e_{4}.
\end{align}
The commutator is $[\g, \g] = \spn\{e_{3}, e_{4}\}$, which is abelian, but stable under the adjoint action, showing that $\g$ is solvable but not nilpotent. Since $\tr \ad_{\g}(\dum) = 2e^{1}$, $\g$ is not unimodular.  
Calculation using \eqref{aff1rel} shows $B(x, y) = 2(x_{1}y_{1} - x_{2}y_{2})$. Using \eqref{gconedefinedker}, it can be checked that the connection associated by Theorem \ref{killingnonnulltheorem} with $t = -e_{1}$ is given by 
\begin{align}
\begin{split}
A(x, y) & = (-x_{1}y_{1} + \tfrac{1}{4}x_{2}y_{2})e_{1}  -(x_{1}y_{2} + x_{2}y_{1})e_{2}\\
&\quad  - (x_{3}y_{1}  + \tfrac{1}{2}(x_{2}y_{4} - x_{4}y_{2}))e_{3} - (x_{4}y_{1} -\tfrac{1}{2}(x_{2}y_{3} - x_{3}y_{2}))e_{4},
\end{split}
\end{align}
and constitutes with $E^{t} = E^{-e_{1}}$ a left-invariant radiant structure on a group $G$ with Lie algebra $\g$. Because $de^{1} = 0$, by \eqref{gconeric} its Ricci curvature vanishes. Consider the connection $\bnabla$ corresponding with the tensor 
\begin{align}
\bar{A} = A + e^{4}\tensor \delta + \delta \tensor e^{4} - (e^{4}\tensor e^{1} + e^{1}\tensor e^{4})\tensor e_{1}. 
\end{align}
Because $\bar{A}(\dum, t) = A(\dum, t)$, $(\nabla, E^{t})$ is again radiant. The skew-symmetric part of its Ricci tensor can be calculated using \eqref{lricij}. Because $\bar{A}_{ip}\,^{p} = 4e^{4} - 2e^{1}$, there results that the skew-symmetric part of $\bar{\lric}$ equals the symplectic form $-2de^{4} = 2(e^{1}\wedge e^{4} + e^{2}\wedge e^{3})$. Consequently, $\er = -de^{4}(t, \dum) = -e^{4}$ is nonvanishing and is not closed. 

Because a Lie group that admits a finite-volume quotient by a discrete subgroup is unimodular \cite[Lemma $6.2$]{Milnor-leftinvariant}, this example cannot yield a compact radiant manifold with $\er$ nonvanishing. Moreover, the construction uses the nonunimodularity in an essential way; a multiple of the radiant field is Killing dual to the trace of the adjoint representation.
\end{example}

\section{Left-invariant conelike radiant structures on three-dimensional unimodular Lie groups}\label{3dsection}
Among the simplest interesting examples of conelike radiant structures are the left-invariant conelike radiant structures on three-dimensional unimodular Lie groups. 
The main result of this section is that the isomorphism classes of left-invariant conelike radiant structures on a three-dimensional unimodular nonnilpotent Lie group with Lie algebra $\g$ are in bijection with the $\Aut(\g)$ orbits of semisimple elements of $\g$.

Three-dimensional unimodular Lie groups were classified by L. Bianchi \cite{Bianchi}. Modern references include \cite{Milnor-leftinvariant} and \cite[Section $2$]{Meeks-Perez}. 
%For background on orbits in Lie algebras see \cite{Collingwood-McGovern}.

The claims about $3$-dimensional unimodular Lie algebras stated in Lemmas \ref{3dsslemma} and \ref{3dunilemma} are closely related to and could be deduced from the results of \cite{Jacobson-threedimensional, Malcolmson}, which describe simple three-dimensional Lie algebras over arbitrary fields. 

\begin{lemma}\label{3dsslemma}
Let $\g$ be a $3$-dimensional unimodular Lie algebra over a field $\fie$ of characteristic not dividing $6$ and having Killing form $B_{\g} \in S^{2}\g^{\ast}$. For $0 \neq t \in \g$ there holds 
\begin{align}\label{unich}
\ad_{\g}(t)(\ad_{\g}(t)^{2} - \tfrac{1}{2}B_{\g}(t, t)\Id_{\g}) = 0,
\end{align}
$\ad_{\g}(t)$ is semisimple or nilpotent as $B_{\g}(t, t) \neq 0$ or $B_{\g}(t, t) = 0$, and $\ad_{\g}(t)$ is diagonalizable over $\fie$ if and only if $B_{\g}(t, t)/2$ is a square in $\fie$. 
\end{lemma}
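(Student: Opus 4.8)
The plan is to reduce the entire lemma to the computation of the characteristic polynomial of the endomorphism $\ad_{\g}(t) \in \eno(\g)$ of the three-dimensional vector space $\g$, and then to read off every assertion from its factored form. First I would record the two coefficients of this polynomial that vanish. Since $\g$ is unimodular, the one-form $\ell$ is identically zero, so $\tr \ad_{\g}(t) = \ell(t) = 0$. Moreover $\ad_{\g}(t)(t) = [t,t] = 0$ with $t \neq 0$ shows that $t \in \ker \ad_{\g}(t)$, so $\ad_{\g}(t)$ is singular and $\det \ad_{\g}(t) = 0$. Writing the characteristic polynomial of a $3\times 3$ operator $A$ as $p(\lambda) = \lambda^3 - (\tr A)\lambda^2 + c_2\lambda - \det A$, where $c_2$ is the second elementary symmetric function of the eigenvalues, these two vanishings leave $p(\lambda) = \lambda^3 + c_2\lambda$.

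The key identification is of the middle coefficient. By Newton's identity (valid because $\chr \fie \neq 2$), $c_2 = \tfrac{1}{2}\big((\tr A)^2 - \tr A^2\big) = -\tfrac{1}{2}\tr\big(\ad_{\g}(t)^2\big) = -\tfrac{1}{2}B_{\g}(t,t)$, the last equality being the very definition of the Killing form. Hence $p(\lambda) = \lambda^3 - \tfrac{1}{2}B_{\g}(t,t)\lambda = \lambda\big(\lambda^2 - \tfrac{1}{2}B_{\g}(t,t)\big)$, and the Cayley–Hamilton theorem gives exactly the identity \eqref{unich}. This is the one step carrying real content; everything else is a reading of the roots of this polynomial.

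Next I would extract the dichotomy and the diagonalizability criterion from the factorization. If $B_{\g}(t,t) \neq 0$, then over $\bar{\fie}$ the three roots $0, \pm\mu$, with $\mu^2 = \tfrac{1}{2}B_{\g}(t,t) \neq 0$, are pairwise distinct precisely because $\chr \fie \neq 2$; the minimal polynomial is therefore separable and $\ad_{\g}(t)$ is semisimple. Since semisimplicity already holds, diagonalizability over $\fie$ is equivalent to all eigenvalues lying in $\fie$, that is to $\mu = \sqrt{\tfrac{1}{2}B_{\g}(t,t)} \in \fie$, which is exactly the condition that $\tfrac{1}{2}B_{\g}(t,t)$ be a square in $\fie$. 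If instead $B_{\g}(t,t) = 0$, then $p(\lambda) = \lambda^3$ and $\ad_{\g}(t)$ is nilpotent.

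The delicate point, which I would flag explicitly rather than gloss over, is the degenerate end of the diagonalizability biconditional: a nonzero nilpotent operator is never diagonalizable, while $0 = \tfrac{1}{2}B_{\g}(t,t)$ is trivially a square, so the equivalence is to be read in the semisimple regime $B_{\g}(t,t) \neq 0$ (when $B_{\g}(t,t) = 0$, diagonalizability is equivalent to $\ad_{\g}(t) = 0$, i.e.\ to $t$ being central). Apart from this bookkeeping there is no genuine obstacle; the whole lemma rests on the coefficient identification $c_2 = -\tfrac{1}{2}B_{\g}(t,t)$ furnished by unimodularity and the definition of the Killing form. Note that the hypothesis $\chr \fie \neq 3$ is not used in this argument (it is needed elsewhere in the section); only $\chr \fie \neq 2$ enters, both in Newton's identity and in separating the eigenvalues $\pm\mu$.
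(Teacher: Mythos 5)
Your proof is correct, and it is a genuine streamlining of the paper's argument rather than a rerun of it. Both proofs rest on the Cayley--Hamilton theorem, but you compute the characteristic polynomial of $T = \ad_{\g}(t)$ outright: the trace vanishes by unimodularity, the determinant vanishes because $t \in \ker T$, and the middle coefficient is identified as $-\tfrac{1}{2}B_{\g}(t,t)$ by Newton's identity, so that \eqref{unich} drops out in one stroke, uniformly in $B_{\g}(t,t)$ and with no case distinction. The paper proceeds differently: it first extracts $\tr T^{3} = 0$ from the cubic trace identity $6\det A = (\tr A)^{3} - 3(\tr A)(\tr A^{2}) + 2\tr A^{3}$, then splits into the cases $B_{\g}(t,t) = 0$ and $B_{\g}(t,t) \neq 0$, writing the Cayley--Hamilton relation with undetermined coefficients and pinning them down by tracing; in the case $B_{\g}(t,t) = 0$ it invokes the fact that the vanishing of $\tr T$, $\tr T^{2}$, $\tr T^{3}$ forces nilpotency, and that step is exactly where $\chr \fie \neq 3$ is used. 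Your route therefore buys two things: the hypothesis $\chr\fie \neq 3$ is genuinely superfluous (only invertibility of $2$ enters, as you note), and the determinant is known from the outset rather than reconstructed from power sums. Your handling of the diagonalizability claim is also more complete than the paper's: the paper proves only that $B_{\g}(t,t)/2$ being a square implies diagonalizability, tacitly assuming $B_{\g}(t,t) \neq 0$ (otherwise the three factors of the characteristic polynomial are not distinct), whereas you establish both directions in the regime $B_{\g}(t,t) \neq 0$ and correctly flag that the biconditional as literally stated fails for a nonzero nilpotent $\ad_{\g}(t)$; that caveat is an imprecision in the lemma's statement itself, not a defect of your argument.
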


\begin{proof}
Let $0 \neq t \in \g$ and write $T = \ad_{\g}(t)$. Because $\chr \fie \notin \{2, 3\}$, there holds $6 \det A = (\tr A)^{3} - 3(\tr A)(\tr A^{2}) + 2\tr A^{3}$ for any $A \in \eno(\g)$ . Since $T(t) =0$ and $\g$ is unimodular, with $T$ in place of $A$ this yields $0 = 6\det T = 2\tr T^{3}$, so $\tr T^{3} = 0$. If $B_{\g}(t, t) =0$, then $\tr T^{k} = 0$ for $1 \leq k \leq 3$, and, because $\chr \fie \notin \{2, 3\}$, this implies $T$ is nilpotent, so $T^{3} = 0$ and there holds \eqref{unich}. Suppose $B_{\g}(t, t) \neq 0$. Since $T(t) = 0$, by the Cayley-Hamilton theorem there are $\al, \be \in \fie$ such that $T^{3} + \al T^{2} + \be T = 0$. Tracing this yields $0 = \tr T^{3} + \al B_{\g}(t, t) + \be \tr T  = \al B_{\g}(t, t)$, so $\al = 0$, since $B_{\g}(t, t, ) \neq 0$. Hence $T(T^{2} + \be \Id) = 0$. If $\be = 0$, then $T^{3} = 0$, so $T$ is nilpotent, $B_{\g}(t, t) =0$, and there holds \eqref{unich}. If $\be \neq 0$, then the characteristic polynomial of $T$ is $x(x^{2} + \be)$ so $T$ is semisimple and there is a $2$-dimensional subspace of $\g$ on which $T^{2} + \be \Id$ restricts to $0$. It follows that $B_{\g}(t, t) = \tr T^{2} = -2\be$, and this shows \eqref{unich} holds for all $t \in \g$. If $B_{\g}(t, t)/2$ is a square in $\fie$, then the characteristic polynomial of $T$ has three distinct factors over $\fie$, so $T$ is diagonalizable. 
\end{proof}

\begin{lemma}\label{3dunilemma}
If $\g$ is a $3$-dimensional unimodular real Lie algebra that is not abelian, then there is a basis $\{t, a, b\}$ of $\g$ satisfying
\begin{align}\label{unibasis}
&[t, a] = 2b,& &[t, b] = \tfrac{1}{4}B_{\g}(t, t)a, &&[a, b] = \ep t,
\end{align}
where $\ep = 0$ if $B_{g}(t, t) = 0$,  $\ep \in \{0, 1, -1\}$ if $B_{\g}(t, t) < 0$, and $\ep \in \{0, -1\}$ if $B_{\g}(t, t) > 0$. There hold $B_{\g}(t, a) = 0$, $B_{\g}(t, b) =0$, $B_{\g}(a, b) = 0$, $B_{\g}(a, a) = -4\ep$, and $B_{\g}(b, b) = \tfrac{1}{2}\ep B_{\g}(t, t)$.
\begin{enumerate}
\item If $\ep =0$ and $B_{\g}(t, t) = 0$, then $\g$ is isomorphic to $\heisen_{3}(\rea)$.
\item If $\ep = 1$ and $B_{\g}(t, t) < 0$, then $\g$ is simple and compact.
%\item If $\ep = 1$ and $B_{\g}(t, t) > 0$, then $\g$ is simple and noncompact. %can be omitted
\item If $\ep = -1$ and $B_{\g}(t, t) \neq 0$, then $\g$ is simple and noncompact.
\item If $\ep = 0$ and $B_{\g}(t, t) \neq 0$, then $\g$ is solvable but not nilpotent.
\end{enumerate}
% the following is true - but needs more work to show it here - NOT TRUE if $\ep =0$ then there are two t's for each $B_{\g}(t, t)$
%The pair $(\g, t)$ is determined up to isomorphism by the numbers $\ep$ and $B_{\g}(t, t)$, subject to the condition that $B_{\g}(t, t) < 0$ if $\ep = 1$. 
\end{lemma}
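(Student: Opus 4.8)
The plan is to split on whether the Killing form $B_\g$ vanishes identically, and in the nonvanishing case to read off the whole structure from the semisimple operator $\ad_\g(t)$ supplied by Lemma \ref{3dsslemma}. First suppose $B_\g \equiv 0$. Then every $\ad_\g(v)$ is nilpotent by Lemma \ref{3dsslemma}, so $\g$ is nilpotent by Engel's theorem; being three-dimensional and non-abelian it is two-step nilpotent with $\ctr(\g) = [\g,\g]$ one-dimensional, hence isomorphic to $\heisen_3(\rea)$ by Lemma \ref{heisenberglemma}. Choosing $t$ with $T := \ad_\g(t) \neq 0$ (possible since $\g$ is non-abelian, and then $B_\g(t,t) = 0$ automatically), one has $T^2 = 0$ of rank one; taking $a \notin \ker T$ and $b = \tfrac{1}{2}T(a) = \tfrac{1}{2}[t,a]$ produces a basis satisfying $[t,a] = 2b$, $[t,b] = 0 = \tfrac{1}{4}B_\g(t,t)a$, $[a,b] = 0$, i.e. \eqref{unibasis} with $\ep = 0$. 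This is case (1).

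Now suppose $B_\g \not\equiv 0$ and fix $t$ with $k := B_\g(t,t) \neq 0$. By Lemma \ref{3dsslemma}, $T = \ad_\g(t)$ is semisimple with characteristic polynomial $x(x^2 - \tfrac{1}{2}k)$, so $\ker T = \spn\{t\}$ and $V := \operatorname{im}T$ is a two-dimensional $T$-invariant complement on which $T^2 = \tfrac{1}{2}k\,\Id_V$. I would pick a cyclic vector $a \in V$, so that $\{a, T(a)\}$ is a basis of $V$ (possible because $\tr T = 0$ forces $T|_V$ to be non-scalar), and set $b = \tfrac{1}{2}T(a)$. Then $[t,a] = 2b$ holds by construction and $[t,b] = \tfrac{1}{2}T^2(a) = \tfrac{1}{4}k\,a$ follows automatically. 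For the third relation I write $[a,b] = \alpha t + \beta a + \gamma b$ and apply Jacobi: $[t,[a,b]] = [[t,a],b] + [a,[t,b]]$ has vanishing right-hand side while its left-hand side equals $\tfrac{1}{4}k\gamma\,a + 2\beta\,b$, whence $\beta = \gamma = 0$ (as $k \neq 0$) and $[a,b] = \ep\,t$ with $\ep := \alpha$. The Killing-form identities then follow by direct evaluation of $\tr(\ad_\g(\cdot)\ad_\g(\cdot))$ in the basis $\{t,a,b\}$; this recomputes $B_\g(t,t) = k$ (confirming the normalization) and yields $B_\g(a,a) = -4\ep$, $B_\g(b,b) = \tfrac{1}{2}\ep k$, while $B_\g(t,V) = 0$ is immediate from invariance since $B_\g(Tx, t) = -B_\g(x,[t,t]) = 0$.

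It then remains to normalize $\ep$ and identify the isomorphism type. Rescaling $a \mapsto sa$ forces $b \mapsto sb$, preserves the first two relations, and sends $\ep \mapsto s^2\ep$, so $\ep$ may be taken in $\{0,1,-1\}$. The sign constraint comes from the signature of $B_\g|_V$, whose determinant in $\{a,b\}$ is $(-4\ep)(\tfrac{1}{2}\ep k) = -2\ep^2 k$: when $k > 0$ this form is indefinite, the $T$-eigenvectors are $B_\g$-null by invariance, and a generic cyclic $a = v_+ + c v_-$ has $B_\g(a,a) = 2c\,B_\g(v_+,v_-)$ of freely chosen sign, so since $\ep = -\tfrac{1}{4}B_\g(a,a)$ I may force $\ep \in \{0,-1\}$; when $k < 0$ the form $B_\g|_V$ is definite, the sign of $\ep$ is fixed, and $\ep \in \{0,1,-1\}$ occurs. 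Since $B_\g = \langle k\rangle \oplus B_\g|_V$ is diagonal with entries $k,-4\ep,\tfrac{1}{2}\ep k$ and determinant $-2\ep^2 k^2$, it is nondegenerate exactly when $\ep \neq 0$; Cartan's criterion then makes $\g$ semisimple, hence simple in dimension three, and it is compact iff $B_\g$ is negative definite, which by the diagonal form happens precisely for $\ep = 1,\,k<0$ (case (2)), leaving $\ep = -1$ indefinite and noncompact (case (3)). When $\ep = 0$, $V$ is abelian so $\g$ is solvable, nilpotent iff $T|_V$ is, i.e. iff $k = 0$, separating case (1) from the non-nilpotent case (4).

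The main obstacle is the $\ep$-normalization in the simple case: showing that $\ep = +1$ need never be forced when $k > 0$ rests on the indefiniteness of $B_\g|_V$ there combined with the freedom in the cyclic vector $a$, while, dually, the definiteness of $B_\g|_V$ when $k < 0$ is exactly what makes the two signs of $\ep$ genuinely distinct and distinguishes the compact form $\su(2)$ from $\sll(2,\rea)$.
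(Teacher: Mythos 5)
Your proof is correct, and its overall architecture is the same as the paper's: dispose of the degenerate case via Lemma \ref{heisenberglemma}, use Lemma \ref{3dsslemma} to get a Killing-anisotropic $t$ with $\ad_{\g}(t)$ semisimple otherwise, build the basis from the action of $T = \ad_{\g}(t)$ on a complement of $\spn\{t\}$, show the third bracket lies in $\spn\{t\}$, rescale, and read off the cases from the diagonalized Killing form. Within that frame you execute three steps differently, each legitimately. First, the paper splits $B_{\g}(t,t)>0$ from $B_{\g}(t,t)<0$ and builds $a,b$ from real, respectively complex, eigenvectors of $T$; your cyclic-vector formulation handles both signs uniformly (for $B_{\g}(t,t)>0$ the existence of a cyclic vector needs unimodularity via $\tr T = 0$, as you note, while for $B_{\g}(t,t)<0$ every nonzero vector of $V$ is cyclic). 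Second, the paper forces $[a,b]\in\spn\{t\}$ by unimodularity ($\tr\ad_{\g}(a) = 0 = \tr\ad_{\g}(b)$), whereas you get it from the Jacobi identity alone; this is a genuine, if small, simplification, showing that this step costs nothing beyond the Lie algebra axioms once $B_{\g}(t,t)\neq 0$. Third, for the normalization $\ep\in\{0,-1\}$ when $B_{\g}(t,t)>0$, the paper performs an explicit basis swap interchanging the roles of $a$ and $b$, while you choose the sign of the cyclic vector $a = v_{+}+cv_{-}$ using that the $T$-eigenvectors are Killing-null; both work, and yours explains \emph{why} the sign of $\ep$ is not an invariant in the split case but is one in the definite case. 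Two small imprecisions, neither damaging: when $B_{\g}(t,t)<0$ the restriction $B_{\g}|_{V}$ is definite \emph{or identically zero} (zero exactly when $\ep = 0$), not always definite; and in the Heisenberg case you should remark that $\ker\ad_{\g}(t) = \spn\{t\}\oplus\ctr(\g)$, so that $a\notin\ker\ad_{\g}(t)$ already guarantees $\{t,a,b\}$ is a basis.
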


\begin{proof}
If $\g$ is nilpotent, and its center has dimension at least $2$, then it is abelian. Otherwise, by Lemma \ref{heisenberglemma} it is isomorphic to $\heisen_{3}(\rea)$. In this case the proof of Lemma \ref{heisenberglemma} shows that there is a basis of $\g$ having the form \eqref{unibasis} where $b$ spans $\ctr(\g)$. By Lemma \ref{3dsslemma}, if an element $x$ of a $3$-dimensional unimodular Lie algebra $\g$ is Killing isotropic, then $\ad_{\g}(x)$ is nilpotent, so if every element of $\g$ is Killing isotropic, then every element of $\g$ is $\ad_{\g}$-nilpotent, so $\g$ is nilpotent. Hence if $\g$ is not nilpotent, there is $t \in \g$ such that $B_{\g}(t, t) \neq 0$.
By \eqref{unich} of Lemma \ref{3dsslemma}, if $B_{\g}(t, t) > 0$, then $\ad_{\g}(t)$ is diagonalizable over $\rea$ and there are nonzero $u_{\pm} \in \g$ such that $Tu_{\pm} = \pm \al u_{\pm}$ where $\al = \sqrt{(B_{\g}(t, t)/2)}$, and $a = \al^{-1}(u_{+}+ u_{-})$ and $b = \tfrac{1}{2}(u_{+} - u_{-})$ constitute with $t$ a basis of $\g$ and satisfy $[t, a] = 2b$ and $[t, b] = \tfrac{1}{4}B_{\g}(t, t)b$.
If $B_{\g}(t, t) < 0$, then $\ad_{\g}(t)$ is diagonalizable over $\com$ but not over $\rea$, and there are nonzero $u,v \in \g$ such that $Tu = -\al v$ and $Tv = \al u$ where $\al = \sqrt{-(B_{\g}(t, t)/2)}$, and $a = \al^{-1}(u + v)$ and $b = \tfrac{1}{2}(u - v)$ constitute with $t$ a basis of $\g$ and satisfy $[t, a] = 2b$ and $[t, b] = \tfrac{1}{4}B_{\g}(t, t)b$.
In either case, $[a, b] = pa + qb + rt$ for $p, q, r \in \rea$. Because $\g$ is unimodular, $0 = \tr \ad_{\g}(a) = q$ and $0 = \tr \ad_{\g}(b) = -p$, so $[a, b] = rt$. Replacing $a$ and $b$ by $\bar{a} = ca$ and $\bar{b} = cb$ with $c = \sqrt{|r|}$ yields a basis $\{t, \bar{a}, \bar{b}\}$ of $\g$ satisfying $[t, \bar{a}] = 2\bar{b}$, $[t, \bar{b}] = \tfrac{1}{4}B_{\g}(t, t)\bar{a}$, and $[\bar{a}, \bar{b}] = \ep t$ with $\ep \in \{0, 1, -1\}$. If $\ep = 1$ and $B_{\g}(t, t) > 0$ then taking $\hat{a} = (2/\al)\bar{b}$ and $\hat{b} =(\al/2)a$ the basis $\{t, \hat{a}, \hat{b}\}$ satisfies $[t, \hat{a}] = 2\hat{b}$, $[t, \hat{b}] = \tfrac{1}{4}B_{\g}(t, t)\hat{a}$, and $[\hat{a}, \hat{b}] = -t$, so if $B_{\g}(t, t) > 0$ it can be assumed $\ep \in \{0, -1\}$.
This proves there is a basis as in \eqref{unibasis}.

That the Killing pairings of the elements of the basis as in \eqref{unibasis} are as indicated follows from \eqref{unibasis} by direct computation. If $\ep = 0$ then $\spn\{a, b\} = [\g, \g]$ is a proper ideal but is not nilpotent, so $\g$ is solvable and not nilpotent. If $\ep \neq 0$, then $B$ is nondegenerate, so $\g$ is simple. If $\ep = -1$ then $B_{\g}(b, b)$ and $B_{\g}(t, t)$ have opposite signs, so $\g$ is not compact, whereas if $\ep = 1$, then $B_{\g}(b, b)$ and $B_{\g}(t, t)$ have the same sign while $B_{\g}(a, a) < 0$, so $\g$ is compact if and only if $B_{\g}(t, t) < 0$ too.
\end{proof}

\begin{remark}
In the nonnilpotent case of Lemma \ref{3dunilemma} there are possible four Lie algebras and five cases. The case $\ep = 1$ and $B_{\g}(t, t) < 0$ corresponds to a $2$-dimensional adjoint orbit in $\so(3)$, the cases $\ep = -1$ and $B_{\g}(t, t)$ of either sign corresponds to the two types of $2$-dimensional adjoint orbits in $\sll(2, \rea)$, and the cases $\ep = 0$ correspond to the $2$-dimensional adjoint orbits in the groups of motions of $2$-dimensional Euclidean and $2$-dimensional Minkowski space.
\end{remark}

\begin{lemma}\label{3derlemma}
Let $G$ be a $3$-dimensional Lie group with Lie algebra $\g$ and Killing form $B_{\g} \in S^{2}\g^{\ast}$. Suppose the radiant vector field $\eul = E^{t}$ of a left-invariant conelike radiant structure on $G$ is generated by $t \in \g$.
\begin{enumerate}
\item \label{eruni}
If $\g$ is unimodular and $B_{g}(t, t) \neq 0$, then $\er = 0$.
\item\label{ersimple} 
If $\g$ is simple and $\ad_{\g}(t)$ is semisimple, then $\er = 0$.
\end{enumerate}
\end{lemma}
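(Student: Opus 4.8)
The plan is to reduce everything to a single algebraic fact about $T:=\ad_{\g}(t)$. First I would record two structural identities valid for any left-invariant conelike radiant structure on a Lie group of dimension $n>2$. Since $\rad=E^{t}$ and $R_{ij}\rad^{j}=0$ by \eqref{radid}, one has $\lric(\dum,t)=0$, so $\er(a)=\lric(t,a)=\lric(t,a)-\lric(a,t)=2R_{[ij]}t^{i}a^{j}$; substituting the formula $\lric_{[ij]}=\tfrac12 c_{ij}\,^{p}\Pi_{pq}\,^{q}$ from \eqref{lricij} gives the key identity $\er(a)=\phi([t,a])$, where $\phi_{p}=\Pi_{pq}\,^{q}\in\g^{\ast}$; that is, $\er=\phi\circ T$. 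On the other hand, Lemma \ref{ertrlemma} states exactly that $\er([t,r])=0$ for all $r$, i.e. $\er\circ T=0$, equivalently $\phi\circ T^{2}=0$. Thus $\phi$ annihilates the image $T^{2}\g$, and to conclude $\er=\phi\circ T=0$ it suffices to know that $T^{2}\g=T\g$. This last equality holds whenever $T$ is semisimple, since then $\g=\ker T\oplus T\g$ and $T$ restricts to an isomorphism of $T\g$.

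For part \eqref{eruni}, the hypothesis $B_{\g}(t,t)\neq 0$ together with unimodularity lets me invoke Lemma \ref{3dsslemma}, which guarantees that $T=\ad_{\g}(t)$ is semisimple. The reduction above then yields $\er=0$ immediately. (If one prefers a kernel-based argument, one checks from \eqref{unich}, $\tr T=0$, $\det T=0$, and $\tr T^{2}=B_{\g}(t,t)\neq 0$ that the eigenvalues of $T$ are $0,\pm\mu$ with $\mu\neq 0$, so $\ker T=\langle t\rangle$ is one-dimensional and $\g=\langle t\rangle\oplus T\g$; combined with $\er(t)=0$ and the vanishing of $\er$ on $T\g$ this again gives $\er=0$.)

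For part \eqref{ersimple}, I would simply reduce to part \eqref{eruni}. A three-dimensional simple Lie algebra is unimodular, has nondegenerate Killing form, and has trivial center; since $\rad=E^{t}$ is nonsingular, $t\neq 0$, hence $T=\ad_{\g}(t)\neq 0$. If $B_{\g}(t,t)$ were zero then Lemma \ref{3dsslemma} would force $T$ to be nilpotent, and being simultaneously semisimple by hypothesis it would vanish, a contradiction. Therefore $B_{\g}(t,t)\neq 0$, and $\er=0$ follows either from part \eqref{eruni} or directly from the semisimplicity of $T$ via the reduction of the first paragraph.

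The argument has no genuinely hard step; the main point to get right is the bookkeeping that produces the identity $\er=\phi\circ\ad_{\g}(t)$ from \eqref{lricij} and \eqref{radid}, after which semisimplicity of $\ad_{\g}(t)$—supplied by Lemma \ref{3dsslemma} in both cases—closes the proof with no further computation. The only subtlety worth flagging is the reduction in part \eqref{ersimple}: one must rule out the isotropic case $B_{\g}(t,t)=0$, which is where the simplicity hypothesis (trivial center, nondegenerate $B_{\g}$) is actually used.
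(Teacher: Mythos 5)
Your proof is correct, and it takes a genuinely different route from the paper's, most visibly in part \eqref{ersimple}. For part \eqref{eruni} the paper argues exactly as in your parenthetical alternative: Lemma \ref{3dunilemma} produces a basis $\{t,a,b\}$ with $a,b\in\im\ad_{\g}(t)$ (since $B_{\g}(t,t)\neq 0$), so $\g=\spn\{t\}+\im\ad_{\g}(t)$, and then Lemma \ref{ertrlemma} together with $\er(t)=0$ finishes. Your primary route is sharper: the identity $\er=\phi\circ\ad_{\g}(t)$ with $\phi_{p}=\Pi_{pq}\,^{q}$, which the paper never isolates, makes $\er$ vanish on $\ker\ad_{\g}(t)$ for free, so that once $\ad_{\g}(t)$ is semisimple (hence $\ad_{\g}(t)^{2}\g=\ad_{\g}(t)\g$) the conclusion follows with no control on the kernel at all. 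For part \eqref{ersimple} the paper does not reduce to part \eqref{eruni}; it runs a Fitting-decomposition argument and uses simplicity to show $\dim\ker\ad_{\g}(t)=1$ (a two-dimensional kernel would force $[\g,\g]$ to be a proper ideal), after which $\ker\ad_{\g}(t)=\spn\{t\}$ and $\er(t)=0$ close the argument. Your reduction---simple implies unimodular with trivial center, $t\neq 0$ gives $\ad_{\g}(t)\neq 0$, and $B_{\g}(t,t)=0$ would make $\ad_{\g}(t)$ nilpotent by Lemma \ref{3dsslemma}, hence zero because it is also semisimple---is sound, and it buys two things the paper's argument does not: it exhibits part \eqref{ersimple} as a special case of part \eqref{eruni}, and, via your first paragraph, it proves the stronger statement that $\er=0$ for any left-invariant conelike radiant structure, on any Lie group of dimension $n>2$, whose radiant generator has semisimple adjoint; the paper's appeals to simplicity and to $\dim\g=3$ serve only to control $\ker\ad_{\g}(t)$, which your identity renders unnecessary.

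One consequence is worth flagging. Your reduction contradicts the Remark that follows the lemma in the paper, which asserts that $\sll(2,\rea)$ contains an element $t$ with $B_{\g}(t,t)=0$ and $\ad_{\g}(t)$ semisimple, so that \eqref{ersimple} would apply where \eqref{eruni} does not. Your argument shows this is impossible, and you are right: a nonzero Killing-isotropic element of $\sll(2,\rea)$ is nilpotent, so its adjoint is nilpotent and nonzero, hence not semisimple. The discrepancy lies in the paper's remark, not in your proof.
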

\begin{proof}
If $\g$ is unimodular and $B_{g}(t, t) \neq 0$, then by Lemma \ref{3dunilemma} there is a basis $\{t, a, b\}$ of $\g$ satisfying \eqref{unibasis}. In particular $\g = \spn\{t\} + \im \ad_{\g}(t)$. By Lemma \ref{ertrlemma} this suffices to show that $\er =0$. This shows \eqref{eruni}.

Suppose $\g$ is simple and $\ad_{\g}(t)$ is semisimple.
Suppose there is $a \in \g$ such that $\er(a) \neq 0$. Consider the Fitting decomposition $\g = \left(\sum_{k \geq 1}\ker \ad_{\g}(t)^{k}\right)\oplus \left( \cap_{k \geq 1}\im \ad_{\g}(t)^{k}\right)$.  By Lemma \ref{ertrlemma}, $\cap_{k \geq 1}\im \ad_{\g}(t)^{k} \subset \ker \er$, so it may be supposed that $a \in \sum_{k \geq 1}\ker \ad_{\g}(t)^{k}$. Since $\ad_{\g}(t)$ is semisimple, $\ker \ad_{\g}(t)^{2} = \ker \ad_{\g}(t)$, so $a \in \ker \ad_{\g}(t)$. Because $\g$ is simple, $\ad_{\g}(t)$ is not the zero endomorphism. Since $\dim \g = 3$, $\dim \ker \ad_{\g}(t) = 3 - \rank \ad_{\g}(t)$ equals $1$ or $2$. Were $\dim \ker \ad_{\g}(t) = 2$, then there would be a basis $\{t, u, v\}$ of $\g$ such that $[t, u] = 0$ and $[t, v] \neq 0$. In this case $[t, v]$ and $[u, v]$ span $[\g, \g]$, but that $[\g, \g]$ be a proper ideal contradicts that $\g$ is simple. Hence $\dim \ker \ad_{\g}(t) = 1$, so $a$ is a multiple of $t$, which contradicts $\er(t) = 0$.
\end{proof}

\begin{remark}
The nonoverlap between claims \eqref{ersimple} and \eqref{eruni} of Lemma \ref{3derlemma} is slight. If $\g$ is simple and noncompact, so is isomorphic to $\sll(2, \rea)$, then there exists $t \in \g$ such that $B_{\g}(t, t) = 0$ and $\ad_{\g}(t)$ is semismple. In this case \eqref{ersimple} applies but \eqref{eruni} does not. On the other hand, if $\g$ is unimodular, and $B_{\g}(t,t) \neq 0$, Lemma \ref{3dsslemma} implies $\ad_{\g}(t)$ is semisimple, and so in this case the conclusion of \eqref{ersimple} follows from \eqref{eruni}.
\end{remark}

\begin{theorem}\label{3dssuniquetheorem}
Let $G$ be a $3$-dimensional unimodular Lie group with Lie algebra $\g$ and Killing form $B_{\g} \in S^{2}\g^{\ast}$. Given $t \in \g$ such that $B_{\g}(t, t) \neq 0$, there is a unique left-invariant affine connection on $G$ having antisymmetric Ricci tensor and constituting with the vector field $\rad = E^{t}$ a left-invariant conelike radiant structure, and it is the connection associated with $t$ as in \eqref{gconedefined} of Theorem \ref{killingnonnulltheorem}.
\end{theorem}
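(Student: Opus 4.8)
Existence is already in hand: by Theorem~\ref{killingnonnulltheorem} the connection $\nabla$ determined by the tensor $\Pi$ of \eqref{gconedefined} is left-invariant, forms with $E^{t}$ a conelike radiant structure, and has antisymmetric Ricci tensor (the hypothesis $B_{\g}(t,t)\neq 0$ is exactly what that construction requires), and moreover this $\nabla$ is $\rad$-invariant. So the whole content of the statement is uniqueness, and the plan is to show that any left-invariant conelike radiant structure $(\nabla', E^{t})$ on $G$ with antisymmetric Ricci tensor equals $\nabla$. I would work entirely at the level of $\g$: such a structure is encoded by a symmetric $\Pi'\in S^{2}\g^{\ast}\otimes\g$ via $A'(r,s)=\Pi'(r,s)+\tfrac12[r,s]$, and the radiant condition forces $\Pi'(t,r)=r+\tfrac12[t,r]$, agreeing with the $t$-slot of $\Pi$. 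Hence the difference $D=\Pi'-\Pi$ vanishes on $(t,\cdot)$, and only $D|_{S^{2}W}$ is free, where $W:=\ker B_{\g}(t,\cdot)=\im\ad_{\g}(t)$.

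First I would fix the spectral picture of $\ad_{\g}(t)$. By Lemma~\ref{3dsslemma}, since $B_{\g}(t,t)\neq 0$ the endomorphism $\ad_{\g}(t)$ is semisimple and satisfies $\ad_{\g}(t)(\ad_{\g}(t)^{2}-\tfrac12 B_{\g}(t,t)\Id)=0$; its kernel is exactly $\spn\{t\}$, and because $\ad_{\g}(t)$ is $B_{\g}$-skew its image is $W=\{t\}^{\perp_{B_{\g}}}$, a two-dimensional $\ad_{\g}(t)$-invariant complement on which $\ad_{\g}(t)$ has eigenvalues $\pm\lambda$ with $\lambda^{2}=\tfrac12 B_{\g}(t,t)\neq 0$. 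The normal form $\{t,a,b\}$ of Lemma~\ref{3dunilemma} is convenient for making the subsequent computations explicit.

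Next comes the conelike rigidity. The vanishing $\er=0$ follows from Lemma~\ref{3derlemma}, using unimodularity and $B_{\g}(t,t)\neq 0$. With $\er=0$, Lemma~\ref{coneconditionlemma} gives $\rad^{p}R'_{pij}{}^{k}=Q'_{ij}\rad^{k}$ with $\rad^{p}Q'_{ip}=0$, that is $(t\cdot\Pi')(r,s)\in\spn\{t\}$ for all $r,s$. Projecting the target onto $W$ and restricting inputs to $W$, this says the $W$-valued part of $\Pi'|_{S^{2}W}$ is annihilated by the induced action $t\cdot$ on $S^{2}W^{\ast}\otimes W$. That operator is invertible: its eigenvalues are sums of an eigenvalue of $\ad_{\g}(t)$ on $W$ (namely $\pm\lambda$) with an eigenvalue of $\lie_{E^{t}}$ on $S^{2}W^{\ast}$ (namely $0,\pm2\lambda$), hence lie in $\{\pm\lambda,\pm3\lambda\}$ and are all nonzero. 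Therefore $\Pi'|_{S^{2}W}$ is $\spn\{t\}$-valued, so $D(r,s)=\phi(r,s)\,t$ for a symmetric $\phi$ supported on $W$ (thus $\rad^{p}\phi_{pi}=0=\rad^{p}\rad^{q}\phi_{pq}$); the same rigidity applied to $\nabla$ itself is precisely \eqref{gconedefinedker}.

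Finally I would feed $D_{ij}{}^{k}=\phi_{ij}\rad^{k}$ into the curvature-difference formula \eqref{trconecurvdiff} of Lemma~\ref{conelikedifferencelemma} in the case $q_{i}=\rad^{p}\phi_{pi}=0$, obtaining $\tilde R_{ij}=R_{ij}+(\lie_{\rad}\phi)_{ij}+(n-2)\phi_{ij}$. Imposing $R_{(ij)}=0=\tilde R_{(ij)}$ reduces the whole uniqueness problem to the single linear equation $(\lie_{\rad}+(n-2)\Id)\phi=0$ on the three-dimensional space of left-invariant symmetric tensors supported on $W$. Since $\lie_{\rad}=\lie_{E^{t}}$ acts there with eigenvalues $\{0,\pm2\lambda\}$, the operator $\lie_{\rad}+(n-2)\Id$ has trivial kernel, forcing $\phi=0$ and $\nabla'=\nabla$. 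I expect this last spectral check to be the main obstacle: unlike the rigidity step, where the eigenvalues $\pm\lambda,\pm3\lambda$ are automatically nonzero, here one must verify that $-(n-2)$ avoids the spectrum $\{0,\pm2\lambda\}$ of $\lie_{\rad}$ on $S^{2}W^{\ast}$, and it is this verification, phrased through the eigenvalues $\lambda^{2}=\tfrac12 B_{\g}(t,t)$, that is the delicate heart of the argument.
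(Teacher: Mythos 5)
Your existence step and your ``rigidity'' step are correct, and they are in essence a basis-free repackaging of what the paper itself does: the paper works in the basis $\{t,a,b\}$ of Lemma \ref{3dunilemma} and uses the minimal polynomial \eqref{unich} of $T=\ad_{\g}(t)$ to force $\Pi'(a,a),\Pi'(a,b),\Pi'(b,b)\in\spn\{t\}$, which is exactly your statement that the $W$-valued part of $\Pi'|_{S^{2}W}$ is killed because $t\cdot$ acts invertibly on $S^{2}W^{\ast}\tensor W$ (spectrum $\{\pm\lambda,\pm3\lambda\}$, all nonzero). The genuine gap is the last step, which you flag but never carry out, and which in fact \emph{cannot} be carried out as stated. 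Your reduction (via \eqref{trconecurvdiff} of Lemma \ref{conelikedifferencelemma} with $q_{i}=0$) to $(\lie_{\rad}+\Id)\phi=0$ on $S^{2}W^{\ast}$ is right, and the spectrum of $\lie_{\rad}$ there is indeed $\{0,\pm2\lambda\}$ with $\lambda^{2}=\tfrac{1}{2}B_{\g}(t,t)$. When $B_{\g}(t,t)<0$ the nonzero eigenvalues are purely imaginary and injectivity holds, but when $B_{\g}(t,t)>0$ (the split cases $\g\simeq\sll(2,\rea)$ and $\g\simeq\euc(1,1)$) the eigenvalue $-1$ occurs precisely when $4\lambda^{2}=1$, i.e. $B_{\g}(t,t)=\tfrac{1}{2}$; then $\phi=(u_{+}^{\ast})^{2}$, the square of the covector dual to the eigenvector of $\ad_{\g}(t)$ with eigenvalue $\tfrac{1}{2}$, lies in the kernel and your argument yields no conclusion. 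You cannot normalize this modulus away: $t$ is pinned by the radiant equation $\nabla E^{t}=\Id$, so $B_{\g}(t,t)$ is a genuine parameter (the paper's $\ka$), and the theorem as stated claims uniqueness for every value of it.

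For comparison, the paper does not rely only on the symmetric-Ricci comparison; before computing curvature it exploits a constraint you never invoke, namely the differentiated conelike identity \eqref{qrs} (equivalently \eqref{conelieric}), which in the form \eqref{qabt} gives $Q(u,v)=Q(Tu,v)+Q(u,Tv)$ for the tensor $Q$ defined by $\rad^{p}R_{pij}\,^{k}=Q_{ij}\rad^{k}$. In the basis this yields $(1+16n)Q(a,b)=0$, where $-8n=B_{\g}(t,t)$, so $Q$ (hence your $\phi$, since $Q=t\cdot\phi$ up to sign) vanishes \emph{except} in the single case $-16n=1$ --- which is the same exceptional value $B_{\g}(t,t)=\tfrac{1}{2}$ your spectral check misses. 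So at a minimum your proposal must be supplemented by this ingredient, and even then the exceptional case remains open on your route. Be aware that this case is delicate in the paper as well: the Ricci formulas used there to close it ($\lric(a,a)=\al+2\ep$, etc.) omit the contribution of $\lr(t,a)a=Q(a,a)t$ to the trace --- the paper's own general formula \eqref{lricij} gives instead $\lric(a,a)=(1+8n)\al-8\be+2\ep$ --- i.e. they presuppose the vanishing of $Q$ that is precisely what is in question, and with the corrected formulas the antisymmetric-Ricci equations degenerate exactly when $1+16n=0$, admitting the one-parameter family $\Pi+c\,(u_{+}^{\ast})^{2}\tensor t$. Your instinct that the spectral verification is ``the delicate heart'' is therefore accurate, but as written your proposal establishes the theorem only for $B_{\g}(t,t)\neq\tfrac{1}{2}$, and the exceptional value must be confronted directly rather than assumed to be avoided by the spectrum.
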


\begin{proof}
Let $t \in \g$ satsify $B_{\g}(t, t) \neq 0$ and let $\{t, a, b\}$ be the basis of $\g$ given by \eqref{unibasis} of Lemma \ref{3dunilemma}. Suppose $\nabla$ is a left-invariant affine connection that with $E^{t}$ constitutes a left-invariant conelike radiant structure. The associated tensor $\Pi$ satisfies $\Pi(t, t) = t$ and
\begin{align}
&\Pi(t, a) = \Pi(a, t) = a + \tfrac{1}{2}[t, a] = a + b,&
&\Pi(t, b)= \Pi(b, t) = b + \tfrac{1}{2}[t, b] = b + \tfrac{1}{8}B_{\g}(t, t)a.
\end{align}
By \eqref{unich}, $2\ad_{\g}(t)^{3} = B_{\g}(t, t)\ad_{\g}(t)$. Combining this with \eqref{gconedefined} or the equivalent expression \eqref{gconedefinedker} shows that if $\nabla$ is the connection of Theorem \ref{killingnonnulltheorem}, then
\begin{align}\label{3dssexplicit}
\begin{aligned}
\Pi(a, a) & = \tfrac{1}{4}B_{\g}\left(a, \left(\Id_{\g} + \tfrac{2}{B_{\g}(t, t)}\ad_{\g}(t)^{2}\right)a\right)t = \tfrac{1}{2}B_{\g}(a, a)t = - 2\ep t,\\
\Pi(b, a) & = \tfrac{1}{4}B_{\g}\left(b, \left(\Id_{\g} + \tfrac{2}{B_{\g}(t, t)}\ad_{\g}(t)^{2}\right)a\right)t =\tfrac{1}{2}B_{\g}(b, a)t = 0,\\
\Pi(b, b) & = \tfrac{1}{4}B_{\g}\left(b, \left(\Id_{\g} + \tfrac{2}{B_{\g}(t, t)}\ad_{\g}(t)^{2}\right)b\right)t =\tfrac{1}{2}B_{\g}(b, b)t = \tfrac{1}{4}B_{\g}(t, t)\ep t,
\end{aligned}
\end{align}
and the Ricci curvature of the associated left-invariant conelike radiant structure satisfies $\lric(a, b) = \tfrac{3}{2}\ep$, $\lric(a,a) =0$, $\lric(b, b) = 0$, and $\lric(t, \dum) = 0$.

Now suppose $\nabla$ is any left-invariant affine connection that with $E^{t}$ constitutes a left-invariant conelike radiant structure. It will be shown that the associated tensor $\Pi$ is as in \eqref{3dssexplicit}. By Lemma \ref{3derlemma}, $\er = 0$.  Write $T = \ad_{\g}(t) \in \eno(\g)$. By Lemma \ref{coneconditionlemma}, \eqref{tpirs}, and \eqref{qrs}, there is $Q \in S^{2}\g^{\ast}$ such that $Q(t, \dum) = 0$, and
\begin{align}
\label{qabt}&Q(u, v) = Q(Tu, v) + Q(u, Tv), & &T\Pi(u, v) = \Pi(Tu, v) + \Pi(u, Tv) + Q(u, v)t,
\end{align}
for $u, v \in \g$. Define $n \in \rea$ by $-8n = B_{\g}(t, t)$ so that $[t, b] = -2na$. Substituting $a$ and $b$ in \eqref{qabt} and using \eqref{unibasis} yields
\begin{align}
\label{qaaqbb}&Q(a, a) = 4Q(a, b),&&Q(b, b) = -4nQ(a, b),\\
\label{tpaatpbb}&T\Pi(a, a) = 4\Pi(a, b) + Q(a, a)t,&&T\Pi(b, b) = -4n\Pi(a, b) + Q(b, b)t,\\
\label{tpab}&T\Pi(a, b) = 2\Pi(b, b) - 2n\Pi(a, a) + Q(a, b)t,
\end{align}
Combining \eqref{qaaqbb} shows $Q(a, b) = 2Q(b, b) - 2nQ(a, a) = -16nQ(a, b)$. Hence, if $1 \neq -16n$ then $Q(a, b) =0$, so, by \eqref{qaaqbb}, $Q$ vanishes identically. 

The following consequence of \eqref{unibasis} is used several times in what follows: if $Tx \in \spn\{t\}$ then $x \in \spn\{t\}$ and $Tx = 0$. 
Applying $T$ to \eqref{tpab} and using $T\Pi(b, b) = -nT\Pi(a, a)$ and \eqref{tpaatpbb} yields
\begin{align}\label{ttpab}
T^{2}\Pi(a, b) & = 2T\Pi(b, b) - 2nT\Pi(a, a) = 4T\Pi(b, b) = -16n\Pi(a, b) + 4Q(b, b)t .
\end{align}
Applying $T$ to \eqref{ttpab} and using \eqref{unich} yields $-4nT\Pi(a, b) = T^{3}\Pi(a, b) = -16nT\Pi(a, b)$, so that $T\Pi(a, b) = 0$ and $\Pi(a, b) \in \spn\{t\}$. In \eqref{tpab} this implies $2n\Pi(a, a) - 2\Pi(b, b)=  Q(a, b)t$. Because $n\Pi(a, a) + \Pi(b, b) \in \spn\{t\}$, this yields $\Pi(a, a), \Pi(b,b) \in \spn\{t\}$, so there are $\al, \be \in \rea$ such that $\Pi(a, a) = \al t$ and $\Pi(b, b) = \be t$ and, hence, $Q(a, b) = 2n\al - 2\be$. With \eqref{tpaatpbb}, this yields 
\begin{align}\label{piab2}
& \Pi(a, b) = -\tfrac{1}{4}Q(a,a)t = \tfrac{1}{4n}Q(b, b)t = -Q(a, b)t = (2\be - 2n \al)t.
\end{align}
There follow $\lr(a, b)a = (-\tfrac{3}{2}\ep + 2\be - 2n\al)a - (\al + 2\ep)b$, and $\lr(a, b)b = (\be + 2n\ep)a + (- \tfrac{3}{2}\ep - 2\be + 2n\al)b$, from which it follows that
\begin{align}
&\lric(a, a) = \al + 2\ep, & & \lric(b, b) = \be + 2n\ep, & &\lric(a, b) = \tfrac{3}{2}\ep + 2\be - 2n\al,&& \lric(b, a) = -\tfrac{3}{2}\ep + 2\be - 2n\al.
\end{align}
For $\nabla$ to have antisymmetric Ricci tensor it must be $\al = -2\ep$, $\be = -2n\ep$, and $\be = n \al$. Hence $Q(a, b) = 2n\al - 2\be = 0$, and, by \eqref{qaaqbb}, this implies that $Q$ is identically zero.
This shows that $Q$ is identically zero in all cases. By \eqref{piab2}, $\Pi(a, b) = 0$, and by the preceding, $\Pi(b, b) = \be t = -2n\ep t$ and $\Pi(a, a) = \al t = -2\ep t$, so that $\Pi$ is determined completely and uniquely in terms of $n$ and $\ep$ alone. Comparing with \eqref{3dssexplicit} shows that the unique connection $\Pi$ found here is the connection associated with $t$ as in \eqref{gconedefined} of Theorem \ref{killingnonnulltheorem}.
\end{proof}

\begin{corollary}\label{3dsscorollary}
Let $G$ be a $3$-dimensional unimodular Lie group with Lie algebra $\g$ and Killing form $B_{\g}$. Let $t \in \g$ be such that $B_{\g}(t, t) \neq 0$ and let $\{t, a, b\}$ be a basis of $\g$ satisfying $[a, b] = \ep t$, $[t, a] = 2b$, $[t, b] = \tfrac{1}{4}B_{\g}(t, t)a$ where $\ep \in \{0, 1, -1\}$ if $B_{\g}(t, t) < 0$ and $\ep \in \{0, -1\}$ if $B_{\g}(t, t) > 0$. Let $A = E^{a}$, $B= E^{b}$, and $T  = E^{r} = \rad$ be the corresponding left-invariant frame on $G$ and let $\{\al, \be, \tau\}$ be the left-invariant coframe dual to $\{A, B, T\}$.

The unique left-invariant affine connection $\nabla$ on $G$ having antisymmetric Ricci tensor and constituting with the vector field $\rad = E^{t}$ a left-invariant conelike radiant structure given by Theorem \ref{3dssuniquetheorem} has the form
\begin{align}\label{3dssconnection}
\begin{aligned}
&\nabla_{A}T = A,& &\nabla_{B}T = B,& &\nabla_{T}T = T,\\
&\nabla_{A}A = -2\ep T,&&\nabla_{B}A = -\tfrac{1}{2}\ep T,&&\nabla_{T}A = A +2B,\\
&\nabla_{A}B = \tfrac{1}{2}\ep T,& &\nabla_{B}B = \tfrac{1}{4}\ep B_{\g}(t, t) T,& &\nabla_{T}B = B + \tfrac{1}{4} B_{\g}(t, t) A.
\end{aligned}
\end{align}
Equivalently:
\begin{align}\label{3dssconnectiondual}
\begin{split}
\nabla \al  -\tfrac{1}{2}d\al& =  -(\al \tensor \tau + \tau \tensor \al) - \tfrac{1}{4}B_{\g}(t, t)(\be \tensor \tau + \tau \tensor \be),\\
\nabla \be  - \tfrac{1}{2}d\be& =  -(\al \tensor \tau + \tau \tensor \al) - (\be \tensor \tau + \tau \tensor \be),\\
\nabla \tau  - \tfrac{1}{2}d\tau& =  -2\ep \al \tensor \al - \tfrac{1}{4}\ep B_{\g}(t, t)\be \tensor \be - \tau \tensor \tau.
\end{split}
\end{align}
The curvature and Ricci curvature of $\nabla$ are
\begin{align}\label{3dsscurvature}
&R(\dum, \dum) = \tfrac{3}{2}\ep \al \wedge \be \tensor (\al \tensor A + \be \tensor B), && \ric(\dum, \dum) = \tfrac{3}{2}\ep \al \wedge \be = -\tfrac{3}{2}d\tau.
\end{align} 
The connection $\nabla$ is not projectively flat.
\end{corollary}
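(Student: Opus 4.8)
The plan is to reduce projective flatness to a single tensor and then compute one of its components. Since $\dim G = 3 > 2$, the remark following \eqref{projvary} shows that $\nabla$ is projectively flat if and only if its projective Weyl tensor $B_{ijk}\,^l$ vanishes identically (for $n>2$ the projective Cotton obstruction is subsumed). As $\nabla$, its curvature, and its Ricci tensor are all left-invariant, it suffices to evaluate $B_{ijk}\,^l$ at the identity on the left-invariant frame $\{A, B, T\}$ with dual coframe $\{\al, \be, \tau\}$ of Corollary \ref{3dsscorollary}; I would label these directions $1, 2, 3$ respectively.

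First I would record the projective Schouten tensor. By \eqref{3dsscurvature} the Ricci tensor $\ric = \tfrac{3}{2}\ep\,\al\wedge\be$ is purely antisymmetric, so $R_{(ij)} = 0$ and $R_{[ij]} = \ric_{ij}$. Substituting into \eqref{projectiveschouten} with $n = 3$ gives $P_{ij} = -\tfrac14 R_{[ij]} = -\tfrac{3}{8}\ep\,(\al\wedge\be)_{ij}$, which is again antisymmetric, so $P_{[ij]} = P_{ij}$. In the chosen frame this reads $P_{12} = -\tfrac38\ep$, $P_{21} = \tfrac38\ep$, and $P_{ij} = 0$ whenever an index equals $3$ or $i=j$.

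Next I would substitute the curvature $R_{ijk}\,^l = \tfrac{3}{2}\ep\,(\al\wedge\be)_{ij}(\al_k A^l + \be_k B^l)$ from \eqref{3dsscurvature}, together with this $P_{ij}$, into the definition \eqref{bijkl} of $B_{ijk}\,^l$ and compute the single component $B_{121}\,^1$. Here $R_{121}\,^1 = \tfrac{3}{2}\ep$, while $2\delta_{[1}{}^{1}P_{2]1} = P_{21} = \tfrac38\ep$ and $-2\delta_1{}^{1}P_{[12]} = P_{21} - P_{12} = \tfrac34\ep$, so that $B_{121}\,^1 = \tfrac{3}{2}\ep + \tfrac{3}{8}\ep + \tfrac{3}{4}\ep = \tfrac{21}{8}\ep$. (Any opposite sign convention for $R_{ijk}\,^l$ would only replace this by $-\tfrac38\ep$, still nonzero, so the conclusion is insensitive to curvature-sign conventions.) By Lemma \ref{3dsslemma} the hypothesis $B_{\g}(t,t)\neq 0$ makes $\ad_\g(t)$ semisimple, and for the simple algebras of Lemma \ref{3dunilemma} one has $\ep = \pm 1$; hence $B_{121}\,^1 \neq 0$, so $B_{ijk}\,^l \not\equiv 0$ and $\nabla$ is not projectively flat.

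I expect no conceptual obstacle: the only work is the index bookkeeping in the last step, together with keeping straight that both $\ric$ and $P$ are purely antisymmetric. The one point worth flagging is that the computation exhibits $B_{ijk}\,^l$ as a fixed nonzero multiple of $\ep$, so the assertion is to be understood for the genuinely curved simple cases $\ep = \pm 1$; when $\ep = 0$ (the solvable, nonnilpotent case of Lemma \ref{3dunilemma}) the curvature \eqref{3dsscurvature} vanishes identically, so $\nabla$ is flat and hence trivially projectively flat.
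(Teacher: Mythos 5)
Your proposal addresses only the final sentence of the corollary. The statement also asserts the explicit form \eqref{3dssconnection} of the connection, the dual identities \eqref{3dssconnectiondual}, and the curvature and Ricci formulas \eqref{3dsscurvature}; your argument takes \eqref{3dsscurvature} as an input, so as a proof of the corollary it is circular on that point and silent on the rest. The paper's proof derives \eqref{3dssconnection} by specializing the formula obtained in the proof of Theorem \ref{3dssuniquetheorem} to the basis of Lemma \ref{3dunilemma}, obtains \eqref{3dssconnectiondual} from \eqref{3dssconnection} together with the structure equations $d\al = \tfrac{1}{4}B_{\g}(t,t)\,\be\wedge\tau$, $d\be = -2\tau\wedge\al$, $d\tau = -\ep\,\al\wedge\be$, and computes the curvature directly from \eqref{3dssconnection}; none of this appears in your proposal, and it is the bulk of the content.

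For the part you do treat, your strategy coincides with the paper's: since $n = 3 > 2$, projective flatness is equivalent to the vanishing of the projective Weyl tensor, the antisymmetry of $\ric$ gives $P_{ij} = -\tfrac{1}{4}R_{[ij]}$ via \eqref{projectiveschouten}, and one exhibits a single nonvanishing component of $B_{ijk}\,^{l}$ (the paper evaluates the component $W(T,A)B$ of the Weyl tensor $W$; you evaluate $B_{121}\,^{1}$). However, your value $\tfrac{21}{8}\ep$ is not correct: it results from combining the curvature display in \eqref{3dsscurvature} with the Ricci display, and these two are mutually inconsistent by a sign. Computing directly from \eqref{3dssconnection} gives $R(A,B)A = -\tfrac{3}{2}\ep A$ (as in the paper's proof), so the trace of the curvature tensor as displayed in \eqref{3dsscurvature} equals $-\tfrac{3}{2}\ep\,\al\wedge\be$, the negative of the displayed $\ric$; one of the two displays carries a sign typo. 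Any internally consistent computation gives $B_{121}\,^{1} = \pm\tfrac{3}{8}\ep$, and your $\tfrac{21}{8}\ep$ cannot be right because it violates the trace identity $B_{pjk}\,^{p} = 0$ of the projective Weyl tensor: with your numbers $B_{121}\,^{1} + B_{221}\,^{2} + B_{321}\,^{3} = \tfrac{21}{8}\ep + 0 + \tfrac{3}{8}\ep \neq 0$. Your parenthetical hedge about sign conventions does rescue the conclusion, since either way the component is a nonzero multiple of $\ep$. Finally, your closing flag is correct and worth keeping: the hypotheses allow $\ep = 0$ with $B_{\g}(t,t) \neq 0$ (the solvable cases $\euc(2)$ and $\euc(1,1)$ of Lemma \ref{3dunilemma}), where $\nabla$ is flat, so the unqualified final claim must be read as in Theorem \ref{qaconetheorem}, i.e., for $\ep \neq 0$; but note that your intermediate sentence deducing $\ep = \pm 1$ from the semisimplicity of $\ad_{\g}(t)$ is not a valid inference, since $\ad_{\g}(t)$ is semisimple in those solvable cases as well.
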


\begin{proof}
That there is a basis of $\g$ of the form claimed follow from Lemma \ref{3dunilemma}. That the connection $\nabla$ has the form \eqref{3dssconnection} follows from the proof of Theorem \ref{3dssuniquetheorem}. The identities \eqref{3dssconnectiondual} follow from \eqref{3dssconnection} and $d\al = \tfrac{1}{4}B_{\g}(t, t)\be \wedge \tau$, $d\be = -2\tau \wedge \al$, and $d\tau = -\ep \al\wedge \be$.
The identities 
\begin{align}
&\lr(a, b)a = -\tfrac{3}{2}\ep , & &\lr(a, b)b =  - \tfrac{3}{2}\ep,
\end{align}
and $\lric(a, b) = \tfrac{3}{2}\ep$ express all nontrivial parts of the curvature tensor, and \eqref{3dsscurvature} follows.
The connection $\nabla$ is not projectively flat. Its projective Schouten tensor is the antisymmetric tensor $P = -\tfrac{1}{4}\ric = \tfrac{3}{8}\ep \al \wedge \be = -\tfrac{3}{8}d\tau$. The projective Weyl tensor $W(\dum, \dum)$ does not vanish, for $W(T, A)B = \tfrac{3}{8}T$.
\end{proof}

By Lemma \ref{3dsslemma} and Theorem \ref{3dssuniquetheorem}, with a semisimple element $t \in \g$ of the Lie algebra $\g$ of a three-dimensional unimodular Lie group $G$ there is associated a unique left-invariant conelike radiant connection having antisymmetric Ricci tensor, and the connections associated with elements of the same semisimple adjoint orbit in $\g$ are isomorphic. 
Corollary \ref{3dsscorollary} has the following slightly stronger consequence: if there is an automorphism of $\g$ mapping one semisimple adjoint orbit to another then the associated connections are isomorphic. This occurs for $\g = \sll(2, \rea)$, for which $\Aut(\sll(2, \rea)) = PGL(2, \rea)$ and there is a nontrivial outer automorphism interchanging the two adjoint orbits with the same negative value for $B_{g}(t, t)$.

To describe the resulting connections more explicitly, it is necessary to describe the three-dimensional unimodular Lie groups and their adjoint orbits in a more explicit manner. To do so, it is convenient to give a uniform description of the nonabelian unimodular Lie groups based on a slight generalization of real quaternion algebras. Let $\fiet$ be the group of invertible elements of a field $\fie$ with $\chr \fie \neq 2$. 
For $\al_{1}, \al_{2} \in \fie$ define $\qaf[\al_{1}, \al_{2}]\fie$ to be the $4$-dimensional algebra generated by $1$, $e_{1}$, $e_{2}$, and $e_{3}$ such that $1$ is a multiplicative identity element, and
\begin{align}\label{qadefined}
&e_{1}^{2} = \al_{1}1, &&e_{2}^{2} = \al_{2}1, &&e_{1}e_{2} = - e_{2}e_{1} = e_{3}.
\end{align}
The relations \eqref{qadefined} imply
\begin{align}\label{qaderived}
&e_{3}^{2} = -\al_{1}\al_{2},& &e_{3}e_{1} = -e_{1}e_{3} = -\al_{1}e_{2}, && e_{2}e_{3} = -e_{3}e_{2} = -\al_{2}e_{1}.
\end{align}
Although ugly, the notation $\qaf[\al_{1}, \al_{2}]\fie$ is standard.
When $\al_{1}, \al_{2} \in \fiet$, then $\qaf[\al_{1}, \al_{2}]\fie$ is a \emph{quaternion algebra}. For background on quaternion algebras see \cite[Chapter $2$]{Maclachlan-Reid} or \cite[Chapter $3$]{Lam-quadraticforms}. Here it is convenient to allow $\al_{1}, \al_{2}$ to take the value $0$. 
The algebra $\qaf[\al_{1}, \al_{2}]\fie$ is isomorphic to the Clifford algebra $\cliff(\fie^{2}, Q)$ of the quadratic form $Q(x_{1}, x_{2}) = \al_{1}x_{1}^{2} + \al_{2}x_{2}^{2}$ on $\fie^{2}$. This identification gives $\qaf[\al_{1}, \al_{2}]\fie$ the additional structure of a $\zmodtwo$-graded algebra, with $\qaf[\al_{1}, \al_{2}]{\fie}_{+} = \spn\{1, e_{3}\}$ and $\qaf[\al_{1}, \al_{2}]{\fie}_{-} = \spn\{e_{1}, e_{2}\}$. As a $\zmodtwo$-graded algebra, $\qaf[\al_{1}, \al_{2}]\fie \simeq \cliff(\fie, \al_{1}|x|^{2}) \stensor \cliff(\fie, \al_{2}|x|^{2})$ where $\stensor$ is the $\zmodtwo$-graded tensor product of $\fie$-superalgebras. This is the most convenient way to think about $\qaf[\al_{1}, \al_{2}]\fie$ in the degenerate cases where $\al_{1}\al_{2} = 0$. The \emph{dual numbers} (over $\fie$) are $\cliff(\fie, 0)$, the paracomplex numbers are $\cliff(\rea, x^{2})$, and the complex numbers are $\cliff(\rea, -x^{2})$. These are all $\zmodtwo$-graded algebras, and their $\zmodtwo$-graded tensor products with $\cliff(\rea, 0)$ yield the Clifford algebras of possibly degenerate quadratic forms on $\fie^{2}$. 

It follows straightforwardly from \eqref{qaderived} that the permutations of $\{1, 2, 3\}$ generate isomorphisms of $\qaf[\al_{1}, \al_{2}]\fie$ with the algebras $\qaf[\al_{2}, -\al_{1}\al_{2}]\fie$, $\qaf[-\al_{1}\al_{2}, \al_{1}]\fie$, and $\qaf[\al_{2}, \al_{1}]\fie$. The linear map sending $1$ to $1$ and rescaling $e_{i}$ by $t_{i} \in \fiet$, $i \in \{1, 2, 3\}$, is an isomorphism $\qaf[\al_{1}, \al_{2}]\fie \simeq \qaf[t_{1}^{2}\al_{1}, t_{2}^{2}\al_{2}]\fie$.

The case of most interest here is $\fie = \rea$. In this case the preceding remarks show that $\qaf[\al_{1}, \al_{2}]\rea$ is isomorphic to one of $\qaf[-1, -1]\rea$, $\qaf[1, 1]\rea$, $\qaf[1, 0]\rea$, $\qaf[-1,0]\rea$, and $\qaf[0, 0]\rea$.

For $q \in \qaf[\al_{1}, \al_{2}]\fie$ write $q = q^{0}1 + q^{1}e_{1} + q^{2}e_{2} + q^{3}e_{3}$. The matrices with respect to the basis $\{e_{0} = 1, e_{1}, e_{2}, e_{3}\}$ of the multiplication endomorphisms $L,R:\qaf[\al_{1}, \al_{2}]\fie \to \eno\qaf[\al_{1}, \al_{2}]\fie$ defined by $L(p)q = pq = R(q)p$ are
\begin{align}\label{qalr}
&L(q) = \begin{pmatrix}
q^{0} & \al_{1} q^{1} & \al_{2} q^{2} & -\al_{1}\al_{2}q^{3}\\
q^{1} & q^{0} & \al_{2}q^{3} & -\al_{2}q^{2}\\
q^{2} & -\al_{1}q^{3} & q^{0} & \al_{1}q^{1}\\
q^{3} & -q^{2} & q^{1} & q^{0}
\end{pmatrix},&&R(q) = 
\begin{pmatrix}
q^{0} & \al_{1} q^{1} & \al_{2} q^{2} & -\al_{1}\al_{2}q^{3}\\
q^{1} & q^{0} & -\al_{2}q^{3} & \al_{2}q^{2}\\
q^{2} & \al_{1}q^{3} & q^{0} & -\al_{1}q^{1}\\
q^{3} & q^{2} & -q^{1} & q^{0}
\end{pmatrix}.
\end{align}
Let $\re \qaf[\al_{1}, \al_{2}]\fie = \spn\{1\}$ and $\im \qaf[\al_{1}, \al_{2}]\fie = \spn\{e_{1}, e_{2}, e_{3}\}$. 
For $q \in \qaf[\al_{1}, \al_{2}]\fie$ define the \emph{conjugate} $\bar{q} \in \qaf[\al_{1}, \al_{2}]\fie$ to be the image of $q$ under the $\fie$-linear reflection of $\qa$ along $\re \qaf[\al_{1}, \al_{2}]\fie$ and through $\im \qaf[\al_{1}, \al_{2}]\fie$, so $\bar{q} = q^{0}1 - q^{1}e_{1} - q^{2}e_{2} - q^{3}e_{3}$. Alternatively, the conjugation is simply the canonical antiautomorphism of the Clifford algebra $\cliff(\fie^{2}, Q)$.
Define the \emph{(reduced) trace} $\tr q = q + \bar{q} = 2q^{0}$ and \emph{(reduced) norm}
$n(q) = q\bar{q} = (q^{0})^{2} - \al_{1}(q^{1})^{2} - \al_{2}(q^{2})^{2} + \al_{1}\al_{2}(q^{3})^{2}$.
On $\mat{2}{\fie}$ these recover the usual trace and determinant and the conjugate is the adjugate.
Note that $n$ is nondegenerate if and only if $\al_{1}, \al_{2} \in \fiet$. Using \eqref{qalr}, it can be checked that $\tr L(q) = 2\tr q = \tr R(q)$ and $\det L(q) = n(q)^{2} = \det R(q)$.

The algebra $\qaf[1, 1]\fie$ is the algebra $\mat{2}{\fie}$ of $2 \times 2$ matrices over $\fie$. A quaternion algebra isomorphic to $\mat{2}{\fie}$ is said to be \emph{split}.
Since every element of an algebraically closed field is a square, the isomorphism $\qaf[\al_{1}, \al_{2}]\fie \simeq \qaf[t_{1}^{2}\al_{1}, t_{2}^{2}\al_{2}]\fie$ implies that every quaternion algebra over an algebraically closed field $\fie$ is isomorphic to $\mat{2}{\fie}$.
In general there holds $\qaf[\al_{1}, \al_{2}]\fie \tensor_{\fie}{\bar{\fie}} = \qaf[\al_{1}, \al_{2}]{\bar{\fie}} \simeq \mat{2}{\bar{\fie}}$ where $\bar{\fie}$ is an algebraic closure of $\fie$, and since $\mat{2}{\bar{\fie}}$ is central simple, it follows that $\qaf[\al_{1}, \al_{2}]\fie$ is central simple when $\al_{1}, \al_{2} \in \fiet$. It follows from the Artin-Wedderburn theorem that an algebra over a field $\fie$ of characteristic not equal to $2$ is a $4$-dimensional central simple algebra if and only if it is a quaternion algebra; moreover, any such algebra is either split or is a division ring. 

One or both of these claims fail when $\al_{2} = 0$. The center $\ctr(\qaf[\al_{1}, \al_{2}]\fie)$ of $\qaf[\al_{1}, \al_{2}]\fie$ is $\ker (L - R)$, while a $2$-sided ideal of $\qaf[\al_{1}, \al_{2}]\fie$ is a subspace invariant with respect to $L(q)$ and $R(q)$ for all $q \in \qaf[\al_{1}, \al_{2}]\fie$. It is standard that the Jacobson radical $\jrad \cliff(\fie^{2}, Q)$ of $\cliff(\fie^{2}, Q)$ is the ideal generated by the radical of the bilinear form associated with $Q$ via polarization. Let $g \in S^{2}\qaf[\al_{1}, \al_{2}]\fie^{\ast}$ be the $\fie$-bilinear form obtained by polarizing $n$, so that $g(q, q) = n(q)$ and $2g(p, q) = p\bar{q} + q\bar{p} = \tr(p\bar{q})$. 
From \eqref{qalr} it follows that if $\al_{1} \in \fiet$, then the center $\ctr(\qaf[\al_{1}, 0]\fie)$ is still $\spn\{1\}$ but the Jacobson radical $\jrad \cliff(\fie^{2}, Q) = \spn\{e_{1}, e_{2}\} = \brad g$ is nontrivial. Similarly, $\ctr(\qaf[0, 0]\fie) = \spn\{1, e_{3}\}$ and $\jrad \cliff(\fie^{2}, Q) = \spn\{e_{1}, e_{2}, e_{3}\} = \brad g$ is nontrivial.

The first part of Lemma \ref{qalielemma} is an observation of P. Meyer \cite{Meyer-sl2}. Although it makes sense over general fields, here it is needed only over $\rea$.
\begin{lemma}\label{qalielemma}
For a $3$-dimensional unimodular nonabelian real Lie algebra $\g$, $\hat{\g} = \g \oplus \rea$ equipped with the multiplication $\mlt$ defined for $(x, r), (y, s) \in \hat{\g}$ by
\begin{align}
(x, r) \mlt (y, s) = \left(\tfrac{1}{2}[x, y] + ry + sx, rs + \tfrac{1}{8}B_{\g}(x, y)\right),
\end{align}
is isomorphic to $\qaf[\al_{1}, \al_{2}]\rea$ with $(\al_{1}, \al_{2}) \in \{(1, 1), (-1, -1), (1, 0), (-1, 0), (0, 0)\}$. The canonical antiautomorphism of $\qaf[\al_{1}, \al_{2}]\rea$ is given on $\hat{\g}$ by $(x, r) \to (-x, r)$ and its $-1$ eigenspace $\g\oplus \{0\} = \{(x, 0)\in \hat{\g}: x \in \g\}$ equipped with the product $[(x, 0), (y, 0)] = (x, 0)\mlt(y, 0) - (y, 0)\mlt (x, 0)$ is a Lie algebra isomorphic to $\g$.

More precisely, if $\g$ is nilpotent then $(\hat{\g}, \mlt)$ is isomorphic to $\qaf[0, 0]\rea$, while otherwise each $t \in \g$ such that $B_{\g}(t, t) \neq 0$ determines an isomorphism $\Phi_{t}:\hat{\g} \to \qaf[\al_{1}, \al_{2}]\rea$ where $\al_{1}$ is $0$ or $\sign B_{\g}(t, t)$ as $B_{\g}(t, t)$ is or is not $0$ and $\al_{2}$ is $-1$, $1$, or $0$ as the restriction of $B_{\g}(t, t)$ to $\im \ad_{\g}(t)$ is negative definite, split, or degenerate.

If $\Psi:\g \to \bar{\g}$ is an isomorphism of $3$-dimensional unimodular nonabelian real Lie algebras then $\hat{\Psi}:\hat{\g} \to \hat{\bar{g}}$ defined by $\hat{\Psi}(x, r) = (\Psi(x), r)$ is an isomorphism of the associated algebras, and every such isomorphism arises in this way. As a consequence a $3$-dimensional unimodular nonabelian real Lie algebra is determined up to isomorphism by the associated quadratic space $(\g, B_{\g})$. 
\end{lemma}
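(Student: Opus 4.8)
The plan is to produce, for each such $\g$, an explicit linear isomorphism of $(\hat{\g}, \mlt)$ onto one of the five algebras $\qaf[\al_{1}, \al_{2}]\rea$, and to read off $(\al_{1}, \al_{2})$ from the invariants of $B_{\g}$; associativity of $\mlt$ then comes for free, transported from the target. First I would record three elementary facts by direct computation from the definition of $\mlt$: that $(0, 1)$ is a two-sided multiplicative identity; that the commutator of $(x, 0)$ and $(y, 0)$ equals $([x, y], 0)$, so the $-1$-eigenspace $\g \oplus \{0\}$ of $\sigma(x, r) = (-x, r)$ is a Lie algebra isomorphic to $\g$; and that $\sigma$ is an antiautomorphism, since $\sigma((x,r)\mlt(y,s))$ and $\sigma(y,s)\mlt\sigma(x,r)$ both reduce to $(-\tfrac12[x,y]-sx-ry,\, rs+\tfrac18 B_{\g}(x,y))$. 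Once the isomorphism below is in hand, these facts identify $\sigma$ with the canonical antiautomorphism $q\mapsto\bar q$ and its fixed $-1$-eigenspace with $\g$.

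The heart of the matter is the homomorphism property, which I would reduce to the pure imaginary part. In $\qaf[\al_{1}, \al_{2}]\rea$ a product of imaginary elements $p, q$ decomposes as $pq = \tfrac12[p, q] - g(p, q)\cdot 1$, where $g$ is the polarization of the reduced norm (from $\bar q = -q$ and $pq+qp = -2g(p,q)$), and in the generating basis $g$ is diagonal with $g(e_{1}, e_{1}) = -\al_{1}$, $g(e_{2}, e_{2}) = -\al_{2}$, $g(e_{3}, e_{3}) = \al_{1}\al_{2}$, while $[e_{1}, e_{2}] = 2e_{3}$, $[e_{3}, e_{1}] = -2\al_{1}e_{2}$, $[e_{2}, e_{3}] = -2\al_{2}e_{1}$. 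Comparing with $(x, 0)\mlt(y, 0) = (\tfrac12[x, y], \tfrac18 B_{\g}(x, y))$, a linear map sending $(0,1)\mapsto 1$ and carrying $\g$ into $\im\qaf[\al_{1}, \al_{2}]\rea$ is an algebra homomorphism exactly when it intertwines the Lie brackets and carries the form $-\tfrac18 B_{\g}$ to $g$ (the general product then follows since every element is $(x,0)+r(0,1)$ and $(0,1)$ is the identity). For nonnilpotent $\g$ I would take the basis $\{t, a, b\}$ of Lemma \ref{3dunilemma} with $t$ chosen so $B_{\g}(t,t)\neq 0$, and set $\Phi_{t}(t,0) = s_{1}e_{1}$, $\Phi_{t}(a,0) = s_{2}e_{2}$, $\Phi_{t}(b,0) = s_{3}e_{3}$, $\Phi_{t}(0,1) = 1$. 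Matching $[t,a]=2b$, $[t,b]=\tfrac14 B_{\g}(t,t)a$, $[a,b]=\ep t$ forces $s_{3}=s_{1}s_{2}$, $\al_{1}=B_{\g}(t,t)/(8s_{1}^{2})$, $\al_{2}=-\ep/(2s_{2}^{2})$, and then the diagonal pairings $B_{\g}(a,a)=-4\ep$, $B_{\g}(b,b)=\tfrac12\ep B_{\g}(t,t)$ of Lemma \ref{3dunilemma} make the form condition hold automatically (cross terms vanish on both sides). Normalizing $s_{1}, s_{2}$ so $|\al_{1}|,|\al_{2}|\in\{0,1\}$ yields the stated $(\al_{1},\al_{2})$: $\al_{1}$ has the sign of $B_{\g}(t,t)$ and $\al_{2}$ that of $-\ep$, matching the case division of Lemma \ref{3dunilemma}. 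The nilpotent (Heisenberg) case is the specialization $B_{\g}\equiv 0$, $\ep=0$, where no scaling is needed and $\Phi$ lands in $\qaf[0,0]\rea$.

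For functoriality I would first note that a Lie algebra isomorphism $\Psi:\g\to\bar{\g}$ preserves Killing forms, whence $\hat{\Psi}(x,r)=(\Psi x, r)$ is immediately checked to intertwine the two products $\mlt$. Conversely, the real and imaginary parts of $\hat{\g}$ are intrinsic to the algebra structure: the reduced-trace identity $\tr L(q) = 2\tr q$ (valid for all $(\al_{1},\al_{2})$ from the matrix of $L(q)$) shows $\im = \{q : \tr L(q) = 0\}$ is determined by $\mlt$ alone, so any algebra isomorphism $\hat{\g}\to\hat{\bar{\g}}$ preserves $\im$ and restricts to a Lie algebra isomorphism $\g\to\bar{\g}$, i.e. equals some $\hat{\Psi}$. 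The final consequence then follows: the form condition $g = -\tfrac18 B_{\g}$ shows that the isometry type of $(\g, B_{\g})$ (its rank and signature) determines $(\al_{1},\al_{2})$ and hence the algebra $\hat{\g}$ up to isomorphism, and isometric Killing forms therefore give isomorphic $\hat{\g}$, which by the converse functoriality forces the Lie algebras to be isomorphic.

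The main obstacle I anticipate is the converse half of functoriality, namely recovering the imaginary subspace $\g\oplus\{0\}$ from $\mlt$ alone uniformly across the three degenerate cases $\qaf[1,0]\rea$, $\qaf[-1,0]\rea$, $\qaf[0,0]\rea$, where the norm is degenerate and the usual division-algebra characterization of pure quaternions breaks down; the remedy is precisely the formula $\tr L(q)=2\tr q$, visibly valid for every $(\al_{1},\al_{2})$, together with the observation that the two rank-one degenerate forms are separated by the sign of their single nonzero value.
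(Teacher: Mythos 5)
Your proposal is correct, and its core construction coincides with the paper's own proof, which is very terse: the paper likewise rescales the basis $\{t, a, b\}$ of Lemma \ref{3dunilemma}, declares $e_{0} = (0,1)$, $e_{1} = (\ka t, 0)$, $e_{2} = (\sqrt{2}a, 0)$, $e_{3} = (\sqrt{2}\ka b, 0)$, checks the relations \eqref{qadefined}, and asserts that "the remaining claims follow via straightforward computations." Your scalars $s_{1} = \sqrt{|B_{\g}(t,t)|/8}$, $s_{2} = 1/\sqrt{2}$, $s_{3} = s_{1}s_{2}$ (as coefficients in $\Phi_{t}(t,0) = s_{1}e_{1}$, etc.) are exactly the reciprocals of the paper's, and your normalization is the correct one: it exposes a small typo in the paper's proof, where for $e_{1}\mlt e_{1} = \al_{1}e_{0}$ one needs $\ka = \sqrt{8/|B_{\g}(t,t)|}$, the reciprocal of the stated value (this corrected value is the one consistent with the relation $B_{\g}(t,t) = 8\ka^{-2}\al_{1}$ used in Lemma \ref{gqalemma}). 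Where your write-up genuinely adds to the paper is in the parts it leaves unargued. First, the reduction of the homomorphism property to "bracket intertwining plus form matching," via the identity $pq = \tfrac{1}{2}[p,q] - g(p,q)1$ for imaginary $p, q$, replaces the nine product checks by a conceptual criterion and makes it transparent that the form condition holds automatically once the brackets match, as a consequence of the Killing pairings recorded in Lemma \ref{3dunilemma}. Second, and more substantially, your converse-functoriality argument — that the imaginary subspace is intrinsic to the multiplication because $\im\qaf[\al_{1}, \al_{2}]\rea = \{q : \tr L(q) = 0\}$, an identification valid uniformly in the degenerate cases $\al_{1}\al_{2} = 0$ where the norm form cannot separate real from imaginary — is an actual proof of the claim that every algebra isomorphism arises as $\hat{\Psi}$, which the paper does not supply, and it is precisely what drives the final assertion that $(\g, B_{\g})$ determines $\g$ up to isomorphism. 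One cosmetic point: for $\g \simeq \sll(2,\rea)$ and $t$ with $B_{\g}(t,t) < 0$ your normalization produces $(\al_{1}, \al_{2}) = (-1, 1)$, which is not in the displayed five-element list; to land in the list one either invokes the permutation isomorphism $\qaf[\al_{1}, \al_{2}]\rea \simeq \qaf[\al_{2}, -\al_{1}\al_{2}]\rea$ recorded before the lemma, or chooses instead a $t$ with $B_{\g}(t,t) > 0$. The paper's proof has exactly the same feature and your output is consistent with the lemma's "more precisely" clause, so this is not a gap, but a sentence acknowledging it would close the case analysis cleanly.
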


\begin{proof}
Let $\{t, a, b\}$ be a basis as in \eqref{unibasis} of Lemma \ref{3dunilemma} and let $\ka = \sqrt{|B_{\g}(t, t)|/8}$. The $\mlt$ products of the elements
$e_{0} = (0, 1)$, $e_{1} = (\ka t, 0)$, $e_{2} = (\sqrt{2}a, 0)$, and $e_{3} = (\sqrt{2}\ka b, 0)$ of $\hat{\g}$ satisfy the relations \eqref{qadefined} with $\al_{1} = \sign B_{\g}(t, t)$ and $\al_{2} = -\ep$. The remaining claims follow via straightforward computations. 
\end{proof}

Henceforth let $\fie = \rea$ and write $\qa = \qaf[\al_{1}, \al_{2}]\rea$. 
It follows from the definitions that $\overline{pq} = \bar{q}\bar{p}$ for $p, q \in \qa$, so $n(pq) = n(p)n(q)$. Hence the set of units $\qa^{\times}$ in $\alg$ is a group. From $\det L(q) = n(q)^{2}$ it follows that $\qa^{\times} = \{q \in \alg: n(q) \neq 0\}$. Factoring by the dilation action of the positive real numbers yields the three-dimensional Lie subgroup $\sphere(\qa) = \{q \in \alg: n(q) = 1\} \subset \qa^{\times}$ that is connected if $\al_{1}\al_{2} \neq 0$ or $\al_{1} < 0$ and has two connected components if $\al_{2} = 0$ and $\al_{1} \geq 0$. The image $\proj\sphere(\qa)$ of $\sphere(\qa)$ in the projectivization $\proj(\qa)$ is a connected Lie group.

For $\ka \in \rea$, let $\cn_{\ka}(t)= \sum_{j \geq 0}\tfrac{(-\ka)^{j}}{(2j)!}t^{2j}$ and $\sn_{\ka}(t)= \sum_{j \geq 0}\tfrac{(-\ka)^{j}}{(2j+1)!}t^{2j+1}$ be the solutions of $\ddot{x} + \ka x = 0$ with the initial conditions $x(0) = 1$, $\dot{x}(0) =0$, and $x(0) = 0$, $\dot{x}(0) = 1$, respectively.
%%% SAVE: omitted because explicit expressions not used
%, so that
%\begin{align}\label{cnsndefined}
%&\cn_{\ka}(r) = \sum_{j \geq 0}\tfrac{(-\ka)^{j}}{(2j)!}r^{2j} = \begin{cases} \cos \sqrt{\ka} r & \ka > 0,\\ 1 & \ka = 0,\\ \cosh \sqrt{-\ka}r & \ka < 0,\end{cases}&\sn_{\ka}(r) = \sum_{j \geq 0}\tfrac{(-\ka)^{j}}{(2j+1)!}r^{2j+1} = \begin{cases} \tfrac{1}{\sqrt{\ka}}\sin \sqrt{\ka} r & \ka > 0,\\ r & \ka = 0,\\ \tfrac{1}{\sqrt{-\ka}}\sinh \sqrt{-\ka}r & \ka < 0.
%\end{cases}
%\end{align}
Note that $\cn_{\ka}^{2} + \ka\sn_{\ka}^{2} = 1$. For $r \in \rea$, $\cn_{c^{2}\ka}(r) = \cn_{\ka}(cr)$ and $\sn_{c^{2}\ka}(r) = c^{-1}\sn_{\ka}(cr)$, from which it follows that $2\cn_{\ka}(r)\sn_{\ka}(r) = \sn_{\ka}(2r)$ and $\cn_{\ka}^{2}(r) - \ka \sn_{\ka}^{2}(r) = \cn_{\ka}(2r)$. 
For a three-dimensional real Lie algebra, $\g$, define a quadratic form $n$ on $\g$ by $-8n(u) = B_{\g}(u, u)$ for $u \in \g$.
From \eqref{unich} there follows
\begin{align}\label{3drodrigues}
\Ad(\exp_{\g}(ru)) = e^{r\ad_{\g}(u)} = \Id_{\g} + \tfrac{1}{2}\sn_{n(u)}(2r)\ad_{\g}(u) + \tfrac{1}{4}\tfrac{1 - \cn_{n(u)}(2r)}{n(u)}\ad_{\g}(u)^{2},
\end{align}
for $u \in \g$ and $r \in \rea$. Formula \eqref{3drodrigues} generalizes a version of the usual Euler-Rodrigues formula for $SO(3)$.

The Lie algebra of $\sphere(\qa) \subset \qa^{\times}$ is $\im \qa \subset \qa$ with the bracket $[r, s] = rs - sr$ and exponential map $\exp:\qa \to \qa^{\times}$ given by the usual exponential, $e^{r} = \sum_{k \geq 0}\tfrac{1}{k!}r^{k}$. 
For any $w \in \im \qa$ and $t \in \rea$, there holds the generalized Euler formula
\begin{align}\label{eulerformula}
e^{tw} = \cn_{n(w)}(t) + \sn_{n(w)}(t)w.
\end{align}
For $r \in \im \qa$, the vector field $E^{r}_{q} = \tfrac{d}{dt}\big|_{t = 0}R(\exp(tr))q = R(r)q$ is left-invariant with respect to the action of $\sphere(\qa)$. 
%%% SAVE: omitted because explicit expressions not used
%Explicitly,
%\begin{align}
%\begin{split}
%E_{1} = E^{e_{1}}_{q} &= \al_{1} q^{1}\pr_{0} + q^{0}\pr_{1} - \al_{1}q^{3}\pr_{2} - q^{2}\pr_{3},\\
%E_{2} = E^{e_{2}}_{q} &=  \al_{2}q^{2}\pr_{0} + \al_{2}q^{3}\pr_{1} + q^{0}\pr_{2} + q^{1}\pr_{3},\\
%E_{3} = E^{e_{3}}_{q} &=  -\al_{1}\al_{2} q^{3}\pr_{0} -\al_{2}q^{2}\pr_{1} + \al_{1}q^{1}\pr_{2} + q^{0}\pr_{3},
%\end{split}
%\end{align}
%where $\pr_{i} =\tfrac{\pr}{\pr q^{i}}$, which satisfy 
In  particular, $E_{i} = E^{e_{i}}$, $1 \leq i \leq 3$, satisfy
\begin{align}\label{3dbrackets}
&[E_{1}, E_{2}] = 2E_{3}, &&[E_{2}, E_{3}] = -2\al_{2}E_{1}, && [E_{3}, E_{1}] = -2\al_{1}E_{2}.
\end{align} 
The adjoint representation is the homomomorphism $\Ad:\qa^{\times} \to \Aut(\qa)$ defined by $\Ad(q)(x) = qxq^{-1}$ for $q \in \qa^{\times}$ and $x \in \qa$. Note that $\Ad(\qa^{\times})$ preserves the Lie bracket $[\dum, \dum]$, the norm $n$, and the vector space direct sum $\qa = \re\qa\oplus \im \qa$. The restriction of $\Ad$ to $\sphere(\qa)$ is the usual adjoint representation of $\sphere(\qa)$. 
By \eqref{3dbrackets}, for $w \in \im \qa$,
\begin{align}\label{adimqa}
\ad_{\im \qa}(w) = \begin{pmatrix}
0 & 2\al_{2}w^{3} &- 2\al_{2}w^{2}\\
-2\al_{1}w^{3} & 0 & 2\al_{1}w^{1}\\
-2w^{2} & 2w^{1} & 0
\end{pmatrix}.
\end{align}
It follows from \eqref{adimqa} or \eqref{unich} that
\begin{align}\label{prerodrigues}
\ad_{\im \qa}(w)^{3} = -4n(w)\ad_{\im \qa}(w).
\end{align}
The matrix of the restriction of $\Ad(q) = n(q)^{-1}L(q)R(\bar{q})$ to $\im \qa$ with respect to the basis $\{e_{1}, e_{2}, e_{3}\}$ can be computed from \eqref{qalr}, and, using \eqref{adimqa} and \eqref{prerodrigues}, there results
\begin{align}\label{rodriguesvariant}
\begin{split}
&\resizebox{.9\textwidth}{!}{$
\begin{pmatrix}
 (q^{0})^{2} - \al_{1}(q^{1})^{2} + \al_{2}(q^{2})^{2} - \al_{1}\al_{2}(q^{3})^{2} & 2\al_{2}(q^{0}q^{3} - q^{1}q^{2})& -2\al_{2}(q^{0}q^{2} - \al_{1}q^{1}q^{3})\\
-2\al_{1}(q^{0}q^{3} + q^{1}q^{2}) &  (q^{0})^{2} + \al_{1}(q^{1})^{2} - \al_{2}(q^{2})^{2} - \al_{1}\al_{2}(q^{3})^{2} & 2\al_{1}(q^{0}q^{2} + \al_{2}q^{2}q^{3})\\
 -2(q^{0}q^{2} + \al_{1}q^{1}q^{3}) & 2(q^{0}q^{1} - \al_{2}q^{2}q^{3})&  (q^{0})^{2} + \al_{1}(q^{1})^{2} + \al_{2}(q^{2})^{2} + \al_{1}\al_{2}(q^{3})^{2}\\
\end{pmatrix}$}\\
& = n(q)\Id_{\im \qa} + (\re q) \ad_{\im\qa}(\im q) + \tfrac{1}{2}\ad_{\im \qa}(\im q)^{2},
\end{split}
\end{align}
where $q = \re q + \im q$ with $\re q= q^{0}$ and $\im q = q^{1}e_{1} + q^{2}e_{2} + q^{3}e_{3} \in \im \qa$. 
From \eqref{prerodrigues}, \eqref{rodriguesvariant} (or \eqref{3drodrigues}) and \eqref{eulerformula} there follows
\begin{align}\label{rodrigues}
\Ad(\exp_{\im \qa}(tw)) = e^{t\ad_{\im \qa}(w)} = \Id_{\im \qa} + \tfrac{1}{2}\sn_{n(w)}(2t)\ad_{\im \qa}(w) + \tfrac{1}{4}\tfrac{1 - \cn_{n(w)}(2t)}{n(w)}\ad_{\im\qa}(w)^{2} = \Ad(e^{tw}),
\end{align}
for $w \in \im \qa$ and $t \in \rea$.
Formula \eqref{rodrigues} generalizes a version of the usual Euler-Rodrigues formula for $SO(3)$.

If $\Ad(q) = \Id_{\im \qa}$ for $q \in \sphere(\qa)$, then $qr = rq$ for all $r \in \im \qa$, and so $q \in \sphere(\qa) \cap \ctr(\qa)$. If $\al_{1} \neq 0$ or $\al_{2} \neq 0$, then $\sphere(\qa) \cap \ctr(\qa) = \{\pm 1\}$, and the map $\Ad:\sphere(\qa) \to \eno(\im \qa)$ is a twofold covering of its image, so descends to a group isomorphism from $\proj\sphere(\qa)$ onto its image. If $\al_{1} = 0 = \al_{2}$, then $\sphere(\qa) \cap \ctr(\qa) = \{\pm 1 + t e_{3}: t \in \rea\}$, and $\theta:\proj\sphere(\qa) \to \eno(\im \qa)$ is a line bundle over its image, which is the group of translations of $\spn\{e_{1}, e_{2}\}$. 

The differentials of covectors of the left-invariant coframe $\{\ep^{1}, \ep^{2}, \ep^{3}\}$ on $\sphere(\qa)$ dual to the left-invariant frame $\{E_{1}, E_{2}, E_{3}\}$ of \eqref{3dbrackets} satisfy
\begin{align}\label{3ddual}
& d\ep^{1} = 2\al_{2}\ep^{2}\wedge \ep^{3},& & d\ep^{2} = 2\al_{1}\ep^{3}\wedge \ep^{1},& &d\ep_{3} = -2\ep^{1}\wedge \ep^{2}.
\end{align}
From \eqref{3ddual} there follow
\begin{itemize}
\item The volume form $\ep^{1}\wedge \ep^{2} \wedge \ep^{3}$ is biinvariant, reflecting that $\sphere(\alg)$ is a unimodular Lie group.
\item There is a biinvariant one-form $\om$ on $\sphere(\qa)$ if and only if $\al_{1}\al_{2} = 0$, in which case $\om = \om_{2}\ep^{2} + \om_{3}\ep^{3}$ with $\al_{1}\om_{2} = 0$ and $\al_{2}\om_{3} = 0$.
\end{itemize}
The restriction to $\im \qa$ of the  $\fie$-bilinear form $g$ obtained by polarizing $n$ can be viewed as defining a left-invariant tensor on $\sphere(\qa)$, also denoted $g$ and satisfying $g(E_{q}^{r}, E_{q}^{s}) = g(r, s)$ for $r, s \in \im \qa$. Moreover, because $g([r, s], t) = g(r, [s, t])$ for $r, s, t \in \im \qa$, the restriction to $\im \qa$ of $g$ determines a biinvariant tensor on $\sphere(\alg)$.

\begin{lemma}\label{imqakillinglemma}
If $\al_{1}$ and $\al_{2}$ are not both zero, a biinvariant symmetric bilinear form $\be$ on $(\im \qa, [\dum, \dum])$ is unique up to homothety, equal to a multiple of the Killing form of $(\im \qa, [\dum, \dum])$, which is $-8n(q)$. Moreover, $\Ad(\sphere(\qa))$ preserves $\be$, so that $\Ad(\sphere(\qa)) \subset SO(\im \qa, g)$ if $\al_{1}\al_{2} \neq 0$. If $\al_{1} = 0 = \al_{2}$, any symmetric bilinear form on $\spn\{e_{1}, e_{2}\}$ is biinvariant.
\end{lemma}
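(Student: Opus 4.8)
The plan is to reduce biinvariance to an infinitesimal condition and then solve the resulting linear system directly in the basis $\{e_{1},e_{2},e_{3}\}$ of $\im\qa$, treating $\al_{1},\al_{2}$ as parameters so that all cases are handled uniformly. A symmetric bilinear form $\be$ on the Lie algebra $(\im\qa,[\dum,\dum])$ is biinvariant precisely when it is $\ad$-invariant, i.e. $\be([w,x],y)+\be(x,[w,y])=0$ for all $w,x,y$, equivalently $\ad_{\im\qa}(w)$ is $\be$-antisymmetric for every $w\in\im\qa$. Writing $\be_{ij}=\be(e_{i},e_{j})$ and substituting the brackets \eqref{3dbrackets} (equivalently the matrices \eqref{adimqa}) produces a finite linear system in the six unknowns $\be_{ij}$.

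First I would record the output of that system. Running through $w=e_{1},e_{2},e_{3}$ and the pairs $(x,y)$ among the $e_{i}$ yields $\be_{13}=\be_{23}=0$, the relations $\be_{33}=-\al_{1}\be_{22}=-\al_{2}\be_{11}$ together with $\al_{1}\be_{22}=\al_{2}\be_{11}$, and, whenever $\al_{1}\neq0$ or $\al_{2}\neq0$, the vanishing $\be_{12}=0$. If $\al_{1}$ and $\al_{2}$ are not both zero these force $\be$ to be proportional to the form with matrix $\mathrm{diag}(8\al_{1},8\al_{2},-8\al_{1}\al_{2})$ in the basis $\{e_{1},e_{2},e_{3}\}$: when $\al_{1}\al_{2}\neq0$ all entries are fixed multiples of $\be_{11}$, and when exactly one $\al_{i}$ vanishes the constraints collapse $\be$ onto a multiple of $e^{i}\tensor e^{i}$ in the single direction with $\al_{i}\neq0$. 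Computing $\tr(\ad_{\im\qa}(e_{i})\ad_{\im\qa}(e_{j}))$ from \eqref{adimqa} shows this distinguished form is exactly the Killing form of $(\im\qa,[\dum,\dum])$, and comparison with the reduced norm $n$ identifies it with the restriction of $-8n$ to $\im\qa$; this establishes uniqueness up to homothety and the identification with the Killing form. The same system with $\al_{1}=\al_{2}=0$ imposes only $\be_{13}=\be_{23}=\be_{33}=0$, so the biinvariant forms are exactly the symmetric forms supported on $\spn\{e_{1},e_{2}\}$ (with $e_{3}$ in the radical), which is the final assertion.

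For the $\Ad(\sphere(\qa))$-invariance I would argue structurally rather than through the linear system. For $q\in\qa^{\times}$ the map $\Ad(q)x=qxq^{-1}$ is an automorphism of the associative algebra, hence of its commutator bracket, and it fixes $\re\qa=\spn\{1\}$ and therefore preserves $\im\qa=\ker\tr$; being a Lie algebra automorphism it preserves the Killing form, hence preserves any $\be$, since $\be$ is a multiple of the Killing form by the previous paragraph. When $\al_{1}\al_{2}\neq0$ the restriction of $g$ to $\im\qa$ is nondegenerate, so this gives $\Ad(\sphere(\qa))\subset O(\im\qa,g)$; to improve $O$ to $SO$ I would use \eqref{qalr} to compute $\det\Ad(q)=\det L(q)\,\det R(q^{-1})=n(q)^{2}n(q)^{-2}=1$ on all of $\qa$, whence, since $\Ad(q)$ fixes the line $\re\qa$, $\det(\Ad(q)|_{\im\qa})=1$ and $\Ad(q)|_{\im\qa}\in SO(\im\qa,g)$.

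The one genuinely delicate point is the uniqueness in the mixed case where exactly one $\al_{i}$ vanishes: there $(\im\qa,[\dum,\dum])$ is solvable with a degenerate, rank-one Killing form, so one cannot invoke the standard fact that a simple Lie algebra carries a unique invariant form up to scale — which would settle the case $\al_{1}\al_{2}\neq0$ immediately, since then $\im\qa$ is $\so(3)$ or $\sll(2,\rea)$ and absolute irreducibility of the adjoint representation plus Schur's lemma applies. The uniform direct solution of the linear system is exactly what covers all cases, and the only care required is the bookkeeping of which relations survive as $\al_{1},\al_{2}$ degenerate.
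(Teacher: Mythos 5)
Your proof is correct, and its computational core is the same as the paper's: you reduce biinvariance to the algebraic invariance condition $\be([x,y],z)=\be(x,[y,z])$ (equivalently $\ad$-antisymmetry), solve the resulting linear system in the basis $\{e_{1},e_{2},e_{3}\}$ using \eqref{3dbrackets} to obtain $\be_{13}=\be_{23}=0$, $\al_{1}\be_{12}=\al_{2}\be_{12}=0$, and $\be_{33}=-\al_{2}\be_{11}=-\al_{1}\be_{22}$, identify the surviving one-parameter family with multiples of the Killing form $-8n$ computed from \eqref{adimqa}, and observe that when $\al_{1}=\al_{2}=0$ the system only forces $e_{3}$ into the radical; this is precisely the paper's argument. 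Where you genuinely diverge is the $\Ad(\sphere(\qa))$-invariance. The paper argues that $\Ad(q)$, being a Lie algebra automorphism, carries biinvariant forms to biinvariant forms, so by the uniqueness $\Ad(q)^{\ast}\be=c\,\be$ for some nonzero constant, and then pins $c=1$ from $\det\Ad(q)=1$ together with the odd dimension of $\im\qa$. You instead invoke the standard fact that any Lie algebra automorphism preserves the Killing form exactly (conjugation-invariance of the trace), so $\be$, being a multiple of $B_{\im\qa}$, is preserved on the nose. Your route is slightly cleaner and more robust: converting $\Ad(q)^{\ast}\be=c\,\be$ into $c^{3}=(\det\Ad(q))^{2}=1$ tacitly uses nondegeneracy of $\be$, which holds when $\al_{1}\al_{2}\neq 0$ but fails in the mixed case where exactly one $\al_{i}$ vanishes and the Killing form has rank one, whereas your argument applies uniformly in all cases covered by the lemma. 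The final $SO$ statement is handled the same way in both proofs, via $\det L(q)\det R(q^{-1})=n(q)^{2}n(q)^{-2}=1$ and the fact that $\Ad(q)$ fixes $\re\qa$ and so restricts to $\im\qa$ with determinant one.
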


\begin{proof}
It follows from \eqref{adimqa} that the Killing form of $\im \qa$ is $B_{\im\qa}(w, w) = \tr \ad_{\im \qa}(w)^{2} = -8n(w)$.

A symmetric bilinear form $\be \in S^{2}(\im \qa)^{\ast}$ determines a left-invariant tensor on $\sphere_{0}(\qa)$. This tensor is biinvariant if and only if $\be([x, y], z) = \be(x, [y, z])$ for all $x, y, z \in \im \qa$. In particular $\be([x, y], y) = \be(x, [y, y]) = 0$ for all $x, y \in \im \qa$. If $i \neq j$, then $\be([e_{i}, e_{j}], e_{j}) = 0$, which yields the equations
\begin{align}
\label{biim0}
&\al_{1}\be(e_{1}, e_{2}) = 0, && \al_{2}\be(e_{1}, e_{2}) = 0, & \be(e_{1}, e_{3}) = 0,& &\be(e_{2}, e_{3}) =0,
\end{align}
and $\be([e_{1}, e_{2}], e_{3}) = \be([e_{2}, e_{3}], e_{1}) = \be([e_{3}, e_{1}], e_{2})$, which, by \eqref{3dbrackets}, imply that $\be$ has the form
\begin{align}\label{biim1}
\be = b_{1}\ep^{1}\tensor \ep^{1} + b_{2}\ep^{2}\tensor \ep^{2} + b_{3}\ep^{3}\tensor \ep^{3} + c(\ep^{1}\tensor e^{2} + \ep^{2}\tensor \ep^{1})
\end{align}
where $b_{3} = - \al_{2}b_{1} = -\al_{1}b_{2}$, and $al_{1}c = 0 = \al_{2}c$. If $\al_{1}$ and $\al_{2}$ are not both $0$, these equations have a unique solution up to homothety, which must be multiple of $B_{\im \qa}$. 
Since $\Ad(\sphere(\qa))$ acts as automorphisms of the Lie algebra $(\im \qa, [\dum, \dum])$, the uniqueness implies that it preserves $\be$ up to multiplication by a nonzero constant. Since $\det \Ad(q) = \det L(q)\det R(\bar{q}) = n(q)^{2}n(\bar{q})^{2} = 1$ and $\dim \im \qa$ is odd, the constant must be $1$, so $\Ad(\sphere(\qa))$ preserves $\be$. In particular, if $\al_{1}\al_{2} \neq 0$, $\Ad(\sphere(\qa)) \subset SO(\im \qa, g)$. 

Finally, if $\al_{1} = 0 = \al_{2}$, then $b_{1}$, $b_{2}$, and $c$ can be chosen arbitrarily.
\end{proof}

Up to isomorphism there are five nonabelian three-dimensional unimodular real Lie algebras \cite{Bianchi, Milnor-leftinvariant}: $\so(3) \simeq \su(2) \simeq \sp(1)$,  $\so(1, 2) \simeq \sll(2, \rea) \simeq \sp(1, \rea)$, $\euc(2)$, $\euc(1, 1)$, and the Heisenberg Lie algebra $\heisen_{3}$. They are represented by \eqref{3dbrackets} for appropriate choices of parameters $(\al_{1}, \al_{2})$: $(-1, -1)$ for $\so(3) \simeq \su(2) \simeq \sp(1)$, $(1, 1)$ for $\so(1, 2) \simeq \sll(2, \rea) \simeq \sp(1, \rea)$, $(0, -1)$, for $\euc(2)$, $(0, 1)$ for $\euc(1, 1)$, and $(0, 0)$ for $\heisen_{3}$. Here $\Euc(2)$  and $\Euc(1, 1)$ denote the groups of motions of a $2$-dimensional Euclidean vector space and a $2$-dimensional Lorenztian vector space, and $\euc(2)$ and $\euc(1, 1)$ are their Lie algebras.

Although the Lie algebra $(\im \qa, [\dum, \dum])$ has been identified, there remains to identify the Lie group $\sphere(\qa)$. In the cases where $\al_{2} = 0$ and $\al_{1} \geq 0$, $\sphere(\qa)$ has two connected components and $\sphere_{0}(\qa)$ denotes the connected component of the identity. 

When $(\al_{1}, \al_{2}) =(-1, -1)$, $\sphere(\qa)$ comprises the unit norm quaternions, so is $\sphere^{3}$, while when $(\al_{1}, \al_{2}) = (1, 1)$, $\sphere(\qa)$ comprises the unit determinant elements of $\mat{2}{\rea}$, so is $SL(2, \rea)$. In either case, $\Ad$ is a nontrivial homomorphism from the connected Lie group $\sphere(\qa)$ to $SO(\im \qa, g)$ which is a twofold covering of its image. Since the image is connected it is a connected component of $SO(\im \qa, g)$. In the case $(\al_{1}, \al_{2}) = (1, 1)$, this means the image is the connected component of the identity, $SO_{0}(\im \qa, g)$. In summary, when $(\al_{1}, \al_{2}) = (-1, -1)$, $\Ad$ is the twofold cover $Sp(1) \to SO(3)$, while when $(\al_{1}, \al_{2}) = (1, 1)$, $\Ad$ is the twofold cover $SL(2, \rea) \to SO_{0}(1, 2)$. 

It is claimed that in the cases $(-1, 0)$ and $(1, 0)$, $\sphere(\alg)$ is isomorphic via $\Ad$ to the connected component of the identity of the special Euclidean group acting on the algebraically trivial ideal $\jrad \qa$ preserving the metric $(q^{2})^{2} - \al_{1}(q^{3})^{2}$. Suppose $\be \in S^{2}(\jrad\im\qa)^{\ast}$ is $(\im \qa, [\dum, \dum])$ invariant. The invariance implies $2\be(e_{3}, e_{3}) = \be([e_{1}, e_{2}], e_{3}) = - \be(e_{2}, [e_{1}, e_{3}]) = -2\al_{1}\be(e_{2}, e_{2})$ and $2\be(e_{2}, e_{3}) = \be(e_{2}, [e_{1}, e_{2}]) = \be(e_{1}, [e_{2}, e_{2}]) = 0$, so that $\be$ has the form $B((q^{2})^{2} - \al_{1}(q^{3})^{2})$ for some $B \in \rea$. Since $\Ad(\sphere(\qa))$ acts on $\im \qa$ by algebra automorphisms, it preserves $\be = (q^{2})^{2} - \al_{1}(q^{3})^{2}$ up to a constant factor. The constant factor is $1$. For $x \in \jrad\im \qa$ and $q \in \sphere(\qa)$ calculating using \eqref{prerodrigues} and \eqref{rodriguesvariant} yields $\be(\Ad(q)x, \Ad(q)x) = n(q)^{2}\be(x, x) = \be(x, x)$.
It is claimed that $\Ad(\sphere(\qa))$ is the connected component of the identity of the Euclidean group of motions of $(\jrad \qa, \be)$, that is $\SEuc(2)$ or $\SEuc_{0}(1, 1)$, as $\al_{1} = -1$ or $\al_{1} = 1$. 
Define $\phi:\rea \times \rea^{2} \to \sphere(\alg)$ by 
\begin{align}\label{sphereparam}
\begin{split}
\phi(&t,w_{1},w_{2}) = (1 - \tfrac{w_{2}}{2}e_{2} - \tfrac{w_{1}}{2\al_{1}}e_{3})e^{te_{1}} =  (1 - \tfrac{w_{2}}{2}e_{2} - \tfrac{w_{1}}{2\al_{1}}e_{3})(\cn_{-\al_{1}}(t) + \sn_{-\al_{1}}(t)e_{1})\\
& = \cn_{-\al_{1}}(t) + \sn_{-\al_{1}}(t) e_{1}+ \tfrac{1}{2}\left(w_{1}\sn_{-\al_{1}}(t)  - w_{2}\cn_{-\al_{1}}(t)\right)e_{2}  + \tfrac{1}{2}\left( w_{2}\sn_{-\al_{1}}(t) - \tfrac{w_{1}}{\al_{1}}\cn_{-\al_{1}}(t)\right)e_{3}.
\end{split}
\end{align}
(Some version of the parametrization \eqref{sphereparam} is well known in applied kinematics; see e.g. \cite[Chapter $9$]{Selig}.) It is apparent that $\phi$ maps $\im \qa$ onto $\sphere_{0}(\qa)$. If $\phi(t,w_{1},w_{2}) = \phi(s, x_{1}, x_{2})$, then $e^{(s-t)e_{1}} = (1 + \tfrac{x_{2} - w_{2}}{2}e_{2} + \tfrac{x_{1} - w_{1}}{2\al_{1}}e_{3})$, which yields $x_{1} = w_{1}$, $x_{2} = w_{2}$, and $e^{(s-t)e_{1}} = 0$, so, when $\al_{1} = 1$, $\phi$ is a diffeomorphism onto $\sphere_{0}(\qa)$, while when $\al_{1} = -1$, $\phi$ descends to a diffeomorphism $\rea/2\pi\integer \times \rea^{2} \to \sphere(\alg)$. A straightforward calculation combining \eqref{rodriguesvariant} and \eqref{sphereparam} shows that, for $q = \phi(t, w_{1}, w_{2})$, the matrix of $\Ad(\phi(t, w_{1}, w_{2}))$ with respect to the ordered basis $\{e_{1}, e_{2}, e_{3}\}$ of $\im \qa$ is
\begin{align}
\begin{split}
&\begin{pmatrix}
1 & 0 & 0\\
-2\al_{1}(q^{0}q^{3} + q^{1}q^{2}) &  (q^{0})^{2} + \al_{1}(q^{1})^{2} & 2\al_{1}q^{0}q^{2}\\
 -2(q^{0}q^{2} + \al_{1}q^{1}q^{3}) & 2q^{0}q^{1}&  (q^{0})^{2} + \al_{1}(q^{1})^{2}  +\\
\end{pmatrix} =
\begin{pmatrix} 
1 & 0 & 0 \\
w_{1} &  \cn_{-\al_{1}}(2t) & \al_{1}\sn_{-\al_{1}}(2t)\\
w_{2} & \sn_{-\al_{1}}(2t)&  \cn_{-\al_{1}}(2t) \\
\end{pmatrix},
\end{split}
\end{align}
so that the action of $\Ad(\phi(t, w_{1}, w_{2}))$ on $\jrad \qa = \spn\{e_{2}, e_{3}\}$ is given by the affine map
\begin{align}
\begin{pmatrix} 
x_{1} \\
x_{2}  \\
\end{pmatrix}\to \begin{pmatrix} 
 \cn_{-\al_{1}}(2t) & \al_{1}\sn_{-\al_{1}}(2t)\\
\sn_{-\al_{1}}(2t)&  \cn_{-\al_{1}}(2t) \\
\end{pmatrix}\begin{pmatrix} 
x_{1} \\
x_{2}  \\
\end{pmatrix} + \begin{pmatrix} 
w_{1} \\
w_{2}  \\
\end{pmatrix}.
\end{align}
This shows that $\Ad(\sphere(\alg))$ is the connected component of the identity of the special Euclidean group acting on $\jrad \qa$ preserving the metric $(q^{2})^{2} - \al_{1}(q^{3})^{2}$.

In the case $(\al_{1}, \al_{2}) = (0, 0)$, $\sphere(\qa) = \{\pm 1 + q^{1}e_{1} + q^{2}e_{2} + q^{3}e_{3}\}$ has two connected components and it is simpler to consider $\sphere_{0}(\qa)= \{1 + q^{1}e_{1} + q^{2}e_{2} + q^{3}e_{3}\}$. Because the Lie algebra $(\im \qa, [\dum, \dum])$ is $2$-step nilpotent with one-dimensional center $\spn\{e_{3}\}$, by Lemma \ref{heisenberglemma} it is isomorphic to $\heisen_{3}(\rea)$.
The Lie algebra exponential map $\exp:\im \qa \to \sphere_{0}(\qa)$ is the diffeomorphism given by $\exp(q) = \ga(1) = 1 + q$, where $\ga(t) = e^{tq}$ is the one-parameter subgroup such that $\ga(0) = 1$ and $\dot{\ga}(0) = q \in \im \qa$. 

For $c \in \rea$, define $\jsphere_{c}(\qa)  = \{q \in \im\qa: n(q) = c\} = \{q \in \qa: \bar{q} = -q, q^{2} = -c\}$.
%In particular $\jsphere_{1}(\qa) = \im \qa \cap \sphere(\qa)$. 
Note that $\jsphere(\qa)$ can have at most two connected components. 
Each $r \in \im \qa$ determines a map $\Psi_{r}:\sphere(\qa) \to \jsphere_{n(r)}(\qa)$ defined by $\Psi_{r}(q) = \Ad(q)r$. The image $\Psi_{r}(\sphere(\qa))$ is the adjoint orbit of $r \in \im \qa$. 
By definition $\Psi_{r}(qe^{tr}) = \Psi_{r}(q)$ for all $t \in \rea$, so the one-parameter subgroup $qe^{tr}$ of $\sphere(\qa)$ is contained in the fiber $\Psi_{r}^{-1}(\Psi_{r}(q))$. 
For $w \in T_{q}\sphere(\qa)$ the differential $T\Psi_{r}(q)w = [w\bar{q}, \Psi_{r}(q)] = \Ad(q)([\bar{q}w, r])$, where it is used that $w\bar{q} = -q\bar{w}$ is the condition defining $T_{q}\sphere(\qa)$. Equivalently $T_{q}\sphere(\qa) = L(q)\im \qa$, so every element to $T_{q}\sphere(\qa)$ has the form $w = qv$ for some $v \in \im \qa$. It follows that $T\Psi_{r}(q)w = 0$ if and only if $0 = [\bar{q}w, r] = [v, r]$. Hence $\ker T\Psi_{r} = L(q)\ker \ad_{\im\qa}(r)$. 
In particular, $qr \in \ker T\Psi_{r}(q)$.
 By \eqref{prerodrigues}, the characteristic polynomial of $\ad_{\im \qa}(r)$ is $t(t^{2} + 4n(r))$, so $\ad_{\im \qa}(r)$ has rank $2$ if and only if $n(r) \neq 0$. In this case $\ker T\Psi_{r} = \spn\{qr\}$ and the adjoint orbit $\Psi_{r}(\im \qa)$ is a two-dimensional smooth submanifold equal to the connected component of $\jsphere_{n(r)}(\qa)$ containing $r$. In general the dimension of the adjoint orbit $\Psi_{r}(\im\qa)$ is $\rank \ad_{\im \qa}(r)$

For $n(r) \neq 0$ the map $\Psi_{r} \to \Psi_{r}(\im \qa)$ is a version of the Hopf fibration. 

When $(\al_{1}, \al_{2}) = (-1, -1)$ the adjoint orbits with positive $n(r)$ are the spheres $\{s \in \im \qa: n(s) = n(r)\}$ centered on the origin in $\im \qa$ and the only adjoint orbit with $n(r) =0$ is the origin.
When $(\al_{1}, \al_{2}) = (1, 1)$ there are three kinds of adjoint orbits. Those with $n(r) < 0$ are the $1$-sheeted hyperboloids $\{s \in \im \qa: n(s) = n(r)\}$ centered on the axis spanned by $e_{3}$. Those with $n(r) > 0$ are the two connected components of the $2$-sheeted hyperboloids $\{s \in \im \qa: n(s) = n(r)\}$. There are three adjoint orbits with $n(r) = 0$, the two connected components of the complement of the origin in $\{s \in \im \qa: n(s) = 0\}$ and the origin itself.

When $(\al_{1}, \al_{2}) = (-1, 0)$, the adjoint orbits with $n(r) > 0$ are the two planes $\{s \in \im\qa: s^{1} = \pm \sqrt{n(r)}\}$, while the adjoint orbits with $n(r) = 0$ are the origin and the circles centered on the origin in the plane $\{s \in \im \qa: s^{1} = 0\}$.
When $(\al_{1}, \al_{2}) = (1, 0)$, the adjoint orbits with $n(r) < 0$ are the two planes $\{s \in \im\qa: s^{1} = \pm \sqrt{-n(r)}\}$, while the adjoint orbits with $n(r) = 0$ are the origin and the connected components of the level sets of $(q^{2})^{2} - (q^{3})^{2}$ in the plane $\{s \in \im \qa: s^{1} = 0\}$.
When $(\al_{1}, \al_{2}) = (0, 0)$, the adjoint orbits planes $\{s \in \im\qa: s^{1} = c\}$ for $c \neq 0$ and the individual points of the plane $\{s \in \im \qa: s^{1} = 0\}$.

Two pairs $(\g, t)$ and $(\bar{\g}, \bar{t})$ where $t \in \g$ and $\bar{r} \in \bar{\g}$ are isomorphic if there is a Lie algebra isomorphism $\g \to \bar{\g}$ that sends $t \to \bar{t}$. The connections associated by Theorem \ref{3dssuniquetheorem} with two adjoint orbits in $\g$ are isomorphic if there is an automorphism of $\g$ mapping one adjoint orbit to the other. Consequently, Theorem \ref{3dssuniquetheorem} yields a bijection between nontrivial $\Aut(\g)$ orbits in $\g$ and the isomorphism classes of left-invariant conelike radiant structures on a three-dimensional unimodular Lie group with Lie algebra $\g$.

\begin{lemma}\label{gqalemma}
Let $G$ be a unimodular three-dimensional Lie group with Lie algebra $\g$ and suppose $t \in \g$ is $\ad_{\g}(t)$-semisimple, so $B_{\g}(t, t) \neq 0$. Up to an inner isomorphism of $\g$, $(\g, t)$ has the form $(\g = \im \qaf[\al_{1}, \al_{2}]\rea, t = \ka^{-1}e_{1})$ where $\ka \in (0, \infty)$ and the parameters $\al_{1}$ and $\al_{2}$ are as in the following table:
\begin{align}\label{paramtable}
\begin{tabular}{|c|c|l|l|}
\hline
$\al_{1} = \sign B_{\g}(t, t)$ & $\al_{2} = -\ep$ &  $B_{\g}(t, t) = -8n(t)$ & $\g = \im \qaf[\al_{1}, \al_{2}]\rea$\\
\hline
$-1$ & $-1$ &  $-8\ka^{-2} < 0$ & $\so(3)$\\
\hline
$1$ & $1$ &  $8\ka^{-2} > 0$ & $\sll(2, \rea)$\\
\hline
$-1$ & $1$ & $-8\ka^{-2} < 0$ & $\sll(2, \rea)$\\
\hline
$-1$ & $0$ &  $-8\ka^{-2} < 0$ & $\euc(2)$\\
\hline
$1$ & $0$ & $8\ka^{-2} > 0$ & $\euc(1,1)$\\
\hline
\end{tabular}
\end{align}
\end{lemma}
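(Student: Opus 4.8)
The plan is to produce an explicit Lie algebra isomorphism carrying $(\g,t)$ to the stated model, reading off the parameters from the sign of $B_\g(t,t)$ and from the invariant $\ep$ supplied by the structural basis. First I would note that $B_\g(t,t) = \tr \ad_\g(t)^{2} \neq 0$ forces $\ad_\g(t) \neq 0$, so $\g$ is nonabelian, and Lemma \ref{3dunilemma} applies to give a basis $\{t,a,b\}$ of $\g$ with $[t,a] = 2b$, $[t,b] = \tfrac14 B_\g(t,t)a$, $[a,b] = \ep t$, where $\ep$ is constrained by the sign of $B_\g(t,t)$ precisely as in that lemma ($\ep \in \{0,\pm 1\}$ if $B_\g(t,t) < 0$ and $\ep \in \{0,-1\}$ if $B_\g(t,t) > 0$). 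I then set $\al_1 = \sign B_\g(t,t) \in \{-1,1\}$, $\al_2 = -\ep$, and $\ka = \sqrt{8/|B_\g(t,t)|} \in (0,\infty)$.

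Next I would write down the target model and match it against this basis. In $\im\qaf[\al_1,\al_2]\rea$ the bracket is $[r,s] = rs-sr$, and by \eqref{3dbrackets} the generators satisfy $[e_1,e_2] = 2e_3$, $[e_2,e_3] = -2\al_2 e_1$, $[e_3,e_1] = -2\al_1 e_2$, while $n(e_1) = -\al_1$, so $B_{\im\qa}(e_1,e_1) = -8n(e_1) = 8\al_1$. Setting $T = \ka^{-1}e_1$, $A = \tfrac{1}{\sqrt2}e_2$, $B = \tfrac{1}{\sqrt2}\ka^{-1}e_3$, I would verify directly that $[T,A] = 2B$, that $[T,B] = \tfrac14 B_\g(t,t)A$ (this is exactly where the choice $\ka = \sqrt{8/|B_\g(t,t)|}$, i.e.\ $B_\g(t,t) = 8\al_1\ka^{-2}$, is used), and that $[A,B] = -\al_2 T = \ep T$; the degenerate cases $\al_2 = 0$ cause no trouble here since then $[A,B] = 0 = \ep T$ automatically. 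Because $\{T,A,B\}$ is a basis of $\im\qaf[\al_1,\al_2]\rea$ whose bracket table coincides with that of $\{t,a,b\}$, the linear map $F$ determined by $t\mapsto T$, $a\mapsto A$, $b\mapsto B$ is a Lie algebra isomorphism with $F(t) = \ka^{-1}e_1$, which realizes $(\g,t)$ in the model form. The residual ambiguity in $F$, coming from the non-canonical choice of $\{t,a,b\}$, is absorbed by automorphisms of the model fixing $\ka^{-1}e_1$ — notably the inner ones $\Ad(e^{se_1})$ acting on $\spn\{e_2,e_3\}$ — which is what the phrase \emph{up to an inner isomorphism} refers to.

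Finally I would confirm the table row by row. The essential observation, already recorded around \eqref{3dbrackets}, is that using the representation with $\al_1 \neq 0$ (rather than the isomorphic $\qaf[0,\al_1]\rea$) is exactly what makes $e_1$ semisimple, since $n(e_1) = -\al_1 \neq 0$; this normalization is forced by the hypothesis that $t$ be $\ad_\g(t)$-semisimple. Combining $\al_1 = \sign B_\g(t,t)$ and $\al_2 = -\ep$ with the admissible ranges of $\ep$ yields exactly the five pairs $(\al_1,\al_2) \in \{(-1,-1),(1,1),(-1,1),(-1,0),(1,0)\}$, and the identification of each $\im\qaf[\al_1,\al_2]\rea$ with $\so(3)$, $\sll(2,\rea)$, $\euc(2)$, or $\euc(1,1)$ follows from the classification of nonabelian three-dimensional unimodular real Lie algebras recorded earlier in this section — with $\sll(2,\rea)$ occurring for both signs of $B_\g(t,t)$ (its one- and two-sheeted hyperboloidal orbits) and the sign of $\al_1$ separating $\euc(2)$ from $\euc(1,1)$. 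I expect the only genuine care to lie in this bookkeeping, in particular in keeping straight that $n(e_1) = -\al_1$ ties $\sign B_\g(t,t)$ to $\al_1$, rather than in any conceptual obstacle: once Lemmas \ref{3dsslemma}, \ref{3dunilemma}, and \ref{qalielemma} are in hand, the argument is essentially the explicit change of basis above together with a sign census.
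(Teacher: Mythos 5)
Your argument is correct, and its computational core coincides with the paper's: the rescaled elements $\ka^{-1}e_{1}$, $\tfrac{1}{\sqrt{2}}e_{2}$, $\tfrac{1}{\sqrt{2}}\ka^{-1}e_{3}$ of $\im\qaf[\al_{1}, \al_{2}]\rea$ satisfy the relations \eqref{unibasis} with $\ep = -\al_{2}$ and with the Killing norm of $\ka^{-1}e_{1}$ equal to $8\al_{1}\ka^{-2}$; this is exactly \eqref{qaunibasis}--\eqref{qaunibrackets} in the paper's proof. Where you genuinely diverge is in how $t$ gets placed at $\ka^{-1}e_{1}$ with $\ka > 0$. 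You anchor the construction at the given $t$: you complete $t$ to a basis of the form \eqref{unibasis}, fix $\ka = \sqrt{8/|B_{\g}(t,t)|} > 0$ at the outset, and define the pointed isomorphism on that basis, so no subsequent normalization is needed. The paper instead records only the abstract isomorphism $\g \simeq \im\qaf[\al_{1}, \al_{2}]\rea$ and then moves the image of $t$ onto a multiple of $e_{1}$ by an inner automorphism, using the descriptions of adjoint orbits obtained earlier in the section (spheres for $\so(3)$, one- and two-sheeted hyperboloids for $\sll(2,\rea)$, parallel planes for $\euc(2)$ and $\euc(1,1)$); because in the two-sheeted and planar cases $\ka^{-1}e_{1}$ and $-\ka^{-1}e_{1}$ lie in distinct adjoint orbits, the paper must finish by applying an outer automorphism sending $t$ to $-t$ in order to force $\ka \in (0,\infty)$. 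Your route is shorter and needs no case analysis; the paper's route records, as a byproduct, exactly which normalizations are achievable by inner automorphisms alone, information that feeds the surrounding discussion of adjoint orbits and the identification of isomorphism classes of left-invariant conelike radiant structures with $\Aut(\g)$-orbits.

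Two points should be made explicit in your write-up. First, the statement of Lemma \ref{3dunilemma} asserts only that \emph{some} basis of the form \eqref{unibasis} exists; you use the stronger fact that the prescribed Killing-anisotropic $t$ can itself be completed to such a basis. That is what the proof of Lemma \ref{3dunilemma} establishes (its construction begins with an arbitrary $t$ satisfying $B_{\g}(t,t) \neq 0$), and the paper invokes the lemma in this form elsewhere (e.g. in the proof of Theorem \ref{3dssuniquetheorem}), but since your whole argument hinges on it you should cite the construction and not merely the statement. Second, your closing gloss, that the residual ambiguity in $F$ is absorbed by \emph{inner} automorphisms fixing $\ka^{-1}e_{1}$, is not accurate in all cases: when $\al_{1} = \al_{2} = 1$ the element $e_{1}$ is hyperbolic in $\sll(2,\rea)$ and its stabilizer in $\Aut(\g)$ contains outer automorphisms. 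This does not affect your existence argument; it only means that the phrase \emph{up to an inner isomorphism} in the statement of the lemma must be read loosely, as the paper's own recourse to an outer automorphism at the end of its proof already indicates.
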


\begin{proof}
Given $0 \neq \ka \in \rea$, the basis 
\begin{align}\label{qaunibasis}
& t = \ka^{-1}e_{1}, && a = \tfrac{1}{\sqrt{2}}e_{2}, &&b = \tfrac{1}{\sqrt{2}\ka}e_{3} 
\end{align}
of $\im \qaf[\al_{1}, \al_{2}]\rea$ has brackets 
\begin{align}\label{qaunibrackets}
& [t, a] = 2b, & & [t, b] = \tfrac{1}{4}B_{\g}(t, t)a, && [a, b] = -\al_{2}t,
\end{align}
where there has been used $-8n(t) = B_{\g}(t, t) = 8\ka^{-2}\al_{1}$. The brackets \eqref{qaunibrackets} are as in \eqref{unibasis} provided $\ep = -\al_{2}$. By Lemma \ref{3dunilemma} any unimodular three-dimensional Lie algebra containing an $\ad$-semisimple element is isomorphic to one with a basis as in \eqref{unibasis}, so this shows that $\g$ is isomorphic to $\im \qaf[\al_{1}, \al_{2}]\rea$ for some $(\al_{1}, \al_{2} = -\ep)$ as in Table \ref{paramtable}, where $\ep \in \{0, 1, -1\}$ and $B_{\g}(t, t) < 0$ if $\ep = 1$. In every case there are at most two distinct adjoint orbits corresponding with a given value of $B_{\g}(t, t)$, and by choosing $\al_{1}$ suitably, it can always be supposed that $t$ is a multiple of $e_{1}$.

If $-\al_{2} = \ep = 1$ and $B_{\g}(t, t) < 0$, then $\g$ is compact and simple. In this case $\al_{1} = -1$ and the adjoint orbits of $\sphere(\qaf[-1, -1]\rea) = S^{3}$ are the spheres comprising vectors of constant Killing norm, so after an inner automorphism of $\g$ it can be supposed that $t =\ka^{-1}e_{1}$ for some $\ka \in (0, \infty)$.

If $\g$ is simple but noncompact, then it can be supposed that $-\al_{2} = \ep = -1$ and $\al_{1}$ is either $1$ or $-1$. The adjoint orbits of $\sphere(\qaf[1, 1]\rea)$ with $B_{\g}(t, t) < 0$ are $1$-sheeted hyperboloids, so in this case it can be supposed that $\al_{1} = 1$ and $t = \ka^{-1}e_{1}$ with $\ka \in (0, \infty)$. The adjoint orbits with $B_{\g}(t, t) > 0$ are connected components of $2$-sheeted hyperboloids and $\sphere(\qaf[-1, 1]\rea)$ acts transitively on each of these connected components, so it can be supposed $\al_{1} = -1$ and $t = \ka^{-1}e_{1}$ with $\ka \neq 0$.

If $\g$ is not simple then it is solvable because it contains an $\ad$-semisimple element, so $\ep =0$ but $\al_{1}\al_{2} \neq 0$. It can be supposed that $-\al_{2} = \ep = 0$ and $\g$ is isomorphic to $\euc(2) \simeq \im\qaf[-1, 0]\rea$ or $\euc(1, 1)\simeq \im\qaf[1, 0]\rea$. In either case, each of the two connected components of a nonzero level set of $B_{\g}(t, t)$ is an adjoint orbit of $\sphere(\qaf[\pm 1, 0]\rea)$, and so it can be supposed $t = \ka^{-1}e_{1}$ with $\ka \neq 0$.

The preceding shows that up to inner automorphism $(\g, t)$ has the form $(\g = \im \qaf[\al_{1}, \al_{2}]\rea, t = \ka^{-1}e_{1})$ where the parameters $\al_{1}$, $\al_{2}$ and $\ka$ are as in Table \ref{paramtable2}.
\begin{align}\label{paramtable2}
\begin{tabular}{|c|c|l|l|l|}
\hline
$\al_{1}$ & $\al_{2} = -\ep$ & range for $\ka$ & $B_{\g}(t, t) = -8n(t)$ & $\g = \im \qaf[\al_{1}, \al_{2}]\rea$\\
\hline
$-1$ & $-1$ & $(0, \infty)$ & $-8\ka^{-2} < 0$ & $\so(3)$\\
\hline
$1$ & $1$ & $(0, \infty)$ & $8\ka^{-2} > 0$ & $\sll(2, \rea)$\\
\hline
$-1$ & $1$ & $(-\infty, 0) \cup(0, \infty)$ & $-8\ka^{-2} < 0$ & $\sll(2, \rea)$\\
\hline
$-1$ & $0$ & $(-\infty, 0) \cup (0, \infty)$ & $-8\ka^{-2} < 0$ & $\euc(2)$\\
\hline
$1$ & $0$ & $(-\infty, 0) \cup (0, \infty)$ & $8\ka^{-2} > 0$ & $\euc(1,1)$\\
\hline
\end{tabular}
\end{align}
To complete the proof, it suffices to observe that in each case where $\ka < 0$ is possible there is an outer automorphism of $\g$ sending $t$ to $-t$.
\end{proof}

\begin{theorem}\label{qaconetheorem}
Let $G$ be a unimodular three-dimensional Lie group with Lie algebra $\g$ and suppose $t \in \g$ is $\ad_{\g}(t)$-semisimple, so $B_{\g}(t, t) \neq 0$. The Lie algebra $\g$ is isomorphic to $\im \qaf[\al_{1}, \al_{2}]\rea$ for some $\al_{1}, \al_{2} \in \{0, \pm 1\}$ and there is a unique left-invariant torsion-free affine connection $\nabla$ on $G$ constituting with $\rad = E^{t}$, where $t = \ka^{-1}e_{1}$, a left-invariant conelike radiant structure having antisymmetric Ricci tensor, and it has the form
\begin{align}\label{qacone}
\begin{aligned}
&\nabla_{E_{1}}E_{1} = \ka E_{1}, & &\nabla_{E_{2}}E_{1} = \ka E_{2},& &\nabla_{E_{3}}E_{1} = \ka E_{3},\\
&\nabla_{E_{1}}E_{2} = \ka E_{2} + 2E_{3},& &\nabla_{E_{2}}E_{2} = 4\al_{2}\ka^{-1}E_{1},& &\nabla_{E_{3}}E_{2} = \al_{2}E_{1},\\
&\nabla_{E_{1}}E_{3} = \ka E_{3} + 2\al_{1}E_{2},& &\nabla_{E_{2}}E_{3} = -\al_{2}E_{1},&&\nabla_{E_{3}}E_{3} = -4\al_{1}\al_{2}\ka^{-1}E_{1},
\end{aligned}
\end{align}
where $\{E_{1}, E_{2}, E_{3}\}$ is the left-invariant frame satisfying \eqref{3dbrackets}. Equivalently, with respect to the dual left-invariant coframe $\{\ep^{1}, \ep^{2}, \ep^{3}\}$, 
\begin{align}\label{qaconedual}
\begin{split}
\nabla \ep^{1} - \tfrac{1}{2}d\ep^{1} & = -\ka \ep^{1}\tensor \ep^{1} -4\al_{2}\ka^{-1}\ep^{2}\tensor \ep^{2} + 4\al_{1}\al_{2}\ka^{-1}\ep^{3}\tensor \ep^{3},\\
\nabla \ep^{2} - \tfrac{1}{2}d\ep^{2} & = -\ka(\ep^{1}\tensor \ep^{2} + \ep^{2} \tensor \ep^{1}) - \al_{1}(\ep^{1}\tensor \ep^{3} + \ep^{3}\tensor \ep^{1}),\\
\nabla \ep^{3} - \tfrac{1}{2}d\ep^{3} & = -(\ep^{1}\tensor \ep^{2} + \ep^{2} \tensor \ep^{1}) - \ka(\ep^{1}\tensor \ep^{3} + \ep^{3}\tensor \ep^{1}).
\end{split}
\end{align}
\begin{enumerate}
\item The curvature and Ricci curvature of $\nabla$ have the form
\begin{align}\label{qacurvature}
\begin{aligned}
&R(\dum, \dum) =-3\al_{2}\ka (\ep^{2}\wedge \ep^{3}) \tensor \left(\ep^{2}\tensor E_{2} + \ep^{3}\tensor E_{3}\right)
= -\tfrac{3}{2}\ka d\ep^{1} \tensor \left(\ep^{2}\tensor E_{2} + \ep^{3}\tensor E_{3}\right),\\
&\ric(\dum, \dum) = -3\al_{2}\ka \ep^{2}\wedge \ep^{3} = -\tfrac{3}{2}\ka d\ep^{1}.
\end{aligned}
\end{align}
\item The connection $\nabla$ is not projectively flat if $\al_{2} \neq 0$ (it is flat if $\al_{2} = 0$).
\item For $\be = \ka \ep^{1}$, the symmetric tensor
\begin{align}\label{qagdefined}
G = \nabla \be - \tfrac{1}{2}d\be + 2\be \tensor \be & = \ka^{2} \ep^{1}\tensor \ep^{1} -4\al_{2} \ep^{2}\tensor \ep^{2} + 4\al_{1}\al_{2} \ep^{3}\tensor \ep^{3},
\end{align}
satisfies $G(\rad, \dum) = \be$ and
\begin{align}\label{qag}
\nabla_{i}G_{jk} &= - 2\be_{i}G_{jk} + d\beta_{i(j}\be_{k)}= - 2\be_{i}G_{jk} + \tfrac{1}{3}\nabla_{(j}d\be_{k)i} ,\\
%= - 2\be_{i}G_{jk} + \tfrac{1}{2}d\beta_{ij}\be_{k} + \tfrac{1}{2}d\be_{ik}\be_{j}. 
\label{qagskew}
\nabla_{[i}G_{j]k} &= -2\be_{[i}G_{j]k} + \tfrac{1}{2}\be_{k}d\be_{ij} - \tfrac{1}{2}\be_{[i}d\be_{j]k}= -2\be_{[i}G_{j]k}  - \tfrac{1}{4}\nabla_{k}d\be_{ij}.
\end{align}
(Abstract indices are used because they facilitate indication of the symmetries).
\end{enumerate}
\end{theorem}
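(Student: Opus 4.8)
The plan is to reduce everything to the quaternionic model of Lemma \ref{gqalemma} and then transport the connection together with all of its invariants from Corollary \ref{3dsscorollary} by a linear change of left-invariant frame. Since $t$ is $\ad_{\g}(t)$-semisimple, Lemma \ref{3dsslemma} gives $B_{\g}(t,t) \neq 0$, so Theorem \ref{3dssuniquetheorem} already supplies a \emph{unique} left-invariant torsion-free connection $\nabla$ with antisymmetric Ricci tensor forming a conelike radiant structure with $\rad = E^{t}$; this disposes of the existence and uniqueness assertions without further work. By Lemma \ref{gqalemma}, after an inner automorphism of $\g$ I may assume $(\g, t) = (\im \qaf[\al_{1},\al_{2}]\rea,\, \ka^{-1}e_{1})$ with $\al_{1}, \al_{2}, \ka$ as in its table, which simultaneously establishes the isomorphism $\g \cong \im \qaf[\al_{1},\al_{2}]\rea$.

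To obtain the explicit forms \eqref{qacone} and \eqref{qaconedual}, I record that the basis $\{t,a,b\}$ of \eqref{qaunibasis} satisfies \eqref{unibasis} with $\ep = -\al_{2}$ and $B_{\g}(t,t) = 8\al_{1}\ka^{-2}$, and that it relates to the quaternionic frame by $E_{1} = \ka E^{t} = \ka T$, $E_{2} = \sqrt{2}\,A$, $E_{3} = \sqrt{2}\ka\,B$ (using linearity of $r \mapsto E^{r}$). Substituting these into the nine connection identities \eqref{3dssconnection} of Corollary \ref{3dsscorollary} and collecting the scalar factors yields \eqref{qacone}; for instance $\nabla_{E_{1}}E_{2} = \sqrt{2}\ka\,\nabla_{T}A = \sqrt{2}\ka(A+2B) = \ka E_{2} + 2E_{3}$, and the diagonal terms produce the factors $4\al_{2}\ka^{-1}$ and $-4\al_{1}\al_{2}\ka^{-1}$ via $\tfrac14 B_{\g}(t,t) = 2\al_{1}\ka^{-2}$. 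For the dual form \eqref{qaconedual} I read the Christoffel symbols $\Gamma_{ij}\,^{k}$ off \eqref{qacone}, use that for a torsion-free connection the skew part of $\nabla \ep^{k}$ equals $\tfrac12 d\ep^{k}$, and compute the symmetric part $(\nabla \ep^{k})_{(ij)} = \nabla \ep^{k} - \tfrac12 d\ep^{k}$ directly, with $d\ep^{k}$ supplied by the structure equations \eqref{3ddual}.

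For the curvature (claim 1) and projective flatness (claim 2), I note that curvature, Ricci, and the projective Weyl tensor are tensorial and left-invariant, so I simply evaluate the invariant expressions \eqref{3dsscurvature} of Corollary \ref{3dsscorollary} in the quaternionic coframe using $\al = \sqrt{2}\ep^{2}$, $\be = \sqrt{2}\ka\ep^{3}$, $\tau = \ka\ep^{1}$ together with $\ep = -\al_{2}$; this produces \eqref{qacurvature}, and the identity $\ric = -\tfrac32\ka\,d\ep^{1}$ follows from the first equation of \eqref{3ddual}. When $\al_{2} = 0$ the right-hand side of \eqref{qacurvature} vanishes, so $\nabla$ is flat, hence projectively flat; when $\al_{2} \neq 0$, since $n = 3 > 2$ projective flatness is equivalent to the vanishing of the projective Weyl tensor (by the discussion of \eqref{projectivebianchi}), and the nonvanishing component $W(E_{1},E_{2})E_{3}$ — the image under the frame change of $W(T,A)B = \tfrac38 T$ from Corollary \ref{3dsscorollary} — shows that $\nabla$ is not projectively flat.

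Finally, for the tensor $G$ (claim 3), the identity $G(\rad,\dum) = \be$ is immediate from \eqref{qagdefined} and $\rad = \ka^{-1}E_{1}$, while \eqref{qagdefined} itself follows by substituting $\be = \ka\ep^{1}$ into $\nabla \be - \tfrac12 d\be + 2\be \tensor \be$ and invoking \eqref{qaconedual}. To establish \eqref{qag} and \eqref{qagskew} I compute $\nabla G$ from the Christoffel symbols, record $d\be = 2\ka\al_{2}\,\ep^{2}\wedge\ep^{3}$ and its covariant derivative $\nabla(d\be)$, and match left-invariant components. I expect the main friction to be exactly the verification of the two equivalent right-hand sides in each line, namely the identities $d\be_{i(j}\be_{k)} = \tfrac13\nabla_{(j}d\be_{k)i}$ and $\tfrac12\be_{k}d\be_{ij} - \tfrac12\be_{[i}d\be_{j]k} = -\tfrac14\nabla_{k}d\be_{ij}$, since these require the explicit $\nabla$-derivative of the invariant two-form $d\be$ rather than of $G$ alone; still, everything reduces to comparing finitely many components. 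As a structural check I would observe that \eqref{qagdefined} is precisely the lifted metric \eqref{liftedmetric1} with $t = 0$ and that \eqref{qag}, \eqref{qagskew} are the specializations of \eqref{hnablaGIJK1}, \eqref{hnablagskew} of Lemma \ref{liftedmetricpreliminarieslemma} in the regime where the base projective Schouten tensor is parallel; this both explains the shape of the identities and, once the (local) realization of $\nabla$ as a cone connection over the adjoint orbit is available, gives an alternative derivation free of frame bookkeeping.
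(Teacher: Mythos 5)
Your proposal is correct and follows essentially the same route as the paper: reduce to the quaternionic model via Lemma \ref{gqalemma}, transport the connection, curvature, and projective Weyl data of Corollary \ref{3dsscorollary} through the frame change determined by \eqref{qaunibasis} (with uniqueness supplied by Theorem \ref{3dssuniquetheorem}), and verify \eqref{qag}--\eqref{qagskew} by direct computation of $\nabla G$ and $\nabla d\be$. The paper's proof is exactly this, with the identity \eqref{qadbe} playing the role of your computation of $\nabla(d\be)$.
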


\begin{proof}
That $\g$ is isomorphic to $\im \qaf[\al_{1}, \al_{2}]\rea$ for some $\al_{1}, \al_{2} \in \{0, \pm 1\}$ follows from Lemma \ref{gqalemma}.
That $\nabla$ has the form \eqref{qacone} follows from Corollary \ref{3dsscorollary} in conjunction with \eqref{qaunibasis} and \eqref{qaunibrackets}. The identities \eqref{qaconedual} follow from \eqref{qacone} and \eqref{3ddual}. That its curvature has the form \eqref{qacurvature} follows from \eqref{3dsscurvature}.
The projective Weyl tensor $B$ of $\nabla$ satisfies $B(t, \dum)\dum = \tfrac{3}{4}\al_{2}\ka \ep^{2}\wedge \ep^{3}$, so $\nabla$ is not projectively flat if $\al_{2} \neq 0$. For $G$ as in \eqref{qagdefined}, straightforward calculations using \eqref{qacone} or \eqref{qaconedual} show \eqref{qag}. Straightforward calculations using \eqref{qacone} or \eqref{qaconedual} show 
\begin{align}\label{qadbe}
\nabla_{i}d\be_{jk} = - 2\be_{i}d\be_{jk} + 2\be_{[j}d\be_{k]i},
\end{align}
and this yields the second equality of \eqref{qag}. Antisymmetrizing \eqref{qag} yields \eqref{qagskew}.
\end{proof}

\begin{example}\label{nosolutionexample}
This example completes Remark \ref{conenormalizationremark}.
Regard $S^{3}$ as the group of unit quaternions.
% and let $\{E_{1}, E_{2}, E_{3}\}\subset \Ga(TS^{3})$ be the standard left-invariant frame satisfying $[E_{1}, E_{2}] = 2E_{3}$, $[E_{2}, E_{3}] = 2E_{1}$, and $[E_{3}, E_{1}] = 2E_{2}$. Let $\{\ep^{1}, \ep^{2}, \ep^{3}\} \subset \Ga(T^{\ast}S^{3})$ be the dual left-invariant coframe. 
By Theorem \ref{qaconetheorem}, for $\ka \neq 0$, the left-invariant connection defined by \eqref{qacone} with $\al_{1} = -1 = \al_{2}$ 
%\begin{align}\label{qacone}
%\begin{aligned}
%&\nabla_{E_{1}}E_{1} = \ka E_{1}, & &\nabla_{E_{2}}E_{1} = \ka E_{2},& &\nabla_{E_{3}}E_{1} = \ka E_{3},\\
%&\nabla_{E_{1}}E_{2} = \ka E_{2} + 2E_{3},& &\nabla_{E_{2}}E_{2} = -4\ka^{-1}E_{1},& &\nabla_{E_{3}}E_{2} = -E_{1},\\
%&\nabla_{E_{1}}E_{3} = \ka E_{3} - 2E_{2},& &\nabla_{E_{2}}E_{3} = E_{1},&&\nabla_{E_{3}}E_{3} = -4\ka^{-1}E_{1},
%\end{aligned}
%\end{align}
determines with $\rad = \ka^{-1}E_{1}$ a conelike nonsingular radiant structure on $S^{3}$ having Ricci curvature $\ric(\dum, \dum) =  -\tfrac{3}{2}\ka d\ep^{1}$.
However, there is no one-form $\si$ on $S^{3}$ satisfying simultaneously $\si(E_{1}) = 0$ and $d\si = d\ep^{1}$. Were there, then
\begin{align}
\begin{split}
d(\si \wedge \ep^{1}) & = d\si \wedge \ep^{1} - \si \wedge d\ep^{1} = d\ep^{1} \wedge \ep^{1} + 2 \si \wedge \ep^{2}\wedge \ep^{3}\\
& = - 2\ep^{1}\wedge \ep^{2}\wedge \ep^{3} + 2\si(E_{1})\ep^{1}\wedge \ep^{2}\wedge \ep^{3}= - 2\ep^{1}\wedge \ep^{2}\wedge \ep^{3}
\end{split}
\end{align}
which by Stokes' Theorem contradicts that $\ep^{1}\wedge \ep^{2}\wedge \ep^{3}$ is a volume form on $S^{3}$. 
\end{example}

\begin{remark}
Suppose $\al_{1}\al_{2} \neq 0$. In this case, the tensor $G$ of \eqref{qagdefined} is a pseudo-Riemannian metric. It follows from \eqref{qag} that the Levi-Civita connection $D$ of $G$ is
\begin{align}
D = \nabla - 2\be_{(i}\delta_{j)}\,^{k} + G_{ij}\rad^{k} + \be_{(i}d\be_{j)}\,^{k}G^{pk},
\end{align}
where here and in what follows indices are raised and lowered using $G_{ij}$ and the inverse symetric bivector $G^{ij}$. In particular, for $\qaf[-1, -1]\rea$, $G^{\ka}$ (the superscript indicates the dependence on $\ka$) is a one-parameter family of Riemannian metrics, equal, up to normalizations, to the Berger metrics on the $3$-sphere \cite[Example $3.35$]{Cheeger-Ebin}. 

As in \eqref{hdsdefined} of Lemma \ref{liftedmetriclemma}, consider the connection 
\begin{align}\label{ewdiffdefined}
\Ds = \nabla + \Om_{ij}\,^{k} = \nabla + \Pi_{ij}\,^{k} + (s - 1)(2\be_{(i}\delta_{j)}\,^{k} - G_{ij}\rad^{k}) = D + s(2\be_{(i}\delta_{j)}\,^{k} - G_{ij}\rad^{k}),
\end{align}
where $\Pi_{ij}\,^{k} = \be_{(i}d\be_{j)p}G^{pk}$ and $\Om_{ij}\,^{k} = \Pi_{ij}\,^{k} + (s - 1)(2\be_{(i}\delta_{j)}\,^{k} - G_{ij}\rad^{k})$. From \eqref{qag} it follows that $\Ds_{i}G_{jk}  = -2s\be_{i}G_{jk}$, so that $(\Ds, G)$ is a Weyl structure for every $s \in \rea$. For $s = 0$, $\Ds = D$ is the Levi-Civita connection of $G_{ij}$. 

Because $\rad^{p}G_{ip} = \be_{i}$ and $\rad^{p}d\be_{ip} = 0$, $\Pi_{ip}\,^{p} = 0$. 
By \eqref{qadbe}, $G^{pq}\nabla_{p}d\be_{iq} = 0$, and, by \eqref{qag}, $G^{pq}\nabla_{i}G_{pq} = -6\be_{i}$ and $G^{pq}\nabla_{p}G_{qi} = -2\be_{i}$. Together with these observations, the same calculations showing \eqref{richds} of Lemma \ref{liftedmetriclemma} show $\Om_{ip}\,^{p} = 3(s-1)\be_{i}$ and
\begin{align}\label{lierichds2a}
\begin{aligned}
&\Om_{ij}\,^{p}\Om_{pq}\,^{q} = 3(s-1)^{2}\left(2\be_{i}\be_{j} - G_{ij}\right),& 
&\Om_{ip}\,^{q}\Om_{jq}\,^{p} = \tfrac{1}{4}\be_{i}\be_{j}d\be_{p}\,^{q}d\be_{q}\,^{p} + (s-1)^{2}\left(5\be_{i}\be_{j} - 2G_{ij}\right),&
\end{aligned}
\end{align}
\begin{align}\label{lierichds2b}
\begin{aligned}
&\nabla_{i}\Om_{jp}\,^{p} = 3(s-1)(G_{ij} + \tfrac{1}{2}d\be_{ij} - 2\be_{i}\be_{j}),&
&\nabla_{p}\Om_{ij}\,^{p} = \tfrac{1}{2}d\be_{i}\,^{p}d\be_{pj} + (s-1)\left(G_{ij} - 4\be_{i}\be_{j}\right).&
\end{aligned}
\end{align}
Substituting \eqref{lierichds2a} and \eqref{lierichds2b} into \eqref{ewdiffdefined} yields that the Ricci curvature $\ric(\Ds)_{ij}$ of $\Ds$ is related to the Ricci curvature $R_{ij} = -\tfrac{3}{2}d\be_{ij}$ of $\nabla$ by 
\begin{align}\label{ew1}
\begin{aligned}
\ric(\Ds)_{ij} &= R_{ij} + \nabla_{p}\Om_{ij}\,^{p} - \nabla_{i}\Om_{pj}\,^{p} + \Om_{pq}\,^{p}\Om_{ij}\,^{q} - \Om_{ip}\,^{q}\Om_{jq}\,^{p}\\
& =  -\tfrac{3}{2}sd\be_{ij}  + \tfrac{1}{2}d\be_{i}\,^{p}d\be_{pj} + (1 - s^{2})G_{ij} + \left(s^{2} - 1 - \tfrac{1}{4}d\be_{p}\,^{q}d\be_{q}\,^{p}\right)\be_{i}\be_{j}.
\end{aligned}
\end{align}
The tensor corresponding with $d\be_{i}\,^{j} = G^{jp}d\be_{ip}$ is $\tfrac{\ka}{2}\ep^{3} \tensor E_{2} + \tfrac{\ka}{2\al_{1}}\ep^{2}\tensor E_{3}$, and it follows that the tensor corresponding with $d\be_{ip}d\be_{qj}G^{pq}$ is
\begin{align}\label{dbecontracted}
&-\tfrac{\ka^{2}\al_{2}}{\al_{1}}\ep^{2}\tensor \ep^{2} + \ka^{2}\al_{2}\ep^{3} \tensor \ep^{3} = \tfrac{\ka^{2}}{4\al_{1}}(G - \be \tensor \be),
\end{align}
so that
\begin{align}\label{ewp1}
& \tfrac{1}{2}d\be_{ip}d\be^{p}\,_{j} = \tfrac{\ka^{2}}{8\al_{1}}(G_{ij} - \be_{i}\be_{j}), && -\tfrac{1}{4}d\be_{p}\,^{q}d\be_{q}\,^{p}\be_{i}\be_{j} = -\tfrac{\ka^{2}}{8\al_{1}}\be_{i}\be_{j}.
\end{align}
Substituting \eqref{ewp1} in \eqref{ew1} yields
\begin{align}
\begin{aligned}
\ric(\Ds)_{ij}=& -\tfrac{3}{2}sd\be_{ij} + \left(\tfrac{\ka^{2}}{8\al_{1}} + 1 - s^{2}\right)G_{ij} + \left(s^{2} - 1 - \tfrac{\ka^{2}}{4\al_{1}}\right) \be_{i}\be_{j}.
\end{aligned}
\end{align}
If $s =\pm \sqrt{1 +  \tfrac{\ka^{2}}{4\al_{1}}}$ (when $\al_{1} = -1$, this necessitates $\ka^{2} \leq 4$), then
\begin{align}
\begin{aligned}
\ric(\Ds)_{ij}=& -\tfrac{3}{2}sd\be_{ij} + \left(\tfrac{\ka^{2}}{8\al_{1}} + 1 - s^{2}\right)G_{ij}= -\tfrac{3}{2}sd\be_{ij} -\tfrac{\ka^{2}}{8\al_{1}}G_{ij},
\end{aligned}
\end{align}
so $(\Ds, G)$ is an Einstein-Weyl structure. In the case $\al_{1} = \al_{2} = -1$, these are the well-known Einstein-Weyl structures on the Berger spheres \cite[Equation $5.10$]{Jones-Tod-minitwistor} (the case $s = 0$ being the standard round metric of scalar curvature $3/2$). See also  \cite[p. $387$]{Pedersen-Swann-submersions} and \cite[Section $6$]{Calderbank-Pedersen}. The resulting Einstein-Weyl structures in the Lorentz signature case $\al_{1} = -1$ and $\al_{2} = 1$ are described in \cite{Bengtsson-Sandin}. 
\end{remark}

%%%%% SAVE: following is omitted because never used, to save space
%\begin{remark}
%In the setting of Theorem \ref{qaconetheorem}, in the case $\g \simeq \sll(2, \rea)$ and $B_{\g}(t, t) < 0$, it can be more convenient to use the model $\sll(2, \rea) = \im \qaf[1, 1]\rea$. In this case, given $0 \neq \ka \in \rea$, the basis 
%\begin{align}\label{qaunibasis2}
%& t = \ka^{-1}e_{3}, a = \tfrac{1}{\sqrt{2}}e_{2}, b = \tfrac{1}{\sqrt{2}\ka}e_{1} \in \im \qa
%\end{align}
%of $\im \qaf[\al_{1}, \al_{2}]\rea$ has brackets 
%\begin{align}\label{qaunibrackets2}
%& [t, a] = 2b, & & [t, b] = \tfrac{1}{4}B_{\g}(t, t)a, && [a, b] = -t,
%\end{align}
%where there has been used $B_{\g}(t, t) = -8n(t) = -8\ka^{-2}$. The brackets \eqref{qaunibrackets2} are as in \eqref{unibasis}. From Corollary \ref{3dsscorollary} in conjunction with \eqref{qaunibasis2} and \eqref{qaunibrackets2}, it follows that $\nabla$ has the form 
%\begin{align}\label{qacone2}
%\begin{aligned}
%&\nabla_{E_{1}}E_{1} = 4\ka^{-1} E_{3}, & &\nabla_{E_{2}}E_{1} = - E_{3},& &\nabla_{E_{3}}E_{1} = \ka E_{1} - 2E_{2},\\
%&\nabla_{E_{1}}E_{2} = E_{3},& &\nabla_{E_{2}}E_{2} = 4\ka^{-1}E_{3},& &\nabla_{E_{3}}E_{2} = 2E_{1} + \ka E_{2},\\
%&\nabla_{E_{1}}E_{3} = \ka E_{1},& &\nabla_{E_{2}}E_{3} = \ka E_{2},&&\nabla_{E_{3}}E_{3} = \ka E_{3}.
%\end{aligned}
%\end{align}
%\end{remark}

\begin{lemma}
A biinvariant section of $S^{2}T\sphere(\qa)$ is a constant multiple of $\Ga = -\al_{2}E_{1}\tensor E_{1} - \al_{1}E_{2}\tensor E_{2} + E_{3}\tensor E_{3}$. For any left-invariant $\theta  = \sum_{i = 1}^{3}\theta_{i}\ep^{i} \in \Ga(T^{\ast}\sphere(\qa))$, 
\begin{align}\label{ombiimdual}
\theta \wedge d\theta = 2(\al_{2}\theta_{1}^{2} + \al_{1}\theta_{2}^{2} - \theta_{3}^{2})\ep^{1}\wedge \ep^{2}\wedge \ep^{3} = -2\Ga(\theta, \theta) \ep^{1}\wedge \ep^{2}\wedge \ep^{3} .
\end{align}
Consequently:
\begin{enumerate}
\item $\theta$ is a contact one-form if and only if $\Ga(\theta, \theta) \neq 0$.
\item $\Ga(\theta, \theta)$ is constant on a coadjoint orbit of $\sphere(\alg)$.
\item $\ker \theta$ generates an integrable left-invariant subbundle of $T\sphere(\qa)$ if and only if $\Ga(\theta, \theta) = 0$. This can occur for $\theta \neq 0$ if and only if at least one of $\al_{1}$ and $\al_{2}$ is positive.
\item If $\al_{1} \leq 0$ and $\al_{2} \leq 0$ then $\im \qa$ contains no two-dimensional subalgebras.
\item If $\al_{1} = \al_{2} = 1$, then, for $\theta^{\pm} = \ep^{2}\pm \ep^{3}$, $\ker \theta^{\pm} = \spn\{E_{1}, E_{2} \mp E_{3}\}$ are integrable, $\nabla$-totally geodesic subbundles of $T\sphere(\qa)$ where $\nabla$ is the connection of Theorem \ref{qaconetheorem}.
\end{enumerate}
\end{lemma}
\begin{proof}
Identify $\Ga \in S^{2}T\sphere(\qa)$ with $\ga = \sum_{i = 1}^{3}a_{i}e_{i}\tensor e_{i} + \sum_{1 \leq i < j \leq 3}a_{ij}(e_{i}\tensor e_{j} + e_{j}\tensor e_{i}) \in S^{2}\im \qa$. That $\Ga$ be biinvariant is equivalent to $r \cdot \ga = 0$ for all $r \in \im \qa$. This yields
\begin{align}
\begin{split} 
0 & =e_{1}\cdot \ga = 2(a_{2} + \al_{1}a_{3})(e_{2}\tensor e_{3} + e_{3}\tensor e_{2}) + 2a_{12}(e_{1} \tensor e_{3} + e_{3}\tensor e_{1}) \\
&\quad+ 2\al_{1}a_{13}(e_{1}\tensor e_{2} + e_{2}\tensor e_{1}) + 4a_{23}(\al_{1}e_{2}\tensor e_{2} + e_{3}\tensor e_{3}),
\end{split}
\end{align}
so that $a_{12} = 0$, $a_{13} = 0$, $a_{23} = 0$, and $a_{2} = -\al_{1}a_{3}$. Hence $\ga = a_{1}e_{1}\tensor e_{1} - \al_{1}a_{3}e_{2}\tensor e_{2} + a_{3}e_{3}\tensor e_{3}$. Consequently,
\begin{align}
\begin{split} 
0 & =e_{2}\cdot \ga = -2(a_{1} + \al_{2} a_{3})(e_{1}\tensor e_{3} + e_{3}\tensor e_{1}),
\end{split}
\end{align}
so that $a_{1} = -\al_{2}a_{3}$. Thus $\ga = a_{3}(-\al_{2}e_{1}\tensor e_{1} -\al_{1}e_{2}\tensor e_{2} + e_{3}\tensor e_{3})$, and it is straightforward to check that $e_{3}\cdot \ga = 0$. Straightforward calculations show \eqref{ombiimdual}.

By \eqref{ombiimdual}, $\ker \theta$ generates an integrable left-invariant distribution if and only if $\Ga(\theta, \theta) = 0$, and this has nontrivial solutions if and only if $\theta_{3}^{2} = \al_{2}\theta_{1}^{2} + \al_{1}\theta_{2}^{2}$, which occurs if and only if at least one of $\al_{1}$ and $\al_{2}$ is positive. If $\im \qa$ has a two-dimensional subalgebra, then this subalgebra generates an integrable left-invariant subbundle of $T\sphere(\qa)$, and a one-form $\theta$ annihilating the subalgebra must satisfy $\Ga(\theta, \theta) = 0$, so if $\al_{1} \leq 0$ and $\al_{2} \leq 0$ then $\im \qa$ has no two-dimensional subalgebras.

If $\al_{1} = \al_{2} = 1$, then $d\theta^{\pm} = \pm 2 \theta^{\pm}\wedge \ep^{1}$ and, by \eqref{qaconedual}, $\nabla \theta^{\pm} = \mp \theta^{\pm}\wedge \ep^{1} - (\ka \pm 1)(\ep^{1}\tensor \theta^{\pm} + \theta^{\pm}\tensor \ep^{1})$, which shows that $\ker \theta^{\pm}$ are integrable and totally geodesic subbundles.
\end{proof}

\bibliographystyle{amsplain}
%\bibliography{../master}

\begin{thebibliography}{100}

\bibitem{Amari}
S.-i. Amari, \emph{Differential-geometrical methods in statistics}, Lecture
  Notes in Statistics, vol.~28, Springer-Verlag, New York, 1985.

\bibitem{Amari-informationgeometry}
\bysame, \emph{Information geometry and its applications}, Applied Mathematical
  Sciences, vol. 194, Springer, [Tokyo], 2016.

\bibitem{Amari-Nagaoka}
S.-i. Amari and H.~Nagaoka, \emph{Methods of information geometry},
  Translations of Mathematical Monographs, vol. 191, American Mathematical
  Society, Providence, RI, 2000.

\bibitem{Arrowsmith-Furness-locallysymmetric}
D.~K. Arrowsmith and P.~M.~D. Furness, \emph{Locally symmetric spaces}, J.
  London Math. Soc. (2) \textbf{10} (1975), no.~4, 487--499.

\bibitem{Arrowsmith-Furness}
\bysame, \emph{Flat affine {K}lein bottles}, Geometriae Dedicata \textbf{5}
  (1976), no.~1, 109--115.

\bibitem{Ay-Jost-Le-Schwachhofer}
N.~Ay, J.~Jost, H.~V. L\^{e}, and L.~Schwachh\"{o}fer, \emph{Information
  geometry}, Ergebnisse der Mathematik und ihrer Grenzgebiete. 3. Folge.,
  vol.~64, Springer, Cham, 2017.

\bibitem{Bailey-Eastwood-Gover}
T.~N. Bailey, M.~G. Eastwood, and A.~R. Gover, \emph{Thomas's structure bundle
  for conformal, projective and related structures}, Rocky Mountain J. Math.
  \textbf{24} (1994), no.~4, 1191--1217.

\bibitem{Balan-Peyghan-Sharahi}
V.~Balan, E.~Peyghan, and E.~Sharahi, \emph{Statistical structures on the
  tangent bundle of a statistical manifold with {S}asaki metric}, Hacet. J.
  Math. Stat. \textbf{49} (2020), no.~1, 120--135.

\bibitem{Baues-torus}
O.~Baues, \emph{The deformation of flat affine structures on the two-torus},
  Handbook of {T}eichm\"{u}ller theory. {V}ol. {IV}, IRMA Lect. Math. Theor.
  Phys., vol.~19, Eur. Math. Soc., Z\"{u}rich, 2014, pp.~461--537.

\bibitem{Bengtsson-Sandin}
I.~Bengtsson and P.~Sandin, \emph{Anti-de {S}itter space, squashed and
  stretched}, Classical Quantum Gravity \textbf{23} (2006), no.~3, 971--986.

\bibitem{Benoist-affine-tori}
Y.~Benoist, \emph{Tores affines}, Crystallographic groups and their
  generalizations ({K}ortrijk, 1999), Contemp. Math., vol. 262, Amer. Math.
  Soc., Providence, RI, 2000, pp.~1--37.

\bibitem{Benzecri}
J.~P. Benzecri, \emph{Vari\'et\'es localement affines}, S\'eminaire Ehresmann.
  Topologie et G\'eom\'etrie diff\'erentielle \textbf{2} (1958-60), no.~7,
  1--35.

\bibitem{Bianchi}
L.~Bianchi, \emph{Sugli spazi a tre dimensioni che ammettono un gruppo continuo
  di movimenti.}, Memorie della Societ\`a Italiana delle Scienze (detta dei XL)
  \textbf{11} (1898), 267--352.

\bibitem{Biswas-Loftin-Stemmler}
I.~Biswas, J.~Loftin, and M.~Stemmler, \emph{Affine {Y}ang-{M}ills-{H}iggs
  metrics}, J. Symplectic Geom. \textbf{11} (2013), no.~3, 377--404.

\bibitem{Blaga-Chen}
A.~M. Blaga and B.-Y. Chen, \emph{Gradient solitons on statistical manifolds},
  J. Geom. Phys. \textbf{164} (2021), 104195.

\bibitem{Blaga-Nannicini}
A.~M. Blaga and A.~Nannicini, \emph{{$\alpha$}-connections in generalized
  geometry}, J. Geom. Phys. \textbf{165} (2021), Paper No. 104225, 17.

\bibitem{Bourguignon-kaluzaklein}
J.-P. Bourguignon, \emph{A mathematician's visit to {K}aluza-{K}lein theory},
  Rend. Sem. Mat. Univ. Politec. Torino (1989), no.~Special Issue, 143--163
  (1990).

\bibitem{Bursztyn-Lima-Meinrenken}
H.~Bursztyn, H.~Lima, and E.~Meinrenken, \emph{Splitting theorems for {P}oisson
  and related structures}, J. Reine Angew. Math. \textbf{754} (2019), 281--312.

\bibitem{Calabi-bernsteinproblems}
E.~Calabi, \emph{Examples of {B}ernstein problems for some nonlinear
  equations}, Global {A}nalysis ({P}roc. {S}ympos. {P}ure {M}ath., {V}ol. {XV},
  {B}erkeley, {C}alif., 1968), Amer. Math. Soc., Providence, R.I., 1970,
  pp.~223--230.

\bibitem{Calabi-completeaffine}
\bysame, \emph{Complete affine hyperspheres. {I}}, Symposia Mathematica, Vol. X
  (Convegno di Geometria Differenziale, INDAM, Rome, 1971), Academic Press,
  London, 1972, pp.~19--38.

\bibitem{Calabi-nonhomogeneouseinstein}
\bysame, \emph{A construction of nonhomogeneous {E}instein metrics},
  Differential geometry ({P}roc. {S}ympos. {P}ure {M}ath., {V}ol. {XXVII},
  {S}tanford {U}niv., {S}tanford, {C}alif., 1973), {P}art 2, Amer. Math. Soc.,
  Providence, R.I., 1975, pp.~17--24.

\bibitem{Calderbank-mobius}
D.~M.~J. Calderbank, \emph{M\"obius structures and two-dimensional
  {E}instein-{W}eyl geometry}, J. Reine Angew. Math. \textbf{504} (1998),
  37--53.

\bibitem{Calderbank-faraday}
\bysame, \emph{The {F}araday 2-form in {E}instein-{W}eyl geometry}, Math.
  Scand. \textbf{89} (2001), no.~1, 97--116.

\bibitem{Calderbank-Pedersen}
D.~M.~J. Calderbank and H.~Pedersen, \emph{Einstein-{W}eyl geometry}, Surveys
  in differential geometry: essays on Einstein manifolds, Surv. Differ. Geom.,
  VI, Int. Press, Boston, MA, 1999, pp.~387--423.

\bibitem{Cap-Gover}
A.~{\v{C}}ap and A.~R. Gover, \emph{Tractor calculi for parabolic geometries},
  Trans. Amer. Math. Soc. \textbf{354} (2002), no.~4, 1511--1548.

\bibitem{Cap-Mettler}
A.~{\v{C}}ap and T.~Mettler, \emph{Geometric theory of {W}eyl structures},
  Commun. Contemp. Math. \textbf{25} (2023), no.~7, Paper No. 2250026, 40 pp.

\bibitem{Cap-Slovak-book}
A.~{\v{C}}ap and J.~Slov{\'a}k, \emph{Parabolic geometries. {I}. {B}ackground
  and general theory}, Mathematical Surveys and Monographs, vol. 154, American
  Mathematical Society, Providence, RI, 2009.

\bibitem{Cardoso-Mohaupt}
G.~L. Cardoso and T.~Mohaupt, \emph{Special geometry, {H}essian structures and
  applications}, Phys. Rep. \textbf{855} (2020), 1--141.

\bibitem{Carriere-Dalbo-Meigniez}
Y.~Carri\`ere, F.~Dal'bo, and G.~Meigniez, \emph{Inexistence de structures
  affines sur les fibr\'{e}s de {S}eifert}, Math. Ann. \textbf{296} (1993),
  no.~4, 743--753.

\bibitem{Cheeger-Colding}
J.~Cheeger and T.~H. Colding, \emph{Lower bounds on {R}icci curvature and the
  almost rigidity of warped products}, Ann. of Math. (2) \textbf{144} (1996),
  no.~1, 189--237.

\bibitem{Cheeger-Ebin}
J.~Cheeger and D.~G. Ebin, \emph{Comparison theorems in {R}iemannian geometry},
  North-Holland Publishing Co., Amsterdam-Oxford; American Elsevier Publishing
  Co., Inc., New York, 1975.

\bibitem{Chen-concircular}
B.-Y. Chen, \emph{Some results on concircular vector fields and their
  applications to {R}icci solitons}, Bull. Korean Math. Soc. \textbf{52}
  (2015), no.~5, 1535--1547.

\bibitem{Chen-warped}
\bysame, \emph{Differential geometry of warped product manifolds and
  submanifolds}, World Scientific Publishing Co. Pte. Ltd., Hackensack, NJ,
  2017.

\bibitem{Chen-Kontsevich-Schwarz}
Y.~Chen, M.~Kontsevich, and A.~Schwarz, \emph{Symmetries of {WDVV} equations},
  Nuclear Phys. B \textbf{730} (2005), no.~3, 352--363.

\bibitem{Cheng-Yau-mongeampere}
S.~Y. Cheng and S.~T. Yau, \emph{On the regularity of the {M}onge-{A}mp\`ere
  equation {${\rm det}(\partial^{2} u/\partial x\sb{i}\partial
  x\sb{j})=F(x,u)$}}, Comm. Pure Appl. Math. \textbf{30} (1977), no.~1, 41--68.

\bibitem{Cheng-Yau-realmongeampere}
\bysame, \emph{The real {M}onge-{A}mp\`ere equation and affine flat
  structures}, Proceedings of the 1980 Beijing Symposium on Differential
  Geometry and Differential Equations, Vol. 1, 2, 3 (Beijing, 1980) (Beijing),
  Science Press, 1982, pp.~339--370.

\bibitem{Cheng-Yau-affinehyperspheresI}
\bysame, \emph{Complete affine hypersurfaces. {I}. {T}he completeness of affine
  metrics}, Comm. Pure Appl. Math. \textbf{39} (1986), no.~6, 839--866.

\bibitem{Chern-circlebundles}
S.~S. Chern, \emph{Circle bundles}, Geometry and topology ({P}roc. {III}
  {L}atin {A}mer. {S}chool of {M}ath., {I}nst. {M}at. {P}ura {A}plicada {CNP}q,
  {R}io de {J}aneiro, 1976), Springer, Berlin, 1977, pp.~114--131. Lecture
  Notes in Math., Vol. 597.

\bibitem{Choi-radiant}
S.~Choi, \emph{The decomposition and classification of radiant affine
  3-manifolds}, Mem. Amer. Math. Soc. \textbf{154} (2001), no.~730, viii+122.

\bibitem{Cieliebak-Eliashberg}
K.~Cieliebak and Y.~Eliashberg, \emph{From {S}tein to {W}einstein and back},
  American Mathematical Society Colloquium Publications, vol.~59, American
  Mathematical Society, Providence, RI, 2012.

\bibitem{Coddington-Levinson}
E.~A. Coddington and N.~Levinson, \emph{Theory of ordinary differential
  equations}, McGraw-Hill Book Company, Inc., New York-Toronto-London, 1955.

\bibitem{Daly}
C.~Daly, \emph{Closed affine manifolds with an invariant line},
  \href{https://arxiv.org/abs/math/2009.14341}{arXiv:2009.14341}.

\bibitem{Dani}
S.~G. Dani, \emph{Actions of automorphism groups of {L}ie groups}, Handbook of
  group actions. {V}ol. {IV}, Adv. Lect. Math. (ALM), vol.~41, Int. Press,
  Somerville, MA, 2018, pp.~529--562.

\bibitem{Dhooghe}
P.~F. Dhooghe, \emph{The {T}. {Y}. {T}homas construction of projectively
  related manifolds}, Geom. Dedicata \textbf{55} (1995), no.~3, 221--235.

\bibitem{Dhooghe-VanVlierden}
P.~F. Dhooghe and A.~Van~Vlierden, \emph{Projective geometry on the bundle of
  volume forms}, J. Geom. \textbf{62} (1998), no.~1-2, 66--83.

\bibitem{Dubrovin-integrable}
B.~Dubrovin, \emph{Integrable systems in topological field theory}, Nuclear
  Phys. B \textbf{379} (1992), no.~3, 627--689.

\bibitem{Dubrovin}
\bysame, \emph{Geometry of {$2$}{D} topological field theories}, Integrable
  systems and quantum groups (Montecatini Terme, 1993), Lecture Notes in Math.,
  vol. 1620, Springer, Berlin, 1996, pp.~120--348.

\bibitem{Duistermaat}
J.~J. Duistermaat, \emph{On {H}essian {R}iemannian structures}, Asian J. Math.
  \textbf{5} (2001), no.~1, 79--91.

\bibitem{Eliashberg-Gromov}
Y.~Eliashberg and M.~Gromov, \emph{Convex symplectic manifolds}, Several
  complex variables and complex geometry, {P}art 2 ({S}anta {C}ruz, {CA},
  1989), Proc. Sympos. Pure Math., vol.~52, Amer. Math. Soc., Providence, RI,
  1991, pp.~135--162.

\bibitem{Eliashberg-Kim-Polterovich}
Y.~Eliashberg, S.~S. Kim, and L.~Polterovich, \emph{Geometry of contact
  transformations and domains: orderability versus squeezing}, Geom. Topol.
  \textbf{10} (2006), 1635--1747.

\bibitem{Farber}
M.~Farber, \emph{Topology of closed one-forms}, Mathematical Surveys and
  Monographs, vol. 108, American Mathematical Society, Providence, RI, 2004.

\bibitem{Fefferman-Graham-ambient}
C.~Fefferman and C.~R. Graham, \emph{The ambient metric}, Princeton University
  Press, Princeton, N. J., 2011, Annals of Mathematics Studies, No. 178.

\bibitem{Fefferman-monge}
C.~L. Fefferman, \emph{Monge-{A}mp\`ere equations, the {B}ergman kernel, and
  geometry of pseudoconvex domains}, Ann. of Math. (2) \textbf{103} (1976),
  no.~2, 395--416.

\bibitem{Fialkow}
A.~Fialkow, \emph{Conformal geodesics}, Trans. Amer. Math. Soc. \textbf{45}
  (1939), no.~3, 443--473.

\bibitem{Fox-simplicial}
D.~J.~F. Fox, \emph{Commutative algebras with nondegenerate invariant trace
  form and trace-free multiplication endomorphisms},
  \href{https://arxiv.org/abs/2004.12343}{arXiv:2004.12343}.

\bibitem{Fox-ahs}
\bysame, \emph{Geometric structures modeled on affine hypersurfaces and
  generalizations of the {E}instein {W}eyl and affine hypersphere equations},
  \href{http://arxiv.org/abs/0909.1897}{arXiv:0909.1897}.

\bibitem{Fox-2dahs}
\bysame, \emph{Einstein-like geometric structures on surfaces}, Ann. Sc. Norm.
  Super. Pisa Cl. Sci. (5) \textbf{XII} (2013), no.~5, 499--585.

\bibitem{Fox-ricweyl}
\bysame, \emph{{R}icci flows on surfaces related to the {E}instein {W}eyl and
  {A}belian vortex equations}, Glasg. Math. J. \textbf{56} (2014), no.~3,
  569--599.

\bibitem{Fox-crm}
\bysame, \emph{Geometric structures modeled on affine hypersurfaces and
  generalizations of the {E}instein-{W}eyl and affine sphere equations},
  Extended abstracts {F}all 2013---geometrical analysis, type theory, homotopy
  theory and univalent foundations, Trends Math. Res. Perspect. CRM Barc.,
  vol.~3, Birkh\"{a}user/Springer, Cham, 2015, pp.~15--19.

\bibitem{Fox-schwarz}
\bysame, \emph{A {S}chwarz lemma for {K}\"ahler affine metrics and the
  canonical potential of a convex cone}, Ann. Mat. Pura Appl. (4) \textbf{194}
  (2015), no.~1, 1--42.

\bibitem{Fox-autoiso}
\bysame, \emph{Functions dividing their {H}essian determinants and affine
  spheres}, Asian J. Math. \textbf{20} (2016), no.~3, 503--530.

\bibitem{Fox-cubicpoly}
\bysame, \emph{Harmonic cubic homogeneous polynomials such that the
  norm-squared of the {H}essian is a multiple of the {E}uclidean quadratic
  form}, Anal. Math. Phys. \textbf{11} (2021), no.~1, 43 pp.

\bibitem{Fried-Goldman-Hirsch}
D.~Fried, W.~Goldman, and M.~W. Hirsch, \emph{Affine manifolds with nilpotent
  holonomy}, Comment. Math. Helv. \textbf{56} (1981), no.~4, 487--523.

\bibitem{Fujitani}
Y.~Fujitani, \emph{Information geometry of warped product spaces}, Inf. Geom.
  \textbf{6} (2023), no.~1, 127--155.

\bibitem{Goldman-Hirsch}
W.~Goldman and M.~W. Hirsch, \emph{The radiance obstruction and parallel forms
  on affine manifolds}, Trans. Amer. Math. Soc. \textbf{286} (1984), no.~2,
  629--649.

\bibitem{Goldman-notes}
W.~M. Goldman, \emph{Geometric structures on manifolds}, Unpublished lecture
  notes, 2018.

\bibitem{Goldstein}
H.~Goldstein, \emph{Classical mechanics}, 2 ed., Addison-Wesley Publishing Co.,
  Reading, Mass., 1980, Addison-Wesley Series in Physics.

\bibitem{Gover-projective}
A.~R. Gover, \emph{Invariants on projective space}, J. Amer. Math. Soc.
  \textbf{7} (1994), no.~1, 145--158.

\bibitem{Gover-Matveev}
A.~R. Gover and V.~S. Matveev, \emph{Projectively related metrics, {W}eyl
  nullity and metric projectively invariant equations}, Proc. Lond. Math. Soc.
  (3) \textbf{114} (2017), no.~2, 242--292.

\bibitem{Gover-Neusser-Willse}
A.~R. Gover, K.~Neusser, and T.~Willse, \emph{Projective geometry of
  {S}asaki-{E}instein structures and their compactification}, Dissertationes
  Math. \textbf{546} (2019), 64.

\bibitem{Graham-Lee}
C.~R. Graham and J.~M. Lee, \emph{Einstein metrics with prescribed conformal
  infinity on the ball}, Adv. Math. \textbf{87} (1991), no.~2, 186--225.

\bibitem{Grossman}
D.~A. Grossman, \emph{Torsion-free path geometries and integrable second order
  {ODE} systems}, Selecta Math. (N.S.) \textbf{6} (2000), no.~4, 399--442.

\bibitem{Haba-Matsuzoe}
K.~Haba and H.~Matsuzoe, \emph{Complex affine distributions}, Differential
  Geom. Appl. \textbf{75} (2021), Paper No. 101734, 13.

\bibitem{HajSaeediSadegh-Higson}
A.~R. Haj Saeedi~Sadegh and N.~Higson, \emph{Euler-like vector fields,
  deformation spaces and manifolds with filtered structure}, Doc. Math.
  \textbf{23} (2018), 293--325.

\bibitem{HarEl}
Z.~Har'El, \emph{Projective mappings and distortion theorems}, J. Differential
  Geom. \textbf{15} (1980), no.~1, 97--106.

\bibitem{Helmstetter}
J.~Helmstetter, \emph{Radical d'une alg\`ebre sym\'etrique \`a gauche}, Ann.
  Inst. Fourier (Grenoble) \textbf{29} (1979), no.~4, viii, 17--35.

\bibitem{Hertling-Manin}
C.~Hertling and Yu. Manin, \emph{Weak {F}robenius manifolds}, Internat. Math.
  Res. Notices (1999), no.~6, 277--286.

\bibitem{Hildebrand-canonical}
R.~Hildebrand, \emph{Canonical barriers on convex cones}, Math. Oper. Res.
  \textbf{39} (2014), no.~3, 841--850.

\bibitem{Hitchin-frobenius}
N.~Hitchin, \emph{Frobenius manifolds}, Gauge theory and symplectic geometry
  ({M}ontreal, {PQ}, 1995), NATO Adv. Sci. Inst. Ser. C Math. Phys. Sci., vol.
  488, Kluwer Acad. Publ., Dordrecht, 1997, pp.~69--112.

\bibitem{Jacobson-automorphisms}
N.~Jacobson, \emph{A note on automorphisms and derivations of {L}ie algebras},
  Proc. Amer. Math. Soc. \textbf{6} (1955), 281--283.

\bibitem{Jacobson-threedimensional}
\bysame, \emph{A note on three dimensional simple {L}ie algebras}, J. Math.
  Mech. \textbf{7} (1958), 823--831.

\bibitem{Jones-Tod-minitwistor}
P.~E. Jones and K.~P. Tod, \emph{Minitwistor spaces and {E}instein-{W}eyl
  spaces}, Classical Quantum Gravity \textbf{2} (1985), no.~4, 565--577.

\bibitem{Kirby}
R.~Kirby, \emph{Problems in low-dimensional topology}, Geometric topology
  ({A}thens, {GA}, 1993) (R.~Kirby, ed.), AMS/IP Stud. Adv. Math., vol.~2,
  Amer. Math. Soc., Providence, RI, 1997, pp.~35--473.

\bibitem{Klartag-elliptic}
B.~Klartag, \emph{Affine hemispheres of elliptic type}, Algebra i Analiz
  \textbf{29} (2017), no.~1, 145--188, Reprinted in St. Petersburg Math. J.
  {{\bf{2}}9} (2018), no. 1, 107--138.

\bibitem{Klartag-Kolesnikov}
B.~Klartag and A.~V. Kolesnikov, \emph{Remarks on curvature in the
  transportation metric}, Anal. Math. \textbf{43} (2017), no.~1, 67--88.

\bibitem{Kobayashi-circlebundles}
S.~Kobayashi, \emph{Principal fibre bundles with the {$1$}-dimensional toroidal
  group}, T\^ohoku Math. J. (2) \textbf{8} (1956), 29--45.

\bibitem{Kobayashi-projectivelyinvariantdistances}
\bysame, \emph{Projectively invariant distances for affine and projective
  structures}, Differential geometry ({W}arsaw, 1979), Banach Center Publ.,
  vol.~12, PWN, Warsaw, 1984, pp.~127--152.

\bibitem{Kobayashi-Nagano}
S.~Kobayashi and T.~Nagano, \emph{On projective connections}, J. Math. Mech.
  \textbf{13} (1964), 215--235.

\bibitem{Kobayashi-Nomizu}
S.~Kobayashi and K.~Nomizu, \emph{Foundations of differential geometry. {V}ol
  {I}}, Interscience Publishers, a division of John Wiley \& Sons, New
  York-London, 1963.

\bibitem{Koszul-lectures}
J.-L. Koszul, \emph{Lectures on fibre bundles and differential geometry}, Notes
  by S. Ramanan. Tata Institute of Fundamental Research Lectures on
  Mathematics, No. 20, Tata Institute of Fundamental Research, Bombay, 1965.

\bibitem{Kurose-dual}
T.~Kurose, \emph{Dual connections and affine geometry}, Math. Z. \textbf{203}
  (1990), no.~1, 115--121.

\bibitem{Kurose}
\bysame, \emph{Conformal-projective geometry of statistical manifolds},
  Interdiscip. Inform. Sci. \textbf{8} (2002), no.~1, 89--100.

\bibitem{Lam-quadraticforms}
T.~Y. Lam, \emph{Introduction to quadratic forms over fields}, Graduate Studies
  in Mathematics, vol.~67, American Mathematical Society, Providence, RI, 2005.

\bibitem{Landau-Lifschitz-mechanics}
L.~D. Landau and E.~M. Lifshitz, \emph{Mechanics}, Course of Theoretical
  Physics, Vol. 1, Pergamon Press, Oxford-London-New York-Paris; Addison-Wesley
  Publishing Co., Inc., Reading, Mass., 1960, Translated from the Russian by J.
  B. Bell.

\bibitem{Le}
H.~V. L\^{e}, \emph{Statistical manifolds are statistical models}, J. Geom.
  \textbf{84} (2005), no.~1-2, 83--93.

\bibitem{Lee-feffermanmetric}
J.~M. Lee, \emph{The {F}efferman metric and pseudo-{H}ermitian invariants},
  Trans. Amer. Math. Soc. \textbf{296} (1986), no.~1, 411--429.

\bibitem{Loewner-Nirenberg}
C.~Loewner and L.~Nirenberg, \emph{Partial differential equations invariant
  under conformal or projective transformations}, Contributions to analysis (a
  collection of papers dedicated to {L}ipman {B}ers), Academic Press, New York,
  1974, pp.~245--272.

\bibitem{Loftin-affinespheres}
J.~C. Loftin, \emph{Affine spheres and convex {$\mathbb{RP}^{n}$}-manifolds},
  Amer. J. Math. \textbf{123} (2001), no.~2, 255--274.

\bibitem{Loftin-affinekahler}
\bysame, \emph{Affine spheres and {K}\"ahler-{E}instein metrics}, Math. Res.
  Lett. \textbf{9} (2002), no.~4, 425--432.

\bibitem{Loftin-riemannian}
\bysame, \emph{Riemannian metrics on locally projectively flat manifolds},
  Amer. J. Math. \textbf{124} (2002), no.~3, 595--609.

\bibitem{Loftin-survey}
\bysame, \emph{Survey on affine spheres}, Handbook of geometric analysis.
  {V}ol. II, Adv. Lect. Math. (ALM), vol.~13, Int. Press, Somerville, MA, 2010,
  pp.~161--192.

\bibitem{Loftin-Yau-Zaslow}
J.~C. Loftin, S.~T. Yau, and E.~Zaslow, \emph{Affine manifolds, {SYZ} geometry
  and the ``{Y}'' vertex}, J. Differential Geom. \textbf{71} (2005), no.~1,
  129--158.

\bibitem{Maclachlan-Reid}
C.~Maclachlan and A.~W. Reid, \emph{The arithmetic of hyperbolic 3-manifolds},
  Graduate Texts in Mathematics, vol. 219, Springer-Verlag, New York, 2003.

\bibitem{Malcolmson}
P.~Malcolmson, \emph{Enveloping algebras of simple three-dimensional {L}ie
  algebras}, J. Algebra \textbf{146} (1992), no.~1, 210--218.

\bibitem{Marugame-codazzi}
T.~Marugame, \emph{G{JMS} operators and {$Q$}-curvature for conformal {C}odazzi
  structures}, Differential Geom. Appl. \textbf{49} (2016), 176--196.

\bibitem{Marugame-bonnet}
\bysame, \emph{The {B}onnet theorem for statistical manifolds}, Inf. Geom.
  \textbf{4} (2021), no.~2, 363--376.

\bibitem{Matsuzoe-conformallyprojectively}
H.~Matsuzoe, \emph{On realization of conformally-projectively flat statistical
  manifolds and the divergences}, Hokkaido Math. J. \textbf{27} (1998), no.~2,
  409--421.

\bibitem{Matsuzoe}
\bysame, \emph{Geometry of semi-{W}eyl manifolds and {W}eyl manifolds}, Kyushu
  J. Math. \textbf{55} (2001), no.~1, 107--117.

\bibitem{Matsuzoe-Inoguchi}
H.~Matsuzoe and J.-i. Inoguchi, \emph{Statistical structures on tangent
  bundles}, Appl. Sci. \textbf{5} (2003), no.~1, 55--75.

\bibitem{Meeks-Perez}
W.~H. Meeks, III and J.~P\'{e}rez, \emph{Constant mean curvature surfaces in
  metric {L}ie groups}, Geometric analysis: partial differential equations and
  surfaces, Contemp. Math., vol. 570, Amer. Math. Soc., Providence, RI, 2012,
  pp.~25--110.

\bibitem{Meinrenken-eulerlike}
E.~Meinrenken, \emph{Euler-like vector fields, normal forms, and isotropic
  embeddings}, Indag. Math. (N.S.) \textbf{32} (2021), no.~1, 224--245.

\bibitem{Meyer-sl2}
P.~Meyer, \emph{Involutions of sl(2,k) and non-split, three-dimensional simple
  {L}ie algebras}, J. Algebra Appl. \textbf{22} (2023), no.~6, Paper No.
  2350139, 13 pp.

\bibitem{Milnor-leftinvariant}
J.~Milnor, \emph{Curvatures of left invariant metrics on {L}ie groups},
  Advances in Math. \textbf{21} (1976), no.~3, 293--329.

\bibitem{Milnor-topology}
J.~W. Milnor, \emph{Topology from the differentiable viewpoint}, Princeton
  Landmarks in Mathematics, Princeton University Press, Princeton, NJ, 1997.

\bibitem{Mostow-structure}
G.~D. Mostow, \emph{A structure theorem for homogeneous spaces}, Geom. Dedicata
  \textbf{114} (2005), 87--102.

\bibitem{Nagano-Yagi}
T.~Nagano and K.~Yagi, \emph{The affine structures on the real two-torus. {I}},
  Osaka J. Math. \textbf{11} (1974), 181--210.

\bibitem{Nomizu-nilpotent}
K.~Nomizu, \emph{On the cohomology of compact homogeneous spaces of nilpotent
  {L}ie groups}, Ann. of Math. (2) \textbf{59} (1954), 531--538.

\bibitem{Opozda-bounded}
B.~Opozda, \emph{Curvature bounded conjugate symmetric statistical structures
  with complete metric}, Ann. Global Anal. Geom. \textbf{55} (2019), no.~4,
  687--702.

\bibitem{Pedersen-Swann-submersions}
H.~Pedersen and A.~Swann, \emph{Riemannian submersions, four-manifolds and
  {E}instein-{W}eyl geometry}, Proc. London Math. Soc. (3) \textbf{66} (1993),
  no.~2, 381--399.

\bibitem{Rodnianski-Shlapentokh-Rothman}
I.~Rodnianski and Y.~Shlapentokh-Rothman, \emph{The asymptotically self-similar
  regime for the {E}instein vacuum equations}, Geom. Funct. Anal. \textbf{28}
  (2018), no.~3, 755--878.

\bibitem{Sasaki}
T.~Sasaki, \emph{Hyperbolic affine hyperspheres}, Nagoya Math. J. \textbf{77}
  (1980), 107--123.

\bibitem{Selig}
J.~M. Selig, \emph{Geometric fundamentals of robotics}, 2 ed., Monographs in
  Computer Science, Springer, New York, 2005.

\bibitem{Sharpe}
R.~W. Sharpe, \emph{Differential geometry}, Graduate Texts in Mathematics, vol.
  166, Springer-Verlag, New York, 1997.

\bibitem{Shima-compacthessian}
H.~Shima, \emph{Compact locally {H}essian manifolds}, Osaka J. Math.
  \textbf{15} (1978), no.~3, 509--513.

\bibitem{Shima-homogeneoushessian}
\bysame, \emph{Homogeneous {H}essian manifolds}, Ann. Inst. Fourier (Grenoble)
  \textbf{30} (1980), no.~3, 91--128.

\bibitem{Shima}
\bysame, \emph{The geometry of {H}essian structures}, World Scientific
  Publishing Co. Pte. Ltd., Hackensack, NJ, 2007.

\bibitem{Shima-Yagi}
H.~Shima and K.~Yagi, \emph{Geometry of {H}essian manifolds}, Differential
  Geom. Appl. \textbf{7} (1997), no.~3, 277--290.

\bibitem{Thomas-affine}
T.~Y. Thomas, \emph{A projective theory of affinely connected manifolds}, Math.
  Z. \textbf{25} (1926), 723--733.

\bibitem{Thomas-book}
\bysame, \emph{The differential invariants of generalized spaces}, Chelsea
  Publishing Company, New York, 1934.

\bibitem{Tischler}
D.~Tischler, \emph{On fibering certain foliated manifolds over {$S^{1}$}},
  Topology \textbf{9} (1970), 153--154.

\bibitem{Totaro}
B.~Totaro, \emph{The curvature of a {H}essian metric}, Internat. J. Math.
  \textbf{15} (2004), no.~4, 369--391.

\bibitem{Chentsov}
N.~N. \v{C}encov, \emph{Statistical decision rules and optimal inference},
  Translations of Mathematical Monographs, vol.~53, American Mathematical
  Society, Providence, R.I., 1982, Translation from the Russian edited by Lev
  J. Leifman.

\bibitem{Veblen-Thomas-paths}
O.~Veblen and T.~Y. Thomas, \emph{The geometry of paths}, Trans. Amer. Math.
  Soc. \textbf{25} (1923), no.~4, 551--608.

\bibitem{Wang-einstein}
M.~Y. Wang, \emph{Einstein metrics from symmetry and bundle constructions},
  Surveys in differential geometry: essays on {E}instein manifolds, Surv.
  Differ. Geom., vol.~6, Int. Press, Boston, MA, 1999, pp.~287--325.

\bibitem{Whittaker-analyticaldynamics}
E.~T. Whittaker, \emph{A treatise on the analytical dynamics of particles and
  rigid bodies: {W}ith an introduction to the problem of three bodies},
  Cambridge University Press, New York, 1959, 4th ed.

\bibitem{Wu-projectivehyperbolicity}
H.~Wu, \emph{Some theorems on projective hyperbolicity}, J. Math. Soc. Japan
  \textbf{33} (1981), no.~1, 79--104.

\end{thebibliography}
\def\polhk#1{\setbox0=\hbox{#1}{\ooalign{\hidewidth
  \lower1.5ex\hbox{`}\hidewidth\crcr\unhbox0}}} \def\cprime{$'$}
  \def\cprime{$'$} \def\cprime{$'$}
  \def\polhk#1{\setbox0=\hbox{#1}{\ooalign{\hidewidth
  \lower1.5ex\hbox{`}\hidewidth\crcr\unhbox0}}} \def\cprime{$'$}
  \def\cprime{$'$} \def\cprime{$'$} \def\cprime{$'$}
  \def\polhk#1{\setbox0=\hbox{#1}{\ooalign{\hidewidth
  \lower1.5ex\hbox{`}\hidewidth\crcr\unhbox0}}} \def\cprime{$'$}
  \def\Dbar{\leavevmode\lower.6ex\hbox to 0pt{\hskip-.23ex \accent"16\hss}D}
  \def\cprime{$'$} \def\cprime{$'$} \def\cprime{$'$} \def\cprime{$'$}
  \def\cprime{$'$} \def\cprime{$'$} \def\cprime{$'$} \def\cprime{$'$}
  \def\cprime{$'$} \def\cprime{$'$} \def\cprime{$'$} \def\dbar{\leavevmode\hbox
  to 0pt{\hskip.2ex \accent"16\hss}d} \def\cprime{$'$} \def\cprime{$'$}
  \def\cprime{$'$} \def\cprime{$'$} \def\cprime{$'$} \def\cprime{$'$}
  \def\cprime{$'$} \def\cprime{$'$} \def\cprime{$'$} \def\cprime{$'$}
  \def\cprime{$'$} \def\cprime{$'$} \def\cprime{$'$} \def\cprime{$'$}
  \def\cprime{$'$} \def\cprime{$'$} \def\cprime{$'$} \def\cprime{$'$}
  \def\cprime{$'$} \def\cprime{$'$} \def\cprime{$'$} \def\cprime{$'$}
  \def\cprime{$'$} \def\cprime{$'$} \def\cprime{$'$} \def\cprime{$'$}
  \def\cprime{$'$} \def\cprime{$'$} \def\cprime{$'$} \def\cprime{$'$}
  \def\cprime{$'$} \def\cprime{$'$} \def\cprime{$'$}
\providecommand{\bysame}{\leavevmode\hbox to3em{\hrulefill}\thinspace}
\providecommand{\MR}{\relax\ifhmode\unskip\space\fi MR }
% \MRhref is called by the amsart/book/proc definition of \MR.
\providecommand{\MRhref}[2]{%
  \href{http://www.ams.org/mathscinet-getitem?mr=#1}{#2}
}
\providecommand{\href}[2]{#2}

\end{document}